\definecolor{dark-red}{rgb}{0.6,0,0}
\definecolor{dark-green}{rgb}{0,0.4,0}
\definecolor{medium-blue}{rgb}{0,0,0.5}
\newcommand{\Rep}{\mathrm{Rep}}
\newcommand{\BB}{\mr{BB}}
\newcommand{\Aut}{\mr{Aut}}
\newcommand{\Fil}{\mr{Fil}}
\newcommand{\an}{\mr{an}}
\newcommand{\gr}{\mr{gr}}
\newcommand{\Gr}{\mathrm{Gr}}
\newcommand{\Id}{\mr{Id}}
\newcommand{\Fl}{\mr{Fl}}
\newcommand{\Ad}{\mathrm{Ad}}
\newcommand{\cyc}{\mathrm{cyc}}
\newcommand{\Spd}{\mathrm{Spd}}
\newcommand{\GL}{\mathrm{GL}}
\newcommand{\HT}{\mr{HT}}
\newcommand{\mc}[1]{\mathcal{#1}}
\newcommand{\mbb}[1]{\mathbb{#1}}
\newcommand{\mr}[1]{\mathrm{#1}}
\newcommand{\mf}[1]{\mathfrak{#1}}
\newcommand{\et}{\mathrm{\acute{e}t}}
\newcommand{\dR}{\mathrm{dR}}
\newcommand{\BC}{\mathrm{BC}}
\newcommand{\Perf}{\mathrm{Perf}}
\newcommand{\adm}{\mathrm{adm}}
\newcommand{\AdicSpaces}{\mr{AdicSpc}}
\newcommand{\Sm}{\mr{Sm}}
\newcommand{\LZ}{\mathrm{LZ}}
\newcommand{\VB}{\mathrm{Vect}}
\newcommand{\Sch}{\mathrm{Sch}}
\newcommand{\univ}{\mathrm{univ}}
\newcommand{\Spa}{\mathrm{Spa}}
\newcommand{\Isom}{\mathrm{Isom}}
\newcommand{\FF}{\mathrm{FF}}
\DeclareMathOperator{\Lie}{Lie}
\DeclareMathOperator{\Spec}{Spec}
\DeclareMathOperator{\Gal}{Gal}
\DeclareMathOperator{\Hom}{Hom}
\newcommand{\ul}[1]{\underline{#1}}
\newcommand{\Hdg}{\mr{Hdg}}
\newcommand{\BG}{\mathrm{B}G}
\newcommand{\Sh}{\mathrm{Sh}}
\newcommand{\qpbar}{\overline{\mathbb{Q}}_p}
\newcommand{\gx}{(\mathsf{G},\mathsf{X})}
\newcommand{\std}{\mathrm{std}}
\newcommand{\End}{\mr{End}}
\numberwithin{equation}{subsection}
\numberwithin{equation}{subsubsection}
\theoremstyle{plain}
\newtheorem{maintheorem}{Theorem}
\newtheorem*{theorem*}{Theorem}
\newtheorem{theorem}[subsubsection]{Theorem}
\newtheorem{corollary}[subsubsection]{Corollary}
\newtheorem{proposition}[subsubsection]{Proposition}
\newtheorem{lemma}[subsubsection]{Lemma}
\theoremstyle{definition}
\newtheorem{example}[subsubsection]{Example}
\newtheorem{definition}[subsubsection]{Definition}
\newtheorem{remark}[subsubsection]{Remark}
\newcommand{\triv}{\mathrm{triv}}
\newcommand{\qpbreve}{\breve{\mathbb{Q}}_p}
\newcommand{\alg}{\mr{alg}}
\newcommand{\AffPerf}{\mr{AffPerf}}
\newcommand{\red}{\mathrm{red}}
\newcommand{\M}{\mathscr{M}}
\newcommand{\Tw}{\mathrm{Tw}}\newcommand{\RH}{\mathcal{RH}}
\newcommand{\Igs}{\mathrm{Igs}}
\newcommand{\BL}{\mathrm{BL}}
\newcommand{\Bun}{\mathrm{Bun}}
\newcommand{\lfp}{{\mathrm{lf}^{+}}}
\newcommand{\cn}{\mathrm{cn}}
\newcommand{\lf}{\mathrm{lf}}
\newcommand{\lfid}{{\diamond_{\lf}}}
\newcommand{\Fqbar}{\overline{\mathbb{F}}_q}
\newcommand{\goodAS}{\mathrm{SSAdicSpc}}
\newcommand{\bs}{\backslash}
\title{Inscription, twistors, and $p$-adic periods}
\author{Sean Howe}
\begin{document}

\begin{abstract} We introduce the theory of inscribed $v$-sheaves, a differentiable extension of the theory of diamonds and $v$-sheaves with internal tangent bundles that are often relative inscribed Banach--Colmez spaces, then apply this theory to the study of $p$-adic periods. In particular, we construct natural inscribed versions of the Hodge and Hodge--Tate period maps and their lattice refinements for de Rham torsors, then compute the derivatives of these period maps in terms of classical structures in $p$-adic Hodge theory. These torsors include infinite level global Shimura varieties and infinite level local Shimura varieties, and for these spaces we also give another moduli-theoretic construction of the inscribed structure; the construction in the local Shimura case applies more generally to the non-minuscule moduli of mixed characterisic local shtukas with one leg. The key new ingredients in our study of inscribed structures on $p$-adic Lie group torsors over smooth rigid varieties over a $p$-adic field are the Liu-Zhu period map, a refinement of the Hodge period map whose derivative is the geometric Sen morphism/canonical Higgs field, and a closely related exact tensor functor from $\mathbb{Q}_p$-local systems to a category of twistor bundles on the relative thickened Fargues--Fontaine curve. These new structures are only visible after passing to the inscribed setting.  We also discuss some possible implications of our computations in the vein of ``differential topology for diamonds." 
\end{abstract}

\maketitle

\let\thefootnote\relax\footnote{MSC 2020 --- 	14F10, 14F30, 14G45}

\tableofcontents

\section{Introduction}

\subsection{Motivation}\label{ss.motivation}
Over the past decade, the fundamental building blocks of $p$-adic geometry have shifted from those of rigid analytic geometry, Tate's Noetherian convergent power series rings, to those of perfectoid geometry, Scholze's typically non-Noetherian perfectoid algebras (characterized by the existence of approximate $p$th roots). This shift has increased the scope and power of the theory, even in the study of rigid analytic varieties and their cohomology. For example, by building from perfectoid algebras to the theory of diamonds and $v$-sheaves, one obtains new period maps in $p$-adic Hodge theory whose domain and/or codomain does not exist in the classical theory. However, this shift comes at a price: by moving away from the familiar convergent power series rings, we lose access to the differentiable toolkit of tangent spaces and derivatives that is so fundamental in classical geometric reasoning. 

\begin{example}\label{example.intro-annihilating-differentials}
Let $C/\mbb{Q}_p$ be an algebraically closed non-archimedean extension. Then $C$ is a perfectoid field, and a basic example of a perfectoid $C$-algebra is the completion $C\langle t^{\pm1/p^\infty}\rangle $ of $\bigcup_{n \geq 1} C[t^{\pm1/p^n}]$ for the supremum norm on the coefficients. Geometrically, this is the ring of functions on the perfectoid annulus, a Galois $\mbb{Z}_p(1)$-cover of the rigid analytic annulus $|t|=1 \subseteq \mathbb{A}^1$ whose ring of functions is $C\langle t^{\pm1}\rangle$. The module of continuous K\"ahler differentials of $C\langle t^{1/p^\infty}\rangle $ over $C$ is trivial: any continuous derivation $d$ from $C\langle t^{1/p^\infty}\rangle$ to a Banach $C\langle t^{1/p^\infty}\rangle$-module is zero because of the formula $p^n d\log t^{1/p^n}=d\log t$.
\end{example}

This phenomenon is general: the existence of approximate $p$-power roots forces any continuous derivation on a perfectoid algebra to be identically zero. Thus, to obtain a broadly applicable differential theory for perfectoid spaces, one cannot proceed directly via the K\"ahler approach as in classical rigid analytic, complex analytic, or algebraic geometry. This issue propagates more broadly to the theory of \emph{diamonds}, which are functors on perfectoid spaces constructed as quotients of representable functors by pro-\'{e}tale equivalence relations, as well as the more general \emph{$v$-sheaves} and \emph{$v$-stacks} that arise naturally in moduli problems related to $p$-adic cohomology and $p$-adic Hodge theory. 

Nevertheless, it is clear that some diamonds and $v$-sheaves do have natural tangent bundles. For example, if $L/\mbb{Q}_p$ is a non-archimedean extension, then smooth rigid analytic varieties over $L$ embed, by their functor of points on perfectoid algebras over $L$, fully faithfully into the category of diamonds over $\Spd L$, and certainly we know how to define the tangent bundle of a smooth rigid analytic variety. More recently, Fargues and Scholze \cite{FarguesScholze.GeometrizationOfTheLocalLanglandsCorrespondence}, in the context of their Jacobian criterion for cohomological smoothness, have defined Banach--Colmez Tangent Bundles for moduli spaces of sections of smooth quasi-projective adic spaces over Fargues--Fontaine curves. Here, the Tangent Bundle is a Vector Bundle (we follow the precepts of Colmez Capitalization in our nomenclature), i.e. a sheaf of modules for the topological constant sheaf $\mathbb{Q}_p^\diamond$ (which is usually denoted $\ul{\mbb{Q}_p}$). 

The same diamond can sometimes be constructed in different ways as a moduli of sections, leading to distinct Tangent Bundles. Thus the Tangent Bundle assigned by such a construction is not intrinsic to the diamond, and can be thought of as a type of additional differentiable structure akin to a differentiable manifold structure on a topological space. 

\begin{example}\label{example.intro-diff-structures}
Let $C$ be an algebraically closed perfectoid field in characteristic $p$. Then, the moduli of sections of the vector bundle $\mc{O}(1/n)$ on the Fargues--Fontaine curve $X_C/\mbb{Q}_p$ is represented by the open perfectoid unit disk over $C$. The Tangent Bundle assigned to it by the Fargues--Scholze construction is the constant Banach--Colmez space $\BC(\mc{O}(1/n)).$ For varying $n$, these are distinct as Vector Bundles on the open perfectoid unit disk.
\end{example}

\subsection{Inscribed $v$-sheaves}\label{ss.intro-inscribed}
The first purpose of the present work is to introduce a robust differentiable structure from which such Tangent Bundles arise naturally. This is the theory of inscribed\footnote{The nomenclature is by analogy with an inscription on a rock-theoretic diamond, which is a piece of extra identifying information laser-etched along the edge at the widest part.} diamonds and $v$-sheaves, where the objects are not functors on perfectoid spaces but instead functors on certain finite locally free thickenings of Fargues--Fontaine curves.

Concretely, for the purposes of the introduction, a test object is an object in the category $X_{\mbb{Q}_p,\Box}^\lfp$ consisting of pairs\footnote{Our notation in the introduction differs slightly from the notation in the body, where we will typically write $P$ for a perfectoid space in characteristic $p$ and $P^\sharp$ for an untilt over $\Spa \mbb{Q}_p$ to allow for more general situations where no untilt has been fixed, i.e. where there is no fixed map $P \rightarrow \Spd\mbb{Q}_p$.} $(P, \mathcal{X}/X_{P^\flat})$, where $P/\Spa \mbb{Q}_p$ is a perfectoid space, $X_{P^\flat}=X_{\mathbb{Q}_p, P^\flat}$ is the associated Fargues--Fontaine curve (an adic space over $\mathbb{Q}_p$), and $\mc{X}/X_{P^\flat}$ is defined by a locally free $\mathcal{O}_{X_{P^\flat}}$-algebra $\mathcal{O}_{\mc{X}}$ such that $\mathcal{O}_{\mc{X}}^\red=\mathcal{O}_{X_{P^\flat}}$. We also require that, as a vector bundle on $X_{P^\flat}$, $\mathcal{O}_{\mc{X}}$ satisfies a slope condition  (see \cref{ss.slope-condition}) that is imposed to ensure that the basic strata in the inscribed moduli of $G$-bundles are deformation-theoretically open (\cref{prop.b-basic-open-stratum}). We will write test objects as $\mc{X}/X_{P^\flat}$ or simply as $\mc{X}$. There is a canonical Cartier divisor $\overline{\infty}: P \hookrightarrow X_{P^\flat}$; we extend this to $\infty: \overline{\infty}^* \mc{X} \hookrightarrow \mc{X}.$ The $v$-topology on the set of all test objects is pulled back from the $v$-topology on perfectoid spaces. 

These thickenings of Fargues--Fontaine curves play a role similar to the artinian rings used in the deformation theory of Galois representations --- in fact, because of the slope condition, they can be thought of as living inside of a category of relative ``artinian effective Banach--Colmez algebras over $\mathbb{Q}_p$ with residue field $\mbb{Q}_p$."

\begin{remark} We use finite locally free thickenings because they are the largest category for which we can easily establish a GAGA equivalence between thickenings of the algebraic Fargues--Fontaine curve and thickenings of the adic Fargues--Fontaine curve. However, we have tried to set up the theory to be compatible with the eventual use of other larger categories of thickenings. In particular, we expect that many of our computations of tangent bundles and derivatives will extend to a category of (derived, analytic) thickenings including all artinian Banach--Colmez algebras over $\mathbb{Q}_p$ with residue field $\mathbb{Q}_p$. Such an extension should allow us to make contact between our computations and the theory of analytic prismatization that has been announced by Ansch\"{u}tz--le Bras--Rodriguez Camargo--Scholze.
\end{remark}

\begin{remark}\label{remark.unique-structure-morphism}
Our thickenings $\mc{X}/X_{P^\flat}$ are, by definition, equipped with a structure morphism back to $X_{P^\flat}$ (the thickening is the map $X_{P^\flat} \hookrightarrow \mc{X}$ given by $\mathcal{O}_{\mc{X}} \twoheadrightarrow \mc{O}_{\mc{X}}^\red=\mc{O}_{X_{P^\flat}}$). This structure morphism can be viewed as a property of a thickening rather than as an additional choice of data: if such a structure morphism exists then it is unique because there are no continuous derivations on a basis of affinoid opens of $X_{P^\flat}$ (which is a preperfectoid space). 
\end{remark}

\subsubsection{} An inscribed $v$-sheaf is a $v$-sheaf on this category of test objects that sends certain coproducts to products (see \cref{def.inscribed-fc}).  For any inscribed $v$-sheaf $\mc{S}$, we define the underlying $v$-sheaf  $\overline{\mc{S}}/\Spd \mbb{Q}_p$ by $\overline{\mc{S}}(P)=\mc{S}(X_{P^\flat}/X_{P^\flat})$. In the other direction, we can and do view $v$-sheaves over $\Spd \mbb{Q}_p$ as the full sub-category of inscribed $v$-sheaves consisting of ``trivially inscribed $v$-sheaves": for a $v$-sheaf $S/\Spd \mbb{Q}_p$, we set $S(\mc{X}/X_{P^\flat})=S(P)$. 

\begin{remark}\label{remark.deRhamStack}In particular, for an inscribed $v$-sheaf $\mc{S}$, if we view $\overline{\mc{S}}$ as a trivially inscribed $v$-sheaf then it is defined by
$\overline{\mc{S}}(\mc{X}/X_{P^\flat})=\mc{S}(X_{P^\flat}).$
Because $X_{P^\flat}=\mc{X}^\red$, $\overline{\mc{S}}$ can be viewed as the de Rham stack of $\mc{S}$. Thus, \cref{remark.unique-structure-morphism} implies that we are in an unusual setting where the usual map $\mc{S} \rightarrow \overline{\mc{S}}$ (induced by the thickening map $X_{P^\flat} \hookrightarrow \mc{X}$) from an object to its de Rham stack admits a canonical section $\overline{\mc{S}} \rightarrow \mc{S}$ (induced by the structure map $\mc{X} \rightarrow X_{P^\flat}$). This reflects that our category is rather flabby, so that, e.g., the functor from smooth rigid analytic varieties to inscribed $v$-sheaves is faithful but not full. This flabbiness is actually very useful at times: for example, it allows us (in \cref{ss.hodge-period-maps}) to define a global Hodge period map for a filtered vector bundles with integrable connection measuring the position of the filtration against bases of flat sections, which does not typically exist rigid analytically.
\end{remark}

\subsubsection{}The inscribed formalism allows us to extract the Tangent Bundles discussed in \cref{ss.motivation} from a  construction of internal tangent bundles by Weil restriction: for $\mc{S}$ an inscribed $v$-sheaf, the tangent bundle $T_\mc{S}$ is obtained by ``adding an $\epsilon$":
\[ T_{\mc{S}}(\mc{X})=\mc{S}(\mc{X}[\epsilon]) \textrm{ where } \mc{O}_{\mc{X}[\epsilon]}:=\mc{O}_{\mc{X}}[\epsilon]/\epsilon^2. \]
The tangent bundle $T_{\mc{S}}$ is itself an inscribed $v$-sheaf with a canonical map to $\mc{S}$, and $T_{\mc{S}}/\mc{S}$ is a module over $\mathbb{Q}_p^\lfid$, where the latter is defined by
 \[ \mathbb{Q}_p^\lfid(\mc{X})=H^0(\mc{X}, \mc{O}_{\mc{X}})=\End(\mc{X}[\epsilon]/\mc{X}).\] 
 The addition law on $T_{\mc{S}}$ comes from the coproduct condition. 

In particular, $\overline{\mathbb{Q}_p^\lfid}=\mathbb{Q}_p^\diamond$, and thus $\overline{T_\mc{S}}$ is a $\mathbb{Q}_p^\diamond$-module over $\overline{\mc{S}}$ so can be thought of as a Tangent Bundle to $\overline{\mc{S}}$ in the sense of \cref{ss.motivation}. This theory encompasses naturally the tangent bundles of rigid analytic varieties: for a smooth rigid analytic variety $Z/L$, we set \[ Z^\lfid(\mc{X}/X_{P^\flat})=\Hom_{\Spa \mbb{Q}_p}(\overline{\infty}^* \mc{X}, Z) \] (which lives over $\Spd L$), and then $T_{Z^\lfid}=(T_Z)^\lfid$ (treating $T_Z$ itself as a smooth rigid analytic space over $L$) so that $\overline{T_{Z^\lfid}} \rightarrow \overline{Z^\lfid}$ is $(T_Z)^\diamond \rightarrow Z^\diamond$.

A similar construction gives the Tangent Bundles of the Fargues--Scholze Jacobian criterion by starting with a smooth morphism $Z \rightarrow X_{P^\flat}$ and considering the sections functor $Z^\lfid/P^\diamond$ which sends $\mc{X}/X_Q, Q \rightarrow P$ to 
$\Hom_{X_{P^\flat}} (\mc{X}, Z)$ (see \cref{example.smooth-rig-and-fs} for details and a refinement). We also treat similar moduli of sections constructions for smooth spaces over other loci in the Fargues--Fontaine curve, and much of the power of the formalism comes from the fact that all of these different constructions now produce differential objects in the same category, so that one can easily study the natural maps between them.  

\subsection{Main results}\label{ss.intro-main-results}
The main goal of this theory to provide a setting for applying differential techniques to the study of the period maps that arise when considering $p$-adic cohomology and its variation in families. We now describe our results in this direction. 

Let $L$ be a $p$-adic field, that is, a complete non-archimedean extension of $\mathbb{Q}_p$ with discrete value group and perfect residue field. Suppose $Z/L$ is a smooth rigid analytic variety, $G/\mathbb{Q}_p$ is a connected linear algebraic group, $K \leq G(\mathbb{Q}_p)$ is an open subgroup, and  $\tilde{Z}/Z$ is a $K$-torsor (on the pro-\'{e}tale site or, equivalently for such a torsor, the $v$-site). We have seen that there is a natural inscribed version of $Z$, $Z^\lfid$, and the $p$-adic manifold $K$ also has an inscribed incarnation, which can be constructed concretely in this case as 
\[ K^\lfid=G(\mathbb{Q}_p^\lfid) \times_{G(\mathbb{Q}_p^\diamond)} K^\diamond \textrm{ (for $K^\diamond:=\underline{K}$ the topological constant sheaf).}\]

Just as $T_{Z^\lfid}=(T_Z)^\lfid$, where $T_Z$ is the rigid analytic tangent bundle, $T_{K^\lfid}= (T_{K})^\lfid$, where $T_K$ is the $p$-adic manifold tangent bundle. Concretely, for $\mf{k}:=\Lie K = \Lie G(\mathbb{Q}_p)=:\mf{g}$, this means that $T_{K^\lfid}$ is identified with the constant bundle $\mathfrak{k} \otimes_{\mathbb{Q}_p} \mathbb{Q}_p^\lfid$ by viewing elements of $\mathfrak{k}$ as left-invariant vector fields. Our main results allow us to construct of a non-trivial inscribed structure $\tilde{Z}^\lfid$ on $\tilde{Z}$ compatible with the inscribed structures on $Z$ and $K$ and an explicit computation of $T_{Z^\lfid}$ as well as the derivatives of associated period maps.

In particular, a key motivation for us to develop this framework was the desire to differentiate Hodge--Tate period maps and their lattice refinements. However, it turns out that the key tool in constructing the inscribed structure on $\tilde{Z}$ is a novel study of the \emph{Hodge} period map, which lies on the other side of $p$-adic comparison theorems, in the inscribed setting. 

\subsubsection{} In fact, the most important period map for us is the Liu-Zhu period map, a generalization of the Hodge period map. We call it the Liu-Zhu period map because it is defined starting from the category of $t$-connections on the ``formal lift of $Z_C$ to $B^+_\dR$" of \cite{LiuZhu.RigidityAndARiemannHilbertCorrespondenceForpAdicLocalSystems} where  $C:=\overline{L}^\wedge$ and $B^+_\dR:=\mathbb{B}^+_\dR(C)$ is the associated Fontaine period ring. Starting from $\tilde{Z}$, the Riemann-Hilbert correspondence $\mathbb{L} \mapsto \RH(\mathbb{L})$ of \cite{LiuZhu.RigidityAndARiemannHilbertCorrespondenceForpAdicLocalSystems} can be used to construct such a $t$-connection with $G$-structure, and from it we construct a new period map associated to $\tilde{Z}$,
\[ \pi_{\LZ}: Z^\lfid \rightarrow \Gr_{G} / G(\overline{\mathbb{B}_\dR}). \]
We explain the notation: for period sheaves in $p$-adic Hodge theory such as $\mathbb{B}^+_\dR$, $\mathbb{B}_\dR$, $\mathcal{O}$, etc., we write the same symbol for the ``natural" extension to inscribed $v$-sheaves, and the symbol with an overline for the ``trivial" extension to inscribed $v$-sheaves. The natural extension is obtained by extending an interpretation of the period sheaf in terms of Fargues--Fontaine curves to thickened Fargues--Fontaine curves: for example, the natural extension $\mathbb{B}^+_\dR$ evaluated on a thickening $\mathcal{X}/X$ of a Fargues--Fontaine curve $X$ is given by the functions on the formal neighborhood in $\mathcal{X}$ of the canonical divisor $\infty$; the trivial extension $\overline{\mathbb{B}^+_\dR}$ evaluated on $\mathcal{X}$ returns instead the functions on the formal neighborhood in $X$ of the canonical divisor $\overline{\infty}$. As noted in \cref{ss.intro-inscribed}, trivial inscriptions exist for all $v$-sheaves but, for period sheaves and moduli spaces in $p$-adic Hodge theory such as the $\mathbb{B}^+_\dR$-affine Grassmannian $\Gr_G$, we show that there are also natural extensions that contain interesting nilpotent information. 

The period map $\pi_{\LZ}$ is only interesting in the inscribed setting: on underlying $v$-sheaves, it factors through the classifying stack $\ast / G(\overline{\mathbb{B}_\dR})$, and is simply the classifying map on $Z^\diamond$ for the push-out $\tilde{Z} \times^{K^\diamond} G(\overline{\mathbb{B}_\dR})$ of $\tilde{Z}$ to a $G(\overline{\mathbb{B}_\dR})$-torsor. On the other hand, the derivative $d\pi_{\LZ}$ is highly non-trivial: its restriction to underlying $v$-sheaves is the geometric Sen morphism / canonical Higgs field of Pan \cite{Pan.OnLocallyAnalyticVectorsOfTheCompletedCohomologyOfModularCurves, Pan.OnLocallyAnalyticVectorsOfTheCompletedCohomologyOfModularCurvesII} and Rodriguez Camargo \cite{RodriguezCamargo.GeometricSenTheoryOverRigidAnalyticSpaces} (see \cref{remark.geometric-sen}). 

\subsubsection{}This interpretation of $d\pi_{\LZ}$ as the canonical Higgs field is largely irrelevant for the present work; what is more important is to understand the relation between $d\pi_{\LZ}$ and the Hodge period map when $\tilde{Z}/Z$ is a de Rham torsor in the sense of \cite{Scholze.pAdicHodgeTheoryForRigidAnalyticVarieties} --- we recall that a torsor is de Rham, for example, if it is the trivializing torsor for the cohomology of a smooth proper family over $Z$;  in fact, by the results of \cite{LiuZhu.RigidityAndARiemannHilbertCorrespondenceForpAdicLocalSystems}, it suffices to check the de Rham condition after push-out by a single faithful representation of $G$ and at a single classical point in each connected component of $Z$, where it is the usual de Rham condition of Fontaine.

In the de Rham case, the comparison of \cite{Scholze.pAdicHodgeTheoryForRigidAnalyticVarieties} produces an associated filtered $G$-bundle with integrable connection satisfying Griffiths transversality $\omega_{\nabla}^\Fil$ on $Z$ (which, in particular, determines the $t$-connection in the Liu-Zhu Riemann-Hilbert correspondence).  From $\omega_{\nabla}^\Fil$ we produce an associated Hodge period map 
\[ \pi_{\Hdg}: Z^\lfid \rightarrow \Fl_{G}^\lfid/G(\overline{\mathcal{O}}), \]
where here $\Fl_G$ is the rigid analytic variety parameterizing filtrations on the trivial $G$-torsor. This period map measures the position of the Hodge filtration against a basis of flat sections; it is a useful feature of the inscribed formalism that this makes sense globally even though a basis of flat sections does not typically exist on opens that cover the rigid analytic variety $Z$ so that there is usually no such global rigid analytic Hodge period map. 

We also produce a lattice refinement $\pi_\Hdg^+: Z^\lfid \rightarrow \Gr_{G}^\lfid/G(\overline{\mathbb{B}^+_\dR}).$ It refines $\pi_\Hdg$ in the following sense: if $\pi_{\Hdg}$ factors through a connected component $\Fl_{[\mu^{-1}]}/G(\overline{\mathcal{O}})$ for $[\mu^{-1}]$ a conjugacy class of cocharacters of $G_{\overline{\mbb{Q}}_p}$ defined over $L$ (this holds, e.g., if $Z$ is geometrically connected), then $\pi_{\Hdg}^+$ factors through an associated Schubert cell $\Gr_{[\mu]} \subseteq \Gr_G$, and $\pi_{\Hdg}^+$ is a lift of $\pi_\Hdg$ along the Bialynicki-Birula map $\BB: \Gr_{[\mu]} \rightarrow \Fl_{[\mu^{-1}]}^\lfid$ which sends a $\mathbb{B}^+_\dR$-lattice on the trivial $G(\mathbb{B}^+_\dR)$-torsor to its trace filtration on the trivial $G(\mathcal{O})$-torsor. In this case, the relation between $\pi_{\Hdg}$, $\pi_{\Hdg}^+$, and $\pi_{\LZ}$ is encapsulated in the commutative diagram
% https://q.uiver.app/#q=WzAsNCxbMCwwLCIgWl57XFxsZmlkfSAiXSxbMSwxLCJcXEdyX3tbXFxtdV19L0coXFxvdmVybGluZXtcXG1hdGhiYntCfV4rX1xcZFJ9KSJdLFsxLDMsIlxcRmxfe1tcXG11XnstMX1dfS9HKFxcb3ZlcmxpbmV7XFxtYXRoY2Fse099fSkiXSxbMiwxLCJcXEdyX0cvRyhcXG92ZXJsaW5le1xcbWF0aGJie0J9X1xcZFJ9KSJdLFswLDEsIlxccGleK19cXEhkZyIsMV0sWzAsMiwiXFxwaV97XFxIZGd9IiwxLHsiY3VydmUiOjJ9XSxbMSwyLCJcXEJCIiwxXSxbMCwzLCJcXHBpX3tcXExafSIsMSx7ImN1cnZlIjotM31dLFsxLDNdXQ==
\[\begin{tikzcd}
	{ Z^{\lfid} } \\
	& {\Gr_{[\mu]}/G(\overline{\mathbb{B}^+_\dR})} & {\Gr_G/G(\overline{\mathbb{B}_\dR})} \\
	\\
	& {\Fl_{[\mu^{-1}]}/G(\overline{\mathcal{O}})}
	\arrow["{\pi^+_\Hdg}"{description}, from=1-1, to=2-2]
	\arrow["{\pi_{\LZ}}"{description}, curve={height=-18pt}, from=1-1, to=2-3]
	\arrow["{\pi_{\Hdg}}"{description}, curve={height=12pt}, from=1-1, to=4-2]
	\arrow[from=2-2, to=2-3]
	\arrow["\BB"{description}, from=2-2, to=4-2]
\end{tikzcd}\]
Moreover, by an essentially classical computation (see \cref{lemma.dpihdg-ks}), 
\[ d\pi_{\Hdg}: T_{Z^\lfid} \rightarrow \pi_{\Hdg}^* T_{\Fl_{[\mu^{-1}]}/G(\overline{\mathcal{O}})}=\omega_{\nabla}^\Fil(\mf{g})/\Fil^0\omega_{\nabla}^\Fil(\mf{g}) \]
is the Kodaira-Spencer map for $\omega_{\nabla}^\Fil$. In particular, $d\pi_{\Hdg}^+=d\pi_{\LZ}$ is a lift of the Kodaira-Spencer map along $d\BB$; on the underlying $v$-sheaves there is a unique such lift and it can also be described using the usual Hodge--Tate comparison theorem by \cite[Theorem A]{Howe.GeometricSenAndKodairaSpencer}. 
\begin{remark}\label{remark.griffiths-transversality}We define a period map $\pi_\Hdg$ for any filtered $G$-bundle with integrable connection on $Z$, and lift it to a canonical map $\pi_\Hdg^+$ whenever the filtration and connection satisfy Griffiths transversality (which is the statement that the Kodaira-Spencer map $d\pi_\Hdg$ factors through $\gr^{-1}\omega_{\nabla}^\Fil(\mf{g})=\Fil^{-1}\omega_{\nabla}^\Fil(\mf{g})/\Fil^0\omega_{\nabla}^\Fil(\mf{g})$). Conversely, because the derivatives $d\BB$, $d\pi_{\Hdg}$, and $d\pi_{\Hdg}^+=d\pi_{\LZ}$ are all $\mathbb{B}^+_\dR$-linear, a computation of $d\BB$ shows that the existence of a factorization of $ \pi_{\Hdg}$ through $\BB$  necessitates Griffiths transversality!
\end{remark}

\subsubsection{}The key point in all of these constructions is an adaptation and extension of Scholze's \cite[\S6]{Scholze.pAdicHodgeTheoryForRigidAnalyticVarieties} functor $\mathbb{M}$ from filtered vector bundles with integrable connection satisfying Griffiths transversality on $Z$ to locally free $\mathbb{B}^+_\dR$-modules on $Z^\diamond$. We produce an enrichment of this functor that is valued in locally free $\mathbb{B}^+_\dR$-modules on $Z^\lfid$. Crucially, when the filtration is not flat, neither is this $\mathbb{B}^+_\dR$-module: our $\mathbb{M}$ is \emph{not} the flat extension of Scholze's $\mathbb{M}$. 

To define our $\mathbb{M}$, we use a geometrization of the $\mathcal{O}\mathbb{B}_\dR$-comparison via the inscribed $\mathbb{B}^+_\dR$-jet sheaf $Z_{(\infty)}^\lfid$. In particular, a filtered vector bundle on $Z$ (or, more generally, a Galois-equivariant integrable $t$-connection on the formal lift of $Z_C$ to $B^+_\dR$) can be pulled back to produce a locally free $\mathbb{B}^+_\dR$-module on $Z_{(\infty)}^\lfid$, and then Griffiths transversality of the connection is precisely what is needed to descend this module to $Z^\lfid$ --- indeed, because the filtrations on $\mathbb{B}_\dR$ and $V$ are convolved, two jets with the same image in $Z^\lfid$ are congruent modulo $\Fil^1 \mathbb{B}^+_\dR$ so that, in the Taylor series for integrating the connection, the decrease in filtration from the partial derivatives is canceled out by the powers of the variables.  

\subsubsection{} The Liu-Zhu period map measures the position of $\mathbb{M}$ against bases of flat sections for $\mathbb{M} \otimes_{\mathbb{B}^+_\dR} \mathbb{B}_\dR$ (which \emph{is} flat), while the lattice Hodge period map measures the position of $\mathbb{M}$ against bases of flat sections for the lattice $\mathbb{M}_0 \subseteq \mathbb{M} \otimes_{\mathbb{B}^+_\dR} \mathbb{B}_\dR$ obtained from the same vector bundle with integrable connection but equipped with the trivial filtration --- unlike $\mathbb{M}$, $\mathbb{M}_0$ is flat since the trivial filtration is flat for any connection. The construction of $\mathbb{M}_0$ has no analog for general $t$-connections, which is why we only see $\pi_{\LZ}$ in the general setting. 

\subsubsection{} The consideration of $\pi_{\LZ}$ and $\mathbb{M}$ leads to a new structure in relative $p$-adic Hodge theory that is the key to our inscribed structure on $\tilde{Z}$: first recall that the category of \'{e}tale $\mathbb{Q}_p$-local systems on $Z$ is equivalent to the category of vector bundles on the relative Fargues--Fontaine curve $Z^\diamond$ that are fiberwise semistable of slope zero; we denote this equivalence by $\mathbb{L} \mapsto \mathbb{L} \otimes_{\mathbb{Q}_p^\diamond} {\mathcal{O}_{X}}$. Given such a vector bundle, we can extend it to a flat vector bundle $\mathbb{L} \otimes_{\mathbb{Q}_p^\diamond} {\mathcal{O}_{\mathcal{\mathcal{X}}}}$ on the relative thickened Fargues--Fontaine curve over $Z^\lfid$, and then modify it at the canonical point $\infty$ using the lattice $\mathbb{M}(\RH(\mbb{L}))$ to obtain a vector bundle on the relative thickened Fargues--Fontaine curve over $Z^\lfid$ with a meromomorphic integrable connection:
\[ \Tw(\mathbb{L}) := (\mathbb{L} \otimes_{\mathbb{Q}_p^\diamond} {\mathcal{O}_{\mathcal{\mathcal{X}}}})_{\mathbb{M}(\RH(\mathbb{L}))}. \]
The meromorphic integrable connection has a simple pole at $\infty$; we call such an object a twistor on the relative thickened Fargues--Fontaine curve over $Z^\lfid$ (see \cref{def.mer-conn-and-twistor}), by analogy with Simpson's variations of twistor structure \cite{Simpson.MixedTwistorStructures}. 

\begin{maintheorem}[see \cref{theorem.twistor-functor}]\label{maintheorem.twistor-correspondence}Let $L$ be a $p$-adic field and let $Z/L$ be a smooth rigid analytic variety. The assignment $\mathbb{L} \rightarrow \Tw(\mathbb{L})$ is a fully faithful exact tensor functor from \'{e}tale $\mathbb{Q}_p$-local systems on $Z$ to twistors on the relative thickened Fargues--Fontaine curve over $Z^\lfid$.
\end{maintheorem}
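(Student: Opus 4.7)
The plan is to verify the claimed properties in sequence: that $\Tw(\mathbb{L})$ actually is a twistor, that $\Tw$ is exact and tensorial, and finally that it is fully faithful. The fullness claim is the main obstacle.

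To show $\Tw(\mathbb{L})$ is a twistor, the only non-trivial point is that the flat connection on $\mathbb{L}\otimes_{\mathbb{Q}_p^\diamond}\mathcal{O}_{\mathcal{X}}$ extends meromorphically across $\infty$ with at worst a simple pole after modifying by $\mathbb{M}(\RH(\mathbb{L}))$. This is controlled by the enhanced Scholze functor $\mathbb{M}$: the defining feature of the enhancement is that $\mathbb{M}(\RH(\mathbb{L}))$ descends from the jet sheaf $Z_{(\infty)}^\lfid$ to $Z^\lfid$ precisely because of Griffiths transversality of the filtered bundle with connection underlying $\RH(\mathbb{L})$. A local computation at $\infty$ — using a trivialization of $\mathbb{L}\otimes_{\mathbb{Q}_p^\diamond}\mathcal{O}_{\mathcal{X}}$ by flat sections together with a lift of a filtered frame of the lattice — then shows via the Leibniz rule that each filtration jump across the modification contributes at worst one pole, and that the canceling between powers of $t$ and the drop in $\Fil^\bullet\mathbb{B}_\dR^+$ emphasized in the discussion of $\mathbb{M}$ cuts off higher order poles.

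Exactness and compatibility with tensor products are inherited step by step from the constituent operations: $\mathbb{L}\mapsto \mathbb{L}\otimes_{\mathbb{Q}_p^\diamond}\mathcal{O}_{\mathcal{X}}$, the Liu--Zhu Riemann--Hilbert correspondence $\RH$, the enhanced $\mathbb{M}$ (built from pullback along a smooth map of inscribed $v$-sheaves and descent, both exact), and the modification operation $(\cdot)_\Lambda$ at the Cartier divisor $\infty$, which is exact and tensorial in the pair (bundle, lattice). Faithfulness is quick: evaluating $\Tw$ at trivially inscribed test objects and restricting away from $\infty$ recovers the classical functor $\mathbb{L}\mapsto \mathbb{L}\otimes_{\mathbb{Q}_p^\diamond}\mathcal{O}_{X}$, and this is already fully faithful thanks to the equivalence between $\mathbb{Q}_p$-local systems on $Z$ and fiberwise semistable slope-zero bundles on the relative Fargues--Fontaine curve.

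The main obstacle is fullness. Given $f\colon\Tw(\mathbb{L}_1)\to\Tw(\mathbb{L}_2)$, restriction to $Z^\diamond\hookrightarrow Z^\lfid$ and then away from $\infty$ yields a morphism of semistable slope-zero bundles, hence a morphism $\tilde f\colon \mathbb{L}_1\to\mathbb{L}_2$ of local systems; the task is to prove $f=\Tw(\tilde f)$ on all of $Z^\lfid$. The approach is to exploit that for any inscribed test object $\mathcal{X}/X_{P^\flat}$ the bundle $\mathbb{L}_i\otimes_{\mathbb{Q}_p^\diamond}\mathcal{O}_{\mathcal{X}}$ is the \emph{unique} horizontal extension to $\mathcal{X}$ of $\mathbb{L}_i\otimes \mathcal{O}_{X_{P^\flat}}$, so a morphism between such extensions, restricted to the complement of $\infty$, is rigidly determined by its restriction to $X_{P^\flat}$. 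Combined with functoriality of the enhanced $\mathbb{M}$ applied to $\RH(\tilde f)$, which matches the lattices on either side at $\infty$, this identifies $f$ with $\Tw(\tilde f)$ everywhere. The delicate point — and the technical heart of the argument — is the uniqueness of horizontal extensions on inscribed test objects, which demands a careful analysis of the flat connection on thickenings of the Fargues--Fontaine curve relative to the reduction.
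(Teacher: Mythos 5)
Your proposal follows essentially the same route as the paper's proof of \cref{theorem.twistor-functor}: the twistor (simple pole) condition comes from the fact that the meromorphic connection at $\infty$ is the $t$-connection of $\RH(\mathbb{L})$, exactness and the tensor structure are inherited from the constituent functors, faithfulness is read off from the restriction to $Z^\diamond$, and fullness uses that a morphism compatible with the connections is rigidified by its restriction to the reduction. Two remarks. First, the step you single out as ``the technical heart'' --- uniqueness of horizontal extensions on inscribed test objects --- is definitionally trivial in this framework: by \cref{def.mer-conn-and-twistor} an integrable connection on $\mc{E}|_{\mc{X}\backslash\infty}$ \emph{is} an isomorphism with $\pi^*\iota^*\mc{E}|_{\mc{X}\backslash\infty}$, and the structure morphism $\pi$ exists and is unique (\cref{remark.unique-structure-morphism}), so there is no analysis of flat connections on thickenings to be done; the paper's one-line appeal to ``the integrable connection'' is exactly this. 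Second, your passage from $f$ to $\tilde f$ elides a point: restricting $f$ to $Z^\diamond$ and away from $\infty$ gives a morphism $\mathbb{L}_1\otimes\mathcal{O}_{X\backslash\infty}\to\mathbb{L}_2\otimes\mathcal{O}_{X\backslash\infty}$, not a morphism of the semistable slope-zero bundles $\mathbb{L}_i\otimes\mathcal{O}_X$ themselves (the restriction $\overline{\Tw(\mathbb{L}_i)}$ is the \emph{modified} bundle, with lattice $\overline{\mathbb{M}(\RH(\mathbb{L}_i))}$ rather than $\mathbb{L}_i\otimes\mathbb{B}^+_\dR$ at $\infty$); to produce $\tilde f$ one must still argue that this morphism extends across $\infty$ preserving the standard lattices. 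The paper's own proof is equally terse on this point, so I would not call it a disqualifying gap, but your write-up asserts it as immediate when it is not.
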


\begin{remark}The existence of an exact tensor functor from \'{e}tale $\mathbb{Q}_p$-local systems on $Z$ to some type of variation of twistor structures over $Z$ was conjectured by Fargues and Liu-Zhu \cite{LiuZhu.RigidityAndARiemannHilbertCorrespondenceForpAdicLocalSystems}. \cref{maintheorem.twistor-correspondence} is related to this conjecture but, as explained in \cref{Remark.FarguesLiuZhuDiscussion}, is probably not quite what they had in mind!
\end{remark}

\begin{remark}As explained in \cref{remark.extensions-of-twistor}, the de Rham case of the functor $\Tw$ and \cref{maintheorem.twistor-correspondence} can be extended from de Rham local systems to general de Rham bundles on the relative Fargues--Fontaine curve. In the case where $L$ is not a $p$-adic field (e.g. $L=C$), it should be straightforward to obtain a generalization to nilpotent bundles on the relative Fargues--Fontaine curve (here for $\mc{V}$ such a vector bundle, nilpotent refers to nilpotence of the geometric Sen operator on the $\overline{\mc{O}}$-module $\overline{\infty}^* \mc{V}$, which always holds when $L$ is $p$-adic), and in fact we expect a version to hold even without this nilpotence condition.   
\end{remark}

\subsubsection{}Applying the functor $\Tw$ of \cref{maintheorem.twistor-correspondence} allows us to produce from $\tilde{Z}$ a $G$-twistor on the relative thickened Fargues--Fontaine curve over $Z^\lfid$, and then we define $\tilde{Z}^\lfid$ as an open inside of the trivializing $G(\mathbb{Q}_p^\lfid)$-torsor for the underlying $G$-bundle of this $G$-twistor (see \cref{ss.inscribed-torsors}). This construction is functorial in the defining data $(Z, G, K \subseteq G(\mathbb{Q}_p), \tilde{Z})$.

\begin{remark}We expect that $\tilde{Z}^\lfid$ is functorial in $(Z,K, \tilde{Z})$ where $K$ is treated as a $p$-adic Lie group, and even functorial in $\tilde{Z}/Z$ treated as a $p$-adic manifold fibration over $Z$ as defined in \cite{Howe.PAdicManifoldFibrations} --- the latter is a less rigid category where maps do not need to respect the torsor structure but instead only the induced $p$-adic manifold structure on the fibers.\end{remark}

\subsubsection{}The natural map $\rho^\lfid: \tilde{Z}^\lfid \rightarrow Z^\lfid$ is a $K^\lfid$-torsor, extending the given $K$-torsor structure on underlying $v$-sheaves (that it is a torsor rather than a quasi-torsor uses crucially the slope condition on our test objects). In particular, writing $\rho^\lfid: \tilde{Z}^\lfid \rightarrow Z^\lfid$, it follows that we obtain an exact sequence of $\mathbb{Q}_p^\lfid$-modules over $\tilde{Z}^\lfid$,
\begin{equation}\label{eq.intro-torsor-exact-seq} 0 \rightarrow \mf{k} \otimes_{\mathbb{Q}_p} \mathbb{Q}_p^\lfid \xrightarrow{da_e} T_{\tilde{Z}^\lfid} \xrightarrow{d\rho^\lfid} {\pi^{\lfid}}^* T_{Z^\lfid} \rightarrow 0. \end{equation}
where we recall $\mathfrak{k}:=\Lie K$ and the first map $da_e$ is the derivative of the action map $a: \tilde{Z}^\lfid \times K^\lfid \rightarrow \tilde{Z}^\lfid$ at the identity section $e$ of $K^\lfid$, which naturally computes the tangent bundles of the fibers (i.e. the vertical tangent bundle of the fibration $\rho^\lfid$). Note that if we restrict to a geometric point $\tilde{z}:\Spa(C,C^+) \rightarrow \tilde{Z}$ lying above $z: \Spa(C,C^+) \rightarrow Z$, then the resulting Tangent Space $T_{\tilde{z}} \tilde{Z}$ can be thought of as an extension of the finite dimensional $C$-vector space $T_{z} Z_C$ by the finite dimensional $\mathbb{Q}_p$-vector space $\mf{k}$ living in the natural $p$-adic Hodge theoretic home for such a beast: the category of Banach--Colmez spaces.

We note that there is also a trivial such a functorial structure, for which the analogous exact sequence \cref{eq.intro-torsor-exact-seq} is canonically split. We will now describe a computation of $T_{Z^\lfid}$ that shows, in particular, our construction is \emph{not} the trivial one if $d\pi_{\LZ} \neq 0$ (in particular, in the de Rham case, whenever the Kodaira-Spencer map is non-zero). 

\begin{remark}
    Descending \cref{eq.intro-torsor-exact-seq} along the $K^\lfid$-action gives an exact sequence over $Z^\lfid$, the Atiyah sequence for $\tilde{Z}^\lfid/Z^\lfid$. Classically in differential geometry, connections on torsors are equivalent to splittings of the Atiyah sequence, thus the non-splitness wherever $d\pi_{\LZ}$ is non-zero can be thought of as the failure of the existence of a connection even locally.  
\end{remark}

\subsubsection{}To describe the computation, we need to introduce some more notation. We write $\BC(\mathcal{O}_{\mathcal{X}}(\infty))$ for the inscribed $v$-sheaf of functions on $\mc{X}\backslash \infty$ with at most a simple pole at $\infty$ (after a twist by $\mathbb{Q}_p(1)$, this is canonically identified with the natural inscribed version of the Banach--Colmez space $\BC(1)$). There is a natural map $r: \BC(\mathcal{O}_{\mathcal{X}}(\infty)) \rightarrow \Fil^{-1} \mathbb{B}_\dR$ obtained by taking the Laurent expansion at $\infty$; we write $\overline{r}$ for the tail map, i.e. the quotient to a map  $\BC(\mathcal{O}_{\mathcal{X}}(\infty)) \rightarrow \Fil^{-1} \mathbb{B}_\dR/\Fil^0 \mathbb{B}_\dR$, so $\ker \overline{r}=\mathbb{Q}_p^\lfid$ (the global functions on $\mc{X}$).  

\begin{maintheorem}\label{maintheorem.tangent-bundle-computation}(see \cref{theorem.torsor-tangent-bundle-computation})
There is a functorial Cartesian diagram of $\mathbb{Q}_p^\lfid$-modules on $\tilde{Z}^\lfid$ 
% https://q.uiver.app/#q=WzAsNCxbMCwyLCJcXG1me2t9IFxcb3RpbWVzX3tcXG1hdGhiYntRfV9wfSBcXEJDKFxcbWF0aGNhbHtPfV97XFxtY3tYfX0oXFxpbmZ0eSkpIl0sWzEsMCwie1xccmhvXlxcbGZpZH1eKiBUX3taXlxcbGZpZH0iXSxbMSwyLCJcXG1me2t9XFxvdGltZXNfe1xcbWF0aGJie1F9X3B9IFxcZnJhY3tcXEZpbF57LTF9XFxtYXRoYmJ7Qn1fXFxkUn17XFxGaWxeMFxcbWF0aGJie0J9X1xcZFJ9Il0sWzAsMCwiVF97XFx0aWxkZXtafV5cXGxmaWR9Il0sWzAsMiwiXFxtYXRocm17aWR9IFxcb3RpbWVzIFxcb3ZlcmxpbmV7cn0iXSxbMSwyLCJ7XFxyaG9ee1xcbGZpZH19XipkXFxwaV97XFxMWn0iXSxbMywxLCJkXFxyaG9eXFxsZmlkIl0sWzMsMF0sWzMsMiwiIiwxLHsic3R5bGUiOnsibmFtZSI6ImNvcm5lciJ9fV1d
\[\begin{tikzcd}
	{T_{\tilde{Z}^\lfid}} & {{\rho^\lfid}^* T_{Z^\lfid}} \\
	\\
	{\mf{k} \otimes_{\mathbb{Q}_p} \BC(\mathcal{O}_{\mc{X}}(\infty))} & {\mf{k}\otimes_{\mathbb{Q}_p} \frac{\Fil^{-1}\mathbb{B}_\dR}{\Fil^0\mathbb{B}_\dR}}
	\arrow["{d\rho^\lfid}", from=1-1, to=1-2]
	\arrow[from=1-1, to=3-1]
	\arrow["\lrcorner"{anchor=center, pos=0.125}, draw=none, from=1-1, to=3-2]
	\arrow["{{\rho^{\lfid}}^*d\pi_{\LZ}}", from=1-2, to=3-2]
	\arrow["{\mathrm{id} \otimes \overline{r}}", from=3-1, to=3-2]
\end{tikzcd}\]
that identifies the inclusion $da_e$ of \cref{eq.intro-torsor-exact-seq} with the product map 
\[ \left(\mf{k} \otimes_{\mbb{Q}_p} \mathbb{Q}_p^\lfid = \mathrm{Ker}(\mr{id} \otimes \overline{r}) \hookrightarrow \mf{k} \otimes_{\mathbb{Q}_p} \BC(\mathcal{O}_{\mc{X}}(\infty)) \right) \times \left(\mf{k} \otimes_{\mbb{Q}_p} \mathbb{Q}_p^\lfid \xrightarrow{0} {\rho^\lfid}^* T_{Z^\lfid} \right).\]
\end{maintheorem}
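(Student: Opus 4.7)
The plan is to unwind the definition of $\tilde{Z}^\lfid$ as an open in the trivializing $G(\mathbb{Q}_p^\lfid)$-torsor of the underlying $G$-bundle of the twistor $\Tw(\mathbb{L})$ and compute the Weil restriction $T_{\tilde{Z}^\lfid}(\mc{X}) = \tilde{Z}^\lfid(\mc{X}[\epsilon])$ directly. A section of $\tilde{Z}^\lfid$ over $\mc{X}/X_{P^\flat}$ lying above $z \in Z^\lfid(\mc{X})$ amounts to a compatible pair of trivializations: one, $\tau_{\mr{aw}}$, of the underlying local system $\mathbb{L}_z \otimes_{\mathbb{Q}_p^\diamond} \mc{O}_{\mc{X} \setminus \infty}$ away from $\infty$ (which records the $K$-torsor structure of $\tilde{Z}$), and another, $\tau_\infty$, of the modified lattice $\mathbb{M}(\RH(\mathbb{L}_z))$ on the formal neighborhood $\mc{X}^\wedge_\infty$ of $\infty$, glued along the Laurent overlap. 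A tangent vector at such a section is by definition a coherent lift to $\mc{X}[\epsilon]$.

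Given a tangent vector $v \in {\rho^\lfid}^* T_{Z^\lfid}$ lifting $z$ to $z' : \mc{X}[\epsilon] \to Z^\lfid$, the lifts of the two pieces are analyzed separately. Since $\mathbb{L}$ is a pro-\'{e}tale $\mathbb{Q}_p$-local system, there is a canonical lift of $\tau_{\mr{aw}}$ along $\mc{X}[\epsilon] \setminus \infty \to \mc{X} \setminus \infty$, and arbitrary lifts differ from it by infinitesimal gauge transformations valued in $\mf{g}$-coefficients that live in the global functions on $\mc{X}[\epsilon]\setminus\infty$ with at most a simple pole at $\infty$; the open $K^\lfid$-torsor condition restricts this ambiguity to $\mf{k} \otimes \BC(\mc{O}_{\mc{X}}(\infty))$. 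On the infinitesimal side at $\infty$, by the very definition of $\pi_{\LZ}$ as measuring the position of $\mathbb{M}$ against bases of flat sections of $\mathbb{M} \otimes_{\mathbb{B}^+_\dR} \mathbb{B}_\dR$, the change of lattice $\mathbb{M}(\RH(\mathbb{L}_{z'}))$ over $\mathbb{M}(\RH(\mathbb{L}_z))$ is precisely ${\rho^\lfid}^* d\pi_{\LZ}(v) \in \mf{k} \otimes \Fil^{-1}\mathbb{B}_\dR/\Fil^0\mathbb{B}_\dR$, and lifts of $\tau_\infty$ over this new lattice exist and are pinned down once the modification is matched on the overlap.

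The compatibility on the Laurent overlap of the two lifts then translates exactly into the statement that the Laurent tail $(\mathrm{id} \otimes \overline{r})$ of the element of $\mf{k} \otimes \BC(\mc{O}_{\mc{X}}(\infty))$ encoding the lift of $\tau_{\mr{aw}}$ equals ${\rho^\lfid}^* d\pi_{\LZ}(v)$, which is exactly the stated Cartesian square. The identification of $da_e$ then follows by direct unwinding: purely vertical motion ($v=0$) corresponds to acting on $\tau_{\mr{aw}}$ by elements of $K^\lfid = K \times_{G(\mathbb{Q}_p^\diamond)} G(\mathbb{Q}_p^\lfid)$ reducing to the identity modulo $\epsilon$, whose infinitesimal version is $\mf{k} \otimes \mathbb{Q}_p^\lfid = \ker(\mathrm{id} \otimes \overline{r}) \hookrightarrow \mf{k} \otimes \BC(\mc{O}_{\mc{X}}(\infty))$, with image zero under $d\rho^\lfid$.

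The main obstacle is the infinitesimal computation in the second paragraph: rigorously matching the change of lattice $\mathbb{M}(\RH(\mathbb{L}_{z'}))$ modulo $\mathbb{M}(\RH(\mathbb{L}_{z}))$ with ${\rho^\lfid}^* d\pi_{\LZ}(v)$, and isolating the precise sense in which overlap compatibility cuts out the fiber product. This should reduce to the construction of $\mathbb{M}$ via the inscribed $\mathbb{B}^+_\dR$-jet sheaf $Z^\lfid_{(\infty)}$ and the Liu-Zhu formal lift, combined with the standard identification of the tangent space of $\Gr_G / G(\overline{\mathbb{B}_\dR})$ at the base point with $\mf{g} \otimes \Fil^{-1}\mathbb{B}_\dR/\Fil^0\mathbb{B}_\dR$, followed by restriction to $\mf{k}$ using the open $K^\lfid$-torsor condition.
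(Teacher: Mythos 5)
Your proposal is correct and follows essentially the same route as the paper: the paper's proof of \cref{theorem.torsor-tangent-bundle-computation} likewise starts from a point $\tilde{z}$, takes the constant extension of its trivialization of $\omega_{\Tw}$ to $\mc{X}[\epsilon]$, twists it away from $\infty$ by $1+a\epsilon$ with $a\in\mf{k}\otimes\BC(\mc{O}_{\mc{X}}(\infty))$, and observes that the fiber-product condition $\overline{r}(a)=\tilde{\kappa}(\gamma)$ is precisely the condition that this extends across $\infty$ to a trivialization of $\gamma^*\omega_{\Tw}$. The only packaging difference is that the paper constructs just the one map $V_{\overline{r},\tilde{\kappa}}\to T_{\tilde{Z}^\lfid}$ and concludes via compatibility with the two short exact sequences (the torsor sequence having been established beforehand in \cref{theorem.ztilde-torsor} using the slope condition and \cref{prop.b-basic-open-stratum}), whereas you argue the bijection on lifts directly; both work.

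One imprecision worth correcting: the restriction of the gauge ambiguity from $\mf{g}\otimes\mathbb{B}_e$ to sections with at most a simple pole at $\infty$ is not imposed by ``the open $K^\lfid$-torsor condition'' (openness of $K$ is invisible at the level of tangent spaces, and $\mf{k}=\mf{g}$ here). It comes from the overlap compatibility you invoke in your next paragraph: the lattice $\mathbb{M}$ moves only within $\Fil^{-1}\mathbb{B}_\dR\cdot\mathbb{M}/\mathbb{M}$ because $d\pi_{\LZ}$ is $\mathbb{B}^+_\dR$-linear and $T_{Z^\lfid}$ is killed by $\Fil^1\mathbb{B}^+_\dR$, so matching Laurent tails forces $r(a)\in\mf{k}\otimes\Fil^{-1}\mathbb{B}_\dR$. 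Since you do derive the fiber-product condition correctly from the overlap in the end, this is a misattributed justification rather than a gap.
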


In particular, \cref{maintheorem.tangent-bundle-computation} implies the exact sequence \cref{eq.intro-torsor-exact-seq} is non-split already after restriction to any geometric point where $d\pi_{\LZ}$ is non-zero.

\begin{remark}\label{remark.tb-is-bc-of-vb}If $d\pi_{\LZ}$ is injective (in particular, in the de Rham case, if the Kodaira-Spencer map is injective), then $T_{\tilde{Z}^\lfid}$ is the Banach--Colmez space of global sections $\BC(\mathcal{E})$ for $\mc{E}$ the vector bundle on the relative thickened Fargues--Fontaine curve obtained as the effective minuscule modification of $\mf{k} \otimes_{\mbb{Q}_p} \mathcal{O}_{\mathcal{X}}$ by the local direct summand $d\pi_{\LZ}: T_S \hookrightarrow \mf{k} \otimes_{\mbb{Q}_p} \frac{\Fil^{-1}\mathbb{B}_\dR}{\Fil^0 \mathbb{B}_\dR}$. \end{remark}

\subsubsection{}If $\tilde{Z}$ is de Rham, then there are $K^\lfid$-equivariant Hodge--Tate and lattice Hodge--Tate period maps 
\[ \pi_{\HT}: \tilde{Z}^\lfid \rightarrow \Fl_{G}^\lfid \textrm{ and } \pi_{\HT}^+: \tilde{Z}^\lfid \rightarrow \Gr_{G} \]
measuring the position of the (inscribed) Hodge--Tate filtration on $\mathbb{M} \otimes_{\mathbb{B}^+_\dR} \mathcal{O}$ and the lattice $\mathbb{M}_0$ that induces it with respect to a universal trivialization over $\tilde{Z}^\lfid$. The lattice Hodge--Tate period map refines the Hodge--Tate period map: if the Hodge period map $\pi_\Hdg$ factors through $\Fl_{[\mu^{-1}]}/G(\overline{\mc{O}})$ (this holds always for some $[\mu^{-1}]$ if $Z$ is geometrically connected), then $\pi_{\HT}$ factors through $\Fl_{[\mu]}$, $\pi_{\HT}^+$ factors through $\Gr_{[\mu^{-1}]}$, and $\pi_{\HT}=\BB \circ \pi_{\HT}^+$.  

\begin{maintheorem}\label{maintheorem.Hodge--Tate-derivative}(see \cref{theorem.ht-derivative-computation})
    For $\mf{k}^+_\dR=\mathbb{M}_0(\omega_{\nabla}^\Fil(\Lie G))$, 
    \[ d\pi_{\HT}^+: T_{\tilde{Z}^\lfid} \rightarrow \pi_{\HT}^* T_{\Gr_{G}} =  \frac{\mf{k}\otimes_{\mbb{Q}_p}\mathbb{B}_\dR}{\mf{k}^+_\dR}\]
    is the map given by the composition
    \[ T_{\tilde{Z}^\lfid} \rightarrow \mf{k} \otimes_{\mbb{Q}_p} \BC(\mathcal{O}_{\mc{X}}(\infty)) \xrightarrow{\mr{id} \otimes r} \mf{k} \otimes_{\mbb{Q}_p} \mathbb{B}_\dR \rightarrow \frac{\mf{k} \otimes_{\mbb{Q}_p} \mathbb{B}_\dR}{\mf{k}^+_\dR}, \]
    where the first map is the projection coming from the identification of \cref{maintheorem.tangent-bundle-computation}. 
\end{maintheorem}
In the statement we use the canonical identification $\mf{k} \otimes_{\mbb{Q}_p} \mathbb{B}_\dR=\mf{k}^+_\dR \otimes_{\mathbb{B}^+_\dR} \mathbb{B}_\dR$ over  $\tilde{Z}^\lfid$.

\begin{remark}
When $\pi_\HT^+$ factors through $\Gr_{[\mu^{-1}]}$, $d\pi_\HT^+$ factors through 
\[ {\pi_{\HT}^+}^* T_{\Gr_{[\mu^{-1}]}}= \frac{\mf{k} \otimes_{\mbb{Q}_p}\mathbb{B}^+_\dR}{\mf{k}^+_\dR \cap \mf{k} \otimes_{\mbb{Q}_p}\mathbb{B}^+_\dR}. \]
It follows that in this case $d\pi_\HT$ is given by composition with the natural map to
\[ \pi_{\HT}^*T_{\Fl_{[\mu]}^\lfid}=\mf{k} \otimes_{\mbb{Q}_p} \mathcal{O} / \Fil^0_\HT (\mf{k} \otimes_{\mbb{Q}_p} \mathcal{O}) \]
where the filtration used on the right is the (inscribed) Hodge--Tate filtration. 
\end{remark}

\begin{remark} Note that, unlike in the Hodge case, there is no $\Fil^1\mathbb{B}^+_\dR$-torsion constraint on $T_{\tilde{Z}^\lfid}$ imposing Griffiths transversality on the (inscribed) Hodge--Tate filtration. Indeed, since $\mathbb{M}$ is not flat, the best one can do to obtain period maps from $Z^\lfid$ itself is to consider
\[ \pi_\HT^+/K^\lfid: Z^\lfid \rightarrow \Gr_{[\mu^{-1}]}/K^\lfid \textrm{ and }  \pi_{\HT}/K^\lfid: Z^\lfid \rightarrow \Fl_{[\mu]}/K^\lfid,\]
but now the quotient is killing a lot of differential information so that the factorization of $\pi_\HT/K^\lfid$ through $\BB/K^\lfid$ does not have the same strong implications.\end{remark}

\begin{example}\label{example.shimura-varieties}
    Let $\gx$ be a Shimura datum and assume, for simplicity, that $\mathsf{G}$ contains no $\mathbb{Q}$-anisotropic $\mathbb{R}$-split central torus. Let $K^p \leq G(\mathbb{A}_f^{(p)})$ be a compact open subgroup. We write $\Sh^\diamond_{K^p}$ for the diamond infinite level Shimura variety over the completion $L$ of the reflex field at a prime above $p$.  For any compact open subgroup $K_p \leq G(\mathbb{Q}_p)$ such that $K_p K^p$ is neat, $\Sh^\diamond_{K^p}$ is a de Rham $K_p$-torsor over the rigid analytic finite level Shimura variety $\Sh_{K_pK^p}/L$ . We write $\pi_{\Hdg, K_p}^{(+)}$ for the finite level (lattice) Hodge period map. 
    
    From the above construction, we obtain an inscribed version $\Sh_{K^p}^\lfid$ equipped with a (lattice) Hodge--Tate period map $\pi_{\HT}^{(+)}$ and a  (lattice) Hodge period map $\pi_{\Hdg}^{(+)}$ (by composition of $\pi_{\Hdg, K_p}^{(+)}$ with $\Sh_{K^p}^\lfid \rightarrow \Sh_{K_pK^p}^\lfid$) that is functorial in the prime-to-$p$ level structure and independent of the choice of $K_p$. Because the Hodge cocharacter $[\mu]$ is minuscule, the Bialynicki-Birula maps are isomorphisms so that $\pi_\Hdg=\pi_\Hdg^+$ and $\pi_\HT=\pi_{\HT}^+$. These maps fit in the commutative diagram
% https://q.uiver.app/#q=WzAsNCxbMSwwLCJcXFNoX3tLXnB9XlxcbGZpZCJdLFswLDEsIlxcU2hfe0tfcEtecH1eXFxsZmlkIl0sWzIsMiwiXFxGbF97W1xcbXVdfV5cXGxmaWQiXSxbMCwyLCJcXEZsX3tbXFxtdV57LTF9XX1eXFxsZmlkL0coXFxvdmVybGluZXtcXG1hdGhjYWx7T319KSJdLFswLDIsIlxccGlfe1xcSFR9IiwxLHsiY3VydmUiOjF9XSxbMSwzLCJcXHBpX3tcXEhkZywgS19wfSIsMl0sWzAsMSwiXFxyaG9fe0tfcH1eXFxsZmlkIiwxXSxbMCwzLCJcXHBpX3tcXEhkZ30iLDEseyJjdXJ2ZSI6LTF9XV0=
\[\begin{tikzcd}
	& {\Sh_{K^p}^\lfid} \\
	{\Sh_{K_pK^p}^\lfid} \\
	{\Fl_{[\mu^{-1}]}^\lfid/G(\overline{\mathcal{O}})} && {\Fl_{[\mu]}^\lfid}
	\arrow["{\rho_{K_p}^\lfid}"{description}, from=1-2, to=2-1]
	\arrow["{\pi_{\Hdg}}"{description}, curve={height=-6pt}, from=1-2, to=3-1]
	\arrow["{\pi_{\HT}}"{description}, curve={height=6pt}, from=1-2, to=3-3]
	\arrow["{\pi_{\Hdg, K_p}}"', from=2-1, to=3-1]
\end{tikzcd}\]

    By a result of \cite{DiaoLanEtAl.LogarithmicRiemannHilbertCorrespondencesForRigidVarieties}, the associated filtered vector bundle with connection $\omega_{\nabla}^\Fil$ on $\Sh_{K_pK^p}$ is the classical automorphic de Rham bundle; in particular, the Kodaira-Spencer map $\kappa$ is an isomorphism. Thus \cref{remark.tb-is-bc-of-vb} applies and $T_{\Sh_{K^p}^\lfid}=\BC(\mathcal{E})$ for $\mc{E}$ a vector bundle on the relative thickened Fargues--Fontaine curve. Because the Kodaira-Spencer map $\kappa$ for $\omega_{\nabla}^\Fil$ is an isomorphism and the Hodge cocharacter is minuscule, we can describe $\mathcal{E}$ very explicitly: it is the modification of the vector bundle $\mf{k} \otimes_{\mbb{Q}_p} \mathcal{O}_{\mc{X}}$ by the lattice
    \[ \mf{k}^+_\mr{max}=\mf{k}^+_\dR + \mf{k} \otimes_{\mbb{Q}_p} \mathbb{B}^+_\dR \subseteq \mf{k} \otimes_{\mbb{Q}_p} \mathbb{B}_\dR = \mf{k}^+_{\dR} \otimes_{\mbb{B}^+_\dR} \mbb{B}_\dR. \]
    The associated diagram of derivatives can be written as
% https://q.uiver.app/#q=WzAsNyxbMiwwLCJcXEJDKFxcbWN7RX0pIl0sWzAsMiwiKFRfe1xcU2hfe0tfcEtecH19KV5cXGxmaWQiXSxbNCwzLCJcXGdyXnstMX1fe1xcSFR9IChcXG1me2t9XFxvdGltZXMgXFxtYXRoY2Fse099KSJdLFswLDMsIlxcZ3Jeey0xfV97XFxIZGd9XFxvbWVnYV97XFxuYWJsYX1eXFxGaWwoXFxMaWUgRykiXSxbMiwxLCJcXG1me2t9Xitfe1xcbXJ7bWF4fX0iXSxbMywyLCJcXGZyYWN7XFxtZntrfV4rX3tcXG1ye21heH19fXtcXG1me2t9XitfXFxkUn0iXSxbMSwyLCJcXGZyYWN7XFxtZntrfV4rX3tcXG1ye21heH19fXtcXG1me2t9XFxvdGltZXNfe1xcbWJie1F9X3B9XFxtYmJ7Qn1eK19cXGRSfSJdLFswLDIsImRcXHBpX3tcXEhUfSIsMSx7ImN1cnZlIjotMn1dLFsxLDMsIlxca2FwcGEiLDFdLFswLDEsImRcXHJob197S19wfV5cXGxmaWQiLDEseyJjdXJ2ZSI6NH1dLFswLDRdLFs0LDVdLFs1LDIsIj0iLDFdLFswLDMsImRcXHBpX3tcXEhkZ30iLDEseyJjdXJ2ZSI6Mn1dLFs0LDZdLFs2LDMsIj0iLDFdXQ==
\[\begin{tikzcd}
	&& {\BC(\mc{E})} \\
	&& {\mf{k}^+_{\mr{max}}} \\
	{(T_{\Sh_{K_pK^p}})^\lfid} & {\frac{\mf{k}^+_{\mr{max}}}{\mf{k}\otimes_{\mbb{Q}_p}\mbb{B}^+_\dR}} && {\frac{\mf{k}^+_{\mr{max}}}{\mf{k}^+_\dR}} \\
	{\gr^{-1}_{\Hdg}\omega_{\nabla}^\Fil(\Lie G)} &&&& {\gr^{-1}_{\HT} (\mf{k}\otimes \mathcal{O})}
	\arrow[from=1-3, to=2-3]
	\arrow["{d\rho_{K_p}^\lfid}"{description}, curve={height=24pt}, from=1-3, to=3-1]
	\arrow["{d\pi_{\Hdg}}"{description}, curve={height=12pt}, from=1-3, to=4-1]
	\arrow["{d\pi_{\HT}}"{description}, curve={height=-12pt}, from=1-3, to=4-5]
	\arrow[from=2-3, to=3-2]
	\arrow[from=2-3, to=3-4]
	\arrow["\kappa"{description}, from=3-1, to=4-1]
	\arrow["{=}"{description}, from=3-2, to=4-1]
	\arrow["{=}"{description}, from=3-4, to=4-5]
\end{tikzcd}\]
where the top vertical arrow takes a global section to its formal expansion at $\infty$. The left side of the diagram says that, from a differential perspective, $\pi_{\Hdg}$ behaves like a $G(\mathbb{Q}_p^\lfid)$-torsor.
\end{example}

\begin{remark}\label{remark.loc-an-functions}
    In the setting of \cref{example.shimura-varieties}, let $\mc{E} \boxtimes \mathcal{O}$ be the $\mathcal{O}$-module of global sections of $\infty^*\mc{E}$. It can be shown that the restriction of $\mc{E} \boxtimes \mc{O}$ to $|\Sh_{K^p}|$ acts by derivations on the sheaf $\mathcal{O}^\mr{la}$ of locally analytic functions on $|\Sh_{K^p}|$ in the sense of \cite{Pan.OnLocallyAnalyticVectorsOfTheCompletedCohomologyOfModularCurves, Pan.OnLocallyAnalyticVectorsOfTheCompletedCohomologyOfModularCurvesII, RodriguezCamargo.GeometricSenTheoryOverRigidAnalyticSpaces, RodriguezCamargo.LocallyAnalyticCompletedCohomology}; in fact, this is essentially equivalent to the key annihilation property of the canonical Higgs field / geometric Sen morphism. This gives, in particular, another interpretation of the differential operators in \cite{Pan.OnLocallyAnalyticVectorsOfTheCompletedCohomologyOfModularCurvesII}. This construction applies more generally to pro-\'{e}tale $p$-adic manifold fibrations over smooth rigid analytic varieties, and will be treated in this generality in \cite{Howe.PAdicManifoldFibrations}. 
\end{remark}

\subsubsection{}
We obtain a similar computation to \cref{example.shimura-varieties} for local Shimura varieties --- in this case, there is a distinguished global basis of flat sections over the associated rigid analytic finite level space, so that here $\pi_\Hdg$ can be treated as a map to $\Fl_{[\mu^{-1}]}^\lfid$ and it is indeed a $G(\mathbb{Q}_p^\lfid)$-torsor over its image (the open admissible locus). In \cref{ss.main-results-mcls} we give another construction of the  inscribed structure on infinite level local Shimura varieties by making the natural extension of their interpretation as moduli of modifications on the Fargues--Fontaine curve. We obtain the same derivative computations from this moduli perspective.

Similarly, assuming the existence of an Igusa stack in the sense of \cite{Kim.UniquenessAndFunctorialityOfIgusaStacks} (which holds in the Hodge-type case), in \cref{s.Inscribed-global-Shimura} we  give an alternative moduli construction of the same inscribed structure on infinite level global Shimura varieties --- this is an adaptation to our setting of a construction in analytic prismatization that has been announced by Scholze. 

\subsubsection{}One particularly interesting aspect of the moduli construction in the infinite level local Shimura case is that it generalizes to all moduli of mixed characteristic local shtuka with one leg, i.e. to non-minuscule cocharacters (and in fact in \cref{s.moduli-of-mod} we treat even more general moduli of modifications than these). The computation of the derivatives of group actions and period maps in this general context is one of the most satisfying parts of the entire paper, as within it we rediscover all of the fundamental exact sequences of $p$-adic Hodge theory (see, in particular, \cref{cor.unbounded-mod-tangent}, \cref{corollary.bounded-mod-tangent} and \cref{ss.main-results-mcls}). 

These moduli spaces are $G(\mathbb{Q}_p^\lfid)$-covers of the admissible loci $\Gr_{[\mu]}^{b-\adm}$ in Schubert cells of the inscribed $\mathbb{B}^+_\dR$-affine Grassmannian, and this is compatible with the twistor story: the $b$-admissible locus carries a universal meromorphic integrable connection with $G$-structure on the relative thickened Fargues--Fontaine curve, and the associated trivializing torsor on the underlying $G$-bundle is the infinite level space. Its restriction along any map from a rigid analytic variety is a twistor bundle, and this recovers our construction for de Rham torsors over rigid analytic varieties in the (very special) flat crystalline case --- see \cref{ss.mom-revisited}. 

In general, we view the compatibility between our construction of inscribed structures on torsors $\tilde{Z}$ via \cref{maintheorem.twistor-correspondence} with the inscribed structures in these special cases coming from natural extensions of moduli problems in $p$-adic Hodge theory as strong evidence that our inscribed structure, rather than the trivial inscribed structure, is the ``correct" one.

\subsection{Differential topology for diamonds}

Having constructed a differentiable theory and computed the derivatives of period maps, it is natural to ask whether there is anything useful we can do with this information. One promising direction is the connection with differential operators on various types of locally analytic functions noted in \cref{remark.loc-an-functions}, which will be developed more broadly for $p$-adic manifold fibrations over smooth rigid analytic varieties in \cite{Howe.PAdicManifoldFibrations}. Below we discuss other applications in the vein of ``differential topology for diamonds," i.e. using the differentiable structure to deduce properties of the underlying $v$-sheaf.

\subsubsection{A perfectoidness criterion.} For $\tilde{Z}$ as in \cref{ss.intro-main-results} and $C=\overline{L}^\wedge$, it is natural to conjecture that $\tilde{Z}_C$ is perfectoid if, for every geometric point $\tilde{z}$, the Tangent Space $T_{\tilde{z}} \tilde{Z}=\overline{T_{\tilde{z}} \tilde{Z}^\lfid}$ is perfectoid. It is typically simple to check whether these Tangent Spaces are perfectoid because there is a simple classification of perfectoid Banach--Colmez spaces (they are precisely the global section spaces of vector bundles on the Fargues--Fontaine curve with Harder-Narasimhan slopes between 0 and 1). In particular, essentially by \cref{remark.tb-is-bc-of-vb}, this conjecture for $\tilde{Z}$ is equivalent to a conjecture of Rodriguez Camargo in terms of the geometric Sen morphism \cite[Conjecture 3.3.5]{RodriguezCamargo.GeometricSenTheoryOverRigidAnalyticSpaces}, towards which there has been substantial recent progress \cite{He.PerfectoidnessViaSenTheory, BellovinCaiHowe.CharacterizingPerfectoidCoversOfAbelianVarieties}. In \cite{Howe.PAdicManifoldFibrations}, we will extend this Tangent Space perfectoidness conjecture to all pro-\'{e}tale $p$-adic manifold fibrations over smooth rigid analytic varieties, and suggest a differential characterization of the defect to perfectoidness in that setting. 

\subsubsection{Cohomological smoothness}
We say a map of inscribed $v$-sheaves is a \emph{connected submersion} if its derivative is surjective (i.e. it is a differential submersion) and the kernel of the derivative at each geometric point (i.e. the Vertical Tangent Space) is a connected Banach--Colmez space. Morally, we expect any sufficiently natural\footnote{The theory of inscribed $v$-sheaves is somewhat flabby as it includes the trivial inscription on any $v$-sheaf, and any map between two trivially inscribed $v$-sheaves is a connected submersion. Thus some assumption is needed!} connected submersion to be cohomologically smooth in the sense of \cite{FarguesScholze.GeometrizationOfTheLocalLanglandsCorrespondence}.  When applied to morphisms of inscribed $v$-sheaves constructed as moduli of sections, this expectation specializes to the Fargues--Scholze Jacobian criterion for cohomological smoothness, one of the key technical ingredients in \cite{FarguesScholze.GeometrizationOfTheLocalLanglandsCorrespondence}. For the morphisms of inscribed $v$-sheaves coming from morphisms of rigid analytic varieties, it specializes to a classical statement about smooth morphisms of rigid analytic varieties. 

One particularly interesting case is the cohomological smoothness of the structure map for infinite level local Shimura varieties and their non-minuscule analogs, the moduli of mixed characteristic local shtukas with one leg. In the basic Rapoport--Zink EL case, Ivanov--Weinstein \cite{IvanovWeinstein.TheSmoothLocusInInfiniteLevelRapoportZinkSpaces} constructed (components of) these spaces as moduli of sections over the Fargues--Fontaine curve and then used the Fargues--Scholze Jacobian Criterion to show that the open complement of the special locus that parameterizes those $p$-divisible groups with extra endomorphisms is cohomologically smooth; this provided, in particular, a qualitative explanation for some earlier cohomological finiteness results coming from explicit constructions by Weinstein \cite{Weinstein.SemistableModelsForModularCurvesOfArbitraryLevel} of formal models for infinite level Lubin-Tate space. In \cite{Howe.CohomologicalSmoothnessConjecture}, we use the computations of tangent bundles in the present work to compute the locus where the structure map is a connected submersion for all moduli of mixed characteristic local shtukas with one leg (or rather for a natural variant as in \cite{IvanovWeinstein.TheSmoothLocusInInfiniteLevelRapoportZinkSpaces} given by fixing the determinant to kill the central disconnected tangent directions), then prove that this locus is cohomologically smooth in the Rapoport--Zink EL case (removing the basic hypothesis of \cite{IvanovWeinstein.TheSmoothLocusInInfiniteLevelRapoportZinkSpaces}). 

\subsubsection{Perversity}\label{sss.perversity} The fibers of the Hodge--Tate period map have induced inscribed structures from the inscribed Shimura variety structure of \cref{example.shimura-varieties}, and we also construct (in \cref{ss.inscribed-generalized-Newton-strata}) a natural inscribed structure on each Newton stratum in the Hodge--Tate period domain $\Fl_{[\mu]}^\diamond$. All of the Tangent Spaces that arise at geometric points for these objects are Banach--Colmez spaces, thus each admits a dimension that is a pair $(r,d)$ where $r \in \mathbb{Z}$ and $d \in \mathbb{Z}_{\geq 0}$ --- for the Banach--Colmez spaces of global sections associated to a vector bundle with non-negative Harder-Narasimhan slopes on the relative Fargues--Fontaine curve, $r$ is the rank and $d$ is the degree. It is thus natural to speculate on a relation between the dimensions that appear and perversity/smallness phenomena for $\pi_\HT$. In particular, for a geometric point $y$ of $\Sh_{K^p}^\lfid$ such that $\pi_{\HT}(y)$ lies in the Newton stratum $\Fl_{[\mu]}^{[b]}$, our computations show there is an equality (for $\dim_{\BC}$ the Banach--Colmez dimension)
\[ \dim_{\BC} T_y \Sh_{K_p}^\lfid = \dim_{\BC} T_y \pi_{\HT}^{-1}(y) + \dim_{\BC} T_{\pi_{\HT}(y)} \Fl_{[\mu]}^{[b]}.\]
Does this kind of relation between the differential dimension of the fibers and the strata have a connection with the form of $\ell$-adic perversity exploited in \cite{CaraianiScholze.OnTheGenericPartOfTheCohomologyOfCompactUnitaryShimuraVarieties} and subsequent work?

\subsubsection{Transversality}
Our construction of Tangent Bundles also allows us to formulate a natural definition of transverse intersection in some previously inaccessible settings: for example, using period maps, a perfectoid infinite level local Shimura variety can often be embedded inside of a rigid analytic variety. Using the inscribed structure, it then makes sense to consider transverse intersections between this fractal-like perfectoid space and smooth rigid analytic subvarieties of the ambient rigid analytic variety, where the definition of transversality is essentially the usual one from differential topology: the intersection is transverse if, at every geometric point of the intersection, the Tangent Spaces of the subobjects together span (over $\mathbb{Q}_p^\diamond$) the Tangent Space of the ambient variety. In joint work with Klevdal \cite{HoweKlevdal.AdmissiblePairsAndpAdicHodgeStructuresIIIVariationAndUnlikelyIntersection}, we use this notion of transversality to formulate an Ax-Schanuel conjecture for local Shimura varieties (that is closely related to our Ax-Lindemann theorem in \cite{HoweKlevdal.AdmissiblePairsAndpAdicHodgeStructuresIITheBiAnalyticAxLindemannTheorem}). 

\subsection{Outline}
In \cref{s.adic-spaces-and-schemes} we treat some preliminaries on nilpotent thickenings of adic spaces and schemes, and in \cref{s.inscription} we develop a theory of inscription in a general axiomatic setting --- we recommend skipping these sections at first and coming back to them only as necessary. 

In \cref{s.inscribed-contexts} we then specialize to the specific inscribed contexts of interest to us in $p$-adic geometry --- in particular, although in the introduction we have used the category of locally free thickenings of Fargues--Fontaine curves that satisfy a slope condition, it is useful to develop various parts of the theory using thickenings of smaller loci within the Fargues--Fontaine curve (for example, to obtain the $\mathbb{B}^+_\dR$-linearity in \cref{remark.griffiths-transversality}). Moreover, since we do not really believe that the ``right" category of test objects is a settled affair, this generality allows us to avoid imposing the slope condition except when it is actually required. We recommend skimming this to fix notation in your head, then coming back as necessary.

The less formal and more geometric parts of the paper finally kick off in \cref{s.inscribed-vb-g-bun}, where we study the inscribed moduli stacks of vector bundles and $G$-bundles on thickenings of relative Fargues--Fontaine curves (and other loci on relative Fargues--Fontaine curves); the key result is \cref{prop.b-basic-open-stratum} on the deformation-openness of basic strata, which is the ultimate reason for the slope condition on thickenings everywhere that it is imposed.

In \cref{s.affine-grassmannian} we construct the inscribed $\mathbb{B}^+_\dR$-affine Grassmannian, its Schubert cells, and the Bialynicki-Birula maps from these Schubert cells to inscribed flag varieties. 

With both the domains and codomains now constructed, in \cref{s.hodge-etc-period-maps} we are able to construct the Hodge, lattice Hodge, and Liu-Zhu period maps and describe the relation between them. The non-trivial extension of Scholze's functor $\mathbb{M}$ in the inscribed setting carried out in this section is perhaps the key technical point of the paper. 

In \cref{s.modifications} we discuss modifications of bundles on relative thickened Farguess-Fontaine curves and related inscribed objects; we also define generalized inscribed Newton strata. An important technical ingredient in some of our later computations of tangent bundles is the torsor structure on the inscribed Hecke correspondence in \cref{ss.inscribed-hecke}. 

At this point it would be possible to continue immediately to the last section, \cref{s.twistors}, where twistors are defined and we prove \cref{maintheorem.twistor-correspondence}, \cref{maintheorem.tangent-bundle-computation}, and \cref{maintheorem.Hodge--Tate-derivative}. Instead, in \cref{s.moduli-of-mod} we first construct inscribed moduli of modifications (this includes infinite level local Shimura varieties and their non-miniscule generalizations, but we actually work in a more general setting) and compute their tangent bundles, derivatives of their period maps, etc. directly from this definition. We also give a very general formulation of the two towers principle in this inscribed context that may be of independent interest (\cref{ss.two-towers}). Similarly, in \cref{s.Inscribed-global-Shimura} we construct inscribed infinite level Shimura varieties via the natural extension of their Igusa stacks moduli interpretation, and compute their tangent bundles, derivatives of period maps, etc. directly from this definition. All of these moduli-theoretic inscribed computations seem to us clearer and more direct than those of \cref{s.twistors}, so, from an expository perspective, it is useful to treat them first. In the local and global Shimura cases, respectively, they are compared with the constructions from the twistor perspective in \cref{ss.mom-revisited} and \cref{ss.global-sv-revisited}. 

\subsection{Acknowledgements}
We thank Christian Klevdal, Gilbert Moss, and Peter Wear for participating in a reading group on \cite{IvanovWeinstein.TheSmoothLocusInInfiniteLevelRapoportZinkSpaces} that ultimately led to this work. We thank Christian Klevdal for influential conversations related to our joint work \cite{HoweKlevdal.AdmissiblePairsAndpAdicHodgeStructuresITranscendenceOfTheDeRhamLattice, HoweKlevdal.AdmissiblePairsAndpAdicHodgeStructuresIITheBiAnalyticAxLindemannTheorem}. We thank Peter Wear for early collaboration on the closely related \cite{Howe.PAdicManifoldFibrations}. We thank Peter Scholze for the suggestion that, instead of just ``adding an $\epsilon$" to the moduli problems, one might profitably consider larger categories of thickenings, which led to the present formulation of our results. We thank Laurent Fargues for a helpful conversation about the two towers isomorphism and, we thank Laurent Fargues and Jared Weinstein for encouraging our study of moduli of mixed characteristic local shtuka with one leg. We thank Daniel Gulotta for suggesting the consideration of quotients of infinite level Shimura varieties and other helpful discussion. We thank Johannes Ansch\"{u}tz, Arthur-C\'{e}sar le Bras, Juan Esteban Rodr\'{i}guez Camargo, and Peter Scholze for helpful conversations and for sharing ideas and perspectives from their theory of analytic prismatization. 

During the preparation of this work, the author was supported by the National Science Foundation through grants DMS-2201112 and DMS-2501816. The author also received support as a visitor at the 2023 Hausdorff Trimester on The Arithmetic of the Langlands Program at the Hausdorff Research Institute for Mathematics from the Deutsche Forschungsgemeinschaft (DFG, German Research Foundation) under Germany’s Excellence Strategy– EXC-2047/1–390685813, as a member at the Institute for Advanced Study during the academic year 2023-24 from the Friends of the Institute for Advanced Study Membership, and during the academic year 2024-2025 from the University of Utah Faculty Fellow program. We thank these institutes and their staff for the extraordinary working conditions they provided, without which this work would not exist in its present form.

\section{Adic spaces and schemes}\label{s.adic-spaces-and-schemes}

In this section we develop some complements on adic spaces and schemes. In particular, we study properties of finite locally free nilpotent thickenings and smooth morphisms, and the relation between them. Everything we discuss is completely classical in the context of schemes, but a bit more delicate in the category of adic spaces. 

To avoid issues of sheafiness, we will work exclusively with the category of strongly sheafy adic spaces as introduced in \cite{HansenKedlaya.SheafinessCriteriaForHuberRings} (see \cref{def.ss-adic-space}). This category includes the sousperfectoid spaces used in \cite[IV.4]{FarguesScholze.GeometrizationOfTheLocalLanglandsCorrespondence}, and thus Fargues--Fontaine curves, but also includes, e.g., smooth rigid analytic varieties over $\Spa(L)$ for any non-archimedean field $L$ (note that, for $L$ very large, $\Spa L$ may not itself be sousperfectoid; see \cite[Remarks 7.7 and 7.8]{HansenKedlaya.SheafinessCriteriaForHuberRings}). Another option that would accommodate both of these is the category of weakly sousperfectoid spaces of \cite{HansenKedlaya.SheafinessCriteriaForHuberRings}, but we will also want to allow locally free nilpotent thickenings (as defined in \cref{ss.thickenings} below) as well as more exotic non-reduced spaces such as the canonical infinitesimal thickenings of perfectoid spaces over $\mathbb{Q}_p$. All of these non-reduced spaces are strongly sheafy but not weakly sousperfectoid. 

Crucially, there is a reasonable notion of smooth morphisms between strongly sheafy adic spaces such that some of the results on smooth morphisms of sous-perfectoid adic spaces obtained in \cite[IV.4]{FarguesScholze.GeometrizationOfTheLocalLanglandsCorrespondence} still hold with little modification to the proofs. In particular, enough structural properties hold for smooth morphisms in this context for us to describe their local structure around a section, define the relative tangent bundles of a smooth morphism, and relate relative tangent bundles to restrictions of scalars along simple square-zero thickenings.

\subsection{Conventions}
In this paper, all Huber rings are complete Tate rings. We write $\AdicSpaces$ for the category of analytic adic spaces, i.e. the category of adic spaces locally of the form $\Spa(A,A^+)$ for $(A,A^+)$ a sheafy Huber pair such that $A$ is a complete Tate ring.  

\subsection{Strongly sheafy adic spaces}

\begin{definition}[cf. Definition 4.1 of \cite{HansenKedlaya.SheafinessCriteriaForHuberRings}]\label{def.ss-adic-space}
    A Huber pair $(A,A^+)$ is \emph{strongly sheafy} if, for every $n\geq 0$, the Tate algebra
\[ (A\langle t_1, \ldots, t_n\rangle, A^+ \langle t_1, \ldots, t_n \rangle) \]
is sheafy. We say an adic space $X$ is \emph{strongly sheafy} if it can be covered by open affinoids $\Spa(A,A^+)$, for $(A,A^+)$ a strongly sheafy Huber pair. We write $\goodAS \subseteq \AdicSpaces$ for the full subcategory of strongly sheafy adic spaces. 
\end{definition}

\begin{example}\hfill
\begin{enumerate}
\item Any sousperfectoid adic space is strongly sheafy. In particular, this applies to any $P\in \Perf$ (the category of perfectoid spaces in characteristic $p$), the Fargues--Fontaine curve $X_{E,P}$ and its cover $Y_{E,P}$, and any untilt $P^\sharp$ (see \cref{ss.Fargues--Fontaine-curves}).
\item Any strongly Noetherian adic space is strongly sheafy. In particular, this applies to rigid analytic varieties over non-archimedean fields.
\item For $P^\sharp$ a perfectoid space over $\mbb{Q}_p$, its canonical thickenings $P^\sharp_{(i)}$ are strongly sheafy (see \cref{ss.perf-untilt-canonical-thickenings}). 
\end{enumerate}
\item By \cref{cor.lf-nilp-strongly-sheafy} below, any locally free nilpotent thickening of a strongly sheafy adic space is strongly sheafy. 
\end{example}

\subsection{Vector bundles}\label{ss.vector-bundles-adic-spaces-schemes}
We write $\VB$ for the fibered category over ringed spaces whose objects are pairs $(T, \mc{F})$ where $T$ is a ringed space and $\mc{F}$ is a locally free of finite rank sheaf of $\mc{O}_T$-modules. 

Given an affine scheme $\Spec A$, global sections give an equivalence between $\VB(\Spec A)$ and the category of finite projective $A$-modules (see, e.g, \cite[\href{https://stacks.math.columbia.edu/tag/00NX}{Tag 00NX}]{stacks-project}). Similarly, given an affinoid adic space $\Spa(A,A^+)$, global sections gives an equivalence between $\VB(\Spa(A,A^+))$ and the category of finite projective $A$-modules by \cite[Theorem 8.2.22]{KedlayaLiu.RelativepAdicHodgeTheoryFoundations}. 

The restriction of $\VB$ to the category of schemes is an \'{e}tale stack. Similarly, because strongly sheafy adic spaces are stably adic in the sense of \cite[Definition 8.2.19]{KedlayaLiu.RelativepAdicHodgeTheoryFoundations}, \cite[Theorem 8.2.22]{KedlayaLiu.RelativepAdicHodgeTheoryFoundations} implies that the restriction of $\VB$ to $\goodAS$ is an \'{e}tale stack.

\subsection{Thickenings of adic spaces and schemes}\label{ss.thickenings}
\begin{definition}\label{def.thickenings}\hfill
\begin{enumerate}
\item A closed immersion $T \rightarrow T'$ of adic spaces is a \emph{nilpotent thickening} if the ideal sheaf $\mc{I}_T$ of $T$ in $\mc{O}_{T'}$ is locally nilpotent, i.e. after restriction to any quasi-compact open there is an $n$ such that $\mc{I}_T^n=0$. It is \emph{square-zero} if $\mc{I}_T^2=0$. 
\item A \emph{nilpotent thickening} of $T/T$ is a morphism $T/T \rightarrow T'/T$ of adic spaces over $T$ such that $T\rightarrow T'$ is a nilpotent thickening. Given such a nilpotent thickening, we obtain a splitting $\mc{O}_{T'}=\mc{O}_T \oplus \mc{I}_{T}$ in the category of sheaves of complete topological $\mc{O}_T$-modules. We say $T/T \rightarrow T'/T$ is \emph{locally free} if
\begin{enumerate}
\item $\mc{O}_{T'}$, or equivalently $\mc{I}_T$, is locally free of finite rank over $\mc{O}_T$, and
\item For any open affinoid adic $\Spa(A,A^+) \subseteq T$, $\mc{O}_{T'}(\Spa(A,A^+))$, or equivalently $\mc{I}_T(\Spa(A,A^+)$, is equipped with its canonical topology as a finite projective $A$-module. 
\end{enumerate}
\item For $T$ an adic space, we say an $\mc{O}_T$-algebra $\mc{A}$ is \emph{augmented} if it is equipped with a (necessarily surjective) map of $\mc{O}_T$-algebras $\mc{A} \rightarrow \mc{O}_T$, and \emph{nilpotent augmented} if the kernel $\mc{I}$ of the augmentation is locally nilpotent. We say an augmented $\mc{O}_T$-algebra $\mc{A}$ is \emph{locally free} if $\mc{I}$ or equivalently $\mc{A}$ is locally free of finite rank over $\mc{O}_T$.
\end{enumerate}
\end{definition}

\begin{remark}
Any nilpotent thickening $T \rightarrow T'$ induces a homeomorphism $|T|=|T'|$ and for any point $t \in |T|$ with associated valuation $v_t$ on $\mc{O}_{T,t}$, the induced valuation on $\mc{O}_{T', t}$ is given by composition of $v_t$ with $\mc{O}_{T',t} \twoheadrightarrow \mc{O}_{T,t}$. 
\end{remark}

For any locally free nilpotent thickening $T/T\rightarrow T'/T$, it follows from the definitions that $\mc{O}_{T'}$ is a locally free nilpotent augmented $\mc{O}_T$-algebra. Conversely, given a locally free nilpotent augmented $\mc{O}_T$-algebra $\mc{A}$, we may construct a locally free nilpotent thickening $T/T \rightarrow \Spa_T \mc{A}$ as follows: we write $\Spa_T \mc{A}$ for the locally v-ringed space $(T, \mc{A})$, where for any open affinoid $\Spa(A,A^+) \subseteq T$, $\mc{A}(\Spa(A,A^+))$ is equipped with its canonical topology as a finite projective $A$-module and, for each $t \in T$, the valuation on $\mc{A}_t$ is pulled back from the valuation $t$ on $\mc{O}_{T,t}$ along the surjection $\mc{A}_t \rightarrow \mc{O}_{T,t}$. When $T=\Spa(A,A^+)$ is affine, then one easily checks that $\Spa_T \mc{A}=\Spa(\mc{A}(T), A^+ \oplus \mc{I}(T))$, where $\mc{A}(T)$ is equipped with its canonical topology as a finite projective $A$-module. Equipped with the map $T \rightarrow \Spa_T \mc{A}$ coming from the augmentation and the structure map $\Spa_T \mc{A} \rightarrow T$ coming from the $\mc{O}_T$-algebra structure, $\Spa_T \mc{A}/T$ is a thickening of $T/T$. 

Note that we can make analogous definitions with schemes, where everything is simpler as we do not need to keep track of the topology. The following is then immediate from the above discussion. 
\begin{proposition}\label{prop.augmented-equiv}\hfill
\begin{itemize} 
\item For $T$ an adic space (resp. scheme), the functor 
\[ (T/T \hookrightarrow T'/T) \mapsto (\mc{O}_{T'} \rightarrow \mc{O}_T) \]
is an equivalence between locally free nilpotent thickenings of $T/T$ and locally free nilpotent augmented $\mc{O}_T$-algebras, with quasi-inverse
\[ (\mc{A} \rightarrow \mc{O}_T) \mapsto (T/T \hookrightarrow \Spa_T \mc{A}/T) \textrm{ (resp. $(T/T \hookrightarrow \Spec_T \mc{A}/T)$). }\]
\item For $T$ an adic space (resp. scheme), $f: V \rightarrow T$ a map of adic spaces (resp. schemes), and $\mc{A}$ a locally free augmented $\mc{O}_T$-algebra, $f^* \mc{A}$ is a locally free augmented $\mc{O}_V$-algebra, and 
\[ \Spa_V f^* \mc{A}= \Spa_T \mc{A} \times_{T} V \textrm{ (resp. } \Spec_V f^* \mc{A}= \Spec_T \mc{A} \times_{T} V  \textrm{).} \]
In particular, the category of locally free nilpotent thickenings is fibered over adic spaces (resp. schemes). 
\end{itemize}
\end{proposition}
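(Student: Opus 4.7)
The plan is to reduce both claims to the affine case, where explicit descriptions were already given in the paragraph preceding the statement.

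First, I would verify that the two functors are mutually quasi-inverse. Starting with a locally free nilpotent thickening $T/T \hookrightarrow T'/T$, its structure sheaf $\mc{O}_{T'}$ is by definition a locally free nilpotent augmented $\mc{O}_T$-algebra; conversely, from such an algebra $\mc{A}$ the construction $\Spa_T \mc{A}$ (resp.\ $\Spec_T \mc{A}$) is designed so that its structure sheaf recovers $\mc{A}$ and the augmentation recovers the thickening inclusion. Functoriality in both directions is automatic from the universal properties of taking global sections on affinoids and the definition of morphisms of locally v-ringed spaces. The only point requiring genuine verification in the adic setting is that $\Spa_T \mc{A}$, a priori only a locally v-ringed space, is in fact an adic space. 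This follows affinoid-locally from the explicit description $\Spa_T \mc{A} = \Spa(\mc{A}(T), A^+ \oplus \mc{I}(T))$ over $T = \Spa(A,A^+)$, combined with \cref{cor.lf-nilp-strongly-sheafy} to conclude strong sheafiness of the resulting Huber pair.

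For the second statement, that $f^*\mc{A}$ is a locally free nilpotent augmented $\mc{O}_V$-algebra is formal: the augmentation, the algebra structure, and local freeness of finite rank are all preserved under pullback. To identify $\Spa_V f^*\mc{A}$ with $\Spa_T \mc{A} \times_T V$, I would work affinoid-locally, writing $T = \Spa(A,A^+)$ and $V = \Spa(B,B^+)$ mapping into an open affinoid of $T$ with $\mc{A}(T) = A \oplus I$. The identification then reduces to the assertion that the coproduct of $(A \oplus I, A^+ \oplus I)$ and $(B, B^+)$ over $(A, A^+)$ in strongly sheafy Huber pairs equals $(B \oplus (I \otimes_A B), B^+ \oplus (I \otimes_A B))$. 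Since $I$ is a finite projective $A$-module, $I \otimes_A B$ is already finite projective and hence automatically complete in its canonical topology as a $B$-module, so the completed tensor product agrees with the algebraic one and the identification is immediate. In the scheme case the analogous argument is even simpler since no topology enters.

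The main obstacle is not really in these formal manipulations but in verifying that $\Spa_T \mc{A}$ is genuinely an adic space, which amounts to checking strong sheafiness of $A \oplus I$ equipped with its canonical topology as a finite projective $A$-module. This is precisely the input provided by \cref{cor.lf-nilp-strongly-sheafy}, after which everything else follows by careful bookkeeping of the finite-projective-module topology through the affinoid descriptions; this is the sense in which the statement is ``immediate from the above discussion."
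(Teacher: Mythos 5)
Your proposal is correct and follows the route the paper intends: the paper gives no proof beyond ``immediate from the above discussion,'' and that discussion is exactly the affine description $\Spa_T\mc{A}=\Spa(\mc{A}(T),A^+\oplus\mc{I}(T))$ plus formal bookkeeping, which you carry out. One caveat: your appeal to \cref{cor.lf-nilp-strongly-sheafy} is backwards relative to the paper's logical order --- that corollary is stated \emph{after} (and as a consequence of) this proposition, and as written it presupposes that the thickening $T'$ is already an adic space, so it cannot be the thing that certifies $\Spa_T\mc{A}$ as an adic space. What you actually need is only sheafiness (not strong sheafiness) of the Huber pair $(A\oplus I, A^+\oplus I)$, and this follows directly from sheafiness of $(A,A^+)$ together with the fact that finite projective modules over sheafy Huber pairs have sheafy modules of sections (\cite[Theorem 8.2.22]{KedlayaLiu.RelativepAdicHodgeTheoryFoundations}, as recalled in \cref{ss.vector-bundles-adic-spaces-schemes}); citing that instead removes the circularity. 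The rest --- mutual quasi-inverseness from the universal property of global sections on affinoids, and the base-change identity via $B\otimes_A(A\oplus I)=B\oplus(I\otimes_A B)$ with no completion needed because $I$ is finite projective --- is exactly right.
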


\begin{corollary}
    For any sheafy Huber pair $(A,A^+)$, there is a natural equivalence between the category of locally free thickenings of $\Spa(A,A^+)/\Spa(A,A^+)$ and locally free thickenings of $\Spec A/\Spec A$.
\end{corollary}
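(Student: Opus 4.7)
The strategy is to invoke Proposition~\ref{prop.augmented-equiv} on both sides to reduce the claim to an equivalence between locally free nilpotent augmented $\mc{O}$-algebras on $\Spa(A,A^+)$ and on $\Spec A$. I would then check that both categories are canonically equivalent, via global sections, to the purely algebraic category $\mc{C}_A$ whose objects are $A$-algebras $B$ that are finite projective as $A$-modules, equipped with an augmentation $B \twoheadrightarrow A$ whose kernel is nilpotent.

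For the scheme side, this is a standard consequence of the equivalence between quasi-coherent $\mc{O}_{\Spec A}$-algebras and $A$-algebras: the locally-free-of-finite-rank condition matches finite projectivity, and local nilpotence of a finitely generated ideal sheaf on an affine scheme is equivalent to nilpotence of the corresponding ideal in $A$. For the adic side, by \cite[Theorem 8.2.22]{KedlayaLiu.RelativepAdicHodgeTheoryFoundations} (recalled in \S\ref{ss.vector-bundles-adic-spaces-schemes}), taking global sections is an equivalence between $\VB(\Spa(A,A^+))$ and the category of finite projective $A$-modules. To upgrade this to algebras, one uses that the multiplication map of a locally free $\mc{O}$-algebra $\mc{A}$ is a morphism $\mc{A} \otimes_{\mc{O}} \mc{A} \to \mc{A}$ of vector bundles, hence determined by its global sections; conversely, any $A$-algebra structure on a finite projective $A$-module sheafifies, because the vector bundle equivalence is symmetric monoidal for the natural tensor products on both sides. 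Nilpotence of the augmentation ideal again passes between the two descriptions using quasi-compactness of $\Spa(A,A^+)$ together with finite generation.

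The potentially delicate point is verifying that the algebra structure is faithfully transported across the vector bundle equivalence on the adic side --- that is, that associativity and unit axioms are correctly encoded at the level of global sections, and that an $A$-algebra structure on a finite projective $A$-module genuinely sheafifies to a sheaf of $\mc{O}_{\Spa(A,A^+)}$-algebras. However, associativity and unitality are expressible as equalities of morphisms of vector bundles (built out of tensor products of $\mc{A}$), so they can be checked on global sections, reducing everything to formal algebra over $A$. Finally, naturality in $(A,A^+)$ follows automatically since the construction $\mc{A} \mapsto \mc{A}(\Spa(A,A^+))$ and its inverse are compatible with the canonical map of locally ringed spaces $\Spa(A,A^+) \to \Spec A$, and with pullback of augmented algebras by Proposition~\ref{prop.augmented-equiv}.
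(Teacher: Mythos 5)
Your proposal is correct and follows the same route as the paper: apply Proposition \ref{prop.augmented-equiv} on both sides and then use the fact that $\VB(\Spa(A,A^+))$ and $\VB(\Spec A)$ are each equivalent, via global sections, to finite projective $A$-modules. The extra verifications you spell out (transport of the algebra structure through the monoidal equivalence, matching of the nilpotence conditions) are exactly the routine checks the paper leaves implicit.
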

\begin{proof}
We apply \cref{prop.augmented-equiv} along with the observation that $\VB(\Spa(A,A^+))$ and $\VB(\Spec A)$ are both equivalent, by global sections, to finite projective $A$-modules (see \cref{ss.vector-bundles-adic-spaces-schemes}). 
\end{proof}

\begin{corollary}\label{cor.lf-nilp-strongly-sheafy} If $T$ is a strongly sheafy adic space, then for any locally free nilpotent thickening $T/T \rightarrow T'/T$, $T'$ is a strongly sheafy adic space. 
\end{corollary}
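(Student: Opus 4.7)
The statement is local on $T$, so I can assume $T = \Spa(A,A^+)$ is an affinoid strongly sheafy adic space. By \cref{prop.augmented-equiv}, $T'/T$ corresponds to a locally free nilpotent augmented $A$-algebra $B = A \oplus I$, where $I$ is a finite projective $A$-module which is nilpotent as an ideal of $B$, and $T' = \Spa(B,B^+)$ with $B^+ = A^+ \oplus I$. What I must check is that, for every $n \geq 0$, the Tate pair $(B\langle t_1,\ldots,t_n\rangle, B^+\langle t_1,\ldots,t_n\rangle)$ is sheafy.

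The main idea is to produce this Tate pair by pulling back $T'$ along the Tate disc over $T$ and then invoking the fibered structure in \cref{prop.augmented-equiv}. Because $(A,A^+)$ is strongly sheafy, the Tate pair $(A\langle \underline{t}\rangle, A^+\langle \underline{t}\rangle)$ is sheafy, so $T_n := \Spa(A\langle \underline{t}\rangle, A^+\langle \underline{t}\rangle)$ is an adic space, and there is a natural structure map $f\colon T_n \to T$. Applying the second part of \cref{prop.augmented-equiv} to $f$ and the augmented $\mc{O}_T$-algebra $\mc{O}_{T'}$, we obtain a locally free nilpotent augmented $\mc{O}_{T_n}$-algebra $f^*\mc{O}_{T'}$ whose relative $\Spa$ over $T_n$ realizes the fiber product $T' \times_T T_n$ as an adic space.

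The remaining step is a direct identification. Because $B$ is finite projective over $A$ (being $A \oplus I$ with $I$ finite projective), the natural map $B \otimes_A A\langle \underline{t}\rangle \to B\langle \underline{t}\rangle$ is an isomorphism (both describe the same direct summand of $A\langle\underline{t}\rangle^{m}$ cut out by an idempotent in $\End_A(A^m)$). Thus $f^*\mc{O}_{T'}$ has global sections $B\langle \underline{t}\rangle = A\langle\underline{t}\rangle \oplus I\langle\underline{t}\rangle$, with $I\langle\underline{t}\rangle$ finite projective over $A\langle\underline{t}\rangle$ and still nilpotent. By the explicit affinoid description preceding \cref{prop.augmented-equiv}, the relative spa is then
\[ \Spa_{T_n} f^*\mc{O}_{T'} = \Spa\bigl(B\langle\underline{t}\rangle,\, A^+\langle\underline{t}\rangle \oplus I\langle\underline{t}\rangle\bigr) = \Spa\bigl(B\langle\underline{t}\rangle,\, B^+\langle\underline{t}\rangle\bigr), \]
and the fact that this is an adic space is precisely the sheafiness of $(B\langle\underline{t}\rangle, B^+\langle\underline{t}\rangle)$.

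The only step with any content is the identification $B \otimes_A A\langle \underline{t}\rangle \cong B\langle \underline{t}\rangle$ for $B$ a finite projective augmented $A$-algebra; everything else is an unpacking of \cref{prop.augmented-equiv} and its base-change compatibility. I do not anticipate any substantial obstacle.
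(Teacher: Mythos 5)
Your argument is correct and is exactly the intended one: the paper states this corollary without proof, and the whole content is your identification $B\langle \underline{t}\rangle \cong B\otimes_A A\langle \underline{t}\rangle$, which exhibits the Tate algebra over the thickening as a finite locally free nilpotent thickening of the sheafy pair $(A\langle \underline{t}\rangle, A^+\langle \underline{t}\rangle)$. The one ingredient you inherit is the paper's own unproved ``one easily checks'' assertion of \cref{ss.thickenings} that $\Spa_T\mc{A}$ over a sheafy affinoid is the adic spectrum of the thickened Huber pair (equivalently, that sheafiness passes to finite locally free nilpotent thickenings, since the relevant \v{C}ech complexes are obtained by tensoring with a finite projective module), applied with $A\langle \underline{t}\rangle$ in place of $A$.
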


\begin{remark}\label{remark.constant-thickening-reduced}
If $T$ is reduced, then for any finite locally free thickening $T/T \rightarrow T'/T$, the map $T \rightarrow T'$ is uniquely determined by $T' \rightarrow T$ as the inverse of the induced isomorphism $(T')^\red \rightarrow T$
\end{remark}

\begin{definition}\label{def.constant-thickening} If $T$ is an adic space  (resp. a scheme), we say a locally free nilpotent thickening $T/T\rightarrow T'/T$ is constant if it there a finite free nilpotent augmented $\mbb{Z}$-algebra $A \rightarrow \mbb{Z}$ such that $T'/T$ is isomorphic to $\Spa_T (A \otimes_{\mbb{Z}} \mc{O}_T)$ (resp. $\Spec_T (A \otimes_{\mbb{Z}} \mc{O}_T)$).  
\end{definition}

For $T$ an adic space or scheme, it follows from \cref{prop.augmented-equiv} that the category of square-zero locally free nilpotent thickenings of $T/T$ is equivalent to the category of locally free $\mathcal{O}_T$-modules of finite rank. 

\begin{definition}\label{def.gen-sz-thick}
    Given an adic space or scheme $T$ and a locally free of finite rank $\mathcal{O}_T$-module $\mc{I}$, we write $T[\mc{I}]$ for the associated locally free square-zero thickening of $T$. As a locally ringed space it is $(|T|, \mathcal{O}_T \oplus \mc{I})$ where the multiplication is given by $(a,i)(a',i')=(aa', ai'+a'i)$.
\end{definition}

\begin{definition}\label{def.constant-sz}
    Given an adic space (resp. scheme) $T$ and a finite free $\mbb{Z}$-module $M$, we write 
    \[ T[M]:=T[\mc{O}_T \otimes M]= \Spa_T (\mbb{Z}[M] \otimes \mc{O}_T) \textrm{ (resp. $\Spec_T (\mbb{Z}[M] \otimes \mc{O}_T[M])$)} \]
    where here $\mbb{Z}[M]$ is the nilpotent augmented finite free $\mbb{Z}$-algebra $\mbb{Z}\oplus M$ with  multiplication given by $(a,m)(a',m')=(aa', am'+a'm)$.
\end{definition}

From the discussion above we see a constant square-zero thickening of $T/T$ can be equivalently defined as a thickening isomorphic to $T[M]/T$ for some finite free $\mathbb{Z}$-module $M$. The following is immediate.

\begin{lemma}\label{lemma.module-thickening-functor}
For any adic space $T$ or scheme $T$, $M \mapsto T[M]/T$ defines a contravariant functor from finite free $\mathbb{Z}$-modules to constant square-zero thickenings of $T/T$ such that
\begin{equation}\label{eq.sum-to-coproduct}T[M_1 \times M_2]=T[M_1] \sqcup_{T} T[M_2]\end{equation}
where the inclusion maps $T[M_i] \rightarrow T[M_1 \times M_2]$ are induced by the projections $M_1 \times M_2 \rightarrow M_i$. 
\end{lemma}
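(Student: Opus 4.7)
The plan is to reduce both assertions to corresponding statements about locally free nilpotent augmented $\mathcal{O}_T$-algebras via \cref{prop.augmented-equiv}. By that equivalence, $T[M]/T$ is the constant square-zero thickening associated to the augmented $\mathcal{O}_T$-algebra $\mathbb{Z}[M] \otimes_{\mathbb{Z}} \mathcal{O}_T$ (with square-zero ideal $M \otimes_{\mathbb{Z}} \mathcal{O}_T$), and morphisms of thickenings of $T/T$ correspond contravariantly to morphisms of augmented $\mathcal{O}_T$-algebras.

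For functoriality, any map $f : M \to M'$ of finite free $\mathbb{Z}$-modules induces a homomorphism of augmented square-zero $\mathbb{Z}$-algebras $\mathbb{Z}[M] \to \mathbb{Z}[M']$ given by $(a, m) \mapsto (a, f(m))$, which is visibly functorial in $f$ and compatible with the augmentations. Tensoring with $\mathcal{O}_T$ and invoking \cref{prop.augmented-equiv} contravariantly then produces a functorial morphism $T[M'] \to T[M]$ of thickenings of $T/T$ over $T$.

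For the coproduct formula, I would first check at the level of augmented $\mathbb{Z}$-algebras that the natural map
\[ \mathbb{Z}[M_1 \times M_2] \;\xrightarrow{\sim}\; \mathbb{Z}[M_1] \times_{\mathbb{Z}} \mathbb{Z}[M_2], \qquad (a, m_1, m_2) \mapsto \bigl((a, m_1), (a, m_2)\bigr), \]
is an isomorphism: both sides have underlying $\mathbb{Z}$-module $\mathbb{Z} \oplus M_1 \oplus M_2$ with the square-zero multiplication $(a, m_1, m_2)(a', m_1', m_2') = (aa', am_1' + a'm_1, am_2' + a'm_2)$, and the projection to the $i$-th factor is precisely the algebra map induced by $p_i : M_1 \times M_2 \to M_i$. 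Since all terms are finite free over $\mathbb{Z}$, tensoring with $\mathcal{O}_T$ preserves this fiber product, so
\[ \mathbb{Z}[M_1 \times M_2] \otimes \mathcal{O}_T \;\cong\; (\mathbb{Z}[M_1] \otimes \mathcal{O}_T) \times_{\mathcal{O}_T} (\mathbb{Z}[M_2] \otimes \mathcal{O}_T) \]
as augmented $\mathcal{O}_T$-algebras. A direct check of universal properties then shows that pushouts in the category of locally free nilpotent thickenings of $T/T$ are computed by fiber products of augmented $\mathcal{O}_T$-algebras: a morphism $T_1' \sqcup_T T_2' \to T''$ is a compatible pair of morphisms $T_i' \to T''$ over $T$, equivalently (by \cref{prop.augmented-equiv}) a pair of $\mathcal{O}_T$-algebra maps $\mathcal{O}_{T''} \to \mathcal{O}_{T_i'}$, equivalently a single map $\mathcal{O}_{T''} \to \mathcal{O}_{T_1'} \times_{\mathcal{O}_T} \mathcal{O}_{T_2'}$. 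Assembling these steps yields \cref{eq.sum-to-coproduct} with the asserted description of the inclusions $T[M_i] \to T[M_1 \times M_2]$.

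I do not expect any serious obstacle: the only point to verify is that the fiber product of two locally free augmented $\mathcal{O}_T$-algebras is again locally free (with ideal the direct sum of the ideals), so the pushout is constructed explicitly as $\Spa_T$ (or $\Spec_T$) of this algebra, and no general existence theorem for pushouts in $\goodAS$ or $\Sch$ is required.
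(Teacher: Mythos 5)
Your proof is correct and follows exactly the route the paper intends: the paper declares this lemma immediate from \cref{prop.augmented-equiv} and the preceding definitions, and your argument simply spells out that verification (functoriality of $M \mapsto \mathbb{Z}[M]$, the identification $\mathbb{Z}[M_1 \times M_2] \cong \mathbb{Z}[M_1] \times_{\mathbb{Z}} \mathbb{Z}[M_2]$, and the fact that pushouts of thickenings correspond to fiber products of augmented algebras, which the paper itself uses in \cref{s.inscription}). No gaps.
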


\begin{definition}\label{def.thickening-categories} For any subcategory $\mathcal{C}$ of adic spaces or schemes, we write $\mathcal{C}^\lf$ for the category of locally free nilpotent thickenings of objects in $\mathcal{C}$, $\mathcal{C}^\cn$ for the subcategory of $\mathcal{C}^\lf$ consisting of constant nilpotent thickenings of objects in $\mathcal{C}$, and $\mathcal{C}^\epsilon$ for the subcategory of $\mathcal{C}^\cn$ consisting of constant square-zero thickenings of objects in $\mathcal{C}$. In each case $\mathcal{C}^\bullet$ is naturally fibered over $\mathcal{C}$. 
\end{definition}

\subsection{Smooth morphisms}

The results on smooth morphisms of sous-perfectoid adic spaces of \cite[IV.4]{FarguesScholze.GeometrizationOfTheLocalLanglandsCorrespondence} up through \cite[Lemma IV.14]{FarguesScholze.GeometrizationOfTheLocalLanglandsCorrespondence} go through essentially as written in the more general setting of strongly sheafy adic spaces. In this section we give just the specific definitions and statements we will need.

\begin{remark}The remaining statements in \cite[IV.4]{FarguesScholze.GeometrizationOfTheLocalLanglandsCorrespondence} depend on Lemma \cite[Lemma IV.16]{FarguesScholze.GeometrizationOfTheLocalLanglandsCorrespondence}, whose proof in loc. cit. uses the sous-perfectoid condition to reduce to the case of a perfectoid $X$. We do not attempt to extend these results here as, for our purposes, the analog of \cite[Lemma IV.14]{FarguesScholze.GeometrizationOfTheLocalLanglandsCorrespondence} saying that any section factors through a ball is the key structural result we need going forward. The generalization to strongly sheafy adic spaces is \cref{prop.section-smooth-balls} below, and we deduce from it a useful corollary about infinitesimal neighborhoods of sections in \cref{corollary.section-infinitesimal-pullback}.
\end{remark}

\begin{definition}(cf. \cite[Definition IV.4.9]{FarguesScholze.GeometrizationOfTheLocalLanglandsCorrespondence}, \cite[Definition 5.11]{HansenKedlaya.SheafinessCriteriaForHuberRings}).
A morphism $f: Y \rightarrow X$ of strongly sheafy adic spaces is
\begin{enumerate}
    \item \'{e}tale if, locally on $X$ and $Y$, it can be written as an open immersion followed by a finite \'{e}tale map, and
    \item smooth if there is a cover of $Y$ by open sets $V$ such that $f|_V$ can be written as a composition of an \'{e}tale map $V \rightarrow \mathbb{B}^d_X$ followed by projection to $X$.
\end{enumerate}
\end{definition}

\begin{lemma}[cf. Proposition IV.4.10-(iii) of \cite{FarguesScholze.GeometrizationOfTheLocalLanglandsCorrespondence}]\label{lemma.ss-base-change}
Let $Y \rightarrow X$ be a smooth map of strongly sheafy adic spaces, and let $X' \rightarrow X$ be an arbitrary map of strongly sheafy adic spaces. Then, $Y':=Y \times_X X'$ is a strongly sheafy adic space and $Y' \rightarrow X'$ is a smooth map of strongly sheafy adic spaces. Thus, the category of smooth morphisms of strongly sheafy adic spaces is a fibered category over strongly sheafy adic spaces. 
\end{lemma}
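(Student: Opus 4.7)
The plan is to reduce the statement to local building blocks and verify base change in each. Since smoothness is local on source and target, I would first work locally and assume $X$, $Y$, and $X'$ are affinoid. Using the local factorization of smooth morphisms as \'{e}tale maps to relative balls, and the local factorization of \'{e}tale maps as open immersions followed by finite \'{e}tale maps, it suffices to establish the result in three building-block cases: (i) $Y = \mathbb{B}^d_X$, (ii) $Y \to X$ an open immersion, and (iii) $Y \to X$ a finite \'{e}tale morphism.

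Cases (i) and (ii) should dispatch quickly. For (i), the natural candidate for the base change is $\mathbb{B}^d_{X'}$, whose ring of functions is a relative Tate algebra over $X'$, and hence is a strongly sheafy Huber pair by the very definition of strongly sheafy applied to $X'$; the projection is smooth by construction. For (ii), open immersions are stable under arbitrary base change, and open subspaces of strongly sheafy adic spaces are strongly sheafy by locality of the definition.

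The main work is case (iii). Given $\Spa(B, B^+) \to \Spa(A, A^+)$ finite \'{e}tale and $\Spa(A', A'^+) \to \Spa(A, A^+)$ with $(A', A'^+)$ strongly sheafy, the candidate for the base change is $(B \otimes_A A', B'^+)$, where the tensor product is automatically complete because $B$ is finite over $A$, and $B'^+$ is the integral closure of the image of $B^+ \otimes_{A^+} A'^+$. For each $n \geq 0$ there is a natural identification
\[ (B \otimes_A A')\langle t_1, \ldots, t_n\rangle \;=\; B \otimes_A A'\langle t_1, \ldots, t_n\rangle, \]
which is a finite \'{e}tale algebra over $A'\langle t_1, \ldots, t_n\rangle$; the latter is sheafy by the strongly sheafy hypothesis on $(A', A'^+)$. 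Invoking the ascent of sheafiness along finite \'{e}tale morphisms (from \cite{KedlayaLiu.RelativepAdicHodgeTheoryFoundations}, as also used in the sous-perfectoid analog \cite[Proposition IV.4.10(iii)]{FarguesScholze.GeometrizationOfTheLocalLanglandsCorrespondence}), each such Tate algebra is sheafy, so $(B \otimes_A A', B'^+)$ is strongly sheafy, and the induced morphism to $\Spa(A', A'^+)$ is finite \'{e}tale and in particular smooth.

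The main obstacle I expect is confirming that the finite \'{e}tale sheafiness ascent used in case (iii) applies without additional hypotheses beyond being strongly sheafy; this is precisely the reason strongly sheafy (rather than merely sheafy) is the correct working category, as pulling back by such morphisms must stay inside a category in which all relative Tate algebras are sheafy. Once these building blocks are established, the fibered category conclusion follows formally from stability under base change together with the compatibility of smoothness with composition of the component factorizations.
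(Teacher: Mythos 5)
Your proposal is correct and follows essentially the same route as the paper, whose one-line proof is exactly that balls over, rational localizations of, and finite \'{e}tale covers of a strongly sheafy affinoid are again strongly sheafy. Your elaboration of the three building-block cases, with the finite \'{e}tale sheafiness ascent as the only nontrivial input, is just a spelled-out version of that argument.
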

\begin{proof}
This follows since balls over, rational localizations of, and finite \'{e}tale covers of a strongly sheafy $\Spa(A,A^+)$ are all strongly sheafy. 
\end{proof}

\begin{definition}[cf. Definition IV.4.11 of \cite{FarguesScholze.GeometrizationOfTheLocalLanglandsCorrespondence}]
For $f:Y \rightarrow X$ a smooth map of strongly sheafy adic spaces, the sheaf of relative differentials $\Omega_{Y/X}$ on $Y$ is $\mc{I}/\mc{I}^2$ for $\mc{I}$ the ideal sheaf of the diagonal $Y \hookrightarrow Y \times_X Y$.
\end{definition}

\begin{lemma} For $f: Y \rightarrow X$ a smooth map of strongly sheafy adic spaces, $\Omega_{Y/X}$ is locally free of finite rank over $\mathcal{O}_Y$. In fact it is free of rank $d$ on any open $V$ as in the definition of a smooth map such that $f|_V$ factors through an \'{e}tale map $V \rightarrow \mathbb{B}^d_X$. For any map of strongly sheafy adic spaces $g: X' \rightarrow X$, $g^* \Omega_{Y/X}=\Omega_{Y \times_{X} X' / X'}.$
\end{lemma}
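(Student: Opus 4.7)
The plan is to work locally on $Y$ and reduce to the two model cases, the relative ball and étale maps; the base change statement will then follow formally from local freeness. So choose $V \subseteq Y$ together with a factorization $V \xrightarrow{h} \mathbb{B}^d_X \xrightarrow{p} X$ with $h$ étale, and aim to show $\Omega_{V/X}$ is free of rank $d$.

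First, I would verify directly that $\Omega_{\mathbb{B}^d_X/X}$ is free of rank $d$. Over an affinoid open $\Spa(A,A^+) \subseteq X$ the diagonal is cut out by the sequence $s_i := t_i \otimes 1 - 1 \otimes t_i$ in $A\langle t_1, \ldots, t_d\rangle \widehat{\otimes}_A A\langle t_1, \ldots, t_d\rangle$, and a direct computation identifies $\mathcal{I}/\mathcal{I}^2$ with the free module on $\overline{s_1}, \ldots, \overline{s_d}$. Strong sheafiness ensures the completed tensor product is the usual Tate algebra in $2d$ variables, so this is routine.

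Next, for an étale map $h: V \to W$ of strongly sheafy adic spaces I would show $\Omega_{V/W} = 0$ and that the natural map $h^* \Omega_{W/X} \to \Omega_{V/X}$ is an isomorphism. Working locally, $h$ is either an open immersion (for which all assertions are immediate) or a finite étale map. In the finite étale case, the diagonal $V \hookrightarrow V \times_W V$ is both closed and étale, hence open and closed, so its ideal sheaf vanishes on a neighborhood of the diagonal and $\Omega_{V/W} = 0$. The cotangent exact sequence
\[ h^* \Omega_{W/X} \to \Omega_{V/X} \to \Omega_{V/W} \to 0 \]
holds in the strongly sheafy setting by the usual diagonal manipulation, with \cref{lemma.ss-base-change} ensuring all relevant fiber products stay in our category, and so gives surjectivity. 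Injectivity then follows by constructing a local retraction using an étale local section of $h$. Combining with the ball computation yields $\Omega_{V/X} \cong h^* \Omega_{\mathbb{B}^d_X/X} \cong \mathcal{O}_V^d$.

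For the base change assertion, once $\Omega_{Y/X}$ is locally free we argue as follows. The diagonal $\Delta_{Y'/X'}$ is the pullback of $\Delta_{Y/X}$ along the natural map $Y' \times_{X'} Y' \to Y \times_X Y$ by \cref{lemma.ss-base-change}, so the ideal sheaf of $\Delta_{Y'/X'}$ is generated by the pullback of that of $\Delta_{Y/X}$. Local freeness of $\mathcal{I}/\mathcal{I}^2$, together with the fact that the diagonal of a smooth map is locally a regular immersion (visible through the factorization through the ball), implies that pullback commutes with passing to $\mathcal{I}/\mathcal{I}^2$, giving $g^* \Omega_{Y/X} \cong \Omega_{Y'/X'}$. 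The main obstacle I anticipate is verifying the cotangent exact sequence and the retraction argument in the strongly sheafy setting without the usual flatness and coherence tools; this is handled by the local reduction to Tate algebras and their finite étale covers, where completed tensor products behave classically.
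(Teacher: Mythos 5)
Your overall strategy is the classical one (compute $\Omega$ for the relative ball, prove \'{e}tale invariance of $\Omega$, and conclude via the factorization through $\mathbb{B}^d_X$), and it genuinely differs from the paper's proof, which never isolates an \'{e}tale-invariance statement. The paper instead factors the diagonal as $Y \to Y\times_{Y'}Y \to Y\times_X Y$ with $Y'=\mathbb{B}^d_X$, notes that the first map is an open immersion, and then --- this is the key input --- invokes spreading out on the \'{e}tale site (\cite[Lemma 15.6, Lemma 12.17]{Scholze.EtaleCohomologyOfDiamonds}) to show that the \'{e}tale map $Z = Y\times_XY\to\mathbb{B}^{2d}_X$ becomes, over a small enough tube $p^n\mathbb{B}^d_X\times_X\mathbb{B}^d_X$ around the diagonal copy of $\mathbb{B}^d_X$, the constant family $p^n\mathbb{B}^d_X\times_X Z_0$. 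In those explicit coordinates the ideal sheaf is literally $(t_1,\dots,t_d)$, so local freeness, the rank, and the base change claim are all read off at once.

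The genuine gap in your write-up is the \'{e}tale-invariance step, which is exactly where the non-Noetherian difficulties concentrate. Surjectivity of $h^*\Omega_{W/X}\to\Omega_{V/X}$ is the easy half (near the diagonal the ideal of $\Delta_V$ agrees with that of $V\times_WV$, which is generated by the pullback of the ideal of $\Delta_W$). But your injectivity argument --- ``constructing a local retraction using an \'{e}tale local section of $h$'' --- does not work as stated: an \'{e}tale (e.g.\ finite \'{e}tale) map admits no local sections in general, and producing sections after a further \'{e}tale base change would require already knowing that the formation of $\Omega$ commutes with \'{e}tale base change, which is part of what you are proving. The classical substitute is flat base change for conormal sheaves of regular immersions, and it is precisely this that is not ``routine'' for completed tensor products of the rings allowed here (thickened perfectoid rings and the like); your final base-change paragraph leans on the same unproved compatibility. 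Some geometric input must replace it --- in the paper that input is the spreading-out lemma above --- and without it, or an explicit argument at the level of the relevant Huber rings, your step establishing $h^*\Omega_{\mathbb{B}^d_X/X}\cong\Omega_{V/X}$ is not proved.
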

\begin{proof}
The following is based on the proof of \cite[Proposition IV.4.12]{FarguesScholze.GeometrizationOfTheLocalLanglandsCorrespondence}, but we have modified the structure of the argument to make it more clear (to us). Working locally, we may assume $X=\Spa(A,A^+)$, $Y$ is quasi-compact, and $Y\rightarrow X$ factors through an \'{e}tale map $Y \rightarrow \mbb{B}^d_X=: Y'$. The diagonal $Y \rightarrow Y \times_X Y$ then factors as
\[ Y \rightarrow Y \times_{Y'} Y = (Y \times_X Y) \times_{Y' \times_X Y'} Y' \rightarrow Y \times_X Y. \]
The first map, as the diagonal of an \'{e}tale map, is an open immersion. It follows that $\mc{I}_Y/\mc{I}_Y^2$ is the restriction of $\mc{I}_{Y \times_{Y'} Y}/\mc{I}_{Y \times_{Y'} Y}^2$ to $Y$, where $Y \times_{Y'} Y$ is viewed as a closed in $Y \times_X Y$. 

We thus analyze this ideal sheaf. By a change of coordinates, we may rewrite $Y' \times_X Y'$ as $\mbb{B}^{2d}_X$, such that the diagonal of $Y'$ over $X$ is the inclusion $\mbb{B}^d_X \hookrightarrow \mbb{B}^{2d}_X$ corresponding to setting the first $d$ coordinates to zero. In these coordinates, we write $Z \rightarrow \mbb{B}^{2d}_X$ for the \'{e}tale map $Y \times_X Y \rightarrow Y' \times_X Y'$ and $Z_0 \rightarrow \mbb{B}^{d}_X$ for its restriction to the diagonal of $Y'$ over $X$ obtained by setting the first $d$ coordinates to zero, i.e. for $Y \times_{Y'} Y \rightarrow Y'$. We are thus interested in the ideal sheaf of $Z_0$ in $Z$. By spreading out of the \'{e}tale site \cite[Lemma 15.6, Lemma 12.17]{Scholze.EtaleCohomologyOfDiamonds} we find that, for $n \gg 0$, $Z|_{p^n \mbb{B}^d_X \times_X \mbb{B}^d_X}$ is isomorphic to $p^n \mbb{B}^d_X \times_X Z_0$ over ${p^n \mbb{B}^d_X \times_X \mbb{B}^d_X}$. Covering $Z_0$ by strongly sheafy affinoids $\Spa(R,R^+)$, the corresponding opens ${p^n\mbb{B}^d_X \times_X \Spa(R,R^+)}$ are of the form 
$\Spa (R\langle t_1, \ldots, t_d \rangle, R^+\langle t_1, \ldots, t_d \rangle)$ and the ideal sheaf is $\mc{I}_Z=(t_1, \ldots, t_d)$. It follows that on this affinoid $\mc{I}_Z/\mc{I}_Z^2$ is free of rank $d$ with basis the classes of $t_1, \ldots, t_d$, and that concludes the proof of the local freeness and computation of the rank. Using these local charts, the claim about the base change is also immediate. 

\end{proof}

\begin{proposition}\label{prop.section-smooth-balls}(cf. \cite[Lemma IV.4.14]{FarguesScholze.GeometrizationOfTheLocalLanglandsCorrespondence}). If $f:Y \rightarrow X$ is a smooth map of strongly sheafy adic spaces and $s: X \rightarrow Y$ is a section, then there is a cover of $X$ by open subsets $U$ such that $s|_{U}$ factors through a neighborhood of $Y|_{U}$ isomorphic to $\mbb{B}^d_U$ (as adic spaces over $U$). 
\end{proposition}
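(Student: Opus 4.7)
The plan is to follow closely the sous-perfectoid argument of Fargues--Scholze \cite[Lemma IV.4.14]{FarguesScholze.GeometrizationOfTheLocalLanglandsCorrespondence}, exploiting the fact that its main technical input --- the spreading-out of \'{e}tale maps from \cite[Lemma 15.6, Lemma 12.17]{Scholze.EtaleCohomologyOfDiamonds} --- applies equally well in the strongly sheafy setting and was already invoked in the preceding proof computing $\Omega_{Y/X}$. Working locally on $X$ and $Y$, I may assume $f$ factors as $Y \xrightarrow{g} \mathbb{B}^d_X \xrightarrow{\pi} X$ with $g$ \'{e}tale, which is the starting point.

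First I would normalize the situation. The composition $s_0 := g\circ s : X \to \mathbb{B}^d_X$ is a section of $\pi$, determined by its effect $a_i := s_0^*(t_i) \in \mathcal{O}_X^+(X)$ on the standard coordinates $t_i$ on $\mathbb{B}^d_X$. Translating $t_i \mapsto t_i - a_i$ gives an automorphism of $\mathbb{B}^d_X$ over $X$ carrying $s_0$ to the zero section $z: X \hookrightarrow \mathbb{B}^d_X$, so I may assume $s_0 = z$. Setting $Y_0 := Y \times_{\mathbb{B}^d_X, z} X$, the map $Y_0 \to X$ is \'{e}tale and $s$ factors through a section $\bar{s}: X \to Y_0$. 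Since $\bar{s}$ is itself \'{e}tale as a section of an \'{e}tale map, and since it is injective, it is an open immersion; after further shrinking $X$ I may assume $\bar{s}$ identifies $X$ with an open and closed subspace of $Y_0$.

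Next, the same \'{e}tale spreading tool used in the previous proof gives, for $n$ sufficiently large, an identification of $g^{-1}(p^n\mathbb{B}^d_X) \to p^n\mathbb{B}^d_X$ as an \'{e}tale cover with the pullback of $Y_0 \to X$ along $\pi$, so
\[ g^{-1}(p^n \mathbb{B}^d_X) \;\cong\; p^n \mathbb{B}^d_X \times_X Y_0. \]
Restricting to the clopen factor $X \subseteq Y_0$ picked out in the previous step produces an open neighborhood $V \subseteq Y$ of $s(X)$ with $V \cong p^n \mathbb{B}^d_X$ over $X$. Rescaling the coordinates on the ball gives an isomorphism $V \cong \mathbb{B}^d_X$ over $X$ that carries $s$ to the zero section, which is the desired conclusion.

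The step I expect to require the most care is the passage from ``$\bar{s}$ is a section of an \'{e}tale morphism'' to ``$\bar{s}$ is an open immersion.'' This is routine in the scheme-theoretic setting (an injective unramified morphism is an open immersion), and the argument transports to our setting because \'{e}tale morphisms of strongly sheafy adic spaces are, by definition, locally the composition of an open immersion with a finite \'{e}tale map, so the statement can be checked after pullback to an affinoid base and then reduced to the case of a finite \'{e}tale cover, where a section is manifestly clopen. Everything else is formal, once one uses that the strongly sheafy category is stable under the operations that appear (rational localizations, finite \'{e}tale covers, and fiber products along smooth maps), which was recorded in \cref{lemma.ss-base-change}.
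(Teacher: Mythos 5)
Your argument is correct and is essentially the paper's: the paper simply asserts that the proof of \cite[Lemma IV.4.14]{FarguesScholze.GeometrizationOfTheLocalLanglandsCorrespondence} applies unchanged, and what you have written out is precisely that proof (translate the induced section of $\mathbb{B}^d_X$ to the origin, observe the section of the \'etale map $Y_0\to X$ is an open immersion, spread out to identify $g^{-1}(p^n\mathbb{B}^d_X)$ with $p^n\mathbb{B}^d_X\times_X Y_0$, restrict and rescale), using the same spreading-out input the paper already invokes in its computation of $\Omega_{Y/X}$. The only cosmetic remark is that you need only that $\bar{s}$ is an open (not clopen) immersion — which follows most cleanly from the diagonal of an \'etale map being an open immersion, a fact the paper uses in the preceding proof — since the spreading-out isomorphism lets you restrict to the open $p^n\mathbb{B}^d_X\times_X\bar{s}(X)$.
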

\begin{proof}
The proof from \cite[Lemma IV.4.14]{FarguesScholze.GeometrizationOfTheLocalLanglandsCorrespondence} applies with no change.  
\end{proof}

\begin{corollary}\label{corollary.section-infinitesimal-pullback}
If $f:Y \rightarrow X$ is a smooth map of strongly sheafy adic spaces and $s: X \rightarrow Y$ is a section then, for any $n\geq 0$, the $n$th infinitesimal neighborhood $s_{(n)}$ of $s(X)$ in $Y$ is a locally free nilpotent thickening of $X/X$, and there is a natural isomorphism $s_{(1)}=X[s^*\Omega_{Y/X}].$ 
\end{corollary}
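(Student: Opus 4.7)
The plan is to verify both claims locally on $X$ via \cref{prop.section-smooth-balls}. Working on opens where $s$ factors through a neighborhood of $s(X)$ in $Y$ isomorphic to $\mathbb{B}^d_X$ over $X$, and using that the $n$th infinitesimal neighborhood is supported on $|s(X)|$ and so depends only on the structure of $Y$ in an open neighborhood of $s(X)$, I may replace $Y$ locally by $\mathbb{B}^d_X$. In these coordinates the section corresponds to a tuple of power-bounded functions $a_1, \ldots, a_d \in \mathcal{O}_X(X)$, and the ideal sheaf $\mathcal{I}_s$ of $s(X)$ is generated by the regular sequence $x_i := t_i - a_i$.

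I would then compute the quotient $\mathcal{O}_X\langle t_1, \ldots, t_d\rangle/(x_1, \ldots, x_d)^{n+1}$ directly. As an $\mathcal{O}_X$-module it is free of finite rank on the monomials $\prod_i x_i^{e_i}$ with $\sum_i e_i \leq n$, and it is canonically identified with the polynomial quotient $\mathcal{O}_X[x_1, \ldots, x_d]/(x_1, \ldots, x_d)^{n+1}$. Equipped with its canonical topology as a finite projective $\mathcal{O}_X$-module and with the augmentation $x_i \mapsto 0$, this is a locally free nilpotent augmented $\mathcal{O}_X$-algebra, so by \cref{prop.augmented-equiv} it defines a locally free nilpotent thickening of $X/X$. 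It coincides with $s_{(n)}|_U$ because the augmentation is $s^\#$ and the ideal it cuts out is $\mathcal{I}_s^{n+1}$. Globally, the augmented algebra $s^{-1}(\mathcal{O}_Y/\mathcal{I}_s^{n+1}) \twoheadrightarrow \mathcal{O}_X$ is intrinsic to $(Y/X, s)$, so the local identifications are canonical and glue to exhibit $s_{(n)}$ as a locally free nilpotent thickening of $X/X$.

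For the second claim, specialize to $n = 1$. Then $\mathcal{O}_{s_{(1)}} = \mathcal{O}_X \oplus \mathcal{I}_s/\mathcal{I}_s^2$ as an $\mathcal{O}_X$-algebra with $\mathcal{I}_s/\mathcal{I}_s^2$ of square zero. The canonical map $\mathcal{I}_s/\mathcal{I}_s^2 \to s^*\Omega_{Y/X}$ given by $\bar{\varphi} \mapsto s^*(d\varphi)$ sends $\bar{x}_i \mapsto s^*(dt_i)$ in our local model, hence is an isomorphism there. Because this map is constructed from the universal differential, it is globally defined and chart-independent, so the resulting identification $s_{(1)} = X[s^*\Omega_{Y/X}]$ (via \cref{def.gen-sz-thick}) is natural and functorial in $(Y/X, s)$.

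The main point to check --- not a genuine obstacle but the step requiring a bit of care --- is that the ideal $(x_1, \ldots, x_d)^{n+1} \subset \mathcal{O}_X\langle t_1, \ldots, t_d\rangle$ is closed, so that the quotient defines a legitimate strongly sheafy affinoid closed subspace of $\mathbb{B}^d_X$. Closedness follows because the quotient is a finite projective $\mathcal{O}_X$-module and thus complete in its canonical topology; sheafiness and strong sheafiness of the resulting thickening are then immediate from \cref{cor.lf-nilp-strongly-sheafy}.
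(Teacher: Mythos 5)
Your proof is correct and follows essentially the same route as the paper's: localize via \cref{prop.section-smooth-balls} to reduce to the zero section of $\mathbb{B}^d_U$, identify $\mathcal{O}_{s_{(n)}}$ with the free augmented algebra $\mathcal{O}_U[x_1,\dots,x_d]/(x)^{n+1}$ and apply \cref{prop.augmented-equiv}, then produce a global comparison map for $n=1$ and check it is an isomorphism in these coordinates. The only caveat is your appeal to ``the universal differential'': the paper deliberately avoids asserting any universal property for $d$ in the strongly sheafy setting (it instead builds the natural map in the opposite direction, $s^*\Omega_{Y/X}\to\mathcal{I}_s/\mathcal{I}_s^2$, directly from the factorization of $\Delta\circ s$ through $((s\circ f)\times\Id)\circ s$); your map is still fine provided you read $d\varphi$ as the concrete class of $p_1^*\varphi-p_2^*\varphi$ in $\mathcal{I}_\Delta/\mathcal{I}_\Delta^2$, with well-definedness on $\mathcal{I}_s^2$ coming from the Leibniz rule and $s^*|_{\mathcal{I}_s}=0$ rather than from universality.
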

\begin{proof}
For the first part, note that over any $U$ as in \cref{prop.section-smooth-balls}, we can translate so that $s$ is the zero section. Then, over $U$, $s_{(n)}$ is the  thickening associated to $\mc{O}_{U}[t_1, \ldots, t_d]/(t_1,\ldots,t_d)^{n+1}$ by \cref{prop.augmented-equiv}. 

For the second part, to see that there is a natural isomorphism $s_{(1)}=X[s^*\Omega_{Y/X}]$, it suffices to construct a natural map $s^* \Omega_{Y/X} \rightarrow \mc{I}_{s(X)}/\mc{I}_{s(X)}^2$ and then verify it is an isomorphism in these local coordinates. Now, for $\Delta: Y \rightarrow Y \times_X Y$ the diagonal, we have 
\[ s^* \Omega_{Y/X}= s^* \Delta^* \mc{I}_\Delta = (\Delta \circ s)^* \mc{I}_\Delta \]
On the other hand, we can also write 
\[ \Delta \circ s = ((s \circ f) \times \Id) \circ s.\]
The restriction of $((s \circ f) \times \Id)^* \mc{O}_{Y \times_X Y} \rightarrow \mc{O}_{Y}$ to $((s \circ f) \times \Id)^* \mc{I}_\Delta$ factors through $\mc{I}_{s(X)}$. Thus we obtain a natural map
\[ s^* \Omega_{Y/X} = (((s \circ f) \times \Id) \circ s)^* \mc{I}_\Delta \rightarrow s^* \mc{I}_{s(X)} = \mc{I}_{s(X)}/\mc{I}_{s(X)}^2. \]
Using the local coordinates as above for the zero section in $\mbb{B}^d_U$, it is elementary to check that this is an isomorphism. 
\end{proof}

\subsection{Tangent bundles and restriction of scalars}\newcommand{\Sym}{\mathrm{Sym}}

We now discuss the relation between tangent bundles and restriction of scalars for smooth morphisms. We first recall the theory for schemes. 

First, we recall that for any scheme $X$ and quasi-coherent sheaf $\mc{F}/X$, we can form $\mbb{V}(\mc{F}):=\Spec_X \Sym^\bullet \mc{F}$, a scheme over $X$ representing
\[ (T/X) \mapsto \Hom_{\mc{O}_T}(\mc{F}_T, \mc{O}_T). \]
This construction is naturally a contravariant functor from quasi-coherent sheafs over $X$ to schemes over $X$.

Now, suppose $Y/X$ is a morphism of schemes, and $\mc{I}$ is a locally free of finite rank sheaf of $\mc{O}_X$-modules on $X$. Then, we may consider the restriction of scalars $R_{X[\mc{I}]/X}(Y[\mc{I}_Y]/X[\mc{I}])$. 
This is the functor on schemes over $X$ sending $T/X$ to 
\begin{align*} \Hom_{X[\mc{I}]}(T \times_{X} X[\mc{I}], Y[\mc{I}_Y])&=\Hom_{X[\mc{I}]}(T \times_{X} X[\mc{I}], Y\times_X 
X[\mc{I}])\\
&=\Hom_{X}(T \times_X X[\mc{I}], Y).
\end{align*}
Pull-back along the closed immersion $X \hookrightarrow X[\mc{I}]$ equips $R_{X[\mc{I}]/X}(Y[\mc{I}_Y]/X[\mc{I}])$ with a structure map to $Y$. The following shows it is represented by a natural scheme over $Y$. 

\begin{proposition}\label{prop.schemes-ros-tangent}
If $f: Y\rightarrow X$ is a morphism of schemes, there is a natural identification of functors on schemes over $Y$
\[ R_{X[\mc{I}]/X}(Y[\mc{I}_Y]/X[\mc{I}])=\mbb{V}(\Omega_{Y/X} \otimes_{\mc{O}_Y} \mc{I}_Y^*).\]
\end{proposition}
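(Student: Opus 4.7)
The plan is to describe the $T$-points of both sides for a given $T \to Y$ and verify they agree functorially. Write $t: T \to Y$ for a $Y$-scheme. A $T$-point of the left-hand side over $t$ is, by the defining formula, an $X$-morphism $T \times_X X[\mathcal{I}] \to Y$ whose restriction to $T$ along $T \hookrightarrow T \times_X X[\mathcal{I}]$ is $t$. Since $X[\mathcal{I}]$ is constructed from a locally free module, base change gives $T \times_X X[\mathcal{I}] = T[\mathcal{I}_T]$ where $\mathcal{I}_T := (f \circ t)^*\mathcal{I} = t^*\mathcal{I}_Y$. So the problem reduces to classifying $X$-morphisms $T[\mathcal{I}_T] \to Y$ lifting $t$.

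The key observation is that such lifts always exist: the composition $T[\mathcal{I}_T] \xrightarrow{\mathrm{red}} T \xrightarrow{t} Y$ (using the retraction from \cref{prop.augmented-equiv}) gives a canonical ``trivial'' lift that serves as a basepoint. I will invoke the classical fact (see e.g.\ \cite[\href{https://stacks.math.columbia.edu/tag/00ZW}{Tag 00ZW}]{stacks-project}) that for a square-zero thickening, the set of $X$-morphisms $T[\mathcal{I}_T] \to Y$ extending a fixed $t: T \to Y$ is a torsor under $X$-derivations $\mathrm{Der}_{\mathcal{O}_X}(\mathcal{O}_Y, t_*\mathcal{I}_T)$. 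With the trivial lift as basepoint, this torsor is canonically trivialized, giving a bijection
\[ \{X\text{-lifts of } t\} \;\xleftrightarrow{\;\sim\;}\; \mathrm{Der}_{\mathcal{O}_X}(\mathcal{O}_Y, t_*\mathcal{I}_T) = \mathrm{Hom}_{\mathcal{O}_Y}(\Omega_{Y/X}, t_*\mathcal{I}_T) = \mathrm{Hom}_{\mathcal{O}_T}(t^*\Omega_{Y/X}, \mathcal{I}_T). \]

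Finally, because $\mathcal{I}$ (and hence $\mathcal{I}_Y$ and $\mathcal{I}_T = t^*\mathcal{I}_Y$) is locally free of finite rank, dualizing yields
\[ \mathrm{Hom}_{\mathcal{O}_T}(t^*\Omega_{Y/X}, t^*\mathcal{I}_Y) \;=\; \mathrm{Hom}_{\mathcal{O}_T}\!\bigl(t^*\Omega_{Y/X} \otimes_{\mathcal{O}_T} t^*\mathcal{I}_Y^*,\; \mathcal{O}_T\bigr) \;=\; \mathrm{Hom}_{\mathcal{O}_T}\!\bigl(t^*(\Omega_{Y/X} \otimes_{\mathcal{O}_Y} \mathcal{I}_Y^*),\; \mathcal{O}_T\bigr), \]
and this last set is exactly the fiber of $\mathbb{V}(\Omega_{Y/X} \otimes_{\mathcal{O}_Y} \mathcal{I}_Y^*)$ over $t$ by definition of $\mathbb{V}$. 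Assembling over all $T$-points produces the claimed identification of functors.

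There are no real obstacles here: the only points requiring any care are (i) the verification that the trivial lift is actually $\mathcal{O}_X$-linear (immediate since the retraction $T[\mathcal{I}_T] \to T$ splits the structure map over $X$), and (ii) naturality of the identification in $T$, which follows from the naturality of both the torsor structure on the set of lifts and the duality isomorphism for the locally free sheaf $\mathcal{I}_Y$. The analogous statement in the adic setting, which is presumably the target of the next proposition, will be the genuinely delicate one, since there one must verify that the same construction produces a strongly sheafy adic space (via \cref{prop.augmented-equiv} and \cref{cor.lf-nilp-strongly-sheafy}) and that the classical derivation-torsor formalism survives the topological completions.
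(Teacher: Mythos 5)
Your proof is correct and is essentially the paper's argument: both reduce to classifying $X$-morphisms $T[\mc{I}_T]\to Y$ lifting $t$, identify these with $\mc{O}_X$-linear derivations $\mc{O}_Y\to t_*\mc{I}_T$ via the universal property of $\Omega_{Y/X}$, and then dualize using local freeness of $\mc{I}$. Your "torsor under derivations trivialized by the canonical retraction" is just a repackaging of the paper's direct observation that an augmented algebra map into $t_*\mc{O}_T\oplus t_*\mc{I}_T$ splits as the given map plus a derivation, so there is no substantive difference.
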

\begin{proof}
To give a map from $T\times_X X[I] /X $ to $Y/X$ inducing a fixed map $g: T \rightarrow Y$ is the same as to give a map of $f^{-1}\mc{O}_X$-algebras augmented to $\mc{O}_Y$ 
\[ \mc{O}_{Y} \rightarrow g_*\mc{O}_T \oplus (g_* \mc{I}_T). \]
This is equivalent to an $f^{-1}\mc{O}_X$-linear derivation $\mc{O}_Y \rightarrow g_* \mc{I}_T$, or equivalently an element of 
\begin{align*} 
\Hom_{\mc{O}_Y}(\Omega_{Y/X}, g_* \mc{I}_T)&=\Hom_{\mc{O}_T}(g^*\Omega_{Y/X}, \mc{I}_T)\\
&= \Hom_{\mc{O}_T}( g^* \Omega_Y, g^* \mc{I}_Y) \\
&= \Hom_{\mc{O}_T}( g^* (\Omega_Y \otimes_{\mc{O}_Y} \mc{I}_Y^*), \mc{O}_T)\\
&= \Hom_Y(T/Y, \mbb{V}(\Omega_{Y/X} \otimes_{\mc{O}_Y} \mc{I}_Y^*)).
\end{align*}

\end{proof}

We now want to imitate this with strongly sheafy adic spaces. In this case, the functor $\mbb{V}$ is defined only for $\mc{F}$ locally free (in which case the sheaf of sections of $\mbb{V}(\mc{F})$ is $\mc{F}^*$). This is not an issue, since for strongly sheafy adic spaces we have anyway only defined $\Omega_{Y/X}$ for $Y/X$ smooth. 

Another issue that does arise, however, is that even in the smooth case where we have defined $\Omega_{Y/X}$,  we have not actually established any universality property for the natural derivation $\mc{O}_Y \rightarrow \Omega_{Y/X}$. In fact, the notion of a universal continuous derivation is a bit subtle and may be best understood in a more general context. We avoid this by modifying the structure of the proof to work around the necessity of establishing such a property.  

\begin{proposition}\label{prop.ssadic-ros-tangent}
If $Y/X$ is a smooth morphism of strongly sheafy adic spaces and $\mc{I}$ is a locally free of finite rank $\mc{O}_X$-module, then there is a natural identification of functors on strongly sheafy adic spaces over $Y$
\[ R_{X[\mc{I}]/X}(Y[\mc{I}_Y]/X[\mc{I}])=\mbb{V}(\Omega_{Y/X} \otimes_{\mc{O}_Y} \mc{I}_Y^*).\]
\end{proposition}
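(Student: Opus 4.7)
The plan is to mimic the scheme-theoretic proof of \cref{prop.schemes-ros-tangent}, replacing the use of the universal property of $\mc{O}_Y\to \Omega_{Y/X}$ (which has not been established in the adic setting) by a direct geometric argument via the identification of the first-order diagonal neighborhood with $Y[\Omega_{Y/X}]$ from \cref{corollary.section-infinitesimal-pullback}. For $T$ a strongly sheafy adic space over $Y$ with structure morphism $g\colon T\to Y$, unwinding definitions gives
\[ R_{X[\mc{I}]/X}(Y[\mc{I}_Y]/X[\mc{I}])(T/Y) = \{\tilde g\colon T[\mc{I}_T]\to Y \text{ an $X$-morphism restricting to }g\}, \]
where $\mc{I}_T := g^*\mc{I}_Y$, while $\mbb{V}(\Omega_{Y/X}\otimes \mc{I}_Y^*)(T/Y) = \Hom_{\mc{O}_T}(g^*\Omega_{Y/X},\mc{I}_T)$. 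The goal is to build a natural bijection between these two sets.

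First I would send a lift $\tilde g$ to the $X$-morphism $h := (g\pi,\tilde g)\colon T[\mc{I}_T]\to Y\times_X Y$, where $\pi\colon T[\mc{I}_T]\to T$ is the canonical retraction. Both components of $h$ restrict to $g$ on $T$, so $h^\sharp(\mc{I}_\Delta)\subseteq \mc{I}_T$; since $\mc{I}_T^2=0$ it follows that $h^\sharp(\mc{I}_\Delta^2)=0$, and $h$ factors through the first infinitesimal neighborhood $\Delta_{(1)}\subseteq Y\times_X Y$. Applying \cref{corollary.section-infinitesimal-pullback} to the smooth morphism $p_1\colon Y\times_X Y\to Y$ and its section $\Delta$ yields a canonical identification $\Delta_{(1)}=Y[\Omega_{Y/X}]$ as locally free nilpotent thickenings of $Y/Y$ (with $Y$-structure via $p_1$). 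Via \cref{prop.augmented-equiv}, the resulting map $T[\mc{I}_T]\to Y[\Omega_{Y/X}]$ over $Y$ corresponds to an augmented $\mc{O}_Y$-algebra homomorphism $\mc{O}_Y\oplus\Omega_{Y/X}\to g_*(\mc{O}_T\oplus \mc{I}_T)$ lifting $g^\sharp$, which is determined by its linear part $\alpha\colon g^*\Omega_{Y/X}\to\mc{I}_T$.

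In the other direction, I would send $\alpha$ to $\tilde g := p_2\circ h_\alpha$, where $h_\alpha\colon T[\mc{I}_T]\to Y[\Omega_{Y/X}] = \Delta_{(1)}\hookrightarrow Y\times_X Y$ is the map built from $\alpha$ via \cref{prop.augmented-equiv}; the identity $p_2\circ \Delta = \mathrm{id}_Y$ shows $\tilde g$ restricts to $g$ on $T$, and the two constructions are mutually inverse because the first and second projections of $h_\alpha$ are, tautologically, $g\pi$ and $\tilde g$ respectively. Naturality in $T/Y$ is immediate from the canonicity of each step. The only genuinely new input relative to the scheme proof is the factorization of $h$ through $\Delta_{(1)}$, which I expect to be the main step to check; however it reduces to the ideal-theoretic argument above together with the fact that \cref{corollary.section-infinitesimal-pullback} already certifies $\Delta_{(1)}$ as a locally free nilpotent thickening in this setting. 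As a sanity check, one could reduce locally via \cref{prop.section-smooth-balls} to the case where $Y\times_X T\to T$ factors through $\mbb{B}^d_T$ and $g$ is the zero section, in which case a lift of $g$ to $T[\mc{I}_T]\to\mbb{B}^d_T$ is freely parameterized by a $d$-tuple in $\mc{I}_T$ (automatically topologically nilpotent since $\mc{I}_T^2=0$), matching $\Hom(\mc{O}_T^d,\mc{I}_T)=\Hom(g^*\Omega_{\mbb{B}^d_T/T},\mc{I}_T)$ under the basis $dt_1,\ldots,dt_d$.
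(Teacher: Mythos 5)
Your proof is correct and follows essentially the same route as the paper's: both arguments reduce a lift of $g$ over $T[\mc{I}_T]$ to a map into the first infinitesimal neighborhood of a section of a smooth morphism, identify that neighborhood via \cref{corollary.section-infinitesimal-pullback}, and finish with \cref{prop.augmented-equiv}. The only (immaterial) difference is that the paper works with the section $g\times\Id$ of $Y\times_X T/T$ and its neighborhood $s_{(1)}$, whereas you work with $\Delta_{(1)}\subseteq Y\times_X Y$ and pull back along $g$ — these agree by base change.
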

\begin{proof}
To give a map from $T\times_X X[\mc{I}] /X $ to $Y/X$ inducing a fixed map $g: T \rightarrow Y$ is the same as to give an extension of the section $s=g \times \Id: T \rightarrow Y \times_X T$ to a section
\[ \tilde{s}: T[\mc{I}_T]/T \rightarrow Y \times_X T / T. \]
Such a map must factor through the first infinitesimal neighborhood $s_{(1)}$ of $s$ in $Y \times_X T$. Thus, by \cref{corollary.section-infinitesimal-pullback}, to give $\tilde{s}$ is the same as to give a map of locally free square-zero thickenings 
\[ T[\mc{I}_T]/T \rightarrow T[s^*\Omega_{Y \times_X T / T}]/T=T[g^*\Omega_{Y/X}]/T. \]
But by \cref{prop.augmented-equiv}, this is the same as to give a map of $\mc{O}_T$-modules $g^*\Omega_{Y/X} \rightarrow \mc{I}_T$. This yields the desired equality
\begin{align*} \Hom(g^* \Omega_{Y/X}, \mc{I}_T)&=\Hom(g^* (\Omega_{Y/X} \otimes_{\mc{O}_Y} \mc{I}_Y^*) , \mc{O}_T) \\
&= \Hom_Y(T/Y, \mbb{V}(\Omega_{Y/X} \otimes_{\mc{O}_Y} \mc{I}_Y^*))\end{align*}
\end{proof}

\begin{example}\label{example.tangent-bundle-via-ros} When $\mc{I}=\mc{O}_X \cdot \epsilon$, we write $X[\epsilon]:=X [\mc{O}_X \cdot \epsilon]$, the thickening obtained by adding a single $\epsilon$ squaring to zero.  In this case, from \cref{prop.schemes-ros-tangent} or \cref{prop.ssadic-ros-tangent} we deduce that, for $T_{Y/X}:=\mbb{V}(\Omega_{Y/X})$ the (geometric) tangent bundle of $Y$ over $X$,
\[ R_{X[\epsilon]/X}(Y[\epsilon]/X[\epsilon])=T_{Y/X}. \]
\end{example}

\section{Inscription}\label{s.inscription}

In this section we introduce the notion of inscription and the construction of tangent bundles for inscribed objects. The general setup starts with a pair $(\mathcal{C}, B)$, where $\mathcal{C}$ is a category and $B$ is a functor from $\mathcal{C}$ to schemes or strongly sheafy adic spaces. In this setup, we define a new category $B^\lf$ whose objects are pairs consisting of an object of $\mathcal{C}$ and a locally free thickening of the associated scheme or strongly sheafy adic space. An inscribed fibered category will then be a fibered category over $B^\lf$ that transforms certain simple coproducts into products. These objects have natural tangent bundles, which are again inscribed fibered categories, and for an inscribed presheaf the tangent bundle has moreover a natural module structure. After developing the basic language, in \cref{ss.inscribed-groups} we explain a simple theory of inscribed groups and their actions, and in \cref{ss.inscribed-mos} we explain a moduli of sections construction of inscribed presheaves that will play a key role in the remainder of the work. In section \cref{ss.restricted-categories-thickenings} we discuss how these results extend to a more general notion where the test objects form only a natural subcategory of $B^\lf$.

The key example for us is when $\mathcal{C}$ is a category of affinoid perfectoid spaces and the functor $B$ is the (adic or scheme-theoretic) Fargues--Fontaine curve. However, we will also find ourselves using variants where $B$ is another natural functor arising in $p$-adic Hodge theory, such as the canonical deformation of a perfectoid space in characteristic zero. Since the basic properties are completely independent of any constructions in $p$-adic Hodge theory, we hope it will be clearer to develop them without mentioning the specific situation. The specific inscribed contexts we are interested in will be described in detail in \cref{s.inscribed-contexts}, along with the results specific to those setups.

\subsection{Inscribed fibered categories}
We adopt the terminology of  \cite[\href{https://stacks.math.columbia.edu/tag/0011}{Tag 0011}]{stacks-project} in our discussion of fibered categories. 

\newcommand{\Spaces}{\mathrm{Spaces}}
Let $\mathcal{C}$ be a category, let $\Spaces$ be either  the category of schemes or strongly sheafy adic spaces and let $B: \mathcal{C} \rightarrow \Spaces$ be a covariant functor. 

\begin{definition}
We write $B^\lf$ for the fibered category over $\mathcal{C}$ whose objects are pairs $(o, \mc{B}/B(o))$ such that $o \in \mathcal{C}$ and $\mc{B}/B(o)$ is a locally free nilpotent thickening of $B(o)/B(o)$ (see \cref{def.thickenings}). The morphisms
\[ \Hom_{B^\lf}\left((o, \mc{B}/B(o)), (o', \mc{B}'/B(o'))\right)\]
are given by the set of pairs consisting of a morphism $o \rightarrow o'$ and a morphism $\mc{B} \rightarrow \mc{B}'$ covering the induced map $B(o) \rightarrow B(o')$. In what follows, we will write an object of $B^\lf$ simply as $\mc{B}$ or $\mc{B}/B(o)$ when it will cause no confusion. 
\end{definition}

\begin{example}
If $\mathcal{C}=\{\ast\}$ and $B(\ast)=\Spec k$ (resp.  $\Spa(k,k^+)$) for $k$ a field (resp. non-archimedean field), then $B^\lf$ is equivalent to the opposite category of nilpotent artininian local $k$-algebras with residue field $k$. 
\end{example}

Note that, given an object $\mc{B}_0/B(o) \in B^\lf$, and locally free nilpotent thickenings $\mc{B}_i/\mc{B}_0$ of $\mc{B}_0/\mc{B}_0$, $i=1,2$, the push-out $\mc{B}_1 \sqcup_{\mc{B}_0} \mc{B}_2$ is naturally a locally free nilpotent thickening of $B(o)/B(o)$ thus gives an object of $B^\lf$. At the level of augmented $\mc{O}_{B(o)}$ algebras as in \cref{prop.augmented-equiv}, this push-out corresponds to the fiber product $\mathcal{O}_{\mc{B}_1} \times_{\mc{O}_{\mc{B}_0}} \mc{O}_{\mc{B}_2}$.  

\begin{definition}[Inscribed fibered categories]\label{def.inscribed-fc}  A fibered category $\mc{S}$ over $B^\lf$ is \emph{inscribed} if, for any $\mc{B}_0 \in B^\lf$ and any pair of locally free nilpotent thickenings $\mc{B}_i/\mc{B}_0$, $i=1,2$, the functor 
\begin{equation}\label{eq.inscribed-def} \mc{S}(\mc{B}_1 \sqcup_{\mc{B}_0} \mc{B}_2) \rightarrow \mc{S}(\mc{B}_1) \times_{\mc{S}(\mc{B}_0)} \mc{S}(\mc{B}_2)\end{equation}
induced by pullback along $\mc{B}_i \hookrightarrow \mc{B}_1 \sqcup_{\mc{B}_0} \mc{B}_2$ is an equivalence (note that this does not depend on the choice of pullbacks for $\mc{S}$). 
\end{definition}

\begin{example} We can view a presheaf $\mc{S}$ on $B^\lf$ as a discrete fibered category, i.e. a category fibered in sets over $B^\lf$. Then $\mc{S}$ is inscribed if and only for any $\mc{B}_i/\mc{B}_0$ as above, \cref{eq.inscribed-def} is a bijection of sets. 
\end{example}

\begin{definition}[Underlying fibered category and trivial inscription]\label{def.underlying-fc-and-trivial-inscription}\hfill
\begin{enumerate}
    \item Pullback along $\mc{C} \rightarrow B^\lf$, $o \mapsto B(o)/B(o)$ is a functor from the (2,1)-category of fibered categories over $B^\lf$ to the (2,1)-category of fibered categories over $\mc{C}$, which we write as $\mc{S} \mapsto \overline{\mc{S}}$. We refer to $\overline{\mc{S}}$ as the underlying fibered category of $\mc{S}$; if $\mc{S}$ is inscribed, we refer to $\mc{S}$ as an inscription on $\overline{\mc{S}}$. 
    \item Pullback along $B^\lf \rightarrow \mc{C}$, $\mc{B}/B(o) \mapsto o$, is a functor from 
   the (2,1)-category of fibered categories over $\mc{C}$ to the (2,1)-category of fibered categories over $B^\lf$, which we write as $S \mapsto S^\triv$. We refer to $S^\triv$ as the trivial inscription on $S$.
\end{enumerate}
\end{definition}

The name trivial inscription is justified by the immediate
\begin{lemma} For $S$ a fibered category over $\mc{C}$,  $S^\triv$ is inscribed. 
\end{lemma}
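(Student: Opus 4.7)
The plan is to verify the inscription condition by simply unwinding what $S^\triv$ does to the diagram $\mc{B}_1 \leftarrow \mc{B}_0 \rightarrow \mc{B}_2$ in $B^\lf$. First I would spell out the definition of $S^\triv$ as a pullback: its fiber over $\mc{B}/B(o)\in B^\lf$ is canonically equivalent to $S(o)$, and for any morphism $(o,\mc{B}/B(o)) \to (o',\mc{B}'/B(o'))$ in $B^\lf$ covering a morphism $o\to o'$ of $\mc{C}$, the induced restriction functor $S^\triv(\mc{B}') \to S^\triv(\mc{B})$ is (canonically 2-isomorphic to) the pullback functor $S(o') \to S(o)$ of the fibered category $S$.

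Next I would observe that, given any $\mc{B}_0/B(o) \in B^\lf$ and any pair of locally free nilpotent thickenings $\mc{B}_i/\mc{B}_0$ for $i=1,2$, all four objects $\mc{B}_0$, $\mc{B}_1$, $\mc{B}_2$, and $\mc{B}_1 \sqcup_{\mc{B}_0} \mc{B}_2$ live in $B^\lf$ over the \emph{same} object $o \in \mc{C}$, and the four structural morphisms $\mc{B}_0 \to \mc{B}_i \to \mc{B}_1 \sqcup_{\mc{B}_0} \mc{B}_2$ are morphisms in $B^\lf$ covering $\mathrm{id}_o$. Applying the identification of the previous paragraph, each of the four fibers
\[ S^\triv(\mc{B}_1 \sqcup_{\mc{B}_0} \mc{B}_2),\ S^\triv(\mc{B}_1),\ S^\triv(\mc{B}_2),\ S^\triv(\mc{B}_0) \]
is canonically equivalent to $S(o)$, and the four pullback functors between them used to form the comparison functor of \cref{eq.inscribed-def} are all canonically 2-isomorphic to $\mathrm{id}_{S(o)}$.

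Finally I would conclude: under these identifications the comparison functor is (2-naturally isomorphic to) the diagonal $S(o) \to S(o) \times_{S(o)} S(o)$, where both legs of the 2-fibered product are identities, and this diagonal is an equivalence of categories (with quasi-inverse either projection). This is exactly the condition of \cref{def.inscribed-fc}, so $S^\triv$ is inscribed. There is no substantive obstacle here; the only bookkeeping point is to ensure the four canonical identifications with $S(o)$ are compatible with the chosen cleavage, which is immediate from the universal property defining $S^\triv$ as a pullback of fibered categories along $B^\lf \to \mc{C}$.
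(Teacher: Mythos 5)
Your proof is correct and is precisely the verification the paper leaves implicit (the lemma is stated as ``immediate'' with no written proof). All four test objects $\mc{B}_0$, $\mc{B}_1$, $\mc{B}_2$, $\mc{B}_1\sqcup_{\mc{B}_0}\mc{B}_2$ lie over the same $o\in\mc{C}$ with structure maps covering $\mathrm{id}_o$, so under the canonical identifications of each fiber of $S^\triv$ with $S(o)$ the comparison functor of \cref{eq.inscribed-def} becomes the diagonal $S(o)\to S(o)\times_{S(o)}S(o)$, which is an equivalence.
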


The composition of $\mc{C} \rightarrow B^\lf \rightarrow \mc{C}$ is the identity functor. We thus obtain a natural isomorphism of functors from fibered categories over $\mc{C}$ to fibered categories over $\mc{C}$, $\overline{(\Box^\triv)} = \Id$. We also have natural transformations $\Id \rightarrow (\overline{\Box})^\triv \rightarrow \Id$ of functors from fibered categories on $B^\lf$ to fibered categories on $B^\lf$ via the natural maps $\mc{S}(\mc{B}/B(o)) \rightarrow \mc{S}(B(o)/B(o)) \rightarrow \mc{S}(\mc{B}/B(o))$ induced by pullbacks along $\mc{B}/B(o) \rightarrow B(o)/B(o) \rightarrow \mc{B}/B(o)$. 

The induced functors
\[ \Hom(S^\triv, \mc{S}) \rightarrow \Hom(S, \overline{\mc{S}})  \textrm{ and } \Hom(\overline{\mc{S}}, S) \rightarrow \Hom(\mc{S}, S^\triv)\]
are equivalences, i.e. trivial inscription and the underlying fibered category are ambidextrously adjoint. In particular, since $\overline{\Box^\triv}=\Id$, we find $\Box^\triv$ is fully faithful. Because of this, we will often drop the superscript $\triv$ and simply treat fibred categories over $\mathcal{C}$ as trivially inscribed fibered categories over $B^\lf$ when it will cause no serious confusion; as in \cref{remark.deRhamStack}, from this perspective we can view $\overline{\mc{S}}$ as a type of de Rham stack for $\mc{S}$. 

We also have the following useful permanence property under limits: 

\begin{lemma}\label{lemma.inscribed-limits}
Inscribed fibered categories are preserved under 2-limits. 
\end{lemma}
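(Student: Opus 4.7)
The plan is to reduce the statement to the fact that 2-limits commute with 2-limits. The inscribed condition \cref{eq.inscribed-def} demands that a certain comparison functor between 2-limits (the left side is a category of sections, the right a 2-fiber product of such) be an equivalence. For $\mc{S} = 2\text{-}\lim_j \mc{S}_j$ a 2-limit of inscribed fibered categories over $B^\lf$, both sides of the comparison for $\mc{S}$ should rewrite naturally as 2-limits over $j$ of the corresponding sides for each $\mc{S}_j$, at which point the equivalence for $\mc{S}$ follows from the equivalences for each $\mc{S}_j$.

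First, I would recall (or invoke from the general theory of fibered categories, cf. \cite[\href{https://stacks.math.columbia.edu/tag/02XD}{Tag 02XD}]{stacks-project}) that for a 2-limit $\mc{S} = 2\text{-}\lim_j \mc{S}_j$ of fibered categories over the base $B^\lf$ and any test object $\mc{B} \in B^\lf$, the fiber category is canonically equivalent to the 2-limit of the fibers:
\[ \mc{S}(\mc{B}) \;\simeq\; 2\text{-}\lim_j \mc{S}_j(\mc{B}). \]
Second, I would invoke the general fact that 2-fiber products commute with 2-limits in each slot: for any 2-functorial diagram of spans $\{A_j \to B_j \leftarrow C_j\}$, the canonical functor
\[ 2\text{-}\lim_j\bigl(A_j \times_{B_j} C_j\bigr) \longrightarrow \bigl(2\text{-}\lim_j A_j\bigr) \times_{2\text{-}\lim_j B_j} \bigl(2\text{-}\lim_j C_j\bigr) \]
is an equivalence.

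Applied to the inscribed coproduct $\mc{B}_1 \sqcup_{\mc{B}_0} \mc{B}_2$ and its inclusions from $\mc{B}_0, \mc{B}_1, \mc{B}_2$, these two facts together identify the comparison functor
\[ \mc{S}(\mc{B}_1 \sqcup_{\mc{B}_0} \mc{B}_2) \longrightarrow \mc{S}(\mc{B}_1) \times_{\mc{S}(\mc{B}_0)} \mc{S}(\mc{B}_2) \]
with the 2-limit over $j$ of the corresponding comparison functors for each $\mc{S}_j$. Since each of those is an equivalence by the inscribed hypothesis and a 2-limit of equivalences is an equivalence, so is the comparison for $\mc{S}$.

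The only real obstacle is bookkeeping: one must check that the functor obtained via the universal property of the 2-limit agrees with the functor induced by pullback along the inclusions $\mc{B}_i \hookrightarrow \mc{B}_1 \sqcup_{\mc{B}_0} \mc{B}_2$. This is a routine diagram chase, using that the pullback functoriality in $\mc{S}$ is itself constructed from the pullback functorialities in each $\mc{S}_j$ via the universal property of the 2-limit.
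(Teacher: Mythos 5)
Your proposal is correct and follows the same route as the paper's proof, which simply computes $(\lim \mc{S}_j)(\mc{B}_1 \sqcup_{\mc{B}_0} \mc{B}_2) = \lim \mc{S}_j(\mc{B}_1 \sqcup_{\mc{B}_0} \mc{B}_2) = \lim (\mc{S}_j(\mc{B}_1) \times_{\mc{S}_j(\mc{B}_0)} \mc{S}_j(\mc{B}_2)) = (\lim \mc{S}_j)(\mc{B}_1) \times_{(\lim \mc{S}_j)(\mc{B}_0)} (\lim \mc{S}_j)(\mc{B}_2)$, i.e.\ fibers of 2-limits are 2-limits of fibers and 2-limits commute with 2-fiber products. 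Your additional remark about checking compatibility of the comparison functors is just the bookkeeping the paper leaves implicit.
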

\begin{proof}
For $\mc{S}_i$, $j \in J$ a diagram of inscribed fibered categories and $\mc{B}_i/\mc{B}_0$ locally free thickenings of $\mc{B}_0$ as in \cref{def.inscribed-fc} we have 
\begin{align*} ( \lim \mc{S}_j)(\mc{B}_1 \sqcup_{\mc{B}_0} \mc{B}_2) &= \lim ( \mc{S}_j(\mc{B}_1 \sqcup_{\mc{B}_0} \mc{B}_2)) \\
&= \lim (\mc{S}_j(\mc{B}_1) \times_{\mc{S}_j(\mc{B}_0)} \mc{S}_j(\mc{B}_2) )\\
&= (\lim \mc{S}_j)(\mc{B}_1) \times_{ (\lim \mc{S}_j)(\mc{B}_0) } (\lim \mc{S}_j)(\mc{B}_2).
\end{align*}
\end{proof}

\subsection{$\mbb{B}$-modules}
\begin{definition}\label{def.BB} Let $\mbb{B}$ be the presheaf of rings on $B^\lf$ defined by 
\[ \mbb{B}(\mc{B})=H^0(\mc{B}, \mc{O}_{\mc{B}}). \]
\end{definition}

\begin{proposition}\label{prop.BB-inscribed}
$\mbb{B}$ is an inscribed presheaf. 
\end{proposition}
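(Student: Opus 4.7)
The plan is to show that, at the level of sheaves of rings on the common underlying topological space $|B(o)|$, the structure sheaf of the pushout $\mathcal{B}_1 \sqcup_{\mathcal{B}_0} \mathcal{B}_2$ is literally the fiber product $\mathcal{O}_{\mathcal{B}_1} \times_{\mathcal{O}_{\mathcal{B}_0}} \mathcal{O}_{\mathcal{B}_2}$, and then to deduce the inscribed condition for $\mathbb{B}$ from the fact that $H^0$ commutes with limits.

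Concretely, I would fix $\mathcal{B}_0/B(o) \in B^\lf$ and locally free nilpotent thickenings $\mathcal{B}_i/\mathcal{B}_0$, $i=1,2$. The discussion immediately preceding \cref{def.inscribed-fc} (combined with \cref{prop.augmented-equiv}) identifies $\mathcal{B}_1 \sqcup_{\mathcal{B}_0} \mathcal{B}_2$ with the locally free nilpotent thickening of $B(o)/B(o)$ whose augmented $\mathcal{O}_{B(o)}$-algebra is $\mathcal{O}_{\mathcal{B}_1} \times_{\mathcal{O}_{\mathcal{B}_0}} \mathcal{O}_{\mathcal{B}_2}$; the splittings supplied by the thickening structures give a local decomposition
$$\mathcal{O}_{\mathcal{B}_1} \times_{\mathcal{O}_{\mathcal{B}_0}} \mathcal{O}_{\mathcal{B}_2} \;\cong\; \mathcal{O}_{\mathcal{B}_0} \,\oplus\, \mathcal{I}_{\mathcal{B}_0 \hookrightarrow \mathcal{B}_1} \,\oplus\, \mathcal{I}_{\mathcal{B}_0 \hookrightarrow \mathcal{B}_2},$$
so the fiber product is indeed locally free over $\mathcal{O}_{B(o)}$ with nilpotent augmentation. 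I would then apply the global sections functor $H^0(|B(o)|, -)$: as a right adjoint on sheaves of abelian groups it preserves all small limits, and in particular this fiber product. This gives
$$\mathbb{B}(\mathcal{B}_1 \sqcup_{\mathcal{B}_0} \mathcal{B}_2) \;=\; \mathbb{B}(\mathcal{B}_1) \times_{\mathbb{B}(\mathcal{B}_0)} \mathbb{B}(\mathcal{B}_2),$$
which is precisely the inscribed condition of \cref{def.inscribed-fc} applied to the presheaf $\mathbb{B}$ viewed as a discrete fibered category.

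I do not expect any serious obstacle: the argument is a formal consequence of the description of pushouts in $B^\lf$ via augmented algebras together with exactness of $H^0$. The only minor subtlety to keep in mind in the adic setting is that I must work with sheaves of rings rather than sheaves of topological rings, but this is harmless because $\mathbb{B}$ is defined as the plain ring of global sections and forgets the canonical topology placed on $\mathcal{O}_{\mathcal{B}}$ in \cref{def.thickenings}.
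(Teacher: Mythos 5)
Your proof is correct and is essentially the paper's argument: the paper notes $\mathbb{B}(\mathcal{B}/B(o))=\Hom_{B(o)}(\mathcal{B},\mathbb{A}^1_{B(o)})$ and invokes the universal property of the coproduct, which amounts to exactly your observation that the structure sheaf of $\mathcal{B}_1\sqcup_{\mathcal{B}_0}\mathcal{B}_2$ is the fiber product $\mathcal{O}_{\mathcal{B}_1}\times_{\mathcal{O}_{\mathcal{B}_0}}\mathcal{O}_{\mathcal{B}_2}$ and that global sections preserve this limit.
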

\begin{proof}
To see that $\mbb{B}$ is inscribed, it suffices to note that 
\[ \mbb{B}(\mc{B}/B(o)) = \Hom_{B(o)}(\mc{B}, \mbb{A}^1_{B(o)}) \]
so that it follows from the universal property of the coproduct. 
\end{proof}

\begin{remark}
In \cref{ss.inscribed-mos} we will see that much more general moduli of sections constructions also give rise to inscribed presheaves. 
\end{remark}

A $\mbb{B}$-module over an inscribed presheaf $\mc{S}$ is a morphism of presheaves $\mc{V} \rightarrow \mc{S}$ equipped with a zero section $0_{\mc{V}}:\mc{S} \rightarrow \mc{V}$, a commutative group law $\mc{V} \times_{\mc{S}} \mc{V} \rightarrow \mc{V}$, and an action map $\mbb{B} \times {\mc{V}} \rightarrow \mc{V}$ satisfying the usual properties. A $\mbb{B}$-module $\mc{V}$ over $\mc{S}$ is inscribed if it is inscribed as a presheaf on $B^\lf$.

\subsection{Tangent bundles}
There is a natural functor $B^\lf \rightarrow B^\lf$ of fibered categories over $\mc{C}$ sending $\mc{B}$ to $\mc{B}[\epsilon]$ as in \cref{example.tangent-bundle-via-ros}.  For $\mc{S}$ an inscribed fibered category, we write $T_{\mc{S}}$ for the pullback of $\mc{S}$ along this functor, i.e. $T_{\mc{S}}(\mc{B}/B(o))=\mc{S}(\mc{B}[\epsilon]/B(o))$.  

\begin{lemma}\label{lemma.tangent-bundle-is-inscribed} For $\mc{S}$ an inscribed fibered category, $T_{\mc{S}}$ is an inscribed fibered category. 
\end{lemma}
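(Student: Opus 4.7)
The plan is to unfold the definitions and reduce the claim to the inscription property of $\mc{S}$ applied to pushouts of the form $\mc{B}_1[\epsilon] \sqcup_{\mc{B}_0[\epsilon]} \mc{B}_2[\epsilon]$. Specifically, fix $\mc{B}_0 \in B^\lf$ and locally free nilpotent thickenings $\mc{B}_i/\mc{B}_0$, $i=1,2$. By the definition $T_{\mc{S}}(\mc{B}) = \mc{S}(\mc{B}[\epsilon])$, so the comparison functor
\[ T_{\mc{S}}(\mc{B}_1 \sqcup_{\mc{B}_0} \mc{B}_2) \to T_{\mc{S}}(\mc{B}_1) \times_{T_{\mc{S}}(\mc{B}_0)} T_{\mc{S}}(\mc{B}_2) \]
is identified with
\[ \mc{S}\bigl((\mc{B}_1 \sqcup_{\mc{B}_0} \mc{B}_2)[\epsilon]\bigr) \to \mc{S}(\mc{B}_1[\epsilon]) \times_{\mc{S}(\mc{B}_0[\epsilon])} \mc{S}(\mc{B}_2[\epsilon]). \]

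The key step is to show that the functor $(-)[\epsilon]: B^\lf \to B^\lf$ commutes with pushouts of locally free nilpotent thickenings over a common base. Using the equivalence of \cref{prop.augmented-equiv}, a pushout $\mc{B}_1 \sqcup_{\mc{B}_0} \mc{B}_2$ corresponds on augmented $\mc{O}_{B(o)}$-algebras to the fiber product $\mc{O}_{\mc{B}_1} \times_{\mc{O}_{\mc{B}_0}} \mc{O}_{\mc{B}_2}$, while the $\epsilon$-thickening corresponds to $(-) \otimes_{\mbb{Z}} \mbb{Z}[\epsilon]/\epsilon^2$. Since $\mbb{Z}[\epsilon]/\epsilon^2$ is finite free over $\mbb{Z}$, this tensor product commutes with the fiber product, giving
\[ (\mc{B}_1 \sqcup_{\mc{B}_0} \mc{B}_2)[\epsilon] \cong \mc{B}_1[\epsilon] \sqcup_{\mc{B}_0[\epsilon]} \mc{B}_2[\epsilon]. \]
One also checks that $\mc{B}_i[\epsilon]/\mc{B}_0[\epsilon]$ are themselves locally free nilpotent thickenings, since the ideal defining the thickening is obtained from $\ker(\mc{O}_{\mc{B}_i} \to \mc{O}_{\mc{B}_0})$ by extending scalars along $\mc{O}_{\mc{B}_0} \to \mc{O}_{\mc{B}_0}[\epsilon]/\epsilon^2$, preserving both local freeness and local nilpotence.

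Given this compatibility, the inscription property of $\mc{S}$ applied to the pushout $\mc{B}_1[\epsilon] \sqcup_{\mc{B}_0[\epsilon]} \mc{B}_2[\epsilon]$ delivers exactly the desired equivalence. I expect the main work is the verification that $(-)[\epsilon]$ commutes with pushouts; this is a routine calculation using \cref{prop.augmented-equiv} rather than a genuine obstacle, so the proof should be quite short. I would present it as a single short paragraph invoking \cref{prop.augmented-equiv} for the commutation and then the defining property of $\mc{S}$.
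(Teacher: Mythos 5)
Your proposal is correct and follows essentially the same route as the paper: the paper's proof is exactly the chain of identifications $T_{\mc{S}}(\mc{B}_1 \sqcup_{\mc{B}_0} \mc{B}_2) = \mc{S}((\mc{B}_1 \sqcup_{\mc{B}_0} \mc{B}_2)[\epsilon]) \cong \mc{S}(\mc{B}_1[\epsilon] \sqcup_{\mc{B}_0[\epsilon]} \mc{B}_2[\epsilon]) \cong \mc{S}(\mc{B}_1[\epsilon]) \times_{\mc{S}(\mc{B}_0[\epsilon])} \mc{S}(\mc{B}_2[\epsilon])$, with the commutation of $(-)[\epsilon]$ with pushouts left implicit. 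Your explicit verification of that commutation via \cref{prop.augmented-equiv} is exactly the right justification for the step the paper takes for granted.
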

\begin{proof}
Using the inscribed property of $\mc{S}$, we compute
\begin{align*}(T_\mc{S})(\mc{B}_1 \sqcup_{\mc{B}_0} \mc{B}_2) & = \mc{S}( (\mc{B}_1 \sqcup_{\mc{B}_0} \mc{B}_2)[\epsilon]) \\
&\cong \mc{S}(\mc{B}_1[\epsilon] \sqcup_{\mc{B}_0[\epsilon]} \mc{B}_2[\epsilon]) \\
&\cong \mc{S}(\mc{B}_1[\epsilon]) \times_{\mc{S}(\mc{B}_0[\epsilon])} \mc{S}(\mc{B}_2[\epsilon])\\
&\cong T_\mc{S}(\mc{B}_1) \times_{T_\mc{S}(\mc{B}_0)}  T_\mc{S}(\mc{B}_2).
\end{align*}
\end{proof}

The canonical map $\mc{B} \rightarrow \mc{B}[\epsilon]$ induces a morphism $T_{\mc{S}} \rightarrow \mc{S}$. When $\mc{S}$ is an inscribed presheaf, so is $T_{\mc{S}}$, and we claim this morphism can be upgraded with the natural structure of a $\mbb{B}$-module over $\mc{S}$. Indeed, we will show it obtains a $0$-section $0_{T_{\mc{S}}}: \mc{S} \rightarrow T_{\mc{S}}$ by pullback along the structure maps $\mc{B}[\epsilon] \rightarrow \mc{B}$, an action $a_{T_{\mc{S}}}: \mbb{B}\times\mc{V} \rightarrow \mc{V}$ via the natural identification $\mbb{B}(\mc{B})=\End_{\mc{B}}(\mc{B}[\epsilon])$, and an abelian group structure $+_{\mc{T}_{\mc{S}}}: T_{\mc{S}} \times_{\mc{S}} T_{\mc{S}} \rightarrow T_{\mc{S}}$ by 
\[ \mc{S}(\mc{B}[\epsilon])\times_{\mc{S}(\mc{B})} \mc{S}(\mc{B}[\epsilon]) \rightarrow \mc{S}(\mc{B}[\epsilon]\sqcup_{\mc{B}}\mc{B}[\epsilon]) \rightarrow \mc{S}(\mc{B}[\epsilon]) \]
where the first map is obtained by inverting the bijection coming from the inscribed property and the second map is pullback along the composition of $\mc{B}[\epsilon] \sqcup_{\mc{B}} \mc{B}[\epsilon]=\mc{B}[\epsilon_1, \epsilon_2]$ (where here $\epsilon_1^2=\epsilon_1\epsilon_2=\epsilon_2^2=0$) and the map $\epsilon \mapsto \epsilon_1 + \epsilon_2$. 

\begin{proposition}\label{prop.tangent-bundle-functor}
The assignment 
\[ \mc{S} \mapsto (T_{\mc{S}}/\mc{S}, 0_{T_{\mc{S}}}, +_{T_{\mc{S}}}, a_{T_{\mc{S}}})\] is a functor from the (1-)category of inscribed presheaves to the (1-)category of inscribed presheaves equipped with an inscribed $\mbb{B}$-module. 
\end{proposition}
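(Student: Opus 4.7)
The plan is to exhibit every piece of the $\mbb{B}$-module structure as the pullback along an explicit morphism in $B^\lf$ between objects of the form
\[ \mc{B}[\epsilon_1, \ldots, \epsilon_n] := \mc{B}[\epsilon_1] \sqcup_{\mc{B}} \cdots \sqcup_{\mc{B}} \mc{B}[\epsilon_n], \]
which by \cref{prop.augmented-equiv} corresponds at the level of augmented $\mc{O}_{\mc{B}}$-algebras to $\mc{O}_\mc{B}[\epsilon_1, \ldots, \epsilon_n]/(\epsilon_1, \ldots, \epsilon_n)^2$. Iterating the inscribed property of $\mc{S}$ produces a natural identification
\[ \mc{S}(\mc{B}[\epsilon_1, \ldots, \epsilon_n]) \;\cong\; T_\mc{S}(\mc{B}) \times_{\mc{S}(\mc{B})} \cdots \times_{\mc{S}(\mc{B})} T_\mc{S}(\mc{B}) \quad (n \text{ factors}). \]
This is the key technical ingredient: any morphism $\mc{B}[\epsilon] \to \mc{B}[\epsilon_1, \ldots, \epsilon_n]$ in $B^\lf$ then induces, via $\mc{S}$ and this identification, a map from the $n$-fold fibered product of $T_\mc{S}(\mc{B})$ over $\mc{S}(\mc{B})$ into $T_\mc{S}(\mc{B})$ over $\mc{S}(\mc{B})$, and commutative diagrams of such morphisms in $B^\lf$ produce commutative diagrams of the associated operations on $T_\mc{S}$.

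Each piece of structure and each axiom is a manifestation of this principle. The zero section is pullback along the structure map $\mc{B}[\epsilon] \to \mc{B}$; scalar multiplication by $b \in \mbb{B}(\mc{B}) = \End_\mc{B}(\mc{B}[\epsilon])$ is pullback along the corresponding self-morphism of $\mc{B}[\epsilon]$; addition is pullback along ``$\epsilon \mapsto \epsilon_1 + \epsilon_2$'' as in the definition preceding the proposition; and inversion is pullback along ``$\epsilon \mapsto -\epsilon$''. Each $\mbb{B}$-module axiom --- commutativity and associativity of $+_{T_\mc{S}}$, the fact that $0_{T_\mc{S}}$ is a two-sided identity with inverses, associativity and distributivity for the $\mbb{B}$-action --- is then witnessed by the commutativity of a small diagram of morphisms among the $\mc{B}[\epsilon_1, \ldots, \epsilon_n]$'s, which is straightforward to verify at the level of augmented algebras. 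Conceptually, this is the shadow on $\mc{S}$ of the fact that $\mc{O}_\mc{B} \oplus \mc{O}_\mc{B} \cdot \epsilon$ is the universal $\mc{O}_\mc{B}$-module square-zero extension of $\mc{O}_\mc{B}$.

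For functoriality, a morphism $f: \mc{S} \to \mc{S}'$ of inscribed presheaves induces levelwise maps $f_{\mc{B}[\epsilon]}: \mc{S}(\mc{B}[\epsilon]) \to \mc{S}'(\mc{B}[\epsilon])$ which assemble into a morphism of presheaves $T_f: T_\mc{S} \to T_{\mc{S}'}$ sitting over $f$ (the latter because $T_\mc{S} \to \mc{S}$ is itself pullback along $\mc{B} \to \mc{B}[\epsilon]$, and $f$ is natural in this morphism). Compatibility of $T_f$ with the zero sections, addition maps, and $\mbb{B}$-actions is immediate from naturality: all these operations are defined by pullback along morphisms in $B^\lf$ that do not depend on the chosen inscribed presheaf, so $f$ commutes with them. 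Functoriality in the sense of respecting identities and composition is then automatic from functoriality of $\mc{S} \mapsto \mc{S}(\mc{B}[\epsilon])$.

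The only subtle point, which I expect to be the main obstacle, is verifying that the identification of $\mc{S}(\mc{B}[\epsilon_1, \ldots, \epsilon_n])$ with the $n$-fold fibered product of $T_\mc{S}(\mc{B})$ over $\mc{S}(\mc{B})$ is canonical and fully natural in morphisms among such thickenings; in particular, that it is independent of the order in which the inscribed condition is iterated. This amounts to associativity of the coproduct in $B^\lf$ combined with the naturality built into \cref{def.inscribed-fc}, and once it is set up carefully the remainder of the argument is a sequence of entirely formal diagram chases that invoke no structural feature of the context $(\mc{C}, B)$ beyond the existence of locally free square-zero thickenings.
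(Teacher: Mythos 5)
Your proposal is correct and is essentially the paper's argument: the paper also defines the zero section, scalar action, and addition by pullback along morphisms of constant square-zero thickenings (packaged as matrices acting on finite free $\mbb{B}(\mc{B})$-modules, with $\mc{B}[M_1\times M_2]=\mc{B}[M_1]\sqcup_{\mc{B}}\mc{B}[M_2]$ playing the role of your $\mc{B}[\epsilon_1,\ldots,\epsilon_n]$), verifies the module axioms by the same small commutative diagrams, and derives functoriality from naturality. The only cosmetic difference is that the paper abstracts this into an equivalence between product-preserving presheaves on a category of free modules and $\mbb{B}$-modules over a presheaf, which sidesteps your worry about order-independence of the iterated identification by working with $\mc{B}[M]$ for a free module $M$ directly.
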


We will prove the proposition using the following structure: we write $\mc{B}^*\VB$ for the category of pairs $(\mc{B}, \mc{I})$ consisting of a $\mc{B}$ in $B^\lf$ and a locally free of finite rank $\mc{O}_{\mc{B}}$-module $\mc{I}$. There is a functor $F: (\mc{B}^*\VB) \rightarrow B^\lf$ given by 
$(\mc{B},\mc{I}) \mapsto \mc{B}[\mc{I}].$ For $\mc{S}$ an inscribed fibered category, we can thus consider $F^*\mc{S}$. Then, for example, $T_{\mc{S}}$ is the pullback  
\[ (\mc{B} \mapsto (\mc{B},\mc{O}_{\mc{B}} \cdot \epsilon))^* F^* \mc{S},\]
and $0_{T_{\mc{S}}}$ is obtained from $\mc{B} \mapsto (\mc{O}_{\mc{B}}\cdot \epsilon \xrightarrow{0} 0)$. The structures defining the group and $\mbb{B}$-module structure only depend on the restriction to the full fibered subcategory $\mc{O}_{\mc{B}}^\oplus$ whose objects are pairs $(\mc{B}, \mc{V})$ where $\mc{V}$ is a finite free module over $\mc{O}_{\mc{B}}$. We identify this with the category $\mbb{B}^\oplus$ whose objects are pairs $(\mc{B}, M)$, where $M$ is a finite free module over $\mbb{B}(\mc{B})$. 

The result will then follow from the following more general statement: Let $\mc{A}$ be a category, let $\mbb{A}$ be a presheaf of rings on $\mc{A}$, and let $\mbb{A}^\oplus$ be the fibered category of whose objects are pairs $(A, M)$ for $A \in \mc{A}$ and $M$ a finite free $\mbb{A}(A)$-module. Note that the fiber of $\mbb{A}^\oplus$ over $A$ is the opposite category of finite free $\mbb{A}(A)$-modules, so the restriction of any presheaf $\mc{F}$ to this fiber can be viewed as a covariant functor $\mc{F}_A$ from finite free $\mbb{A}(A)$-modules to sets. We say a presheaf $\mc{F}$ on $\mbb{A}^\oplus$ is product-preserving if each of its restrictions $\mc{F}_A$ preserve products over the final object, i.e. if the natural map $\mc{F}_A(M_1 \times M_2) \rightarrow \mc{F}_A(M_1) \times_{\mc{F}_A(0)} \mc{F}_A(M_2)$ is a bijection for any $M_1$, $M_2$. Given a product preserving $\mc{F}$, we write $\mc{F}_i$ for the presheaf on $\mc{A}$ sending $A$ to $\mc{F}_i(\mbb{A}(A)^i)$. Then, there are natural maps 
\[ \mc{F}_1 \rightarrow \mc{F}_0, 0_{\mc{F}}: \mc{F}_0 \rightarrow \mc{F}_1, m_{\mc{F}}: \mc{F}_1 \times_{\mc{F}_0} \mc{F}_1, \textrm{ and } a_{\mc{F}}: \mbb{A} \times \mc{F}_1 \rightarrow \mc{F}_1 \]
defined on the fiber over $A$ as follows, writing $\mbb{A}(A)=:R$,
\begin{enumerate}
    \item We define the structure map $\mc{F}_{1}(A)=\mc{F}_A(R) \rightarrow \mc{F}_A(0)=\mc{F}_0(A)$ by applying $\mc{F}_A$ to the final map $R \rightarrow 0$.
    \item We define $0_{\mc{F},A}: \mc{F}_0(A)=\mc{F}_A(0) \rightarrow \mc{F}_A(R)=\mc{F}_1(A)$ by applying $\mc{F}_A$ to the initial map $0 \rightarrow R$.
    \item We define $m_{\mc{F},A}$ by composing the inverse of the bijection 
    \[ \mc{F}_A(R^2) \xrightarrow{ \mc{F}_A(\begin{bmatrix}1 & 0\end{bmatrix}) \times  \mc{F}_A(\begin{bmatrix}0 & 1\end{bmatrix})} \mc{F}(R) \times_{\mc{F}(0)}\mc{F}(R) = (\mc{F}_1 \times_{\mc{F}_0} \mc{F}_1)(A) \]
    with
    \[ \mc{F}_A(R^2) \xrightarrow{\mc{F}_A(\begin{bmatrix} 1 & 1 \end{bmatrix})} \mc{F}_A(R)=\mc{F}_1(A).\]
    \item We define the action under $a_{\mc{F},A}$ of $r \in \mbb{A}(A)=R$ on $\mc{F}_1(A)=\mc{F}_A(R)$ to be given by $\mc{F}_A([r])$, where $[r]$ is the $1\times 1$ matrix viewed as the homomorphism from $R$ to $R$ given by left multiplication by $r$. 
\end{enumerate}

\begin{lemma}\label{lemma.equiv-module-category}The assignment $\mc{F} \mapsto (\mc{F}_1/\mc{F}_0, 0_{\mc{F}}, m_{\mc{F}}, a_{\mc{F}})$ is an equivalence of categories between product preserving presheaves on $\mbb{A}^\oplus$ and the category of pairs consisting of a presheaf $\mc{F}_0$ on $\mc{A}$ and an $\mbb{A}$-module $\mc{F}_1/\mc{F}_0$. 
\end{lemma}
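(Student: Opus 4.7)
The plan is to construct an explicit quasi-inverse functor $\Phi$ and verify both round trips. Given data $(\mc{F}_0,\mc{F}_1/\mc{F}_0, 0_\mc{F}, m_\mc{F}, a_\mc{F})$, I define a presheaf $\Phi(\mc{F}_0,\mc{F}_1)$ on $\mbb{A}^\oplus$ by setting
\[ \Phi(\mc{F}_0,\mc{F}_1)(A,\mbb{A}(A)^n) := \underbrace{\mc{F}_1(A) \times_{\mc{F}_0(A)} \cdots \times_{\mc{F}_0(A)} \mc{F}_1(A)}_{n\text{ factors}},\]
with the empty fiber product interpreted as $\mc{F}_0(A)$. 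For a morphism in the fiber over $A$ given by an $n\times m$ matrix $(a_{ij})$ representing an $\mbb{A}(A)$-module map $\mbb{A}(A)^m \to \mbb{A}(A)^n$, the pullback sends $(v_1,\ldots,v_n)$ to the tuple whose $j$-th coordinate is $\sum_i a_{ij}\cdot v_i$, computed via $m_\mc{F}$ and $a_\mc{F}$ inside the fiber of $\mc{F}_1(A)$ over the common image of the $v_i$ in $\mc{F}_0(A)$. Functoriality in $\mc{A}$ is inherited from $\mc{F}_0$ and $\mc{F}_1$, and $\Phi(\mc{F}_0,\mc{F}_1)$ is product-preserving by construction.

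Next I would verify that both composites are naturally isomorphic to the identity. The composite $(\mc{F}_0,\mc{F}_1)\mapsto\Phi(\mc{F}_0,\mc{F}_1)\mapsto(\cdot,\cdot,0,m,a)$ returns the starting data by direct inspection: the rank $1$ and rank $0$ values of $\Phi$ are $\mc{F}_1$ and $\mc{F}_0$, and the extracted operations match those hard-coded into the pullback maps of $\Phi$. For the other composite, starting from a product-preserving $\mc{F}$ and forming $\Phi$ of its extracted data produces on $(A,\mbb{A}(A)^n)$ an $n$-fold fiber product; the product-preserving hypothesis, applied inductively, supplies a natural bijection to $\mc{F}_A(\mbb{A}(A)^n)$. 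Compatibility of this bijection with all morphisms reduces, after decomposing an arbitrary matrix into compositions of the generating maps $\begin{bmatrix}1 & 1\end{bmatrix}$, $R\to 0$, $0\to R$, $[r]\colon R\to R$, and the product projections and inclusions, to the definitions of $0_\mc{F}$, $m_\mc{F}$, $a_\mc{F}$ in terms of $\mc{F}$.

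To upgrade to an equivalence of categories, a natural transformation $\mc{F}\to\mc{F}'$ of product-preserving presheaves restricts to morphisms $\mc{F}_0\to\mc{F}'_0$ and $\mc{F}_1\to\mc{F}'_1$ that commute with $0$, $m$, $a$ by naturality; conversely, any morphism of module data extends uniquely to a natural transformation at all ranks via the product description.

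The main obstacle is purely combinatorial: one must check that the module axioms for $(\mc{F}_1/\mc{F}_0, 0_\mc{F}, m_\mc{F}, a_\mc{F})$ are consequences of $\mc{F}_A$ applied to universal identities among matrices in the additive category of finite free $\mbb{A}(A)$-modules. For example, associativity and commutativity of $m_\mc{F}$ come from the two composites $R^3\to R$ and from the swap $R^2\to R^2$ composed with $\begin{bmatrix}1 & 1\end{bmatrix}$, while distributivity of the scalar action follows from $[r]\circ\begin{bmatrix}1 & 1\end{bmatrix}=\begin{bmatrix}1 & 1\end{bmatrix}\circ\operatorname{diag}(r,r)$. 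These verifications are the expected Lawvere-theory dictionary between product-preserving functors and algebraic structures, here relativized over the base set $\mc{F}_0(A)$.
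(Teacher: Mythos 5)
Your proof is correct and takes essentially the same route as the paper's: the module axioms are extracted from universal identities among matrices via product preservation (the Lawvere-theory dictionary, fiberwise over $\mc{F}_0(A)$), and your $n$-fold fiber product quasi-inverse coincides with the paper's basis-free formula $(A,M)\mapsto\{(s,m): s\in\mc{F}_0(A),\ m\in M\otimes_{\mbb{A}(A)}(\mc{F}_1(A)\times_{\mc{F}_0(A)}s)\}$ once a basis of $M$ is chosen. The only cosmetic issues are a transposed index in your matrix-action formula and the fact that your $\Phi$ is literally defined only on the objects $(A,\mbb{A}(A)^n)$ rather than on all finite free modules; neither affects the substance.
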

\begin{proof}
That the data defines an $\mbb{A}$-module follows by arguing, for each $A \in \mc{A}$, fiberwise over $\mc{F}_0(A)$ using the usual result in the deformation theory of functors (see, e.g. \cite[\href{https://stacks.math.columbia.edu/tag/06I6}{Tag 06I6}]{stacks-project}). In fact, one can just rewrite the arguments in this setting: for example, the commutativity of the group law follows from the following commutative diagram
% https://q.uiver.app/#q=WzAsNSxbMCwxLCJcXG1je0Z9X0EoUl4yKSJdLFsyLDEsIlxcbWN7Rn1fQShSXjIpIl0sWzAsMCwiXFxtY3tGfV9BKFIpXFx0aW1lc1xcbWN7Rn1fQShSKSJdLFsyLDAsIlxcbWN7Rn1fQShSKVxcdGltZXNcXG1je0Z9X0EoUikiXSxbMSwyLCJcXG1je0Z9X0EoUikiXSxbMCwyLCIoXFxtY3tGfV9BKFsxXFw7MF0pLCBcXG1je0Z9X0EoWzBcXDsxXSkpIl0sWzEsMywiKFxcbWN7Rn1fQShbMVxcOzBdKSwgXFxtY3tGfV9BKFswXFw7MV0pKSIsMl0sWzIsMywiKHNfMSxzXzIpXFxtYXBzdG8oc18yLHNfMSkiXSxbMCwxXSxbMCw0LCJcXG1je0Z9X0EoWzFcXDsxXSkiLDFdLFsxLDQsIlxcbWN7Rn1fQShbMVxcOzFdKSIsMV1d
\[\begin{tikzcd}
	{\mc{F}_A(R)\times\mc{F}_A(R)} && {\mc{F}_A(R)\times\mc{F}_A(R)} \\
	{\mc{F}_A(R^2)} && {\mc{F}_A(R^2)} \\
	& {\mc{F}_A(R)}
	\arrow["{(s_1,s_2)\mapsto(s_2,s_1)}", from=1-1, to=1-3]
	\arrow["{(\mc{F}_A([1\;0]), \mc{F}_A([0\;1]))}", from=2-1, to=1-1]
	\arrow["{ \mc{F}_A(\left[\substack{0\;1 \\ 1 \; 0}\right])}", from=2-1, to=2-3]
	\arrow["{\mc{F}_A([1\;1])}"{description}, from=2-1, to=3-2]
	\arrow["{(\mc{F}_A([1\;0]), \mc{F}_A([0\;1]))}"', from=2-3, to=1-3]
	\arrow["{\mc{F}_A([1\;1])}"{description}, from=2-3, to=3-2]
\end{tikzcd}\]
and similar diagrams establish the other $R$-module properties. 

An inverse functor can be constructed by sending $\mc{F}_1/\mc{F}_0$ to the presheaf $\mc{F}$ sending $(A, M)$ to the set of pairs $(s,m)$ where $s \in \mc{F}_0(A)$ and $m \in M \otimes_{\mbb{A}(A)} (\mc{F}_1(A) \times_{\mc{F}_0(A)} s)$; we omit the verification that this is an inverse. 
\end{proof}

\begin{proof}[Proof of \cref{prop.tangent-bundle-functor}]
The functor in \cref{prop.tangent-bundle-functor} is given by first pulling back along $\mbb{B}^\oplus \rightarrow \mc{B}^*\VB \rightarrow B^\lf$ and then applying the functor of \cref{lemma.equiv-module-category}. That the pullback to $\mbb{B}^\oplus$ is product preserving follows from the inscribed property.  This shows we obtain a functor from inscribed presheaves to inscribed presheaves equipped with a $\mbb{B}$-module, as claimed. That this $\mbb{B}$-module is also inscribed follows from \cref{lemma.tangent-bundle-is-inscribed}. 
\end{proof}

\begin{remark}
For a general inscribed fibered category $\mc{S}$, it would thus be natural to try to replace the consideration of the tangent bundle with its group structure that we used for inscribed $v$-sheaves with the consideration of the pullback of $\mc{S}$ to $\mbb{B}^\oplus$. We do not consider this further here, but we note it should give some type of Picard stack over $\mc{S}$.  
\end{remark}

\begin{remark}\label{remark.tangent-bundle-equivalent-data}
If $\mc{S}$ is an inscribed presheaf, then $\overline{T_{\mc{S}}}/\overline{\mc{S}}$ is a $\overline{\mbb{B}}$-module. To define it, it suffices to know just the restriction of $\mc{S}$ to the category of finite free square-zero thickenings, and \cref{lemma.equiv-module-category} shows that knowledge of this restriction is in fact equivalent to knowledge of the $\overline{\mathbb{B}}$-module $\overline{T_{\mc{S}}}/\overline{\mc{S}}$. 
\end{remark}

\subsection{Topologies}
Let $\tau$ be a Grothendieck topology on $\mathcal{C}$. Then, since $B^\lf$ is fibered over $\mathcal{C}$, there is a natural $\tau$-topology also on $B^\lf$: a family of morphisms with fixed target is a cover if and only if is the pullback of a cover in $\mathcal{C}$. 

\begin{definition}
An \emph{inscribed} $\tau$-prestack/stack/sheaf\footnote{A fibered category is a prestack if morphisms between objects are sheaves, and a prestack is a stack if objects also satisfy descent. A presheaf, viewed as a fibered category with discrete fibers, is automatically a prestack, and is a stack if and only if it is a sheaf.} is an inscribed fibered category that is also a $\tau$-prestack/stack/sheaf on $B^\lf$. When the topology is implicit, we will often drop $\tau$ from the notation. 
\end{definition}

The following lemmas are immediate from the definitions. We will use them implicitly with no further comment.  

\begin{lemma}\label{lemma.adjunction-prestacks-stacks-sheaves} If $\mc{S}$ is an inscribed $\tau-$prestack/stack/sheaf, then $\overline{\mc{S}}$ is a prestack/stack/sheaf on $\mathcal{C}_\tau$, and if $S$ is a prestack/stack/sheaf on $\mathcal{C}_\tau$, then $S^\triv$ is an inscribed $\tau$-prestack/stack/sheaf. 
\end{lemma}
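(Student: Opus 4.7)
The statement is formal and reduces to an unwinding of definitions, made symmetric by the ambidextrous adjunction between $\overline{(-)}$ and $(-)^{\triv}$ established just above the lemma. I would organize the verification around two observations about how the topology on $B^{\lf}$ interacts with the two functors $\iota: \mathcal{C} \to B^{\lf}$, $o \mapsto B(o)/B(o)$ (defining $\overline{\mc{S}} = \iota^{*}\mc{S}$), and $p: B^{\lf} \to \mathcal{C}$, $\mc{B}/B(o) \mapsto o$ (defining $S^{\triv} = p^{*}S$).

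First I would record the key setup: by construction the $\tau$-topology on $B^{\lf}$ is pulled back along $p$, so every cover of $\mc{B}/B(o)$ in $B^{\lf}$ has the form $\{\mc{B}\times_{o}o_i \to \mc{B}\}$ for a cover $\{o_i \to o\}$ in $\mathcal{C}$, with fibered base change existing by \cref{prop.augmented-equiv}. Dually, $\iota$ is a section of $p$ and sends a cover $\{o_i\to o\}$ in $\mathcal{C}$ to the cover $\{B(o_i)/B(o_i)\to B(o)/B(o)\}$ in $B^{\lf}$, since $B(o)\times_{B(o)}B(o_i)=B(o_i)$. The double fiber products $\mc{B}_i\times_{\mc{B}}\mc{B}_j$ arising in descent data, computed using the fibered structure of $B^{\lf}/\mathcal{C}$, are the pullbacks to $B^{\lf}$ of the double fiber products $o_i\times_{o}o_j$ in $\mathcal{C}$; this is the only place where a small check is needed, and it is immediate from the fibered property.

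Given this, the first claim follows by noting that for $x,y\in \overline{\mc{S}}(o)=\mc{S}(B(o)/B(o))$ and a cover $\{o_i\to o\}$, the sheaf condition for $\mathrm{Hom}(x,y)$ on $\mathcal{C}_{\tau}$ is literally the sheaf condition for the same morphism sheaf on $B^{\lf}_{\tau}$ along the corresponding cover, which holds because $\mc{S}$ is a $\tau$-prestack; analogously for descent of objects or sections when $\mc{S}$ is a stack or sheaf. The second claim is symmetric: for $S$ a prestack on $\mathcal{C}_{\tau}$, the defining identity $S^{\triv}(\mc{B}'/B(o'))=S(o')$ shows that the descent datum on $B^{\lf}_{\tau}$ along any cover $\{\mc{B}\times_o o_i\to \mc{B}\}$ coincides with the descent datum on $\mathcal{C}_{\tau}$ along $\{o_i\to o\}$, and the hypotheses on $S$ directly give what is needed.

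\emph{Main obstacle.} There is essentially none; the proof is pure bookkeeping. The single point requiring any thought is the identification of iterated fiber products in $B^{\lf}$ with pullbacks of iterated fiber products in $\mathcal{C}$, and this is handled once and for all by the fibered structure of $B^{\lf}\to \mathcal{C}$. This is presumably why the lemma is stated as immediate from the definitions.
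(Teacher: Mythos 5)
Your proposal is correct and matches the paper's (implicit) argument: the paper gives no proof at all, declaring the lemma "immediate from the definitions," and your unwinding — covers in $B^\lf$ are pullbacks of covers in $\mathcal{C}$, $\iota$ sends covers to covers, and the fibered structure identifies the relevant iterated fiber products — is exactly the bookkeeping that justification presupposes.
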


\begin{lemma}\label{lemma.tangent-prestacks-stacks-sheaves}
    If $\mc{S}$ is an inscribed prestack/stack/sheaf, then $T_{\mc{S}}$ is an inscribed prestack/stack/sheaf. 
\end{lemma}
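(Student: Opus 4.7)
Since $T_{\mc{S}}$ is by construction the pullback of $\mc{S}$ along the functor $e \colon B^\lf \to B^\lf$, $\mc{B} \mapsto \mc{B}[\epsilon]$, the plan is to show that $e$ is continuous for the $\tau$-topology on $B^\lf$, in the sense that it sends $\tau$-covers to $\tau$-covers and commutes with the formation of the (pairwise and triple) fiber products that appear in the descent diagrams. The prestack/stack/sheaf conditions for $T_{\mc{S}}$ along any cover $\{\mc{B}_i \to \mc{B}\}$ will then translate, via $T_{\mc{S}}(-) = \mc{S}(e(-))$, into the corresponding descent conditions for $\mc{S}$ along $\{e(\mc{B}_i) \to e(\mc{B})\}$, which hold by hypothesis.

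To verify these two properties of $e$, I would use the definitions directly together with the base-change clause of \cref{prop.augmented-equiv}. By definition of the $\tau$-topology on $B^\lf$, every $\tau$-cover of $\mc{B}/B(o)$ is pulled back from a $\tau$-cover $\{o_i \to o\}$ in $\mc{C}$, i.e.\ is of the form $\{\mc{B}_i := \mc{B} \times_{B(o)} B(o_i) \to \mc{B}\}$. The base-change property for locally free augmented algebras gives canonical identifications
\[ \mc{B}_i[\epsilon] = \mc{B}[\epsilon] \times_{\mc{B}} \mc{B}_i = \mc{B}[\epsilon] \times_{B(o)} B(o_i), \]
so $\{e(\mc{B}_i) \to e(\mc{B})\}$ is precisely the $\tau$-cover of $\mc{B}[\epsilon]$ pulled back from $\{o_i \to o\}$. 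Applying the same identification iteratively yields
\[ (\mc{B}_{i_1} \times_{\mc{B}} \cdots \times_{\mc{B}} \mc{B}_{i_k})[\epsilon] = e(\mc{B}_{i_1}) \times_{e(\mc{B})} \cdots \times_{e(\mc{B})} e(\mc{B}_{i_k}), \]
which is what is needed to carry descent diagrams (for objects, morphisms, and cocycle conditions) across $e$.

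With these compatibilities in hand, the three cases of the lemma are immediate: in the sheaf case, the equalizer diagram for $T_{\mc{S}}$ along $\{\mc{B}_i \to \mc{B}\}$ unfolds to the equalizer diagram for $\mc{S}$ along $\{e(\mc{B}_i) \to e(\mc{B})\}$; in the prestack case, the isom-presheaf for a pair of objects in $T_{\mc{S}}(\mc{B}) = \mc{S}(e(\mc{B}))$ restricts along $e$ to an isom-sheaf for $\mc{S}$; and in the stack case, a descent datum for $T_{\mc{S}}$ along $\{\mc{B}_i \to \mc{B}\}$ is literally a descent datum for $\mc{S}$ along $\{e(\mc{B}_i) \to e(\mc{B})\}$, whose effectivity is preserved since the gluing $e(\mc{B}) = \mc{B}[\epsilon]$ itself lies in $B^\lf$.

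The main (very mild) obstacle is simply checking that the base-change identifications from \cref{prop.augmented-equiv} apply cleanly on both sides of every fiber product above; no conceptual difficulty arises beyond bookkeeping, and in particular no hypothesis on the specific topology $\tau$ or on the functor $B$ is needed.
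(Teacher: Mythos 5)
Your argument is correct and is exactly the (unwritten) argument the paper intends: the lemma is stated as ``immediate from the definitions,'' and the content is precisely that $\mc{B}\mapsto\mc{B}[\epsilon]$ is a functor of fibered categories over $\mc{C}$, so by \cref{prop.augmented-equiv} it carries every cover pulled back from $\{o_i\to o\}$ at $\mc{B}$ to the cover pulled back from the same $\{o_i\to o\}$ at $\mc{B}[\epsilon]$, and the descent diagrams transport accordingly. (The ``inscribed'' half of the conclusion is \cref{lemma.tangent-bundle-is-inscribed}, which you may take as already established.)
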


\subsection{Inscribed abelian sheaves and $\mbb{B}$-modules}
We fix a Grothendieck topology $\tau$ on $\mathcal{C}$. For $\mc{S}$ an inscribed sheaf, we view an abelian sheaf on $\mc{S}$ as a morphism of sheaves on $B^\lf$, $\mc{V}\rightarrow \mc{S}$, with a zero section $0_{\mc{V}}: \mc{S} \rightarrow \mc{V}$ and an addition law $\mc{V} \times_{\mc{S}} \mc{V} \rightarrow \mc{V}$ satisfying the usual compatibilities. An abelian sheaf $\mc{V}$ on $\mc{S}$ is inscribed if $\mc{V}$ is inscribed as a presheaf on $B^\lf$.

\begin{proposition}\label{prop.inscribed-abelian-sheaves-abelian-cat}
For $\mc{S}$ an inscribed sheaf, the category of inscribed abelian sheaves on $\mc{S}$ is a full abelian subcategory of the category of abelian sheaves on $\mc{S}$. 
\end{proposition}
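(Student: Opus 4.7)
The plan is to show the full subcategory of inscribed abelian sheaves inside abelian sheaves on $\mc{S}$ is closed under kernels and cokernels computed in the ambient category, which makes it a full abelian subcategory.

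For kernels, the argument is immediate: the kernel of a morphism $f: \mc{V} \to \mc{W}$ in abelian sheaves is computed presheaf-wise, and the formation of kernels commutes with the fiber products appearing in the inscribed condition. Hence $\cref{lemma.inscribed-limits}$ applies directly.

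For cokernels, the cokernel in abelian sheaves is the sheafification of the presheaf cokernel $\mc{Q}_{\mathrm{pre}}(\mc{B}) = \mc{W}(\mc{B})/f(\mc{V}(\mc{B}))$, and I need to show this sheafification is inscribed. The plan is to prove the more general statement that sheafification preserves the inscribed condition, then deduce the cokernel case. The key technical input is that the topology on $B^\lf$ is pulled back from $\tau$ on $\mc{C}$, and moreover base change along any morphism in $\mc{C}$ commutes with pushouts of locally free thickenings: at the level of structure sheaves, the pushout $\mc{O}_{\mc{B}_1} \times_{\mc{O}_{\mc{B}_0}} \mc{O}_{\mc{B}_2}$ fits in the short exact sequence
\[ 0 \to \mc{O}_{\mc{B}_1} \times_{\mc{O}_{\mc{B}_0}} \mc{O}_{\mc{B}_2} \to \mc{O}_{\mc{B}_1} \oplus \mc{O}_{\mc{B}_2} \to \mc{O}_{\mc{B}_0} \to 0 \]
of locally free (hence flat) $\mc{O}_{B(o)}$-modules, whose exactness is preserved by any base change from $B(o)$. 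Combining this compatibility with the facts that equalizers commute with fiber products (both being finite limits) and that filtered colimits commute with finite limits in sets, a direct \v{C}ech computation shows that the plus construction preserves the inscribed condition; iterating gives the same for sheafification.

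The main obstacle is that the presheaf cokernel $\mc{Q}_{\mathrm{pre}}$ is generally \emph{not} inscribed: the natural map $\mc{Q}_{\mathrm{pre}}(\mc{B}_1 \sqcup_{\mc{B}_0} \mc{B}_2) \to \mc{Q}_{\mathrm{pre}}(\mc{B}_1) \times_{\mc{Q}_{\mathrm{pre}}(\mc{B}_0)} \mc{Q}_{\mathrm{pre}}(\mc{B}_2)$ can fail to be bijective, with the obstruction to both injectivity and surjectivity living in the failure of restriction maps $\mc{K}(\mc{B}_1) \to \mc{K}(\mc{B}_0)$ (for $\mc{K} := \ker f$) and $\mc{V}(\mc{B}_1) \to \mc{V}(\mc{B}_0)$ to be surjective. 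Thus the argument cannot merely invoke ``sheafification preserves already-inscribed presheaves''; it must show that sheafification \emph{repairs} this failure. This is where the interaction between the $\tau$-topology and the flatness compatibility above becomes essential: locally in $\tau$, the obstructions to the inscribed condition are killed in the sheafified cokernel, yielding an inscribed abelian sheaf.
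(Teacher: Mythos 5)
Your treatment of kernels matches the paper, and your auxiliary claim that the plus construction (hence sheafification) preserves the inscribed condition is correct and provable exactly as you indicate: base change along $B(o')\to B(o)$ commutes with the pushout of locally free thickenings by the flatness argument you give, and finite limits commute with the filtered colimit over covers. The problem is the cokernel step, where your diagnosis of the obstruction is backwards and the final paragraph contains no argument. You assert that the presheaf cokernel fails to be inscribed because the restriction maps $\mc{V}(\mc{B}_1)\to\mc{V}(\mc{B}_0)$ can fail to be surjective. But in this framework they never fail: by \cref{def.thickenings}, a locally free nilpotent thickening of $\mc{B}_0/\mc{B}_0$ is by definition a morphism \emph{over} $\mc{B}_0$, so $\mc{B}_1$ carries a structure morphism $\mc{B}_1\to\mc{B}_0$ retracting the closed immersion (cf.\ \cref{remark.unique-structure-morphism}), and pullback along it splits the restriction $\mc{V}(\mc{B}_1)\to\mc{V}(\mc{B}_0)$ for \emph{every} presheaf $\mc{V}$. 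This split surjectivity is precisely the engine of the paper's proof: given sections of the two quotients agreeing over $\mc{B}_0$, one lifts them locally, observes that the lifts differ over $\mc{B}_0$ by a section of $\mc{V}_{\mc{B}_0}$, lifts that discrepancy to $\mc{V}_{\mc{B}_1}$ via the splitting, and corrects one lift so the pair glues; this shows that forming the quotient commutes with the fiber product $(-)_{\mc{B}_1}\times_{(-)_{\mc{B}_0}}(-)_{\mc{B}_2}$.

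Without that observation, your plan cannot close: sheafification does not repair failures of the inscribed condition in general (a sheaf that is not inscribed is its own sheafification), and "locally in $\tau$, the obstructions \ldots are killed" is an assertion with no mechanism behind it --- the fiber products in \cref{def.inscribed-fc} are evaluated on single objects, not \'{e}tale-locally, so the topology by itself gives you nothing. The fix is to supply the splitting: with it, the presheaf-level image and cokernel of $f$ are already inscribed (adjust $v_1$ by a lift of $v_1|_{\mc{B}_0}-v_2|_{\mc{B}_0}\in\ker f(\mc{B}_0)$, resp.\ $w_1$ by $f$ of a lift of the discrepancy), and your sheafification lemma then finishes the argument. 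That corrected route is a mild repackaging of the paper's proof, which instead works directly with local lifts of sections of the sheaf quotient.
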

\begin{proof}
The zero object $\mc{S}/\mc{S}$ is inscribed, and \cref{lemma.inscribed-limits} implies that finite products of inscribed abelian sheaves are inscribed and that kernels of maps of inscribed abelian sheaves are inscribed. It remains to see that cokernels are inscribed. Because kernels are inscribed, it suffices to see that quotients are also inscribed. 

We will use the following notation: for $\mc{F}$ a presheaf on $B^\lf$ and $\mc{B}/B(o) \in B^\lf$, we write $\mc{F}_{\mc{B}}$ for the presheaf on $\mathcal{C}/o$ sending $o'\rightarrow o$ to $\mc{F}(\mc{B} \times_{B(o)} B(o'))$.

Let $\mc{V} \subseteq\mc{W}$ be inscribed abelian sheaves over $\mc{S}$. Suppose given locally free nilpotent thickenings $\mc{B}_i / \mc{B}_0$, $i=1,2$ lying over $o \in \mc{C}$, such that $\mc{B}=\mc{B}_1 \sqcup_{\mc{B}_0} \mc{B}_2$. We then have 
\begin{align*}
(\mc{W}/\mc{V})_{\mc{B}}&=\mc{W}_{\mc{B}}/\mc{V}_{\mc{B}} \\
&= \mc{W}_{\mc{B}_1} \times_{\mc{W}_{\mc{B}_0}} \mc{W}_{\mc{B}_2} / \left(V_{\mc{B}_1} \times_{\mc{V}_{\mc{B}_0}} \mc{V}_{\mc{B}_2}\right) \\
&= \mc{W}_{\mc{B}_1}/\mc{V}_{\mc{B}_1} \times_{\mc{W}_{\mc{B}_0}/\mc{V}_{\mc{B}_0}} \mc{W}_{\mc{B}_2}/\mc{V}_{\mc{B}_2}\\
&= (\mc{W}/\mc{V})_{\mc{B}_1} \times_{(\mc{W}/\mc{V})_{\mc{B}_0}} (\mc{W}/\mc{V})_{\mc{B}_2}.\end{align*}
Only the third equality requires some justification: The natural map 
\[ \left( \mc{W}_{\mc{B}_1} \times_{\mc{W}_{\mc{B}_0}} \mc{W}_{\mc{B}_2} \right) \rightarrow \mc{W}_{\mc{B}_1}/\mc{V}_{\mc{B}_1} \times_{\mc{W}_{\mc{B}_0}/\mc{V}_{\mc{B}_0}} \mc{W}_{\mc{B}_2} / \mc{V}_{\mc{B}_2} \]
has kernel $\mc{V}_{\mc{B}_1} \times_{\mc{V}_{\mc{B}_0}} \mc{V}_{\mc{B}_2}$, so it remains to show it is surjective. But, for $(a,b)$ in the image, if we choose on some cover preimages $\tilde{a}$ and $\tilde{b}$, then the images of $\tilde{a}$ and $\tilde{b}$ in $\mc{W}_{\mc{B}_0}$ differ by an element of $\mc{V}_{\mc{B}_0}$. Since $\mc{V}_{\mc{B}_1} \rightarrow \mc{V}_{\mc{B}_0}$ is surjective already as a map of presheaves (it admits a section), we may modify the lift $\tilde{a}$ so that $\tilde{a}$ and $\tilde{b}$ have the same image in $\mc{W}_{\mc{B}_0}$, so that $(\tilde{a}, \tilde{b})$ is a section of $\mc{W}_{\mc{B}_1} \times_{\mc{W}_{\mc{B}_0}} \mc{W}_{\mc{B}_2}$ mapping to $(a,b)$. 
\end{proof}

\begin{corollary}\label{cor.inscribed-B-abelian}
    For $\mc{S}$ an inscribed sheaf, the category of inscribed sheaves of $\mbb{B}$-modules on $\mc{S}$ is a full abelian subcategory of the category of sheaves of $\mbb{B}$-modules on $\mc{S}$.
\end{corollary}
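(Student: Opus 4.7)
The plan is to reduce everything to \cref{prop.inscribed-abelian-sheaves-abelian-cat} via the forgetful functor from sheaves of $\mbb{B}$-modules on $\mc{S}$ to abelian sheaves on $\mc{S}$. The key observation is that, because being inscribed is a condition only on the underlying presheaf of sets, this forgetful functor both preserves and reflects the property of being inscribed. Moreover, in the category of sheaves of $\mbb{B}$-modules on $\mc{S}$, kernels and cokernels are computed as kernels and cokernels of the underlying maps of abelian sheaves, equipped with the canonically induced $\mbb{B}$-action.

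First, the subcategory of inscribed $\mbb{B}$-modules is full in the category of all $\mbb{B}$-modules by definition, and contains the zero object $\mc{S}/\mc{S}$. Finite products of inscribed $\mbb{B}$-modules are inscribed by \cref{lemma.inscribed-limits}, and the $\mbb{B}$-module structure on the product is the product structure, so we have biproducts.

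Next, let $f : \mc{V} \to \mc{W}$ be a morphism of inscribed $\mbb{B}$-modules on $\mc{S}$. Its kernel and cokernel in the category of sheaves of $\mbb{B}$-modules coincide with the kernel and cokernel of $f$ viewed as a morphism of abelian sheaves, each carrying the $\mbb{B}$-module structure induced by that on $\mc{V}$ (respectively $\mc{W}$). By \cref{prop.inscribed-abelian-sheaves-abelian-cat}, these underlying abelian sheaves are inscribed. Hence the resulting $\mbb{B}$-modules are inscribed as well, and the image and coimage of $f$ (computed analogously) are inscribed. This gives that the inclusion of inscribed $\mbb{B}$-modules into all $\mbb{B}$-modules is exact and identifies the former as a full abelian subcategory.

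There is essentially no obstacle: the entire content is already in the cokernel argument of \cref{prop.inscribed-abelian-sheaves-abelian-cat}, where the nontrivial step was showing that the natural map on presheaf quotients is surjective using that $\mc{V}_{\mc{B}_1} \twoheadrightarrow \mc{V}_{\mc{B}_0}$ admits a section coming from the structure map $\mc{B}_0 \hookrightarrow \mc{B}_1$. The only thing one might worry about is whether taking cokernels of $\mbb{B}$-module maps agrees with taking cokernels of underlying abelian sheaf maps, but since $\mbb{B}$-modules form an abelian category over the sheaf of rings $\mbb{B}$ and the forgetful functor to abelian sheaves is exact, this is automatic.
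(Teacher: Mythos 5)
Your proof is correct and takes exactly the route the paper intends: the paper states this as an immediate consequence of \cref{prop.inscribed-abelian-sheaves-abelian-cat} with no separate argument, and your fleshing-out — that inscribedness is a condition on the underlying presheaf of sets and that kernels and cokernels of $\mbb{B}$-module maps are computed on underlying abelian sheaves — is the intended justification.
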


\subsection{Relative tangent bundles and normal bundles}

If $f: \mc{Z} \rightarrow \mc{S}$ is a morphism of inscribed presheaves, we obtain by \cref{prop.tangent-bundle-functor} a morphism of $\mbb{B}$-modules on $\mc{Z}$, 
$df: T_{\mc{Z}} \rightarrow f^* T_{\mc{S}}.$
\begin{definition}\label{def.rel-tan-norm}
If $f:\mc{Z} \rightarrow \mc{S}$ is a morphism of inscribed presheaves, we let $T_{\mc{Z}/\mc{S}}:= \mr{ker}\, df$. If $f:\mc{Z} \rightarrow \mc{S}$ is a morphism of inscribed sheaves, we let $N_{\mc{Z}/\mc{S}}:=\mr{coker}\, df$. 
\end{definition}

In the setting of \cref{def.rel-tan-norm}, it follows from \cref{cor.inscribed-B-abelian} that $T_{\mc{Z}/\mc{S}}$ and $N_{\mc{Z}/\mc{S}}$ are inscribed $\mbb{B}$-modules over $\mc{Z}$. 

\begin{example}\label{cref.example-relative-tangent-bundle-triv-inscribed}
If $S$ is a presheaf on $\mc{C}$ and $\mc{Z}/S^\triv$, then $T_{S^\triv}=0$ so $T_{\mc{Z}/S^\triv}=T_{\mc{Z}}$. In particular, for any inscribed presheaf $\mc{S}$, $T_{\mc{S}}=T_{\mc{S}/B^\lf}=T_{\mc{S}/\overline{\mc{S}}}$, where in the middle $B^\lf$ is treated as the trivial presheaf on $B^\lf$. 
\end{example}

\subsubsection{Variant}\label{sss.variant-def-discrete}
Suppose $\mathcal{Z} \rightarrow \mathcal{S}$ is a morphism of inscribed prestacks. Then we can define $\mathcal T_{\mathcal{Z}/\mathcal{S}}(\mathcal{B}) = \mathcal{S}(\mathcal{B}[\epsilon])\times_{\mathcal{Z}(\mathcal{B}[\epsilon])} \mathcal{Z}$, and when $\mathcal{S}$ and $\mathcal{Z}$ are presheaves, this is equivalent to the previous definition. Moreover, if $\mathcal{Z} \rightarrow \mathcal{S}$ has discrete fibers, then $T_{\mathcal{Z}/\mathcal{S}}$ has a natural $\mathbb{B}$-module structure by the same construction as in \cref{prop.tangent-bundle-functor}. 

\begin{definition}
    An inscribed prestack $\mathcal{S}$ is deformation discrete if $\mathcal{S} \rightarrow \overline{\mc{S}}$ has discrete fibers. In this case we define $\mathcal{T}_{\mathcal{S}}:=\mathcal{T}_{\mathcal{S}/\overline{\mc{S}}}.$ 
\end{definition}

This construction gives a $\mathbb{B}$-module over $\mathcal{S}$ that is functorial in maps of deformation discrete inscribed prestacks. When $\mathcal{S}$ is an inscribed presheaf, it is equivalent to the definition of $T_{\mathcal{S}}$ given above; this more general definition was used implicitly in the discussion of the Hodge, lattice Hodge, and Liu-Zhu period maps in \cref{ss.intro-main-results}.

\subsection{Inscribed groups}\label{ss.inscribed-groups}

For $\mc{S}$ an inscribed presheaf, an inscribed group over $\mc{S}$ is a map of inscribed presheaves $\mc{G}/\mc{S}$ equipped with an identity section $e: \mc{S} \rightarrow \mc{G}$, an inverse map $\mc{G} \rightarrow \mc{G}$, and a multiplication law $\mc{G} \times_S \mc{G} \rightarrow \mc{G}$ satisfying the usual compatibilities.

\begin{example} Inscribed $\mbb{B}$-modules are, in particular, inscribed (abelian) groups. 
\end{example}

\begin{lemma}
    For $\mc{S}$ an inscribed sheaf and $\mc{G}/\mc{S}$ an inscribed group, $T_{\mc{G}/\mc{S}}  / \mc{S}$ admits a canonical inscribed group structure. The structure map $T_{\mc{G}/\mc{S}} \rightarrow \mc{G}$ is a surjective homomorphism. It is canonically split by the zero section $\mc{G} \rightarrow T_{\mc{G}/\mc{S}}$, and on the kernel $\Lie \mc{G}=e^*T_{\mc{G}/\mc{S}}$, the two natural group structures over $\mc{S}$ agree (one as a subgroup of $T_{\mc{G}/\mc{S}}$, and the other by pull-back of the $\mbb{B}$-module structure on $T_{\mc{G}/\mc{S}}$ along $e: \mc{S} \rightarrow \mc{G}$). In particular, $T_{\mc{G}/\mc{S}} =\mc{G} \ltimes \Lie \mc{G}$ for the natural $\mbb{B}$-linear conjugation action on $\Lie \mc{G}$. 
\end{lemma}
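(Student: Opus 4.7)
The plan is to deduce almost everything from functoriality of $T_{(-)/\mc{S}}$ together with a single Eckmann--Hilton-type computation carried out using the inscribed property; that computation is the only non-formal input.

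First I would observe that the functor $\mc{Z} \mapsto T_{\mc{Z}/\mc{S}}$ on inscribed presheaves over $\mc{S}$ preserves finite limits: using the description
\[ T_{\mc{Z}/\mc{S}}(\mc{B}) = \mc{Z}(\mc{B}[\epsilon]) \times_{\mc{S}(\mc{B}[\epsilon])} \mc{S}(\mc{B}) \]
from \cref{sss.variant-def-discrete} (which agrees with \cref{def.rel-tan-norm} for $\mc{Z}$ a presheaf), it is built from pullbacks and fiber products. Applying this to the multiplication, identity, and inverse of $\mc{G}/\mc{S}$ equips $T_{\mc{G}/\mc{S}}/\mc{S}$ with a canonical inscribed group structure. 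The structure map $T_{\mc{G}/\mc{S}} \to \mc{G}$ and the zero section $\mc{G} \to T_{\mc{G}/\mc{S}}$ arise by functoriality from the retraction $\mc{B}[\epsilon] \twoheadrightarrow \mc{B}$ and the closed immersion $\mc{B} \hookrightarrow \mc{B}[\epsilon]$, so both are group homomorphisms over $\mc{S}$. Their composition is the identity, which yields both the splitting and the surjectivity, and the kernel of a split surjective homomorphism of groups is identified with the fiber over the identity, which here is $e^* T_{\mc{G}/\mc{S}} = \Lie\mc{G}$.

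The heart of the argument is to check that the group law $\cdot$ on $\Lie\mc{G}(\mc{B}) \subseteq \mc{G}(\mc{B}[\epsilon])$ inherited from $\mc{G}(\mc{B}[\epsilon])$ agrees with the abelian addition $+$ coming from the $\mbb{B}$-module structure on $T_{\mc{G}/\mc{S}}$. Given $v_1, v_2 \in \Lie\mc{G}(\mc{B})$ lying over a common $s \in \mc{S}(\mc{B})$, I would apply the inscribed property (\cref{def.inscribed-fc}) to the decomposition $\mc{B}[\epsilon_1, \epsilon_2] = \mc{B}[\epsilon_1] \sqcup_{\mc{B}} \mc{B}[\epsilon_2]$ of \cref{lemma.module-thickening-functor}, producing from the inscribed equivalence
\[ \mc{G}(\mc{B}[\epsilon_1, \epsilon_2]) \xrightarrow{\sim} \mc{G}(\mc{B}[\epsilon_1]) \times_{\mc{G}(\mc{B})} \mc{G}(\mc{B}[\epsilon_2]) \]
three elements $\tilde v_1, \tilde v_2, v$ corresponding to the pairs $(v_1, e)$, $(e, v_2)$, $(v_1, v_2)$. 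Restricting $\tilde v_1 \cdot \tilde v_2 \in \mc{G}(\mc{B}[\epsilon_1, \epsilon_2])$ to each $\mc{B}[\epsilon_i]$ gives $v_i \cdot e = e \cdot v_i = v_i$, so the inscribed equivalence forces $\tilde v_1 \cdot \tilde v_2 = v$. Pulling back along the morphism $\delta : \mc{B}[\epsilon] \to \mc{B}[\epsilon_1, \epsilon_2]$ given by $\epsilon_1, \epsilon_2 \mapsto \epsilon$ converts the left side to $v_1 \cdot v_2$ (since pullback commutes with group multiplication and $\delta^* \tilde v_i = v_i$ by the construction of the $\mbb{B}$-module law in \cref{prop.tangent-bundle-functor}) and the right side to $\delta^* v = v_1 + v_2$ by the same construction, yielding the required identity $v_1 \cdot v_2 = v_1 + v_2$.

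With this in hand the semidirect product decomposition $T_{\mc{G}/\mc{S}} = \mc{G} \ltimes \Lie\mc{G}$ is immediate from the split exact sequence of inscribed groups over $\mc{S}$. For $\mbb{B}$-linearity of the conjugation action, additivity follows from the agreement of the two group laws on $\Lie\mc{G}$ just proved, while for scalar multiplication the action of $b \in \mbb{B}(\mc{B})$ on $\Lie\mc{G}(\mc{B})$ is defined via pullback along the endomorphism $\epsilon \mapsto b\epsilon$ of $\mc{B}[\epsilon]$; the conjugating element $g_\epsilon$ is pulled back from $\mc{G}(\mc{B})$ along $\mc{B}[\epsilon] \twoheadrightarrow \mc{B}$ and is therefore invariant under every endomorphism of $\mc{B}[\epsilon]$ over $\mc{B}$, so the scalar action and conjugation commute. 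The only non-formal step is the Eckmann--Hilton-type computation in the preceding paragraph; the rest is manipulation of the universal properties.
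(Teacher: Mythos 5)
Your proof is correct and follows essentially the same route as the paper's: the group structure is inherited from the limit-preservation/functoriality of $T_{(-)/\mc{S}}$, and the agreement of the two group laws on $\Lie \mc{G}$ is the Eckmann--Hilton argument. The only difference is packaging: the paper deduces $a\cdot b = dm_e((a,0)+(0,b)) = a+b$ directly from the $\mbb{B}$-linearity of $dm_e$ supplied by \cref{prop.tangent-bundle-functor}, whereas you unwind that linearity by hand via the coproduct decomposition $\mc{B}[\epsilon_1,\epsilon_2]=\mc{B}[\epsilon_1]\sqcup_{\mc{B}}\mc{B}[\epsilon_2]$ --- the same manipulation that underlies \cref{lemma.equiv-module-category}.
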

\begin{proof}
For any $\mc{B}\in B^\lf$, $\mc{G}(\mc{B}[\epsilon])/\mc{S}(\mc{B}[\epsilon])$ is a group over $\mc{S}(\mc{B}[\epsilon])$, and $T_{\mc{G}/\mc{S}}(\mc{B}[\epsilon])$ is the pullback of this to a group over $\mc{S}(\mc{B})$ along $0_{T_{\mc{S}}}: \mc{S}(\mc{B}) \rightarrow \mc{S}(\mc{B}[\epsilon]).$  This shows $T_{\mc{G}/\mc{S}}$ admits a canonical inscribed group structure, and it is evident that the structure map to $\mc{G}/\mc{S}$ is a surjective group homomorphism split by the zero section. 

To see the two group structures on $\Lie \mc{G}$ agree, we first note that the subgroup structure can be written as $dm_e: \Lie \mc{G} \times_{\mc{S}} \Lie \mc{G} \rightarrow \Lie{\mc{G}}$, where $m: \mc{G} \times_{\mc{S}} \mc{G} \rightarrow \mc{G}$ is the multiplication map. This is a $\mbb{B}$-linear map; in particular we find the two group structures agree because
\[ dm_e( (a,b) )=dm_e( (a,0) + (0,b) )= dm_e((a,0)) + dm_e ((0,b))=a+b.\]
 
\end{proof}

Suppose $f: \mc{Z} \rightarrow \mc{S}$ is a map of inscribed presheaves, and $\mc{G}/\mc{S}$ is an inscribed group over $\mc{S}$. A (right) action of $\mc{G}$ on $\mc{Z}$ is a map $a: \mc{Z} \times_{\mc{S}} \mc{G} \rightarrow \mc{Z}$ satisfying the usual axioms. We note that a (right) action of $\mc{G}$ on $\mc{Z}$ induces a (right) action of $T_{\mc{G}/\mc{S}}$ on $T_{\mc{Z}/\mc{S}}$. We write $da_e$ for the induced map $f^* \Lie \mc{G} \rightarrow T_{\mc{Z}/\mc{S}}$ obtained by pulling back $da$ along $\Id_{\mc{Z}} \times e$. Concretely, given a tangent vector $t: \mc{B}[\epsilon] \rightarrow \mc{G}$ restricting to $e: \mc{B} \rightarrow \mc{G}$ and a $z: \mc{B} \rightarrow \mc{Z}$, $da_e(t)=\tilde{z} \cdot t$, where $\tilde{z}$ is the constant extension of $z$ to a $\mc{B}[\epsilon]$-point of $\mc{Z}$. 

\begin{proposition}\label{prop.inscribed-quotient}
We fix a topology $\tau$, and let $a: \mc{Z} \times_{\mc{S}} \mc{G} \rightarrow \mc{Z}$ be a faithful right action of an inscribed group sheaf $\mc{G}$ over an inscribed sheaf $\mc{S}$ on an inscribed sheaf $\mc{Z}$ over $\mc{S}$. Then the quotient $\mc{Z}/\mc{G}$ is an inscribed sheaf over $\mc{S}$ and $T_{(\mc{Z}/\mc{G})/\mc{S}}=T_{\mc{Z}/\mc{S}}/T_{\mc{G}/\mc{S}}$. In particular, writing $\pi:\mc{Z} \rightarrow \mc{Z}/\mc{G}$ for the quotient map, there is a canonical $\mc{G}$-equivariant identification 
\[ \pi^* T_{(\mc{Z}/\mc{G})/\mc{S}}=\mr{coker}(da_e).\]  
\end{proposition}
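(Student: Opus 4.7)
The plan is threefold: first verify that $\mc{Z}/\mc{G}$, viewed as the $\tau$-sheafification of the presheaf quotient, is an inscribed $\tau$-sheaf; then construct a natural $\mc{G}$-equivariant $\mathbb{B}$-linear map $\phi: \mr{coker}(da_e) \to \pi^* T_{(\mc{Z}/\mc{G})/\mc{S}}$ and verify it is an isomorphism by a local analysis on $\mc{Z}$; and finally descend along the $\mc{G}$-torsor $\pi$ to obtain $T_{(\mc{Z}/\mc{G})/\mc{S}} = T_{\mc{Z}/\mc{S}}/T_{\mc{G}/\mc{S}}$, noting that this quotient is well-defined as an inscribed $\mathbb{B}$-module by \cref{cor.inscribed-B-abelian}.

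For the inscribed property of $\mc{Z}/\mc{G}$, by \cref{def.inscribed-fc} applied to the $\tau$-sheaf $\mc{Z}/\mc{G}$ it suffices to check that for any pushout $\mc{B} = \mc{B}_1 \sqcup_{\mc{B}_0} \mc{B}_2$ in $B^\lf$, the map
\[ (\mc{Z}/\mc{G})(\mc{B}) \rightarrow (\mc{Z}/\mc{G})(\mc{B}_1) \times_{(\mc{Z}/\mc{G})(\mc{B}_0)} (\mc{Z}/\mc{G})(\mc{B}_2) \]
is a bijection. Faithfulness makes $\pi: \mc{Z} \to \mc{Z}/\mc{G}$ a $\mc{G}$-torsor in the $\tau$-topology, so a compatible pair $(\bar{z}_1, \bar{z}_2)$ locally lifts to $z_i \in \mc{Z}(\mc{B}_i)$ with a $\tau$-local discrepancy $g_0 \in \mc{G}(\mc{B}_0)$ satisfying $z_1|_{\mc{B}_0} = z_2|_{\mc{B}_0} \cdot g_0$. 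After a further refinement, modify one of the $z_i$ by a $\mc{G}$-translate obtained from a lift of $g_0$ along the restriction $\mc{G}(\mc{B}_i) \to \mc{G}(\mc{B}_0)$ (built using the canonical splitting $B(o) \leftrightarrows \mc{B}_i$ of \cref{remark.unique-structure-morphism} together with the inscribed property of $\mc{G}$) so that $z_1|_{\mc{B}_0} = z_2|_{\mc{B}_0}$; the inscribed property of $\mc{Z}$ then assembles $(z_1, z_2)$ into a section of $\mc{Z}(\mc{B})$, and its image in $\mc{Z}/\mc{G}$ is the desired lift. Uniqueness in $(\mc{Z}/\mc{G})(\mc{B})$ and independence of the auxiliary choices follow from faithfulness together with the inscribed property of $\mc{G}$.

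For the tangent bundle identification, the composition $\pi \circ a: \mc{Z} \times_{\mc{S}} \mc{G} \to \mc{Z}/\mc{G}$ factors through $\mr{pr}_1$, so $d\pi \circ da_e = 0$ and the induced $\mc{G}$-equivariant map $\phi: \mr{coker}(da_e) \to \pi^* T_{(\mc{Z}/\mc{G})/\mc{S}}$ is well-defined and $\mathbb{B}$-linear. Surjectivity follows by applying the lifting of the previous step to $\mc{B}[\epsilon]$: any tangent vector $\bar{z}': \mc{B}[\epsilon] \to \mc{Z}/\mc{G}$ at $\pi(z)$ lifts locally to some $\tilde{z}': \mc{B}[\epsilon] \to \mc{Z}$, and after multiplication by an element of $\mc{G}(\mc{B}[\epsilon])$ pulled back through the structure map $\mc{B}[\epsilon] \to \mc{B}$ one arranges $\tilde{z}'|_{\mc{B}} = z$. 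For injectivity, if $z'_1, z'_2: \mc{B}[\epsilon] \to \mc{Z}$ both restrict to $z$ on $\mc{B}$ and satisfy $\pi(z'_1) = \pi(z'_2)$, then $\tau$-locally $z'_1 = z'_2 \cdot g$ for some $g: \mc{B}[\epsilon] \to \mc{G}$; the equality $z'_1|_{\mc{B}} = z'_2|_{\mc{B}}$ combined with faithfulness forces $g|_{\mc{B}} = e$, so $g$ corresponds to a section of $f^*\Lie \mc{G}$ and $z'_1$ and $z'_2$ differ by $da_e(g)$ in the $\mathbb{B}$-module $T_{\mc{Z}/\mc{S}}|_z$. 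The resulting identification $\pi^* T_{(\mc{Z}/\mc{G})/\mc{S}} = \mr{coker}(da_e)$ is $\mc{G}$-equivariant and descends along the $\mc{G}$-torsor $\pi$ to yield $T_{(\mc{Z}/\mc{G})/\mc{S}} = T_{\mc{Z}/\mc{S}}/T_{\mc{G}/\mc{S}}$.

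The main obstacle is the first step: the naive presheaf quotient $\mc{B} \mapsto \mc{Z}(\mc{B})/\mc{G}(\mc{B})$ is typically not itself inscribed, so one cannot sheafify and conclude formally. The delicate point is assembling $\tau$-local lifts across the pushout $\mc{B}_1 \sqcup_{\mc{B}_0} \mc{B}_2$ by exploiting in tandem the torsor structure of $\pi$ on each $\mc{B}_i$, the inscribed property of $\mc{G}$, and the canonical splittings of the locally free thickenings; once this is in place the tangent bundle computation reduces to the local analysis sketched above.
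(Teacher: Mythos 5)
Your proof is correct and follows essentially the same route as the paper: the paper's argument (by reference to the proof of \cref{prop.inscribed-abelian-sheaves-abelian-cat}) likewise lifts a compatible pair locally, kills the discrepancy in $\mc{G}(\mc{B}_0)$ using the section of $\mc{G}(\mc{B}_i)\to\mc{G}(\mc{B}_0)$ induced by the structure morphism of the thickening, and glues via the inscribed property of $\mc{Z}$ and $\mc{G}$; the tangent-bundle identification is then the same cokernel computation you carry out. (Minor note: the splitting you invoke is part of the data of a thickening in \cref{def.thickenings}, so the citation of \cref{remark.unique-structure-morphism} is unnecessary, but this is cosmetic.)
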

\begin{proof} Arguing as in the proof of \cref{prop.inscribed-abelian-sheaves-abelian-cat},
\[ \mc{Z}_{\mc{B}_1 \sqcup_{\mc{B}_0} \mc{B}_2} / \mc{G}_{\mc{B}_1 \sqcup_{\mc{B}_0} \mc{B}_2}=\mc{Z}_{\mc{B}_1} \times_{\mc{Z}_{\mc{B}_0}} \mc{Z}_{\mc{B}_2} / \mc{G}_{\mc{B}_1} \times_{\mc{G}_{\mc{B}_0}} \mc{G}_{\mc{B}_2} = \mc{Z}_{\mc{B}_1}/\mc{G}_{\mc{B}_1} \times_{\mc{Z}_{\mc{B}_0}/\mc{G}_{\mc{B}_0}} \mc{Z}_{\mc{B}_2}/\mc{G}_{\mc{B}_2} \]
we conclude that $\mc{Z}/\mc{G}$ is inscribed. The rest is immediate. 
\end{proof}

\subsection{Moduli of sections}\label{ss.inscribed-mos}

In the following definition we sometimes interpret $\mc{B}$ as the functor from $B^\lf$ to $\Spaces$ sending $\mc{B}/B(o)$ to $\mc{B}$. 

\begin{definition} \hfill
\begin{enumerate}
\item We write $\Sm_\mc{B}$ for the category whose objects are pairs $(\mc{B}, Z/\mc{B})$ where $\mc{B}$ is an object in $B^\lf$ and $Z/\mc{B}$ is a smooth morphism of schemes/strongly sheaf adic spaces 
    \item For $\mc{S}$ an inscribed presheaf, a smooth space over $\mc{B}$ on $\mc{S}$ is a map of fibered categories $Z: \mc{S} \rightarrow \Sm_{\mc{B}}$. 
    \item For $\mc{S}$ an inscribed presheaf and $Z$ a smooth space over $\mc{B}$ on $\mc{S}$, we define $\mc{B}^*h_{Z}$ to be the presheaf sending $\mc{B} \in B^\lf$ to the set of isomorphism classes of pairs $(s,f)$ where $s \in \mc{S}(\mc{B})$ and $f \in \Hom_{\mc{B}}(\mc{B}/\mc{B}, \mc{Z}(s)/\mc{B}).$ 
 \end{enumerate}
\end{definition}

\begin{proposition}\label{prop.mos-inscribed-presheaf}For $\mc{S}$ an inscribed presheaf and $Z$ a smooth space over $\mc{B}$ on $\mc{S}$, $\mc{B}^*h_{Z}$ is an inscribed presheaf over $\mc{S}$, and there is a canonical identification of $\mbb{B}$-modules $T_{\mc{B}^*h_{Z}/\mc{S}}=\mc{B}^*h_{T_{Z/\mc{B}}}$
where $T_{Z/\mc{B}}$ is the smooth space over $\mc{B}$ on $\mc{S}$ sending $s \in S(\mc{B})$ to the geometric tangent bundle $T_{Z(s)/\mc{B}}$ of \cref{example.tangent-bundle-via-ros}. 
\end{proposition}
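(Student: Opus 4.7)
The plan is to reduce both claims to local computations with affine/affinoid charts, combined with the Weil restriction formula already established in \cref{prop.ssadic-ros-tangent} (resp.\ \cref{prop.schemes-ros-tangent}).

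First, I verify that $\mc{B}^*h_Z$ is inscribed over $\mc{S}$. The structure map to $\mc{S}$ is the forgetful map $(s,f)\mapsto s$, so since $\mc{S}$ is inscribed by hypothesis, it suffices to show that for $\mc{B}=\mc{B}_1\sqcup_{\mc{B}_0}\mc{B}_2$ a pushout of locally free nilpotent thickenings over $B(o)$ and a fixed $s\in\mc{S}(\mc{B})$ with restrictions $s_i:=s|_{\mc{B}_i}$, the natural map
\[
\Hom_\mc{B}(\mc{B},Z(s))\;\longrightarrow\;\Hom_{\mc{B}_1}(\mc{B}_1,Z(s_1))\times_{\Hom_{\mc{B}_0}(\mc{B}_0,Z(s_0))}\Hom_{\mc{B}_2}(\mc{B}_2,Z(s_2))
\]
is a bijection, where Cartesianness of the functor $Z$ yields $Z(s_i)=Z(s)\times_\mc{B}\mc{B}_i$. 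Since $|\mc{B}_i|=|\mc{B}_0|=|\mc{B}|=|B(o)|$ as topological spaces (all thickenings are nilpotent), a compatible pair $(f_1,f_2)$ on the right covers a single continuous map to $|Z(s)|$, so this is a local question on $|\mc{B}|$. Choosing an affine/affinoid open $V\subseteq\mc{B}$ whose image under the common topological section lies in an affine/affinoid open $U\subseteq Z(s)$, the claim reduces to the universal property of the fiber product of $\mc{O}_{B(o)}$-algebras $\mc{O}_\mc{B}(V)=\mc{O}_{\mc{B}_1}(V)\times_{\mc{O}_{\mc{B}_0}(V)}\mc{O}_{\mc{B}_2}(V)$ which characterizes the pushout in $B^\lf$ via \cref{prop.augmented-equiv}; the local bijections glue uniquely.

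For the identification of tangent bundles, the fibers of $\mc{B}^*h_Z\to\mc{S}$ are discrete, so I may use the variant definition in \cref{sss.variant-def-discrete}:
\[
T_{\mc{B}^*h_Z/\mc{S}}(\mc{B})=\bigl\{(s,g):s\in\mc{S}(\mc{B}),\;g\in\Hom_{\mc{B}[\epsilon]}(\mc{B}[\epsilon],Z(s[\epsilon]))\bigr\},
\]
where $s[\epsilon]$ denotes the pullback of $s$ along the structure map $\mc{B}[\epsilon]\to\mc{B}$. Cartesianness of $Z$ gives $Z(s[\epsilon])=Z(s)\times_\mc{B}\mc{B}[\epsilon]$, and \cref{prop.ssadic-ros-tangent} applied with $\mc{I}=\mc{O}_\mc{B}\cdot\epsilon$, combined with \cref{example.tangent-bundle-via-ros}, identifies $\Hom_{\mc{B}[\epsilon]}(\mc{B}[\epsilon],Z(s)\times_\mc{B}\mc{B}[\epsilon])$ with $\Hom_\mc{B}(\mc{B},T_{Z(s)/\mc{B}})$ functorially in $s$. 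Under this identification $(s,g)$ corresponds to a pair $(s,h)$ with $h$ a section of the smooth $\mc{B}$-space $T_{Z(s)/\mc{B}}$, i.e.\ exactly a $\mc{B}$-point of $\mc{B}^*h_{T_{Z/\mc{B}}}$.

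Finally, the $\mbb{B}$-module structures match by construction: on $T_{\mc{B}^*h_Z/\mc{S}}$ it is produced by \cref{prop.tangent-bundle-functor} (applied to the variant) from rescaling $\epsilon$ in $\mc{B}[\epsilon]$, while on $\mc{B}^*h_{T_{Z/\mc{B}}}$ it is the $\mbb{B}(\mc{B})=H^0(\mc{B},\mc{O}_\mc{B})$-module structure on sections of the vector bundle $T_{Z(s)/\mc{B}}\to Z(s)$; both correspond under the Weil restriction identification, since the latter is $\mc{O}_\mc{B}$-linear. The main obstacle is the first step: one must trade the abstract pushout in $B^\lf$ for concrete local statements about fiber products of algebras while tracking the Cartesianness of $Z$ (so that all base changes are recovered from a single smooth $\mc{B}$-space $Z(s)$). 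Once this is in place, the tangent bundle identification is essentially a direct application of the Weil restriction formulas already at our disposal.
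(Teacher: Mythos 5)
Your proof is correct and follows essentially the same route as the paper's: the inscribed property is reduced, fiberwise over $s\in\mc{S}(\mc{B})$, to the universal property of the pushout $\mc{B}_1\sqcup_{\mc{B}_0}\mc{B}_2$ mapping into $Z(s)$ (which the paper invokes directly as ``the definition of a coproduct,'' while you justify it locally via fiber products of algebras on affinoid charts --- a reasonable amount of extra care), and the tangent bundle identification is exactly the application of \cref{example.tangent-bundle-via-ros}, i.e.\ of \cref{prop.schemes-ros-tangent} and \cref{prop.ssadic-ros-tangent}, that the paper cites.
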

\begin{proof}
To see that it is inscribed, fix a $\mc{B}_0 \rightarrow \mc{S}$ and locally free nilpotent thickenings $\mc{B}_1, \mc{B}_2$ of $\mc{B}_0$. Then, for $\mc{B}:=\mc{B}_1 \sqcup_{\mc{B}_0} \mc{B}_2$, we want to show 
\[ h_Z(\mc{B}) \rightarrow h_Z(\mc{B}_1) \times_{h_Z(\mc{B}_0)} h_Z(\mc{B}_2) \]
is a bijection. Since $\mc{S}$ is inscribed it suffices to work over an element of $s\in \mc{S}(\mc{B})$ corresponding to $(s_1,s_2) \in \mc{S}(\mc{B}_1) \times \mc{S}(\mc{B}_2)$ lying over a common $s_0$ in ${\mc{S}(\mc{B}_0)}$, and we need to show 
\[ h_Z(\mc{B})_s = h_Z(\mc{B}_1)_{s_1} \times_{h_Z(\mc{B}_0)_{s_0}} h_Z(\mc{B}_2)_{s_2}. \]
The elements of $h_Z(\mc{B})_s$ are given by
\[ \Hom_{\mc{B}}(\mc{B}, Z(s))=\Hom_{\Sch/\mc{B}}(\mc{B}_1 \sqcup_{\mc{B}_0} \mc{B}_2, Z(s)). \]
By the definition of a coproduct, this is equal to
\[ \Hom_{\mc{B}}(\mc{B}_1, Z(s)) \times_{\Hom_{\mc{B}}(\mc{B}_0, Z(s))} \Hom_{\mc{B}}(\mc{B}_2, Z(s)).\]
But, since $Z(s)\times_{\mc{B}} {\mc{B}_i}=Z(s_i)$, this is an element of
\[ \Hom_{\mc{B}_1}(\mc{B}_1, Z(s_1)) \times_{\Hom_{\mc{B}_0}(\mc{B}_0, Z(s_0))} \Hom_{\mc{B}_2}(\mc{B}_2, Z(s_2))\]
which is equal, as desired, to 
\[ h_Z(\mc{B}_1)_{s_1} \times_{h_Z(\mc{B}_0)_{s_0}} h_Z(\mc{B}_2)_{s_2}. \]

Finally, the identity $T_{\mc{B}^*h_Z/\mc{S}} = \mc{B}^*h_{T_{Z/\mc{B}}}$ follows from \cref{example.tangent-bundle-via-ros}.
\end{proof}

\begin{remark}
When $\Spaces$ is the category of schemes, we can define more generally a scheme over $\mc{B}$ on $\mc{S}$ 
and \cref{prop.mos-inscribed-presheaf} holds in this generality; however, we will not need this generality in what follows. When $\Spaces$ is the category of strongly sheafy adic spaces, we must restrict to smooth morphisms for two technical reasons: first, we only know we have fiber products for smooth morphisms (we used this fact implicitly in the claim that $\Sm_{\mc{B}}$ is a fibered category), and more seriously, in this case we have only defined tangent bundles for smooth morphisms. 
\end{remark}

\subsection{Restricted categories of thickenings}\label{ss.restricted-categories-thickenings}
As explained in the introduction, some of our main results will only apply after passing to a restricted category of locally free thickenings of Fargues--Fontaine curves where we impose a natural slope condition. We briefly discuss a general framework for this kind of restriction.

Suppose $B^{\bullet}\subseteq B^\lf$ is a full fibered subcategory such that
\begin{enumerate}
\item For any $o \in \mathcal{C}$, $B(o)/B(o) \in B^\bullet$. 
\item For any $\mc{B}/B(o) \in B^\bullet$, and any finite free $\mbb{B}(\mc{B})$-module $M$, the finite locally free square-zero thickening $\mc{B}[M]/B(o)$ of $\mc{B}/B(o)$ is also an object of $B^\bullet$. 
\end{enumerate}

In this context, we can define a $B^\bullet$-inscribed presheaves / fibered categories / sheaves / prestacks  / stacks by replacing $B^\lf$ everywhere above with $B^\bullet$ and only requiring \cref{eq.inscribed-def} for those push-outs that are contained in $B^\bullet$. The definitions, structures, and results, in the previous sections for $B^\lf$-inscribed objects then make sense also for $B^\bullet$-inscribed objects. 

We will typically apply this only to $B^\bullet$-inscribed objects that are obtained by restricting a $B^\lf$-inscribed object, but for which there is some natural property that only holds only over $\bullet$-thickenings. We note that, in this case, the tangent bundle of the restricted $B^\bullet$ inscribed $v$-sheaf is simply the restriction of the tangent bundle of the $B^\lf$ inscribed $v$-sheaf. 

\begin{example}
Let $B^\epsilon \subseteq B^\lf$ denote the full subcategory whose objects are those finite locally free thickenings isomorphic to $B(o)[M]/B(o)$ for $M$ a finite locally free $\mbb{B}(B(o)/B(o))=\mc{O}(B(o))$-module. Then, as in \cref{remark.tangent-bundle-equivalent-data}, the category of $B^\epsilon$-inscribed presheaves is equivalent, via $\mc{S} \mapsto (\overline{\mc{S}}, \overline{T_{\mc{S}}})$, to the category of presheaves on $\mc{C}$ equipped with a presheaf of $\mbb{B}$-modules. 
\end{example}

\section{Inscribed contexts}\label{s.inscribed-contexts}

In this section we describe the pairs $(\mathcal{C},B)$ consisting of a category $\mathcal{C}$ and a functor $B$ from $\mathcal{C}$ to schemes or strongly sheafy adic spaces to which we will apply the formalism of \cref{s.inscription} in the remainder of this work. After recalling some constructions of adic spaces and schemes attached to perfectoid spaces in \cref{ss.perf-untilt-canonical-thickenings} and \cref{ss.Fargues--Fontaine-curves}, in \cref{ss.the-pairs-we-use} we define these pairs and state their  basic properties (see \cref{prop.inscribed-pairs}).  In \cref{ss.pairs-moduli-of-sections} we revisit the moduli of sections construction of \cref{ss.inscribed-mos} in these contexts. In particular, we verify that it gives rise to inscribed $v$-sheaves in the cases of our main interest. We note that the algebraic moduli of sections construction, which applies only to affine schemes, is relatively straightforward, and suffices for many of our computations. The analytic moduli of sections construction gives a common generalization and inscribed upgrade of the diamonds associated to smooth rigid analytic spaces and Fargues--Scholze moduli of sections in a way that incorporates tangent bundles.

\subsection{Perfectoid spaces, untilts, and canonical thickenings}\label{ss.perf-untilt-canonical-thickenings}
Let $\Perf$ be the category of perfectoid spaces in characteristic $p$ and $\AffPerf \subseteq \Perf$ the subcategory of affinoid perfectoid spaces. We equip the categories $\AffPerf \subseteq \Perf$ with the $v$-topology of \cite[Definition 8.1]{Scholze.EtaleCohomologyOfDiamonds}. We note that, to define a $v$-stack on $\Perf$, it suffices to give its values on $\AffPerf$.

Recall that $\Spd \mbb{Q}_p$ is the $v$-sheaf on $\Perf$ sending $P$ to the set of isomophism classes of untilts $P^\sharp/\Spa \mbb{Q}_p$. Given such an untilt $P^\sharp/\Spa \mbb{Q}_p$, there is a canonical infinitesimal thickening for each $i \geq 0$, $P^\sharp_{(i)}$: When $P^\sharp=\Spa(A,A^+)$ is affinoid perfectoid, 
\[ P^\sharp_{(i)}:=\Spa(A_{(i)}, A_{(i)}^+) \]
where the Huber pair $(A_{(i)}, A_{(i)}^+)$ is defined as follows. First, we write $\mbb{B}^+_\dR$ and $A_\mr{inf}$ for the usual Fontaine functors, $\theta: \mbb{B}^+_\dR(A) \twoheadrightarrow A$ for the usual Fontaine map, whose kernel is a Cartier divisor, and $\Fil^j \mbb{B}^+_\dR(A)=(\mr{Ker} \theta)^j$. Then 
\[ A_{(i)}:=\mbb{B}^+_\dR(A)/\Fil^{i+1}\mbb{B}^+_\dR(A),\; A_{(i)}^+=\theta^{-1}(A^+), \]
and $A_{(i)}$ is equipped with the $f$-adic topology such that a ring of definition is given by the image of $A_\mr{inf}(A^{+,\flat})$. Note that $(A_{(i)},A_{(i)}^+)$, by construction, lies over $(\mbb{Q}_p, \mbb{Z}_p)$. 

\begin{lemma}
The Huber pair $(A_{(i)}, A_{(i)}^+)$ is strongly sheafy. 
\end{lemma}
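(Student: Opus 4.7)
The plan is to exhibit $(A_{(i)}, A_{(i)}^+)$ as a finite locally free nilpotent thickening of the perfectoid Huber pair $(A, A^+)$ in the sense of \cref{def.thickenings}, and then to invoke \cref{cor.lf-nilp-strongly-sheafy}, using that affinoid perfectoid spaces are sousperfectoid hence strongly sheafy.

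First I would choose a generator $\xi \in A_\mr{inf}(A^{+,\flat})$ of $\Fil^1 \mbb{B}^+_\dR(A) = \ker \theta$; such a $\xi$ exists and is a non-zero divisor on $\mbb{B}^+_\dR(A)$. Using that the graded pieces $\Fil^j/\Fil^{j+1}$ are free of rank one over $A$ with generator the class of $\xi^j$, I would then lift inductively through the filtration to deduce that the quotient
\[ A_{(i)} = \mbb{B}^+_\dR(A)/\xi^{i+1} \]
is free as an $A$-module of rank $i+1$ with basis $1, \xi, \ldots, \xi^i$, and that the natural augmentation $A_{(i)} \twoheadrightarrow A$ (reduction modulo $\xi$) has kernel $\xi A_{(i)}$ satisfying $(\xi A_{(i)})^{i+1} = 0$. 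This verifies the algebraic part of \cref{def.thickenings}.

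Second, I would verify that the $f$-adic topology on $A_{(i)}$ defined via the image of $A_\mr{inf}(A^{+,\flat})$ as a ring of definition agrees with the canonical topology on $A_{(i)}$ as a finite projective $A$-module. Under the module decomposition $A_{(i)} = \bigoplus_{j=0}^i A \cdot \xi^j$, the image of $A_\mr{inf}(A^{+,\flat})$ corresponds to $\bigoplus_{j=0}^i A^+ \cdot \xi^j$ (up to bounded subsets), which one checks by using that $\theta \colon A_\mr{inf}(A^{+,\flat}) \twoheadrightarrow A^+$ is continuous with respect to the $(p, [\varpi^\flat])$-adic topology on the source and the natural topology on the target (for $\varpi^\flat$ a pseudouniformizer of $A^{+,\flat}$), and bootstrapping up the $\xi$-adic filtration.

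Combining these two steps identifies $\Spa(A_{(i)}, A_{(i)}^+)/\Spa(A, A^+)$ as a finite locally free nilpotent thickening of $\Spa(A, A^+)/\Spa(A, A^+)$ in the sense of \cref{def.thickenings}. Since $\Spa(A, A^+)$ is strongly sheafy, the conclusion then follows from \cref{cor.lf-nilp-strongly-sheafy}. The main obstacle is the topology comparison in the second step; this is essentially bookkeeping, but it is where the actual care is needed, since the algebraic structure in the first step is formal once one knows the freeness of the graded pieces of $\Fil^\bullet \mbb{B}^+_\dR(A)$.
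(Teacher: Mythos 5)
Your strategy does not work, and the reason is spelled out by the paper itself in the remark immediately following this lemma: for $i \geq 1$ the canonical thickening $P^\sharp_{(i)}$ does \emph{not} fall under the umbrella of \cref{def.thickenings}, because a locally free nilpotent thickening of $\Spa(A,A^+)/\Spa(A,A^+)$ requires a structure morphism back to $\Spa(A,A^+)$, i.e.\ a \emph{continuous} $A$-algebra section of the augmentation $\theta\colon A_{(i)} \twoheadrightarrow A$, and no such section exists. Already for $A=\mbb{C}_p$ the augmentation $\mbb{B}^+_\dR(\mbb{C}_p)/\Fil^2 \to \mbb{C}_p$ admits no continuous algebra splitting (e.g.\ because $\overline{\mbb{Q}}_p$ is dense in both source and target). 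Consequently your first step is already ill-posed: the decomposition $A_{(i)} = \bigoplus_{j=0}^i A\cdot \xi^j$ presupposes an $A$-module structure on $A_{(i)}$, which requires exactly the ring map $A \to A_{(i)}$ that does not exist (only the graded pieces $\Fil^j/\Fil^{j+1}$ are $A$-modules; the extension is not continuously split). The step you dismiss as ``essentially bookkeeping'' — the topology comparison — is precisely where the obstruction lives and is insurmountable, so \cref{cor.lf-nilp-strongly-sheafy} cannot be invoked.

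The paper instead argues directly. Set $P_{(i),n} = \Spa(A_{(i)}\langle t_1,\dots,t_n\rangle, A^+_{(i)}\langle t_1,\dots,t_n\rangle)$; all of these have the same underlying space, valuations, and rational opens as $P_{(0),n}$, so one only has to check the sheaf property of the structure presheaf. This is done by induction on $i$ using the short exact sequence of presheaves
\[ 0 \rightarrow \mc{O}_{P_{(i-1),n}} \xrightarrow{\cdot \xi} \mc{O}_{P_{(i),n}} \rightarrow \mc{O}_{P_{(0),n}} \rightarrow 0, \]
with base case $i=0$ the perfectoid (hence strongly sheafy) case: a presheaf that is an extension of a sheaf by a sheaf, exact at the level of presheaves, is itself a sheaf. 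If you want to salvage a ``reduce to the perfectoid case'' argument, this filtration-by-$\xi$ dévissage is the correct replacement for the (nonexistent) direct sum decomposition.
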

\begin{proof}
For any $n \geq 0$, we write $P_{(i),n} = \Spa(A_{(i)}\langle t_1, \ldots, t_n\rangle, A^+_{(i)}\langle t_1, \ldots, t_n \rangle).$ Note that because these are nilpotent thickenings, for any $i \geq 0$, $P_{(i),n}$ has the same underlying topological space and valuations as $P_{(0),n}$ and rational opens are naturally identified. 

We must show that the structure presheaf is a sheaf for each $P_{(i),n}$. We argue by induction on $i$. When $i=0$, this holds since perfectoid spaces are strongly sheafy. If we fix a generator $\xi$ for $\ker \theta$, then for any $i \geq 1$ we obtain an exact sequence of presheaves
\[ 0 \rightarrow \mc{O}_{P_{(i-1),n}}\xrightarrow{\cdot \xi} \mc{O}_{P_{(i),n}} \rightarrow \mc{O}_{P_{(0),n}} \rightarrow 0. \]
By the inductive hypothesis, $\mc{O}_{P_{(i-1),n}}$ is a sheaf and $\mc{O}_{P_{(0),n}}$ is a sheaf. It follows that $\mc{O}_{P_{(i),n}}$ is a sheaf.
\end{proof}

Outside of the affinoid case, we obtain $P^\sharp_{(i)}$ by glueing. 

\begin{remark}The canonical thickenings $P^\sharp_{(i)}$, $i \geq 1$, do not fall under the umbrella of \cref{ss.thickenings} because one cannot choose a structure morphism $P^{\sharp}_{(i)} \rightarrow P^{\sharp}$. For example, for $P=\Spa(\mbb{C}_p)$, the associated augmentation is 
\[ \mbb{B}^+_\dR(\mbb{C}_p)/\Fil^2\mbb{B}^+_\dR(\mbb{C}_p) \xrightarrow{\theta} \mathbb{C}_p \]
which does not admit a \emph{continuous} algebra section (e.g., because $\overline{\mathbb{Q}}_p$ is dense in both the target and the source).  
\end{remark}

In the affinoid case $P^\sharp=\Spa(A,A^+)$, for $0 \leq i < \infty$, we write 
\[ P^{\sharp-\alg}_{(i)}:=\Spec \mc{O}(P^\sharp_{(i)}) = \Spec \mbb{B}^+_\dR(A)/\Fil^{i+1} \mbb{B}^+_\dR(A).\]
The category of vector bundles on $P^\sharp_{(i)}$ is equivalent to the category of vector bundles on $P^{\sharp-\alg}_{(i)}$ as both are equivalent to projective modules over $\mc{O}(P^\sharp_{(i)})$ (see \cref{ss.vector-bundles-adic-spaces-schemes}).

We also write 
$P^{\sharp-\alg}_{(\infty)}:=\Spec \lim_i \mc{O}(P^\sharp_{(i)})$, i.e. $P^{\sharp-\alg}_{(\infty)}=\Spec \mbb{B}^+_\dR(A)$ when $P^\sharp=\Spa(A,A^+)$. 

\begin{remark}
    We could can also view the system $(P^{\sharp}_{(i)})_i$ as a formal adic space; this perspective will play a role in the constructions of \cref{s.hodge-etc-period-maps}. 
\end{remark}

\begin{definition}\label{def.infinitesimal-nbhds}\hfill
\begin{enumerate}
\item For $0 \leq i < \infty$, we write $\Box^\sharp_{(i)}$ for the functor from $\AffPerf/\Spd \mbb{Q}_p$ to strongly sheafy adic spaces
\[ P/\Spd \mbb{Q}_p \mapsto P^\sharp_{(i)}. \]
We also write $\Box^\sharp=\Box^\sharp_{(0)}$. 
\item For $0 \leq i \leq \infty$, we write $\Box^{\sharp-\alg}_{(i)}$ for the functor from $\AffPerf/\Spd \mbb{Q}_p$ to schemes
\[ P/\Spd \mbb{Q}_p \mapsto P^{\sharp-\alg}_{(i)}. \]
We also write $\Box^{\sharp-\alg}=\Box^{\sharp-\alg}_{(0)}$. 
\item We write $\Box^{\sharp-\alg}_{(\infty)}\backslash \Box^{\sharp-\alg}$ for the functor from $\AffPerf/\Spd \mbb{Q}_p$ to schemes
\[ P/\Spd \mbb{Q}_p \mapsto P^{\sharp-\alg}_{(\infty)} \backslash P^{\sharp-\alg}. \]
\end{enumerate}
\end{definition}

As noted above, for any $0 \leq i < \infty$, there is a natural equivalence of fibered categories on $\Perf/\Spd \mbb{Q}_p$ 
\begin{equation}\label{eq.alg-an-equivalence-infinitesimal} (\Box_{(i)}^{\sharp-\alg})^*\VB=(\Box^{\sharp}_{(i)})^*\VB.\end{equation}

\begin{lemma}\label{lemma.inf-nbhds-stack}\hfill
\begin{enumerate}
    \item For any any $0 \leq i \leq \infty$, $(\Box_{(i)}^{\sharp-\alg})^*\VB$ is a $v$-stack. 
    \item For $0 \leq i < \infty$, $(\Box_{(i)}^{\sharp})^*\VB$ is a $v$-stack.
    \item $(\Box^{\sharp-\alg}_{(\infty)}\backslash \Box^{\sharp-\alg})^*\VB$ is a $v$-prestack. 
\end{enumerate}
 \end{lemma}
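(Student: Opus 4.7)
I will establish the three claims in the order (2), (1), (3). The bulk of the work is in (2); parts (1) and (3) follow by formal manipulations starting from this. For (2) I proceed by induction on $i \geq 0$. The base case $i = 0$ is $v$-descent for vector bundles on affinoid perfectoid spaces, which via \cref{ss.vector-bundles-adic-spaces-schemes} reduces to descent for finite projective modules over perfectoid rings and is standard by the methods of \cite{Scholze.EtaleCohomologyOfDiamonds}. For the inductive step, $P^\sharp_{(i-1)} \hookrightarrow P^\sharp_{(i)}$ is a square-zero thickening whose ideal sheaf $\mc{J}_i := \mr{Fil}^i \mbb{B}^+_\dR/\mr{Fil}^{i+1}\mbb{B}^+_\dR$ is the pullback of a (trivial) line bundle on $P^\sharp$. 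Standard deformation theory describes a vector bundle on $P^\sharp_{(i)}$ as a vector bundle $\mc{V}$ on $P^\sharp_{(i-1)}$ together with a lift, where the set of lifts (when nonempty) is a torsor under $H^1(P^\sharp, \mc{E}nd(\mc{V}|_{P^\sharp}) \otimes \mc{J}_i|_{P^\sharp})$, the obstruction to existence sits in $H^2$, and the automorphisms of a fixed lift form the $H^0$ of the same coefficient sheaf. Higher $v$-cohomology of vector bundles on an affinoid perfectoid space vanishes (Scholze's vanishing, propagated to the $v$-topology), so every lift exists and is unique up to non-canonical isomorphism, while the automorphism $H^0$ is itself a $v$-sheaf by the base case applied to the vector bundle $\mc{E}nd(\mc{V}|_{P^\sharp}) \otimes \mc{J}_i|_{P^\sharp}$. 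Combining this with the inductive hypothesis for the reduction mod $\mr{Fil}^i$ of any given descent datum yields $v$-descent for vector bundles on $P^\sharp_{(i)}$.

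For (1) with $i < \infty$, the result is immediate from (2) via the equivalence \eqref{eq.alg-an-equivalence-infinitesimal}. For $i = \infty$, the ring $\mbb{B}^+_\dR(A)$ is complete and Hausdorff for its $\mr{Fil}^\bullet$-filtration, so a finitely generated projective module over $\mbb{B}^+_\dR(A)$ is equivalent to a compatible system of finitely generated projective modules over the finite-level quotients $\mbb{B}^+_\dR(A)/\mr{Fil}^{i+1}$: idempotents in matrix rings lift uniquely across each nilpotent step by Hensel's lemma, and completeness assembles compatible systems of idempotents into an idempotent in the limit. Hence $(\Box_{(\infty)}^{\sharp-\alg})^*\VB = \lim_i (\Box_{(i)}^{\sharp-\alg})^*\VB$ as fibered categories over $\AffPerf/\Spd \mbb{Q}_p$, and a $2$-limit of $v$-stacks is a $v$-stack. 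For (3), the structure sections $\mbb{B}_\dR(A) = \mbb{B}^+_\dR(A)[\xi^{-1}]$ form a $v$-sheaf on $\AffPerf/\Spd \mbb{Q}_p$, as a filtered colimit in the qcqs setting of the $v$-sheaves $\xi^{-n}\mbb{B}^+_\dR$ (each identified with $\mbb{B}^+_\dR$, which is a $v$-sheaf by the structure-sheaf case of (1)). Morphism sheaves between vector bundles on the affine scheme $\Spec \mbb{B}_\dR(A)$ are direct summands of finite free $\mbb{B}_\dR(A)$-modules, so they too are $v$-sheaves; this is exactly the $v$-prestack condition.

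The main obstacle I anticipate is in the inductive step for (2): it rests on the vanishing of higher $v$-cohomology of vector bundles on affinoid perfectoid spaces, and on the verification that the deformation-theoretic classification of lifts across a locally free square-zero thickening holds in this non-Noetherian analytic setting. Both are standard in spirit, but the verification likely proceeds affine-locally via the equivalence of \cref{ss.vector-bundles-adic-spaces-schemes} between vector bundles on $\Spa(\mbb{B}^+_\dR(A)/\mr{Fil}^{i+1}, \cdot)$ and finite projective $\mbb{B}^+_\dR(A)/\mr{Fil}^{i+1}$-modules, then reduces to algebraic lifting of finite projective modules across nilpotent square-zero ideals, where the required cohomology vanishing is governed by $v$-descent at the base level.
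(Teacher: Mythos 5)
Your proof is correct, but it routes the argument differently from the paper. The paper disposes of part (1) by citing \cite[Corollary 17.1.9]{ScholzeWeinstein.BerkeleyLecturesOnPAdicGeometryAMS207} (which covers both the finite quotients and $\mbb{B}^+_\dR$ itself), deduces (2) from (1) via the equivalence \cref{eq.alg-an-equivalence-infinitesimal}, and then gives for (3) exactly the argument you give: $\mbb{B}_\dR=\bigcup \xi^{-n}\mbb{B}^+_\dR$ is a $v$-sheaf, and hom-presheaves between projective $\mbb{B}_\dR$-modules are summands of $\mbb{B}_\dR^n$. You instead prove (2) from scratch by induction on $i$ and deduce (1), handling $i=\infty$ by lifting idempotents through the complete filtration. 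What your approach buys is independence from the cited corollary; what it costs is that you are in effect re-proving it, and indeed your inductive step is in substance the argument the paper does spell out later, in the proof of \cref{lemma.thickenings-v-descent}, for the more general setting of locally free thickenings $\mc{B}/P^\sharp_{(i)}$ — there the d\'evissage is packaged not as torsor/obstruction classes for lifts of bundles but as the congruence filtration $G_j=\ker(\GL_n(\mc{O}_{(i)})\to\GL_n(\mc{O}_{(j)}))$ with graded pieces $M_n(\mc{O}\{j+1\})$, together with the vanishing of higher $v$-cohomology of finite projective modules on affinoid perfectoids from \cite[Theorem 17.1.3]{ScholzeWeinstein.BerkeleyLecturesOnPAdicGeometryAMS207}. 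That nonabelian Čech-$H^1$ formulation is the cleanest way to make precise your sentence "combining this with the inductive hypothesis for the reduction mod $\Fil^i$ of any given descent datum yields $v$-descent," which as written leaves implicit how the groupoid of lifts interacts with the cocycle condition; I would recommend phrasing the step that way, but I do not regard this as a gap.
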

 \begin{proof}
 Part (1) is \cite[Corollary 17.1.9]{ScholzeWeinstein.BerkeleyLecturesOnPAdicGeometryAMS207}, and part (2) then follows from \cref{eq.alg-an-equivalence-infinitesimal}.

 For (3), note that $P^{\sharp-\alg}_{(\infty)}\backslash P^{\sharp-\alg}$ is affine: indeed, for $P^\sharp=\Spa(A,A^+)$, it is $\Spec \mbb{B}_\dR(A)$, where as usual $\mbb{B}_\dR(A)$ is obtained from $\mbb{B}^+_\dR(A)$ by inverting any generator of $\ker \theta$. To that end, we note that $\mbb{B}_\dR$ is a $v$-sheaf: This holds, e.g., since if we restrict this to $\Perf/P$ for any $P/\Spd \mbb{Q}_p$ with $P^\sharp=\Spa(A,A^+)$ and fix a generator $\xi$ for $\ker \theta$ on $\mbb{B}^+_\dR(A)$, then the restriction to $\Perf/P$ is $\bigcup \frac{1}{\xi^i}\mbb{B}^+_\dR$, so it is a $v$-sheaf since $\mbb{B}^+_\dR$ is (that $\mbb{B}^+_\dR$ is a $v$-sheaf is part of the case $i=(\infty)$ of (1)). This implies part (3): note that the presheaf of homomorphisms between any $\mbb{B}_\dR$-modules $M_1$ and $M_2$ is the presheaf of sections of $M_1^* \otimes M_2$. The latter is a projective module so that its sheaf of sections is a summand of $\mbb{B}_\dR^n$ for some $n$.  
 \end{proof}

\subsection{Fargues--Fontaine curves}\label{ss.Fargues--Fontaine-curves}
For a $E/\mbb{Q}_p$ a finite extension with residue field $\mbb{F}_q$ and $P=\Spa(R,R^+) \in \AffPerf/\Spd \mbb{F}_q$, as in \cite[II.1.15]{FarguesScholze.GeometrizationOfTheLocalLanglandsCorrespondence} we write 
\[ Y_{E,P} = \Spa(W_E(R^+), W_E(R^+))\backslash V([\varpi]p)\] where $\varpi$ is any pseudouniformizer in $R^+$. It admits a $q$-power Frobenius $\sigma$, and the Fargues--Fontaine curve is 
\[ X_{E,P} := Y_{E,P}/\sigma^{\mbb{Z}}. \]
As in \cite[\S II.2.3]{FarguesScholze.GeometrizationOfTheLocalLanglandsCorrespondence}, there is an ample line bundle $\mathcal{O}(1)$ on $X_{E,P}$, and defining $X_{E,P}^\alg:=\mathrm{Proj} \bigoplus_{i \geq 0} H^0(X_E, \mathcal{O}(i))$, there is a natural map of ringed spaces $X_{E,P} \rightarrow X_{E,P}^\alg$ such that pullback induces an equivalence 
\begin{equation}\label{eq.GAGA-equiv} \VB(X_{E,P}^\alg)=\VB(X_{E,P}) \end{equation} 
that furthermore identifies cohomology groups on both sides. 

Note that there is a natural map $\Spd E \rightarrow \Spd \mbb{F}_q$. For $P/\Spd E$, Fontaine's map $\theta$ induces functorial closed immersions over $E$ for $0 \leq i < \infty$
\[ P_{(i)}^\sharp \hookrightarrow Y_{E,P}, P_{(i)}^\sharp \hookrightarrow X_{E,P}, \textrm{ and } P_{(i)}^{\sharp-\alg} \hookrightarrow X_{E,P}^\alg. \]
It also induces a functorial map 
\[ P_{(\infty)}^{\sharp-\alg} \rightarrow X_{E,P}^\alg \]
which is the algebraization of the formal neighborhood of $P^{\sharp-\alg}$ in $X_{E,P}^\alg$. 

\begin{definition}\label{def.ff-functors}\hfill
    \begin{enumerate}
    \item We write $X_{E,\Box}$ for the functor from $\AffPerf/\Spd \mbb{F}_q$ to strongly sheafy adic spaces
    \[ P/\Spd \mbb{F}_q \mapsto X_{E,P}. \]
    \item We write $X_{E,\Box}^\alg$ for the functor from $\AffPerf/\Spd \mbb{F}_q$ to schemes
    \[ P/\Spd \mbb{F}_q \mapsto X_{E,P}^\alg. \]
    \item We write $X_{E,\Box}^\alg \backslash \Box^{\sharp-\alg}$ for the functor from $\AffPerf/\Spd E$ to schemes
    \[ P/\Spd E \mapsto X_{E,P}\backslash P^{\sharp-\alg}.\]
    \end{enumerate}
\end{definition}

\begin{lemma}\label{lemma.ff-prestack-vstack}\hfill
\begin{enumerate}
\item $(X_{E,\Box})^* \VB$ and $(X_{E,{\Box}}^\alg)^*\VB$ are both $v$-stacks. They are equivalent by pullback along the natural transformation of functors to ringed spaces $X_{E,\Box} \rightarrow X_{E,\Box}^\alg$.
\item $(X_{E,\Box} \backslash \Box^{\sharp-\alg})^*\VB$ is a $v$-prestack. 
\end{enumerate}
\end{lemma}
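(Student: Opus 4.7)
The plan is to reduce both parts to $v$-descent for vector bundles on the full relative Fargues-Fontaine curves, and for part (2) to control the pole at the divisor $\infty = P^{\sharp-\alg}$ by a filtration argument.

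\textbf{Part (1).} The identification $(X_{E,\Box})^*\VB \simeq (X_{E,\Box}^\alg)^*\VB$ is the functorial version of the GAGA equivalence \eqref{eq.GAGA-equiv}: it holds pointwise at every $P$ by hypothesis, and compatibility with pullback along morphisms $P \to P'$ in $\AffPerf/\Spd\mbb{F}_q$ follows from the naturality of the analytification map $X_{E,P} \to X_{E,P}^\alg$. It thus suffices to prove one of the two sides is a $v$-stack. For this I would appeal to the $v$-descent property of $\Bun_{\GL_n}$ on relative Fargues-Fontaine curves established in \cite{FarguesScholze.GeometrizationOfTheLocalLanglandsCorrespondence}. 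A direct alternative is to observe that $X_{E,P} = Y_{E,P}/\sigma^{\mbb{Z}}$ with $Y_{E,P}$ sousperfectoid, so vector bundles on $X_{E,P}$ are equivalent to $\sigma$-equivariant finite projective modules over sousperfectoid Huber pairs covering $Y_{E,P}$; both $v$-descent for finite projective modules over sousperfectoid rings (\cite{KedlayaLiu.RelativepAdicHodgeTheoryFoundations}) and the compatibility of pullback with the $\sigma$-action then give the claim.

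\textbf{Part (2).} Since the prestack condition concerns only descent of morphisms, it suffices to show that for every $P \in \AffPerf/\Spd E$ and every pair $\mathcal{V}_1, \mathcal{V}_2$ of vector bundles on $X_{E,P}^\alg \setminus P^{\sharp-\alg}$, the presheaf sending $(Q \to P)$ to
\[
\Hom(\mathcal{V}_{1,Q}, \mathcal{V}_{2,Q}) = H^0\bigl(X_{E,Q}^\alg \setminus Q^{\sharp-\alg},\; \mathcal{W}|_Q\bigr), \qquad \mathcal{W} := \mathcal{V}_1^\vee \otimes \mathcal{V}_2,
\]
is a $v$-sheaf. The main step is a filtration by pole order: for each $n \geq 0$, let $\mathcal{W}_n \subset j_*\mathcal{W}$ (for $j$ the open immersion into $X_{E,P}^\alg$) denote the canonical coherent extension of $\mathcal{W}$ to a vector bundle on $X_{E,P}^\alg$ with order of pole at $P^{\sharp-\alg}$ bounded by $n$. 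The $\mathcal{W}_n$ form an increasing family of vector bundles with
\[
H^0(X_{E,P}^\alg \setminus P^{\sharp-\alg}, \mathcal{W}) \;=\; \varinjlim_n H^0(X_{E,P}^\alg, \mathcal{W}_n),
\]
and each $Q \mapsto H^0(X_{E,Q}^\alg, \mathcal{W}_n|_Q)$ is a $v$-sheaf by part (1). Since any $v$-cover of an affinoid perfectoid may be refined by a single surjection from an affinoid perfectoid, the sheaf condition reduces to an equalizer for a single morphism, which is a finite limit and therefore commutes with the filtered colimit; this yields the $v$-sheaf property of $\mathcal{W}$.

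\textbf{Main obstacle.} The chief technical point is the construction of the extensions $\mathcal{W}_n$ as honest vector bundles on $X_{E,P}^\alg$, functorial in $P$, with union exhausting $j_*\mathcal{W}$. In the absolute case this is standard from the description of vector bundles on a regular curve, but in the relative setting it requires checking the local structure of $X_{E,P}^\alg$ along the relative Cartier divisor $P^{\sharp-\alg}$ and the compatibility of such extensions with base change in $P$; once that is in place the rest of the argument is mechanical.
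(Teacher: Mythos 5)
Part (1) of your proposal is correct and matches the paper: the paper also reduces via GAGA to the analytic side and then descends $\varphi$-equivariantly from $Y_{E,P}$, citing the proof of \cite[Proposition 19.5.3]{ScholzeWeinstein.BerkeleyLecturesOnPAdicGeometryAMS207}, which is your ``direct alternative.''

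For part (2), your overall shape (reduce the Hom-presheaf to global sections of a bundle, filter by pole order at $\infty$, use part (1) termwise, and commute the filtered colimit past the equalizer for a single affinoid-perfectoid surjection) is the right one, but the step you yourself flag as the ``main obstacle'' is a genuine gap, and it is not a routine verification. A vector bundle $\mathcal{W}$ on $X_{E,P}^{\alg}\setminus P^{\sharp-\alg}$ is \emph{not} given to you as the restriction of a bundle on $X_{E,P}^{\alg}$; producing an extension $\mathcal{W}_n$ amounts, via Beauville--Laszlo, to choosing a finite projective $\mbb{B}^+_\dR(A)$-lattice in $\mathcal{W}\otimes_{\mbb{B}_e(A)}\mbb{B}_\dR(A)$, functorially in $P$, and in the relative setting such lattices need not exist globally or compatibly with base change. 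The filtration-by-pole-order argument therefore cannot be run on a general $\mathcal{W}$.

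The gap is avoidable, and the paper's route shows how: $X_{E,P}^{\alg}\setminus P^{\sharp-\alg}$ is \emph{affine}, with ring of global sections $\mbb{B}_e(A)$, so $\mathcal{W}=\mathcal{V}_1^{\vee}\otimes\mathcal{V}_2$ corresponds to a finite projective $\mbb{B}_e(A)$-module and is hence a direct summand of $\mbb{B}_e(A)^n$; the associated presheaves of sections on $\AffPerf/P$ are then retracts of $\mbb{B}_e^n$, and a retract of a $v$-sheaf is a $v$-sheaf. This reduces everything to showing that $\mbb{B}_e$ itself is a $v$-sheaf, i.e.\ to the single bundle $\mathcal{O}|_{X\setminus\infty}$, for which the canonical extensions are simply the twists $\mathcal{O}(n)$ and
\[
\mbb{B}_e \;=\; \varinjlim_n H^0(X_{E,-},\mathcal{O}(n)),
\]
with each term a $v$-sheaf by part (1). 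Your observation that $v$-covers of affinoid perfectoids refine to single surjections, so that the sheaf condition is a finite limit commuting with the filtered colimit, is exactly what is needed to finish and is left implicit in the paper. In short: insert the affineness-plus-direct-summand reduction before your filtration step, apply the filtration only to $\mathcal{O}^{\oplus n}$, and the argument closes.
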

\begin{proof}
For (1), the equivalence follows from the GAGA equivalence \cref{eq.GAGA-equiv}, so it suffices to establish the stack property only in the analytic case. To that end, we first note that, for any open $U \subseteq Y_{E,P}$, $U^*\VB$ is a $v$-stack on $\AffPerf/P$ by \cite[Proof of Proposition 19.5.3] {ScholzeWeinstein.BerkeleyLecturesOnPAdicGeometryAMS207}. In particular, since the category of vector bundles on $X_E$ is equivalent to the category of $\varphi$-equivariant bundles on $Y_E$, it follows that $X_{E,\Box}^*\VB$ is a $v$-stack.

For (2), we note that $X_{E,P}^\alg \backslash P^{\sharp-\alg}$ is affine. For $P^\sharp=\Spa(A,A^+)$, its global sections are usually written as $\mbb{B}_e(A)$. As in the proof of \cref{lemma.inf-nbhds-stack}, it suffices to verify these global sections are a $v$-sheaf. This follows, e.g., by writing its restriction to any $P/\Spd E$ as the colimit of the global sections presheaves of $\mathcal{O}(n)$ on $X_{E,P}$, which is a $v$-sheaf by part (1). 
\end{proof}

\subsection{The pairs $(\mathcal{C},B)$ that we will use}\label{ss.the-pairs-we-use}

\begin{proposition}\label{prop.inscribed-pairs}
Consider the pairs $(\AffPerf/S,B)$ for $(S,B)$ as follows:
\begin{enumerate}
\item $S=\Spd \mbb{Q}_p$ and $B=\Box^\sharp_{(i)}$ or $\Box_{(i)}^{\sharp-\alg}$ for any $0 \leq i < \infty$
\item $S=\Spd \mbb{Q}_p$ and $B=\Box^{\sharp-\alg}_{(\infty)}$
\item $S=\Spd \mbb{Q}_p$ and $B=\Box^{\sharp-\alg}\backslash \Box^{\sharp-\alg}_{(\infty)}$
\item For $E/\mathbb{Q}_p$ a finite extension with residue field $\mathbb{F}_q$, $S=\Spd \mbb{F}_q$ and $B=X_{E,\Box}$ or $X_{E,\Box}^\alg$. 
\item For $E/\mathbb{Q}_p$ a finite extension with residue field $\mathbb{F}_q$, $S=\Spd E$ and $B=X_{E,\Box}^\alg\backslash \Box^{\sharp-\alg}$. 
\end{enumerate}
In all cases the presheaf $\mbb{B}$ on $B^\lf$ of \cref{def.BB} is an inscribed $v$-sheaf. In (1) and (4), there is a canonical equivalence between the categories $B^\lf$ for the analytic and algebraic versions, identifying the $v$-sheaf $\mbb{B}$. 
\end{proposition}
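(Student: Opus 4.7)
The plan is to reduce every assertion to a property of vector bundles on $B(P)$ as $P$ varies. By \cref{prop.augmented-equiv}, giving an object $\mathcal{B}/B(o) \in B^\lf$ is the same as giving a locally free nilpotent augmented $\mathcal{O}_{B(o)}$-algebra $\mathcal{A}$, and under this equivalence one has
\[ \mathbb{B}(\mathcal{B}) = H^0(B(o),\mathcal{A}) = \Hom_{\mathcal{O}_{B(o)}}(\mathcal{O}_{B(o)},\mathcal{A}). \]
The inscribed property is already provided by \cref{prop.BB-inscribed}, so what will remain is the $v$-sheaf property in all cases and the analytic/algebraic equivalence in cases (1) and (4).

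For the equivalence in cases (1) and (4), I would apply \cref{prop.augmented-equiv} on both sides to reduce the claim to a natural monoidal equivalence between the categories of locally free finite rank $\mathcal{O}$-modules on the analytic and algebraic versions of $B(P)$ sending augmentations to augmentations. The underlying equivalence of categories is \cref{eq.alg-an-equivalence-infinitesimal} for case (1) and the GAGA equivalence \cref{eq.GAGA-equiv} for case (4); in both cases it is induced by pullback along a morphism of ringed spaces, hence is symmetric monoidal and preserves global sections, so algebra structures and augmentations are carried across. The identification of $\mathbb{B}$ is then immediate from the fact that global sections coincide.

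For the $v$-sheaf property, I would fix $\mathcal{B}/B(P)$ corresponding to a locally free augmented $\mathcal{O}_{B(P)}$-algebra $\mathcal{A}$, and let $(P_i \to P)$ be a $v$-cover in $\AffPerf/S$. By the definition of the $v$-topology on $B^\lf$ the associated cover of $\mathcal{B}$ is obtained by pullback, and by functoriality of \cref{prop.augmented-equiv} this pullback corresponds to restriction of $\mathcal{A}$ along $B(P_i) \to B(P)$. The sheaf condition for $\mathbb{B}$ at $\mathcal{B}$ therefore becomes the assertion that the presheaf
\[ Q/P \;\longmapsto\; \Hom_{\mathcal{O}_{B(Q)}}\bigl(\mathcal{O}_{B(Q)},\, \mathcal{A}|_{B(Q)}\bigr) \]
is a $v$-sheaf on $\AffPerf/P$. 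This is exactly the Hom presheaf between two vector bundles on $B$, hence a $v$-sheaf whenever $B^*\VB$ is a $v$-prestack; this prestack property is supplied by \cref{lemma.inf-nbhds-stack} for cases (1), (2), (3) and by \cref{lemma.ff-prestack-vstack} for cases (4), (5).

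The main obstacle is nothing more than the bookkeeping in the final paragraph: one must verify that the $v$-topology on $B^\lf$ and the formation of fiber products of thickenings really are compatible, via \cref{prop.augmented-equiv}, with base change of the associated augmented algebras along $B(P_i) \to B(P)$, so that the reduction to a Hom-of-vector-bundles question is genuine. Once this functoriality is in place, no further input beyond the already cited lemmas is required.
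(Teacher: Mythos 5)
Your proposal is correct and follows essentially the same route as the paper: the inscribed property via \cref{prop.BB-inscribed}, the $v$-sheaf property by reducing (via \cref{prop.augmented-equiv}) to the sections/Hom presheaf of the locally free augmented algebra viewed as a vector bundle on $B$, which is covered by \cref{lemma.inf-nbhds-stack} and \cref{lemma.ff-prestack-vstack}, and the analytic/algebraic equivalence from \cref{eq.alg-an-equivalence-infinitesimal} and \cref{eq.GAGA-equiv}. The compatibility of pullback of thickenings with base change of augmented algebras that you flag at the end is exactly the content of the second part of \cref{prop.augmented-equiv}, so no gap remains.
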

\begin{proof}
That $\mbb{B}$ is inscribed is \cref{prop.BB-inscribed} and that it is a $v$-sheaf follows from \cref{lemma.inf-nbhds-stack} in cases (1)-(3) and \cref{lemma.ff-prestack-vstack} in cases (4) and (5).  In light of \cref{prop.augmented-equiv}, the equivalence between the algebraic and analytic categories of thickenings follows in (1) from \cref{eq.alg-an-equivalence-infinitesimal} and in (4) from \cref{eq.GAGA-equiv} (or the corresponding part of \cref{lemma.ff-prestack-vstack}-(1)). 
\end{proof}

\subsection{Moduli of sections}\label{ss.pairs-moduli-of-sections}

We consider now a pair $(\AffPerf/S,B)$ as in \cref{prop.inscribed-pairs}. For $\mc{S}$ an inscribed sheaf, and $Z$ a smooth scheme or strongly sheaf adic space over $\mc{B}$ on $\mc{S}$ as in \cref{ss.inscribed-mos}, we write $Z^\lfid$ for the presheaf $\mc{B}^* h_{Z}$ over $\mc{S}$ of \cref{prop.inscribed-pairs}, i.e. 
\[ Z^{\lfid}(s \in \mc{S}(\mc{B})) = \Hom_{\mc{B}}(\mc{B}, Z(s)). \]

\begin{theorem}\label{theorem.affine-scheme-mos}
Let $(\mc{C}, B)$ be one of the algebraic pairs of \cref{prop.inscribed-pairs}, let $\mc{S}$ be an inscribed $v$-sheaf on $B^\lf$, let $Z$ be a smooth \emph{affine} scheme over $\mc{B}$ on $\mc{S}$. Then $Z^\lfid$ is an inscribed $v$-sheaf, and there is a natural identification $T_{Z^{\lfid}/\mc{S}} = (T_{Z/\mc{B}})^\lfid$ of inscribed sheaves of $\mbb{B}$-modules over $Z^{\lfid}$.
\end{theorem}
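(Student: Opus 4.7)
The plan is to reduce to verifying $v$-descent for $Z^\lfid$, since the inscribed-presheaf structure and the tangent-bundle identification as $\mbb{B}$-modules are already provided by \cref{prop.mos-inscribed-presheaf}. First, I would invoke \cref{prop.mos-inscribed-presheaf} to obtain that $Z^\lfid$ is an inscribed presheaf over $\mc{S}$ and that there is a canonical identification of inscribed $\mbb{B}$-modules $T_{Z^\lfid/\mc{S}} = (T_{Z/\mc{B}})^\lfid$. Since $T_{Z/\mc{B}} \to \mc{B}$ is again a smooth affine morphism (the geometric tangent bundle of an affine scheme is the spectrum of a symmetric algebra over a projective module, hence affine), the same descent argument applied to $T_{Z/\mc{B}}$ will give that $(T_{Z/\mc{B}})^\lfid$ is a $v$-sheaf whenever $Z^\lfid$ is. Thus both assertions reduce to showing $Z^\lfid$ is a $v$-sheaf on $B^\lf$.

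Next, to prove descent, I would fix a $v$-cover $\{\mc{B}_i \to \mc{B}\}_i$ in $B^\lf$; by construction of the $v$-topology on $B^\lf$, this is pulled back from a $v$-cover of the underlying object of $\AffPerf/S$. Given a compatible family $(s_i, \sigma_i) \in Z^\lfid(\mc{B}_i)$, the hypothesis that $\mc{S}$ is a $v$-sheaf produces a unique $s \in \mc{S}(\mc{B})$ restricting to the $s_i$. Because $Z$ is a morphism of fibered categories, $Z(s)_{\mc{B}_i} \simeq Z(s_i)$, so the problem reduces to descent for sections of the fixed smooth affine $\mc{B}$-scheme $Z(s) = \Spec A$ along $\{\mc{B}_i \to \mc{B}\}_i$.

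The main step is then to exploit that $A$, being smooth over $\mc{O}(\mc{B})$, is finitely presented. I would choose any presentation $A = \mc{O}(\mc{B})[x_1, \dots, x_n]/(f_1, \dots, f_m)$; then for every $\mc{B}' \to \mc{B}$ in $B^\lf$,
\[ \Hom_{\mc{B}'}(\mc{B}', Z(s)_{\mc{B}'}) = \{(r_1, \dots, r_n) \in \mbb{B}(\mc{B}')^n : f_j(r_1, \dots, r_n) = 0 \text{ for all } j\}, \]
exhibiting the presheaf on $B^\lf/\mc{B}$ over the section $s$ as a finite equalizer of finite powers of $\mbb{B}$. Since $\mbb{B}$ is a $v$-sheaf by \cref{prop.inscribed-pairs} and finite limits of $v$-sheaves are $v$-sheaves, this concludes the argument.

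I do not anticipate any serious obstacle: the inputs (inscribed presheaf and $\mbb{B}$-module tangent identification from \cref{prop.mos-inscribed-presheaf}; $v$-sheafiness of $\mbb{B}$ from \cref{prop.inscribed-pairs}; sheafiness of $\mc{S}$; equalizer presentation of affine smooth morphisms) are all in place and combine formally. The only minor point requiring care is the coherence between the base-changed presentation of $A$ along $\mc{B}' \to \mc{B}$ and the moduli description, but this is immediate since $Z(s)_{\mc{B}'} = \Spec(A \otimes_{\mc{O}(\mc{B})} \mc{O}(\mc{B}'))$ carries over the same defining equations $f_j$.
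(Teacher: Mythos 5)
Your reduction to the $v$-sheaf property via \cref{prop.mos-inscribed-presheaf}, and your equalizer argument, match the paper's proof exactly in the cases where the base is affine. But the theorem covers \emph{all} the algebraic pairs of \cref{prop.inscribed-pairs}, including case (4), where $B=X^{\alg}_{E,\Box}$ is the algebraic Fargues--Fontaine curve. There $\mc{B}$ is a locally free thickening of $X^{\alg}_{E,P}=\mathrm{Proj}\bigoplus_i H^0(X_{E,P},\mc{O}(i))$, which is not affine, so your main step breaks down: a smooth \emph{relatively} affine $Z(s)/\mc{B}$ is given by a quasi-coherent sheaf of algebras and admits no global presentation $A=\mc{O}(\mc{B})[x_1,\dots,x_n]/(f_1,\dots,f_m)$ over the ring of global functions $\mbb{B}(\mc{B})$, and its sections are not cut out by equations inside finite powers of $\mbb{B}$. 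A concrete counterexample to your equalizer description: for $Z=\mbb{V}(\mc{O}(-1))$ over $X_{E,P}$, the moduli of sections is the Banach--Colmez space $\BC(\mc{O}(1))$, which is infinite-dimensional and certainly not a finite limit of copies of $\mbb{B}=\ul{E}$. This case is not peripheral --- it is the one used for Banach--Colmez spaces, automorphism groups of $G$-bundles on $\mc{X}$, and the moduli of modifications.

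The paper closes this gap by a Beauville--Laszlo decomposition: after fixing an untilt, it writes
\[ Z^{\lfid}= (Z|_{X_{E,\Box}^\alg \backslash \Box^{\sharp-\alg}})^\lfid \times_{(Z|_{\Box^{\sharp-\alg}_{(\infty)} \backslash \Box^{\sharp-\alg}})^\lfid}(Z|_{\Box_{(\infty)}^{\sharp-\alg}})^\lfid, \]
i.e.\ as a fiber product of moduli of sections over $\Spec\mbb{B}_e$, $\Spec\mbb{B}_\dR$ and $\Spec\mbb{B}^+_\dR$, each of which is an affine base falling under cases (2), (3), (5), where your argument (and the paper's, which is the same: $\Hom_{\mbb{B}(P/S)}(D,\mbb{B}(Q/S)\otimes_{\mbb{B}(P/S)}C)$ is a sheaf because the target is the sheaf of sections of a finite projective module) applies. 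You would need to add this glueing step, and note that a finite fiber product of inscribed $v$-sheaves is again one, to complete the proof.
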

\begin{proof}
In \cref{prop.inscribed-pairs} we showed $Z^\lfid$ was an inscribed presheaf, and the made the identification of tangent bundles. Thus it remains only to verify that $Z^{\lfid}$ is a $v$-sheaf. 

It suffices, for each $s:\mc{B}_0/B(P/S) \rightarrow \mc{S}$ and $Z_0:=Z(s)$, to verify that the presheaf on $\AffPerf/P$ 
\[ Q/P \mapsto \Hom_{\mc{B}_{0,B(Q/S)}}(\mc{B}_{0,B(Q/S)}, Z_{0,B(Q/S)})=\Hom_{\mc{B}_{0}}(\mc{B}_{0,B(Q/S)}, Z_0)\]
is a $v$-sheaf. In the cases (1)-(3) and (5) where everything in sight is affine, writing $\mc{B}_0=\Spec C$ and $Z_0=\Spec D$, this is
\[ Q/P \mapsto \Hom_{\mbb{B}(P/S)}(D, \mbb{B}(Q/S) \otimes_{\mbb{B}(P/S)} C). \]
This is a $v$-sheaf since $Q/P \mapsto \mbb{B}(Q/S) \otimes_{\mbb{B}(P/S)} C$ is a $v$-sheaf by \cref{lemma.inf-nbhds-stack} in cases (1)-(3) and \cref{lemma.ff-prestack-vstack} in case (5) (it is the $v$-sheaf of sections of $\mc{O}_{\mc{B}_0}$ viewed as an object of $\VB(B(P/S))$). In case (4), we may fix an untilt $P^\sharp/\Spd \mbb{Q}_p$, then deduce the result from that in cases (2), (3), and (5) by writing 
\[ Z^{\lfid}= (Z|_{X_{E,\Box}^\alg \backslash \Box^{\sharp-\alg}})^\lfid \times_{(Z|_{\Box^{\sharp-\alg}_{(\infty)} \backslash \Box^{\sharp-\alg}})^\lfid}(Z|_{\Box_{(\infty)}^{\sharp-\alg}})^\lfid. \]
\end{proof}

\begin{theorem}\label{theorem.smooth-adic-space-mos}
Let $(\mathcal{C}, B)$ be one of the analytic pairs of \cref{prop.inscribed-pairs}, and let $Z$ be a smooth adic space over $\mc{B}$ on $\mc{S}$. Then $Z^{\lfid}$
is an inscribed $v$-stack, and there is a natural identification $T_{Z^{\lfid}/\mc{S}} = (T_{Z/\mc{B}})^\lfid$ of $\mbb{B}$-modules over $Z^{\lfid}$.
\end{theorem}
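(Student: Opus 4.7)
The inscribed presheaf property and the identification $T_{Z^{\lfid}/\mc{S}} = (T_{Z/\mc{B}})^{\lfid}$ of $\mathbb{B}$-modules are furnished in full generality by \cref{prop.mos-inscribed-presheaf}, so the remaining content of the theorem is that $Z^{\lfid}$ is a $v$-sheaf (note that $Z^{\lfid}$ is still set-valued, so the $v$-stack conclusion reduces to a $v$-sheaf conclusion). Because $\mc{S}$ is a $v$-sheaf, this reduces, working over a fixed $s : \mc{B}_0/B(P/S) \to \mc{S}$ and setting $Z_0 := Z(s)$, to showing that the presheaf
\[
 Q/P \;\longmapsto\; \Hom_{\mc{B}_0}\bigl(\mc{B}_0 \times_{B(P/S)} B(Q/S),\, Z_0\bigr)
\]
on $\AffPerf/P$ is a $v$-sheaf.

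The plan is to reduce to the case of a relative ball over $\mc{B}_0$ and then invoke the $v$-sheaf property of $\mathbb{B}$ established in \cref{prop.inscribed-pairs}. First I would localize on $Z_0$: any section to $Z_0$ determines, by pulling back an affinoid open cover of $Z_0$, a rational open cover of $\mc{B}_0 \times_{B(P/S)} B(Q/S)$ together with compatible sections into these affinoid pieces. Using $v$-descent for the structure sheaf together with descent for open immersions of strongly sheafy adic spaces, this reduces the problem to the case in which $Z_0$ is affinoid. Second, by the definition of smoothness recalled in \cref{s.adic-spaces-and-schemes}, such a $Z_0$ admits, locally, an étale factorization $Z_0 \to \mathbb{B}^d_{\mc{B}_0} \to \mc{B}_0$; étale descent for strongly sheafy adic spaces, combined with descent for open immersions as above, reduces further to the case $Z_0 = \mathbb{B}^d_{\mc{B}_0}$. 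Finally, a section to $\mathbb{B}^d_{\mc{B}_0}$ over $\mc{B}_0$ is precisely a $d$-tuple of power-bounded elements of the structure sheaf on $\mc{B}_0 \times_{B(P/S)} B(Q/S)$, and this is a $v$-sheaf since power-boundedness is a $v$-local condition and $\mathbb{B}$ is a $v$-sheaf by \cref{prop.inscribed-pairs}.

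The main technical obstacle is the localization step: I must verify that the rational open cover of $\mc{B}_0 \times_{B(P/S)} B(Q/S)$ arising from pulling back an open cover of $Z_0$ through a section is itself compatible with $v$-refinements of $Q$, so that the patching argument can be carried out. This is a statement purely about strongly sheafy adic spaces; since locally free nilpotent thickenings induce homeomorphisms on underlying topological spaces we have $|\mc{B}_0 \times_{B(P/S)} B(Q/S)| = |B(Q/S)|$, so it suffices to see that rational opens on $B(Q/S)$ descend compatibly under $v$-refinements of $Q$ on the perfectoid side. This follows from the $v$-stack property of $B^{\ast}\VB$, as recorded in \cref{lemma.inf-nbhds-stack} and \cref{lemma.ff-prestack-vstack}, together with the fact that, under the analytic-algebraic equivalence for vector bundles, rational opens on $B(Q/S)$ are locally cut out by sections of the structure sheaf whose $v$-descent is then handled by the same mechanism used in the proof of \cref{theorem.affine-scheme-mos}.
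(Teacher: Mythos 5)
Your reduction of the $v$-stack statement to a $v$-sheaf statement, and your use of \cref{prop.mos-inscribed-presheaf} for the inscribed property and the identification $T_{Z^\lfid/\mc{S}}=(T_{Z/\mc{B}})^\lfid$, match the paper exactly. The descent argument itself, however, takes a different route (reduction to the relative ball) and has genuine gaps.

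The most serious one: the two analytic pairs of \cref{prop.inscribed-pairs} are $(\Spd \mbb{Q}_p, \Box^\sharp_{(i)})$ and $(\Spd \mbb{F}_q, X_{E,\Box})$, and you treat them uniformly, whereas the second is where essentially all of the difficulty lies. In that case the source $\mc{B}_0\times_{X_{E,P}}X_{E,Q}$ is a thickening of a non-perfectoid, non-affinoid space, and the descent machinery of \cref{ss.descent-lemmas} that makes $\mbb{B}$ a $v$-sheaf (indeed a $v$-sheaf of \emph{topological} rings, which is what gluing continuous homomorphisms out of affinoids actually requires) is only directly available for thickenings of perfectoid spaces and their canonical infinitesimal neighborhoods. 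The paper's proof of case (4) runs through a mechanism that your outline does not contain and cannot bypass: reduce from $X_{E,\Box}$ to $Y_{E,\Box}$ by $\varphi$-equivariance, base change along the pro-\'{e}tale cover $\Spa E_\infty \to \Spa E$ (the completed cyclotomic $\mbb{Z}_p$-extension) so that case (1) applies in the perfectoid world, and then descend by taking $\Gamma$-invariants using Tate's normalized traces (the decomposition $E_\infty = E\oplus V$ with $\gamma-1$ invertible on $V$). The assertion that ``power-boundedness is a $v$-local condition and $\mbb{B}$ is a $v$-sheaf'' is not a substitute for this.

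Separately, even granting the ball case, your reduction to it is unsupported at both hinges. The localization step needs the open cover of $\mc{B}_0\times_{B(P/S)}B(Q'/S)$ induced by a section over a $v$-cover $Q'\to Q$ to descend to a cover over $Q$; this is a purely topological statement about $|B(Q'/S)|\to|B(Q/S)|$ being a surjective quotient map of spectral spaces (the mechanism of \cite[\S 15]{Scholze.EtaleCohomologyOfDiamonds}), and the $v$-stack property of $B^*\VB$ that you invoke is not the relevant tool --- rational opens are cut out by inequalities among sections, not classified by vector bundles, and in the Fargues--Fontaine case even the quotient-map statement needs care. Likewise, passing from an affinoid smooth $Z_0$ to a ball requires descending sections of finite \'{e}tale covers of the thickened spaces along $v$-covers, which is nowhere established and is of comparable difficulty to the theorem itself. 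The paper sidesteps both issues by handling arbitrary affinoid targets at once (gluing continuous ring homomorphisms, using the topological sheaf property of $\mbb{B}$) and only then gluing along open subfunctors for general $Z_0$.
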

\begin{proof}

In \cref{prop.inscribed-pairs} we showed $Z^\lfid$ was an inscribed presheaf and established the identification of tangent bundles. Thus it remains only to verify that $Z^{\lfid}$ is a $v$-sheaf. 

It suffices, for each $s:\mc{B}_0/B(P/S) \rightarrow \mc{S}$ and $Z_0:=Z(s)$, to verify that the presheaf on $\AffPerf/P$ 
\[ Q/P \mapsto \Hom_{\mc{B}_{0,B(Q/S)}}(\mc{B}_{0,B(Q/S)}, Z_{0,B(Q/S)})=\Hom_{\mc{B}_{0}}(\mc{B}_{0,B(Q/S)}, Z_0)\]
is a $v$-sheaf. We claim this formula defines a $v$-sheaf on $\AffPerf/P$ for any analytic adic space $Z_0$ over $E$.

We first treat case (1), so that $B=\Box^\sharp_{(i)}$, $0 \leq i <\infty$.
For $Z_0$ affinoid, it follows as in the proof of \cref{theorem.affine-scheme-mos} or \cite[Lemma 15.1-(ii)]{Scholze.EtaleCohomologyOfDiamonds}, using that $\mbb{B}$ is in fact a sheaf of topological rings. For any rational open $U \subseteq Z_0$, the functor represented by $U$ is an open sub-functor. Thus, as in \cite[\S15]{Scholze.EtaleCohomologyOfDiamonds}, we may glue along these open subfunctors to obtain the result for general $Z_0$. 

We now treat case $(4)$, so that $B=X_{E,\Box}$. We observe that it suffices to prove the analogous statement over $Y_{E,\Box}$, since the property for $X_{E,\Box}$ then follows by viewing morphisms from $X_{E,Q} \times_{X_{E,P}} \mc{B}_0$ as $\varphi$-equivariant morphisms from $Y_{E,Q} \times_{X_{E,P}} \mc{B}_0$. We will deduce this statement from the $i=0$ part of case (1), established above.

To that end, let $E_\infty$ be the completion of the $\mathbb{Z}_p$-subextension of the cyclotomic extension $E(\mu_{p^\infty})$ --- $E_\infty$ is a perfectoid field. We write $\Gamma=\mbb{Z}_p$ for the Galois group, and $\gamma \in \Gamma$ for a topological generator. We first consider the presheaf 
\[ Q/P \mapsto \Hom(\mc{B}_{0, Y_{E,Q}} \times_{\Spa E} \Spa E_\infty, Z_0). \]
It follows from case (1) that this is a $v$-sheaf --- we can apply case (1) here because $Q/P \mapsto Y_{E,Q} \times_{\Spa E} \Spa E_\infty$ is a product preserving functor to perfectoid spaces that sends $v$-covers to $v$-covers. We then conclude by observing that the $v$-sheaf we are interested in is obtained by taking the $\Gamma$-invariant sections in this $v$-sheaf. Indeed, we can check this when $Z_0$ is affinoid, in which case it reduces to the statement that 
\[ \mc{O}(\mc{B}_{0,Y_{E,Q}})=\mc{O}(\mc{B}_{0, Y_{E,Q}} \times_{\Spa E} \Spa E^\cyc)^\Gamma. \]
This follows by reducing to the corresponding statement where $Y$ is replaced by the affinoid $Y_I$ for $I \subseteq (0,\infty)$ a compact interval, which follows because
\[ \mc{O}(\mc{B}_{0, Y_{E,I, Q}} \times_{\Spa E} \Spa E_\infty) = \mc{O}(\mc{B}_{0, Y_{E,I, Q}} \hat{\otimes}_E E_\infty) \]
and, by \cite[Proposition 7]{Tate.pDivisibleGroups}, there is a direct sum decomposition $E_\infty=E \oplus V$ such that $\gamma-1$ acts invertibly on $V$. 
\end{proof}

The following example shows that \cref{theorem.smooth-adic-space-mos} encodes both the tangent bundles of smooth rigid analytic varieties over nonarchimedean fields and the Tangent Bundles arising in the Fargues--Scholze Jacobian criterion. 
\begin{example}\label{example.smooth-rig-and-fs}\hfill
\begin{enumerate}
    \item We work over the pair $(\Spd \mbb{Q}_p, \Box^\sharp)$. Suppose $L/\mathbb{Q}_p$ is a non-archimedean extension and $Y/L$ is a smooth rigid analytic variety. Then, 
\[ Y \times_{\Spa L} \mc{B} \]
is a smooth adic space over $\mc{B}$ on $(\Spd L)^\triv$, and 
\[ \overline{(Y \times_{\Spa L} \mc{B})^\lfid}= Y^\diamond \textrm{ and } \overline{T_{(Y \times_{\Spa L} \mc{B})^\lf}}=(T_{Y/\Spa L})^\diamond\]
\item We work over the pair $(\Spd \mbb{F}_q, X_{E,\Box})$. Suppose $P/\Spd \mbb{F}_q$ and $Z$ is a smooth adic space over $X_{E,P}$. Then 
\[ Z \times_{X_{E,P}} \mc{B} \]
is a smooth adic space over $\mc{B}$ on $P^\triv$, and
\[ \overline{(Z \times_{X_{E,P}} \mc{B})^\lfid} = \mathcal{M}_Z \textrm{ and } \overline{T_{(Z \times_{X_{E,P}} \mc{B})^\lfid}} = T_{\mathcal{M}_Z}\]
where $\mathcal{M}_Z$ is the Fargues--Scholze moduli of sections as in \cite[IV.4]{FarguesScholze.GeometrizationOfTheLocalLanglandsCorrespondence}, $\mathcal{M}_Z(Q/P)=\Hom_{X_{E,P}}(X_{E,Q}, Z)$, and $T_{\mathcal{M}_Z}$ is its Tangent Bundle as implicit in \cite[IV.4]{FarguesScholze.GeometrizationOfTheLocalLanglandsCorrespondence} (cf. \cite{IvanovWeinstein.TheSmoothLocusInInfiniteLevelRapoportZinkSpaces}), sending $f \in \Hom_{X_{E,P}}(X_{E,Q}, Z)$ to 
\[ H^0(X_{E,Q}, f^* T_{Z/X_{E,P}}). \]
\end{enumerate}
\end{example}

\begin{remark} The setups of \cref{theorem.affine-scheme-mos} and \cref{theorem.smooth-adic-space-mos} also allow for more general constructions: for example, the absolute Banach--Colmez spaces of \cite[II.2.2]{FarguesScholze.GeometrizationOfTheLocalLanglandsCorrespondence} are sections of smooth adic spaces over $X_{E,\Box}$ on $\Spd \Fqbar$, and in the rigid analytic case the formalism allows us to consider ineffective descents of rigid analytic varieties, such as the Breuil-Kisin-Fargues twist $\mathbb{A}^1_{\mbb{Q}_p}\{1\}$, as smooth adic spaces over $\Box^\sharp$ on $\Spd \mbb{Q}_p$. 
\end{remark}

\subsection{Change of context}\label{ss.change-of-context}
We now describe how to move between different inscribed contexts, focusing on the inscribed contexts in \cref{prop.inscribed-pairs}. 

We first note that, for a pair $(S/\AffPerf,\mc{B})$, if we have a map of $v$-sheaves on $\AffPerf$, $S' \rightarrow S$, then the categories of inscribed $v$-sheaves over $\mc{B}^\lf$ equipped with a structure morphism to $(S')^\triv$ is naturally equivalent to the category of inscribed $v$-sheaves over $(\mc{B}|_{S'})^\lf$, compatibly with tangent bundles, etc., and \cref{prop.inscribed-pairs} still holds if we replace the $S$ in any pair with such an $S'$. We will use this implicitly below.  

Now, for $E/\mathbb{Q}_p$ a finite extension with residue field $\mbb{F}_q$, we have the natural map $\Spd E \rightarrow \Spd \mbb{F}_q$, and above it, the natural maps
\begin{align*}
\Box^{\sharp}_{(i)} & \rightarrow X_{E,\Box} \textrm{ for } 0\leq i < \infty,\\
\Box^{\sharp-\alg}_{(i)} & \rightarrow X^\alg_{E,\Box} \textrm{ for } 0\leq i \leq \infty\\
X_{E,\Box}^{\alg}\backslash \Box^{\sharp-\alg} &\rightarrow X_{E,\Box}^{\alg}. 
\end{align*}
Over $\Spd E$ we also have the natural map 
\[ \Box^{\sharp-\alg}_{(\infty)}\backslash \Box^{\sharp-\alg} \rightarrow  X_{E,\Box}^{\sharp-\alg}\backslash \Box^{\sharp-\alg}. \]
Thus, for example, we may pullback an inscribed $v$-sheaf $\mc{S}/(\Spd E)^\triv$ for the context $(\Spd E, \Box^{\sharp})$ to an inscribed $v$-sheaf for the context $(\Spd \mathbb{F}_q, X_E)$ lying over $(\Spd E)^\triv$ by 
\[ \mc{S}(\mc{X}/X_{E,P}, P \rightarrow \Spd E):=\mc{S}(\mc{X} \times_{X_{E,P}} P^\sharp). \]
This construction allows us, e.g., to treat both rigid analytic varieties and Fargues--Scholze moduli of sections as in \cref{example.smooth-rig-and-fs} in a common world. The price one pays is the loss of information about the $\mbb{B}$-module structure on the tangent bundle --- for example, if $Z/E$ is a rigid analytic variety and we pullback $(Z/E)^\lfid$ by this construction, then its tangent bundle with respect to the inscribed context $(\Spd \mathbb{F}_q, X_E)$ will only remember the $E^\lfid$-module structure rather than the full $\mathcal{O}$-module structure. 

\subsection{The slope condition $\lfp$}\label{ss.slope-condition}
In the context of \cref{prop.inscribed-pairs}-(4), we will also consider inscribed $v$-sheaves, etc., on a restricted category of thickenings $X_{E,\Box}^{\lfp}$ or equivalently $X_{E,\Box}^{\alg-\lfp}$ (we can work with a restricted category of thickenings via the mechanism explained in  \cref{ss.restricted-categories-thickenings}). This is the category that was used for the statements of our results in the introduction (for $E=\mathbb{Q}_p$ and working over $\Spd \mbb{Q}_p$); we give its definition now and observe some basic properties. We will work just with $X_{E,\Box}^\lfp$ --- the treatment of the algebraic version is identical, and is equivalent via the GAGA equivalence of \cref{prop.inscribed-pairs}.

\begin{definition}
$X_{E,\Box}^\lfp$ is the full subcategory of $X_{E,\Box}^\lf$ whose objects are those thickenings $\mc{X}/X_{E,P}$ of $X_{E,P}/X_{E,P}$ such that, for $\mc{I}:= \ker \left(\mc{O}_{\mc{X}} \rightarrow \mc{O}_{X_P}\right)$, $\mc{I}^{n}/\mc{I}^{n+1}$ is a finite locally free $\mc{O}_{X_{E,P}}$-module that has non-negative Harder-Narasimhan slopes after restriction to $X_{E,\Spa(C,C^+)}$ for any geometric point $\Spa(C,C^+) \rightarrow P$. 
\end{definition}

We now verify that this subcategory satisfies the two conditions enumerated in \cref{ss.restricted-categories-thickenings}. First, evidently $X_{E,P}/X_{E,P} \in X_{E,\Box}^\lfp$ for any $P \in \Perf$. Now, suppose given $\mc{X}/X_{E,P} \in X_{E,\Box}^\lfp$, and a finite free $\mc{O}(\mc{X})$-module $M$ or rank $r$. Then, for $\mc{I}$ the ideal sheaf of $X_{E,P} \hookrightarrow \mc{X}$, the ideal sheaf $\mc{I}_M$ of $X_{E,P} \hookrightarrow \mc{X}[M]$ can be identified with 
\[ \mc{I} \oplus M \cong \mc{I} \oplus \mc{O}_{\mc{X}}^{\oplus r}. \]
In particular, we find
\[ \mc{I}_M^n = \mc{I}^n \oplus (\mc{I}^{n-1})^{\oplus r}. \]
Thus
\[ \mc{I}_{M}^n/\mc{I}_{M}^{n+1}=\mc{I}^n/\mc{I}^{n+1} \oplus (\mc{I}^{n-1}/\mc{I}^{n})^{\oplus r},\]
so that the slope condition on $\mc{I}_{\mc{M}}$ follows from the slope condition on $\mc{I}$.

\section{Inscribed vector bundles and $G$-bundles}\label{s.inscribed-vb-g-bun}

We consider one of the following inscribed contexts $(\AffPerf/S, B)$
\begin{enumerate}
\item For $L$ a $p$-adic field and $0 \leq i \leq \infty$, $(\AffPerf/\Spd L, \Box_{(i)}^{\sharp-\alg})$. 
\item For $L/\mathbb{Q}_p$ a finite extension with residue field $\mbb{F}_q$,  $(\AffPerf/\Spd \mathbb{F}_q, X_{E,\Box}^\alg)$.
\end{enumerate}

In the first part of this section we will show that, for $G/L$ a linear algebraic group, the moduli of $G$-bundles on $\mc{B}$ is an inscribed $v$-stack. The result in the case $i=\infty$ of (1) will be used in \cref{s.affine-grassmannian} to construct and study the inscribed $\mbb{B}^+_\dR$-affine Grassmannian. The result in the case (2) will give us an inscription on the moduli stack $\mathrm{Bun}\,G$ of \cite{Fargues.GeometrizationOfTheLocalLanglandsCorrespondenceAnOverview}; we emphasize that this is \emph{not} the trivial inscription. The precise statements are given in \cref{ss.vb-and-classifying-stack}. The inscribed property is relatively straightforward and the prestack property can be deduced from the non-inscribed version, but in order to obtain descent we have to redo some of the descent arguments of \cite{ScholzeWeinstein.BerkeleyLecturesOnPAdicGeometryAMS207} in our setting --- this is carried out in \cref{ss.descent-lemmas}.

In the remainder of the section we develop some complements that will be used in later sections of the paper: In \cref{ss.BC-spaces}, we briefly discuss the inscribed Banach--Colmez spaces associated to a vector bundle, and in \cref{ss.NewtonStrataBG} we discuss the Newton strata on the inscribed classifying stack. A key result is \cref{prop.b-basic-open-stratum}, which says that the basic strata are open not just topologically, i.e. on the underlying $v$-stack, but also deformation-theoretically when the slope condition $\lfp$ of \cref{ss.slope-condition} is imposed on the test objects. 

\subsection{Vector bundles and the classifying stack}\label{ss.vb-and-classifying-stack}

We write $\mc{B}$ for the functor from $B^\lf$ to schemes over $\Spec L$ sending $\mc{B}/B(P/S)$ to $\mc{B}$, so that $\mc{B}^*\VB$ is the inscribed fibered category over $B^\lf$ whose objects are pairs $(\mc{B}, \mc{V})$ where $\mc{B}\in B^\lf$ and $\mc{V}$ is a locally free of finite rank $\mc{O}_{\mc{B}}$-module. 

\begin{theorem}\label{theorem.vb-inscribed-vstack} $\mc{B}^*\VB$ is an inscribed $v$-stack. 
\end{theorem}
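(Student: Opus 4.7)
The plan is to verify the inscribed property and the $v$-stack property separately. Inscription is essentially formal Milnor patching, while descent must be reduced to the already-established stack property for vector bundles on the unthickened $B(P/S)$.

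For inscription, I fix $\mc{B}_0 \in B^\lf$ together with locally free nilpotent thickenings $\mc{B}_1, \mc{B}_2/\mc{B}_0$, with pushout $\mc{B} = \mc{B}_1 \sqcup_{\mc{B}_0} \mc{B}_2$. Under the augmented-algebra correspondence of \cref{prop.augmented-equiv}, $\mc{O}_\mc{B}$ is the Milnor fiber product $\mc{O}_{\mc{B}_1} \times_{\mc{O}_{\mc{B}_0}} \mc{O}_{\mc{B}_2}$ of sheaves on $|B(P/S)|$, and the two maps to $\mc{O}_{\mc{B}_0}$ are surjections with nilpotent kernel. Milnor patching for finitely generated projective modules therefore applies sheaf-theoretically (idempotents lift across nilpotent ideals), yielding the desired equivalence $\VB(\mc{B}) \simeq \VB(\mc{B}_1) \times_{\VB(\mc{B}_0)} \VB(\mc{B}_2)$.

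For the $v$-stack property, I would induct on the nilpotency order $n$ with $\mc{I}^n = 0$, where $\mc{I}$ is the ideal of $B(P/S)$ in $\mc{B}$. The base case $n = 1$ is precisely the already-established $v$-stack property of $\VB$ on $X_{E,\Box}^\alg$, resp. $\Box^{\sharp-\alg}_{(i)}$, supplied by \cref{lemma.ff-prestack-vstack}-(1), resp. \cref{lemma.inf-nbhds-stack}-(1). For the inductive step I would use the subthickening $\mc{B}^{(n-1)} \hookrightarrow \mc{B}$ cut out by $\mc{I}^{n-1}$ (so the remaining step is square-zero with ideal $\mc{I}^{n-1}$, annihilated by $\mc{I}$ and hence a locally free $\mc{O}_{B(P/S)}$-module). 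The prestack property reduces via the short exact sequence placing $\Hom_\mc{B}(\mc{V}, \mc{W})$ between the Hom-sheaf on $\mc{B}^{(n-1)}$ and the Hom-sheaf on the unthickened base twisted by $\mc{I}^{n-1}$; both pieces are $v$-sheaves by the inductive hypothesis and the base case. Object descent is handled by first descending the restriction modulo $\mc{I}^{n-1}$ by induction and then lifting, with the lifting obstructions and the lifting torsor controlled by cohomology of vector bundles on the unthickened base, which again descends by the base case.

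The main obstacle is the object-descent step, and it is precisely this step for which the author indicates that portions of the Scholze--Weinstein $v$-descent machinery must be replayed in the thickened setting (the key technical content being packaged in the descent lemmas of \cref{ss.descent-lemmas}). Note that the slope condition defining $X_{E,\Box}^\lfp$ plays no role here --- it will only be needed later to secure deformation-openness of basic strata (\cref{prop.b-basic-open-stratum}) --- so the statement of \cref{theorem.vb-inscribed-vstack} is valid on the full category $B^\lf$.
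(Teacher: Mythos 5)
Your treatment of the inscribed property matches the paper's: the paper simply cites Ferrand's theorem on modules over a fiber product of rings, i.e. exactly the Milnor-patching argument you spell out, and your remark that the slope condition plays no role in this theorem is correct. The prestack step is done more directly in the paper: $\Hom(\mc{E}_1,\mc{E}_2)$ is the global sections functor of $\mc{E}_1^*\otimes\mc{E}_2$, which pushes forward to a locally free sheaf of finite rank on the unthickened base, so the sheaf property follows at once from \cref{lemma.inf-nbhds-stack}/\cref{lemma.ff-prestack-vstack} with no induction and no appeal to the graded pieces of $\mc{I}$.

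The descent step is where your proposal has a genuine gap. Your induction on the nilpotency order rests on the claim that the square-zero step from $\mc{O}_{\mc{B}}/\mc{I}^{n-1}$ to $\mc{O}_{\mc{B}}$ has ideal $\mc{I}^{n-1}$ that is ``annihilated by $\mc{I}$ and hence a locally free $\mc{O}_{B(P/S)}$-module.'' Being annihilated by $\mc{I}$ makes $\mc{I}^{n-1}$ an $\mc{O}_{B(P/S)}$-module, but not a locally free one: \cref{def.thickenings} only requires $\mc{I}$ itself to be locally free, and the slope condition of \cref{ss.slope-condition} has to \emph{impose} local freeness of the graded pieces $\mc{I}^{n}/\mc{I}^{n+1}$ as an additional hypothesis, confirming that it is not automatic. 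So the intermediate quotients $\mc{O}_{\mc{B}}/\mc{I}^{j}$ need not even define objects of $B^\lf$, and the lifting obstructions and lifting torsor for your square-zero step are not controlled by cohomology of vector bundles on the unthickened base. Moreover, even where the graded pieces are locally free, effectivity of descent requires the \emph{vanishing} of the Cech $H^1$ of these sheaves on $v$-covers of affinoid perfectoids, not merely their sheafiness. This is precisely why the paper's \cref{lemma.perf-vb-descent} uses the $\mc{I}$-adic filtration only over a perfectoid field (where every module is free) and, in the general case, instead trivializes the descent datum at a point, spreads out, and runs a $\varpi$-adic successive approximation using the almost vanishing of $H^1_v$ of the plus structure sheaf. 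You correctly locate the hard content in \cref{ss.descent-lemmas}, but the inductive route you sketch toward it would not go through.
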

\begin{proof}
That $\mc{B}^*\VB$ is inscribed follows from \cite[Theoreme 2.2-(iv)]{Ferrand.ConducteurDescentEtPincement}. We thus must verify it is a $v$-stack. To see that it is a prestack, observe that if we fix $\mc{E}_1$, $\mc{E}_2$ in the same fiber, then $\Hom(\mc{E}_1, \mc{E}_2)$ is the global sections functor of $\mc{E}_1^* \otimes \mc{E}_2$. By pushing forward to a locally free sheaf of finite rank on $B$, it follow from \cref{lemma.inf-nbhds-stack}/\cref{lemma.ff-prestack-vstack} that this is a $v$-sheaf.  

It remains to establish descent. We do not know how to deduce this directly from the descent for $B^*\VB$, since it is not clear that the descent as a locally free $\mc{O}_B$-module with $\mc{O}_{\mc{B}}$ action is locally free as an $\mc{O}_{\mc{B}}$-module. However, the proofs of descent in the non-inscribed settings can be adapted to the inscribed setting; we carry this out in the next subsection: in case (1), the result it \cref{lemma.thickenings-v-descent}, and in case (2), it follows from \cref{lemma.Y-vb-descent} combined with the trivial analytic descent of vector bundles from $Y_{E}$ to $X_{E}$ and the GAGA equivalence between vector bundles on $X_E$ and $X_E^\alg$.  
\end{proof}

Under our assumptions, the functor $B$ and thus also $\mc{B}$ factors canonically through schemes over $\Spec L$. Thus, for $G/L$ a linear algebraic group, it makes sense to consider also the pull back $\mc{B}^*\BG$ of the classifying stack for $G$. Concretely, $\mc{B}^*\BG$ is the fibered category over $B^\lf$ whose objects are pairs $(\mc{B} \in B^\lf, \mc{G}/B)$ where $\mc{G}$ is a $G$-torsor on $\mc{B}$. It will be convenient at various times to use the Tannakian, \'{e}tale, and geometric perspectives on $G$-torsors, and we move freely between these. 

\begin{theorem}\label{thm.BG-pull-back-inscribed}
Suppose $G/L$ is a linear algebraic group. Then $\mc{B}^*\BG$ is an inscribed $v$-stack. 
\end{theorem}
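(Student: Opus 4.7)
The strategy is to leverage \cref{theorem.vb-inscribed-vstack} via Tannakian duality. Since $L$ has characteristic zero and $G/L$ is a linear algebraic group, hence affine and flat over $L$, for any $L$-scheme $\mc{X}$ the groupoid of $G$-torsors on $\mc{X}$ is canonically equivalent to the groupoid of exact symmetric monoidal functors $\Rep G \rightarrow \VB(\mc{X})$, naturally in pullback along morphisms of $L$-schemes. This reduces both the inscribed and $v$-stack properties of $\mc{B}^*\BG$ to the corresponding properties of $\mc{B}^*\VB$ established in \cref{theorem.vb-inscribed-vstack}.

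For the inscribed property, fix $\mc{B}_0 \in B^\lf$ with locally free nilpotent thickenings $\mc{B}_1, \mc{B}_2$ of $\mc{B}_0$, and set $\mc{B} := \mc{B}_1 \sqcup_{\mc{B}_0} \mc{B}_2$. By \cref{theorem.vb-inscribed-vstack}, pullback gives an equivalence
\[ \VB(\mc{B}) \xrightarrow{\sim} \VB(\mc{B}_1) \times_{\VB(\mc{B}_0)} \VB(\mc{B}_2), \]
and this is readily seen to be an equivalence of exact symmetric monoidal categories: tensor products, duals, and short exact sequences on the right-hand side are defined componentwise and lift uniquely to $\VB(\mc{B})$ (concretely, one uses the description from \cite{Ferrand.ConducteurDescentEtPincement} cited in the proof of \cref{theorem.vb-inscribed-vstack} in terms of modules over the pullback algebra). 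Applying $\mathrm{Fun}^{\otimes,\mr{ex}}(\Rep G, -)$, which commutes with such $2$-fiber products of target exact symmetric monoidal categories, then yields the desired equivalence $\mc{B}^*\BG(\mc{B}) \xrightarrow{\sim} \mc{B}^*\BG(\mc{B}_1) \times_{\mc{B}^*\BG(\mc{B}_0)} \mc{B}^*\BG(\mc{B}_2)$.

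For the $v$-stack property, the prestack part reduces to the assertion that for any two $G$-torsors $\mc{G}_1, \mc{G}_2$ on some $\mc{B}/B(P/S) \in B^\lf$, the presheaf of isomorphisms is an inscribed $v$-sheaf on $\AffPerf/P$: since $\Isom(\mc{G}_1, \mc{G}_2)$ is an affine scheme of finite type over $\mc{B}$ (an inner form of $G$), this follows from \cref{theorem.affine-scheme-mos} applied to the smooth affine scheme $\Isom(\mc{G}_1, \mc{G}_2)$ over $\mc{B}$ and the inscribed $v$-sheaf $\ast = (\Spd L)^\triv$. Effective $v$-descent is similarly handled Tannakianly: a $v$-descent datum for a $G$-torsor on $\mc{B}$ translates to a $v$-descent datum for an exact tensor functor $\Rep G \rightarrow \VB$, and the underlying functor glues to a vector bundle functor by the $v$-stack property in \cref{theorem.vb-inscribed-vstack}; exactness and tensor compatibility of the glued functor can be verified after pullback to the cover, where they hold by assumption.

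The only structural subtlety is ensuring the Tannakian equivalence and its compatibility with pullback behave well over our non-reduced test schemes $\mc{B}$; this is standard in characteristic zero for $G$ affine, since the relevant sheaves are fppf-locally trivial on any $L$-scheme and $\Rep G$ is an honest abelian rigid tensor category. I expect the main place where one must be slightly careful is checking the exact symmetric monoidal equivalence of $\VB$-categories under push-out, but this is already implicit in the previous theorem and is a direct consequence of the fact that pullback along $\mc{B} \rightarrow \mc{B}_i$ is exact and symmetric monoidal.
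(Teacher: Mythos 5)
Your proof is correct, and the $v$-stack half coincides with the paper's (the paper likewise disposes of descent and the prestack property "from the Tannakian perspective" together with \cref{theorem.vb-inscribed-vstack}, and your use of \cref{theorem.affine-scheme-mos} for the isomorphism sheaves matches what is done for automorphism groups elsewhere in the paper). Where you genuinely diverge is the inscribed property. The paper does not Tannakianize this step: it uses that all of $\mc{B}_0,\mc{B}_1,\mc{B}_2$ and $\mc{B}_1\sqcup_{\mc{B}_0}\mc{B}_2$ share the same \'etale site $\mc{B}_{0,\et}$, identifies $\BG(\mc{B}_\bullet)$ with the groupoid of torsors on $\mc{B}_{0,\et}$ under the sheaf of groups $G(\mc{O}_{\mc{B}_\bullet})$, observes that $G(\mc{O}_{\mc{B}})=G(\mc{O}_{\mc{B}_1})\times_{G(\mc{O}_{\mc{B}_0})}G(\mc{O}_{\mc{B}_2})$ as sheaves (as in the proof of \cref{theorem.affine-scheme-mos}), and concludes by the elementary fact that torsors under a fiber product of groups are pairs of torsors with an identification of push-outs. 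That route trades your main subtlety for a different one: the paper must know that $G$-torsors on the thickenings are \'etale-locally trivial (automatic here since $G$ is smooth in characteristic zero), but it never needs to know that the Ferrand equivalence $\VB(\mc{B})\simeq \VB(\mc{B}_1)\times_{\VB(\mc{B}_0)}\VB(\mc{B}_2)$ detects exactness. Your approach does need this, and it is the one point you should actually verify rather than defer: given a complex of vector bundles on $\mc{B}$ that is short exact after pullback to each $\mc{B}_i$, surjectivity of $E\to E''$ follows by Nakayama since the relevant ideals are nilpotent, the kernel is then again a bundle, and $E'\to\ker$ is an isomorphism by full faithfulness of the Ferrand functor; with that sentence added your argument is complete. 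Both proofs are of comparable length; yours has the mild advantage of treating the inscribed and descent properties uniformly through $\VB$, while the paper's cocycle argument is more self-contained at this step.
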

\begin{proof}
That $\mc{B}^*\BG$ is a $v$-stack is immediate from the Tannakian perspective and \cref{theorem.vb-inscribed-vstack}. To see that it is inscribed, for $\mc{B}=\mc{B}_1 \sqcup_{\mc{B}_0} \mc{B}_2$, we may view each of $G(\mc{O}_{\mc{B}_\bullet})$, $\bullet=0,1,2$, as an \'{e}tale sheaf of sections on $\mc{B}_{0,\et}$, so that $\BG(\mc{B}_\bullet)$ classifies $G(\mc{O}_{\mc{B}_\bullet})$-torsors on $\mc{B}_{0,\et}$. But, as in the proof of \cref{theorem.affine-scheme-mos}, these associated sheaves of sections on $\mc{B}_{0,\et}$ satisfy 
\[ G(\mc{O}_{\mc{B}}) = G(\mc{O}_{\mc{B}_1}) \times_{G(\mc{O}_{\mc{B}_0})} G(\mc{O}_{\mc{B}_2}). \]
To give an \'{e}tale torsor for this group is equivalent to giving \'{e}tale torsors for $G(\mc{O}_{\mc{B}_1})$ and $G(\mc{O}_{\mc{B}_2})$ and an isomorphism of their push-outs to $G(\mc{O}_{\mc{B}_0})$ --- the inverse functor is given by the fiber product of sheaves. 
\end{proof}

\begin{remark}
Except in the $i=\infty$ case of (1), both of the above results hold also for the analytic version $\mc{B}^\an$ of $\mc{B}$ since there is a GAGA equivalence for the stack of vector bundles $\mc{B}^*\VB \cong (\mc{B}^\an)^*\VB$. 
\end{remark}

\subsection{Descent lemmas}\label{ss.descent-lemmas}
We now prove the descent lemmas that were used in the proof of \cref{theorem.vb-inscribed-vstack}. We use the techniques of \cite{ScholzeWeinstein.BerkeleyLecturesOnPAdicGeometryAMS207}. 

\begin{lemma}\label{lemma.perf-vb-descent}
Let $P\in \Perf$ with a map $P\rightarrow \Spd \mbb{Q}_p$ giving an untilt $P^\sharp/\Spa \mathbb{Q}_p$, and let $\mc{B}/P^\sharp$ be a locally free nilpotent thickening of $P^\sharp$. Then, the fibered category on $\Perf/P$ sending $Q/P$ to the category of locally free of finite rank $\mc{O}_{\mc{B}_{Q^\sharp}}$-modules is a $v$-stack. 
\end{lemma}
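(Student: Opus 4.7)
The plan is to induct on the minimal $n \geq 0$ with $\mc{I}^{n+1}=0$, where $\mc{I} := \ker(\mc{O}_{\mc{B}} \to \mc{O}_{P^\sharp})$. The base case $n=0$ has $\mc{B}=P^\sharp$, and the statement reduces to $v$-descent for vector bundles on perfectoid spaces \cite[Corollary 17.1.9]{ScholzeWeinstein.BerkeleyLecturesOnPAdicGeometryAMS207}. For the inductive step, I set $\mc{B}' := \Spa_{P^\sharp}(\mc{O}_{\mc{B}}/\mc{I}^n)$, a locally free thickening of nilpotence $n-1$, and $\mc{J} := \mc{I}^n$, which by the local freeness of $\mc{B}/P^\sharp$ is a locally free of finite rank $\mc{O}_{P^\sharp}$-module; then $\mc{B} \to \mc{B}'$ is a square-zero extension by $\mc{J}$.

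The Hom-sheaf condition would come first. Given $\mc{E}_1, \mc{E}_2 \in \VB(\mc{B}_{P^\sharp})$, the internal Hom $\mc{E}_1^\vee \otimes \mc{E}_2$ is locally free of finite rank on $\mc{B}_{P^\sharp}$, and its direct image to $P^\sharp$ is a locally free of finite rank $\mc{O}_{P^\sharp}$-module (since $\mc{O}_{\mc{B}}$ is so over $\mc{O}_{P^\sharp}$). Consequently the presheaf $Q/P \mapsto \Hom_{\mc{O}_{\mc{B}_{Q^\sharp}}}(\mc{E}_{1,Q},\mc{E}_{2,Q})$ is identified with the global sections of the pullback to $Q$ of a vector bundle on $P^\sharp$, and is a $v$-sheaf by the base case.

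For effective descent, a descent datum $(\mc{E},\alpha)$ over a $v$-cover $Q \to P$ reduces modulo $\mc{I}^n$ to one for $\VB(\mc{B}'_{Q^\sharp})$, which descends by induction to some $\mc{E}' \in \VB(\mc{B}'_{P^\sharp})$. The remaining task is to upgrade $\mc{E}'$ to a bundle on $\mc{B}_{P^\sharp}$ inducing the original datum. For this I would pass to the $v$-gerbe on $P^\sharp$ of lifts of $\mc{E}'$ along $\mc{B} \to \mc{B}'$: lifts exist Zariski-locally on $P^\sharp$ (since $\mc{E}'$ trivializes there, and trivial bundles trivially lift), while the torsor of isomorphism classes of lifts and the sheaf of automorphisms of a fixed lift restricting to the identity on $\mc{B}'_{P^\sharp}$ are both controlled by the additive sheaf $\mc{J}\otimes_{\mc{O}_{\mc{B}'_{P^\sharp}}}\underline{\End}(\mc{E}')$, whose pushforward to $P^\sharp$ is a locally free $\mc{O}_{P^\sharp}$-module of finite rank by the Hom-step and the local freeness of $\mc{J}$.

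The main obstacle will be to extract from the descent datum a canonical object in this lifting gerbe on $\mc{B}_{P^\sharp}$, rather than merely an isomorphism class. This ultimately reduces to $v$-descent of sections of the vector bundle $\mc{J}\otimes\underline{\End}(\mc{E}')$ on $P^\sharp$ — again the base case — but the bookkeeping with the gerbe structure requires care in organizing the cocycle data for lifts and their automorphisms. The key geometric input that makes this tractable is precisely the local freeness condition on $\mc{J}$ built into the definition of a locally free thickening; without it, the band of the gerbe would not be a $v$-descendable object on $P^\sharp$.
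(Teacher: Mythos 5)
There is a genuine gap, and it sits exactly where your argument leans hardest: the claim that $\mc{J}=\mc{I}^n$ is ``by the local freeness of $\mc{B}/P^\sharp$ a locally free of finite rank $\mc{O}_{P^\sharp}$-module.'' The definition of a locally free nilpotent thickening (\cref{def.thickenings}) only requires $\mc{I}$ itself (equivalently $\mc{O}_{\mc{B}}$) to be locally free over $\mc{O}_{P^\sharp}$; it says nothing about the powers $\mc{I}^n$ or the graded pieces $\mc{I}^j/\mc{I}^{j+1}$, and these need not be locally free. (This is why \cref{ss.slope-condition} has to impose local freeness of $\mc{I}^n/\mc{I}^{n+1}$ as an \emph{extra} condition in the definition of $\lfp$.) For a concrete failure, take $P^\sharp=\Spa(R,R^+)$ perfectoid, $a\in R$ a non-zero-divisor non-unit, and $\mc{O}_{\mc{B}}=R\oplus Re_1\oplus Re_2\oplus Re_3$ with $e_1e_2=ae_3$ and all other products of the $e_i$ zero: then $\mc{I}$ is free of rank $3$, but $\mc{I}^2=aRe_3$ and $\mc{O}_{\mc{B}}/\mc{I}^2$ has the non-projective summand $(R/aR)e_3$. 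This breaks your induction in two places at once: $\mc{B}'=\Spa_{P^\sharp}(\mc{O}_{\mc{B}}/\mc{I}^n)$ need not be a locally free thickening, so the inductive hypothesis does not apply to it; and the band $\mc{J}\otimes\underline{\End}(\mc{E}')$ of your lifting gerbe need not push forward to a vector bundle on $P^\sharp$, so neither its $v$-descent nor the vanishing of $H^1_v$ on affinoid perfectoids --- which you need at the final step to trivialize the torsor of lifts --- is available.

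Your gerbe mechanism itself is fine (and over a geometric point, where every module over the field is free, the $\mc{I}$-adic d\'evissage does work --- the paper runs essentially your filtration argument there in matrix-cocycle form, killing the additive quotients $M_n(\,\cdot\,\otimes I^i/I^{i+1})$ by acyclicity of $\mc{O}$ on affinoid perfectoids). But in the general case the paper deliberately abandons the $\mc{I}$-adic filtration for exactly the reason above: it localizes on $P^\sharp$ to choose a free open integral model $A_0=R^++I_0$ of $\mc{O}(\mc{B})$, normalizes the descent cocycle to be $\equiv 1 \bmod \varpi$ using its value at a point, and then uses almost vanishing of $H^1_v$ of the plus-structure sheaf to iteratively improve the congruence and converge to a coboundary. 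To repair your proof you would either need to add the hypothesis that all $\mc{I}^j/\mc{I}^{j+1}$ are locally free (which would prove a strictly weaker lemma than the one stated) or replace the $\mc{I}$-adic filtration by a $\varpi$-adic approximation of this kind.
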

\begin{proof}
When $\mc{B}=P^\sharp$, this is \cite[Lemma 17.1.8]{ScholzeWeinstein.BerkeleyLecturesOnPAdicGeometryAMS207}. We will bootstrap from this case. 

Suppose $Q/P$ and $\mc{E}$ is a locally free of finite rank $\mc{O}_{\mc{B}_{Q^\sharp}}$. Then, the presheaf on $\AffPerf/Q$, 
\[ Q'/Q \mapsto H^0(\mc{B}_{Q'^\sharp}, \mc{E}|_{Q'^\sharp}) \]
is a $v$-sheaf: indeed, it is the $v$-sheaf of sections of the locally free of finite rank $\mc{O}_{Q^\sharp}$-module $\pi_*\mc{E}|_{Q^\sharp}$ for $\pi:\mc{B}_{Q^\sharp} \rightarrow Q^\sharp$, so this follows from \cite[Lemma 17.1.8]{ScholzeWeinstein.BerkeleyLecturesOnPAdicGeometryAMS207}. In particular, we find that for any two such $\mc{E}_1$ and $\mc{E}_2$, the presheaf 
\[ Q'/Q \mapsto \Hom(\mc{E}_1|_{\mc{B}_{Q'^\sharp}}, \mc{E}_2|_{\mc{B}_{Q'^\sharp}}) \]
is a $v$-sheaf on $\Perf/Q$ since it is the sheaf of sections of  $\mc{E}_1^* \otimes_{\mc{O}_{\mc{B}_{Q'^\sharp}}} \mc{E}_2$. 

It remains to show that all descent data is effective. We may assume $Q=P$. Note that we can use \cite[Lemma 17.1.8]{ScholzeWeinstein.BerkeleyLecturesOnPAdicGeometryAMS207} to carry out the descent as a locally free $\mc{O}_{P^\sharp}$ module with $\mc{O}_{\mc{B}_{Q^\sharp}}$-action, but it is not clear that the result is locally free as an $\mc{O}_{\mc{B}_{Q^\sharp}}$-module. However, for $P$ a geometric point, we can deduce the case of a general $\mc{B}$ from \cite[Lemma 17.1.8]{Scholze.pAdicGeometry} (see below), and in general we will build from the case of a geometric by point adapting the proof of \cite[Lemma 17.1.8]{ScholzeWeinstein.BerkeleyLecturesOnPAdicGeometryAMS207}. 

Let $P^\sharp=\Spa(R,R^+)$ and consider a cover $Q \rightarrow P$ with $Q^\sharp=\Spa(S,S^+)$. By replacing $Q$ with an open cover, it suffices to descend the trivial module $M=S^n \otimes_R \mc{O}(\mc{B})$. If we write $\mc{O}(\mc{B})=A$, a finite projective $R$-module, then our descent data is an element $g\in \GL_n((S \hat{\otimes}_R S) \otimes_R A)$ satisfying a cocycle condition (note the second tensor product does not need to be completed since $A$ is a finite projective $R$-module). We want to show that, locally on $P^\sharp$, we can modify $g$ by a coboundary to obtain the identity matrix. 

We first treat the case that $P^\sharp=\Spa(K,K^+)$ is a perfectoid field. In that case, if we write $I$ for the kernel of $A \rightarrow K$ and
\[ G_i:= \ker \GL_n((S \hat{\otimes}_K S) \otimes_K A) \rightarrow \GL_n((S \hat{\otimes}_K S) \otimes_K A/I^j)\]
then $G_0/G_1 = \GL_n(S \hat{\otimes}_K S)$ and for $i \geq 1$, $G_i/G_{i+1}=M_n( (S \hat{\otimes}_K S) \otimes_K I^{i}/I^{i+1})$
For $G_0/G_1$ descent then applies by \cite[Lemma 17.1.8]{ScholzeWeinstein.BerkeleyLecturesOnPAdicGeometryAMS207} and we can thus modify our descent data $g$ by a cocycle to lie in $G_1$. Then, the Cech cohomology group involving $G_1/G_2$ vanishes by \cite[Theorem 17.1.3]{ScholzeWeinstein.BerkeleyLecturesOnPAdicGeometryAMS207} since it is the Cech cohomology on an affinoid cover of the free $\mc{O}$-module $\mc{O}^{nm}$, $m=\dim_K A$. Thus we may modify the descent data $g$ by a cocycle to lie in $G_2$. We  repeat this argument until we reach an $i$ large enough that $I^i=0$ so that $G_i=\{e\}$.

Now, in the general case, fixing $x \in P^\sharp$, we may replace $P^\sharp$ with a rational neighborhood $U$ of $x$ such that $I$, the kernel of $A \rightarrow R$, is free. We fix a basis $i_1, \ldots, i_m$ for $I$ and set $I_0= R^+ i_1 + \ldots + R^+ i_m \subseteq I$, a free $R^+$ sub-module of $I$. Then, for $a$ sufficiently large, $A_0=R^+ + p^a I_0$ is an open sub-algebra of $R$ (here we use that $(p^a i_1)(p^a i_2)=p^{2a} i_1 i_2$ to see it is a sub-algebra) that is free as an $R^+$-module. Replacing $i_j$ with $p^a i_j$ and $I_0$ with $p^a I_0$, we have $A_0 = R^+ + I_0$. Since the descent data is trivial at $x$, we may choose an element $t(x) \in \GL_n(K(x) \otimes_S A)$ such that $g(x)=(\mr{Pr}_1^* t(x))^{-1}(\mr{Pr}_2^* t(x)).$ Replacing $U$ with a potentially smaller rational neighborhood, we may spread out $t(x)$ to a section $t$ over $U$. Then, replacing $g$ with $(\Pr_1^*t)g(\Pr_2^* t)^{-1}$ so that $g(x)$ is the identity matrix, we may pass to a potentially smaller rational neighborhood $U$ to assume that $g$ lies in $\GL_n( (S^+ \hat{\otimes}_{R^+} S^+) \otimes_{R^+} A_0)$ and even that $g \cong 1 \mod \varpi$ for a pseudo-uniformizer $\varpi$. Then, $g$ mod $\varpi^2$ lies in 
\[ 1 + M_n( (S^+ \hat{\otimes}_{R^+} S^+) \otimes_{R^+} (\varpi A_0 / \varpi^2 A_0)) \cong (R^+/\omega)^{m n}. \]
By the almost vanishing of the cohomology of the plus-structure sheaf on affinoid perfectoids in \cite[Theorem 17.1.3]{ScholzeWeinstein.BerkeleyLecturesOnPAdicGeometryAMS207}, for any small $\epsilon$ we may modify $g$ by a coboundary so that $g \equiv 1 \mod \varpi^{2-\epsilon}$. Iterating, we may modify so that $g \equiv 1 \mod \varpi^{2^k-\epsilon}$ for any $k$ (allowing $\epsilon$ to grow but never past $1$). The product of the elements we are conjugating by converges, so that we find that $g$ is itself a coboundary and the descent data is trivial over $U$.  

By carrying this out in a neighborhood of any point $x$ we obtain an analytic cover where the descent data is effective, and then we conclude by glueing in the analytic topology. 
\end{proof}

As in \cite[Corollary 17.1.9]{ScholzeWeinstein.BerkeleyLecturesOnPAdicGeometryAMS207}, we can also extend over the canonical infinitesimal thickenings. We break this up into two statements; the first giving a geometric statement valid over all finite thickenings $P_{(i)}$, and the second giving a purely algebraic statement valid also over $P_{(\infty)}^\alg$.

\begin{lemma}\label{lemma.thickenings-v-descent}
Let $P/\Spd \mbb{Q}_p$ be a perfectoid space, and for $i \geq 0$ let $\mc{B}/P_{(i)}^\sharp$ be a locally free nilpotent thickening. The fibered category over $\Perf/P$ sending any $Q/P$ to the category $\VB(\mc{B}_{Q^\sharp_{(i)}})$ of locally free of finite rank $\mc{O}_{\mc{B}^\sharp_{Q_{(i)}}}$-modules (for $\mc{B}_{Q^\sharp_{(i)}} = \mc{B} \times_{P^\sharp_{(i)}} Q^\sharp_{(i)}$) is a $v$-stack. 

Moreover, if $Q \in \AffPerf/P$, $\VB(\mc{B}_{Q^\sharp_{(i)}})$ is equivalent to the category of finite projective $\mc{O}(\mc{B}_{Q^\sharp_{(i)}})$-modules and for any $\mc{E} \in \VB(\mc{B}_{Q^\sharp_{(i)}})$, $H^j_v(Q, \mc{E})=0$ for all $j>0$. 
\end{lemma}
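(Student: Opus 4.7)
The plan is to adapt the proof of \cref{lemma.perf-vb-descent} essentially verbatim, noting that its argument used only the fact that $\mc{O}(\mc{B})$ was a finite projective $\mc{O}(P^\sharp)$-module with nilpotent augmentation ideal, not the existence of an algebra section. This key property continues to hold in the present setting: writing $P^\sharp = \Spa(R, R^+)$, the canonical thickening $\mc{O}(P^\sharp_{(i)}) = \mathbb{B}^+_\dR(R)/\Fil^{i+1}$ is a finite locally free $R$-algebra with nilpotent augmentation ideal $\Fil^1/\Fil^{i+1}$, and $\mc{O}(\mc{B})$ is a further finite locally free augmented extension of it, so $\mc{O}(\mc{B})$ is finite projective over $R$ with nilpotent augmentation ideal $J := \ker(\mc{O}(\mc{B}) \to R)$. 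Moreover, from the functoriality of the construction, $\mc{B}_{Q^\sharp_{(i)}} = \mc{B} \times_{P^\sharp} Q^\sharp$, so the descent data for the trivial module on a cover $Q \rightarrow P$ with $Q^\sharp = \Spa(S, S^+)$ lives in $\GL_n((S\hat\otimes_R S)\otimes_R \mc{O}(\mc{B}))$, exactly as before.

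I would first dispatch the ``moreover'' part: when $Q \in \AffPerf/P$, the thickening $Q^\sharp_{(i)}$ is affinoid strongly sheafy (\cref{ss.perf-untilt-canonical-thickenings}), hence by \cref{cor.lf-nilp-strongly-sheafy} and \cref{prop.augmented-equiv} so is $\mc{B}_{Q^\sharp_{(i)}}$, giving the equivalence with finite projective $\mc{O}(\mc{B}_{Q^\sharp_{(i)}})$-modules by \cite[Theorem 8.2.22]{KedlayaLiu.RelativepAdicHodgeTheoryFoundations}. For the vanishing of $H^j_v(Q,\mc{E})$, I would push $\mc{E}$ forward along the finite map $\pi: \mc{B}_{Q^\sharp_{(i)}} \to Q^\sharp$: since $\mc{O}_{\mc{B}_{Q^\sharp_{(i)}}}$ is finite locally free over $\mc{O}_{Q^\sharp_{(i)}}$, which is in turn finite locally free over $\mc{O}_{Q^\sharp}$, the sheaf $\pi_*\mc{E}$ is locally free of finite rank on $Q^\sharp$, and \cite[Theorem 17.1.3]{ScholzeWeinstein.BerkeleyLecturesOnPAdicGeometryAMS207} then kills its higher $v$-cohomology. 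The $v$-sheaf property of Hom between two objects then follows by identifying $\Hom(\mc{E}_1, \mc{E}_2)$ with the global sections of $\mc{E}_1^* \otimes \mc{E}_2$ and applying the same pushforward/cohomology argument.

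For effectivity of descent, the argument mirrors that of \cref{lemma.perf-vb-descent}: we reduce to trivializing a cocycle $g \in \GL_n((S\hat\otimes_R S)\otimes_R \mc{O}(\mc{B}))$, filter by the normal subgroups $G_k := \ker\bigl(\GL_n \to \GL_n\bmod J^k\bigr)$, and modify $g$ step by step. The first quotient $G_0/G_1 = \GL_n(S\hat\otimes_R S)$ is handled by $v$-descent of vector bundles on $P^\sharp$ via \cite[Lemma 17.1.8]{ScholzeWeinstein.BerkeleyLecturesOnPAdicGeometryAMS207}, and the successive quotients $G_k/G_{k+1} = M_n((S\hat\otimes_R S)\otimes_R J^k/J^{k+1})$ are killed using the almost vanishing of \cite[Theorem 17.1.3]{ScholzeWeinstein.BerkeleyLecturesOnPAdicGeometryAMS207}. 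The main technical obstacle, as in the original proof, is the non-field base case: one must, locally around a point $x \in P^\sharp$, choose an $R^+$-free open subalgebra $A_0 = R^+ + J_0 \subset \mc{O}(\mc{B})$ (which exists because $J$ is finite projective over $R$), spread out a point-wise trivialization of $g$ at $x$, and then iteratively improve the congruence $g \equiv 1 \bmod \varpi^{2^k - \epsilon}$ using almost vanishing; this argument goes through in our setting with no essential change since it relies only on the structure of $\mc{O}(\mc{B})$ as a finite projective $R$-algebra with nilpotent augmentation ideal.
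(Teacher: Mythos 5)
There is a genuine gap, and it sits at the very first step of your plan. You assert that $\mc{O}(P^\sharp_{(i)}) = \mbb{B}^+_\dR(R)/\Fil^{i+1}$ is a finite locally free $R$-\emph{algebra}, and consequently that $\mc{O}(\mc{B})$ is a finite projective $R$-algebra with nilpotent augmentation ideal, that $\mc{B}_{Q^\sharp_{(i)}} = \mc{B}\times_{P^\sharp}Q^\sharp$, and that descent data lives in $\GL_n((S\hat\otimes_R S)\otimes_R \mc{O}(\mc{B}))$. None of this is available for $i\geq 1$: the Fontaine map $\theta:\mbb{B}^+_\dR(R)/\Fil^{i+1}\twoheadrightarrow R$ admits no continuous algebra section in general (the paper's remark in \cref{ss.perf-untilt-canonical-thickenings} gives the explicit counterexample $R=\mbb{C}_p$, where $\overline{\mbb{Q}}_p$ is dense in source and target), so $\mc{O}(\mc{B})$ carries no $R$-module structure at all and the tensor products, the fiber product over $P^\sharp$, and the construction of the open subalgebra $A_0=R^++J_0$ are all undefined. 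The same problem breaks your "moreover" step: $\mc{O}_{Q^\sharp_{(i)}}$ is not finite locally free over $\mc{O}_{Q^\sharp}$, so you cannot push $\mc{E}$ forward to a locally free sheaf on $Q^\sharp$ and invoke \cite[Theorem 17.1.3]{ScholzeWeinstein.BerkeleyLecturesOnPAdicGeometryAMS207} directly. This is precisely the distinction the paper is at pains to draw between locally free nilpotent thickenings of $P^\sharp$ (which by definition come with a structure morphism back to $P^\sharp$, and to which \cref{lemma.perf-vb-descent} applies) and the canonical thickenings $P^\sharp_{(i)}$ (which do not).

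The repair is the paper's actual argument: induct on $i$ and do dévissage along the canonical filtration of $\mbb{B}^+_\dR$ rather than treating $\mc{O}(\mc{B})$ as a single augmented $R$-algebra. For cohomology vanishing one filters $\mc{E}$ by the subsheaves $\Fil^j\mbb{B}^+_\dR\cdot\mc{E}$; the graded pieces are Breuil--Kisin--Fargues twists of $\mc{E}\otimes\mc{O}_{\mc{B}_{Q^\sharp}}$, which \emph{are} finite locally free $\mc{O}_{Q^\sharp}$-modules and are killed by \cite[Theorem 17.1.3]{ScholzeWeinstein.BerkeleyLecturesOnPAdicGeometryAMS207}. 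For effectivity one filters $\GL_n(\mc{O}_{\mc{B}_{P^\sharp_{(i)}}})$ by the kernels $G_j$ of reduction to $\mc{O}_{\mc{B}_{P^\sharp_{(j)}}}$: the bottom quotient is $\GL_n(\mc{O}_{\mc{B}_{P^\sharp}})$, where \cref{lemma.perf-vb-descent} applies because $\mc{B}_{P^\sharp}$ genuinely is a locally free thickening of the perfectoid $P^\sharp$, and the higher quotients $G_j/G_{j+1}=1+M_n(\mc{O}_{\mc{B}_{P^\sharp}}\{j+1\})$ are additive groups of finite locally free $\mc{O}_{P^\sharp}$-modules with vanishing $H^1_v$ on affinoids. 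Your instinct that the hard analytic work (the $\varpi^{2^k-\epsilon}$ approximation) is concentrated in \cref{lemma.perf-vb-descent} is correct, but it enters here only through the $i=0$ base case, not through a verbatim rerun of that argument on $\mc{O}(\mc{B})$ itself.
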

\begin{proof}
In general, the equivalence on affinoids with finite projectives is \cite[Theorem 8.2.22]{KedlayaLiu.RelativepAdicHodgeTheoryFoundations}.

For the remainder of the statement, we argue by induction on $i$. For $i=0$, the $v$-stack property is \cref{lemma.perf-vb-descent}, and the vanishing property for higher cohomology on affinoid perfectoids follows from \cite[Theorem 17.1.3]{Scholze.pAdicGeometry} since any finite projective module is a direct summand of a finite rank free module. 

Now let $i>0$. We first show that, for any $Q \in \Perf/P$ and $\mc{E}  \in \VB(\mc{B}_{Q^\sharp_{(i)}})$, $\mc{E}$ defines a $v$-sheaf on $\Perf/Q$ whose higher cohomology vanishes on affinoids. To that end, for any $j \leq i$, we write $\mc{E}_{(j)}= \mc{E} \otimes \mc{O}_{\mc{B}_{Q^\sharp_{(j)}}}$. Then we have an exact sequence 
\[ 0 \rightarrow \mc{E}_{(i-1)} \rightarrow \mc{E}_{(i)} \rightarrow \mc{E}_{(1)} \rightarrow 0\]
This induces maps of presheaves of sections on $\Perf/Q$ forming a short exact sequence after restriction to $\AffPerf/Q$. Since $\mc{E}_{(1)}$ and $\mc{E}_{(i-1)}$ both define sheaves on $\Perf/Q$ and the higher Cech cohomology of the sheaf defined by $\mc{E}_{(i-1)}$ vanishes on any cover in $\AffPerf/Q$, we deduce that $\mc{E}_{(i)}$ defines a sheaf on $\AffPerf/Q$, and then by taking the long exact sequence also that its higher cohomology is zero on $\AffPerf/Q$. Since $\mc{E}_{(n)}$ already defined a sheaf in the analytic topology of any object in $\Perf/Q$, we conclude it defines a sheaf on $\Perf/Q$. 

Now, it follows immediately as in the proof of \cref{lemma.perf-vb-descent} that hom-sets are $v$-sheaves, so it remains only to establish descent. For $0 \leq j \leq i$, writing $G_j$ for the kernel of 
\[ \GL_n(\mc{O}_{\mc{B}_{P^\sharp_{(i)}}}) \rightarrow \GL_n(\mc{O}_{\mc{B}_{P^\sharp_{(j)}}}), \]
we have $G/G_0 = \GL_n(\mc{O}_{\mc{B}_{P^\sharp}})$ and $G_j/G_{j+1} = 1 + M_n (\mc{O}_{\mc{B}_{P^\sharp_{(0)}}}\{j+1\})$ for $j <i$ and $0$ for $j \geq i$ (where the Breuil-Kisin-Fargues twist $\{k\}$ denotes tensor with $\Fil^{k}\mathbb{B}_\dR/\Fil^{k+1}\mathbb{B}_\dR$). Since each of these subquotients has vanishing $H^1$ on affinoid perfectoids (the first by the $i=0$ case of descent), so does $\GL_n(\mc{O}_{\mc{B}_{P^\sharp_{(i)}}})$, and it follows that descent is effective on $\AffPerf/P$. Since analytic descent holds already, we obtain the desired descent result.  
\end{proof}

\begin{lemma} Let $P \in \AffPerf$ and, for $0 \leq i \leq \infty$, let $\mc{B}$ be a finite locally free nilpotent thickening of $\Spec \mc{O}(P_{(i)}).$ Then, the fibered category over $\AffPerf/P$ sending $Q/P$ to the set of projective $\mc{O}(\mc{B}) \otimes_{\mc{O}(P_{(i)})} \mc{O}(Q_{(i)})$-modules is a $v$-stack. Moreover, for any finite projective $\mc{O}(\mc{B})$-module $M$, the associated sheaf of sections 
\[ Q/P \mapsto M \otimes_{\mc{O}(P_{(i)})} \mc{O}_{Q_{(i)}} \]
has vanishing higher cohomology. 
\end{lemma}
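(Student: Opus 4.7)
The plan is to bootstrap from the preceding lemma (finite $i$) to the $i=\infty$ case, handling cohomology vanishing and descent separately, both by inverse limit arguments along $\Fil^\bullet \mathbb{B}^+_\dR(A)$.

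First I would reduce to the case $i = \infty$ (which alone is new), writing $P = \Spa(A,A^+)$ so $\mathcal{O}(P_{(\infty)}) = \mathbb{B}^+_\dR(A)$. Since $\mathcal{O}(\mathcal{B})$ is finite projective over $\mathbb{B}^+_\dR(A)$, it is $\Fil^\bullet$-adically complete and identifies with $\lim_i \mathcal{O}(\mathcal{B}_{(i)})$ where $\mathcal{B}_{(i)} := \Spec \mathcal{O}(\mathcal{B})/\Fil^{i+1}\mathcal{O}(\mathcal{B})$ is a finite locally free nilpotent thickening of $\Spec \mathcal{O}(P_{(i)})$. Similarly, any finite projective $\mathcal{O}(\mathcal{B})$-module $M$ satisfies $M = \lim_i M_{(i)}$ with $M_{(i)} := M/\Fil^{i+1}M$ finite projective over $\mathcal{O}(\mathcal{B}_{(i)})$, and the same holds after base change to $\mathcal{O}(Q_{(\infty)})$ for any $Q \in \AffPerf/P$.

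For cohomology vanishing, I would use the short exact sequences
\[ 0 \to \Fil^{i}M/\Fil^{i+1}M \to M_{(i)} \to M_{(i-1)} \to 0 \]
and the isomorphism $\Fil^{i}M/\Fil^{i+1}M \cong (M/\Fil^1 M) \otimes_{A} (\Fil^{i}\mathbb{B}^+_\dR(A)/\Fil^{i+1}\mathbb{B}^+_\dR(A))$ coming from local freeness of $M$ over $\mathcal{O}(\mathcal{B})$. The graded piece is (a twist of) a finite projective $A$-module, so its associated $v$-sheaf on $\AffPerf/P$ has vanishing higher cohomology by \cite[Theorem 17.1.3]{ScholzeWeinstein.BerkeleyLecturesOnPAdicGeometryAMS207} (or the $i=0$ case of the preceding lemma). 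By induction and the long exact sequence, each $M_{(i)}$ has vanishing higher cohomology on affinoids, and the transition maps $H^0(M_{(i+1)}) \to H^0(M_{(i)})$ are surjective. A Mittag-Leffler / $R\!\lim$ argument then gives $H^j_v(Q,M) = H^j(R\!\lim_i R\Gamma(M_{(i)})) = 0$ for $j > 0$ on affinoids; that $M$ defines a $v$-sheaf (hence in particular hom-sheaves for finite projective modules, which sit as kernels inside free modules) follows from the same exact sequence and the $v$-sheafiness of each $M_{(i)}$.

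For descent, let $Q \to P$ be a $v$-cover together with descent data on a trivial module $N \otimes_{\mathcal{O}(\mathcal{B})}\mathcal{O}(\mathcal{B}) \otimes_{\mathcal{O}(P_{(\infty)})}\mathcal{O}(Q_{(\infty)})$, given by an element of $\GL_n$ satisfying a cocycle condition. Reducing modulo $\Fil^{i+1}$ gives descent data at each finite level, which is effective by the preceding lemma, so we obtain a compatible system of finite projective modules $N_{(i)}$ on each $\mathcal{B}_{(i)}$. The obstruction to lifting a descent from level $i-1$ to level $i$ in a compatible way lives in a Cech $H^1$ of the additive sheaf of $n \times n$ matrices in the graded piece $\Fil^{i}/\Fil^{i+1}$-tensor, which vanishes by Phase 1. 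Combined with the Mittag-Leffler argument (surjectivity of transition maps on $H^0$), we lift the compatible finite-level descents to a finite projective $\mathcal{O}(\mathcal{B})$-module $N := \lim_i N_{(i)}$ descending the original module. The main obstacle I anticipate is the careful management of the inverse limit in the descent part: one must ensure that the $\lim_i N_{(i)}$ really is finite projective over $\mathcal{O}(\mathcal{B})$ and that the recovered descent isomorphism converges in the cocycle condition; this is handled by noting that $\mathcal{O}(\mathcal{B})$ is $\Fil^\bullet$-adically complete and each $N_{(i)}$ is locally on $P^\sharp$ a direct summand of a free module, so finite projectivity lifts through the limit, and the cocycle convergence follows from the vanishing of the obstructions at each graded step together with Mittag-Leffler.
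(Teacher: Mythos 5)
Your proposal is correct and takes essentially the same route as the paper: the finite-$i$ cases are rephrasings of the preceding lemma, and the $i=\infty$ case is handled by passing to the limit along $\Fil^\bullet$, using the equivalence between finite projective $\mc{O}(\mc{B})$-modules and compatible systems at finite levels (your descent phase) together with the vanishing of higher derived limits of the inverse system of sheaves (your graded-piece/Mittag-Leffler phase). The paper merely asserts these two inputs where you spell out the details; the only mild overcomplication is your obstruction-theoretic discussion of compatibility of the finite-level descents, which follows directly from full faithfulness of descent at each finite level.
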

\begin{proof} The cases $0 \leq i < \infty$ are rephrasings of \cref{lemma.thickenings-v-descent}. The case $\infty$ follows by passing to the limit using that
\begin{enumerate}
\item The category of finite projective $\mc{O}(\mc{B})$-modules is equivalent to the category of compatible systems of finite projective $\mc{O}(\mc{B}) \otimes_{\mc{O}(P_{(\infty)})} \mc{O}(P_{(i)})$-modules, and
\item The inverse system of sheaves $(Q/P \mapsto M \otimes_{\mc{O}(P_{(\infty)})} \mc{O}_{Q_{(i)}})$ whose limit is $Q/P \mapsto M \otimes_{\mc{O}(P_{(\infty)})} \mc{O}_{Q_{(\infty)}}$ has no higher derived limit. 
\end{enumerate}
\end{proof}

\begin{lemma}\label{lemma.Y-vb-descent}
Let $E/\mbb{Q}_p$ be a finite extension with residue field $\mathbb{F}_q$, let $P \in \Perf/\Spd \mbb{F}_q$ and let $Y=Y_{E,P}$. Then, for any open $U \subseteq Y$ and any locally free nilpotent thickening $\mc{B}/U$, the fibered category over $\Perf/P$ sending any $Q/P$ to the category of locally free of finite rank $\mc{O}_{\mc{B}_{Y_{E,Q}}}$-modules (for $\mc{B}_{Y_{E,Q}}=\mc{B} \times_Y Y_{E,Q}$), is a $v$-stack.
\end{lemma}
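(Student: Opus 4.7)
The plan is to mirror the structure of the proof of \cref{lemma.thickenings-v-descent}, but with the open $U \subseteq Y_{E,P}$ playing the role of $P^\sharp_{(0)}$ and the thickening $\mc{B}/U$ playing the role of $\mc{B}/P_{(i)}^\sharp$. The base case (no thickening) must be extracted from the descent for vector bundles on open subsets of the relative $Y$, which is implicit in \cite[Proposition 19.5.3]{ScholzeWeinstein.BerkeleyLecturesOnPAdicGeometryAMS207} (cf. the argument recalled in the proof of \cref{lemma.ff-prestack-vstack}-(1)); the inductive step is obstruction-theoretic and uses the vanishing of higher cohomology of coherent sheaves on $U_{Y_{E,Q}}$ over affinoid perfectoid covers, which again follows from the base case.

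First, I would verify that hom-sheaves are $v$-sheaves on $\Perf/P$. For $\mc{E}_1, \mc{E}_2 \in \VB(\mc{B}_{Y_{E,Q}})$, the presheaf $Q'/Q \mapsto \Hom(\mc{E}_1|_{Q'}, \mc{E}_2|_{Q'})$ is the global sections presheaf of the locally free $\mc{O}_{\mc{B}_{Y_{E,Q}}}$-module $\mc{E}_1^{\vee} \otimes \mc{E}_2$, which by pushforward along $\mc{B}_{Y_{E,Q}} \to U_{Y_{E,Q}}$ is a locally free $\mc{O}_{U_{Y_{E,Q}}}$-module, so the $v$-sheaf property reduces to descent of global sections of vector bundles on opens of $Y$, which is part of the base case.

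Next, I would establish effectivity of descent by induction on the length $n$ of the thickening $\mc{B}/U$. The case $n=0$ (so $\mc{B}=U$) is precisely descent of vector bundles on open subsets of $Y_{E,Q}$ along a $v$-cover, which holds on affinoids as in the sousperfectoid setting of \cite[Proposition 19.5.3]{ScholzeWeinstein.BerkeleyLecturesOnPAdicGeometryAMS207} and glues along analytic covers of $U$. For the inductive step, let $\mc{I}$ be the ideal sheaf of $U \hookrightarrow \mc{B}$, let $\mc{B}' \hookrightarrow \mc{B}$ be the thickening cut out by $\mc{I}^n$, and filter
\[
G := \GL_r(\mc{O}_{\mc{B}_{Y_{E,Q}}}) \;\supseteq\; G_0 := \ker\!\bigl(G \to \GL_r(\mc{O}_{\mc{B}'_{Y_{E,Q}}})\bigr)
\]
with $G/G_0 = \GL_r(\mc{O}_{\mc{B}'_{Y_{E,Q}}})$ and $G_0 \cong \mathrm{id} + M_r(\mc{I}^n/\mc{I}^{n+1})$ via the exponential (since $\mc{I}^{n+1}=0$), which is abelian and identified with the additive group of the locally free $\mc{O}_U$-module $M_r(\mc{I}^n)$. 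Given a descent datum for the trivial bundle (after localizing on $P$ and reducing to trivial bundles using the $n=0$ case), one lifts it step-by-step: the class modulo $G_0$ is effective by the inductive hypothesis, and the remaining obstruction lives in $H^1$ of the affinoid cover with values in $M_r(\mc{I}^n/\mc{I}^{n+1})$, which vanishes because coherent cohomology on affinoid covers of the $v$-stack of vector bundles on opens of $Y$ is trivial (this is again an input from the $n=0$ base case, analogous to the use of \cite[Theorem 17.1.3]{ScholzeWeinstein.BerkeleyLecturesOnPAdicGeometryAMS207} in \cref{lemma.thickenings-v-descent}).

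The main obstacle is the base case: one needs $v$-descent and vanishing of higher Cech cohomology for locally free sheaves on $U_{Y_{E,Q}}$ for $U \subseteq Y_{E,P}$ an arbitrary open. On quasi-compact opens one can restrict to affinoids of the form $Y_I$ with $I \subseteq (0,\infty)$ a rational compact interval, where $Y_I$ is sousperfectoid and the argument of \cite[Proposition 19.5.3]{ScholzeWeinstein.BerkeleyLecturesOnPAdicGeometryAMS207} applies verbatim, giving $v$-descent together with vanishing of Cech cohomology on affinoid perfectoid covers. For a general open $U$, one covers $U$ by such $Y_I$'s and glues in the analytic topology; this glueing is harmless because, having already proved that hom-sheaves are $v$-sheaves, analytic descent on $U$ is automatic. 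Everything else in the argument is then formal.
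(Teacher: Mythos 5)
Your overall strategy (hom-sheaves as global sections of bundles, base case from the sousperfectoid descent of \cite[Proposition 19.5.3]{ScholzeWeinstein.BerkeleyLecturesOnPAdicGeometryAMS207}, then a d\'evissage on the thickening) is reasonable, but the inductive step as written has a genuine gap. You filter $\GL_r(\mc{O}_{\mc{B}})$ by the kernel of reduction to $\mc{O}_{\mc{B}'}=\mc{O}_{\mc{B}}/\mc{I}^n$ and identify the bottom piece with $1+M_r(\mc{I}^n/\mc{I}^{n+1})$, treating this as "the additive group of a locally free $\mc{O}_U$-module." But the definition of a locally free nilpotent thickening (\cref{def.thickenings}) only requires $\mc{I}$ itself to be locally free; the powers $\mc{I}^n$ and the graded pieces $\mc{I}^n/\mc{I}^{n+1}$ need not be locally free (this is exactly why the $\lfp$ condition in \cref{ss.slope-condition} has to impose local freeness of $\gr^n_{\mc{I}}$ as an extra hypothesis). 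Consequently (i) the intermediate quotient $\mc{B}'$ need not be a locally free nilpotent thickening, so your inductive hypothesis does not apply to $G/G_0$, and (ii) the obstruction group $M_r(\mc{I}^n)$ is only a coherent subsheaf of a vector bundle, so the \v{C}ech-acyclicity you invoke is not covered by the vector-bundle case. Note that the paper's own filtration arguments only use the $\mc{I}$-adic filtration when the base is a perfectoid \emph{field} (where every module is free); in the general perfectoid case (\cref{lemma.perf-vb-descent}) it switches to spreading out from a point plus a $\varpi$-adic approximation using almost-acyclicity of $\mc{O}^+$, precisely to avoid this issue.

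The paper's route for the present lemma is also structurally different and sidesteps the problem: rather than running the d\'evissage over $Y$, it covers $U$ by sousperfectoid affinoids $Y_I$, pulls the \emph{entire} thickening back along the perfectoid cover $Y_{I}\hat{\otimes}_E E_\infty \to Y_I$ (obtaining a locally free nilpotent thickening of a perfectoid space), applies the already-proved \cref{lemma.perf-vb-descent}, and descends back via the $\Gamma$-action and normalized traces, exactly as in the untwisted argument of \cite[Proposition 19.5.3]{ScholzeWeinstein.BerkeleyLecturesOnPAdicGeometryAMS207}. To repair your proof you would either have to restrict to thickenings with locally free $\mc{I}$-adic graded pieces (insufficient for the lemma as stated, which is needed for the full $\lf$ category in \cref{theorem.vb-inscribed-vstack}) or import the $\varpi$-adic approximation to the sousperfectoid setting; the cleaner fix is to adopt the paper's wholesale reduction to the perfectoid case. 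A smaller point: one covers $U$ by rational subsets contained in the $Y_I$, not by the $Y_I$ themselves, and the acyclicity you cite is not literally part of your "$n=0$ base case" (effectivity of descent) but a separate, albeit standard, consequence of the direct-summand trick.
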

\begin{proof}
As in the proof of \cref{theorem.smooth-adic-space-mos} and following the proof of \cite[Proposition 19.5.3]{ScholzeWeinstein.BerkeleyLecturesOnPAdicGeometryAMS207}, we can reduce to descent for locally free thickenings of perfectoid spaces. This holds by the $i=0$ case of \cref{lemma.thickenings-v-descent}.
\end{proof}

\subsection{Inscribed Banach--Colmez spaces}\label{ss.BC-spaces}
\newcommand{\bct}{\boxtimes}

In this subsection we work in the inscribed context $(\Spd \mathbb{F}_q, X^\alg_{E,\Box})$, and write $\mathcal{X}$ for the functor $(X^\alg_{E,\Box})^\lf \rightarrow \Sch/E$, $\mathcal{X}/X_{E,P}^\alg \mapsto \mathcal{X}$. We write $E^\lfid$ for the  $(X^\alg_{E,\Box})^\lf$-inscribed $v$-sheaf $\mathbb{B}$ of \cref{def.BB},
\[ E^\lfid(\mathcal{X})=H^0(\mc{X}, \mc{O}).\] 

Suppose $\mc{S}$ is an inscribed $v$-sheaf and $\mc{E}: \mc{S} \rightarrow \mc{X}^*\VB$. We write $\mc{E}_{\mathcal{X}}$ for $\mc{E}(\mc{X}/X_P \rightarrow \mc{S})$ below when it will cause no confusion. 

We consider the following two functors on $(X^\alg_{E,\Box})^\lf/\mc{S}$:
\begin{align*}
\BC(\mc{E}): &\; \mc{X}/S \mapsto H^0(\mc{X}, \mc{E}_{\mc{X}}) \textrm{, and } \\ 
\BC(\mc{E}[1]): & \textrm{ the sheafification of } \left(\mc{X}/S \mapsto H^1(\mc{X}, \mc{E}_{\mc{X}}) \right)
\end{align*}
which are both naturally $E^{\lfid}$-modules. 

\begin{proposition}\label{prop.BC-inscribed}
Suppose $\mc{S}$ is an inscribed $v$-sheaf, and $\mc{E}: \mc{S} \rightarrow \mc{X}^*\VB$. Then, each of $\BC(\mc{E})$ and $\BC(\mc{E}[1])$ is an inscribed $v$-sheaf.
\end{proposition}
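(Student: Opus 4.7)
The plan is to identify $\BC(\mc{E})$ and $\BC(\mc{E}[1])$ as, respectively, the moduli of sections of a smooth affine scheme and the cokernel of an explicit map between such moduli of sections, then invoke the abelian subcategory structure on inscribed $\mathbb{B}$-modules furnished by \cref{cor.inscribed-B-abelian}.

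First I would handle $\BC(\mc{E})$ directly: the associated geometric vector bundle $\mathbb{V}(\mc{E}) := \Spec_{\mc{X}} \Sym^\bullet \mc{E}^\vee$ is smooth and affine over $\mc{X}$ on $\mc{S}$, and a section of $\mc{E}_\mc{X}$ over any test object $\mc{X}$ corresponds to an $X_{E,P}^\alg$-morphism $\mc{X} \to \mathbb{V}(\mc{E})$. Hence $\BC(\mc{E}) = \mathbb{V}(\mc{E})^{\lfid}$, and \cref{theorem.affine-scheme-mos} applied in the algebraic inscribed context (2) gives that $\BC(\mc{E})$ is an inscribed $v$-sheaf of $\mathbb{B}$-modules.

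For $\BC(\mc{E}[1])$, the strategy is a \v{C}ech computation. First I would observe that the inscribed property is $v$-local for $v$-sheaves: both sides of the defining bijection are finite limits, which commute with the descent equalizer, so the property can be checked after passing to a $v$-cover of the underlying $P$. After such a cover I would arrange two effective Cartier divisors $D_1, D_2 \subset X_{E,P}^\alg$ with $D_1 \cap D_2 = \emptyset$ (e.g.~coming from two independently chosen untilts of $P$, which exist $v$-locally). Then $U_i := X_{E,P}^\alg \setminus D_i$ and $U_1 \cap U_2$ are affine, so for any finite locally free thickening $\mc{X}/X_{E,P}^\alg$ the pullbacks $\mc{X}_{U_i}$ and $\mc{X}_{U_1 \cap U_2}$ are affine schemes on which vector bundle cohomology vanishes in positive degree. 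The \v{C}ech complex for this two-element affine cover yields a four-term exact sequence of presheaves on $(X_{E,\Box}^\alg)^\lf/\mc{S}$,
\[
0 \longrightarrow \BC(\mc{E}) \longrightarrow \mc{H}^0_{U_1} \oplus \mc{H}^0_{U_2} \longrightarrow \mc{H}^0_{U_1 \cap U_2} \longrightarrow \mc{H}^1 \longrightarrow 0,
\]
where $\mc{H}^0_U(\mc{X}) = H^0(\mc{X}_U, \mc{E}|_{\mc{X}_U})$ and $\mc{H}^1(\mc{X}) = H^1(\mc{X}, \mc{E}_\mc{X})$. Each $\mc{H}^0_U$ is the moduli of sections of the affine scheme $\mathbb{V}(\mc{E})|_U$ (affine since $U$ is affine) over $\mc{X}_U$, and the same techniques as in the proof of \cref{theorem.affine-scheme-mos} show that it is an inscribed $v$-sheaf of $\mathbb{B}$-modules. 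Since sheafification is exact and $\BC(\mc{E}[1])$ is by definition the $v$-sheafification of $\mc{H}^1$, the sheafified sequence identifies $\BC(\mc{E}[1])$ with the cokernel of a map of inscribed $\mathbb{B}$-modules; by \cref{cor.inscribed-B-abelian}, this cokernel is itself inscribed.

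The principal obstacle is the $v$-local construction of two disjoint effective Cartier divisors on $X_{E,P}^\alg$ for an arbitrary $P/\Spd \mathbb{F}_q$, together with the adaptation of \cref{theorem.affine-scheme-mos} to handle moduli of sections over an \emph{affine open} of the Fargues-Fontaine curve rather than over the full curve. Both are essentially technical: two generic untilts become available after a $v$-cover and can be arranged to have disjoint images after an additional $v$-refinement, while the proof of case (4) of \cref{theorem.affine-scheme-mos} already reduces to ring-theoretic $\Hom$-computations of exactly the shape needed for $\mc{H}^0_U$. A secondary check is that the four-term presheaf exact sequence remains exact after $v$-sheafification, which follows from the exactness of the sheafification functor.
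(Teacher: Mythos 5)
Your treatment of $\BC(\mc{E})$ is exactly the paper's: identify it with the moduli of sections of the geometric vector bundle and quote \cref{theorem.affine-scheme-mos}. For $\BC(\mc{E}[1])$ you and the paper share the same structural idea --- realize it as the cokernel of a map of inscribed $v$-sheaves of abelian groups and invoke \cref{prop.inscribed-abelian-sheaves-abelian-cat} --- but you present $H^1$ as a cokernel via a two-element affine \v{C}ech cover of $X^\alg_{E,P}$, whereas the paper pulls back to $\mc{X}\times_{X_{E,\Box}} Y_{E,\Box}$ and uses the standard presentation $H^1(X,\mc{E})=\mr{coker}(\varphi-1)$ on $H^0(Y,\cdot)$ as in \cite[Proposition II.2.1]{FarguesScholze.GeometrizationOfTheLocalLanglandsCorrespondence}, with the $v$-sheaf property over $Y$ supplied by \cref{lemma.Y-vb-descent}. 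The Frobenius-descent route buys you exactly what your route lacks: it requires no choice of auxiliary divisors and no affine cover of the relative curve.

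The genuine weak point in your argument is the step you label ``essentially technical'': the $v$-local construction of two \emph{disjoint} degree-one Cartier divisors on $X^\alg_{E,P}$ for arbitrary $P$. A single untilt divisor is available after the $v$-cover $P\times_{\Spd\mbb{F}_q}\Spd E\rightarrow P$, and its complement is affine (this is case (5) of the moduli-of-sections machinery), but producing a second untilt whose divisor is disjoint from the first \emph{over all of $P$} is not free: two independently chosen untilts can collide, the disjointness locus is only an open subset of $|P|$, and you must argue that the family of such opens (over varying pairs of untilts) actually constitutes a $v$-cover --- surjectivity on geometric points is clear but the quasicompactness condition in the definition of a $v$-cover needs an argument. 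You would also need to verify the $v$-sheaf property of $\mc{H}^0_{U_1\cap U_2}$ (the double intersection is the complement of two divisors, so its sections are a filtered colimit $\bigcup_n \BC(\mc{E}(nD_1+nD_2))$, and one has to justify that this colimit of $v$-sheaves is a $v$-sheaf, as the paper does in the proof of \cref{lemma.ff-prestack-vstack}). Your preliminary reduction --- that the inscribed property of a $v$-sheaf may be checked after a $v$-cover of $P$ --- is sound, since the pushout $\mc{X}_1\sqcup_{\mc{X}_0}\mc{X}_2$ is formed from finite locally free $\mc{O}_{X_{E,P}}$-algebras and hence commutes with base change. So the proposal is salvageable, but as written it defers the hardest point; the paper's $\varphi$-equivariant presentation is the cleaner and intended path.
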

\begin{proof}
For $\BC(\mc{E})$, the property of being an inscribed $v$-sheaf follows from \cref{theorem.affine-scheme-mos} applied to the associated geometric vector bundle. Similarly, we obtain the result for $\BC(\mc{E}[1])$ by applying \cref{lemma.Y-vb-descent} after pulling back to $\mc{X} \times_{X_{E,\Box}} Y_{E,\Box}$ and realizing $\BC(\mc{E}[1])$ as the cokernel of a map of inscribed $v$-sheaves of abelian groups over $Y$ (as in \cite[Proposition II.2.1]{FarguesScholze.GeometrizationOfTheLocalLanglandsCorrespondence}), which remains inscribed by \cref{prop.inscribed-abelian-sheaves-abelian-cat}. 
\end{proof}

\begin{proposition}\label{prop.BC-vanishing} Suppose $\mc{S}$ is an inscribed $v$-sheaf and $\mc{E} :\mc{S} \rightarrow \mc{X}^*\VB$ factors through $\overline{\mc{E}}: \mc{S} \rightarrow \overline{(\mc{X}^*\VB)}=X_{E,\Box}^*\VB$. If the Harder-Narasimhan slopes of $\overline{\mc{E}}$ at each geometric point $\Spd(C,C^+)\rightarrow \overline{\mc{S}}$ are non-negative, then 
\[\BC(\mc{E}[1])|_{(X^\alg_{E,\Box})^\lfp}=0.\]
\end{proposition}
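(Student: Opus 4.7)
The plan is to reduce the statement to the standard vanishing of $\BC(\mc{V}[1])$ for vector bundles $\mc{V}$ on $X_{E,P}^{\alg}$ whose geometric Harder--Narasimhan slopes are everywhere non-negative. Such vanishing (as a $v$-sheaf) is part of the Fargues--Scholze theory of Banach--Colmez spaces: writing any such $\mc{V}$ as an extension by its HN pieces and using that after a $v$-cover to a geometric point the $H^1$ literally vanishes (this is the slope zero and positive slope case of the cohomology of the Fargues--Fontaine curve), one sees that locally in the $v$-topology the $H^1$ presheaf is zero. Granting this base case, the task is to propagate the vanishing from $X_{E,P}^{\alg}$ up the nilpotent thickening $\mc{X}$.

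Concretely, fix a test object $\mc{X}/X_{E,P}^{\alg}$ in $(X_{E,\Box}^{\alg-\lfp})^{\lf}$ and let $\mc{I} := \ker(\mc{O}_{\mc{X}} \to \mc{O}_{X_{E,P}^{\alg}})$, so that $\mc{I}^{n}/\mc{I}^{n+1}$ is locally free of finite rank on $X_{E,P}^{\alg}$ and, by the slope condition defining $\lfp$, has non-negative Harder--Narasimhan slopes at every geometric point $\Spa(C,C^{+}) \to P$. Since the thickening is finite locally free and nilpotent, $\mc{I}^{N}=0$ for some $N$, and the $\mc{I}$-adic filtration on $\mc{E}_{\mc{X}}$ gives short exact sequences
\begin{equation*}
0 \to \mc{I}^{n+1}\mc{E}_{\mc{X}} \to \mc{I}^{n}\mc{E}_{\mc{X}} \to \overline{\mc{E}} \otimes_{\mc{O}_{X_{E,P}^{\alg}}} \mc{I}^{n}/\mc{I}^{n+1} \to 0
\end{equation*}
of coherent $\mc{O}_{\mc{X}}$-modules. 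Because $\mc{X} \to X_{E,P}^{\alg}$ is affine, each of these objects is (after pushforward, which is harmless for cohomology) a vector bundle on $X_{E,P}^{\alg}$, and the graded piece $\overline{\mc{E}} \otimes \mc{I}^{n}/\mc{I}^{n+1}$ has non-negative geometric HN slopes, because the tensor product of two semistable bundles on $X_{E,C}^{\alg}$ is semistable of slope equal to the sum (by the Fargues--Fontaine classification), so the HN slopes of a tensor product are sums of HN slopes of the factors.

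The induction is now downward on $n$: starting from $\mc{I}^{N}\mc{E}_{\mc{X}} = 0$, the long exact sequence of $v$-sheaves associated to the short exact sequence above (obtained by taking the presheaf cohomology long exact sequence and sheafifying, which is exact; the sequence terminates after $\BC(\cdot[1])$ because $H^{i}$ on a Fargues--Fontaine curve vanishes for $i \geq 2$) reads
\begin{equation*}
\BC\bigl(\mc{I}^{n+1}\mc{E}_{\mc{X}}[1]\bigr) \to \BC\bigl(\mc{I}^{n}\mc{E}_{\mc{X}}[1]\bigr) \to \BC\bigl((\overline{\mc{E}} \otimes \mc{I}^{n}/\mc{I}^{n+1})[1]\bigr) \to 0.
\end{equation*}
The right-hand term vanishes by the base case, and the left-hand term vanishes by the inductive hypothesis, so $\BC(\mc{I}^{n}\mc{E}_{\mc{X}}[1])=0$; taking $n=0$ gives the result.

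The main obstacle is really the base case, i.e.\ making precise the statement that $\BC(\mc{V}[1])=0$ as a $v$-sheaf when $\mc{V}$ is a vector bundle on $X_{E,P}^{\alg}$ with non-negative geometric Harder--Narasimhan slopes. Once this is granted, the filtration argument is essentially formal, with the only minor subtlety being the verification that the long exact sequence of cohomology makes sense at the level of $v$-sheaves, which follows from exactness of $v$-sheafification applied to the ordinary long exact sequence of presheaf cohomology groups.
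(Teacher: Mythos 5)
Your proof is correct, and it reduces to the same key input as the paper's: the vanishing of the $H^1$ $v$-sheaf for a vector bundle on the relative Fargues--Fontaine curve with everywhere non-negative geometric Harder--Narasimhan slopes (\cite[Proposition II.3.4-(ii)]{Fargues.GeometrizationOfTheLocalLanglandsCorrespondenceAnOverview}, which is exactly what the paper cites). The organization differs: the paper uses the factoring hypothesis head-on to write $f^*\mc{E} \cong \overline{f}^*\overline{\mc{E}} \otimes_{\mc{O}_{X_{E,P}}} \mc{O}_{\mc{X}}$, pushes this forward along the affine map $\mc{X} \to X_{E,P}$ to get a single vector bundle $\mc{V}$ on $X_{E,P}$ whose slopes are non-negative (being an iterated extension of $\overline{f}^*\overline{\mc{E}} \otimes \mc{I}^n/\mc{I}^{n+1}$), and applies the vanishing once; you instead filter by powers of $\mc{I}$ and run a descending induction through the long exact sequences, applying the vanishing to each graded piece. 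These are the same dévissage seen from two angles --- your version makes explicit the extension argument that the paper compresses into the assertion ``the slopes of $\mc{V}$ are all nonnegative,'' and has the minor advantage of not needing to assemble the pushforward as a single object (indeed it would still work for bundles on $\mc{X}$ that are not flat extensions, since the graded pieces are always $\mc{E}_{\mc{X}}|_{X_{E,P}} \otimes \mc{I}^n/\mc{I}^{n+1}$), while the paper's version is shorter. Your identification of the graded pieces, the tensor-product slope computation at geometric points, and the exactness of $v$-sheafification applied to the presheaf long exact sequence are all fine.
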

\begin{proof}
Given a map $f:\mc{X}/X_{E,P} \rightarrow \mc{S}$, we consider the pullback of $\BC(\mc{E}[1])$ to $\Perf/P$. Viewing $f^* \mc{E}$ as a vector bundle on $\mc{X}$, it follows from our assumption that it is isomorphic to 
\[ \overline{f}^*\overline{\mc{E}} \otimes_{\mc{O}_{X_{E,P}}} \mc{O}_{\mc{X}}=:\mc{V}. \]
The pullback of $\BC(\mc{E}[1])$ to $\Perf/P$ can thus be viewed as the $v$-sheafification of 
\[ (P'\rightarrow P) \mapsto H^1(X_{E,P'}, \mc{V}|_{X_{E,P'}}).\]
By our condition the slopes of $\overline{f}^*\mc{E}_0$ and $\mc{O}_{\mc{X}}$, the slopes of $\mc{V}$ are all nonnegative, thus this $v$-sheafication is trivial by \cite[Proposition II.3.4-(ii)]{Fargues.GeometrizationOfTheLocalLanglandsCorrespondenceAnOverview}.
\end{proof}

\subsection{Newton strata}\label{ss.NewtonStrataBG}
We maintain the notation of \cref{ss.BC-spaces}, and fix $G/E$ a connected linear algebraic group.  
For any $b \in G(\breve{E})$, we have a canonical map $\mc{E}_b: \Spd \Fqbar \rightarrow \mc{X}^*\BG$. It is induced by pullback along $\mc{X} \rightarrow X_{E,\Box}$ from the usual construction
\[ \mc{E}_b: \Spd \Fqbar \rightarrow \mr{Bun} G=X_{E,\Box}^*\BG \]
which sends $P/\Spd \Fqbar$ to the descent of the trivial $G$-torsor $\mc{E}_\triv$ on $Y_{E,P}$ via the isomorphism $\sigma^* \mc{E}_\triv = \mc{E}_\triv \xrightarrow{b} \mc{E}_\triv$. The isomorphism class of $\mc{E}_b$ depends only on the $\sigma$-conjugacy class $[b]$, and we write $(\mc{X}^*\BG)^{[b]} \subseteq \mc{X}^*\BG \times \Spd \Fqbar$ for the image of the graph of $b$ (i.e. $\mc{E}: \mc{X} \rightarrow \BG\times \Spd \Fqbar$ factors through $(\mc{X}^*\BG)^{[b]}$ if and only if it is $v$-locally isomorphic to $\mc{E}_b$).  

\begin{lemma}$(\mc{X}^*\BG)^{[b]}$ is an inscribed $v$-stack. 
\end{lemma}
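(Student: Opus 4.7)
The plan is to identify $(\mc{X}^*\BG)^{[b]}$ with the preimage of the classical Newton stratum $(\Bun G)^{[b]} \subseteq \Bun G \times \Spd \Fqbar$ under the restriction-to-reduced morphism $r:\mc{X}^*\BG \to X_{E,\Box}^{\alg}{}^*\BG = \Bun G$ sending $\mc{E}$ to $\overline{\mc{E}} = \mc{E}|_{X_{E,P}}$. Granted this identification, the lemma follows formally: $r$ is a morphism of inscribed $v$-stacks by \cref{thm.BG-pull-back-inscribed}, and $(\Bun G)^{[b]}$ is a $v$-substack of $\Bun G \times \Spd \Fqbar$ by the standard Newton stratification results of Fargues--Scholze. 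Its preimage is then a $v$-substack of $\mc{X}^*\BG \times \Spd \Fqbar$, and it is automatically inscribed: for any pushout $\mc{B} = \mc{B}_1 \sqcup_{\mc{B}_0} \mc{B}_2$, all four thickenings share the common reduced part $X_{E,P}$, so $\overline{\mc{E}} = \overline{\mc{E}_1} = \overline{\mc{E}_2}$, and a condition on $\overline{\mc{E}}$ propagates without obstruction from the restrictions to the glued object.

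It therefore remains to verify the identification. One direction is immediate: restriction to the reduced part preserves $v$-local isomorphism, so $\mc{E} \in (\mc{X}^*\BG)^{[b]}$ implies $\overline{\mc{E}} \in (\Bun G)^{[b]}$. For the converse, suppose $\overline{\mc{E}}$ is $v$-locally isomorphic to $\mc{E}_b|_{X_{E,P}}$; after a $v$-cover $Q \to P$ we may assume this holds on the nose over $X_{E,Q}$. Using the structure morphism $\mc{X} \to X_{E,P}$ from \cref{remark.unique-structure-morphism}, pullback yields a canonical deformation $\overline{\mc{E}}_\mc{X}$ of $\overline{\mc{E}}$ to $\mc{X}_Q$ that by functoriality is already isomorphic there to $\mc{E}_b$ (the latter being defined as precisely such a pullback). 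It then suffices to show that $\mc{E}$ and $\overline{\mc{E}}_\mc{X}$, being two deformations of a common $G$-bundle from $X_{E,Q}$ to $\mc{X}_Q$, are $v$-locally isomorphic after a further $v$-cover of $Q$.

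The main obstacle is this last $v$-local equivalence of deformations. Filtering the ideal $\mc{I} := \ker(\mc{O}_\mc{X} \twoheadrightarrow \mc{O}_{X_{E,P}})$ by its powers and arguing square-zero step by square-zero step, the difference between two deformations along $\mc{I}^n/\mc{I}^{n+1}$ is classified by a class in the sheafified $H^1$ of $\ad(\mc{E}_b) \otimes \mc{I}^n/\mc{I}^{n+1}$ over $Q \in \Perf/P$. By \cref{prop.BC-inscribed}, this sheafified $H^1$ is precisely the Banach--Colmez space $\BC(\ad(\mc{E}_b) \otimes \mc{I}^n/\mc{I}^{n+1}[1])$, and its vanishing follows from \cref{prop.BC-vanishing} whenever the tensor product has non-negative Harder--Narasimhan slopes at each geometric point. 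This is the natural setting in which the lifting succeeds, in particular on the slope-restricted site $X^{\alg-\lfp}_{E,\Box}$ and for $b$ basic, where $\ad(\mc{E}_b)$ is semistable of slope zero and the hypothesis on $\mc{I}^n/\mc{I}^{n+1}$ is built in. Under these hypotheses, the two deformations become isomorphic $v$-locally, the identification with the preimage of $(\Bun G)^{[b]}$ is complete, and the inscribed $v$-stack property of $(\mc{X}^*\BG)^{[b]}$ follows from the formal reduction of the first paragraph.
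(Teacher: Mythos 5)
There is a genuine gap: your entire argument rests on identifying $(\mc{X}^*\BG)^{[b]}$ with the preimage of $(\Bun G)^{[b]}$ under restriction to the reduced curve, and that identification is false at the level of generality of the lemma. The lemma is stated for an arbitrary $\sigma$-conjugacy class $[b]$ and for all locally free thickenings, whereas the deformation-lifting step you need (vanishing of the sheafified $H^1$ of $\ad(\mc{E}_b)\otimes \mc{I}^n/\mc{I}^{n+1}$) requires both that $[b]$ be basic (so that $\mf{g}_b$ is the trivial isocrystal, of slope zero) and that the test objects satisfy the slope condition $\lfp$. You acknowledge this in your last paragraph, but that concession means you have only proved the lemma under extra hypotheses; the identification you are trying to establish is precisely \cref{prop.b-basic-open-stratum}(2), a separate and strictly stronger statement that the paper proves later under exactly those hypotheses. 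For non-basic $b$, or for thickenings with negative-slope graded pieces, the preimage of the reduced Newton stratum is genuinely larger than $(\mc{X}^*\BG)^{[b]}$, so your ``formal reduction'' in the first paragraph does not apply: the defining condition of $(\mc{X}^*\BG)^{[b]}$ is a condition on the bundle over the whole thickening $\mc{X}$, not on its restriction to $X_{E,P}$.

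The intended argument is much softer and avoids deformation theory entirely. The $v$-stack property is inherited from \cref{thm.BG-pull-back-inscribed} since the defining condition is $v$-local; the only issue for the inscribed property is essential surjectivity in \cref{eq.inscribed-def}. Given $\mc{E}$ on $\mc{X}_1 \sqcup_{\mc{X}_0} \mc{X}_2$ with each $\mc{E}|_{\mc{X}_i}$ $v$-locally isomorphic to $\mc{E}_b$, pass to a common $v$-cover where both restrictions are literally isomorphic to $\mc{E}_b$. The two isomorphisms need not agree over $\mc{X}_0$, but they differ there by an automorphism of $\mc{E}_b|_{\mc{X}_0}$, and since $\mc{E}_b$ is pulled back from $X_{E,P}$, any such automorphism extends to $\mc{X}_1$ by pullback along the structure retraction $\mc{X}_1 \to \mc{X}_0$; i.e.\ $\Aut(\mc{E}_b|_{\mc{X}_1}) \twoheadrightarrow \Aut(\mc{E}_b|_{\mc{X}_0})$. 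Correcting one of the two isomorphisms by such an extension, they glue to an isomorphism $\mc{E} \cong \mc{E}_b$ over the pushout. This works for arbitrary $[b]$ with no slope hypothesis, which is what the lemma claims.
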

\begin{proof}
By \cref{thm.BG-pull-back-inscribed}, it remains only to check the inscribed property, and for this the only question is essential surjectivity in \cref{eq.inscribed-def}. Thus suppose $\mc{E}$ is a $G$-bundle on $\mc{X}_1 \sqcup_{\mc{X}_0} \mc{X}_2$ such that $\mc{E}|_{\mc{X}_i}$ is $v$-locally isomorphic to $b$ for each $i$. Then, passing to a sufficiently large cover, we can assume each is isomorphic to $\mc{E}_b$. Since the automorphisms of $\mc{E}_b|_{\mc{X}_1}$ surject onto those of $\mc{E}_b|_{\mc{X}_0}$, we can then glue to get an isomorphism with $\mc{E}_b$ over $\mc{X}_1 \sqcup_{\mc{X}_0} \mc{X}_2$. 
\end{proof}

Recall that $[b]$ is called basic if, for $\mf{g}:=\Lie G$, equipped with the adjoint action, the associated isocrystal $\mf{g}_{b}$ is trivial (equivalently, the slope morphism for $[b]$ is central in $G$).  

\begin{remark}\label{remark.stratification-bunG}
When $G$ is reductive, we have $\overline{(\mc{X}^*\BG)^{[b]}}=\mr{Bun}_G^{[b]}$, for the right hand side as defined in  \cite[Chapter III]{FarguesScholze.GeometrizationOfTheLocalLanglandsCorrespondence}. This is not ``by definition", as the right-hand side is defined by the condition of being isomorphic to $\mc{E}_b$ at all geometric points, but it follows from the results of \cite[Chapter III]{FarguesScholze.GeometrizationOfTheLocalLanglandsCorrespondence}. For general $G$, the right hand side is defined by the condition at geometric points in \cite[\S4]{HoweKlevdal.AdmissiblePairsAndpAdicHodgeStructuresIITheBiAnalyticAxLindemannTheorem}, and the proof of \cite[Theorem 3]{HoweKlevdal.AdmissiblePairsAndpAdicHodgeStructuresIITheBiAnalyticAxLindemannTheorem} shows that this agrees with $\overline{(\mc{X}^*\BG)^{[b]}}$ when $[b]$ is basic; we expect that they agree in general.
\end{remark}

\begin{proposition}\label{prop.b-basic-open-stratum}
Suppose $[b]$ is basic. Then, the restriction of $(\mc{X}^*\BG)^{[b]}$ to $X_{E,\Box}^\lfp$ is open in the following strong sense:
\begin{enumerate}
\item The underlying $v$-stack $\overline{(\mc{X}^*\BG)^{[b]}}=\Bun_G^{[b]}$ is an open substack of $\Bun_G$, and
\item $(\mc{X}^*\BG)^{[b]}$ is the formal neighborhood of $\Bun_G^{[b]}$, that is, 
\[ (\mc{X}^*\BG)^{[b]} = (\mc{X}^*\BG) \times_{\overline{\mc{X}^*\BG}=\Bun_G} \Bun_G^{[b]}\]
\end{enumerate}
\end{proposition}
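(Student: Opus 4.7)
Part (1), concerning only the underlying $v$-stack $\overline{(\mc{X}^*\BG)^{[b]}}=\Bun_G^{[b]}$, is part of the known stratification of $\Bun_G$: for $G$ reductive it is in \cite[Chapter III]{FarguesScholze.GeometrizationOfTheLocalLanglandsCorrespondence}, and the general connected case is covered by \cref{remark.stratification-bunG}. The real content is (2), which asks for the formal completion. Unpacking definitions, (2) reduces to the following lifting statement: for $\mc{X}/X_{E,P} \in X_{E,\Box}^\lfp$ and $\mc{E}$ a $G$-torsor on $\mc{X}$ whose reduction to $X_{E,P}$ lies in $\Bun_G^{[b]}$, one must show that $\mc{E}$ is itself $v$-locally isomorphic to $\mc{E}_b|_{\mc{X}}$.

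The plan is to pass to a $v$-cover so as to fix an isomorphism $\mc{E}|_{X_{E,P}} \cong \mc{E}_b|_{X_{E,P}}$, and then to extend it up the finite filtration
\[ X_{E,P} = \mc{X}_0 \hookrightarrow \mc{X}_1 \hookrightarrow \cdots \hookrightarrow \mc{X}_N = \mc{X}, \qquad \mc{O}_{\mc{X}_i}:= \mc{O}_{\mc{X}}/\mc{I}^{i+1}, \]
where $\mc{I}$ is the ideal sheaf of the canonical retraction $\mc{X} \to X_{E,P}$ of \cref{remark.unique-structure-morphism}. Each $\mc{X}_{i+1} \to \mc{X}_i$ is square-zero with ideal $\mc{J}_i := \mc{I}^i/\mc{I}^{i+1}$, which by the $\lfp$ hypothesis is locally free over $\mc{O}_{X_{E,P}}$ with non-negative Harder--Narasimhan slopes at every geometric point. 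By the standard deformation theory of $G$-torsors along a square-zero ideal (applied to the sheaves of groups $G(\mc{O}_{\mc{X}_{i+1}})$ on $|X_{E,P}|$, with the descent results of \cref{ss.descent-lemmas} used to trivialize $v$-locally), lifts of an isomorphism $\phi_i: \mc{E}|_{\mc{X}_i} \xrightarrow{\sim} \mc{E}_b|_{\mc{X}_i}$ to $\mc{X}_{i+1}$ form a torsor under $H^0(X_{E,P}, \mr{ad}(\mc{E}_b)\otimes \mc{J}_i)$ and their obstruction lies in $H^1(X_{E,P}, \mr{ad}(\mc{E}_b) \otimes \mc{J}_i)$.

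The basic hypothesis now enters decisively: because the slope morphism of $[b]$ is central in $G$, it acts trivially on $\mf{g}=\Lie G$, so $\mr{ad}(\mc{E}_b) = \mf{g}_b$ is a trivial vector bundle of rank $\dim \mf{g}$ on $X_{E,P}$. The obstruction therefore lives in $H^1(X_{E,P}, \mc{J}_i)^{\oplus \dim \mf{g}}$, and since $\mc{J}_i$ has non-negative slopes at every geometric point, the Fargues--Fontaine vanishing underlying \cref{prop.BC-vanishing} (i.e. \cite[Proposition II.3.4-(ii)]{Fargues.GeometrizationOfTheLocalLanglandsCorrespondenceAnOverview}) implies that the $v$-sheafification of this $H^1$ is zero. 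Thus at each step the obstruction can be killed after a further $v$-cover; after the $N$ refinements I obtain the desired $v$-local isomorphism $\mc{E} \cong \mc{E}_b|_{\mc{X}}$.

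The main obstacle, and the \emph{raison d'\^{e}tre} of the slope condition $\lfp$ introduced in \cref{ss.slope-condition}, is precisely the need to force the associated graded pieces $\mc{J}_i$ to have non-negative slopes so that the required cohomology vanishing is available; without this the $H^1$'s controlling lifts can be arbitrarily large and a basic $[b]$ would fail to be deformation-theoretically open. A secondary technical point that will require care is verifying that the usual exact sequence $1 \to \mr{ad}(\mc{E}_b) \otimes \mc{J}_i \to G(\mc{O}_{\mc{X}_{i+1}}) \to G(\mc{O}_{\mc{X}_i}) \to 1$ of sheaves of groups on $|X_{E,P}|$ remains short exact $v$-locally, so that the standard long exact cohomology argument applies in the inscribed setting; this should follow from the $v$-local triviality of $G$-torsors provided by \cref{ss.descent-lemmas}.
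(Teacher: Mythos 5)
Your proposal is correct and follows essentially the same route as the paper's proof: part (1) by citing the known stratification results, and part (2) by inducting up the filtration by powers of the ideal $\mc{I}$, classifying the square-zero extensions by $H^1(X_{E,P}, \mf{g}_b \otimes \mc{I}^i/\mc{I}^{i+1})$, using the basic hypothesis to identify $\mf{g}_b$ with the trivial bundle $\mf{g}\otimes \mc{O}$, and killing the class $v$-locally via the non-negative-slope vanishing of \cite[Proposition II.3.4-(ii)]{Fargues.GeometrizationOfTheLocalLanglandsCorrespondenceAnOverview}. The technical point you flag about the short exact sequence of automorphism sheaves is exactly how the paper justifies the $H^1$-classification, so nothing is missing.
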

\begin{proof}
The first claim follows from \cref{remark.stratification-bunG} and \cite[Theorem 4.3]{HoweKlevdal.AdmissiblePairsAndpAdicHodgeStructuresIITheBiAnalyticAxLindemannTheorem}.

For the second case, we must show that, for $\mc{E}$ a $G$-bundle on $\mc{X}/X_{E,P}$, if $\mc{E}|_{X_{E,P}}$ is $v$-locally isomorphic to $\mc{E}_b$ then so is $\mc{E}$. Writing $\mc{X}_{(i)}$ for the thickening corresponding to $\mc{O}_{\mc{X}}/\mc{I}^{i+1}$, we can assume this holds for $\mc{E}_{\mc{X}_{(i-1)}}$ and then extend to $\mc{E}_{\mc{X}_{(i)}}$. Passing to a cover, we may assume $\mc{E}_{\mc{X}_{(i)}}=\mc{E}_{b}|_{\mc{X}_{(i)}}.$ Then, we claim the isomorphism class of such an extension is classified by an element of $H^1(X_{E,P}, \mf{g} \otimes \mc{I}^{i}/\mc{I}^{i+1})$ --- this follows because the automorphism group over $\mc{X}_{(i+1)}$ of $\mc{E}_b|_{\mc{X}_{(i)}}$ is an extension of the automorphism group of $\mc{E}_b|_{\mc{X}_{(i-1)}}$  
by $\mc{E}(\mf{g}_b) \otimes (\mc{I}^{i}/\mc{I}^{i+1})$ which, because of the basic hypothesis, is just $\mf{g} \otimes (\mc{I}^{i}/\mc{I}^{i+1})$. The $v$-local vanishing of this class then follows from  \cite[Proposition II.3.4-(ii)]{Fargues.GeometrizationOfTheLocalLanglandsCorrespondenceAnOverview} (as in \cref{prop.BC-vanishing}).
\end{proof}

\section{The $\mbb{B}^+_\dR$ affine Grassmannian}\label{s.affine-grassmannian}
In this section we define the inscribed $\mbb{B}^+_\dR$ affine Grassmannian associated to a connected linear algebraic group over a $p$-adic field and study its basic properties. In \cref{ss.bdrag-def-first-prop} we give the definition and establish its first properties, including the computation of its tangent bundle. In \cref{ss.bdrag-schubert-cells} we define its Schubert cells and compute their tangent bundles. In \cref{ss.bdrag-bialynicki-birula} we extend the definition of the Bialynicki-Birula map to the inscribed setting, and compute its derivative. Finally, in \cref{ss.bdrag-big-schubert-cells} we study the Schubert cells in $G(\mbb{B}_\dR)$ and their natural period maps. The main result of that subsection is \cref{theorem.SchubertCellDiagrams}, which can be viewed as a toy version of the more refined computations for moduli of modifications over the Fargues--Fontaine curve of \cref{theorem.bounded-structure} and \cref{corollary.bounded-mod-tangent}. 

\subsubsection{Notation}
We fix a $p$-adic field $L$ and work in the inscribed context $(\AffPerf/\Spd L, \Box_{(\infty)}^{\sharp-\alg})$ (which we can also interpret as working in the inscribed context $(\AffPerf/\Spd \mathbb{Q}_p, \Box_{(\infty)}^{\sharp-\alg})$ but with all objects over $\Spd L$). We write $\mbb{B}^+_\dR$ for the $\Box_{(\infty)}^{\sharp-\alg}$-inscribed $v$-sheaf $\mbb{B}$ of \cref{def.BB}, 
\[ \mbb{B}^+_\dR: \mc{B}/P^{\sharp-\alg}_{(\infty)} \mapsto \mc{O}(\mc{B}).\]
It is a sheaf of $L$-algebras by the natural map $\Box^{\sharp-\alg}_{(\infty)} \rightarrow \Spec L$ on $\Spd L$. We note that the functor $(\mc{B}/P_{(\infty)}^{\sharp-\alg}) \mapsto \mc{B}$ is naturally identified with $\Spec \mbb{B}^+_\dR$. 

 We write $\mbb{B}_\dR$ for the inscribed $v$-sheaf obtained by change of context \cref{ss.change-of-context} from the inscribed $v$-sheaf $\mathbb{B}$ with respect to $\Box_{(\infty)}^{\sharp-\alg}\backslash \Box^{\sharp-\alg}$, 
\[ \mbb{B}_\dR: \mc{B}/P^{\sharp-\alg}_{(\infty)} \mapsto \mc{O}\left(\mc{B} \times_{P^{\sharp-\alg}_{(\infty)}}(P^{\sharp-\alg}_{(\infty)} \backslash P^{\sharp-\alg})\right).\]
Note that $\mathbb{B}_\dR$ is naturally a $\mbb{B}^+_\dR$-algebra.

\subsection{Definition and first properties}\label{ss.bdrag-def-first-prop}

We want to define the $\mathbb{B}^+_\dR$-affine Grassmannian as a fiber product. Before doing so, we note that the fibered category $(\Spec \mathbb{B}_\dR)^* \BG$ is not covered by \cref{thm.BG-pull-back-inscribed} because we do not know if descent holds. However, we still have:

\begin{lemma}\label{lemma:BdR-BG-pullback-inscribed-prestack} Let $G/L$ be a connected linear algebraic group. Then $(\Spec \mbb{B}_\dR)^*\BG$ is an inscribed pre-stack. 
\end{lemma}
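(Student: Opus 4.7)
The plan is to reduce both assertions to the analogous properties for vector bundles on $\Spec \mbb{B}_\dR$ via Tannakian duality. The essential input is the pushout identity
\[ \mbb{B}_\dR(\mc{B}_1 \sqcup_{\mc{B}_0} \mc{B}_2) = \mbb{B}_\dR(\mc{B}_1) \times_{\mbb{B}_\dR(\mc{B}_0)} \mbb{B}_\dR(\mc{B}_2). \]
At the level of structure sheaves this holds by definition of the pushout of affine schemes; passing to $\mbb{B}_\dR$ amounts to inverting a fixed generator $\xi$ of $\ker\theta$, which commutes with the fiber product by exactness of localization. Since $\mc{O}(\mc{B}_i) \twoheadrightarrow \mc{O}(\mc{B}_0)$ is surjective and remains so after inverting $\xi$, Ferrand's theorem (\cite[Théorème~2.2-(iv)]{Ferrand.ConducteurDescentEtPincement}, applied exactly as in the proof of \cref{theorem.vb-inscribed-vstack}) yields an exact, tensor-compatible equivalence
\[ \VB(\Spec \mbb{B}_\dR(\mc{B}_1 \sqcup_{\mc{B}_0} \mc{B}_2)) \simeq \VB(\Spec \mbb{B}_\dR(\mc{B}_1)) \times_{\VB(\Spec \mbb{B}_\dR(\mc{B}_0))} \VB(\Spec \mbb{B}_\dR(\mc{B}_2)). \]

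For the inscribed property, I would then invoke Tannakian duality: a $G$-torsor on an $L$-scheme $S$ is the same datum as an exact $L$-linear tensor functor $\Rep_L G \to \VB(S)$. An exact tensor functor into a $2$-fiber product of vector bundle categories is precisely a pair of exact tensor functors into the factors together with a compatibility isomorphism over the base, so the inscribed property of $(\Spec \mbb{B}_\dR)^*\BG$ follows formally from the vector bundle equivalence above.

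For the prestack property, given two $G$-torsors with associated fiber functors $\omega_1, \omega_2$, the Isom-presheaf is the sub-presheaf of $\prod_{V \in \Rep_L G} \Isom(\omega_1(V), \omega_2(V))$ cut out by the tensor compatibility relations, i.e.\ an equalizer. Each individual Isom between locally free $\mbb{B}_\dR$-modules is a $v$-sheaf by \cref{lemma.inf-nbhds-stack}-(3), and the $v$-sheaf property is preserved under products and equalizers, so the total Isom-presheaf is a $v$-sheaf.

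The main obstacle is the ring-level pushout identity for $\mbb{B}_\dR$: one must check carefully that inverting $\xi$ really does commute with the fiber product of rings and that Ferrand's surjectivity hypothesis still holds after localization. Once this is in place, both properties are formal consequences of the corresponding facts for vector bundles combined with Tannakian reconstruction, the latter being classical for affine flat group schemes over $L$.
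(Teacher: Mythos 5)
Your proof is correct, but it takes a genuinely different route from the paper's, which disposes of both claims in two sentences. For the prestack property, the paper observes that $\mathcal{I}som(\mathcal{G}_1,\mathcal{G}_2)$ is a smooth affine scheme over $\Spec \mbb{B}_\dR$ and invokes the moduli-of-sections result \cref{theorem.affine-scheme-mos} directly, rather than decomposing the Isom presheaf Tannakianly into a limit of Hom presheaves of vector bundles; both work, but note that your appeal to \cref{lemma.inf-nbhds-stack}-(3) needs the (harmless) extension from $\mbb{B}_\dR(A)$-modules to $\mbb{B}_\dR(\mc{B})$-modules for $\mc{B}$ a thickening, since $\mbb{B}_\dR(\mc{B})$ is finite projective over $\mbb{B}_\dR(A)$, and that Isom sits inside Hom as the locus of invertible sections, which is a subsheaf because Hom is separated. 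For the inscribed property, the paper argues as in \cref{thm.BG-pull-back-inscribed}: the schemes $\Spec \mbb{B}_\dR(\mc{B}_\bullet)$ all share the same \'{e}tale site (the transition maps are nilpotent surjections), the sheaves of sections satisfy $G(\mbb{B}_\dR(\mc{B})) = G(\mbb{B}_\dR(\mc{B}_1)) \times_{G(\mbb{B}_\dR(\mc{B}_0))} G(\mbb{B}_\dR(\mc{B}_2))$, and torsors under a fiber product of sheaves of groups along a surjection are pairs of torsors glued over the base. Your route via Ferrand plus Tannakian reconstruction is equally valid --- the point you flag as the main obstacle, that inverting $\xi$ commutes with the fiber product of rings and preserves Ferrand's surjectivity hypothesis, does hold --- but it carries the extra bookkeeping of verifying that the Ferrand equivalence is compatible with tensor products and detects exactness in both directions (so that exact tensor functors into the $2$-fiber product really are the same as compatible pairs of exact tensor functors), which the paper's \'{e}tale-gluing argument sidesteps; in exchange, your argument is uniform with the vector-bundle case and avoids any appeal to the identification of \'{e}tale sites under nilpotent thickenings.
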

\begin{proof}
For any $\mc{B}/P^{\sharp-\alg}_{(\infty)}$ and $G$-torsors $\mathcal{G}_1$, $\mathcal{G}_2$ on $\mc{B}$, the presheaf of homomorphisms is the moduli of sections for the smooth affine scheme $\mathcal{I}som_{\mc{B}}(\mathcal{G}_1, \mathcal{G}_2)$. This is a $v$-sheaf by \cref{theorem.affine-scheme-mos}, so $\Spec(\mbb{B}_\dR)^*\BG$ is a pre-stack. 

It is inscribed by the same argument as in the proof of \cref{thm.BG-pull-back-inscribed}. 
\end{proof}

\begin{definition}For $L$ a $p$-adic field and $G/L$ connected linear algebraic group, $\Gr_G$ is the prestack on $(\Box_{(\infty)}^{\sharp-\alg})^\lf$ making the following diagram Cartesian
% https://q.uiver.app/#q=WzAsNCxbMCwxLCJcXFNwZCBMIl0sWzIsMSwiKFxcU3BlYyBcXG1hdGhiYntCfV9cXGRSKV4qIEJHIl0sWzIsMCwiKFxcU3BlYyBcXG1hdGhiYntCfV4rX1xcZFIpXiogQkciXSxbMCwwLCJcXEdyX0ciXSxbMCwxLCJcXG1je0V9X1xcdHJpdiJdLFsyLDFdLFszLDJdLFszLDBdXQ==
\[\begin{tikzcd}
	{\Gr_G} && {(\Spec \mathbb{B}^+_\dR)^* BG} \\
	{\Spd L} && {(\Spec \mathbb{B}_\dR)^* BG}
	\arrow[from=1-1, to=1-3]
	\arrow[from=1-1, to=2-1]
	\arrow[from=1-3, to=2-3]
	\arrow["{\mc{E}_\triv}", from=2-1, to=2-3]
\end{tikzcd}\]
where the right vertical arrow is restriction of $G$-bundles, $\Spd L$ is equipped with the trivial inscription (i.e. it is the final object), and $\mathcal{E}_\triv$ denotes the trivial $G$-bundle. In other words, $\Gr_G(\mathcal{B})$ classifies $G$-bundles on $\mathcal{B}=\Spec \mathbb{B}^+_\dR(\mc{B})$ equipped with a trivialization after restriction to $\Spec \mbb{B}_\dR(\mc{B})$. 
\end{definition}

Note that the automorphism group of any object in $\Gr_G$ is trivial, so that passing to isomorphism classes we can and do view it as a presheaf instead of as a fibered category. 
\begin{proposition}
$\Gr_G$ is an inscribed $v$-sheaf. 
\end{proposition}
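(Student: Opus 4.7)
The approach is to separate the verification into three pieces: (i) the inscribed property, (ii) the presheaf (as opposed to prestack) property, and (iii) $v$-descent. Pieces (i) and (ii) are essentially formal given what has been established; the content is in (iii), where we must compensate for the fact that $(\Spec \mathbb{B}_\dR)^*\BG$ is only known to be a prestack.

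First I would observe that $\Gr_G$, as a $2$-fiber product of inscribed fibered categories (namely $(\Spec \mathbb{B}^+_\dR)^*\BG$ from \cref{thm.BG-pull-back-inscribed}, the trivially inscribed $\Spd L$, and $(\Spec \mathbb{B}_\dR)^*\BG$ from \cref{lemma:BdR-BG-pullback-inscribed-prestack}), is itself an inscribed fibered category by \cref{lemma.inscribed-limits}. Since a $G$-torsor equipped with a trivialization (after restriction to $\Spec \mathbb{B}_\dR$) has no nontrivial automorphisms, $\Gr_G$ is in fact an inscribed presheaf, and so the prestack condition is trivial and it remains only to verify $v$-descent for objects.

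So let $\mathcal{B}'/P'^{\sharp-\alg}_{(\infty)} \to \mathcal{B}/P^{\sharp-\alg}_{(\infty)}$ be a $v$-cover induced from a $v$-cover $P' \to P$, and suppose $(\mathcal{G}', \tau') \in \Gr_G(\mathcal{B}')$ is equipped with descent data over $\mathcal{B}'' := \mathcal{B}' \times_{\mathcal{B}} \mathcal{B}'$. The plan is to descend the two pieces separately:
\begin{itemize}
\item The $G$-bundle $\mathcal{G}'$ on $\Spec \mathbb{B}^+_\dR(\mathcal{B}') = \mathcal{B}'$ with descent data descends to a $G$-bundle $\mathcal{G}$ on $\mathcal{B}$ by the $v$-stack property of $(\Spec \mathbb{B}^+_\dR)^*\BG$ from \cref{thm.BG-pull-back-inscribed}.
\item The trivialization $\tau'$ is a section of the $G$-torsor $\mathcal{G}|_{\Spec \mathbb{B}_\dR(\mathcal{B}')}$ over $\Spec \mathbb{B}_\dR(\mathcal{B}')$, and the compatibility in the definition of $\Gr_G$ ensures that after identification of the two restrictions of $\mathcal{G}$ to $\Spec \mathbb{B}_\dR(\mathcal{B}'')$ via the descent datum, $\tau'$ acquires descent data. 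Sections of the affine scheme $\mathcal{G}|_{\Spec \mathbb{B}_\dR}$ over $\Spec \mathbb{B}_\dR$ form a $v$-sheaf by applying \cref{theorem.affine-scheme-mos} in the inscribed context $(\Spd L, \Box^{\sharp-\alg}_{(\infty)}\backslash \Box^{\sharp-\alg})$ (via change of context as in \cref{ss.change-of-context}), so $\tau'$ descends uniquely to a trivialization $\tau$ of $\mathcal{G}|_{\Spec \mathbb{B}_\dR(\mathcal{B})}$.
\end{itemize}
The pair $(\mathcal{G}, \tau)$ defines the desired object of $\Gr_G(\mathcal{B})$, and its uniqueness (up to unique isomorphism) follows from the absence of nontrivial automorphisms.

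The only real point that requires care is the second bullet: the target $(\Spec \mathbb{B}_\dR)^*\BG$ is only an inscribed prestack, so one cannot simply appeal to stacky descent there. The trick is that once one has the underlying $G$-bundle $\mathcal{G}$ on $\mathcal{B}$, the remaining data of a trivialization on $\Spec \mathbb{B}_\dR$ is a section of an affine (torsor) scheme, and moduli of sections of affine schemes are $v$-sheaves by the algebraic moduli-of-sections result \cref{theorem.affine-scheme-mos}. This is precisely the additional rigidity afforded by the trivialization in the definition of $\Gr_G$.
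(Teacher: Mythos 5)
Your proof is correct and is essentially the paper's argument written out in full: the paper's one-line proof cites exactly \cref{thm.BG-pull-back-inscribed}, \cref{lemma:BdR-BG-pullback-inscribed-prestack}, and \cref{lemma.inscribed-limits}, relying on the standard fact that a $2$-fiber product of stacks over a prestack is a stack. Your second bullet (descending the trivialization as a section of an affine scheme via \cref{theorem.affine-scheme-mos}) is precisely the content of the prestack lemma, re-derived inline rather than cited.
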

\begin{proof}
It follows from \cref{thm.BG-pull-back-inscribed} and \cref{lemma:BdR-BG-pullback-inscribed-prestack} that it is a $v$-sheaf, and it follows from these results combined with \cref{lemma.inscribed-limits} that it is inscribed. 
\end{proof}

There is a natural action of $G(\mathbb{B}_\dR)=\mathcal{A}ut(\mc{E}_\triv)$ on $\Gr_G$ by changing the trivialization. There is also a canonical point $\ast_1: \Spd L \rightarrow \Gr_G$ given by $\mathcal{E}_\triv \times_{\Id} \mathcal{E}_\triv$, i.e. by the trivial bundle with on $\Spec \mbb{B}^+_\dR$ with its canonical trivialization after restriction to $\Spec \mbb{B}_\dR$. 

\begin{proposition}\label{prop.Bdr-aff-grass-transitive}
The action of $G(\mbb{B}_\dR)$ on $\Gr_G$ is transitive in the \'{e}tale topology. In particular, the orbit map for $\ast_1$ induces an identification 
\[ \Gr_G = G(\mbb{B}_\dR)/G(\mbb{B}^+_\dR), \]
where the quotient can be formed in either the \'{e}tale or $v$-topology. 
\end{proposition}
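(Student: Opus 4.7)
The plan is to prove the statement in three stages: identify the stabilizer of $\ast_1$ with $G(\mathbb{B}^+_\dR)$ to get an injection from the quotient, establish surjectivity of the orbit map in the étale topology, and finally observe that the identification of quotients in both topologies comes for free.

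First, I would verify the stabilizer computation. An element $g \in G(\mathbb{B}_\dR(\mc{B}))$ fixes $\ast_1 \in \Gr_G(\mc{B})$ precisely when the automorphism it induces on $\mc{E}_\triv$ over $\Spec \mathbb{B}_\dR(\mc{B})$ extends to an automorphism of $\mc{E}_\triv$ over $\Spec \mathbb{B}^+_\dR(\mc{B})$, equivalently when $g \in G(\mathbb{B}^+_\dR(\mc{B}))$. This yields an injection of inscribed sheaves $G(\mathbb{B}_\dR)/G(\mathbb{B}^+_\dR) \hookrightarrow \Gr_G$, and étale-local transitivity amounts to the surjectivity of this map.

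Next I would reduce étale-local surjectivity to a trivialization statement for $G$-bundles, and then reduce the inscribed case to the non-inscribed one. A section of $\Gr_G(\mc{B})$ lies étale-locally on $P$ in the orbit of $\ast_1$ iff the underlying $G$-bundle $\mc{G}$ on $\Spec \mathbb{B}^+_\dR(\mc{B})$ is étale-locally (on $P$) trivial. Write $B^+ = \mathbb{B}^+_\dR(P^{\sharp-\alg}_{(\infty)})$ and $R = \mathbb{B}^+_\dR(\mc{B})$, so that $R$ is a finite locally free augmented $B^+$-algebra with nilpotent augmentation ideal $I$. Pulling back to $\Spec B^+$, the restriction $\mc{G}|_{\Spec B^+}$ is étale-locally trivial on $P$ by the known non-inscribed identification $\Gr_G^\diamond = G(\mathbb{B}_\dR)^\diamond/G(\mathbb{B}^+_\dR)^\diamond$ of \cite{ScholzeWeinstein.BerkeleyLecturesOnPAdicGeometryAMS207}. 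I would then extend the chosen trivialization step-by-step along the filtration $\Spec(R/I^{k+1}) \hookleftarrow \Spec(R/I^{k})$, using that $G$ is smooth and affine so that torsors deform freely along nilpotent thickenings of affine schemes — concretely, after trivializing $\mc{G}|_{\Spec(R/I^{k})}$ the obstruction to lifting is a class in Čech $H^1$ on $\Spec(R/I^{k+1})$ with values in $\Lie G \otimes I^k/I^{k+1}$, and this class vanishes after further étale localization on $P$ (in fact it already vanishes on the affine $\Spec R/I^{k+1}$ because the torsor we are lifting is trivial).

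The main obstacle is keeping careful track, in the nilpotent lifting step, of the fact that we can arrange a single étale cover of $P$ over which all iterated lifts are simultaneously effective; this is ensured by the fact that, having passed to the étale cover trivializing $\mc{G}|_{\Spec B^+}$, the remaining obstructions are already trivial rather than merely $v$-locally trivial, so no further refinement of the cover is needed. Once surjectivity of the orbit map is established in the étale topology, the orbit map is a fortiori $v$-surjective, so the induced map from the $v$-quotient is an isomorphism; but since $\Gr_G$ is already a $v$-sheaf, the étale and $v$ quotients agree with $\Gr_G$ as inscribed $v$-sheaves, completing the proof.
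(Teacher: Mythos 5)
Your proposal is correct and follows essentially the same route as the paper: the paper likewise reduces the claim to showing that any $G$-torsor on a thickening $\mc{B}/P^{\sharp-\alg}_{(\infty)}$ is trivial after an \'{e}tale cover of $P$, and then invokes the argument of \cite[Proposition 19.1.2]{ScholzeWeinstein.BerkeleyLecturesOnPAdicGeometryAMS207}. Your write-up simply makes explicit the nilpotent-lifting step (trivialize on $\Spec \mathbb{B}^+_\dR$ by the non-inscribed case, then lift the section through the augmentation-ideal filtration using smoothness of $G$ and affineness) that the paper delegates to that citation.
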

\begin{proof}
The first claim follows from the second, so it suffices to show that any $G$-torsor on $\mathcal{B}/P^{\sharp-\alg}_{(\infty)}$ is trivial after base change to $P'^{\sharp-\alg}_{(\infty)}$ for an \'{e}tale cover $P' \rightarrow P$. The proof is then exactly as in \cite[Proposition 19.1.2]{ScholzeWeinstein.BerkeleyLecturesOnPAdicGeometryAMS207} (see also \cite[Proposition 2.1]{CesnaviciusYoucis.TheAnalyticTopologySufficesForTheBplusDrGrassmannian} and \cite[Proposition 2.2.2]{Howe.TransitivityOfTheB+dRLoopGroupActionOnSchubertCells}). 
\end{proof}

We will use this transitivity to compute the tangent bundle of $\Gr_G$. Before making this computation, we introduce some notation. 

\begin{definition}\label{def.VplusUnivOnGrG}
    For $G/L$ a connected linear algebraic group and $V \in \Rep_{G}(L)$, 
    \begin{enumerate} 
    \item Let $V^+_\univ$ denote the sheaf of $\mbb{B}^+_\dR$-modules on $\Gr_G$ defined as follows: to give a $\mc{B}$-point of $\Gr_G$ is to give a $G$-torsor $\mc{E}$ on $\mc{B}$ with a trivialization after restriction to $\mc{B} \times_{P_{(\infty)}^{\sharp-\alg}}P_{(\infty)}^{\sharp-\alg}\backslash P^{\sharp-\alg}=\Spec \mbb{B}_\dR(\mc{B})$. To such a point, we associate the projective $\mbb{B}^+_\dR(\mc{B})$-module 
\[ \mc{E}(V)=H^0(\mc{B}, \mc{E} \times^{G} (V \otimes \mc{O}_{\mc{B}})). \]
    \item Let $\varphi_{\univ}: V^+_\univ \otimes_{\mbb{B}^+_\dR} \mbb{B}_\dR \xrightarrow{\sim} V \otimes_{L} \mbb{B}_\dR$ send a point as above to the trivialization $\mc{E}(V) \otimes_{\mbb{B}^+_\dR(\mc{B})} \mbb{B}_\dR(\mc{B}) \rightarrow V \otimes_L \mbb{B}_\dR(\mc{B})$. 
    \end{enumerate}
\end{definition}

Recall that in \cref{ss.inscribed-groups} we have defined, for any group action $a$ of an inscribed group $\mc{G}$ on an inscribed $v$-sheaf $\mc{S}$ its derivative at the identity element $da_e: \Lie \mc{G} \rightarrow T_{\mc{S}}$. In the following, we identify $\Lie (G(\mbb{B}_\dR))=\mf{g}\otimes_L \mbb{B}_\dR$, where $\mf{g}=\Lie G(L)$. 

\begin{corollary}\label{corollary.tangent-bundle-affine-grassmannian}
For $a: G(\mbb{B}_\dR) \times_{\Spd L} \Gr_G \rightarrow \Gr_G$ the action map, the derivative $da_e: \mf{g} \otimes_L \mbb{B}_\dR \rightarrow T_{\Gr_G}$ at the identity section $e$ of $G(\mbb{B}_\dR)$ is a surjection of inscribed $\mbb{B}^+_\dR$-modules over $\Gr_G$ with kernel $\varphi_\univ(\mf{g}^+_{\univ})$. It induces a canonical identification of inscribed $\mbb{B}^+_\dR$-modules over $\Gr_G$
\[ T_{\Gr_G} = \left(\mf{g} \otimes \mbb{B}_\dR\right) / \mf{g}^+_{\univ} = (\mf{g}^+_\univ \otimes_{\mbb{B}^+_\dR} \mbb{B}_\dR)/\mf{g}^+_{\univ}=\mf{g}^+_{\univ} \otimes_{\mbb{B}^+_\dR} (\mbb{B}_\dR/\mbb{B}^+_\dR) \]
where here we use $\varphi_\univ$ to identify the $\mbb{B}_\dR$-modules over $\Gr_G$
\[ \mf{g}\otimes\mbb{B}_\dR=\mf{g}^+_\univ \otimes_{\mbb{B}^+_\dR} \mbb{B}_\dR. \]
\end{corollary}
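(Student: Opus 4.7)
The plan is to present $\Gr_G$ as an étale-local quotient via \cref{prop.Bdr-aff-grass-transitive}, compute the tangent bundle on the cover $G(\mbb{B}_\dR)$, and then match the resulting sub-lattice with $\varphi_\univ(\mf{g}^+_\univ)$ by a pointwise computation at each $g \cdot \ast_1$.

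Concretely, I would apply \cref{prop.inscribed-quotient} to the faithful right-multiplication action of $G(\mbb{B}^+_\dR)$ on $G(\mbb{B}_\dR)$ over $\Spd L$. Writing $\pi : G(\mbb{B}_\dR) \to \Gr_G$ for the quotient map, this gives $\pi^* T_{\Gr_G/\Spd L} = \coker(da^R_e)$, where $da^R_e : \mf{g} \otimes \mbb{B}^+_\dR \to T_{G(\mbb{B}_\dR)}$ is the derivative of the right action at the identity of $G(\mbb{B}^+_\dR)$. Since $G$ is a linear algebraic group over $L$, $T_G = G \times_L \mf{g}$ via left-invariant vector fields, and functoriality of tangent bundles for the moduli-of-sections construction (\cref{theorem.affine-scheme-mos}, together with \cref{ss.change-of-context}) inscribes this to a global trivialization $T_{G(\mbb{B}_\dR)} \cong G(\mbb{B}_\dR) \times (\mf{g}\otimes \mbb{B}_\dR)$. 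In these coordinates $da^R_e$ is the constant inclusion $\mf{g}\otimes \mbb{B}^+_\dR \hookrightarrow \mf{g}\otimes \mbb{B}_\dR$ — right multiplication by $h$ is $\Ad(h^{-1})$ on left-invariant vector fields, which is the identity at $h=e$ — so
\[
\pi^* T_{\Gr_G} \;\cong\; \mf{g}\otimes_{\mbb{B}^+_\dR}\!(\mbb{B}_\dR/\mbb{B}^+_\dR).
\]

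Surjectivity of $da_e$ on $\Gr_G$ follows because $\pi^*da_e$ agrees (up to the invertible map $dR_g|_e$ at each $g$) with the surjection above, and $\pi$ is étale-locally a cover; in particular $\ker da_e \subseteq \mf{g}\otimes \mbb{B}_\dR$ is a $\mbb{B}^+_\dR$-lattice. To identify it with $\varphi_\univ(\mf{g}^+_\univ)$, which is also a $\mbb{B}^+_\dR$-lattice, I would check equality after pullback along $\pi$. At $g \in G(\mbb{B}_\dR)$ with $\pi(g) = g \cdot \ast_1$, the orbit map $h \mapsto h \cdot (g\cdot \ast_1) = \pi(hg)$ factors as $\pi \circ R_g$, so $\ker(da_e|_{g \cdot \ast_1}) = dR_g|_e(\ker d\pi|_g) = \Ad(g)(\mf{g}\otimes \mbb{B}^+_\dR)$ in left-invariant coordinates. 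Unpacking \cref{def.VplusUnivOnGrG} at the point $g \cdot \ast_1 = (\mc{E}_\triv, g)$ identifies $\mf{g}^+_\univ$ with $\mf{g}\otimes \mbb{B}^+_\dR$ and $\varphi_\univ$ with $\Ad(g)$ (via the canonical identification $[h,X] \leftrightarrow h\cdot X$ of $\mc{E}_\triv \times^G \mf{g}$ with $\mf{g}$), so the two lattices match. The three equivalent forms of $T_{\Gr_G}$ in the statement then follow formally from the short exact sequence $0 \to \varphi_\univ(\mf{g}^+_\univ) \to \mf{g}\otimes\mbb{B}_\dR \to T_{\Gr_G} \to 0$ and the isomorphism $\varphi_\univ : \mf{g}^+_\univ \otimes_{\mbb{B}^+_\dR}\mbb{B}_\dR \cong \mf{g}\otimes\mbb{B}_\dR$.

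The main obstacle I expect will be careful bookkeeping of conventions — left- versus right-invariant trivialization of $T_{G(\mbb{B}_\dR)}$, left versus right action of $G(\mbb{B}^+_\dR)$, and the exact sign in $\Ad$ in the computation of $\varphi_\univ$ — to ensure that the lattice match at each $g \cdot \ast_1$ is literal rather than off by an $\Ad(g^{-1})$ twist.
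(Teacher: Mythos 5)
Your proposal is correct and shares the paper's core strategy --- surjectivity from transitivity of the $G(\mbb{B}_\dR)$-action, then identification of the kernel with the Lie algebra of the stabilizer --- but it executes the kernel step by a different route. The paper works intrinsically over $\Gr_G$: it writes the stabilizer of $(\mathcal{E},\varphi)$ as $\varphi^{-1}\circ\Aut(\mathcal{E})\circ\varphi$, observes that $(\mathcal{E},\varphi)\mapsto\Aut(\mathcal{E})$ is the moduli of sections of $\mathcal{E}\times^G G$ (adjoint action) over $\Spec\mbb{B}^+_\dR$, and reads off $\Lie\ul{\Aut}(\mathcal{E}_\univ)=\mf{g}^+_\univ$ directly from \cref{theorem.affine-scheme-mos}; this never passes to a cover and avoids the left/right-invariant bookkeeping you flag at the end. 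You instead invoke \cref{prop.inscribed-quotient} for the right $G(\mbb{B}^+_\dR)$-action on $G(\mbb{B}_\dR)$ and match the two $\mbb{B}^+_\dR$-lattices after pullback along $\pi$ by the explicit $\Ad(g)$ computation at $g\cdot\ast_1$; this is legitimate (both are sub-$v$-sheaves of $\mf{g}\otimes\mbb{B}_\dR$, and by transitivity $\pi$ admits sections étale-locally, so equality may be checked on the cover), and it makes the answer concrete, at the cost of exactly the sign/convention care you anticipate. One small slip: with the orbit map factored as $\pi\circ R_g$ you have $\ker(do_x|_e)=(dR_g|_e)^{-1}(\ker d\pi|_g)$, not $dR_g|_e(\ker d\pi|_g)$; since $dR_g|_e=\Ad(g^{-1})$ in left-invariant coordinates, the inverse gives $\Ad(g)(\mf{g}\otimes\mbb{B}^+_\dR)$, which is the answer you state, so the conclusion is unaffected.
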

\begin{proof}
From the transitivity of the action of $G(\mathbb{B}_\dR)$ on $\Gr_G$ established in \cref{prop.Bdr-aff-grass-transitive}, we find $da_e$ is surjective. It remains to compute its kernel.

The stabilizer of a point $(\mathcal{E}, \varphi: \mathcal{E}|_{\Spec \mathbb{B}_\dR} \cong \mathcal{E}_\triv)$ in $G(\mathbb{B}_\dR)$ is 
\begin{equation}\label{eq.stabilizer-conj} \varphi^{-1} \circ \Aut(\mathcal{E}) \circ \varphi \subseteq \Aut(\mathcal{E}_\triv)=G(\mathbb{B}_\dR). \end{equation}

Note that $\ul{\Aut}({\mc{E}_\univ}): (\mathcal{E}, \varphi) \mapsto \Aut(\mathcal{E})$ is the moduli of sections of the smooth affine scheme over $\mc{B}=\Spec \mbb{B}^+_\dR$ on $\Gr_G$, $(\mathcal{E}, \varphi) \mapsto \mc{A}ut_{\mc{B}}(\mathcal{E})$. It is thus an inscribed $v$-sheaf over $\Gr_G$ by \cref{theorem.affine-scheme-mos}. Since $\mc{A}ut_{\mc{B}}(\mathcal{E})$ is naturally identified with $\mathcal{E} \times^G G$ where $G$ on the right is equipped with the adjoint action, it follows from \cref{theorem.affine-scheme-mos} that $\Lie \ul{\Aut}({\mc{E}_\univ}) = \mf{g}^+_\univ$. Using the identification $\Lie (G(\mathbb{B}_\dR))=\mf{g} \otimes \mbb{B}_\dR$, \cref{eq.stabilizer-conj} identifies the tangent space of the stabilizer with $\varphi_{\univ}(\mf{g}^+_\univ)$, giving the result. 
\end{proof}

\subsection{Schubert cells in the $B^+_\dR$-affine Grassmannian}\label{ss.bdrag-schubert-cells}

Let $G/L$ be a connected linear algebraic group, and let $[\mu]$ be a conjugacy class of cocharacters of $G_{\overline{L}}$. For $\mu \in [\mu]$, we write $L(\mu) \subseteq \overline{L}$ for the field of definition of $\mu$. We write $L([\mu]) \subseteq \overline{L}$ for the field of definition of $[\mu]$, i.e. the fixed field of the stabilizer of $[\mu]$ in $\Gal(\overline{L}/L)$. For any $\mu \in [\mu]$, we obtain a point $\ast_{\mu}: \Spd L(\mu) \rightarrow \Gr_G$ whose value on any $\mc{B}$ is $\xi^{\mu} \cdot \ast_1$ where $\xi$ is any generator of $\Fil^1 \mathbb{B}^+_\dR(\mc{B})$. This is well defined because, given another generator $\xi'$, $\xi^{-\mu}  (\xi')^{\mu} =(\xi'/\xi)^\mu \in G(\mathbb{B}^+_\dR)$. 

Because the elements of $[\mu]$ are conjugate over $\overline{L}$, we find
\begin{lemma} 
The $v$-sheaf image of $G(\mbb{B}^+_\dR) \cdot \ast_{\mu} \subseteq \Gr_G \times \Spd L(\mu)$ in $\Gr_{G} \times \Spd L([\mu])$ is independent of the choice of $\mu \in [\mu]$. 
\end{lemma}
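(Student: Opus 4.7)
The plan is to reduce, by passing to a $v$-cover, to a situation where both $\mu$ and $\mu'$ are defined over the base field together with an element $g \in G$ conjugating them; there the two $G(\mathbb{B}^+_\dR)$-orbits coincide on the nose, and $v$-descent then gives the claim over $\Spd L([\mu])$.

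\emph{Step 1 (choice of field).} Fix $\mu,\mu'\in[\mu]$. Choose a complete non-archimedean field $L'/L([\mu])$ containing $L(\mu)$ and $L(\mu')$ together with an element $g\in G(L')$ satisfying $\mu'=\mathrm{Int}(g)\circ\mu$; one may take $L'=\overline{L}^\wedge$. Then $\Spd L'\to \Spd L(\mu),\, \Spd L(\mu'),\, \Spd L([\mu])$ are all $v$-covers.

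\emph{Step 2 (pointwise identification over $L'$).} For any $\mc{B}/P^{\sharp-\alg}_{(\infty)}$ with $P/\Spd L'$ and any generator $\xi\in\Fil^1\mathbb{B}^+_\dR(\mc{B})$,
\[
\ast_{\mu'}(\mc{B}) \;=\; \xi^{\mu'}\cdot \ast_1 \;=\; (g\,\xi^{\mu}\,g^{-1})\cdot\ast_1 \;=\; g\cdot \ast_\mu(\mc{B}),
\]
where the last equality uses that $g^{-1}\in G(L')\subseteq G(\mathbb{B}^+_\dR(\mc{B}))$ stabilizes the canonical point $\ast_1=(\mathcal{E}_{\mathrm{triv}},\mathrm{id})$. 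Since $g\in G(L')\subseteq G(\mathbb{B}^+_\dR)$, this yields an equality of sub-$v$-sheaves of $\Gr_G\times\Spd L'$,
\[
G(\mathbb{B}^+_\dR)\cdot \ast_\mu \;=\; G(\mathbb{B}^+_\dR)\cdot g\cdot \ast_\mu \;=\; G(\mathbb{B}^+_\dR)\cdot\ast_{\mu'}.
\]

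\emph{Step 3 ($v$-descent).} Write $Y_\mu$ (resp.\ $Y_{\mu'}$) for the $v$-sheaf image of $G(\mathbb{B}^+_\dR)\cdot\ast_\mu\subseteq \Gr_G\times\Spd L(\mu)$ (resp.\ of $G(\mathbb{B}^+_\dR)\cdot\ast_{\mu'}\subseteq \Gr_G\times\Spd L(\mu')$) inside $\Gr_G\times\Spd L([\mu])$. Pulling back along $\Spd L'\to\Spd L([\mu])$ and using compatibility of $v$-sheaf images with $v$-base change, we identify both pullbacks with $G(\mathbb{B}^+_\dR)\cdot\ast_\mu = G(\mathbb{B}^+_\dR)\cdot\ast_{\mu'}$ of Step~2. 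Since $\Spd L'\to \Spd L([\mu])$ is a $v$-cover and sub-$v$-sheaves of a $v$-sheaf are determined by their pullback along a $v$-cover, we conclude $Y_\mu = Y_{\mu'}$.

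\emph{Main obstacle.} The argument is essentially formal, and the only point requiring real care is the computation in Step~2: one must recognize that the conjugating element $g$, being an $L'$-point of $G$, lies in $G(\mathbb{B}^+_\dR)$ and therefore (i)~acts trivially on $\ast_1$, and (ii)~moves $G(\mathbb{B}^+_\dR)\cdot\ast_\mu$ to itself. Everything else is standard manipulation of $v$-sheaf images and $v$-covers.
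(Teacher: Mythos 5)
Your proof is correct and is essentially the argument the paper has in mind: the paper disposes of this lemma with the single observation that the elements of $[\mu]$ are conjugate over $\overline{L}$, and your Steps 1--3 are the natural spelling-out of that observation (conjugator $g\in G(L')\subseteq G(\mathbb{B}^+_\dR)$ fixes $\ast_1$ and identifies the two orbits, then descend along the $v$-cover $\Spd L'\to\Spd L([\mu])$). The only nitpick is in Step 3: $\Spd L(\mu)\times_{\Spd L([\mu])}\Spd L'$ is a disjoint union of copies of $\Spd L'$ indexed by embeddings, so the pullback of the source is a union of orbits $G(\mathbb{B}^+_\dR)\cdot\ast_{\mu^\sigma}$ over the Galois conjugates $\mu^\sigma\in[\mu]$ --- but these all coincide by your Step 2, so the conclusion stands.
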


\begin{definition}
Let $\Gr_{[\mu]} \subseteq \Gr_G \times \Spd L([\mu])$ be the $v$-sheaf image of $G(\mbb{B}^+_\dR)\cdot \ast_{\mu} \subseteq \Gr_G \times \Spd L(\mu)$ in $\Gr_G \times \Spd L([\mu])$ for any choice of $\mu \in [\mu].$
\end{definition}

\begin{proposition}\label{prop.schubert-cell}
 Let $G/L$ be a connected linear algebraic group. The action of $G(\mbb{B}^+_\dR)$ on $\Gr_{[\mu]}$ is transitive in the $v$-topology and $\Gr_{[\mu]}$ is inscribed. Moreover, the derivative
\[ d\iota_{[\mu]}: T_{\Gr_{[\mu]}} \rightarrow  \iota_{[\mu]}^* T_{\Gr_G} \]
of the inclusion map
\[ \iota_{[\mu]}: \Gr_{[\mu]} \hookrightarrow \Gr_{G} \times_{\Spd L} \Spd L([\mu]) \]
induces, under the identification of \cref{corollary.tangent-bundle-affine-grassmannian}, an isomorphism 
\[ T_{\Gr_{[\mu]}}=\mf{g}\otimes_L \mbb{B}^+_\dR / \left(\mf{g}\otimes_L \mbb{B}^+_\dR \cap \mf{g}^+_\univ\right) = \left(\mf{g}\otimes_L \mbb{B}^+_\dR + \mf{g}^+_\univ\right) / \mf{g}^+_\univ  \]
such that the natural quotient map from $\mf{g} \otimes_L \mbb{B}^+_\dR$ is the derivative $da_e$ at the identity section of the action map $a: G(\mathbb{B}^+_\dR) \times_{\Spd L([\mu])} \Gr_{[\mu]} \rightarrow \Gr_{[\mu]}$. Moreover, both
\[ \mf{g}^+_\mr{min} := \mf{g} \otimes_L \mbb{B}^+_\dR \cap \mf{g}^+_\univ \textrm{ and } \mf{g}^+_\mr{max}:=\mf{g} \otimes_L \mbb{B}^+_\dR + \mf{g}^+_\univ\]
are locally free $\mbb{B}^+_\dR$-modules over $\Gr_{[\mu]}$ whose values on any test object $\mc{B}$ can be formed as a literal intersection/sum of $\mbb{B}^+_\dR(\mc{B})$-submodules in $\mf{g}\otimes_L\mbb{B}_\dR(\mc{B})=\mf{g}^+_\univ(\mc{B}) \otimes_{\mbb{B}^+_\dR(\mc{B})}\mbb{B}_\dR(\mc{B})$. 
\end{proposition}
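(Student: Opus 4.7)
The plan is to identify $\Gr_{[\mu]} \times_{\Spd L([\mu])} \Spd L(\mu)$ with an inscribed $v$-quotient $G(\mbb{B}^+_\dR)/P_\mu$ via \cref{prop.inscribed-quotient}, read off the tangent bundle from that quotient presentation, and finally establish local freeness of $\mf{g}^+_\mr{min}$ and $\mf{g}^+_\mr{max}$ by a fiberwise analysis using the weight decomposition of $\mf{g}$ under $\mu$.

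First, I would fix a generator $\xi$ of $\Fil^1 \mbb{B}^+_\dR$, so that $\ast_\mu = \xi^\mu \cdot \ast_1$, and describe the stabilizer of $\ast_\mu$ in $G(\mbb{B}^+_\dR)$ as
\[ P_\mu = G(\mbb{B}^+_\dR) \cap \Ad(\xi^\mu)\bigl(G(\mbb{B}^+_\dR)\bigr) \subseteq G(\mbb{B}_\dR). \]
Both $G(\mbb{B}^+_\dR)$ and $P_\mu$ are inscribed $v$-sheaves: the first by \cref{theorem.affine-scheme-mos}, the second as the fiber product of $G(\mbb{B}^+_\dR)$ with its $\Ad(\xi^\mu)$-translate over $G(\mbb{B}_\dR)$, which is formed levelwise since $\mbb{B}^+_\dR$ and $\mbb{B}_\dR$ are themselves inscribed $v$-sheaves. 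By definition $\Gr_{[\mu]} \times \Spd L(\mu)$ is the $v$-image of the $P_\mu$-equivariant orbit map $G(\mbb{B}^+_\dR) \rightarrow \Gr_G$, so transitivity holds in the $v$-topology. Applying \cref{prop.inscribed-quotient} identifies this $v$-image with the inscribed quotient $G(\mbb{B}^+_\dR)/P_\mu$, and inscribedness descends to $\Spd L([\mu])$ by \cref{lemma.inscribed-limits} applied to a Galois cover that splits $[\mu]$.

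Next, the tangent bundle is computed by \cref{prop.inscribed-quotient}:
\[ T_{\Gr_{[\mu]}} = \mf{g} \otimes \mbb{B}^+_\dR \big/ \Lie P_\mu, \]
with quotient map equal to $da_e$. To identify $\Lie P_\mu$, I would use that by construction $d\iota_{[\mu]} \circ da_e$ for the $G(\mbb{B}^+_\dR)$-action equals the restriction of $da_e$ for the $G(\mbb{B}_\dR)$-action on $\Gr_G$ to $\mf{g} \otimes \mbb{B}^+_\dR$, which by \cref{corollary.tangent-bundle-affine-grassmannian} has kernel $\mf{g} \otimes \mbb{B}^+_\dR \cap \mf{g}^+_\univ = \mf{g}^+_\mr{min}$; uniqueness of the kernel yields $\Lie P_\mu = \mf{g}^+_\mr{min}$. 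The second isomorphism $\mf{g} \otimes \mbb{B}^+_\dR/\mf{g}^+_\mr{min} \cong \mf{g}^+_\mr{max}/\mf{g}^+_\univ$ is then the second isomorphism theorem applied to the submodules $\mf{g} \otimes \mbb{B}^+_\dR$ and $\mf{g}^+_\univ$ of $\mf{g} \otimes \mbb{B}_\dR$.

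Finally, for the locally free property, I would first work at a geometric point $\Spa(C, C^+)$, where $\mbb{B}^+_\dR(C)$ is a complete DVR with uniformizer $\xi$. Passing $v$-locally we may assume the $C$-point is $\ast_\mu$, so $\mf{g}^+_\univ(C) = \Ad(\xi^\mu)(\mf{g} \otimes \mbb{B}^+_\dR(C))$. Using the weight decomposition $\mf{g} = \bigoplus_i \mf{g}^{(i)}$ under $\mu$, this gives
\[ \mf{g}^+_\univ(C) = \bigoplus_i \xi^i \, \mf{g}^{(i)} \otimes \mbb{B}^+_\dR(C), \]
and both the intersection and sum with $\mf{g} \otimes \mbb{B}^+_\dR(C)$ decompose weight-by-weight into free $\mbb{B}^+_\dR(C)$-modules of ranks determined by $\mu$. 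For a general test object $\mc{B}$, I would realize $\mf{g}^+_\mr{max}$ as the image of the $\mbb{B}^+_\dR$-linear map $\mf{g}^+_\univ \oplus (\mf{g} \otimes \mbb{B}^+_\dR) \rightarrow \mf{g} \otimes \mbb{B}_\dR$ and $\mf{g}^+_\mr{min}$ as the kernel of $\mf{g} \otimes \mbb{B}^+_\dR \twoheadrightarrow \mf{g}^+_\mr{max}/\mf{g}^+_\univ$; since the fiberwise ranks are constant on $\Gr_{[\mu]}$ by the above, both are locally free. The main obstacle I anticipate is verifying that these sheaf-theoretic constructions coincide with literal intersections/sums of sub-$\mbb{B}^+_\dR(\mc{B})$-modules on each test object. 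I expect to handle this by reducing to the geometric-point computation via $v$-descent for thickenings (\cref{lemma.thickenings-v-descent}), together with the observation that forming kernels and images of $\mbb{B}^+_\dR$-linear maps between locally free modules of fiberwise constant rank commutes with $v$-localization.
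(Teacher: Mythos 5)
Your treatment of transitivity, inscribedness, and the tangent bundle is essentially the paper's argument: reduce to $L=L(\mu)$, present $\Gr_{[\mu]}$ as $G(\mbb{B}^+_\dR)/\mathrm{Stab}(\ast_\mu)$, apply \cref{lemma.inscribed-limits} and \cref{prop.inscribed-quotient}, and identify $\Lie \mathrm{Stab}(\ast_\mu)$ with $\mf{g}^+_\mr{min}$ by comparing $d\iota_{[\mu]}\circ da_e$ with \cref{corollary.tangent-bundle-affine-grassmannian}. That part is fine.

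The gap is in the last step. First, over a general test object $\mc{B}$ the ring $\mbb{B}^+_\dR(\mc{B})$ is large and non-Noetherian, and ``constant fiberwise rank'' is not a criterion for local freeness there. The correct globalization of your weight-space computation is not via ranks but via transitivity itself: $v$-locally on an \emph{arbitrary} test object (not only at geometric points) one has $\mf{g}^+_\univ=\Ad(g\xi^\mu)(\mf{g}\otimes_L\mbb{B}^+_\dR)$ for some $g\in G(\mbb{B}^+_\dR)(\mc{B})$, so that $\mf{g}^+_\mr{min}$ and $\mf{g}^+_\mr{max}$ become literally free by exactly your weight decomposition conjugated by $\Ad(g)$; local freeness then descends by the $v$-stack property of locally free $\mbb{B}^+_\dR$-modules (\cref{lemma.thickenings-v-descent}). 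Second, your closing ``observation'' that forming images of $\mbb{B}^+_\dR$-linear maps commutes with $v$-localization is precisely the statement that has to be proved for $\mf{g}^+_\mr{max}$, and it is not automatic: the presheaf image ($=$ the literal sum) need not a priori coincide with the sheaf image. For $\mf{g}^+_\mr{min}$ there is no issue, since taking sections is left exact and a kernel/intersection is always formed literally. For $\mf{g}^+_\mr{max}$ the paper's mechanism is the exact sequence
\[ 0 \rightarrow \mf{g}^+_{\mr{min}} \rightarrow \mf{g}^+_{\univ} \oplus \mf{g}\otimes_L \mbb{B}^+_\dR \rightarrow \mf{g}^+_{\mr{max}} \rightarrow 0 \]
together with the vanishing of $H^1_v$ of the locally free module $\mf{g}^+_\mr{min}$ (the second part of \cref{lemma.thickenings-v-descent}): this gives surjectivity on $\mc{B}$-sections, hence that the sheaf-theoretic sum agrees with the literal sum of submodules of $\mf{g}\otimes_L\mbb{B}_\dR(\mc{B})$. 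Without this cohomological input your argument for the ``literal sum'' claim does not close.
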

\begin{proof}
For the first part, we can reduce to the case $L=L(\mu)$ for some $\mu \in [\mu]$, so that 
\[ \Gr_{[\mu]}=G(\mathbb{B}^+_\dR) \cdot \ast_{\mu} = G(\mathbb{B}^+_\dR)/\mathrm{Stab}(\ast_{\mu}). \]
and the transitivity is clear. It follows from \cref{lemma.inscribed-limits} that $\mathrm{Stab}(\ast_{\mu})$ is inscribed, and then from \cref{prop.inscribed-quotient} that $\Gr_{[\mu]}$ is inscribed. The computation of $d\iota_{[\mu]}$ then follows by comparing with \cref{corollary.tangent-bundle-affine-grassmannian}. 

Finally, we verify the claims about $\mf{g}^+_\mr{min}$ and $\mf{g}^+_{\mr{max}}$. For $\mf{g}^+_{\mr{min}}$, its formation on any test object is clearly as a literal intersection, so it suffices to show it is locally free. But this follows from the $v$-stack property of \cref{thm.BG-pull-back-inscribed}, since it is easily seen to be locally free after passage to a $v$-cover where $\mf{g}^+_\univ=g \cdot \ast_{[\mu]}$ for $g \in G(\mbb{B}^+_\dR)$. For $\mf{g}^+_{\mr{max}}$, the same argument shows it is locally free, but we must explain why its formation on any test object is a literal sum (since a priori there is a $v$-sheafification). This follows from the exact sequence 
\[ 0 \rightarrow \mf{g}^+_{\mr{min}} \rightarrow \mf{g}^+_{\univ}(\mc{B}) \oplus \mf{g}\otimes_L \mbb{B}^+_\dR \rightarrow \mf{g}^+_{\mr{max}} \rightarrow 0 \]
and the $v$-acyclicity of locally free $\mathbb{B}^+_\dR$-modules given by the second part of \cref{lemma.thickenings-v-descent} (applied to $\mf{g}^+_{\mr{min}}$ in the long exact sequence of $v$-cohomology).
\end{proof}

\begin{remark}
Note that, in \cite{ScholzeWeinstein.BerkeleyLecturesOnPAdicGeometryAMS207, HoweKlevdal.AdmissiblePairsAndpAdicHodgeStructuresIITheBiAnalyticAxLindemannTheorem}, the Schubert cells in the $\mathbb{B}^+_\dR$-affine Grassmannian on $\AffPerf/\Spd L$ are defined to consist of those sections whose restriction to any geometric point lies in the orbit of $\ast_{\mu}$. With this definition, it is only clear that $\overline{\Gr_{[\mu]}}$ is contained in the Schubert cell of loc. cit., not that they are equal. The equality in general follows from \cite[Theorem 1.0.2]{Howe.TransitivityOfTheB+dRLoopGroupActionOnSchubertCells}, building on the reductive case treated in \cite[Proposition VI.2.4]{FarguesScholze.GeometrizationOfTheLocalLanglandsCorrespondence}.

This is not enough to show that in the inscribed setting we could make the definition of the Schubert cell by only considering thickenings over geometric points, however, by a restriction of scalars argument, it is enough if we restrict from locally free nilpotent thickenings to only considering $L$-constant nilpotent thickenings, i.e. those base changed from $L$ (cf. \cite[Example 1.1.1 and Question 1.1.2]{Howe.TransitivityOfTheB+dRLoopGroupActionOnSchubertCells}). It is evident from the arguments above that the key property for computing the tangent bundles is that the action be transitive, which is why we have adopted the definition above rather than the pointwise definition. 
\end{remark}

\begin{remark} If $G$ is reductive, then as a corollary of \cref{prop.schubert-cell}, we find that $T_{\Gr_{[\mu]}}$ is $v$-locally isomorphic to $\bigoplus_\alpha \mbb{B}^+_\dR/\Fil^{\langle \alpha, \mu\rangle}\mbb{B}^+_\dR$, 
where the sum is over any choice of positive roots $\alpha$ for $G_{\overline{L}}$ and $\mu$ is chosen to be dominant. 
\end{remark}

\subsection{The Bialynicki-Birula map}\label{ss.bdrag-bialynicki-birula}
In the following, we write $\mc{O}$ for the inscribed $v$-sheaf over $\Spd L$, $\mc{B}/P^{\sharp\alg}_{(\infty)}\mapsto \mc{O}(\mc{B} \times_{P^{\sharp-\alg}_{(\infty)}} P^{\sharp-\alg}).$ It is the inscribed $v$-sheaf $\mbb{B}$ associated to the pair $(P^{\sharp-\alg},\Spd L)$ viewed in our setting by change of context as in \cref{ss.change-of-context} along $P^{\sharp-\alg} \rightarrow P^{\sharp-\alg}_{(\infty)}$. 

For $G/L$ a connected linear algebraic group and $V \in \Rep_G(L)$ we have the universal $\mathbb{B}^+_\dR$-lattice $V^+_\univ \subseteq V \otimes_L \mbb{B}_\dR$ over $\Gr_G$ of \cref{def.VplusUnivOnGrG}. This lattice induces a natural trace filtration of $V \otimes_L \mc{O}$ by $\mathcal{O}$-modules 
\begin{align*} \Fil^i_{V^+_\univ} (V \otimes_L \mc{O}) & := \left( \Fil^i \mbb{B}^+_\dR \cdot V^+_\univ \cap V \otimes_L \mbb{B}^+_\dR \right) / \left( \Fil^i \mbb{B}^+_\dR \cdot V^+_\univ \cap V \otimes_L \Fil^1 \mbb{B}^+_\dR \right) \\
& \subseteq \left(V \otimes_L \mbb{B}^+_\dR \right) / \left( V \otimes_L \Fil^1 \mbb{B}^+_\dR \right)=V \otimes_L \mc{O}. \end{align*}

This filtration may not be by locally free modules and even when it is it may not be an exact functor from $\Rep_G(L)$ to filtered $\mathcal{O}$-modules. After restricting to a Schubert cell, however, it is. This can be verified after passing to $v$-cover, which, by definition of the Schubert cell, can be chosen so that that the filtration is isomorphic to that defined by $\ast_{\mu}$ for some $\mu \in [\mu]$. Computing directly one finds that this latter is the filtration $\Fil_{\mu^{-1}}$ over $L(\mu)$, where for any cocharacter $\tau$ we define the associated filtration by
\[ \Fil^i_{\tau}(V) = \bigoplus_{j \geq i} V[j], \textrm{ for $V[j]$ the isotypic subspace where $\tau(z)$ acts as $z^j$.} \]

Recall that for any conjugacy class of cocharacters $[\tau]$ there is a flag variety $\Fl_{[\tau]}/L([\tau])$ parameterizing filtrations on the trivial $G$-torsor that are of type $[\tau]$, i.e. locally isomorphic to $\Fil_{\tau}$ for $\tau \in [\tau]$. 

\begin{theorem}\label{theorem.bb-derivative}
For any conjugacy class of cocharacters $[\mu]$ of $G_{\overline{L}}$, 
the restriction of $V \mapsto \Fil^\bullet_{{V^+_\univ}} (V \otimes \mc{O})$ to $\Gr_{[\mu]}$ is a filtration of the trivial $G$-torsor of type $[\mu^{-1}]$. The resulting map $\BB: \Gr_{[\mu]} \rightarrow \Fl_{[\mu^{-1}]}^\lfid$ is equivariant along the natural map $G(\mbb{B}^+_\dR) \twoheadrightarrow G(\mc{O})$, and its derivative fits into the commuting diagram 
% https://q.uiver.app/#q=WzAsNSxbMCwwLCJUX3tcXEdyX3tbXFxtdV19fSJdLFswLDIsIlxcQkJeKlRfe1xcRmxfe1tcXG11XnstMX1dfV5cXGxmaWR9Il0sWzEsMCwiXFxtZntnfVxcb3RpbWVzX0wgXFxtYmJ7Qn1eK19cXGRSIC9cXGxlZnQoXFxtZntnfVxcb3RpbWVzX0wgXFxtYmJ7Qn1eK19cXGRSIFxcY2FwIFxcbWZ7Z31fe1xcdW5pdn1eK1xccmlnaHQpIl0sWzEsMiwiXFxsZWZ0KFxcbWZ7Z30gXFxvdGltZXNfTCBcXG1je099XFxyaWdodCkvXFxGaWxeMCAoXFxtZntnfSBcXG90aW1lc19MIFxcbWN7T30pIl0sWzEsMSwiXFxtZntnfSBcXG90aW1lc19MIFxcbWJie0J9XitfXFxkUiAvIFxcbGVmdChcXG1me2d9XFxvdGltZXNfTCBcXG1iYntCfV4rX1xcZFIgXFxjYXAgXFxtZntnfV97XFx1bml2fV4rICsgXFxtZntnfSBcXG90aW1lc19MIFxcRmlsXjEgXFxtYmJ7Qn1eK19cXGRSIFxccmlnaHQpIl0sWzAsMSwiZEJCIiwyXSxbMCwyLCI9IiwxLHsic3R5bGUiOnsiaGVhZCI6eyJuYW1lIjoibm9uZSJ9fX1dLFsxLDMsIj0iLDEseyJzdHlsZSI6eyJoZWFkIjp7Im5hbWUiOiJub25lIn19fV0sWzIsNCwiIiwxLHsic3R5bGUiOnsiaGVhZCI6eyJuYW1lIjoiZXBpIn19fV0sWzQsMywiPSIsMSx7InN0eWxlIjp7ImhlYWQiOnsibmFtZSI6Im5vbmUifX19XV0=
\[\begin{tikzcd}
	{T_{\Gr_{[\mu]}}} & {\mf{g}\otimes_L \mbb{B}^+_\dR /\left(\mf{g}\otimes_L \mbb{B}^+_\dR \cap \mf{g}_{\univ}^+\right)} \\
	& {\mf{g} \otimes_L \mbb{B}^+_\dR / \left(\mf{g}\otimes_L \mbb{B}^+_\dR \cap \mf{g}_{\univ}^+ + \mf{g} \otimes_L \Fil^1 \mbb{B}^+_\dR \right)} \\
	{\BB^*T_{\Fl_{[\mu^{-1}]}^\lfid}} & {\left(\mf{g} \otimes_L \mc{O}\right)/\Fil^0 (\mf{g} \otimes_L \mc{O})}
	\arrow["{=}"{description}, no head, from=1-1, to=1-2]
	\arrow["dBB"', from=1-1, to=3-1]
	\arrow[two heads, from=1-2, to=2-2]
	\arrow["{=}"{description}, no head, from=2-2, to=3-2]
	\arrow["{=}"{description}, no head, from=3-1, to=3-2]
\end{tikzcd}\]
where the top horizontal equality is from \cref{prop.schubert-cell} and the bottom horizontal equality follows from \cref{theorem.smooth-adic-space-mos}
and the usual computation of the tangent bundle of the flag variety $\Fl_{[\mu^{-1}]}$. 
\end{theorem}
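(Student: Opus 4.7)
The plan is to reduce everything to the action of $G(\mathbb{B}^+_\dR)$ and the computation at the basepoint $\ast_\mu$, exploiting the transitivity from \cref{prop.schubert-cell}. First, to see that $\Fil^\bullet_{V^+_\univ}(V \otimes \mc{O})$ restricted to $\Gr_{[\mu]}$ defines a filtration of the trivial $G$-torsor of type $[\mu^{-1}]$, I would check this $v$-locally. By the transitivity of $G(\mathbb{B}^+_\dR)$ on $\Gr_{[\mu]}$, after a $v$-cover (and after enlarging $L$ to $L(\mu)$) every point is of the form $g \cdot \ast_\mu$ for some $g \in G(\mathbb{B}^+_\dR)$, and at $\ast_\mu$ the lattice $V^+_\univ$ is literally $\xi^\mu \cdot (V \otimes \mathbb{B}^+_\dR)$ for any generator $\xi$ of $\Fil^1\mathbb{B}^+_\dR$. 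A direct computation of the trace filtration of $\xi^\mu \cdot (V \otimes \mathbb{B}^+_\dR)$ inside $V \otimes \mathbb{B}_\dR$ (decomposing $V$ into weight spaces under $\mu$) identifies it with the weight filtration $\Fil_{\mu^{-1}}(V)$ after reducing modulo $\Fil^1 \mathbb{B}^+_\dR$. This is both the type statement and the exactness in $V$; tensor-compatibility is immediate from the construction.

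Next, for the $G(\mathbb{B}^+_\dR) \twoheadrightarrow G(\mc{O})$-equivariance: both $V^+_\univ$ and the ambient $V \otimes_L \mathbb{B}_\dR$ are $G(\mathbb{B}^+_\dR)$-equivariant, so the full trace filtration on $V \otimes \mathbb{B}^+_\dR$ is $G(\mathbb{B}^+_\dR)$-equivariant. Reducing modulo $\Fil^1\mathbb{B}^+_\dR$ kills precisely the kernel of $G(\mathbb{B}^+_\dR) \twoheadrightarrow G(\mc{O})$, so the induced filtration on $V \otimes \mc{O}$ carries only a $G(\mc{O})$-action, giving the required equivariance. Consequently $\BB$ is well defined as a map of inscribed $v$-sheaves $\Gr_{[\mu]} \to \Fl_{[\mu^{-1}]}^\lfid$ (using \cref{theorem.smooth-adic-space-mos} to view the flag variety inscribed-ly).

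For the derivative, I would exploit the $G(\mathbb{B}^+_\dR)$-equivariance to propagate a computation at $\ast_\mu$ to all of $\Gr_{[\mu]}$, or, more efficiently, argue universally as follows. By \cref{prop.schubert-cell}, the derivative at the identity of the action, $da_e : \mf{g} \otimes_L \mathbb{B}^+_\dR \to T_{\Gr_{[\mu]}}$, is a surjection and identifies $T_{\Gr_{[\mu]}}$ with the displayed quotient. On the flag variety side, the analogous derivative $\mf{g} \otimes_L \mc{O} \to T_{\Fl_{[\mu^{-1}]}^\lfid}$ for the $G(\mc{O})$-action is the standard surjection with kernel $\Fil^0(\mf{g} \otimes \mc{O})$, computed using \cref{theorem.smooth-adic-space-mos} and the classical fact that the tangent bundle of $\Fl_{[\mu^{-1}]}$ at a filtration of type $[\mu^{-1}]$ is $\mf{g}/\Fil^0\mf{g}$ with respect to that filtration. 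The $G(\mathbb{B}^+_\dR) \twoheadrightarrow G(\mc{O})$-equivariance of $\BB$ implies that the composition $d\BB \circ da_e$ factors through the reduction $\mf{g}\otimes_L \mathbb{B}^+_\dR \twoheadrightarrow \mf{g}\otimes_L \mc{O}$ and coincides with the surjection $\mf{g}\otimes_L \mc{O} \twoheadrightarrow (\mf{g} \otimes \mc{O})/\Fil^0$. Since $da_e$ is surjective, this uniquely determines $d\BB$, and yields the commutative diagram in the statement.

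The main obstacle I anticipate is not any single step but rather making precise in what sense the filtration $\Fil^0(\mf{g}\otimes \mc{O})$ appearing on the bottom right is the \emph{universal} filtration over $\Gr_{[\mu]}$ (pulled back along $\BB$) rather than a pointwise one; this requires checking that the $v$-locally defined trace filtration glues and is exact on $\Rep_G(L)$ over the whole Schubert cell, which is exactly the content that the literal sum/intersection formulas of \cref{prop.schubert-cell} for $\mf{g}^+_{\max}$ and $\mf{g}^+_{\min}$ encode. Once this is in hand, the identification of the middle quotient with $(\mf{g}\otimes \mc{O})/\Fil^0$ is a straightforward reindexing: modulo $\mf{g}\otimes_L \Fil^1\mathbb{B}^+_\dR$, the term $\mf{g}\otimes_L \mathbb{B}^+_\dR \cap \mf{g}^+_\univ$ reduces to $\Fil^0$ of the Hodge-type filtration induced by $V^+_\univ$ on $\mf{g}\otimes \mc{O}$, which is precisely $\Fil^0$ in the statement.
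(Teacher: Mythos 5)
Your proposal is correct and follows essentially the same route as the paper: the filtration is identified $v$-locally at $\ast_\mu$ as $\Fil_{\mu^{-1}}$ via transitivity of the $G(\mathbb{B}^+_\dR)$-action, equivariance follows from the construction, and the derivative is pinned down because both $T_{\Gr_{[\mu]}}$ and $T_{\Fl_{[\mu^{-1}]}^\lfid}$ are presented as quotients of the Lie algebras via the surjective derivatives $da_e$ of the transitive group actions, so that equivariance of $\BB$ forces $d\BB$ to be the displayed reduction map. The paper's proof is terser but contains no additional idea beyond what you wrote.
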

\begin{proof}
We have explained the existence of $\BB$ above. The equivariance follows from the construction, and then the computation of the derivative is an immediate consequence since the tangent bundles of $\Gr_{[\mu]}$ and $T_{\Fl_{[\mu^{-1}]}^\lfid}$ are both expressed here in the form that is obtained  by differentiating the transitive group actions of $G(\mathbb{B}^+_\dR)$ and $G(\mathcal{O})$. 
\end{proof}

The computation of $d\BB$ is closely related to Griffiths transversality. 

\begin{corollary}\label{cor.griffiths-transversality}
Suppose $G/L$ is a connected linear algebraic group, $M/L$ is a non-archimedean extension, and $S/M$ is a smooth rigid analytic variety. If $f: (S/M)^\lfid \rightarrow \Gr_{[\mu]} \times_{\Spd L} \Spd M$ is a map of inscribed $v$-sheaves over $\Spd M$, then $\BB \circ f$ satisfies Griffiths transversality for the trivial connection on the trivial $G$-torsor, i.e. $d(\BB \circ f )$ factors through
\[ \gr^{-1}\left(\mf{g} \otimes_L \mc{O} \right)=\Fil^{-1} \left(\mf{g} \otimes_L \mc{O} \right) / \Fil^0 (\mf{g} \otimes_L \mc{O}) \subseteq (\BB \circ f)^* T_{\Fl_{[\mu]}^\lfid} \]
\end{corollary}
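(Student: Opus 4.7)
My plan is to combine the $\mbb{B}^+_\dR$-linearity of $d\BB$ from \cref{theorem.bb-derivative} with the crucial observation that, in the current inscribed context $(\Spd L,\Box^{\sharp-\alg}_{(\infty)})$, the tangent bundle $T_{(S/M)^\lfid}$ is annihilated by $\Fil^1\mbb{B}^+_\dR$. The first step I would carry out is to make this annihilation precise: $(S/M)^\lfid$ is obtained by the change of context recipe of \cref{ss.change-of-context} from its natural incarnation in the $\Box^{\sharp}$-context, via the reduction functor $\mc{B}/P^{\sharp-\alg}_{(\infty)}\mapsto \mc{B}^{\red}:=\mc{B}\times_{P^{\sharp-\alg}_{(\infty)}}P^{\sharp}$. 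Hence $(S/M)^\lfid(\mc{B}[\epsilon])=\Hom_{M}(\mc{B}^{\red}[\epsilon],S)$, so the scalar action of $\mbb{B}^+_\dR(\mc{B})=\End_{\mc{B}}(\mc{B}[\epsilon])$ on $T_{(S/M)^\lfid}(\mc{B})$ factors through the quotient $\mc{O}(\mc{B}^{\red})=\End_{\mc{B}^{\red}}(\mc{B}^{\red}[\epsilon])$.

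Next, since $df:T_{(S/M)^\lfid}\to f^{*}T_{\Gr_{[\mu]}}$ is $\mbb{B}^+_\dR$-linear by \cref{prop.tangent-bundle-functor} and its source is $\Fil^1\mbb{B}^+_\dR$-annihilated, its image lies in $f^{*}T_{\Gr_{[\mu]}}[\Fil^1\mbb{B}^+_\dR]$. The corollary then reduces to the claim
\[
d\BB\bigl(T_{\Gr_{[\mu]}}[\Fil^1\mbb{B}^+_\dR]\bigr)\subseteq \gr^{-1}(\mf{g}\otimes_{L}\mc{O})\subseteq (\mf{g}\otimes_{L}\mc{O})/\Fil^0(\mf{g}\otimes_L\mc{O}).
\]
This is $v$-local on $\Gr_{[\mu]}$, so by the $v$-transitivity of the $G(\mbb{B}^+_\dR)$-action from \cref{prop.schubert-cell} I would reduce it to a direct computation at $\ast_{\mu}$ (enlarging $L$ to $L(\mu)$) for a fixed $\mu\in[\mu]$.

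For the local verification I would fix a generator $\xi$ of $\Fil^1\mbb{B}^+_\dR$ and decompose $\mf{g}=\bigoplus_i\mf{g}(i)$ into weight spaces under $\Ad\circ\mu$, available for any connected linear algebraic $G$ since any rational $\mbb{G}_m$-representation decomposes into characters. Then $\mf{g}_{\univ}^{+}|_{\ast_\mu}=\Ad(\mu(\xi))(\mf{g}\otimes\mbb{B}^+_\dR)=\bigoplus_{i}\xi^{i}\mbb{B}^+_\dR\cdot\mf{g}(i)$ inside $\mf{g}\otimes\mbb{B}_\dR$, so \cref{theorem.bb-derivative} gives
\[
T_{\Gr_{[\mu]}}|_{\ast_\mu}=\bigoplus_{i>0}\mf{g}(i)\otimes_{L}\mbb{B}^+_\dR/\xi^{i}\mbb{B}^+_\dR,
\]
with $d\BB|_{\ast_\mu}$ equal on the $i$-summand to the reduction $\mbb{B}^+_\dR/\xi^{i}\twoheadrightarrow\mbb{B}^+_\dR/\xi=\mc{O}$, landing in $\mf{g}(i)\otimes\mc{O}\subseteq(\mf{g}\otimes\mc{O})/\Fil^{0}$. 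The $\xi$-torsion of $\mbb{B}^+_\dR/\xi^{i}\mbb{B}^+_\dR$ is $\xi^{i-1}\mbb{B}^+_\dR/\xi^{i}\mbb{B}^+_\dR$; its image in $\mbb{B}^+_\dR/\xi=\mc{O}$ is all of $\mc{O}$ when $i=1$ and $0$ when $i\geq 2$. The image of $T_{\Gr_{[\mu]}}[\Fil^1\mbb{B}^+_\dR]|_{\ast_\mu}$ is therefore exactly $\mf{g}(1)\otimes_{L}\mc{O}$, which coincides with $\Fil^{-1}(\mf{g}\otimes\mc{O})/\Fil^{0}(\mf{g}\otimes\mc{O})=\gr^{-1}(\mf{g}\otimes\mc{O})$.

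The main obstacle is the first step: pinning down the $\mc{O}$-module nature of $T_{(S/M)^\lfid}$ in this algebraic infinite-order context, since the definition of $(S/M)^\lfid$ must be set up carefully as a change-of-context pullback from the $\Box^{\sharp}$-world. Once that is established, the remainder is essentially the Schubert-cell calculation encoded by \cref{prop.schubert-cell} and \cref{theorem.bb-derivative}, which makes manifest that higher-order filtration levels (the summands with $i\geq 2$) contribute to $(\mf{g}\otimes\mc{O})/\Fil^0$ only through elements that are \emph{not} $\xi$-torsion, so that after restricting to the $\Fil^1$-annihilator only the $i=1$ piece survives.
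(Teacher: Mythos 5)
Your proposal is correct and follows essentially the same route as the paper: both arguments use the $\mbb{B}^+_\dR$-linearity of $df$ together with the fact that $T_{(S/M)^\lfid}$ is killed by $\Fil^1\mbb{B}^+_\dR$ to force $df$ into the $\Fil^1\mbb{B}^+_\dR$-torsion of $f^*T_{\Gr_{[\mu]}}$, and then invoke \cref{prop.schubert-cell} and \cref{theorem.bb-derivative} to see that $d\BB$ carries that torsion into $\gr^{-1}(\mf{g}\otimes_L\mc{O})$. The only difference is presentational: you justify the $\Fil^1$-annihilation via the change-of-context mechanism and compute the torsion explicitly by weight spaces at $\ast_\mu$ after a $v$-local reduction, whereas the paper identifies the torsion intrinsically as $\left(\mf{g}\otimes\mbb{B}^+_\dR\cap\Fil^{-1}\mbb{B}^+_\dR\cdot\mf{g}^+_\univ\right)/\left(\mf{g}\otimes\mbb{B}^+_\dR\cap\mf{g}^+_\univ\right)$; both computations agree.
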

\begin{proof}
Suppose given $f: (S/M)^\lfid \rightarrow \Gr_{[\mu]} \times_{\Spd L} \Spd M$. Then $df$ is a map of $\mbb{B}^+_\dR$-modules over $(S/M)^\lfid$, and because $T_{(S/M)^\lfid}$ is annihilated by $\Fil^1 \mbb{B}^+_\dR$ we find that $df$ factors through the part of $f^*T_{\Gr_{[\mu]}}$ annihilated by $\Fil^1 \mbb{B}^+_\dR$. Using the description of \cref{prop.schubert-cell}, this is given by
\[ \left( \mf{g}\otimes \mbb{B}^+_\dR \cap \Fil^{-1} \mbb{B}^+_\dR \cdot \mf{g}^+_\univ \right) / \left(\mf{g}\otimes \mbb{B}^+_\dR \cap \mf{g}_{\univ}^+\right). \]
It then follow from \cref{theorem.bb-derivative} that $d (\BB \circ f)$ factors through 
\[ \Fil^{-1} \left(\mf{g} \otimes \mc{O} \right) / \Fil^0 (\mf{g} \otimes \mc{O}), \]
i.e. that $f\circ \BB$ satisfies the Griffiths transversality condition for the trivial connection on the trivial $G$-torsor. 
\end{proof}

We will return to this connection with Griffiths transversality in \cref{ss.hodge-period-maps}. 

\subsection{Schubert cells in $G(\mbb{B}_\dR)$}\label{ss.bdrag-big-schubert-cells}

Note that we have two natural right actions of $G(\mbb{B}_\dR)$ on itself: the action $a_1$ by right multiplication and the action $a_2$ by left multiplication by the inverse. We also have two natural maps 
$\pi_1, \pi_2: G(\mbb{B}_\dR) \rightarrow \Gr_G$ defined 
\[ \pi_1(c)=c \cdot \ast_1 \textrm{ and } \pi_2(c)=c^{-1} \cdot \ast_1 \]

\begin{definition}
Let $G/L$ be a connected linear algebraic group and $[\mu]$ a conjugacy class of cocharacters of $G_{\overline{L}}$. We define the associated Schubert cell in $G(\mathbb{B}_\dR)$ as 
\begin{equation}\label{eq.big-schubert-cell-fiber-prod} C_{[\mu]}:= \pi_1 \times_{\Gr_G} \Gr_{[\mu]}=\pi_2 \times_{\Gr_G} \Gr_{[\mu^{-1}]}.\end{equation} 
\end{definition}
By \cref{lemma.inscribed-limits}, $C_{[\mu]}$ is an inscribed $v$-sheaf.

\begin{remark}Note that, if we fix $\mc{B} / \Spd L$, a lift to $\mc{B} \rightarrow \Spd L(\mu)$, and a generator $\xi$ of $\Fil^1 \mathbb{B}^+_\dR(\mc{B})$, then $C_{[\mu]}|_{\mc{B}}$ is the two-sided orbit $G(\mbb{B}^+_\dR) \cdot \xi^\mu \cdot G(\mbb{B}^+_\dR)$. One could adapt this observation into and equivalent definition of $C_{[\mu]}$  similar to the definition of $\Gr_{[\mu]}$ used above. 
\end{remark}

\begin{theorem}\label{theorem.SchubertCellDiagrams}
The actions $a_1$ and $a_2$ restrict to actions of $G(\mbb{B}^+_\dR)$ on $C_{[\mu]}$ such that:
\begin{enumerate}
    \item the map $\pi_1$ is a $G(\mbb{B}^+_\dR)$-torsor over $\Gr_{[\mu]}$ for the action $a_1$ and is equivariant for the action $a_2$, and 
    \item the map $\pi_2$ is a $G(\mbb{B}^+_\dR)$-torsor over $\Gr_{[\mu^{-1}]}$ for the action $a_2$ and is equivariant for the action $a_1$. 
\end{enumerate}
The $\mbb{B}^+_\dR$-module on $C_{[\mu]}$
\[ \mf{g}^+_{\mr{max}}:=\mf{g}\otimes_L \mbb{B}^+_\dR + \Ad(c^{-1})(\mf{g} \otimes_L \mbb{B}^+_\dR) \cong^{\Ad(c)} \Ad(c)(\mf{g}\otimes_L \mbb{B}^+_\dR) + \mf{g} \otimes_L \mbb{B}^+_\dR. \]
is naturally identified with the preimage under $\pi_1$ or $\pi_2$ of $\mf{g}^+_{\mr{max}}$ of \cref{prop.schubert-cell}. In particular, it is locally free of finite rank and can be formed as literal sum of modules on any test object as in \cref{prop.schubert-cell}. 

The product action $a$ of $G(\mbb{B}^+_\dR) \times G(\mbb{B}^+_\dR)$ is transitive, and $da_e$ induces an isomorphism $T_{C_[\mu]} =\mf{g}^+_{\mr{max}}$ of $\mbb{B}^+_\dR$-modules over $C_{[\mu]}$ fitting into the commutative diagram
% https://q.uiver.app/#q=WzAsOSxbMSwyLCJcXG1me2d9XitfXFxtcnttYXh9Il0sWzQsMiwiVF97Q197W1xcbXVdfX0iXSxbMCwyLCJcXG1me2d9XFxvdGltZXNfTCBcXG1iYntCfV9cXGRSICJdLFsxLDAsIlxcbWZ7Z31cXG90aW1lc19MIFxcbWJie0J9XitfXFxkUiJdLFsxLDQsIlxcbWZ7Z31cXG90aW1lc19MIFxcbWJie0J9XitfXFxkUiJdLFsyLDEsIlxccGlfMV4qVF97XFxHcl97W1xcbXVdfX0iXSxbMiwzLCJcXHBpXzJeKlRfe1xcR3Jfe1tcXG11XnstMX1dfX0iXSxbMSwzLCJcXGZyYWN7XFxtZntnfV4rX3tcXG1ye1xcbWF4fX19e1xcQWQoY157LTF9KShcXG1me2d9XFxvdGltZXNfTCBcXG1iYntCfV4rX1xcZFIpfSJdLFsxLDEsIlxcZnJhY3tcXG1me2d9Xitfe1xcbXJ7XFxtYXh9fX17XFxtZntnfVxcb3RpbWVzX0wgXFxtYmJ7Qn1eK19cXGRSfSAiXSxbMywyLCJ0XFxtYXBzdG8gdCIsMix7ImN1cnZlIjo1fV0sWzQsMiwidFxcbWFwc3RvIC1cXEFkKGNeey0xfSkodCkiLDAseyJjdXJ2ZSI6LTV9XSxbMCwyLCIiLDAseyJzdHlsZSI6eyJ0YWlsIjp7Im5hbWUiOiJob29rIiwic2lkZSI6ImJvdHRvbSJ9fX1dLFszLDEsIihkYV8xKV9lIiwwLHsiY3VydmUiOi01fV0sWzQsMSwiKGRhXzIpX2UiLDIseyJjdXJ2ZSI6NX1dLFsxLDUsImRcXHBpXzEiLDFdLFsxLDYsImRcXHBpXzIiLDFdLFsxLDAsIj0iLDEseyJzdHlsZSI6eyJ0YWlsIjp7Im5hbWUiOiJhcnJvd2hlYWQifX19XSxbNiw3LCI9IiwxLHsic3R5bGUiOnsidGFpbCI6eyJuYW1lIjoiYXJyb3doZWFkIn19fV0sWzUsOCwiPSIsMSx7InN0eWxlIjp7InRhaWwiOnsibmFtZSI6ImFycm93aGVhZCJ9fX1dLFswLDhdLFswLDddLFszLDgsIjAiLDFdLFs0LDcsIjAiLDFdXQ==
\[\begin{tikzcd}
	& {\mf{g}\otimes_L \mbb{B}^+_\dR} \\
	& {\frac{\mf{g}^+_{\mr{\max}}}{\mf{g}\otimes_L \mbb{B}^+_\dR} } & {\pi_1^*T_{\Gr_{[\mu]}}} \\
	{\mf{g}\otimes_L \mbb{B}_\dR } & {\mf{g}^+_\mr{max}} &&& {T_{C_{[\mu]}}} \\
	& {\frac{\mf{g}^+_{\mr{\max}}}{\Ad(c^{-1})(\mf{g}\otimes_L \mbb{B}^+_\dR)}} & {\pi_2^*T_{\Gr_{[\mu^{-1}]}}} \\
	& {\mf{g}\otimes_L \mbb{B}^+_\dR}
	\arrow["0"{description}, from=1-2, to=2-2]
	\arrow["{t\mapsto t}"', curve={height=30pt}, from=1-2, to=3-1]
	\arrow["{(da_1)_e}", curve={height=-30pt}, from=1-2, to=3-5]
	\arrow["{=}"{description}, tail reversed, from=2-3, to=2-2]
	\arrow[from=3-2, to=2-2]
	\arrow[hook', from=3-2, to=3-1]
	\arrow[from=3-2, to=4-2]
	\arrow["{d\pi_1}"{description}, from=3-5, to=2-3]
	\arrow["{=}"{description}, tail reversed, from=3-5, to=3-2]
	\arrow["{d\pi_2}"{description}, from=3-5, to=4-3]
	\arrow["{=}"{description}, tail reversed, from=4-3, to=4-2]
	\arrow["{t\mapsto -\Ad(c^{-1})(t)}", curve={height=-30pt}, from=5-2, to=3-1]
	\arrow["{(da_2)_e}"', curve={height=30pt}, from=5-2, to=3-5]
	\arrow["0"{description}, from=5-2, to=4-2]
\end{tikzcd}\]
\end{theorem}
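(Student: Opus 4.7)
The plan is to exploit the two natural torsor structures on $\pi_1$ and $\pi_2$, obtained by base change from the standard $G(\mbb{B}^+_\dR)$-torsor structure of $G(\mbb{B}_\dR) \to \Gr_G$, together with the canonical left-invariant trivialization $T_c G(\mbb{B}_\dR) \cong \mf{g}\otimes_L\mbb{B}_\dR$. In this trivialization, $T_{C_{[\mu]}}$ will sit inside $\mf{g}\otimes_L\mbb{B}_\dR$ as the submodule $\mf{g}^+_\mr{max}$, and every arrow of the diagram becomes a literal identity or quotient map.

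First I would verify (1) and (2) by base change. Classically $\pi_1 : G(\mbb{B}_\dR) \to \Gr_G$ is a $G(\mbb{B}^+_\dR)$-torsor for $a_1$ and is $a_2$-equivariant via $\pi_1(g^{-1}c) = g^{-1}\pi_1(c)$; combined with the $G(\mbb{B}^+_\dR)$-invariance of $\Gr_{[\mu]}$ (\cref{prop.schubert-cell}) this shows both $a_1$ and $a_2$ preserve $C_{[\mu]} = \pi_1^{-1}(\Gr_{[\mu]})$, and base change yields (1); the symmetric argument for $\pi_2$ yields (2). Next I would identify $\mf{g}^+_\mr{max}$: the $G(\mbb{B}_\dR)$-stabilizer of $\pi_2(c) = c^{-1}\ast_1$ is $c^{-1}G(\mbb{B}^+_\dR)c$, so via $\pi_2^*\varphi_\univ$, $\pi_2^*\mf{g}^+_\univ$ sits inside $\mf{g}\otimes_L\mbb{B}_\dR$ as $\Ad(c^{-1})(\mf{g}\otimes_L\mbb{B}^+_\dR)$. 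Applying \cref{prop.schubert-cell} then shows $\pi_2^*\mf{g}^+_\mr{max}$ is exactly the module displayed in the theorem, and the analogous computation gives $\pi_1^*\mf{g}^+_\mr{max} = \Ad(c)(\mf{g}\otimes_L\mbb{B}^+_\dR) + \mf{g}\otimes_L\mbb{B}^+_\dR$, identified with the former via $\Ad(c)$. Local freeness and the literal-sum description on any test object transfer directly from \cref{prop.schubert-cell}.

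For transitivity of $a$, I would argue $v$-locally: $G(\mbb{B}^+_\dR)$ acts $v$-locally transitively on $\Gr_{[\mu]}$ by \cref{prop.schubert-cell}, and this lifts to $C_{[\mu]}$ via $a_2$ thanks to the $a_2$-equivariance of $\pi_1$; the $a_1$-torsor structure on $\pi_1$ then sweeps out each fiber. Differentiating $a_1(c,g) = cg$ and $a_2(c,g) = g^{-1}c$ at $g=e$ in the left trivialization directly yields $(da_1)_e(X) = X$ and $(da_2)_e(X) = -\Ad(c^{-1})(X)$. Hence the combined map
\[ \mf{g}\otimes_L\mbb{B}^+_\dR \oplus \mf{g}\otimes_L\mbb{B}^+_\dR \to T_{C_{[\mu]}} \subseteq \mf{g}\otimes_L\mbb{B}_\dR \]
sends $(X_1, X_2) \mapsto X_1 - \Ad(c^{-1})(X_2)$ with image exactly $\mf{g}^+_\mr{max}$, and by transitivity this image equals all of $T_{C_{[\mu]}}$.

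The reverse inclusion $T_{C_{[\mu]}} \subseteq \mf{g}^+_\mr{max}$ would come from the fiber product description $C_{[\mu]} = G(\mbb{B}_\dR) \times_{\Gr_G} \Gr_{[\mu]}$: since the functor $\mc{S} \mapsto T_{\mc{S}}(\mc{B}) = \mc{S}(\mc{B}[\epsilon])$ commutes with limits, $T_{C_{[\mu]}} = T_{G(\mbb{B}_\dR)} \times_{T_{\Gr_G}} T_{\Gr_{[\mu]}}$, so $T_{C_{[\mu]}}$ consists of those $X \in \mf{g}\otimes_L\mbb{B}_\dR$ whose image under $d\pi_1$ lies in $T_{\Gr_{[\mu]}}$; a direct left-trivialized computation of $d\pi_1$, combined with the description of $T_{\Gr_{[\mu]}}$ from \cref{prop.schubert-cell}, carves out exactly $\mf{g}^+_\mr{max}$. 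All arrows of the stated diagram are then immediate consequences. The main obstacle I expect is the careful book-keeping of the $\Ad(c)$-translations between the $\pi_1$- and $\pi_2$-pullback perspectives on $\mf{g}^+_\mr{max}$ and on the target tangent bundles $\pi_{1,2}^* T_{\Gr_{[\mu^{\pm 1}]}}$; fixing the left-invariant trivialization at the outset should reduce the remaining content to routine verifications.
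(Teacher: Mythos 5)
Your proposal is correct and follows essentially the same route as the paper's proof: the equivariant torsor structures and transitivity by base change from $G(\mbb{B}_\dR)\to\Gr_G$ and \cref{prop.schubert-cell}, the identification of $\mf{g}^+_{\mr{max}}$ via the stabilizers $c^{\pm1}G(\mbb{B}^+_\dR)c^{\mp1}$, and the key computation $(t_1,t_2)\mapsto t_1-\Ad(c^{-1})(t_2)$ in the trivialization $T_cG(\mbb{B}_\dR)\cong\mf{g}\otimes_L\mbb{B}_\dR$. Your extra verification of $T_{C_{[\mu]}}\subseteq\mf{g}^+_{\mr{max}}$ via the fiber-product description is a harmless elaboration of what the paper obtains directly from surjectivity of $da_e$ onto $T_{C_{[\mu]}}$.
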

\begin{proof}
The equivariant torsor structures for $\pi_1$ and $\pi_2$ are immediate from the definitions and \cref{prop.schubert-cell}, and so is the transitivity of the product action $a$. 

The description of $\mf{g}^+_{\mr{max}}$ in terms of $\pi_1$ and $\pi_2$ follows from the definition of $\pi_1$ and $\pi_2$ and \cref{prop.schubert-cell}. Indeed, $\pi_1$ classifies the trivial $G$-torsor on $\Spec \mbb{B}^+_\dR$ equipped with the trivialization on $\Spec \mbb{B}_\dR$ given by left multiplication by $c$, while $\pi_2$ classifies the trivial $G$-torsor on $\Spec \mbb{B}^+_\dR$ equipped with the the trivialization on $\Spec \mbb{B}_\dR$ given by left multiplication by $c^{-1}$.  

The transitivity of the product action $a$ induces a surjection 
\[ da_e: \mf{g} \otimes \mbb{B}^+_\dR \oplus \mf{g} \otimes \mbb{B}^+_\dR \twoheadrightarrow T_{C_{[\mu]}}.  \]
If we compose with $T_{C_{[\mu]}}\hookrightarrow T_{G(\mbb{B}_\dR)}|_{C_{[\mu]}}$ and identify the latter with $\mf{g} \otimes \mbb{B}_\dR$ using right-invariant vector fields, then $da_e$ is given by 
\[ (t_1, t_2) \mapsto t_1 - \Ad(c^{-1})(t_2). \]
Indeed, we have $t_2^{-1} c t_1= c (c^{-1} t_2^{-1} c) t_1$. The image of $da_e$ is thus $\mf{g}^+_{\mr{max}}$, giving the claimed isomorphism, and the commutativity of the diagram follows also from this computation and comparison with \cref{prop.schubert-cell}. 
\end{proof}

\section{Inscribed Hodge, lattice Hodge,  and Liu-Zhu period maps}\label{s.hodge-etc-period-maps}

In this section, we define the inscribed Hodge, lattice Hodge, and Liu-Zhu period maps that were described in \cref{ss.intro-main-results}. The lattice Hodge and Liu-Zhu period maps depend on a non-trivial inscribed extension of Scholze's functor $\mathbb{M}$ of \cite{Scholze.pAdicHodgeTheoryForRigidAnalyticVarieties} that will also play a key role also in the construction of twistor bundles in \cref{s.twistors}. 

\subsection{Hodge period maps}\label{ss.hodge-period-maps}
Let $L/\mathbb{Q}_p$ be an arbitrary non-archimedean extension. For this subsection we work in the inscribed context $(\Spd L, \Box^\sharp)$. We write $\mathcal{O}$ the sheaf $\mathbb{B}$ of \cref{def.BB}, i.e. $\mathcal{O}(\mc{P}/P^\sharp)=\mathcal{O}_{\mc{P}}(\mc{P})$. In particular, we note $\overline{\mc{O}}(\mc{P}/P^\sharp)=\mathcal{O}_{P^\sharp}(P^\sharp)$. 

\subsubsection{}
Let $Z/L$ be smooth rigid analytic variety. Let $G/L$ be a connected linear algebraic group, and let $\omega_\nabla$ be a $G$-bundle with integrable connection, i.e. an exact tensor functor from $\Rep G$ to vector bundles with integrable connection on $Z$. We write $\omega$ for the composition of $\omega_\nabla$ with the forgetful functor to vector bundles, and $\mc{G}:=\ul{\Isom}(\omega_\std, \omega)$ for the associated \'{e}tale $G$-torsor. This torsor is represented by a smooth rigid analytic space over $Z$ which we also write as $\mathcal{G}$. The associated inscribed $v$-sheaf $\mathcal{G}^\lfid$ is a $G(\mc{O})$-torsor over $Z^\lfid$. 

We are going to construct a natural reduction of structure group of $\mc{G}^\lfid$ to a $G(\overline{\mc{O}})$-torsor $\mc{G}^{\lfid}_{\nabla} \subseteq \mc{G}^\lfid$ of flat sections over $Z^\lfid$.  To that end, we write $r:Z^\lfid \rightarrow \overline{Z}$ for the natural map as in \cref{def.underlying-fc-and-trivial-inscription}, which maps $f:\mc{P} \rightarrow Z \in Z^\lfid(\mc{P}/P^\sharp)$ to $f|_{P^\sharp}$. Noting $\mc{G}^\diamond=\overline{\mc{G}^\lfid}$, we claim that the integrable connection induces a $G(\overline{\mc{O}})$-equivariant map $\exp_{\nabla}: r^{-1}\mc{G}^\diamond \rightarrow \mc{G}^\lfid$. Indeed, to give a section of $r^{-1}\mc{G}^\diamond$ on $\mc{P}/P^\sharp$ is to give a map $f: \mc{P} \rightarrow Z$ over $\Spa L$ and a section $\overline{s}$ of $\mc{G}$ over $f|_{P^\sharp}$, and the integrable connection promotes this uniquely (and $G(\overline{\mc{O}})$-equivariantly) to a flat section of $s$ of $\mc{G}$ over $f$. 

\begin{remark}
In other words, we have shown $\exp_\nabla$ induces 
\[ r^{-1}\mc{G}^\diamond \times^{G(\overline{\mc{O}})} G(\mc{O}) = \mc{G}^\lfid. \]
\end{remark}

We define $\mc{G}_{\nabla}^\lfid$ to be the image of $r^{-1} \mc{G}^\diamond$ under $\exp_{\nabla}$. 

\subsubsection{}
Suppose now $\omega$ is furthermore equipped with a filtration, i.e. with a factorization $\omega^\Fil$ through the exact category of filtered vector bundles, so that we obtain a rigid analytic period map $\pi_\Fil: \mc{G} \rightarrow \Fl_G$.
This map induces $\pi_\Fil^{\lfid}: \mc{G}^\lfid \rightarrow \Fl_G^\lfid$ and then, by restriction $\tilde{\pi}_\Hdg: \mc{G}_{\nabla}^\lfid \rightarrow \Fl_G^{\lfid}$. Since $\pi_{\Fil}$ is $G$-equivariant, $\tilde{\pi}_{\Hdg}$ is $G(\overline{\mc{O}})$-equivariant, and passing to the quotient we obtain
\[ \pi_\Hdg: Z^\lfid \rightarrow \Fl_{G}^\lfid /G(\overline{\mc{O}}). \]

\subsubsection{} 
The derivative of $\pi_\Hdg$ is a map 
\[ d\pi_\Hdg: (T_Z)^\lfid \rightarrow \pi_{\Hdg}^* T_{\Fl_{G}^\lfid/G(\overline{\mc{O}})}. \]
The pullback appearing as the codomain can be computed by first pulling back to $\Fl_{G}^\lfid$, where it is $T_{\Fl_{G}^\lfid}=(T_{\Fl_G})^\lfid$ with its natural $G(\overline{\mc{O}})$-equivariant structure, then pulling back along $\tilde{\pi}_\Hdg$ and descending along the $G(\overline{\mc{O}})$-action. We thus find $\pi_{\Hdg}^* T_{\Fl_{G}^\lfid/G(\overline{\mc{O}})}$ is naturally identified with $(\omega(\mathfrak{g})/\Fil^0 \omega(\mathfrak{g}))^\lfid$ where $\mathfrak{g}=\Lie G$ is equipped with the adjoint action. We claim that, under this identification, $d\pi_\Hdg$ is simply the map induced by the Kodaira-Spencer map. This is close to a tautology, after we recall the definition of the latter.

\subsubsection{} The Kodaira-Spencer map $\kappa_{\omega_\nabla^\Fil}$ is an $\mc{O}_Z$-linear homomorphism
\[ T_Z \xrightarrow{\kappa_{\omega_\nabla^\Fil}} \omega(\mf{g})/\Fil^0(\omega(\mf{g})).\]
It can be defined as follows: \'{e}tale locally, we may choose another connection $\nabla'$ on $\omega$ that preserves the filtration (e.g. by choosing a trivialization where the filtration is constant and taking the trivial connnection). The difference $\nabla-\nabla'$ assigns to any tangent vector $t$ and representation $V \in \Rep G$ an endomorphism $f_{t,V}=(\nabla_{t,V}-\nabla'_{t,V})$ of $\omega(V)$, functorially in $V$ and compatibly with the tensor product in that $f_{t,V_1 \otimes V_2}=f_{t,V_1} \otimes 1 + 1 \otimes f_{t,V_2}$. By the Tannakian formalism, it is given by an element $f_t \in \omega(\mf{g})$ (which maps to $\omega(\End(V))=\mc{E}nd(\omega(V))$ for any $V \in \Rep G$), and the map $t \mapsto f_t$ is a homomorphism $T_Z \rightarrow \omega(\mf{g})$ on the \'{e}tale cover where $\nabla'$ was chosen. The composition of $t \mapsto f_t$ with projection to $\omega(\mf{g})/\Fil^0(\omega(\mf{g}))$ does not depend on the choice of $\nabla'$, thus descends to give $\kappa_{\omega_{\nabla}^\Fil}$. 

\begin{lemma}\label{lemma.dpihdg-ks}
With the identifications above, $d\pi_\Hdg=\left(\kappa_{\omega_{\nabla}^\Fil} \right)^\lfid$. 
\end{lemma}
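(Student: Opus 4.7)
The plan is to verify the identification étale-locally on $Z$, where an explicit trivialization of $\mc{G}$ makes both $\pi_\Hdg$ and $\kappa_{\omega_\nabla^\Fil}$ transparent. Observe first that both $d\pi_\Hdg$ and $(\kappa_{\omega_\nabla^\Fil})^\lfid$ are $\mc{O}$-linear maps of $\mc{O}$-modules on $Z^\lfid$, and the formation of each is compatible with étale pullback in $Z$. The assertion is thus local, and by the smoothness of the morphism $\mc{G} \to \Fl_G$ (which on $\overline{L}$-points is the period map that picks out filtration positions) we may pass to an étale cover of $Z$ supporting a section $\sigma: Z \to \mc{G}$ such that the induced map $\sigma^{-1}\pi_\Fil: Z \to \Fl_G$ is a \emph{constant} map, say with value $x_0 \in \Fl_G(L)$. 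In this trivialization, writing $\nabla = d + A$ with $A$ a section of $\omega(\mf{g}) \otimes \Omega^1_{Z/L}$, the connection $\nabla' := d$ preserves the constant filtration, so from the definition recalled in the excerpt one reads off $\kappa_{\omega_\nabla^\Fil}(t) = A(t) \bmod \Fil^0\omega(\mf{g})$.

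Next I compute $d\pi_\Hdg$ in this chart. Given $z: \mc{P} \to Z$ in $Z^\lfid(\mc{P}/P^\sharp)$, any flat section over $z$ has the form $s = \sigma \cdot g$ for a unique map $g: \mc{P} \to G$, and by construction of $\pi_\Fil$ the associated flag is $\pi_\Hdg(z) = g^{-1} \cdot x_0 \pmod{G(\overline{\mc{O}})}$. For a tangent vector to $Z^\lfid$ at $z$, i.e.\ an extension $\tilde z: \mc{P}[\epsilon] \to Z$, I need the unique flat extension $\tilde s = \sigma \cdot \tilde g$ of $s$ over $\tilde z$ guaranteed by $\exp_\nabla$. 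Writing $\tilde g = (1+\epsilon X)g$ with $X \in \omega(\mf{g})(z) \otimes \mc{O}(\mc{P})$, the infinitesimal flatness equation $(d+A)\tilde g = 0 \bmod \epsilon^2$ reduces, using $dg + Ag = 0$, to $X = -A(\delta z)$, where $\delta z \in (T_Z)^\lfid(\mc{P}) = T_{Z^\lfid}(\mc{P})$ is the tangent vector under the canonical identification of \cref{example.smooth-rig-and-fs}. Hence the position of the filtration against the flat basis is $\tilde g^{-1} \cdot x_0 = g^{-1}(1 + \epsilon A(\delta z))\cdot x_0$, and its derivative in the $\epsilon$ direction is the class in $\omega(\mf{g})/\Fil^0\omega(\mf{g})$ at $z$ of $A(\delta z)$, which by the previous paragraph equals $\kappa_{\omega_\nabla^\Fil}(\delta z)$.

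Two small compatibility checks then close out the proof. First, one should verify that the identification $\pi_\Hdg^* T_{\Fl_G^\lfid/G(\overline{\mc{O}})} = (\omega(\mf{g})/\Fil^0\omega(\mf{g}))^\lfid$ used to formulate the statement is indeed realized, under $\sigma$, by the trivialization $\mf{g}/\Fil^0_{x_0}\mf{g} \cong \omega(\mf{g})(z)/\Fil^0\omega(\mf{g})(z)$ which sends $1 \otimes Y \bmod \Fil^0$ to $Y$ acting on $x_0 \in T_{x_0}\Fl_G$; this is tautological once one unwinds the definitions of the torsor structure on $\mc{G}_\nabla^\lfid$ and of $G(\overline{\mc{O}})$-equivariant descent of $(T_{\Fl_G})^\lfid$. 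Second, one must check that the output is independent of the choice of étale-local $\sigma$: this follows from the well-known fact that $\kappa_{\omega_\nabla^\Fil}$ is globally well-defined (being the obstruction to $\nabla$ preserving $\Fil^\bullet$ modulo the choice of a preserving $\nabla'$).

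The main obstacle is bookkeeping in Paragraph~2: one must justify that the infinitesimal solution of $(d+A)\tilde g=0$ produced by the first-order calculation is indeed the unique flat extension furnished by $\exp_\nabla$, given that the latter was defined abstractly (as parallel transport across the nilpotent thickening $\mc{P} \hookrightarrow \mc{P}[\epsilon]$ using integrability). This is not deep — integrability ensures that parallel transport along any thickening is computed on simple square-zero thickenings by the connection one-form — but it is the point where the abstract definition of $\exp_\nabla$ meets the concrete formula for $d\pi_\Hdg$, and it is what makes the computation coordinate-free despite being performed in coordinates.
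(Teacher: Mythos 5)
Your proof is correct and follows essentially the same route as the paper's: both arguments pass to an \'etale-local trivialization in which the filtration is constant, write a flat section as that frame times an element $g$, and identify the first-order variation of $g^{-1}\cdot x_0$ in the $\epsilon$-direction with $(\nabla-\nabla')$ modulo $\Fil^0\omega(\mf{g})$. The paper's version is just terser bookkeeping of the same computation (working with $\epsilon\nabla(s_0(gv))$ directly rather than solving $(d+A)\tilde g=0$ for the connection form), so no further comment is needed.
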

\begin{proof}
Working \'{e}tale locally we may assume our $\omega$ is trivialized such that the filtration is constant. Then writing a trivialization of the filtration as $s_0 g$ where $s_0$ is a flat section and $g \in G(\mathcal{P}[\epsilon])$, the derivative of the period map lifts as $\frac{1}{\epsilon}(\tilde{g}^{-1}g - 1) \in \Lie G \otimes \mathcal{O}(\mathcal{P})$, where $\tilde{g}$ is the constant extension to $G(\mathcal{P}[\epsilon])$ of the restriction of $g$ to $G(\mathcal{P})$. For $V \in \Rep G$ and $v \in V$, $\epsilon\nabla (s_0(gv))= s_0(gv) - s_0(\tilde{g}v)=s_0((1-\tilde{g}g^{-1})gv)$. Since $\epsilon \nabla'( s_0(gv))=s_0((1-p)gv)$ where $p$ is $1$ mod $\epsilon$ in the stabilizer $P$ of the filtration, we find $\epsilon(\nabla-\nabla')(s_0(gv))=(s_0((-\overline{g}g^{-1}+p)gv)$. It follows that, in the trivialization given by $s_0$, $\nabla-\nabla'$ is the endomorphism $\frac{1}{\epsilon}(-\tilde{g}g^{-1}+p)$ which, modulo $\Lie P$, agrees with $\frac{1}{\epsilon}(\tilde{g}^{-1}g-1)$, as desired. 
\end{proof}

\subsubsection{} Recall our convention that any $v$-stack can be viewed as an inscribed $v$-stack via the trivial inscription; in particular, under this convention we have $Z^\diamond=\overline{(Z^{\lfid})}$ and $\Fl_G^\diamond/G(\overline{\mc{O}})= \overline{{\Fl_G^\lfid}/G(\overline{\mc{O}})}$. We write $\overline{\pi}_\Hdg$ for the induced map $Z^\diamond \rightarrow \Fl_G^\diamond/G(\overline{\mc{O}})$.

\begin{remark}$\overline{\pi}_\Hdg$ could also be interpreted as the diamondification of the period map of rigid analytic stacks $Z \rightarrow \Fl_G/G$ obtained by quotienting $\pi_\Fil$ by $G$; note, however, that $\pi_\Hdg$ is \emph{not} the $\diamond^\lf$ of this map, since the target of $\pi_\Hdg$ is $\Fl_G^\lfid/G(\overline{\mc{O}})$ rather than $\Fl_G^\lfid/G(\mc{O})$. This is crucial as only the former contains interesting differential information.  
\end{remark}

When the Kodaira-Spencer map is an isomorphism, $\pi_\Hdg$ is an isomorphism of the formal neighborhood in $Z^\lfid$ of any point in $Z^\diamond$ into the formal neighborhood in $\Fl_G^\lfid/G^\diamond$ of its image in $\Fl_G^\diamond/G^\diamond$. In particular, we obtain the following description of $Z^{\diamond_\lf}$ in terms of $Z^\diamond$ and the flag variety.

\begin{proposition}\label{prop.ks-iso-formal-nbhd}
If $\kappa_{\omega_{\nabla}^\Fil}$ is an isomorphism, then the following square is Cartesian
\[\begin{tikzcd}
	{Z^\lfid} & {\Fl_G^\lfid}/G(\overline{\mc{O}}) \\
	{Z^\diamond} & {\Fl_G^\diamond}/G(\overline{\mc{O}})
	\arrow["{\pi_{\Hdg}}", from=1-1, to=1-2]
	\arrow[from=1-1, to=2-1]
	\arrow[from=1-2, to=2-2]
	\arrow["{\overline{\pi}_{\Hdg}}"', from=2-1, to=2-2]
\end{tikzcd}\]
\end{proposition}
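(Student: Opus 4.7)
The plan is to show the natural map $\alpha\colon Z^\lfid \to W$ (where $W$ denotes the proposed fiber product) is an isomorphism of inscribed $v$-sheaves. First I would observe that both sides have the same underlying $v$-sheaf $Z^\diamond$: for the right-hand side this follows by forming the fiber product on underlying $v$-sheaves using $\overline{\Fl_G^\lfid/G(\overline{\mc{O}})}=\Fl_G^\diamond/G(\overline{\mc{O}})$. Thus it suffices to show $\alpha(\mc{P}/P^\sharp)$ is a bijection for every test object $\mc{P}/P^\sharp$.

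Next I would reduce to the case of a simple square-zero thickening. Given $\mc{P}/P^\sharp$, the nilpotent ideal $\mc{I}=\ker(\mc{O}_{\mc{P}}\to \mc{O}_{P^\sharp})$ admits a finite filtration whose associated graded pieces are locally free, yielding a tower $P^\sharp=\mc{P}_0 \subseteq \mc{P}_1\subseteq \cdots \subseteq \mc{P}_n = \mc{P}$ in which each $\mc{P}_{i+1}/\mc{P}_i$ is a locally free square-zero extension with ideal $\mc{I}_{i+1}$. By induction on $i$, assuming $\alpha$ is a bijection on $\mc{P}_i$-sections, I would verify the same for $\mc{P}_{i+1}$-sections by comparing the fibers of both sides over a fixed $s_i\in Z^\lfid(\mc{P}_i)$.

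On the left, since $Z/L$ is smooth, lifts of $s_i\colon \mc{P}_i\to Z$ to $\mc{P}_{i+1}\to Z$ over $\Spa L$ exist, and form a torsor under $H^0(\mc{P}_i, s_i^*T_Z\otimes \mc{I}_{i+1})$. On the right, because $W\to Z^\diamond$ has fiber $(\Fl_G^\lfid/G(\overline{\mc{O}}))\times_{\Fl_G^\diamond/G(\overline{\mc{O}})}\{\overline{\pi}_\Hdg(\overline{f})\}$ and $\Fl_G$ is smooth (so $\Fl_G^\lfid$ is relatively smooth over its underlying $v$-sheaf after passing to a $v$-cover trivializing the $G(\overline{\mc{O}})$-torsor), the corresponding fiber is a torsor under $H^0(\mc{P}_i,\pi_\Hdg(s_i)^*T_{\Fl_G^\lfid/G(\overline{\mc{O}})/(\Fl_G^\diamond/G(\overline{\mc{O}}))}\otimes \mc{I}_{i+1})$; the latter tangent bundle I would identify with $s_i^*(\omega(\mf{g})/\Fil^0\omega(\mf{g}))^\lfid$ using \cref{theorem.smooth-adic-space-mos} and a computation of the quotient tangent bundle via \cref{prop.inscribed-quotient}. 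The induced map of torsors is $d\pi_\Hdg$ tensored with $\mc{I}_{i+1}$, which by \cref{lemma.dpihdg-ks} is $(\kappa_{\omega_\nabla^\Fil})^\lfid\otimes \mc{I}_{i+1}$; this is an isomorphism by the hypothesis that $\kappa_{\omega_\nabla^\Fil}$ is. Since both torsors are non-empty, $\alpha$ is a bijection at $\mc{P}_{i+1}$.

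The main obstacle will be the careful accounting of the tangent bundle of the quotient $\Fl_G^\lfid/G(\overline{\mc{O}})$ relative to $\Fl_G^\diamond/G(\overline{\mc{O}})$ and the matching torsor identification: one must check that the $G(\overline{\mc{O}})$-quotient introduces no extra data in the square-zero direction (which follows because $G(\overline{\mc{O}})$ is trivially inscribed, so its Lie algebra contributes only to the underlying-$v$-sheaf direction), and that lifts on the flag side can indeed be promoted through the quotient (handled by $v$-locally trivializing the torsor). Once the torsor structures on both sides are correctly identified as pullbacks of the Kodaira--Spencer source and target, the conclusion is immediate from the hypothesis.
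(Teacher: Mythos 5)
Your argument is correct in substance and rests on exactly the same two inputs as the paper's proof --- the identification $d\pi_{\Hdg}=(\kappa_{\omega_{\nabla}^\Fil})^\lfid$ from \cref{lemma.dpihdg-ks} together with smoothness of $Z$ and $\Fl_G$ --- but the execution differs. The paper first strips the $G(\overline{\mc{O}})$-quotient by reducing to the square $\mc{G}^\lfid_\nabla\to\Fl_G^\lfid$ over $\mc{G}^\diamond\to\Fl_G^\diamond$ (the flat-sections torsor rigidifies the left column: above a fixed $\tilde f_0$ every deformation $f$ of $f_0$ has a unique flat lift $\tilde f$), and then observes that $f\mapsto\pi_{\Fil}(\tilde f)$ is induced by an honest morphism between the formal neighborhoods of the graphs of $f_0$ and of $\pi_{\Hdg}\circ f_0$; since these formal neighborhoods are smooth and the derivative of this morphism is the Kodaira--Spencer isomorphism, it is an isomorphism, and the Cartesian property follows for all test objects $\mc{P}$ at once. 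Your square-zero d\'evissage is the pointwise shadow of this argument; it works, but it introduces an avoidable burr that you should fix: a general locally free nilpotent thickening need \emph{not} admit a filtration whose graded pieces are locally free (that is precisely the extra content of the $\lfp$ condition in \cref{ss.slope-condition}; already $\mc{I}^2$ can fail to be locally free for a test object in $\Box^{\sharp,\lf}$), so your intermediate stages $\mc{P}_i$ need not be test objects and \cref{prop.ssadic-ros-tangent} and \cref{corollary.section-infinitesimal-pullback} do not literally apply to them. This is repairable --- the sets of lifts along the square-zero steps $\mc{I}^k/\mc{I}^{k+1}$ are still torsors under $H^0(\mc{P}_i, s_i^*T_Z\otimes\mc{I}^k/\mc{I}^{k+1})$ for merely coherent ideals, the relevant hom-sets are just morphisms of adic spaces so evaluating on non-test-objects is harmless, and $\kappa\otimes\mathrm{id}$ remains an isomorphism --- but the paper's route of working with the whole formal neighborhood at once avoids the issue entirely. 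Your handling of the $G(\overline{\mc{O}})$-quotient (trivially inscribed, hence contributing nothing in the deformation direction, plus $v$-local trivialization) is the right idea and corresponds to the paper's opening reduction to the unquotiented square.
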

\begin{proof}
Since we can quotient by $G(\overline{\mc{O}})$, it suffices to show the following diagram is Cartesian
\[\begin{tikzcd}
	{\mc{G}^\lfid_\nabla} & {\Fl_G^\lfid} \\
	{\mc{G}^\diamond} & {\Fl_G^\diamond}
	\arrow["{\tilde{\pi}_{\Hdg}}", from=1-1, to=1-2]
	\arrow[from=1-1, to=2-1]
	\arrow[from=1-2, to=2-2]
	\arrow["\overline{{\tilde{\pi}}}_{\Hdg}"', from=2-1, to=2-2]
\end{tikzcd}\]

We fix a map $\tilde{f}_0: P^\sharp \rightarrow \mc{G}$ lying above $f_0: P^\sharp \rightarrow Z$. Since $\mc{G}_{\nabla}^\lfid$ is a $G(\overline{\mc{O}})$-torsor over $Z^\lfid$, for any $\mc{P}/P^\sharp$ and $f:\mc{P} \rightarrow Z$ with $f|_P^\sharp=f_0$, there is a unique $\tilde{f}: \mc{P} \rightarrow \mc{G}$ lying above $f$ with $\tilde{f}|_{P^\sharp}=\tilde{f}_0$. The assignment $f \mapsto \pi_{\Fil}(\tilde{f})$ is, by construction, induced by an isomorphism from the formal neighborhood of the graph of $f_0: P^\sharp \rightarrow Z$ to the formal neighborhood of the graph of $\pi_{\Hdg}\circ f_0: P^\sharp \rightarrow \Fl_G$. By our computation of $d\pi_\Hdg$, the derivative of this map of formal neighborhoods is the Kodaira-Spencer map, which is an isomorphism. Since these formal neighborhoods are smooth, this map is an isomorphism, so the assignment 
$\tilde{f} \mapsto \pi_{\Hdg}(\tilde{f})$ is a bijection between the set of $\tilde{f}$ deforming $f_0$ and the set of deformations of $\pi_{\Hdg} \circ f_0$. This is exactly the statement that the diagram is Cartesian.  

\end{proof}

\subsection{The lattice Hodge period map}\label{ss.latice-Hodge-period-map}

\subsubsection{} Suppose now that $L$ is a $p$-adic field and $Z/L$ is a smooth rigid analytic variety. In this subsection, we work in the inscribed context $(\Spd L, \square^{\sharp-\alg}_{(\infty)})$. We write $\mathbb{B}^+_\dR$ for the sheaf $\mathbb{B}$ of \cref{def.BB}, sending $\mathcal{B}/P^{\sharp-\alg}_{(\infty)}$ to $\mathcal{O}(\mathcal{B})$. We write $\mathcal{O}$ for the sheaf 
\[ \mathcal{O}(\mc{B})=\mc{O}_{P^{\sharp-\alg} \times_{P^{\sharp-\alg}_{(\infty)}} \mc{B}}(P^{\sharp-\alg} \times_{P^{\sharp-\alg}_{(\infty)}} \mc{B})\]
and 
$\mathbb{B}_\dR$ for the sheaf 
\[ \mathbb{B}_\dR(\mc{B})=\mathcal{O}_{\mc{B}}(P^{\sharp-\alg}_{(\infty)}\backslash P^{\sharp-\alg} \times_{P^{\sharp-\alg}_{(\infty)}} \mathcal{B}),\]
both of which are obtained by change of context as in \cref{ss.change-of-context}. It will be convenient below to write $t$ to mean any choice of a generator of $\Fil^1 \mathbb{B}^+_\dR$ (we cannot make a global choice but over any test object generators exist and nothing will be depend on the choice). 

\subsubsection{}
Suppose $G/L$ is a connected linear algebraic group, and $\omega$ is a $G$-bundle on $Z$ with filtration and integrable connection satisfying Griffiths transversality, i.e. an exact tensor functor from $\Rep G$ to the category of filtered vector bundles on $Z$ with integrable connection satisfying Griffiths transversality. In this subsection we will construct a lattice Hodge period map 
\[ \pi^+_\Hdg: Z^{\lfid} \rightarrow \Gr_{G}/G(\overline{\mathbb{B}^+_\dR}).\]

If we suppose furthermore that the type $[\mu^{-1}]$ of the filtration is constant (this holds, e.g., if $Z$ is geometrically connected), then $\pi_\Hdg^+$ will factor through $\Gr_{[\mu]}/G(\overline{\mathbb{B}^+_\dR})$, and, noting that we can view the map $\pi_\Hdg$ of the previous subsection within this inscribed context by change of context as a map $Z^\lfid \rightarrow \Fl_{[\mu^{-1}]}/G(\overline{\mathcal{O}})$, it will fit into a commutative diagram 
% https://q.uiver.app/#q=WzAsMyxbMCwwLCIgWl57XFxsZmlkfSAiXSxbMiwwLCJcXEdyX3tbXFxtdV19L0coXFxvdmVybGluZXtcXG1hdGhiYntCfV4rX1xcZFJ9KSJdLFsyLDIsIlxcRmxfe1tcXG11XnstMX1dfS9HKFxcb3ZlcmxpbmV7XFxtYXRoY2Fse099fSkiXSxbMCwxLCJcXHBpXitfXFxIZGciLDFdLFswLDIsIlxccGlfe1xcSGRnfSIsMV0sWzEsMiwiXFxCQiIsMV1d
\[\begin{tikzcd}
	{ Z^{\lfid} } && {\Gr_{[\mu]}/G(\overline{\mathbb{B}^+_\dR})} \\
	\\
	&& {\Fl_{[\mu^{-1}]}/G(\overline{\mathcal{O}})}
	\arrow["{\pi^+_\Hdg}"{description}, from=1-1, to=1-3]
	\arrow["{\pi_{\Hdg}}"{description}, from=1-1, to=3-3]
	\arrow["\BB"{description}, from=1-3, to=3-3]
\end{tikzcd}\]

\subsubsection{}The key point is the construction of an exact tensor functor $\mathbb{M}$ from from filtered vector bundles with connection to locally free $\mathbb{B}^+_\dR$-modules on $S^\lfid$. On restriction to $Z^\diamond=\overline{Z^\lfid}$, this will recover the functor $\mathbb{M}$ of \cite{Scholze.pAdicHodgeTheoryForRigidAnalyticVarieties}; it is a crucial point, however, that when the filtration is not flat, then $\mathbb{M}$ \emph{not} simply the constant extension of $\mathbb{M}|_{Z^\diamond}$. 

\subsubsection{} Rather than imitating the $\mathcal{O}\mathbb{B}_\dR$-formalism of \cite{Scholze.pAdicGeometry}, we find it clearer in this case (and for the generalization in the next section) to utilize a geometrization of the construction via the $\mathbb{B}^+_\dR$-jet sheaf: for each $i \geq 0$, we define the $\mathbb{B}^+_\dR/\Fil^{i+1}\mathbb{B}^+_\dR$-jet sheaf $Z_{(i)}^\lfid$ to be the moduli of sections for $Z \times_{\Spd L} \square_{(i)}^\sharp / \square_{(i)}^\sharp$ (viewed in our inscribed setting by change of context), and we define the $\mathbb{B}^+_\dR$-jet sheaf as $Z_{(\infty)}^\lfid = \varprojlim Z_{(i)}$ --- in particular, these are inscribed $v$-sheaves by \cref{theorem.smooth-adic-space-mos}. 

Explicitly, for a test object $(P/\Spd L, \mc{B})$ with $P^\sharp=\Spa(R,R^+)$, if we write 
\[ \mc{B}_{(i)}^\an=\Spa( \mathcal{O}(\mc{B}) \otimes_{\mathbb{B}^+_\dR(R)} \mathbb{B}^+_\dR(R)/\Fil^{i+1} \mathbb{B}^+_\dR(R)) \]
(where the plus ring is the preimage of $R^+$), then we have
\[ Z_{(i)}^\lfid(\mc{B}) = \Hom_{L}(\mc{B}_{(i)}^\an, Z) \textrm{ and } Z_{(\infty)}(\mc{B})=\varprojlim_i \Hom_{L}(\mc{B}_{(i)^\an}, Z). \]

\newcommand{\fan}{\mathrm{f-an}}

In particular, in this setting, a point of $Z_{(\infty)}^\lfid$ gives a map of ringed spaces $\mc{B}^\fan \rightarrow Z$, where $\mc{B}^\fan$ is the ringed space with underlying topological space 
\[ |\mc{B}^\fan|:=|P|=|P_{(i)}^\sharp|=|\mc{B}_{(i)}^\an| \textrm{ for any $0 \leq i < \infty$} \]
and is equipped with the structure sheaf 
\[ \mathcal{O}_{\mc{B}^\fan}=\varprojlim \mathcal{O}_{\mc{B}_{(i)}} \]
Here the superscript $\fan$ stands for formal analytic. We also write $\mc{B}^\fan[\frac{1}{t}]$ for the ringed space obtained by base-change of the structure sheaf to $\mathbb{B}_\dR$.

\subsubsection{} Suppose given a filtered vector bundle $V$ on $Z$ (note that, by convention, our filtrations are always by local direct summands). Then, for $j$ a $\mc{B}$-point of $Z_{(\infty)}^\lfid$, writing $j_{\fan}: \mc{B}^\fan\rightarrow Z$ for the associated map, we obtain a vector bundle $j_{\fan}^*V[\frac{1}{t}]$ on $\mc{B}^\fan[\frac{1}{t}]$. It is equipped with a filtration by locally free $\mathcal{O}_{\mc{B}^\fan}$-modules by convolving the filtration on $V$ with that on $\mathcal{O}_{\mc{B}^\fan[\frac{1}{t}]}$ (since the filtration on $V$ is by local direct summands, we can choose a splitting locally to see that each $\Fil^i \left(j_{\fan}^*V[\frac{1}{t}]\right)$ is locally free). From this construction, we obtain a locally free $\mathbb{B}^+_\dR$-module $\mathcal{L}_{V,\Fil}$ on $Z_{(\infty)}^\lfid$ whose fiber over $j$ is  $H^0(\mc{B}^\fan, \Fil^0 \left(j_{\fan}^*V[\frac{1}{t}]\right))$. 

\subsubsection{} Suppose furthermore that the filtered vector bundle $V$ is equipped with an integrable connection $\nabla$ satisfying Griffiths transversality, i.e. $\nabla|_{\Fil^i V}$ factors through $\Fil^{i+1} V \otimes_{\mathcal{O}_L} \Omega_{Z/L}$.

\begin{lemma}The connection $\nabla$ induces a canonical descent data on $\mathcal{L}_{V,\Fil}$ from $Z_{(\infty)}^\lfid$ to $Z^\lfid$. 
\end{lemma}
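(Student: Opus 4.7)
The plan is to realize the descent data explicitly by exponentiating $\nabla$ along the difference of jets with common reduction. The fiber product $Z_{(\infty)}^\lfid \times_{Z^\lfid} Z_{(\infty)}^\lfid$ parameterizes pairs $(j_1, j_2)\colon \mc{B}^\fan \to Z$ agreeing modulo $\Fil^1 \mathbb{B}^+_\dR$, i.e.\ with common restriction to $P^\sharp$. Working Zariski-locally on $Z$ with \'{e}tale coordinates $x_1, \ldots, x_n$, the differences $f_k := j_2^\# x_k - j_1^\# x_k$ then lie in $\Fil^1 \mathbb{B}^+_\dR(\mc{B})$, and one defines a candidate descent isomorphism
\[ \varphi_{j_1,j_2}\colon j_1^* V \otimes_{\mc{O}_Z} \mc{O}_{\mc{B}^\fan}[1/t] \xrightarrow{\sim} j_2^* V \otimes_{\mc{O}_Z} \mc{O}_{\mc{B}^\fan}[1/t],\quad v \mapsto \sum_I \tfrac{f^I}{I!}\,\nabla_I v, \]
with $I$ ranging over multi-indices and $\nabla_I$ the iterated covariant derivative in the coordinates $x_k$. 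Convergence is termwise in the $\Fil^1$-adic topology since $f^I \in \Fil^{|I|} \mathbb{B}^+_\dR$ and $\mc{O}_{\mc{B}^\fan}$ is $\Fil^1$-adically complete.

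Next I would verify that $\varphi_{j_1,j_2}$ is intrinsic (independent of the choice of coordinates on $Z$) and satisfies the cocycle identity $\varphi_{j_2,j_3} \circ \varphi_{j_1,j_2} = \varphi_{j_1,j_3}$ on the triple fiber product. The cleanest formulation is to characterize $\varphi_{j_1,j_2}$ as the unique $\mc{O}_{\mc{B}^\fan}$-linear isomorphism whose reduction modulo $\Fil^1$ is the tautological identification of the two pullbacks to $P^\sharp$ and which intertwines the pullbacks of $\nabla$; both properties then follow formally from the integrability of $\nabla$, which ensures that parallel transport depends only on the endpoints and not on the chosen coordinate directions.

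The critical step, and the one forcing the Griffiths transversality hypothesis, is to verify that $\varphi_{j_1,j_2}$ preserves the convoluted filtration on source and target, so that it restricts to an isomorphism $\Fil^0(j_1^* V[1/t]) \xrightarrow{\sim} \Fil^0(j_2^* V[1/t])$ and hence induces descent data on $\mc{L}_{V,\Fil}$. By Griffiths transversality, $\nabla_I$ shifts the filtration on $V$ by at most $-|I|$; combined with $f^I \in \Fil^{|I|}\mathbb{B}^+_\dR$, each term $\tfrac{f^I}{I!}\,\nabla_I$ therefore preserves $\Fil^0$ of the convolution. This is the main obstacle of the proof: without Griffiths transversality, the individual Taylor terms would spill outside $\Fil^0$, and the descent would only exist for the flat extension $V \otimes \mathbb{B}^+_\dR$ rather than for the non-flat lattice $\mc{L}_{V,\Fil}$. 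The non-triviality of this step is precisely what makes $\mc{L}_{V,\Fil}$ a genuine inscribed enrichment of Scholze's $\mathbb{M}$, and ultimately what makes $\pi_{\Hdg}^+$ a non-trivial refinement of $\pi_\Hdg$ in \cref{remark.griffiths-transversality}.
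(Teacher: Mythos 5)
Your proposal is correct and takes essentially the same route as the paper: the paper also realizes the descent isomorphism as the pullback of the Taylor-expanded parallel transport $\epsilon(1\otimes v)=\sum_{\ul{k}} \partial^{\ul{k}}v \otimes \tau^{\ul{k}}/\ul{k}!$ along a pair of jets with common reduction, and verifies filtration preservation by exactly your count ($\tau^{\ul{k}}$ pulls back into $t^{|\ul{k}|}\mathcal{O}_{\mc{B}^\fan}$ while Griffiths transversality gives $\partial^{\ul{k}}\Fil^i V \subseteq \Fil^{i-|\ul{k}|}V$). The only cosmetic difference is that the paper first gets the descent data for the trivial filtration from the canonical isomorphism on the formal neighborhood of the diagonal (which packages your coordinate-independence and cocycle checks) and then shows the sublattice $\mc{L}_{V,\Fil}\subseteq \mc{L}_V[\frac{1}{t}]$ is preserved, whereas you build the map directly and characterize it by uniqueness.
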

\begin{proof}
We first consider the case when the filtration is trivial (in which case Griffiths transversality holds for any connection), so that $\Fil^0 \left(j_{\fan}^*V[\frac{1}{t}]\right)=j_{\fan}^*V$; we write $\mathcal{L}_V$ in place of $\mathcal{L}_{V, \Fil}$ when the filtration is trivial. In this case, the descent data is obtained via the usual construction viewing the integrable connection $\nabla$ as an isomorphism $\epsilon:p_1^* V \rightarrow p_2^* V$ on the formal neighborhood of the diagonal in $Z \times_{\Spa L} Z$ and then pulling back: for each $i$, given two $\mathcal{B}$-points $j$, $j'$ in $Z_{(\infty)}^\lfid(\mc{B})$ that restrict to the same $j_{(0)}=j'_{(0)}$ in $Z^\lfid(\mc{B})$, the associated map $j_{(i)}\times j'_{(i)}: \mc{B}_{(i)} \rightarrow Z \times_{\Spa L} Z$ factors through this formal neighborhood, and so we can pull back the isomorphism $\epsilon$ and take global sections to obtain the desired descent data on $\mathcal{L}_{V}$. In particular, we also obtain descent data on $\mathcal{L}_{V}[\frac{1}{t}]$. 

Returning to the case of a general filtration, since $\mathcal{L}_{V,\Fil} \subseteq \mathcal{L}_{V}[\frac{1}{t}]$, it suffices to show the descent data on the latter preserves the former. This can be seen by expressing $\epsilon$ via a Taylor expansion in local coordinates: suppose given two $\mathcal{B}$-points $j$, $j'$ that restrict to the same morphism $j_{(0)}$ from $\mc{B}_{(0)}$. We replace $|P|$ with a rational open that factors through an open in $Z$ with local torus coordinates $x_1, \ldots, x_n$ and write $\tau_i= x_1 \otimes 1 - 1 \otimes x_n$. Then, by the usual computation, the isomorphism $\epsilon$ can be written as 
\[ \epsilon(1 \otimes v)=\sum_{\ul{k}=(k_1, \ldots, k_n) \in \mathbb{Z}_{\geq 0}^k} \partial^{\ul{k}}v \otimes \frac{\tau^{\ul{k}}}{\ul{k}!} \]
where $\partial^{\ul{k}}=\partial_{x_1}^{k_1}\ldots\partial_{x_n}^{k_n}$, $\tau^{\ul{k}}=\tau_1^{k_1}\ldots\tau_{n}^{k_n}$, and $\ul{k}!=k_1!\ldots k_n!$. Now, since $(j^\fan \times' j'^\fan)^*  \tau_i$ lies in $t\mathcal{O}_{\mathcal{B}}^{f-\an}$, we find $(j^\fan \times' j'^\fan)^*  \tau^{\ul{k}}$ lies in $t^{|k|}\mathcal{O}_{\mathcal{B}}^{f-\an}$, where $|k|:= k_1 + \ldots k_n$. On the other hand, by Griffiths transversality for $v \in \Fil^{i} V$, $\partial^{\ul{k}} v \in \Fil^{i-|k|}v$, and thus, by the definition of the convolution filtration, the pullback of $\epsilon$ preserves $\Fil^0 (j^*_{\fan}V[\frac{1}{t}]) \subseteq j^*_{\fan}V[\frac{1}{t}]$. Thus the descent data preserves $\mathcal{L}_{V,\Fil}$, so that $\mathcal{L}_{V,\Fil}$ descends to $Z^\lfid$. 
\end{proof}

\subsubsection{}
We write $\mathbb{M}$ for the resulting functor from filtered vector bundles with integrable connection satisfying Griffiths transversality on $Z$ to locally free $\mathbb{B}^+_\dR$-modules on $Z^\lfid$. Since it is constructed using pullback and descent, it is an exact tensor functor. 

We also write $\mathbb{M}_0$ for the functor from vector bundles with integrable connection on $Z$ to locally free $\mathbb{B}^+_\dR$-modules on $Z^\lfid$ obtained by applying $\mathbb{M}$ to the trivial filtration; by abuse of notation, we also write $\mathbb{M}_0$ for the functor on filtered vector bundles with integrable connection satisfying Griffiths transversality obtained by composing with the forgetful functor (i.e. by applying $\mathbb{M}$ after replacing the given filtration with the trivial filtration). In this setting, by construction, we have a canonical isomorphism 
\begin{equation}\label{eq.m-and-m0}\mathbb{M} \otimes_{\mathbb{B}^+_\dR} \mathbb{B}_\dR = \mathbb{M}_0 \otimes_{\mathbb{B}^+_\dR} \mathbb{B}_\dR\end{equation}

If $V$ is a vector bundle with integrable connection on $Z$, then the connection on $V$ gives an isomorphism $j_{\fan}^*V=\overline{j}_{\fan}^* V$, where $\overline{j}$ is the image of $j$ under the composition of the natural maps \[ Z_{(\infty)}^\lfid \rightarrow Z_{(\infty)}^\diamond \rightarrow Z_{(\infty)}^\lfid. \] 
Taking global sections, we obtain

\begin{lemma}\label{lem.m0-connection}
    For $r: Z^\lfid \rightarrow Z^\diamond$ the natural map, there is a canonical natural isomorphism
\[ r^*\overline{\mathbb{M}_0} \otimes_{\overline{\mathbb{B}^+_\dR}}  \mathbb{B}^+_\dR \rightarrow \mathbb{M}_0.\]
\end{lemma}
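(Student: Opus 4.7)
The plan is to build the asserted map pointwise on the $\mathbb{B}^+_\dR$-jet scheme $Z^\lfid_{(\infty)}$ using the key observation stated just above the lemma, and then descend it along $Z^\lfid_{(\infty)} \to Z^\lfid$. Write $\tilde{r}: Z^\lfid_{(\infty)} \to Z^\diamond_{(\infty)}$ for the restriction and $s: Z^\diamond_{(\infty)} \to Z^\lfid_{(\infty)}$ for the canonical section induced by the structure morphism $\mathcal{B}^\fan \to P^\fan$, and let $\overline{\mathcal{L}_V}$ denote the analogue of $\mathcal{L}_V$ on $Z^\diamond_{(\infty)}$, so that $\mathbb{M}_0(V)$ is the descent of $\mathcal{L}_V$ along $Z^\lfid_{(\infty)} \to Z^\lfid$ and $\overline{\mathbb{M}_0}(V)$ is the descent of $\overline{\mathcal{L}_V}$ along $Z^\diamond_{(\infty)} \to Z^\diamond$.

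For a test point $j \in Z^\lfid_{(\infty)}(\mathcal{B})$ over $P^{\sharp-\alg}_{(\infty)}$ with $P^\sharp = \Spa(R,R^+)$, set $\bar{j} := s(\tilde{r}(j))$; by construction $\bar{j}_\fan$ factors as $\mathcal{B}^\fan \to P^\fan \xrightarrow{\tilde{r}(j)_\fan} Z$. The observation recalled just before the lemma supplies a natural isomorphism $j_\fan^* V \cong \bar{j}_\fan^* V$, and the factorization identifies $\bar{j}_\fan^* V$ with the pullback of $\tilde{r}(j)_\fan^* V$ along the map $\mathbb{B}^+_\dR(R) \to \mathbb{B}^+_\dR(\mathcal{B})$ (which is flat because locally free thickenings are locally free). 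Taking global sections then yields
\[
\mathcal{L}_V|_j \;\cong\; H^0(\mathcal{B}^\fan, \bar{j}_\fan^* V) \;=\; \overline{\mathcal{L}_V}|_{\tilde{r}(j)} \otimes_{\mathbb{B}^+_\dR(R)} \mathbb{B}^+_\dR(\mathcal{B}),
\]
and these assemble into a natural isomorphism of $\mathbb{B}^+_\dR$-modules on $Z^\lfid_{(\infty)}$,
\[
\tilde{r}^* \overline{\mathcal{L}_V} \otimes_{\overline{\mathbb{B}^+_\dR}} \mathbb{B}^+_\dR \;\xrightarrow{\sim}\; \mathcal{L}_V.
\]

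To complete the proof, I will verify that this isomorphism intertwines the descent data defining $\mathbb{M}_0$ from $\mathcal{L}_V$ with those defining $\overline{\mathbb{M}_0}$ from $\overline{\mathcal{L}_V}$, so that one can pass to the quotient to obtain the claimed isomorphism on $Z^\lfid$. Both descent data are, by the construction of $\mathbb{M}_0$, pullbacks of the same canonical isomorphism $\epsilon: p_1^* V \xrightarrow{\sim} p_2^* V$ on the formal neighborhood of the diagonal in $Z \times_L Z$. For two lifts $j, j' \in Z^\lfid_{(\infty)}(\mathcal{B})$ of a common point in $Z^\lfid(\mathcal{B})$, so that $\tilde{r}(j), \tilde{r}(j')$ are lifts of $r(j)|_{P} \in Z^\diamond(P)$, the required compatibility square has horizontal arrows given by pullback of $\epsilon$ along $j \times j'$ and $\tilde{r}(j) \times \tilde{r}(j')$ and vertical arrows given by our pointwise identifications, which are themselves pullbacks of $\epsilon$ along $j \times \bar{j}$ and $j' \times \bar{j}'$. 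The square therefore commutes by the cocycle identity $\epsilon_{13} = \epsilon_{23} \circ \epsilon_{12}$ for $\epsilon$ on triple overlaps, itself a direct consequence of the integrability of $\nabla$. The main obstacle is precisely this bookkeeping of descent compatibility; once it is in place, flat base change together with the inscribed $v$-sheaf property of \cref{lemma.thickenings-v-descent} gives the desired natural isomorphism on $Z^\lfid$.
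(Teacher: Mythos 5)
Your proposal is correct and follows essentially the same route as the paper, whose entire justification is the sentence preceding the lemma: the connection gives $j_\fan^*V \cong \overline{j}_\fan^*V$ for $\overline{j}$ the image of $j$ under $Z^\lfid_{(\infty)} \to Z^\diamond_{(\infty)} \to Z^\lfid_{(\infty)}$, and one takes global sections. Your explicit verification that the pointwise identifications intertwine the two descent data via the cocycle identity for $\epsilon$ is exactly the bookkeeping the paper leaves implicit.
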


\begin{remark}
    Such an isomorphism should be thought of as an integrable connection in the inscribed setting. 
\end{remark}

\begin{remark}
On underlying $v$-sheaves, the functors $\mathbb{M}$ and $\mathbb{M}_0$ recover the constructions of \cite[\S6]{Scholze.pAdicHodgeTheoryForRigidAnalyticVarieties} (cf. also \cite[\S3.2]{HoweKlevdal.AdmissiblePairsAndpAdicHodgeStructuresIITheBiAnalyticAxLindemannTheorem}). \cref{lem.m0-connection} says that $\mathbb{M}_0$ contains no additional information in the inscribed setting --- it is isomorphic to the trivial extension of the construction in the non-inscribed setting. On the other hand, if the filtration on $V$ is not flat, then the induced integrable connection on $\mathbb{M}_0 \otimes_{\mathbb{B}^+_\dR} \mathbb{B}_\dR$ does \emph{not} preserve $\mathbb{M}$, i.e. does not identify $r^*\overline{\mathbb{M}} \otimes_{\overline{\mbb{B}^+_\dR}} \mathbb{B}^+_\dR$ with $\mathbb{M}$. Thus $\mathbb{M}$ carries genuinely interesting inscribed information; this will manifest in the derivatives of the period maps constructed below. 
\end{remark}

\subsubsection{}
Finally, we can construct our lattice Hodge period map. Consider the composition $\mathbb{M} \circ \omega$. We have an isomorphism 
\[ (\mathbb{M} \circ \omega) \otimes_{\mathbb{B}^+_\dR} \mathbb{B}_\dR = (\mathbb{M}_0 \circ \omega) \otimes_{\mathbb{B}^+_\dR} \mathbb{B}_\dR = (r^*\overline{\mathbb{M}_0} \circ \omega) \otimes_{\overline{\mathbb{B}^+_\dR}} \mathbb{B}_\dR\]
coming from \cref{eq.m-and-m0} and \cref{lem.m0-connection}. Thus, we obtain a period map to $\Gr_G$ on the trivializing cover for $r^*\overline{\mathbb{M}_0} \circ \omega$. Since this trivializing cover is a $G(\overline{\mathbb{B}^+_\dR})$-torsor and the period map is equivariant, we can view this instead as a period map 
\[ \pi_{\Hdg}^+: Z^\lfid \rightarrow \Gr_{G}/G(\overline{\mathbb{B}^+_\dR}).\]

\subsubsection{}
The map $\pi_\Hdg^+$ factors through $\Gr_{[\mu]}$ by construction when the filtration on $V$ is of type $[\mu^{-1}]$, and in this case the derivative of $\pi_{\Hdg}^+$ is a map of $\mathbb{B}^+_\dR$-modules 
\[ d\pi_{\Hdg}^+ : T_{Z^\lfid} \rightarrow \mathbb{M}_0(\omega(\mf{g}))/\mathbb{M}(\omega(\mf{g}))\cap\mathbb{M}_0(\omega(\mf{g})).  \]
The composition $\BB \circ \pi_{\Hdg}^+$ is $\pi_\Hdg$, since, by construction, $\Fil^\bullet$ is the trace filtration of $\mathbb{M}$ on $\mathbb{M}_0 \otimes_{ \mathbb{B}^+_\dR} \mathcal{O}$.

From \cref{lemma.dpihdg-ks} it follows, in particular, that $d\pi_{\Hdg}^+$ is a lift of the Kodaira spencer map $\kappa_{\omega^{\nabla}_\Fil}$ along the canonical surjection. 
\[ \mathbb{M}_0(\omega(\mf{g}))/\left(\mathbb{M}(\omega(\mf{g}))\cap\mathbb{M}_0(\omega(\mf{g}))\right) \rightarrow \omega(\mf{g})/\Fil^0 \omega(\mf{g}). \]

\begin{remark}
    Note that Griffiths transversality on $\omega^\Fil_{\nabla}$ is also a \emph{necessary} condition for such a lift to exist. Indeed, as in \cref{cor.griffiths-transversality}, since $d\pi_\Hdg$ is a map of $\mathbb{B}^+_\dR$-modules, it factors through the $t$-torsion in the codomain
    \[ \left(\mathbb{M}_0(\omega(\mf{g}))\cap \frac{1}{t}\mathbb{M}(\omega(\mf{g}))\right)/\left(\mathbb{M}(\omega(\mf{g}))\cap\mathbb{M}_0(\omega(\mf{g}))\right)\]
    In particular, the image of this submodule under $d\BB$ lies in 
    \[ \gr^{-1} \omega(\mf{g})=\Fil^{-1}\omega(\mf{g})/\Fil^0\omega{(\mf{g})}\]
    but the condition that $(\nabla, \Fil)$ satisfies Griffiths transversality is precisely the condition that $d\pi_{\Hdg}=\kappa_{\omega_{\nabla}^\Fil}$ factor through $\gr^{-1} \omega(\mf{g})$.
\end{remark}

\subsection{The Liu-Zhu period map}\label{ss.liu-zhu-period-map}
Note that, in the setting of \cref{ss.latice-Hodge-period-map}, we could also push-out to $G(\overline{\mathbb{B}_\dR})$ to obtain from $\pi_\Hdg^+$ a map 
\[ \pi_{\mathrm{LZ}}: Z^\lfid \rightarrow \Gr_G / G(\overline{\mathbb{B}_\dR}). \]
This map can be described without reference to $\mathbb{M}_0$: indeed, it is simply the period map measuring the position of $\mathbb{M} \circ \omega$ against a flat trivialization of $(\mathbb{M} \circ \omega) \otimes_{\mathbb{B}^+_\dR} \mathbb{B}_\dR$. 

\newcommand{\Spaf}{\mathrm{Spaf}}
This interpretation depends on less data than a filtered vector bundle with connection on $Z$: let $C=\overline{L}^\wedge$, let $B^+_\dR=\mathbb{B}^+_\dR(C)$, and let $Z_{\Spaf B^+_\dR} = \varprojlim Z_{B^+_\dR/\Fil^i B^+_\dR}$, where the limit is taken in ringed spaces: in particular the underlying topological space is $|Z_C|$ and the structure sheaf agrees with that constructed in \cite[\S3.1]{LiuZhu.RigidityAndARiemannHilbertCorrespondenceForpAdicLocalSystems}. We write $Z_{\Spaf B^+_\dR}[\frac{1}{t}]$ for the same space with $t$ inverted in the structure sheaf. Given a vector bundle $W$ on $Z_{\Spaf B^+_\dR}[\frac{1}{t}]$, a connection on $W$ is a map $\nabla: W \rightarrow W \otimes_{\pi^{-1}\mathcal{O}_Z} \pi^{-1}\Omega_Z$ satisfying the Leibniz rule, where $\pi$ is the natural map of ringed spaces $Z_{\Spaf B^+_\dR}[\frac{1}{t}] \rightarrow Z$; it is integrable 
if $\nabla^2=0$. 

In particular, suppose given a vector bundle $V$ on $Z_{\Spaf B^+_\dR}$ and an integrable connection $\nabla$ on $V[\frac{1}{t}]$ such that $\nabla|_V$ factors through $\frac{1}{t} V \subseteq V[\frac{1}{t}]$ --- this condition is Griffiths transversality for the filtrations by powers of $t$, and we call such a pair $(V,\nabla)$ an integrable $t$-connection\footnote{In fact $t\nabla: V \rightarrow V$ is what one would normally call $t$-connection, but this is equivalent data.}. Then, arguing almost exactly as in the previous subsection, we can pull back $V$ to a locally free $\mathbb{B}^+_\dR$-module on the jet sheaf $Z_{(\infty)}^\lfid \times_{\Spd L} \Spd C$, and then use $\nabla$ to produce descent data down to $Z_{C}^\lfid$, defining $\mathbb{M}_C(V)$. The $\mathbb{B}_\dR$-module $\mathbb{M}_C(V)[\frac{1}{t}]$ is equipped with an integrable connection
\[ r^* \overline{\mathbb{M}_C(V)[\frac{1}{t}]} \otimes_{\overline{\mathbb{B}^+_\dR}} \mathbb{B}^+_\dR \]
just as in \cref{lem.m0-connection}. 

In particular, if $\omega$ is an exact functor from $\Rep G$ to integrable $t$-connections on $Z_{\Spaf B^+_\dR}$, then we obtain a period map 
\[ \pi_{\mathrm{LZ}}: Z_C^\lfid \rightarrow \Gr_G/G(\overline{\mathbb{B}_\dR}) \]
measuring the position of $\mathbb{M}_C \circ \omega$ against a flat basis of $\mathbb{M}_C[\frac{1}{t}] \circ \omega$. 

Note that the derivative can be written as a map
\[ d\pi_{\LZ}: T_{Z_C^\lfid} \rightarrow \frac{\mathbb{M}_{C}(\omega(\mf{g}))[\frac{1}{t}]}{\mathbb{M}_{C}(\omega(\mf{g}))}. \]
Since it is a map of $\mathbb{B}^+_\dR$-modules, it factors through 
\[ \frac{\frac{1}{t}\mathbb{M}_{C}(\omega(\mf{g}))}{\mathbb{M}_{C}(\omega(\mf{g}))} = \mathbb{M}_C(\omega(\mf{g})) \otimes_{\mathbb{B}^+_\dR} \gr^{-1}\mathbb{B}^+_\dR. \]

\begin{remark}\label{remark.geometric-sen}
The restriction of $d\pi_{\LZ}$ to the underlying $v$-sheaf $Z^\diamond_C$ is the geometric Sen morphism / canonical Higgs field for the $G(\overline{\mc{O}})$-bundle $\overline{(\mathbb{M}_C \circ \omega) \otimes_{\mathbb{B}^+_\dR} \mc{O}}$ on $Z^\diamond_C$. This can be shown by using an $\mathcal{O}\mathbb{C}$-comparison to construct this $v$-bundle from $\gr^0 \omega$ with the twisted Higgs field induced by $\nabla$ and then applying \cite[Proposition 3.5.2]{RodriguezCamargo.GeometricSenTheoryOverRigidAnalyticSpaces} to compute the geometric Sen morphism. In particular, if $\omega= \RH \circ \omega_\et$ for $\RH$ the Riemann-Hilbert correspondence of \cite{LiuZhu.RigidityAndARiemannHilbertCorrespondenceForpAdicLocalSystems} and  $\omega_\et$ an exact tensor functor from $\Rep G$ to $\mathbb{Q}_p$-local systems on $Z$, then the restriction of $d\pi_\LZ$ is the geometric Sen morphism / canonical Higgs field associated to the $G(\mathbb{Q}_p)$-torsor of trivializations of $\omega_\et$ in \cite{RodriguezCamargo.GeometricSenTheoryOverRigidAnalyticSpaces}. We do not need this comparison for the present work, so we do not give a more detailed argument, but see \cite{Howe.GeometricSenAndKodairaSpencer} for the de Rham case. 
\end{remark}

\subsubsection{}If $V$ is moreover equipped with a continuous action of $\Gal(\overline{L}/L)$ and $\nabla$ is equivariant for this action, then this data all descends to $Z^\lfid$ and defines a tensor functor $\mathbb{M}$ from continuous $\Gal(\overline{L}/L)$-equivariant vector bundles with integrable $t$-connections on $Z_{\Spaf B^+_\dR}$ to $\mathbb{B}^+_\dR$-modules on $Z^\lfid$ and a connection 
\[ r^*\overline{\mathbb{M}[\frac{1}{t}]} \otimes_{\overline{\mathbb{B}_\dR}} \mathbb{B}_\dR \xrightarrow{\sim} \mathbb{M}[\frac{1}{t}]. \]
In particular, if $\omega$ is an exact tensor functor from $\Rep G$ to continuous $\Gal(\overline{L}/L)$-equivariant integrable $t$-connections on $Z_{\Spaf B^+_\dR}$, then we obtain a period map 
\[ \pi_{\mathrm{LZ}}: Z^\lfid \rightarrow \Gr_G/G(\overline{\mathbb{B}_\dR}). \]

\subsubsection{}There is a natural exact tensor functor from filtered vector bundles with integrable connection satisfying Griffiths transversality on $Z$ to vector bundles with integrable $t$-connection on $Z_{\Spaf B^+_\dR}$ sending $(V, \nabla, \Fil)$ to $(\Fil^0(\pi^* V), \pi^*\nabla [1/t])$, where $\pi: Z_{\Spaf B^+_\dR}[\frac{1}{t}] \rightarrow Z$ is the natural map of ringed spaces and $\pi^*V$ is equipped with the convolution of the filtrations on $V$ and $\mathcal{O}_{Z_{\Spaf B^+_\dR}[\frac{1}{t}]}$. Using this functor, $\pi_\LZ$ as defined above recovers the period map described at the start of this subsection obtained from $\pi_\Hdg^+$ by push-out to $G(\overline{\mathbb{B}_\dR})$.

\section{Modifications}\label{s.modifications}
In this section we discuss modifications of $G$-torsors in the inscribed setting. In particular, in \cref{ss.mod-fes} we explain the fundamental exact sequences of $p$-adic Hodge theory in the inscribed setting, which play a key role in the computation of tangent bundles for the moduli of modifications in the sections to come. After some preliminary discussion of automorphism groups of $G$-bundles in \cref{ss.aut-groups}, we then construct the inscribed Hecke correpsondence in \cref{ss.inscribed-hecke}. Using the inscribed Hecke correspondence, we construct the inscribed generalized Newton strata of the $B^+_\dR$-affine Grassmannian and its Schubert cells in \cref{ss.inscribed-generalized-Newton-strata}.

\subsubsection{Notation}
Let $E/\mbb{Q}_p$ be a finite extension. In this section, we work in the inscribed context $(\Spd \mbb{F}_q, X_{E,\Box})$. We will need to consider the sheaf $\mathbb{B}$ not just for $(X^\alg_{E,\Box})^\lf$, but also for related inscribed contexts by change of base. To disambiguate, we begin by fixing some names for these sheaves. 

\begin{itemize}
\item 
We write $E^\lfid$ for the  $(X^\alg_{E,\Box})^\lf$-inscribed $v$-sheaf $\mathbb{B}$ of \cref{def.BB},
\[ E^\lfid(\mathcal{X})=H^0(\mc{X}, \mc{O}).\] 

\item We write $\mbb{B}_e$ for the $(X^\alg_{E,\Box})^\lf$-inscribed $v$-sheaf $\mathbb{B}$ over $\Spd E$ associated, by change of base as in \cref{ss.change-of-context}, to $\mbb{B}$ on $(X_{E,\Box}^\alg\backslash \Box^\sharp)^\lf$: 
\[ \mathbb{B}_e(\mc{X}/X_{E,P}^\alg, P/\Spd E)=H^0(\mc{X}_{X_{E,P}^\alg\backslash P^{\sharp-\alg}}, \mathcal{O}).\]

\item We write $\mbb{B}^+_\dR$ for the $(X^\alg_{E,\Box})^\lf$-inscribed $v$-sheaf over $\Spd E$ associated, by change of base as in \cref{ss.change-of-context}, to $\mbb{B}$ on $(\Box_{(\infty)}^{\sharp-\alg})^\lf$:
\[  \mbb{B}^+_\dR(\mc{X}/X_{E,P}^\alg, P/\Spd E)=H^0(\mathcal{X}_{P_{(\infty)}^{\sharp-\alg}}, \mc{O}) \]

\item We write $\mbb{B}_\dR$ for the $(X^\alg_{E,\Box})^\lf$-inscribed $v$-sheaf over $\Spd E$ associated, by change of base as in \cref{ss.change-of-context}, to $\mbb{B}$ on
$(\Box_{(\infty)}^{\sharp-\alg}\backslash \Box^{\sharp-\alg})^\lf$,
\[  \mbb{B}_\dR(\mc{X}/X_{E,P}^\alg, P/\Spd E)=H^0(\mathcal{X}_{P_{(\infty)}^{\sharp-\alg} \backslash P_{(\infty)}^{\sharp-\alg}}, \mc{O}). \]
\end{itemize}

Note that $\Spec \mbb{B}_e$ (resp. $\Spec \mbb{B}^+_\dR$, resp. $\Spec \mbb{B}_\dR$) is canonically identified with the functor  sending $(\mc{X}/X_{E,P}, P/\Spd E)$ to $\mc{X}_{X_{E,P}}\backslash \mc{X}_{{P}^{\sharp-\alg}}$ (resp. $\mc{X}_{P_{(\infty)}^{\sharp-\alg}}$, resp. $\mc{X}_{P_{(\infty)}^{\sharp-\alg}} \backslash \mc{X}_{{P}^{\sharp-\alg}}$). We will sometimes write $\mc{X}\backslash \infty$ for $\Spec \mbb{B}_e$. 

\subsection{Fundamental exact sequences}\label{ss.mod-fes}
If $\mc{S}$ is an inscribed $v$-sheaf and $\mc{E}: \mc{S} \rightarrow \mc{X}^*\VB$, recall that in \cref{ss.BC-spaces} we have defined $\BC(\mc{E})$ and $\BC(\mc{E}[1])$. If $\mc{S}/\Spd E$, then we also consider, for $\mbb{B}=\mbb{B}_e, \mbb{B}^+_\dR, \textrm{ or } \mbb{B}_\dR$, 
\[ \mc{E} \bct \mbb{B} : \mc{X}/\mc{S} \mapsto H^0(\Spec \mbb{B}(\mc{X}), \mc{E}_{\mc{X}}|_{\Spec \mbb{B}(\mc{X})}). \]
This is naturally a $\mbb{B}$-module, thus, in particular an $E^{\lfid}$-module.  If $W$ is an isocrystal, we write $\BC(W):=\BC(\mc{E}(W))$, $\BC(W[1]):=\BC(\mc{E}(W)[1])$, which lies over $\Spd \Fqbar$, and $W\boxtimes \mbb{B}:= \mc{E}(W) \boxtimes \mbb{B}$, which lies over $\Spd \breve{E}$.

Working over $\Spd E$, if we consider the open immersion $j: \Spec \mbb{B}_e \rightarrow \mc{X}$ and closed immersion $i: \mc{X}_{\Box^{\sharp-\alg}} \hookrightarrow \mc{X}$, then for any inscribed $v$-sheaf $\mc{S}/\Spd E$ and vector bundle $\mc{E}: \mc{S} \rightarrow \mc{X}^*\VB$, there is a natural associated exact sequence of sheaves on $\mc{X}$ over $\mc{S}$ 
\begin{equation}\label{eq.ses-sheaves-on-curlyX} 0 \rightarrow \mc{E}_{\mc{X}} \rightarrow j_*j^*\mc{E} \rightarrow i_* \left((\mc{E}\boxtimes \mbb{B}_\dR)/(\mc{E}\boxtimes \mbb{B}^+_\dR) \right) \rightarrow 0 \end{equation}

\begin{lemma}\label{lemma.fes}
Suppose $\mc{S}$ is an inscribed $v$-sheaf, and $\mc{E}: \mc{S} \rightarrow \mc{X}^*\VB$. Then, each of $\BC(\mc{E})$ and $\BC(\mc{E}[1])$ is an inscribed $v$-sheaf. Moreover, if $\mc{S}/\Spd E$, then, $\mc{E} \boxtimes \mbb{B}$ is an inscribed $v$-sheaf for $\mbb{B}=\mbb{B}_e, \mbb{B}^+_\dR,$ \textrm{ or } $\mbb{B}_\dR$, and the cohomology long exact sequences for 
\cref{eq.ses-sheaves-on-curlyX} induce, by $v$-sheafification, an exact sequence of  $E^{\lfid}$-modules over $\mc{S}$
\begin{equation}\label{eq.fes-inscribed-vb} 0 \rightarrow \BC(\mc{E}) \rightarrow \mc{E} \bct \mbb{B}_{e} \rightarrow   \mc{E} \bct \mbb{B}_\dR / \mc{E} \bct \mbb{B}^+_\dR \rightarrow \BC(\mc{E}[1]) \rightarrow 0.  \end{equation}
\end{lemma}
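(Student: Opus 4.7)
The first assertion (that $\BC(\mc{E})$ and $\BC(\mc{E}[1])$ are inscribed $v$-sheaves) is already Proposition~\ref{prop.BC-inscribed}. For the remaining assertions about $\mc{E}\bct \mbb{B}$, my plan is to first observe that each of $\Spec \mbb{B}_e$, $\Spec \mbb{B}^+_\dR$, and $\Spec \mbb{B}_\dR$, viewed as a functor on $(X_{E,\Box}^\alg)^\lf$, is obtained by change of context (\S\ref{ss.change-of-context}) from the base functor $B$ of one of the inscribed contexts (2), (3), or (5) of Proposition~\ref{prop.inscribed-pairs}. Under each such change of context, $\mc{E}$ pulls back to a vector bundle in the new context, and $\mc{E}\bct\mbb{B}$ is identified with the moduli of sections of the associated geometric vector bundle $\mathbb{V}(\mc{E}^*)$, which is a smooth affine scheme over the base. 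Theorem~\ref{theorem.affine-scheme-mos} then immediately gives that $\mc{E}\bct\mbb{B}$ is an inscribed $v$-sheaf, and the $E^\lfid$-module structure is inherited from the evident $\mbb{B}$-module structure.

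For the exact sequence, I would use the GAGA equivalence of Proposition~\ref{prop.inscribed-pairs} to work with the algebraic versions, so that \eqref{eq.ses-sheaves-on-curlyX} is a short exact sequence of quasi-coherent sheaves on the scheme $\mc{X}^\alg$. Taking Zariski cohomology gives a long exact sequence of presheaves on $(X_{E,\Box}^\alg)^\lf/\mc{S}$
\[ 0 \to H^0(\mc{X},\mc{E}) \to H^0(\mc{X}, j_*j^*\mc{E}) \to H^0(\mc{X}, i_*(\cdots)) \to H^1(\mc{X},\mc{E}) \to H^1(\mc{X}, j_*j^*\mc{E}) \to \cdots \]
I then identify the three $H^0$-terms directly with $\BC(\mc{E})$, $\mc{E}\bct\mbb{B}_e$, and $\mc{E}\bct\mbb{B}_\dR/\mc{E}\bct\mbb{B}^+_\dR$ respectively; the last of these uses the exactness of $i_*$ for a closed immersion and the fact that global sections on $\mc{X}^\alg$ of an $i_*$-pushforward recover sections on the closed subscheme, while the quotient identification reduces to a direct computation on the formal completion at $\infty$ where both $\mc{E}\bct \mbb{B}_\dR$ and $\mc{E}\bct \mbb{B}^+_\dR$ are free modules of the same rank.

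The crucial remaining step is the vanishing of $H^1(\mc{X}, j_*j^*\mc{E})$. Because $j:\Spec \mbb{B}_e \hookrightarrow \mc{X}^\alg$ is the open immersion of the affine open complement of the divisor at $\infty$, one has $R^qj_*(j^*\mc{E}) = 0$ for $q > 0$, so by Leray $H^q(\mc{X}^\alg, j_*j^*\mc{E}) = H^q(\Spec \mbb{B}_e, j^*\mc{E}) = 0$ for $q > 0$ (since $j^*\mc{E}$ is quasi-coherent on the affine scheme $\Spec \mbb{B}_e$). The long exact sequence therefore truncates to a 4-term exact sequence of presheaves on $(X_{E,\Box}^\alg)^\lf/\mc{S}$, whose $v$-sheafification yields \eqref{eq.fes-inscribed-vb}: the first three terms are already $v$-sheaves by the first part of the proof, so they are left unchanged, while by definition the $v$-sheafification of $H^1(\mc{X},\mc{E}_\mc{X})$ is precisely $\BC(\mc{E}[1])$. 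The main place where care will be required is verifying that in the inscribed (thickened) context the identification $\Spec \mbb{B}_e(\mc{X}) = \mc{X}^\alg \setminus \mc{X}^\alg_{\Box^{\sharp-\alg}}$ is indeed an affine open of the thickening $\mc{X}^\alg$ (and not just of $X_{E,P}^\alg$), so that the Leray vanishing applies as above; this reduces, by the nilpotence of the thickening ideal, to the classical affineness of $X_{E,P}^\alg \setminus P^{\sharp-\alg}$ for the algebraic Fargues-Fontaine curve.
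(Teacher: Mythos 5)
Your proposal is correct and follows essentially the same route as the paper: Proposition \ref{prop.BC-inscribed} for the Banach--Colmez terms, Theorem \ref{theorem.affine-scheme-mos} applied to the geometric vector bundles for the $\mc{E}\bct\mbb{B}$ terms, and the long exact sequence truncated by vanishing of quasi-coherent cohomology on the affine open $\Spec\mbb{B}_e$ and the affine support of the $i_*$-term. The paper compresses this last step into one sentence ("immediate from the definitions and the vanishing of quasi-coherent cohomology on affines"); the details you supply, including the affineness of the thickened complement of $\infty$, are exactly the right ones.
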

\begin{proof}
We have already seen in \cref{ss.BC-spaces} that $\BC(\mc{E})$ and $\BC(\mc{E}[1])$ are inscribed $v$-sheaves, and that the $\mc{E} \boxtimes \mbb{B}$ are inscribed $v$-sheaves follows, e.g., from \cref{theorem.affine-scheme-mos} applied to the associated geometric vector bundles (or just from the property for $\mbb{B}$ itself by writing $\mc{E}\boxtimes \mbb{B}$ locally as a direct summand of $\mbb{B}^n$). The exact sequence is then immediate from the definitions and the vanishing of quasi-coherent cohomology on affine schemes. 
\end{proof}

\begin{definition} In the setting of \cref{lemma.fes}, we refer to \cref{eq.fes-inscribed-vb} as the \emph{fundamental exact sequence} for $\mc{E}$. 
\end{definition}

\subsection{Automorphism groups}\label{ss.aut-groups}
Given an inscribed $v$-sheaf $\mc{S}$ and $\mc{E}: \mc{S} \rightarrow \mc{X}^*\BG$, we write $\mc{G}_{\mc{E}}$ for the smooth affine scheme over $\mc{X}$ on $\mc{S}$ of automorphisms of $\mc{E}$. We write $\tilde{G}_{\mc{E}}$ for its moduli of sections, which is an inscribed $v$-sheaf by \cref{theorem.affine-scheme-mos}. If $\mc{S}/\Spd E$, then we can also form its moduli of sections $\mc{G}_{\mc{E}}(\mbb{B}_e)$, $\mc{G}_{\mc{E}}(\mbb{B}^+_\dR)$, and $\mc{G}_{\mc{E}}(\mbb{B}_\dR)$, which are again inscribed $v$-stacks by \cref{theorem.affine-scheme-mos}.
We note also that 
\begin{equation}\label{eq.global-aut-intersection} \tilde{G}_{\mc{E}}=\mc{G}_{\mc{E}}(\mbb{B}_e) \times_{\mc{G}_\mc{E}(\mbb{B}_\dR)} \mc{G}_{\mc{E}}(\mbb{B}^+_\dR)=: \mc{G}_{\mc{E}}(\mbb{B}_e) \cap \mc{G}_{\mc{E}}(\mbb{B}^+_\dR).\end{equation}

Using the computation of the tangent bundle in \cref{theorem.affine-scheme-mos}, we find canonical identifications, compatible with restriction,
\begin{equation}\label{eq.LieAlgebras-of-aut-groups} \Lie {\tilde{G}_{\mc{E}}}=\BC(\mc{E}(\mf{g})) \textrm{ and } \Lie \mc{G}_{\mc{E}}(\mbb{B})=\mc{E}(\mf{g})\boxtimes \mbb{B} \textrm { for } \mbb{B}=\mbb{B}_e, \mbb{B}^+_\dR, \mbb{B}_\dR. \end{equation}

\subsection{An inscribed Hecke correspondence}\label{ss.inscribed-hecke}

\begin{lemma}\label{lemma.vb-glueing}The natural map given by restriction of vector bundles
\[ \mc{X}^* \VB \rightarrow (\Spec \mbb{B}_e)^* \VB \times_{(\Spec \mbb{B}_\dR)^*\VB} (\Spec \mbb{B}^+_\dR)^* \VB \]
is an equivalence of (inscribed) fibered categories.
\end{lemma}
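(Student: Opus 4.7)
The proof proceeds by Beauville--Laszlo glueing along the canonical Cartier divisor $\infty \subset \mathcal{X}$. I proceed one test object $(\mathcal{X}/X_{E,P}^{\alg}, P/\Spd E)$ at a time and note that all constructions are manifestly functorial in $\mathcal{X}$, so that the equivalence over each test object automatically assembles into an equivalence of fibered categories (and in particular, preserves the inscribed structure, which is itself defined by pullback of fibered categories). Over a fixed $\mathcal{X}$, the scheme $\mathcal{X}$ (as a scheme, since $X_{E,P}^\alg$ is the reduction) carries the closed subscheme $\infty := \mathcal{X}_{P^{\sharp\text{-}\alg}}$, which is Cartier with local cutting equation $\xi$ for $\xi$ any generator of $\ker\theta$. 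Its open complement is $\Spec \mathbb{B}_e(\mathcal{X})$, its formal completion along $\infty$ is $\Spec \mathbb{B}^+_\dR(\mathcal{X})$, and their intersection is $\Spec \mathbb{B}_\dR(\mathcal{X})$.

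For full faithfulness, I will show that for any two vector bundles $\mathcal{V}, \mathcal{W}$ on $\mathcal{X}$, the sequence
\[ 0 \to \mc{H}om(\mathcal{V}, \mathcal{W}) \to j_{e,*} \mc{H}om(\mathcal{V}, \mathcal{W})|_{\Spec \mathbb{B}_e} \oplus \mc{H}om(\mathcal{V}, \mathcal{W})^{\wedge}_\infty \to \mc{H}om(\mathcal{V}, \mathcal{W})|_{\Spec \mathbb{B}_\dR} \to 0 \]
is exact, where $(-)^\wedge_\infty$ denotes the $\xi$-adic completion. Since $\mathcal{V}$ and $\mathcal{W}$ are locally free, $\xi$ acts regularly on $\mc{H}om(\mathcal{V}, \mathcal{W})$, and the exactness reduces locally to the classical fact that $0 \to A \to A[1/\xi] \oplus \widehat{A}_\xi \to \widehat{A}_\xi[1/\xi] \to 0$ is exact when $\xi$ is a non-zero-divisor on $A$. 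Taking global sections, this gives the exactness of the hom-glueing sequence.

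For essential surjectivity, given a triple $(\mathcal{V}_e, \mathcal{V}^+_\dR, \varphi)$, I define
\[ \mathcal{V} := j_{e,*}\mathcal{V}_e \times_{j_{e,*}(\mathcal{V}_e|_{\Spec \mathbb{B}_\dR})} \mathcal{V}^+_\dR \]
using the identification $\varphi$, and I claim this is a vector bundle on $\mathcal{X}$ that restricts correctly. Local triviality is the crux: passing to an open $U \subseteq \mathcal{X}$ over which both $\mathcal{V}_e|_{U \setminus \infty}$ and $\mathcal{V}^+_\dR$ over the formal completion $\widehat{U}_\infty$ are free of rank $r$, one reduces to the classical Beauville--Laszlo statement at the Cartier divisor $\infty \cap U$ (see e.g.\ \cite[\href{https://stacks.math.columbia.edu/tag/05ES}{Tag 05ES}]{stacks-project} in the Noetherian case, or the original Beauville--Laszlo argument, which works for any Cartier divisor cut out by a single non-zero-divisor and locally free sheaves): finitely generated flat modules glue along $(A[1/\xi], \widehat{A}_\xi)$ through $\widehat{A}_\xi[1/\xi]$. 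The result is a free $\mc{O}_U$-module of rank $r$.

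The main subtlety to verify is that the classical Beauville--Laszlo theorem applies uniformly to our thickenings despite the non-Noetherian setting. The key observation is that $\mc{O}_\mathcal{X}$ is a \emph{locally free finite} $\mc{O}_{X^\alg_{E,P}}$-algebra, so $\xi$-adic completion over $\mathcal{X}$ is identified with tensor product over $\mc{O}_{X^\alg_{E,P}}$ of $\xi$-adic completion in the underlying FF curve: the ring-theoretic Beauville--Laszlo input needed is just that $\xi$ is a non-zero-divisor and that $A[1/\xi] + \widehat{A}_\xi$ suffice to determine $A$, properties preserved under the finite locally free base change. Thus the Beauville--Laszlo argument applies verbatim, and both the pointwise equivalence and its functoriality in $\mathcal{X}$ follow.
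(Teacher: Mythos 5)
Your proof is correct and takes exactly the approach the paper intends: the paper's entire proof of this lemma is ``Immediate from Beauville--Laszlo glueing,'' and your argument is a careful unwinding of that glueing along the Cartier divisor $\infty$, including the (genuinely worth noting) point that $\xi$ remains a non-zero-divisor and completion commutes with the finite locally free base change $\mc{O}_{X^{\alg}_{E,P}} \to \mc{O}_{\mc{X}}$, so the non-Noetherian Beauville--Laszlo theorem applies verbatim.
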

\begin{proof}
Immediate from Beauville-Laszlo glueing. 	
\end{proof}

\begin{proposition}\label{prop.torsor-glueing}
    The natural map given by restriction of torsors 
\[ \mc{X}^* \BG \rightarrow (\Spec \mbb{B}_e)^* \BG \times_{(\Spec \mbb{B}_\dR)^*\BG} (\Spec \mbb{B}^+_\dR)^* \BG \]
is an equivalence of (inscribed) fibered categories.
\end{proposition}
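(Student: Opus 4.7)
The plan is to deduce this from the vector bundle glueing of Lemma \ref{lemma.vb-glueing} via Tannakian reconstruction. For $G/E$ a connected linear algebraic group and any $E$-scheme $Y$, the groupoid of $G$-torsors on $Y$ is equivalent to the groupoid of $E$-linear exact symmetric monoidal functors $\mathrm{Rep}_E G \to \VB(Y)$ preserving the unit and duals (the usual Tannakian description, which holds even when $Y$ is not affine because a $G$-torsor is Zariski-locally a functor into finite rank vector bundles on its trivializing cover). I will apply this to $Y = \mc{X}$ and to $Y = \Spec \mbb{B}_e,\ \Spec \mbb{B}^+_\dR,\ \Spec \mbb{B}_\dR$ simultaneously over each test object in $(X_{E,\Box}^\alg)^\lf$.

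Under this Tannakian translation, the restriction map in the Proposition becomes post-composition with the restriction functors in Lemma \ref{lemma.vb-glueing}. Concretely, the target of the Proposition is identified with the groupoid of compatible triples $(\omega_e, \omega^+, \omega_\dR, \alpha)$ where each $\omega_\bullet : \mathrm{Rep}_E G \to \VB(\bullet)$ is an exact tensor functor and $\alpha$ is a tensor-isomorphism between the two induced functors to $\VB(\Spec \mbb{B}_\dR)$. This is exactly the groupoid of exact tensor functors into the fiber product category
\[ (\Spec \mbb{B}_e)^*\VB \times_{(\Spec \mbb{B}_\dR)^*\VB} (\Spec \mbb{B}^+_\dR)^*\VB, \]
once one observes that this fiber product inherits a symmetric monoidal structure and a notion of short exact sequence such that the restriction functors are symmetric monoidal and exact. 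Lemma \ref{lemma.vb-glueing} then asserts that restriction is an equivalence of symmetric monoidal categories with compatible exact structures, and post-composition with such an equivalence induces an equivalence on groupoids of exact tensor functors out of $\mathrm{Rep}_E G$. This gives the pointwise equivalence on each test object, and compatibility with pullbacks along morphisms of $(X_{E,\Box}^\alg)^\lf$ is automatic because all constructions (restriction, Tannakian reconstruction, fiber products) commute with base change.

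The whole argument is formal given Tannaka and Lemma \ref{lemma.vb-glueing}; the only mild subtlety is checking that the fiber product of stacks of vector bundles genuinely inherits a symmetric monoidal exact structure, but this is immediate from the coordinatewise definition. Thus there is no real obstacle, and the proof should be a short paragraph citing Tannaka and Lemma \ref{lemma.vb-glueing}.
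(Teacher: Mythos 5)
Your proposal is correct and is exactly the paper's argument: the paper's proof is the one-line "Follows from \cref{lemma.vb-glueing} by the Tannakian formalism," and your write-up just spells out that translation (torsors as exact tensor functors out of $\Rep_E G$, post-composed with the tensor-exact equivalence of vector bundle categories from Beauville--Laszlo glueing). No gaps; you have simply made explicit the routine checks the paper leaves implicit.
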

\begin{proof}Follows from \cref{lemma.vb-glueing} by the Tannakian formalism. 
\end{proof}

Now, suppose given 
\[ \mc{E}_0: \mc{S} \rightarrow \mc{X}^*\BG \textrm{ and a trivialization } \varphi_0: \mc{E}_0|_{\Spec \mbb{B}_\dR} \xrightarrow{\sim} \mc{E}_{\triv}.\]
Then, using \cref{prop.torsor-glueing}, we obtain an induced map 
\begin{equation}\label{eq.modification-map} m_{(\mc{E}_0, \varphi_0)}: \mc{S} \times_{\Spd E} \Gr_{G} \rightarrow \mc{X}^*\BG=
 (\Spec \mbb{B}_e)^* \BG \times_{(\Spec \mbb{B}_\dR)^*\BG} (\Spec \mbb{B}^+_\dR)^* \BG \end{equation}
defined by
\[ (s, (\mc{E}, \varphi: \mc{E}|_{\Spec \mbb{B}_\dR} \xrightarrow{\sim} {\mc{E}_\triv})) \mapsto  (\mc{E}_0|_{\Spec \mbb{B}_e}, \mc{E}, \mc{E}_0|_{\Spec{\mbb{B}_\dR}} \xrightarrow{\varphi^{-1} \circ \varphi_0} \mc{E}|_{\Spec{\mbb{B}_\dR}}).  \]

We write $\mc{G}_0=\mc{G}_{\mc{E}_0}$ for the automorphism scheme of $\mc{E}_0$ as in \cref{ss.aut-groups}. Note that restriction $\mc{G}_0(\mbb{B}_e) \hookrightarrow \mc{G}_0(\mbb{B}_\dR)$ followed by conjugation by  $\varphi_0$ (an isomorphism $\mc{G}_0(\mbb{B}_\dR) \xrightarrow{\sim} G(\mbb{B}_\dR)$) induces a map $\mc{G}_0(\mbb{B}_e) \hookrightarrow G(\mbb{B}_\dR)$. 

\begin{lemma}\label{lemma.quasi-torsor-mod}
The map $p_1 \times m_{\mc{E}_0, \varphi_0}: \mc{S} \times \Gr_G \rightarrow \mc{S} \times \mc{X}^*\BG$ of inscribed $v$-stacks over $\mc{S}$ is a quasi-torsor for the action of $\mc{G}_0(\mbb{B}_e) \leq G(\mbb{B}_\dR)$.
\end{lemma}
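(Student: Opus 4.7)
The plan is to construct an explicit inverse to the action map
\[ \mc{G}_0(\mbb{B}_e) \times_{\mc{S}} (\mc{S} \times \Gr_G) \longrightarrow (\mc{S} \times \Gr_G) \times_{\mc{S} \times \mc{X}^*\BG} (\mc{S} \times \Gr_G), \quad (\gamma, y) \mapsto (y, \gamma \cdot y),\]
and verify it is a two-sided inverse. Fixing a test object $\mc{B} \to \mc{S}$, a $\mc{B}$-point of the right-hand side consists of two sections $y_i = (\mc{E}_i, \varphi_i)$ of $\Gr_G$ together with an isomorphism $\alpha: m_{\mc{E}_0,\varphi_0}(y_1) \xrightarrow{\sim} m_{\mc{E}_0,\varphi_0}(y_2)$ of $G$-torsors on $\mc{X}_{\mc{B}}$.

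First, I would apply the $G$-torsor analog of \cref{lemma.vb-glueing} (which follows by the Tannakian formalism) to $\alpha$. Because $m_{\mc{E}_0, \varphi_0}(y_i)$ is the Beauville--Laszlo glueing of $\mc{E}_0|_{\Spec \mbb{B}_e}$ with $\mc{E}_i$ along the isomorphism $\varphi_i^{-1}\circ\varphi_0$ on $\Spec \mbb{B}_\dR$, the datum of $\alpha$ is equivalent to a triple $(\alpha_e, \alpha_+, c)$ where $\alpha_e$ is an automorphism of $\mc{E}_0|_{\Spec \mbb{B}_e}$, i.e.\ an element of $\mc{G}_0(\mbb{B}_e)(\mc{B})$, where $\alpha_+: \mc{E}_1 \xrightarrow{\sim} \mc{E}_2$ is an isomorphism of $G$-torsors on $\Spec \mbb{B}^+_\dR$, and where $c$ is the compatibility condition on $\Spec \mbb{B}_\dR$:
\[ \alpha_+|_{\Spec \mbb{B}_\dR} \circ (\varphi_1^{-1}\circ\varphi_0) \;=\; (\varphi_2^{-1}\circ\varphi_0)\circ \alpha_e|_{\Spec \mbb{B}_\dR}. \]
I set $\gamma := \alpha_e$ and send $(y_1, y_2, \alpha)$ to $(\gamma, y_1)$.

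Second, I would check that this assignment really does land in the preimage, i.e.\ that $\gamma \cdot y_1 = y_2$ in $\Gr_G(\mc{B})$. Writing $\tilde{\gamma} := \varphi_0 \circ \gamma|_{\Spec \mbb{B}_\dR} \circ \varphi_0^{-1} \in G(\mbb{B}_\dR)(\mc{B})$ for the image of $\gamma$ under the inclusion $\mc{G}_0(\mbb{B}_e) \hookrightarrow G(\mbb{B}_\dR)$, the compatibility $c$ rewrites as $\alpha_+|_{\Spec \mbb{B}_\dR} = \varphi_2^{-1} \circ \tilde{\gamma} \circ \varphi_1$, which is exactly the statement that $\alpha_+$ witnesses the equality of cosets representing $y_2$ and $\tilde{\gamma}\cdot y_1$ in $\Gr_G = G(\mbb{B}_\dR)/G(\mbb{B}^+_\dR)$, using the presentation of \cref{prop.Bdr-aff-grass-transitive}.

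Third, I would verify that the composition in either order is the identity: starting from $(\gamma, y_1)$, applying the action map and then Beauville--Laszlo recovers $\gamma$ as the automorphism of $\mc{E}_0|_{\Spec \mbb{B}_e}$ induced by the tautological identification of $m_{\mc{E}_0,\varphi_0}(y_1)$ with itself shifted by $\gamma$, since by construction the isomorphism realizing $m(y_1) \cong m(\gamma y_1)$ is the identity on the $\Spec \mbb{B}_e$-side composed with $\gamma$; starting from $(y_1, y_2, \alpha)$, the same analysis shows that $\alpha$ is uniquely reconstructed from $(\gamma, \alpha_+)$, since $\alpha_+$ is determined by the compatibility on $\Spec \mbb{B}_\dR$ together with its restriction to $\Spec \mbb{B}^+_\dR$ being forced by the Beauville--Laszlo glueing on the second factor.

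The main potential obstacle is really just bookkeeping of left/right action conventions for $G(\mbb{B}_\dR)$ on $\Gr_G$ and of the transition maps in the glueing; conceptually, the result is tautological because the fiber product $\Gr_G \times_{\mc{X}^*\BG} \Gr_G$ parameterizes two modifications of $\mc{E}_0$ at $\infty$ together with an identification of the resulting bundles, and any such identification must restrict to an automorphism of the \emph{unmodified} piece $\mc{E}_0|_{\Spec \mbb{B}_e}$, which is exactly a section of $\mc{G}_0(\mbb{B}_e)$. No nontrivial geometric input beyond \cref{lemma.vb-glueing} and the description of $\Gr_G$ in \cref{prop.Bdr-aff-grass-transitive} is needed.
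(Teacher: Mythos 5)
Your proof is correct and follows essentially the same route as the paper's: both rest on the observation (via Beauville--Laszlo glueing for $G$-torsors) that an isomorphism between two modifications of $\mc{E}_0$ restricts over $\Spec \mbb{B}_e$ to an automorphism of $\mc{E}_0|_{\Spec \mbb{B}_e}$, whose conjugate by $\varphi_0$ is the element of $\mc{G}_0(\mbb{B}_e) \subseteq G(\mbb{B}_\dR)$ carrying one trivialization to the other. Your packaging as an explicit inverse to the action map into the stacky fiber product is slightly more careful than the paper's argument on isomorphism classes, but the content is identical.
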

\begin{proof}
Suppose $(s, (\mc{E}, \varphi))$ and $(s, (\mc{E}', \varphi'))$ map to the same object. Then $\mc{E}$ is isomorphic to $\mc{E'}$, so we may assume $\mc{E}=\mc{E}'$. It then follows that there is an automorphism $\psi$ of $(\mc{E}_0|_{\Spec \mbb{B}_e})$ such that 
\[ \varphi^{-1} \circ \varphi_{0} \circ \psi = (\varphi')^{-1} \circ \varphi_0.\]
and thus 
\[ \varphi^{-1} \circ (\varphi_{0} \circ \psi \circ \varphi_0^{-1}) = (\varphi')^{-1}. \]
Thus $\varphi_0 \circ \psi \circ \varphi_0^{-1}$ gives an element of $\mc{G}_0(\mbb{B}_e) \subseteq G(\mathbb{B}_\dR)$ mapping the one pre-image to the other. Reversing the argument we find that if there is $g \in G(\mbb{B}_\dR)$ such that $(s, (\mc{E}, g\varphi))$ and $(S, (\mc{E}, \varphi))$ map to the same object, then $g \in \mc{G}_0(\mbb{B}_e)$. 
\end{proof}

\subsection{Generalized Newton strata on the $\mbb{B}^+_\dR$-affine Grassmannian}\label{ss.inscribed-generalized-Newton-strata}
Let $G/E$ be a connected linear algebraic group and let $b_1 \in G(\breve{E})$. We pullback the bundle $\mc{E}_{b_1}$ of \cref{ss.NewtonStrataBG} from $\Spd \Fqbar$ to $\Spd \breve{E}$.  

By construction, there is a canonical trivialization $\varphi_{b_1}: \mc{E}_{b_1}|_{\Spec \mbb{B}_\dR} \xrightarrow{\sim} \mc{E}_{\triv}.$ By the construction of \cref{ss.inscribed-hecke}, we obtain an induced map 
\[ m=m_{b_1,\varphi_{b_1}}: \Spd \breve{E} \times_{\Spd E} \Gr_{G} \rightarrow \mc{X}^*\BG. \]
For $[b_2] \in B(G)$, the set of $\sigma$-conjugacy classes in $G(\breve{E})$, we write 
\[ \Gr_G^{b_1\rightarrow[b_2]} := m \times_{\mc{X}^*\BG} ((\mc{X}^*\BG)^{[b_2]} \rightarrow \mc{X}^*\BG). \]
If we fix also $[\mu]$ a conjugacy class of cocharacters of $G_{\overline{L}}$, we write 
\[ \Gr_{[\mu]}^{b_1 \rightarrow [b_2]} = (\Gr_{[\mu]} \times_{\Spd E([\mu])} \Spd \breve{E}([\mu]))\times_{\Gr_G \times_{\Spd E} \Spd \breve{E}} \Gr_G^{b_1\rightarrow[b_2]}. \]

\begin{example}
By \cref{remark.stratification-bunG} and the results of \cite{FarguesScholze.GeometrizationOfTheLocalLanglandsCorrespondence}, when $G$ is reductive, the $v$-sheaves $\overline{\Gr_G^{b_1 \rightarrow [b_2]}}$ as $[b]$ varies give a stratification of $\overline{\Gr_G \times_{\Spd E} \Spd \breve{E}}$ by locally closed subsheaves. For $b_1=1$, this is called the Newton stratification. 

These stratifications are usually studied after restricting to the Schubert cells $\Gr_{[\mu]}$. In particular, for a conjugacy class $[\mu]$, le $b_1$ lie in the Kottwitz set $B(G,[\mu^{-1}])$. Then $\overline{\Gr_{[\mu]}^{b_1 \rightarrow [1]}}$ is the open non-empty admissible locus for $b_1$ in $\overline{\Gr_{[\mu]}}$, and the other non-empty terms $\overline{\Gr_{[\mu]}^{b_1 \rightarrow [b_2]}}$ stratify the boundary. 
\end{example}

\begin{remark}\label{remark.canonical-torsor-over-newton-stratum}
If we fix $b \in [b]$, then we obtain a canonical $\tilde{G}_b:=\tilde{G}_{\mc{E}_b}$-torsor 
\[ (m_{b_1,\varphi_{b_1}} \times \Id_{\Spd \breve{E}}) \times_{\mc{X}^*\BG \times \Spd \breve{E}} (\mc{E}_b \times \Id_{\Spd \breve{E}}) \rightarrow \Gr_{G}^{b_0 \rightarrow [b]}. \]
parameterizing trivializations of the modified bundle to $\mc{E}_b$. This space also admits a natural action of $\mc{G}_{b_0}(\mbb{B}_e):=\mc{G}_{\mc{E}_{b_0}}(\mbb{B}_e)$ and  it follows from \cref{lemma.quasi-torsor-mod} that this action realizes the structure map to $\Spd \breve{E}$ as a quasi-torsor. We will see in \cref{theorem.unbounded-structure} that this is in fact a torsor (i.e., it is surjective or equivalently in this case non-empty) and admits a simpler description that highlights the symmetry between $b_1$ and $b_2$. Using this description, we will then deduce an explicit computation of the tangent and normal bundles of $\Gr^{b_1 \rightarrow [b_2]}_G$ (\cref{cor.newton-strata-tangent-normal}) and $\Gr^{b_1 \rightarrow [b_2]}_{[\mu]}$  (\cref{cor.bdd-newton-strata-tangent-normal}).  
\end{remark}

\section{Moduli of modifications}\label{s.moduli-of-mod}

In this section we define moduli of modifications between vector bundles on the relative thickened Fargues--Fontaine curve, and establish their main properties. We first treat the unbounded moduli space in \cref{ss.unbounded-moduli}. Its main structures are described in \cref{theorem.unbounded-structure}, including a key description as a bitorsor over $\Spd \breve{E}$ (cf. \cref{remark.canonical-torsor-over-newton-stratum}). In \cref{cor.unbounded-mod-tangent} we then deduce a computation of its tangent bundle, the derivatives of its natural period maps, and the tangent and normal bundles of the associated generalized Newton strata (\cref{cor.newton-strata-tangent-normal}). In \cref{ss.bounded-moduli} we then cut out the bounded moduli space inside by taking the preimage of a Schubert cell under a period map. Combining the results on the unbounded moduli space of \cref{ss.unbounded-moduli} and the results on the $\mathbb{B}^+_\dR$-affine Grassmannian and its Schubert cells of \cref{s.affine-grassmannian}, we obtain in \cref{theorem.bounded-structure} a description of the main structures of the bounded moduli of modifications, in \cref{corollary.bounded-mod-tangent} a description of its tangent bundle and the derivatives of its period maps, and in \cref{cor.bdd-newton-strata-tangent-normal} a description of the tangent and normal bundles of the associated generalized Newton strata. In \cref{ss.two-towers}, we describe a very general two towers isomorphism for inscribed moduli of modifications and explain how it interacts with our computations of tangent bundles and derivatives.

When one of the bundles in the modification is the trivial bundle $\mc{E}_1$, the moduli of modifications is an inscription on the moduli of mixed characteristic local shtuka with one leg as in \cite{ScholzeWeinstein.BerkeleyLecturesOnPAdicGeometryAMS207, HoweKlevdal.AdmissiblePairsAndpAdicHodgeStructuresIITheBiAnalyticAxLindemannTheorem}; in particular, in the minuscule bounded case, these are inscribed infinite level local Shimura varieties. For applications, this is by far the most important case, so we conclude in \cref{ss.main-results-mcls} by specializing our computations to this setting and discussing some complements --- in particular, in the local Shimura case, we discuss the relation between the infinite level moduli inscription and the alternative inscription on the finite level spaces obtained by viewing them as rigid analytic varieties. 

\subsubsection{Notation}
We fix a finite extension $E/\mbb{Q}_p$ with residue field $\mathbb{F}_q$. In this section we work in the inscribed context $(\Spd \mbb{F}_q, X_{E,{\Box}}^\alg)$, and use freely the notation of \cref{s.modifications}. We also use the notation of \cref{s.affine-grassmannian}, transported into this inscribed setting by change of context as in \cref{ss.change-of-context}. In particular, we view $\Gr_G$ as an inscribed $v$-sheaf on $X_{E,\Box}^\lf$ over $\Spd E$, i.e. by 
\[ \Gr_{G}(\mc{X}/X_{E,P}^\alg) = \{ (P/\Spd E, s: \mc{X}|_{P^{\sharp-\alg}_{(\infty)}} \rightarrow \Gr_G) \}, \]
and similarly for the Schubert cells $\Gr_{[\mu]}$, etc.

\subsection{The unbounded moduli space}\label{ss.unbounded-moduli}
Recall from \cref{ss.NewtonStrataBG} that, for $G/E$ a connected linear algebraic group and any $b \in G(\breve{E})$, we have defined a $G$-bundle $\mc{E}_b: \Spd \Fqbar \rightarrow \mc{X}^*\BG$. We write $\mc{G}_b=\mc{G}_{\mc{E}_b}$ for the automorphism scheme of $\mc{E}_b$ as in \cref{ss.aut-groups} and $\tilde{G}_b=\tilde{G}_{\mc{E}_b}$ for the moduli of  sections of $\mc{G}_b$.

After restriction to $\Spd \breve{E}=\Spd E \times_{\Spd \mbb{F}_q} \Spd \Fqbar$, there is a canonical trivialization 
\[ \triv_b: \mc{E}_{\triv} \rightarrow  \mc{E}_b|_{\Spec \mbb{B}_\dR}. \]
In the remainder of this subsection, all inscribed $v$-sheaves have been base changed to lie over $\Spd \breve{E}$ (with its trivial inscription). Equivalently, as described in \cref{ss.change-of-context}, we could work in the inscribed context $(\Spd \breve{E}, X_{E, \Box})$.

\begin{definition}
Let $G/E$ be a connected linear algebraic group, and let $b_1, b_2 \in G(\breve{E})$. We define $\M_{b_1 \rightarrow b_2}$ to be the following presheaf on $X_{E,\Box}^\lf$ over $\Spd \breve{E}$: 
\[ \M_{b_1 \rightarrow b_2} := (\Spd \breve{E} \xrightarrow{\mc{E}_{b_1}|_{\mc{X}\bs\infty}} (\mc{X}\bs\infty)^* \BG) \times_{(\mc{X}\bs\infty)^* \BG} (\Spd \breve{E} \xrightarrow{\mc{E}_{b_2}|_{\mc{X}\bs\infty}} (\mc{X}\bs\infty)^* \BG). \]
Equivalently, $\M_{b_1 \rightarrow b_2}$ is the functor
\[ (\mathcal{X}/X_{E,P}^\alg, P/\Spd \breve{E}) \mapsto \{ \varphi: \mc{E}_{b_1}|_{\mathcal{X}\bs\infty} \xrightarrow{\sim} \mc{E}_{b_2}|_{\mathcal{X}\bs\infty} \}. \]
\end{definition}

It follows from \cref{thm.BG-pull-back-inscribed} that $\M_{b_1 \rightarrow b_2}$ is an inscribed $v$-sheaf over $\Spd \breve{E}$, and it admits obvious actions of $\mc{G}_{b_i}(\mbb{B}_e)$, $i=1,2$, by precomposition and postcomposition with $\varphi$. Explicitly,  we define the right action maps
\begin{align*} a_i: \mc{M}_{b_1\rightarrow b_2} \times_{\Spd \breve{E}}  \mc{G}_{b_i}(\mbb{B}_e) \rightarrow \mc{M}_{b_1 \rightarrow b_2}, i=1,2, \textrm{ by } \\
 a_1(\varphi, g)= \varphi \circ g\textrm{ and } a_2(\varphi, g)=g^{-1} \circ \varphi.
 \end{align*}

\begin{remark}
Unwinding the definitions and proof of \cref{thm.BG-pull-back-inscribed}, we see that $\mc{M}_{b_1 \rightarrow b_2}$ is the moduli of sections over $\mc{X}\backslash \infty$ as in \cref{theorem.affine-scheme-mos} for the affine scheme $\mc{G}_{b_1 \rightarrow b_2}$ over $\mc{X}$ on $\Spd \breve{E}$ of isomorphisms of $G$-torsors, $\mathcal{I}som(\mc{E}_{b_1}, \mc{E}_{b_2})$, and the actions of $\mc{G}_{b_i}(\mbb{B}_e)$  are induced by the actions of $\mc{G}_{b_i}$ on this scheme. 
\end{remark}

We also define $c_\dR: \mc{M}_{b_1 \rightarrow b_2} \rightarrow G(\mbb{B}_\dR)$ to be the map
\[ \varphi \mapsto \triv_{b_2}^{-1} \circ \varphi|_{\Spec \mbb{B}_\dR} \circ \triv_{b_1}.\]
We define period maps $\pi_i: \mc{M}_{b_1 \rightarrow b_2} \rightarrow \Gr_G$ by
\begin{align*} \pi_1(\varphi)&= (\mc{E}_{b_1}|_{\Spec \mbb{B}^+_\dR},  \triv_{b_2}^{-1} \circ \varphi|_{\Spec \mbb{B}_\dR}) \textrm{ and }\\
\pi_2(\varphi)&= (\mc{E}_{b_2}|_{\Spec \mbb{B}^+_\dR}, \triv_{b_1}^{-1}\circ \varphi^{-1}|_{\Spec \mbb{B}_\dR}). \end{align*}

\begin{lemma}\label{lemma.period-maps-cdr}
The maps $\pi_i: \mc{M}_{b_1 \rightarrow b_2} \rightarrow \Gr_G$ are computed via $c_\dR$ as
\[ \pi_1(\varphi) = c_\dR(\varphi) \cdot \ast_1 \textrm{ and } \pi_2(\varphi) =(c_\dR(\varphi))^{-1} \cdot \ast_1. \]
\end{lemma}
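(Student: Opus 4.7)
The plan is to verify each formula by direct computation, identifying the point of $\Gr_G$ with its image in $G(\mbb{B}_\dR)/G(\mbb{B}^+_\dR)$ under \cref{prop.Bdr-aff-grass-transitive} and tracking the sign conventions carefully. Recall that the canonical identification sends $g \cdot \ast_1$ to the coset $g G(\mbb{B}^+_\dR)$, and that to compute the image of a general point $(\mc{E}, \psi)$ one picks (locally on a $v$-cover) a trivialization $\beta$ of $\mc{E}$ over $\Spec \mbb{B}^+_\dR$ and takes the class of $\psi \circ \beta^{-1}|_{\Spec \mbb{B}_\dR} \in G(\mbb{B}_\dR)$.

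The crucial preliminary observation is that the canonical trivialization $\triv_{b_i}: \mc{E}_\triv \to \mc{E}_{b_i}|_{\Spec \mbb{B}_\dR}$ extends canonically to a trivialization $\widetilde{\triv}_{b_i}: \mc{E}_\triv \to \mc{E}_{b_i}|_{\Spec \mbb{B}^+_\dR}$ over the entire formal neighborhood of $\infty$. This is because $\mc{E}_{b_i}$ is obtained as the descent along $\sigma^{\mbb{Z}}$ of the trivial $G$-bundle on $Y_{E,P}$: fixing any lift $\widetilde{\infty} \in Y_{E,P}$ of $\infty \in X_{E,P}$, the formal neighborhood of $\widetilde\infty$ in $Y_{E,P}$ is canonically identified with $\Spec \mbb{B}^+_\dR$ (and misses its own $\sigma$-translates), so the trivial bundle on $Y_{E,P}$ pulls back to $\mc{E}_\triv$ over this neighborhood, giving the desired extension restricting to $\triv_{b_i}$ on $\Spec \mbb{B}_\dR$.

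For the formula for $\pi_1$, I would then use $\widetilde{\triv}_{b_1}^{-1}$ as a distinguished trivialization $\beta: \mc{E}_{b_1}|_{\Spec \mbb{B}^+_\dR} \to \mc{E}_\triv$ to compute the image of $\pi_1(\varphi)$ in $G(\mbb{B}_\dR)/G(\mbb{B}^+_\dR)$. Its restriction to $\Spec \mbb{B}_\dR$ is $\triv_{b_1}^{-1}$, so the associated element of $G(\mbb{B}_\dR)$ is
\[ \left( \triv_{b_2}^{-1} \circ \varphi|_{\Spec \mbb{B}_\dR} \right) \circ \left( \triv_{b_1}^{-1} \right)^{-1} = \triv_{b_2}^{-1} \circ \varphi|_{\Spec \mbb{B}_\dR} \circ \triv_{b_1} = c_\dR(\varphi), \]
which gives $\pi_1(\varphi) = c_\dR(\varphi) \cdot \ast_1$. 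The formula for $\pi_2$ is then essentially formal: one applies the same argument with the roles of $b_1$ and $b_2$ swapped, using $\widetilde{\triv}_{b_2}^{-1}$ as the trivialization and the relation $(\varphi^{-1})|_{\Spec \mbb{B}_\dR} = (\varphi|_{\Spec \mbb{B}_\dR})^{-1}$ to obtain $c_\dR(\varphi)^{-1}$ in place of $c_\dR(\varphi)$.

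The only real obstacle will be bookkeeping of direction conventions (for the isomorphism-of-pairs defining equivalence in $\Gr_G$, for the action of $G(\mbb{B}_\dR)$, and for the various trivializations); once these are pinned down, the calculation is a one-line manipulation in each case.
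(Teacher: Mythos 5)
Your proof is correct and takes essentially the same route as the paper: both arguments come down to extending $\triv_{b_i}$ from $\Spec \mbb{B}_\dR$ to $\Spec \mbb{B}^+_\dR$ (via the canonical lift of $\infty$ to $Y_{E,P}$) and then computing $\triv_{b_2}^{-1}\circ\varphi|_{\Spec\mbb{B}_\dR}\circ\triv_{b_1}=c_\dR(\varphi)$, the paper phrasing this as an explicit isomorphism of pairs in $\Gr_G$ where you phrase it via the coset description $G(\mbb{B}_\dR)/G(\mbb{B}^+_\dR)$. Your explicit justification that the canonical trivialization extends over the unpunctured formal neighborhood is precisely the step the paper leaves implicit, and it is the right one.
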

\begin{proof}
For $\pi_1$, the map $\triv_{b_2}$ gives an isomorphism
\[ c_\dR(\varphi)\cdot \ast_1=(\mc{E}_{1}|_{\Spec \mbb{B}^+_\dR}, c_\dR(\varphi))   \xrightarrow{\sim} (\mc{E}_{b_2}|_{\Spec \mbb{B}^+_\dR}, \varphi|_{\Spec \mbb{B}_\dR} \circ \triv_{b_1}) \]
The argument is similar for $\pi_2$.
\end{proof}

\begin{theorem}\label{theorem.unbounded-structure}
Let $G/E$ be a connected linear algebraic group and let $b_1, b_2 \in G(\breve{E})$. Then
$\M_{b_1\rightarrow b_2}$ is an inscribed $v$-sheaf over $\Spd \breve{E}$. Moreover, 
\begin{enumerate} 
\item The action maps $a_1$ and $a_2$ realize $\M_{b_1 \rightarrow b_2}$ as a bitorsor over $\Spd \breve{E}$, trivializeable over $\Spd \mbb{C}_p$.
\item The map $\pi_1$ factors through $\Gr_G^{b_2 \rightarrow [b_1]}$ and the map $\pi_2$ factors through $\Gr_{G}^{b_1 \rightarrow [b_2]}$. The restriction of the action map $a_1$ to $\tilde{G}_{b_1}$ realizes $\pi_1$ as the canonical $\mc{G}_{b_2}(\mbb{B}_e)$-equivariant $\tilde{G}_{b_1}$-torsor over $\Gr_G^{b_2 \rightarrow [b_1]}$ of \cref{remark.canonical-torsor-over-newton-stratum} and the restriction of the action map $a_2$ to $\tilde{G}_{b_2}$ realizes $\pi_2$ as the canonical $\mc{G}_{b_1}(\mbb{B}_e)$-equivariant $\tilde{G}_{b_2}$-torsor  over $\Gr_G^{b_1 \rightarrow [b_2]}$ of \cref{remark.canonical-torsor-over-newton-stratum}. 
\end{enumerate}
In addition, the following diagram commutes:
% https://q.uiver.app/#q=WzAsMTAsWzAsMywiXFxNX3tiXzFcXHJpZ2h0YXJyb3cgYl8yfSJdLFszLDMsIkcoXFxtYmJ7Qn1fXFxkUikiXSxbMiwxLCJcXEdyX0dee2JfMlxccmlnaHRhcnJvd1tiXzFdfSJdLFsyLDUsIlxcR3JfR157Yl8xXFxyaWdodGFycm93W2JfMl19ICJdLFswLDIsIlxcTV97Yl8xXFxyaWdodGFycm93IGJfMn0gIFxcdGltZXMgXFxtY3tHfV97Yl8xfShcXG1iYntCfV9lKSAiXSxbMCw0LCJcXE1fe2JfMVxccmlnaHRhcnJvdyBiXzJ9IFxcdGltZXMgXFxtY3tHfV97Yl8yfShcXG1iYntCfV9lKSJdLFszLDAsIiBHKFxcbWJie0J9X1xcZFIpIFxcdGltZXMgRyhcXG1iYntCfV9cXGRSKSJdLFszLDYsIkcoXFxtYmJ7Qn1fXFxkUilcXHRpbWVzIEcoXFxtYmJ7Qn1fXFxkUikiXSxbMiwyLCJcXEdyX0ciXSxbMiw0LCJcXEdyX0ciXSxbMCwxLCJjX1xcZFIiLDIseyJzdHlsZSI6eyJ0YWlsIjp7Im5hbWUiOiJob29rIiwic2lkZSI6InRvcCJ9fX1dLFswLDMsIlxccGlfezJ9IiwxXSxbNCwwLCJhXzEiLDJdLFs1LDAsImFfMiJdLFs0LDYsImNfXFxkUiBcXHRpbWVzIChnXFxtYXBzdG9cXHRyaXZfe2JfMX1eey0xfVxcY2lyYyBnIFxcY2lyY1xcdHJpdl97Yl8xfSkiLDAseyJjdXJ2ZSI6LTV9XSxbNiwxLCIoYyxnKVxcbWFwc3RvIGNnIiwxXSxbNywxLCIgKGMsZylcXG1hcHN0byBnXnstMX1jIiwxXSxbNSw3LCJjX1xcZFIgXFx0aW1lcyAoZ1xcbWFwc3RvXFx0cml2X3tiXzJ9XnstMX1cXGNpcmMgZyBcXGNpcmNcXHRyaXZfe2JfMn0pIiwyLHsiY3VydmUiOjV9XSxbMCwyLCJcXHBpX3sxfSIsMV0sWzQsNSwiXFxzdWJzdGFja3soXFx2YXJwaGksZylcXG1hcHN0byBcXFxcKFxcdmFycGhpLCBcXHZhcnBoaSBnXnstMX0gXFx2YXJwaGleey0xfSl9IiwyLHsib2Zmc2V0Ijo1LCJjdXJ2ZSI6M31dLFsxLDgsImMgXFxtYXBzdG8gYyBcXGNkb3QgXFxhc3RfMSIsMV0sWzIsOCwiIiwxLHsic3R5bGUiOnsidGFpbCI6eyJuYW1lIjoiaG9vayIsInNpZGUiOiJib3R0b20ifX19XSxbMSw5LCIgY1xcbWFwc3RvICBjXnstMX1cXGNkb3QgXFxhc3RfMSIsMV0sWzMsOSwiIiwxLHsic3R5bGUiOnsidGFpbCI6eyJuYW1lIjoiaG9vayIsInNpZGUiOiJ0b3AifX19XV0=
\[\begin{tikzcd}
	&&& { G(\mbb{B}_\dR) \times G(\mbb{B}_\dR)} \\
	&& {\Gr_G^{b_2\rightarrow[b_1]}} \\
	{\M_{b_1\rightarrow b_2}  \times \mc{G}_{b_1}(\mbb{B}_e) } && {\Gr_G} \\
	{\M_{b_1\rightarrow b_2}} &&& {G(\mbb{B}_\dR)} \\
	{\M_{b_1\rightarrow b_2} \times \mc{G}_{b_2}(\mbb{B}_e)} && {\Gr_G} \\
	&& {\Gr_G^{b_1\rightarrow[b_2]} } \\
	&&& {G(\mbb{B}_\dR)\times G(\mbb{B}_\dR)}
	\arrow["{(c,g)\mapsto cg}"{description}, from=1-4, to=4-4]
	\arrow[hook', from=2-3, to=3-3]
	\arrow["{c_\dR \times (g\mapsto\triv_{b_1}^{-1}\circ g \circ\triv_{b_1})}", curve={height=-30pt}, from=3-1, to=1-4]
	\arrow["{a_1}"', from=3-1, to=4-1]
	\arrow["\begin{array}{c} \substack{(\varphi,g)\mapsto \\(\varphi, \varphi g^{-1} \varphi^{-1})} \end{array}"', shift right=5, curve={height=18pt}, from=3-1, to=5-1]
	\arrow["{\pi_{1}}"{description}, from=4-1, to=2-3]
	\arrow["{c_\dR}"', hook, from=4-1, to=4-4]
	\arrow["{\pi_{2}}"{description}, from=4-1, to=6-3]
	\arrow["{c \mapsto c \cdot \ast_1}"{description}, from=4-4, to=3-3]
	\arrow["{ c\mapsto  c^{-1}\cdot \ast_1}"{description}, from=4-4, to=5-3]
	\arrow["{a_2}", from=5-1, to=4-1]
	\arrow["{c_\dR \times (g\mapsto\triv_{b_2}^{-1}\circ g \circ\triv_{b_2})}"', curve={height=30pt}, from=5-1, to=7-4]
	\arrow[hook, from=6-3, to=5-3]
	\arrow["{ (c,g)\mapsto g^{-1}c}"{description}, from=7-4, to=4-4]
\end{tikzcd}\]

\end{theorem}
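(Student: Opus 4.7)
The plan is to proceed through the four structural claims in order, treating the bitorsor property as the technical heart of the theorem and letting the remaining items flow from the Hecke correspondence of \cref{ss.inscribed-hecke} and the formula of \cref{lemma.period-maps-cdr}.

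First, I would observe that $\mathcal{M}_{b_1\to b_2}$ is visibly the moduli of global sections over $\mathcal{X}\setminus\infty$ of the smooth affine scheme $\mathcal{I}\mathrm{som}(\mathcal{E}_{b_1},\mathcal{E}_{b_2})$, which is simultaneously an $\mathcal{G}_{b_1}$-torsor and an $\mathcal{G}_{b_2}$-torsor (on opposite sides) over $\mathcal{X}\setminus\infty$. Hence $\mathcal{M}_{b_1\to b_2}$ is an inscribed $v$-sheaf by \cref{theorem.affine-scheme-mos} (or alternatively, as a $2$-fibered product of the inscribed $v$-stacks $(\mathcal{X}\setminus\infty)^*\mathrm{B}G$, by \cref{thm.BG-pull-back-inscribed} and \cref{lemma.inscribed-limits}). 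The actions $a_1,a_2$ then arise from the two torsor structures on $\mathcal{I}\mathrm{som}(\mathcal{E}_{b_1},\mathcal{E}_{b_2})$ by functoriality of the moduli of sections construction.

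For the bitorsor claim in (1), freeness of both actions is immediate since composition in a groupoid is cancellative; transitivity is also immediate at the point-set level, because for any two sections $\varphi,\varphi'$ of $\mathcal{M}_{b_1\to b_2}$ over $\mathcal{X}$, one has $\varphi^{-1}\circ\varphi'\in \mathcal{G}_{b_1}(\mathbb{B}_e)(\mathcal{X})$ and $\varphi'\circ\varphi^{-1}\in\mathcal{G}_{b_2}(\mathbb{B}_e)(\mathcal{X})$ realizing any two points as translates. The only nontrivial ingredient is the surjectivity of the structure map to $\Spd\breve{E}$, which is equivalent to trivializability after a $v$-cover by $\Spd \mathbb{C}_p$; this in turn reduces to the existence of an isomorphism $\mathcal{E}_{b_1}|_{\Spec B_e} \xrightarrow{\sim}\mathcal{E}_{b_2}|_{\Spec B_e}$ of $G$-torsors over $\Spec B_e$ for $B_e = H^0(X_{E,\Spa C_p^\flat}\setminus\{\infty\},\mathcal{O})$. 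This is the main obstacle, and it follows from the classical Fargues--Fontaine result that $B_e$ is a principal ideal domain together with Tannakian reduction: both bundles restrict to the trivial $G$-torsor on $\Spec B_e$ because every isocrystal becomes trivial after removing the point $\infty$ on the curve over $C_p$. Once trivializability over $\Spd C_p$ is established at the underlying $v$-sheaf level, the inscribed bitorsor property is automatic by smoothness of $\mathcal{I}\mathrm{som}(\mathcal{E}_{b_1},\mathcal{E}_{b_2})$, since sections extend uniquely up any nilpotent thickening by \cref{corollary.section-infinitesimal-pullback} applied to the trivialization.

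For part (2), I would unwind the Hecke correspondence $m_{\mathcal{E}_{b_2},\triv_{b_2}^{-1}}$ applied to $\pi_1(\varphi)$: by construction, the modified bundle is obtained by gluing $\mathcal{E}_{b_2}|_{\Spec\mathbb{B}_e}$ with $\mathcal{E}_{b_1}|_{\Spec\mathbb{B}^+_\mathrm{dR}}$ along $\varphi^{-1}$ on $\Spec\mathbb{B}_\mathrm{dR}$, and extending $\varphi^{-1}$ from $\Spec \mathbb{B}_\mathrm{dR}$ to all of $\Spec\mathbb{B}_e$ exhibits this modified bundle as canonically isomorphic to $\mathcal{E}_{b_1}$; hence $\pi_1$ factors through $\Gr_G^{b_2\to[b_1]}$, and $\varphi$ itself provides the universal trivialization to $\mathcal{E}_{b_1}$ that identifies $\pi_1$ with the canonical $\widetilde{G}_{b_1}$-torsor of \cref{remark.canonical-torsor-over-newton-stratum}. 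The symmetric argument handles $\pi_2$. Equivariance of $\pi_i$ under the other action is an immediate consequence of the definitions.

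Finally, the commutative diagram is a bookkeeping exercise. The central identity $\pi_i(\varphi) = c_\mathrm{dR}(\varphi)^{\pm 1}\cdot\ast_1$ is \cref{lemma.period-maps-cdr}; the two edges involving $a_1$ and $a_2$ reduce to the computations
\[
c_\mathrm{dR}(a_1(\varphi,g)) = c_\mathrm{dR}(\varphi)\cdot\bigl(\triv_{b_1}^{-1}\circ g|_{\Spec\mathbb{B}_\mathrm{dR}}\circ\triv_{b_1}\bigr),
\]
\[
c_\mathrm{dR}(a_2(\varphi,g)) = \bigl(\triv_{b_2}^{-1}\circ g|_{\Spec\mathbb{B}_\mathrm{dR}}\circ\triv_{b_2}\bigr)^{-1}\cdot c_\mathrm{dR}(\varphi),
\]
both immediate from the definitions of $c_\mathrm{dR}$, $a_1$, $a_2$. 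The middle edge $(\varphi,g)\mapsto(\varphi,\varphi g^{-1}\varphi^{-1})$ encodes the compatibility $\varphi\circ g = (\varphi g^{-1}\varphi^{-1})^{-1}\circ\varphi$, expressing the same underlying transformation from the two perspectives. Assembling these identities gives commutativity of every face, completing the proof.
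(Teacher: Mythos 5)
Your overall architecture coincides with the paper's proof: the inscribed $v$-sheaf property comes from the pullback-of-$\mathrm{B}G$ / moduli-of-sections machinery, the quasi-bitorsor structure is formal, everything hinges on trivializability over $\Spd \mbb{C}_p$, part (2) is the Hecke correspondence unwound, and the diagram is a definition chase via \cref{lemma.period-maps-cdr}. Your verifications of the $c_\dR$-identities for $a_1$, $a_2$ and of the compatibility $\varphi\circ g=(\varphi g^{-1}\varphi^{-1})^{-1}\circ\varphi$ are correct and are exactly what the paper means by ``a chase through the definitions.''

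The genuine gap is in your justification of the one step you correctly single out as the main obstacle: the triviality of the $G$-torsor $\mc{I}som(\mc{E}_{b_1},\mc{E}_{b_2})$ over $\Spec \mbb{B}_e$ after base change to $\Spd\mbb{C}_p$. Your argument --- $\mbb{B}_e$ is a PID, every isocrystal trivializes away from $\infty$, hence ``Tannakian reduction'' trivializes the torsor --- only works for $\GL_n$ (or groups whose torsors are detected by their associated vector bundles). Knowing that $\mc{E}_{b_i}(V)|_{\Spec\mbb{B}_e}$ is a free module for every $V\in\Rep G$ says that the fiber functor is valued in free modules; it does not say the fiber functor is isomorphic to the standard one, i.e.\ it does not force the vanishing of the relevant class in $H^1_{\et}(\Spec\mbb{B}_e,G)$. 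For reductive $G$ this vanishing is precisely Ansch\"utz's theorem on $G$-torsors over $\Spec\mbb{B}_e$ (the input the paper cites), whose proof requires serious additional structure of $\mbb{B}_e$ beyond being a PID; and since the theorem is stated for an arbitrary connected linear algebraic group, one further needs the reduction to the reductive case, which the paper handles by $\sigma$-conjugating $b$ into a Levi subgroup. Your proof is silent on both points. A secondary, minor issue: sections of the smooth affine scheme $\mc{I}som(\mc{E}_{b_1},\mc{E}_{b_2})$ lift along nilpotent thickenings of the affine base $\mc{X}\backslash\infty$ by formal smoothness, but not \emph{uniquely}, and \cref{corollary.section-infinitesimal-pullback} is not the right citation for this lifting; uniqueness is in any case irrelevant, since the inscribed torsor property only needs existence of a section $v$-locally on each test object.
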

\begin{proof}
It is an inscribed $v$-sheaf by \cref{thm.BG-pull-back-inscribed}. It is evidently a quasi-bitorsor for $\mc{G}_{b_1}(\mbb{B}_e)$ and $\mc{G}_{b_2}(\mbb{B}_e)$. It is trivialized over $\Spd \mbb{C}_p$ (and thus, in particular, a bitorsor) by \cite[Theorem 6.5]{Anschuetz.ReductiveGroupSchemesOverTheFarguesFontaineCurve} (to apply this result in the case of $G$ non-reductive, we note that any $b$ may be $\sigma$-conjugated into a Levi subgroup of $G$; cf. \cite[\S2.2.3]{HoweKlevdal.AdmissiblePairsAndpAdicHodgeStructuresITranscendenceOfTheDeRhamLattice}). The commutativity of the diagram is a chase through the definitions after applying \cref{lemma.period-maps-cdr}. 
\end{proof}

We now describe the differentials of the maps in the commutative diagram of \cref{theorem.unbounded-structure}. To that end, note that, writing $\mf{g}_{b_i}$ for the isocrystal associated to $b_i$ by the adjoint representation on $\mf{g}$,  we have $\pi_i^* \mf{g}^+_\univ=\mc{E}(\mf{g}_{b_i}) \boxtimes \mbb{B}^+_\dR=\mf{g}_{b_i} \otimes_{\qpbreve} \mbb{B}^+_\dR$. Below we will simplify notation by writing 
\[ \mf{g}_{b_i} \boxtimes \mbb{B}^+_\dR := \mc{E}(\mf{g}_{b_i}) \boxtimes \mbb{B}^+_\dR \textrm{ and } \mf{g}_{b_i} \boxtimes \mbb{B}_e :=  \mc{E}(\mf{g}_{b_i}) \boxtimes \mbb{B}_e. \]

Then, from \cref{corollary.tangent-bundle-affine-grassmannian}, we obtain canonical isomorphisms 
\begin{equation}\label{eq.pullback-isos} c_i: (\mf{g}_{b_i} \boxtimes \mbb{B}_\dR) / (\mf{g}_{b_i} \boxtimes \mbb{B}^+_\dR) \xrightarrow{\sim} \pi_i^* T_{\Gr_G}.\end{equation}

\begin{corollary}\label{cor.unbounded-mod-tangent}
The following diagram of $E^\lfid$-modules on $\M_{b_1 \rightarrow b_2}$ commutes, where the left and right columns are the fundamental exact sequences of \cref{lemma.fes} for $\mf{g}_{b_1}$ and $\mf{g}_{b_2}$ and the morphisms $c_i$ are as in \cref{eq.pullback-isos}. Moreover, the horizontal arrows are all isomorphisms.
% https://q.uiver.app/#q=WzAsMTUsWzIsMiwiVF97XFxNX3tiXzFcXHJpZ2h0YXJyb3cgYl8yfX0iXSxbMCwyLCJcXG1me2d9X3tiXzF9IFxcYm94dGltZXMgXFxtYmJ7Qn1fZSJdLFs0LDIsIlxcbWZ7Z31fe2JfMn0gXFxib3h0aW1lcyBcXG1iYntCfV9lIl0sWzEsNCwiXFxwaV97MX1eKlRfe1xcR3JfR30iXSxbNCw0LCJcXGZyYWN7XFxtZntnfV97Yl8yfVxcYm94dGltZXNcXG1iYntCfV9cXGRSfXtcXG1me2d9X3tiXzJ9IFxcYm94dGltZXMgXFxtYmJ7Qn1eK19cXGRSfSJdLFswLDQsIlxcZnJhY3tcXG1me2d9X3tiXzF9XFxib3h0aW1lc1xcbWJie0J9X1xcZFJ9e1xcbWZ7Z31fe2JfMX0gXFxib3h0aW1lcyBcXG1iYntCfV4rX1xcZFJ9Il0sWzAsMSwiXFxCQyhcXG1me2d9X3tiXzF9KSJdLFswLDAsIjAiXSxbMCw1LCJcXEJDKFxcbWZ7Z31fe2JfMX1bMV0pIl0sWzAsNiwiMCJdLFs0LDYsIjAiXSxbNCwxLCJcXEJDKFxcbWZ7Z31fe2JfMn0pIl0sWzQsMCwiMCJdLFszLDQsIlxccGlfezJ9XipUX3tcXEdyX0d9Il0sWzQsNSwiXFxCQyhcXG1me2d9X3tiXzJ9WzFdKSJdLFswLDMsImRcXHBpX3sxfSIsMl0sWzIsNF0sWzUsMywiY18xIiwxXSxbMSwwLCIoZGFfMSlfZSIsMV0sWzYsMV0sWzEsNV0sWzcsNl0sWzUsOF0sWzgsOV0sWzEyLDExXSxbMiwwLCIoZGFfMilfZSIsMV0sWzQsMTMsImNfMiIsMV0sWzAsMTMsImRcXHBpX3syfSJdLFsxMSwyXSxbNCwxNF0sWzE0LDEwXSxbMSwyLCItXFxBZF8qXFx2YXJwaGkiLDEseyJjdXJ2ZSI6LTR9XV0=
\[\begin{tikzcd}
	0 &&&& 0 \\
	{\BC(\mf{g}_{b_1})} &&&& {\BC(\mf{g}_{b_2})} \\
	{\mf{g}_{b_1} \boxtimes \mbb{B}_e} && {T_{\M_{b_1\rightarrow b_2}}} && {\mf{g}_{b_2} \boxtimes \mbb{B}_e} \\
	\\
	{\frac{\mf{g}_{b_1}\boxtimes\mbb{B}_\dR}{\mf{g}_{b_1} \boxtimes \mbb{B}^+_\dR}} & {\pi_{1}^*T_{\Gr_G}} && {\pi_{2}^*T_{\Gr_G}} & {\frac{\mf{g}_{b_2}\boxtimes\mbb{B}_\dR}{\mf{g}_{b_2} \boxtimes \mbb{B}^+_\dR}} \\
	{\BC(\mf{g}_{b_1}[1])} &&&& {\BC(\mf{g}_{b_2}[1])} \\
	0 &&&& 0
	\arrow[from=1-1, to=2-1]
	\arrow[from=1-5, to=2-5]
	\arrow[from=2-1, to=3-1]
	\arrow[from=2-5, to=3-5]
	\arrow["{(da_1)_e}"{description}, from=3-1, to=3-3]
	\arrow["{-\Ad_*\varphi}"{description}, curve={height=-24pt}, from=3-1, to=3-5]
	\arrow[from=3-1, to=5-1]
	\arrow["{d\pi_{1}}"', from=3-3, to=5-2]
	\arrow["{d\pi_{2}}", from=3-3, to=5-4]
	\arrow["{(da_2)_e}"{description}, from=3-5, to=3-3]
	\arrow[from=3-5, to=5-5]
	\arrow["{c_1}"{description}, from=5-1, to=5-2]
	\arrow[from=5-1, to=6-1]
	\arrow["{c_2}"{description}, from=5-5, to=5-4]
	\arrow[from=5-5, to=6-5]
	\arrow[from=6-1, to=7-1]
	\arrow[from=6-5, to=7-5]
\end{tikzcd}\]
\end{corollary}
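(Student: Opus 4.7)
The plan is to derive the diagram from the bitorsor structure of Theorem~\ref{theorem.unbounded-structure} together with the computations of tangent bundles of automorphism groups (\cref{eq.LieAlgebras-of-aut-groups}) and of the affine Grassmannian (\cref{corollary.tangent-bundle-affine-grassmannian}). The two columns are literally the fundamental exact sequences of \cref{lemma.fes} applied to the isocrystal bundles $\mf{g}_{b_1}$ and $\mf{g}_{b_2}$, so the content of the statement is the commutativity of the three squares in the middle row together with the fact that the middle row's horizontal arrows are isomorphisms (the outer horizontal arrows $c_i$ are already isomorphisms by \cref{corollary.tangent-bundle-affine-grassmannian}).

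First I would prove that $(da_1)_e$ and $(da_2)_e$ are isomorphisms. By \cref{theorem.unbounded-structure} the inscribed $v$-sheaf $\M_{b_1\rightarrow b_2}$ is a (right) torsor over $\Spd \breve{E}$ for the action $a_i$ of $\mc{G}_{b_i}(\mbb{B}_e)$ after passage to the $v$-cover $\Spd \mathbb{C}_p$. Since $\Spd\breve{E}$ is trivially inscribed, its tangent bundle vanishes, so \cref{prop.inscribed-quotient} applied to the torsor $\M_{b_1\rightarrow b_2}\rightarrow \Spd\breve E$ (after $v$-localization if needed) gives an identification of $T_{\M_{b_1\rightarrow b_2}}$ with $\mathrm{coker}(0\rightarrow \Lie\mc{G}_{b_i}(\mbb{B}_e))=\Lie\mc{G}_{b_i}(\mbb{B}_e)$, which by \cref{eq.LieAlgebras-of-aut-groups} equals $\mf{g}_{b_i}\boxtimes \mbb{B}_e$; this identification is precisely $(da_i)_e$.

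For commutativity of the top square (containing $-\Ad_\ast \varphi$), I would use the identity $a_1(\varphi,g)=a_2(\varphi,\varphi g\varphi^{-1})$ for $g\in \mc{G}_{b_1}(\mbb{B}_e)$ (equivalently $\varphi\circ g=(\varphi g\varphi^{-1})\circ\varphi$), which is encoded in the curved arrow in the diagram of \cref{theorem.unbounded-structure}. Differentiating at $g=e$ via $g=1+\epsilon X$ for $X\in\mf{g}_{b_1}\boxtimes\mbb{B}_e$ gives $\varphi g\varphi^{-1}=1+\epsilon\Ad(\varphi)(X)$, and using the sign convention $a_2(\varphi,h)=h^{-1}\circ\varphi$ yields $(da_1)_e(X)=(da_2)_e(-\Ad(\varphi)(X))$, which is the claimed commutativity. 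For the bottom two squares (involving $d\pi_i$ and $c_i$), I would use \cref{lemma.period-maps-cdr} to write $\pi_i$ as the composition of $c_\dR$ (or its inverse) with the orbit map of $G(\mbb{B}_\dR)$ acting on $\ast_1$. Then $\pi_1\circ a_1(\varphi,g)=c_\dR(\varphi)\cdot(\triv_{b_1}^{-1}\circ g\circ\triv_{b_1})\cdot\ast_1$, and differentiating at $g=e$ and comparing with the description of $\pi_1^\ast T_{\Gr_G}$ from \cref{corollary.tangent-bundle-affine-grassmannian} (surjection $\mf{g}\otimes\mbb{B}_\dR\twoheadrightarrow T_{\Gr_G}|_{\ast_1}$ with kernel $\mf{g}^+_\univ$, translated via the left action of $c_\dR(\varphi)$) identifies $d\pi_1\circ(da_1)_e$ with the natural quotient map $\mf{g}_{b_1}\boxtimes\mbb{B}_e\rightarrow (\mf{g}_{b_1}\boxtimes\mbb{B}_\dR)/(\mf{g}_{b_1}\boxtimes\mbb{B}^+_\dR)$ composed with $c_1$. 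The argument for $d\pi_2\circ(da_2)_e$ is symmetric.

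Finally, given that the columns are exact (by \cref{lemma.fes}), the bottom horizontals $c_i$ are isomorphisms (by \cref{corollary.tangent-bundle-affine-grassmannian}), the middle horizontals $(da_i)_e$ are isomorphisms (Step~1), and all squares commute (Steps~2--3), a diagram chase shows the diagram has the claimed structure. The main technical obstacle is the careful translation in Step~3 between the description of $T_{\Gr_G}$ via the transitive $G(\mbb{B}_\dR)$-action (as in \cref{corollary.tangent-bundle-affine-grassmannian}) and the description of $\pi_i^\ast T_{\Gr_G}$ as $(\mf{g}_{b_i}\boxtimes\mbb{B}_\dR)/(\mf{g}_{b_i}\boxtimes\mbb{B}^+_\dR)$, where one has to keep track of how $\triv_{b_i}$ conjugates $\mf{g}_{b_i}$ into $\mf{g}$ and how $c_\dR(\varphi)$ acts by left translation on tangent vectors.
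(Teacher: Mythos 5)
Your proposal is correct and follows essentially the same route as the paper: the torsor/bitorsor structure of Theorem~\ref{theorem.unbounded-structure} gives that $(da_i)_e$ are isomorphisms, the circle at the top of that theorem's diagram gives the $-\Ad_*\varphi$ triangle, and the middle quadrilaterals are obtained by differentiating $\pi_i\circ a_i$ using \cref{lemma.period-maps-cdr} and matching against \cref{corollary.tangent-bundle-affine-grassmannian}. One small bookkeeping slip: the group-level identity should read $a_1(\varphi,g)=a_2(\varphi,\varphi g^{-1}\varphi^{-1})$ (not $\varphi g\varphi^{-1}$) --- this inverse is exactly where the minus sign comes from, and your infinitesimal conclusion $(da_1)_e(X)=(da_2)_e(-\Ad(\varphi)(X))$ is nonetheless the correct one.
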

\begin{proof}
It follows from the torsor property that each of $(da_i)_e$ is an isomorphism, and the identity $(d a_1)_e =  - (da_2)_e \circ \Ad_* \varphi$ is immediate from the commutative circle at the top of the diagram in \cref{theorem.unbounded-structure}.

The commutativity of the left middle quadrilateral comes from using \cref{theorem.unbounded-structure} to compute $d\pi_1 \circ da_1$ as the derivative of 
\begin{align*} (\varphi, g) & \mapsto (c_\dR(\varphi) \triv_{b_1}^{-1}g\triv_{b_1} c_\dR(\varphi)^{-1}) c_\dR(\varphi) \cdot \ast_1 \\
&= \left((\triv_{b_2}^{-1}\circ \varphi) g (\triv_{b_2}^{-1} \circ \varphi)^{-1} \right) \cdot (c_\dR(\varphi) \cdot \ast_1)  
\end{align*}
Indeed, the identification $c_1$ is given by composing the derivative of the action map $G(\mbb{B}_\dR) \times_{\Spd \breve{E}} \Gr_G \rightarrow \Gr_G$ at the identity in $G(\mbb{B}_\dR)$ with the isomorphism $\mf{g}_{b_1} \boxtimes \mbb{B}_\dR \rightarrow \mf{g} \boxtimes \mbb{B}_\dR$ induced by $\triv_{b_2}^{-1} \circ \varphi|_{\Spec \mbb{B}_\dR}$. 

Similarly, for commutativity of the right middle quadrilateral we compute $d\pi_2 \circ da_2$ as the derivative of 
\begin{align*} (\varphi, g) & \mapsto (c_\dR(\varphi)^{-1} \triv_{b_2}^{-1}g \triv_{b_2} c_\dR(\varphi)) c_\dR(\varphi)^{-1} \cdot \ast_1 \\
&= \left((\triv_{b_1}^{-1}\circ \varphi^{-1}) g (\triv_{b_1}^{-1} \circ \varphi^{-1})^{-1} \right) \cdot (c_\dR(\varphi)^{-1} \cdot \ast_1)  
\end{align*}
where we note the inverses in the definitions of $a_2$ and $\pi_2$ are cancelling to give the term $g$. Indeed, the identification $c_2$ is given by composing the derivative of the action map $G(\mbb{B}_\dR) \times_{\Spd \breve{E}} \Gr_G \rightarrow \Gr_G$ at the identity in $G(\mbb{B}_\dR)$ with the isomorphism $\mf{g}_{b_2} \boxtimes \mbb{B}_\dR \rightarrow \mf{g} \boxtimes \mbb{B}_\dR$ induced by $\triv_{b_1}^{-1} \circ \varphi^{-1}|_{\Spec \mbb{B}_\dR}$. 
We obtain the negative sign in the commutative diagram because of the $g^{-1}$ that appears instead of a $g$ within the conjugation.  
\end{proof}

\begin{corollary}\label{cor.newton-strata-tangent-normal}
    Let $\mc{E}_{[b_2]}$ be the restriction of the universal $G$-bundle on $\mc{X}^*\BG$ to $(\mc{X}^*\BG)^{[b_2]}$ and let $\mf{g}_{[b_2]}=\mc{E}_{[b_2]}(\mf{g})$ be its push-out by the adjoint representation. The short exact sequence over $\Gr_{G}^{b_1 \rightarrow [b_2]}$ induced by the fundamental exact sequence of \cref{lemma.fes} for $\mf{g}_{[b_2]}$,  
\[ 0 \rightarrow \mf{g}_{[b_2]} \boxtimes \mbb{B}_e / \BC(\mf{g}_{[b_2]})  \rightarrow \mf{g}_{[b_2]}  \boxtimes \mbb{B}_\dR /\mf{g}_{[b_2]}  \boxtimes \mbb{B}^+_\dR \rightarrow \BC(\mf{g}_{[b_2]} [1]) \rightarrow 0, \]
is canonically identified with the short exact sequence
\[ 0 \rightarrow T_{\Gr_G^{b_1 \rightarrow[b_2]}} \xrightarrow{d\iota} \iota^* T_{\Gr_G} \rightarrow N_{\Gr_G^{b_1 \rightarrow [b_2]}} \rightarrow 0 \]
where $\iota: \Gr_G^{b_1 \rightarrow [b_2]} \hookrightarrow \Gr_G$ is the inclusion. 
\end{corollary}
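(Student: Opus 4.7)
The plan is to pull back the tangent-normal exact sequence from $\Gr_G^{b_1 \rightarrow [b_2]}$ to $\mc{M}_{b_1 \rightarrow b_2}$ along the torsor $\pi_2$ of \cref{theorem.unbounded-structure}-(2), where \cref{cor.unbounded-mod-tangent} gives us a complete description of all of the relevant $E^\lfid$-modules and maps, and then descend the resulting identification.

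More precisely, by \cref{theorem.unbounded-structure}-(2), $\pi_2 : \mc{M}_{b_1 \rightarrow b_2} \rightarrow \Gr_G^{b_1 \rightarrow [b_2]}$ is a $\tilde{G}_{b_2}$-torsor for the action $a_2$, and the pullback of $\mc{E}_{[b_2]}$ along $\pi_2$ (factored through the modification map $\Gr_G^{b_1 \rightarrow [b_2]} \to (\mc{X}^*\BG)^{[b_2]}$) is canonically trivialized to $\mc{E}_{b_2}$, so that $\pi_2^* \mf{g}_{[b_2]} = \mf{g}_{b_2}$ and the fundamental exact sequence of \cref{lemma.fes} for $\mf{g}_{[b_2]}$ pulls back to that for $\mf{g}_{b_2}$. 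Combining \cref{prop.inscribed-quotient} with the description $\Lie \tilde{G}_{b_2} = \BC(\mf{g}_{b_2})$ from \cref{eq.LieAlgebras-of-aut-groups}, the tangent sequence of this torsor is
\[ 0 \rightarrow \BC(\mf{g}_{b_2}) \xrightarrow{(da_2)_e} T_{\mc{M}_{b_1 \rightarrow b_2}} \xrightarrow{d\pi_2} \pi_2^* T_{\Gr_G^{b_1 \rightarrow [b_2]}} \rightarrow 0. \]

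Next I would invoke \cref{cor.unbounded-mod-tangent}: the map $(da_2)_e : \mf{g}_{b_2} \boxtimes \mbb{B}_e \xrightarrow{\sim} T_{\mc{M}_{b_1 \rightarrow b_2}}$ is an isomorphism, and under this isomorphism the subsheaf $\BC(\mf{g}_{b_2})$ maps to the first term of the fundamental exact sequence for $\mf{g}_{b_2}$. It follows that $\pi_2^* T_{\Gr_G^{b_1 \rightarrow [b_2]}}$ is canonically identified with $\mf{g}_{b_2} \boxtimes \mbb{B}_e / \BC(\mf{g}_{b_2})$. Similarly, the right middle quadrilateral of \cref{cor.unbounded-mod-tangent} identifies $d\pi_2 \circ (da_2)_e$ with the connecting map $\mf{g}_{b_2} \boxtimes \mbb{B}_e \rightarrow \frac{\mf{g}_{b_2} \boxtimes \mbb{B}_\dR}{\mf{g}_{b_2} \boxtimes \mbb{B}^+_\dR}$ of the fundamental exact sequence (composed with the isomorphism $c_2$), so that passing to the quotient, $\pi_2^* d\iota$ is identified with the natural inclusion
\[ \mf{g}_{b_2} \boxtimes \mbb{B}_e / \BC(\mf{g}_{b_2}) \hookrightarrow \frac{\mf{g}_{b_2} \boxtimes \mbb{B}_\dR}{\mf{g}_{b_2} \boxtimes \mbb{B}^+_\dR} \]
from the fundamental exact sequence, whose cokernel is $\BC(\mf{g}_{b_2}[1])$. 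This gives the claimed identification after pullback along $\pi_2$.

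To finish, I would descend this identification from $\mc{M}_{b_1 \rightarrow b_2}$ back down to $\Gr_G^{b_1 \rightarrow [b_2]}$. The fundamental exact sequence for $\mf{g}_{[b_2]}$ pulls back to that for $\mf{g}_{b_2}$ as already observed, and the tangent-normal sequence of $\iota$ pulls back to the pulled-back sequence as computed. Thus one only needs to check that the identifications produced above are equivariant for the $\tilde{G}_{b_2}$-action through which the descent is taken; this is immediate from functoriality, since the whole diagram of \cref{cor.unbounded-mod-tangent} is natural in the $\tilde{G}_{b_2}$-action via its effect on $\mc{E}_{b_2}$ (the action on $\mf{g}_{b_2}$ being adjoint, which is exactly the action descending to $\mf{g}_{[b_2]}$). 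The main subtlety is this last equivariance/descent check, but it is essentially formal given the naturality built into \cref{cor.unbounded-mod-tangent} and \cref{theorem.unbounded-structure}; all of the substantive content has been done upstream on $\mc{M}_{b_1 \rightarrow b_2}$.
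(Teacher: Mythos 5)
Your proposal is correct and is exactly the argument the paper leaves implicit (the corollary is stated without proof, as an immediate consequence of \cref{theorem.unbounded-structure} and \cref{cor.unbounded-mod-tangent}): pull everything back along the $\tilde{G}_{b_2}$-torsor $\pi_2$, where the trivialization $\pi_2^*\mf{g}_{[b_2]}=\mf{g}_{b_2}$, the identification $(da_2)_e:\mf{g}_{b_2}\boxtimes\mbb{B}_e\xrightarrow{\sim}T_{\mc{M}_{b_1\to b_2}}$, and the commutativity of the right-hand quadrilateral match the two exact sequences term by term, and then descend by $\tilde{G}_{b_2}$-equivariance. The steps you flag as the only subtleties (the canonical trivialization of the pulled-back universal bundle via \cref{remark.canonical-torsor-over-newton-stratum}, and the equivariance needed for descent) are indeed the right ones to check, and your treatment of them is sound.
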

\begin{remark}
The situation is symmetric in $b_1$ and $b_2$, so we only need to state one version in \cref{cor.newton-strata-tangent-normal}. 
\end{remark}

\subsection{The bounded moduli space}\label{ss.bounded-moduli}

We fix $G/E$ a connected linear algebraic group and $[\mu]$ a conjugacy class of cocharacters of $G_{\overline{E}}$. In this subsection all objects are base changed to $\Spd(\breve{E}([\mu]))$. For $b_1, b_2 \in G(\breve{E})$, 
\[ \M_{b_1 \rightarrow b_2, [\mu]} := c_\dR \times_{G(\mbb{B}_\dR)} C_{[\mu]} = \pi_1 \times_{\Gr_G} \Gr_{[\mu]} = \pi_2 \times_{\Gr_G} \Gr_{[\mu^{-1}]} \]
where the two equalities are immediate from the definitions. The left and right multiplication actions of $G(\mathbb{B}_\dR)$ on itself restrict to left and right multiplication actions of $G(\mathbb{B}^+_\dR)$ on $C_{[\mu]}$, thus by \cref{eq.global-aut-intersection}, the actions of $\mc{G}_{b_i}(\mbb{B}_e)$ on $\M_{b_1 \rightarrow b_2}$ restrict to actions of $\tilde{G}_{b_i}$ on $\M_{b_1 \rightarrow b_2, [\mu]}$. Below we write $a_i$ for the restrictions of the right action maps used in \cref{ss.unbounded-moduli}. 

Below we also write $\pi_i$ for the restriction of the period maps to $\M_{b_1 \rightarrow b_2, [\mu]}$, so that $\pi_1$ factors through $\Gr_{[\mu]}$ and $\pi_2$ factors through $\Gr_{[\mu^{-1}]}$. We then obtain two filtration period maps, $\pi_1^f=\BB \circ \pi_1: \M_{b_1 \rightarrow b_2, [\mu]} \rightarrow \Fl_{[\mu^{-1}]}^\lfid$ and $\pi_2^f=\BB \circ \pi_2: \M_{b_1 \rightarrow b_2, [\mu]} \rightarrow \Fl_{[\mu]}^\lfid.$ 

\begin{theorem}\label{theorem.bounded-structure}
Let $b_1, b_2 \in G(\breve{E})$. Then
$\M_{b_1 \rightarrow b_2,[\mu]}$ is a inscribed $v$-sheaf over $\Spd \breve{E}([\mu])$.  The map $\pi_1$ factors through $\Gr_{[\mu]}^{b_2 \rightarrow [b_1]}$ and the map $\pi_2$ factors through $\Gr_{[\mu]}^{b_1 \rightarrow [b_2]}$, and the actions realize $\pi_1$ as the canonical $\tilde{G}_{b_2}$-equivariant $\tilde{G}_{b_1}$-torsor over $\Gr_{[\mu]}^{b_2 \rightarrow [b_1]}$ of \cref{remark.canonical-torsor-over-newton-stratum} and $\pi_2$ as the canonical $\tilde{G}_{b_1}$-equivariant $\tilde{G}_{b_2}$-torsor over $\Gr_{[\mu^{-1}]}^{b_1 \rightarrow [b_2]}$ of \cref{remark.canonical-torsor-over-newton-stratum}. 
The following extended subdiagram of the diagram in \cref{theorem.unbounded-structure} commutes: 
% https://q.uiver.app/#q=WzAsMTMsWzAsMywiXFxNX3tiXzFcXHJpZ2h0YXJyb3cgYl8yLFtcXG11XX0iXSxbMiwyLCJcXEZsX3tbXFxtdV57LTF9XX1eXFxsZmlkIl0sWzMsMiwiXFxHcl97W1xcbXVdfSJdLFsyLDQsIlxcRmxfe1tcXG11XX1eXFxsZmlkIl0sWzMsNCwiXFxHcl97W1xcbXVeey0xfV19Il0sWzQsMywiQ197W1xcbXVdfSAiXSxbMCw0LCJcXE1fe2JfMSBcXHJpZ2h0YXJyb3cgYl8yLFtcXG11XX1cXHRpbWVzXFx0aWxkZXtHfV97Yl8yfSJdLFswLDIsIlxcTV97Yl8xXFxyaWdodGFycm93IGJfMixbXFxtdV19IFxcdGltZXMgXFx0aWxkZXtHfV97Yl8xfSJdLFs0LDAsIkNfe1tcXG11XX0gXFx0aW1lcyBHKFxcbWJie0J9XitfXFxkUikiXSxbNCw2LCJDX3tbXFxtdV19XFx0aW1lcyBHKFxcbWJie0J9XitfXFxkUikiXSxbNSwzXSxbMywxLCJcXEdyX3tbXFxtdV19XntiXzIgXFxyaWdodGFycm93IFtiXzFdfSJdLFszLDUsIlxcR3Jfe1tcXG11XnstMX1dfV57Yl8xIFxccmlnaHRhcnJvdyBbYl8yXX0iXSxbNCwzLCJcXEJCIl0sWzAsMywiXFxwaV8yXmYiLDFdLFswLDUsImNfXFxkUiIsMSx7InN0eWxlIjp7InRhaWwiOnsibmFtZSI6Imhvb2siLCJzaWRlIjoidG9wIn19fV0sWzYsMCwiYV8yIl0sWzcsMCwiYV8xIiwyXSxbNyw4LCJjX1xcZFIgXFx0aW1lcyAoZyBcXG1hcHN0byBcXHRyaXZfe2JfMX1eey0xfVxcY2lyYyBnIFxcY2lyYyBcXHRyaXZfe2JfMX0pIiwwLHsiY3VydmUiOi00fV0sWzYsOSwiY19cXGRSIFxcdGltZXMgKGcgXFxtYXBzdG8gXFx0cml2X3tiXzJ9XnstMX1cXGNpcmMgZyBcXGNpcmMgXFx0cml2X3tiXzJ9KSIsMix7ImN1cnZlIjozfV0sWzgsNSwiKGMsZylcXG1hcHN0byBjZyIsMV0sWzksNSwiKGMsIGcpIFxcbWFwc3RvIGdeey0xfWMiLDFdLFs1LDIsImMgXFxtYXBzdG8gYyBcXGNkb3QgXFxhc3RfMSIsMV0sWzUsNCwiY1xcbWFwc3RvIGNeey0xfVxcY2RvdFxcYXN0XzEiLDFdLFsyLDEsIlxcQkIiLDJdLFsxMSwyLCIiLDAseyJzdHlsZSI6eyJ0YWlsIjp7Im5hbWUiOiJob29rIiwic2lkZSI6InRvcCJ9fX1dLFswLDEsIlxccGlfMV5mIiwxXSxbMCwxMSwiXFxwaV8xIiwxLHsiY3VydmUiOi0yfV0sWzEyLDQsIiIsMSx7InN0eWxlIjp7InRhaWwiOnsibmFtZSI6Imhvb2siLCJzaWRlIjoidG9wIn19fV0sWzAsMTIsIlxccGlfMiIsMSx7ImN1cnZlIjoyfV1d
\[\begin{tikzcd}
	&&&& {C_{[\mu]} \times G(\mbb{B}^+_\dR)} \\
	&&& {\Gr_{[\mu]}^{b_2 \rightarrow [b_1]}} \\
	{\M_{b_1\rightarrow b_2,[\mu]} \times \tilde{G}_{b_1}} && {\Fl_{[\mu^{-1}]}^\lfid} & {\Gr_{[\mu]}} \\
	{\M_{b_1\rightarrow b_2,[\mu]}} &&&& {C_{[\mu]} } & {} \\
	{\M_{b_1 \rightarrow b_2,[\mu]}\times\tilde{G}_{b_2}} && {\Fl_{[\mu]}^\lfid} & {\Gr_{[\mu^{-1}]}} \\
	&&& {\Gr_{[\mu^{-1}]}^{b_1 \rightarrow [b_2]}} \\
	&&&& {C_{[\mu]}\times G(\mbb{B}^+_\dR)}
	\arrow["{(c,g)\mapsto cg}"{description}, from=1-5, to=4-5]
	\arrow[hook, from=2-4, to=3-4]
	\arrow["{c_\dR \times (g \mapsto \triv_{b_1}^{-1}\circ g \circ \triv_{b_1})}", curve={height=-24pt}, from=3-1, to=1-5]
	\arrow["{a_1}"', from=3-1, to=4-1]
	\arrow["\BB"', from=3-4, to=3-3]
	\arrow["{\pi_1}"{description}, curve={height=-12pt}, from=4-1, to=2-4]
	\arrow["{\pi_1^f}"{description}, from=4-1, to=3-3]
	\arrow["{c_\dR}"{description}, hook, from=4-1, to=4-5]
	\arrow["{\pi_2^f}"{description}, from=4-1, to=5-3]
	\arrow["{\pi_2}"{description}, curve={height=12pt}, from=4-1, to=6-4]
	\arrow["{c \mapsto c \cdot \ast_1}"{description}, from=4-5, to=3-4]
	\arrow["{c\mapsto c^{-1}\cdot\ast_1}"{description}, from=4-5, to=5-4]
	\arrow["{a_2}", from=5-1, to=4-1]
	\arrow["{c_\dR \times (g \mapsto \triv_{b_2}^{-1}\circ g \circ \triv_{b_2})}"', curve={height=18pt}, from=5-1, to=7-5]
	\arrow["\BB", from=5-4, to=5-3]
	\arrow[hook, from=6-4, to=5-4]
	\arrow["{(c, g) \mapsto g^{-1}c}"{description}, from=7-5, to=4-5]
\end{tikzcd}\]
\end{theorem}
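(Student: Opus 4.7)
The plan is to reduce essentially everything to Theorem \ref{theorem.unbounded-structure} combined with the definitions of $\Gr_{[\mu]}$, $C_{[\mu]}$, and $\BB$ from Section \ref{s.affine-grassmannian}. First, since
\[ \M_{b_1 \rightarrow b_2, [\mu]} = \M_{b_1 \rightarrow b_2} \times_{G(\mathbb{B}_\dR)} C_{[\mu]} \]
is a 2-limit of inscribed $v$-sheaves (using Theorem \ref{theorem.unbounded-structure} and the inscribed $v$-sheaf property of $C_{[\mu]}$ established in Section \ref{ss.bdrag-big-schubert-cells}), \cref{lemma.inscribed-limits} gives that it is an inscribed $v$-sheaf. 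By Theorem \ref{theorem.unbounded-structure}, $\pi_1$ already factors through $\Gr_G^{b_2 \rightarrow [b_1]}$, and by the alternative description $\M_{b_1 \rightarrow b_2, [\mu]} = \pi_1 \times_{\Gr_G} \Gr_{[\mu]}$ it factors through $\Gr_{[\mu]}$; combining these two factorizations gives the factorization through $\Gr_{[\mu]}^{b_2 \rightarrow [b_1]}$ from the very definition of that locus in \cref{ss.inscribed-generalized-Newton-strata}. The analogous factorization of $\pi_2$ through $\Gr_{[\mu^{-1}]}^{b_1\rightarrow[b_2]}$ follows symmetrically from the identification $\M_{b_1 \rightarrow b_2, [\mu]} = \pi_2 \times_{\Gr_G} \Gr_{[\mu^{-1}]}$.

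Next I would establish the torsor property for $\pi_1$ under $a_1|_{\tilde{G}_{b_1}}$ (the case of $\pi_2$ and $a_2|_{\tilde{G}_{b_2}}$ is symmetric). Given $\varphi \in \M_{b_1 \rightarrow b_2, [\mu]}$ and $g \in \mc{G}_{b_1}(\mathbb{B}_e)$, the commutative diagram from Theorem \ref{theorem.unbounded-structure} gives $c_\dR(\varphi \cdot g) = c_\dR(\varphi)\cdot(\triv_{b_1}^{-1} \circ g \circ \triv_{b_1})$, and this remains in $C_{[\mu]}$ if and only if $\triv_{b_1}^{-1} \circ g \circ \triv_{b_1}$ lies in $G(\mathbb{B}^+_\dR)$, i.e.\ if and only if $g \in \mc{G}_{b_1}(\mathbb{B}^+_\dR)$. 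By the identity \cref{eq.global-aut-intersection}, this intersects $\mc{G}_{b_1}(\mathbb{B}_e)$ in precisely $\tilde{G}_{b_1}$, which shows that the $\mc{G}_{b_1}(\mathbb{B}_e)$-torsor structure of $\pi_1:\M_{b_1\rightarrow b_2} \to \Gr_G^{b_2\rightarrow[b_1]}$ restricts to a $\tilde{G}_{b_1}$-action on $\M_{b_1\rightarrow b_2,[\mu]}$ that is free. Transitivity on fibers over $\Gr_{[\mu]}^{b_2\rightarrow[b_1]}$ follows from the unbounded torsor property together with the same computation run in reverse: if $\pi_1(\varphi)=\pi_1(\varphi')$ in $\Gr_{[\mu]}$, then the unbounded torsor property produces a unique $g \in \mc{G}_{b_1}(\mathbb{B}_e)$ with $\varphi' = \varphi \cdot g$, and the condition $c_\dR(\varphi)^{-1}c_\dR(\varphi') \in G(\mathbb{B}^+_\dR)$ (which holds because $\pi_1(\varphi)=\pi_1(\varphi')$) forces $g \in \tilde{G}_{b_1}$. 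The matching with the canonical torsor of \cref{remark.canonical-torsor-over-newton-stratum} is then just a comparison of moduli definitions, exactly as in the unbounded case.

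Finally, the commutative diagram is obtained by pasting together: (i) the commutative diagram of Theorem \ref{theorem.unbounded-structure} restricted over the locus where $c_\dR$ lands in $C_{[\mu]}$, (ii) the commutative squares defined by $C_{[\mu]} \twoheadrightarrow \Gr_{[\mu]}$ and $C_{[\mu]} \twoheadrightarrow \Gr_{[\mu^{-1}]}$ from Theorem \ref{theorem.SchubertCellDiagrams} (applied via $\pi_1,\pi_2$ and the descriptions $c \mapsto c\cdot \ast_1$ and $c\mapsto c^{-1}\cdot \ast_1$), and (iii) the definitions $\pi_1^f = \BB \circ \pi_1$, $\pi_2^f = \BB \circ \pi_2$, which give the two new commuting triangles at the left involving the flag varieties. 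The only genuinely new commutativity to verify is that the Bialynicki-Birula map $\BB:\Gr_{[\mu]}\to\Fl_{[\mu^{-1}]}^\lfid$ is compatible with the diagram, and this is immediate from its defining $G(\mathbb{B}^+_\dR)$-equivariance established in Theorem \ref{theorem.bb-derivative}. The only real difficulty I anticipate in the whole argument is bookkeeping: keeping straight which $\mathcal{G}_{b_i}$ acts on which side, the sign conventions in $a_2$ (which involves $g^{-1}$), and the $[\mu]$ vs.\ $[\mu^{-1}]$ swap between $\pi_1$ and $\pi_2$ — once these are pinned down, no genuinely new deformation-theoretic input beyond Section \ref{s.affine-grassmannian} and Section \ref{ss.unbounded-moduli} is needed.
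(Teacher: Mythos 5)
Your proposal is correct and follows essentially the same route as the paper, whose entire proof is that the inscribed $v$-sheaf property comes from \cref{lemma.inscribed-limits} applied to the fiber product and that everything else ``follows by restriction'' from \cref{theorem.unbounded-structure}; your write-up simply makes that restriction explicit. One small overstatement worth noting: your claim that $c_\dR(\varphi)\cdot h$ remains in $C_{[\mu]}$ \emph{only if} $h\in G(\mbb{B}^+_\dR)$ is false in general (two elements of $C_{[\mu]}$ can differ by right multiplication by an element outside $G(\mbb{B}^+_\dR)$), but this is harmless since freeness is inherited from the unbounded torsor and your transitivity argument correctly uses the stronger hypothesis $\pi_1(\varphi)=\pi_1(\varphi')$, i.e.\ that $c_\dR(\varphi)^{-1}c_\dR(\varphi')$ stabilizes $\ast_1$.
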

\begin{proof}
That $\M_{b_1 \rightarrow b_2,[\mu]}$ is an inscribed $v$-sheaf follows from the corresponding property of the constituents of the fiber product and \cref{lemma.inscribed-limits}. The rest of the theorem follows by restriction from \cref{theorem.bounded-structure}.
\end{proof}

\begin{remark}
Note that $\mc{M}_{b_1 \rightarrow b_2,[\mu]}$ may be empty. When $b_1=1$, it is non-empty exactly when $[b_2]$ lies in the Kottwitz set $B(G,[\mu^{-1}])$.
\end{remark}

As in the unbounded case, we can now describe the derivatives. To that end, we need to consider some bounded analogs of the fundamental exact sequence. We write $\mc{E}_{\mr{max}}$ for the minimal common modification of $\mc{E}(\mf{g}_{b_1})$ and $\mc{E}(\mf{g}_{b_2})$ on $\mc{M}_{b_1\rightarrow b_2, [\mu]}$, i.e. for the modification associated by \cref{lemma.vb-glueing} to
\[ \mf{g}_{b_1} \boxtimes \mbb{B}^+_\dR + \mf{g}_{b_2} \boxtimes \mbb{B}^+_\dR = \pi_1^*\mf{g}^+_{\mr{max}} = \pi_2^*\mf{g}^+_{\mr{max}}, \]
which is a lattice in $\mf{g}_{b_1} \boxtimes \mbb{B}_\dR=\mf{g}_{b_2} \boxtimes \mbb{B}_\dR$ by \cref{prop.schubert-cell}.

We write this lattice as $\mf{g}^+_{\mr{max}}$. Then, for each $i$, we have an exact sequence of sheaves on $\mc{X}$ over $\mc{M}_{b_1\rightarrow b_2, [\mu]}$: 
\begin{equation}\label{eq.ses-for-bdd-fes} 0 \rightarrow \mc{E}(\mf{g}_{b_i}) \rightarrow \mc{E}_{\mr{max}} \rightarrow \infty_* (\mf{g}^+_{\mr{max}} / \mf{g}_{b_i} \boxtimes \mbb{B}^+_\dR) \rightarrow 0. \end{equation}
The $v$-sheafification of the associated long exact sequence of cohomology gives rise to an exact sequence of $E^\lfid$-modules on $\M_{b_1\rightarrow b_2, [\mu]}$, 
\begin{equation}\label{eq.bdd-fes} 0 \rightarrow \BC(\mf{g}_{b_i}) \rightarrow \BC(\mc{E}_{\mr{\max}}) \rightarrow \frac{\mf{g}^+_{\mr{max}}}{\mf{g}_{b_i} \boxtimes \mbb{B}^+_\dR} \rightarrow \BC(\mf{g}_{b_i}[1]) \rightarrow \BC(\mc{E}_{\mr{max}}[1]) \rightarrow 0\end{equation}
where the last zero is simply because the third term in \cref{eq.ses-for-bdd-fes} is a quasi-coherent sheaf supported on a closed affine subscheme of $\mathcal{X}$ so has vanishing cohomology. 

\begin{corollary}\label{corollary.bounded-mod-tangent}
The following diagram of inscribed $E^\lfid$-modules on $\M_{b_1 \rightarrow b_2, [\mu]}$, which is an extended subdiagram of (the pullback to $\M_{b_1\rightarrow b_2,[\mu]}$ of) the diagram of \cref{cor.unbounded-mod-tangent}, commutes. The left and right columns are the bounded fundamental exact sequences of \cref{eq.bdd-fes} for $\mf{g}_{b_1}$ and $\mf{g}_{b_2}$, and the morphisms $c_i$ are obtained by restricting \cref{eq.pullback-isos}. Moreover, the horizontal arrows are all isomorphisms.
% % https://q.uiver.app/#q=WzAsMTcsWzIsMiwiVF97XFxNX3tiXzEgXFxyaWdodGFycm93IGJfMiwgW1xcbXVdfX0iXSxbMCwyLCJcXEJDKFxcbWN7RX1fe1xcbXJ7bWF4fX0pIl0sWzQsMiwiXFxCQyhcXG1je0V9X3tcXG1ye21heH19KSJdLFsxLDQsIlxccGlfezF9XipUX3tcXEdyX3tbXFxtdV19fSJdLFs0LDQsIlxcZnJhY3tcXG1me2d9Xitfe1xcbXJ7bWF4fX19e1xcbWZ7Z31fe2JfMn0gXFxib3h0aW1lcyBcXG1iYntCfV4rX1xcZFJ9Il0sWzAsNCwiXFxmcmFje1xcbWZ7Z31eK197XFxtcnttYXh9fX17XFxtZntnfV97Yl8xfSBcXGJveHRpbWVzIFxcbWJie0J9XitfXFxkUn0iXSxbMCwxLCJcXEJDKFxcbWZ7Z31fe2JfMX0pIl0sWzAsMCwiMCJdLFswLDUsIlxcQkMoXFxtZntnfV97Yl8xfVsxXSkiXSxbMCw3LCIwIl0sWzQsNywiMCJdLFs0LDEsIlxcQkMoXFxtZntnfV97Yl8yfSkiXSxbNCwwLCIwIl0sWzMsNCwiXFxwaV97Mn1eKlRfe1xcR3Jfe1tcXG11XnstMX1dfX0iXSxbNCw1LCJcXEJDKFxcbWZ7Z31fe2JfMn1bMV0pIl0sWzAsNiwiXFxCQyhcXG1je0V9X3tcXG1ye21heH19WzFdKSJdLFs0LDYsIlxcQkMoXFxtY3tFfV97XFxtcnttYXh9fVsxXSkiXSxbMCwzLCJkXFxwaV97MX0iLDJdLFsyLDRdLFs1LDMsImNfMSIsMV0sWzEsMCwiKGRhXzEpX2UiLDFdLFs2LDFdLFsxLDVdLFs3LDZdLFs1LDhdLFsxMiwxMV0sWzIsMCwiKGRhXzIpX2UiLDFdLFs0LDEzLCJjXzIiLDFdLFswLDEzLCJkXFxwaV97Mn0iXSxbMTEsMl0sWzQsMTRdLFs4LDE1XSxbMTUsOV0sWzE0LDE2XSxbMTYsMTBdLFsxLDIsIi1cXEFkXypcXHZhcnBoaSIsMSx7ImN1cnZlIjotNH1dXQ==
\[\begin{tikzcd}
	0 &&&& 0 \\
	{\BC(\mf{g}_{b_1})} &&&& {\BC(\mf{g}_{b_2})} \\
	{\BC(\mc{E}_{\mr{max}})} && {T_{\M_{b_1 \rightarrow b_2, [\mu]}}} && {\BC(\mc{E}_{\mr{max}})} \\
	\\
	{\frac{\mf{g}^+_{\mr{max}}}{\mf{g}_{b_1} \boxtimes \mbb{B}^+_\dR}} & {\pi_{1}^*T_{\Gr_{[\mu]}}} && {\pi_{2}^*T_{\Gr_{[\mu^{-1}]}}} & {\frac{\mf{g}^+_{\mr{max}}}{\mf{g}_{b_2} \boxtimes \mbb{B}^+_\dR}} \\
	{\BC(\mf{g}_{b_1}[1])} &&&& {\BC(\mf{g}_{b_2}[1])} \\
	{\BC(\mc{E}_{\mr{max}}[1])} &&&& {\BC(\mc{E}_{\mr{max}}[1])} \\
	0 &&&& 0
	\arrow[from=1-1, to=2-1]
	\arrow[from=1-5, to=2-5]
	\arrow[from=2-1, to=3-1]
	\arrow[from=2-5, to=3-5]
	\arrow["{(da_1)_e}"{description}, from=3-1, to=3-3]
	\arrow["{-\Ad_*\varphi}"{description}, curve={height=-24pt}, from=3-1, to=3-5]
	\arrow[from=3-1, to=5-1]
	\arrow["{d\pi_{1}}"', from=3-3, to=5-2]
	\arrow["{d\pi_{2}}", from=3-3, to=5-4]
	\arrow["{(da_2)_e}"{description}, from=3-5, to=3-3]
	\arrow[from=3-5, to=5-5]
	\arrow["{c_1}"{description}, from=5-1, to=5-2]
	\arrow[from=5-1, to=6-1]
	\arrow["{c_2}"{description}, from=5-5, to=5-4]
	\arrow[from=5-5, to=6-5]
	\arrow[from=6-1, to=7-1]
	\arrow[from=6-5, to=7-5]
	\arrow[from=7-1, to=8-1]
	\arrow[from=7-5, to=8-5]
\end{tikzcd}\]
\end{corollary}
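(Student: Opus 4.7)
My plan is to deduce the diagram by restricting the commutative diagram of \cref{cor.unbounded-mod-tangent} to $\M_{b_1 \rightarrow b_2, [\mu]} \hookrightarrow \M_{b_1 \rightarrow b_2}$ and reinterpreting the restrictions of the relevant modules in terms of the lattice $\mf{g}^+_{\mr{max}}$ and the bundle $\mc{E}_{\mr{max}}$. The central new computation is to identify $T_{\M_{b_1 \rightarrow b_2, [\mu]}}$ with $\BC(\mc{E}_{\mr{max}})$. To do this, I would use the fiber product description $\M_{b_1 \rightarrow b_2, [\mu]} = \pi_1 \times_{\Gr_G} \Gr_{[\mu]}$ from \cref{theorem.bounded-structure} together with the compatibility of tangent bundles with inscribed limits (\cref{lemma.inscribed-limits}). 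Combined with the unbounded identification $(da_1)_e: \mf{g}_{b_1} \boxtimes \mbb{B}_e \xrightarrow{\sim} T_{\M_{b_1 \rightarrow b_2}}$ and the Schubert tangent bundle identification $\pi_1^* T_{\Gr_{[\mu]}} = \mf{g}^+_{\mr{max}}/\mf{g}_{b_1} \boxtimes \mbb{B}^+_\dR$ sitting inside $\pi_1^* T_{\Gr_G} = \mf{g}_{b_1} \boxtimes \mbb{B}_\dR / \mf{g}_{b_1} \boxtimes \mbb{B}^+_\dR$ from \cref{prop.schubert-cell}, this fiber product identifies $T_{\M_{b_1 \rightarrow b_2, [\mu]}}$ with the subspace of sections $s \in \mf{g}_{b_1} \boxtimes \mbb{B}_e$ whose germ at $\infty$ lies in $\mf{g}^+_{\mr{max}}$. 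By Beauville-Laszlo (\cref{lemma.vb-glueing}), this subspace is exactly $\BC(\mc{E}_{\mr{max}})$, so the restriction of $(da_1)_e$ gives the left horizontal isomorphism in row 3. Running the same argument through $\pi_2$ yields the right horizontal isomorphism.

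Next I would identify the two outer columns with the bounded fundamental exact sequences. The key observation is that under the identifications above, the composition $\BC(\mc{E}_{\mr{max}}) \xrightarrow{(da_1)_e} T_{\M_{b_1 \rightarrow b_2, [\mu]}} \xrightarrow{d\pi_1} \mf{g}^+_{\mr{max}}/\mf{g}_{b_1} \boxtimes \mbb{B}^+_\dR$ is exactly the $\infty$-restriction map sending a global section of $\mc{E}_{\mr{max}}$ to its formal expansion modulo $\mf{g}_{b_1} \boxtimes \mbb{B}^+_\dR$. This is precisely the middle map in the long exact cohomology sequence attached to \cref{eq.ses-for-bdd-fes}, whose kernel is $\BC(\mf{g}_{b_1})$ and whose cokernel maps to $\BC(\mf{g}_{b_1}[1])$, the latter having cokernel $\BC(\mc{E}_{\mr{max}}[1])$ as the last nonzero term. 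This matches the bounded fundamental exact sequence \cref{eq.bdd-fes} for $\mf{g}_{b_1}$. The right column is handled identically via $\pi_2$ and $\Gr_{[\mu^{-1}]}$, using the $\Ad(\varphi)$-induced identification to transport the argument from the $\mf{g}_{b_1}$-side to the $\mf{g}_{b_2}$-side.

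Finally, every remaining arrow and every commuting cell in the bounded diagram is the restriction of the corresponding structure in \cref{cor.unbounded-mod-tangent}, so commutativity is inherited from the unbounded case. The only point that genuinely needs checking is that the arrow $-\Ad_* \varphi$ in the unbounded diagram preserves the subspace $\BC(\mc{E}_{\mr{max}}) \subseteq \mf{g}_{b_i} \boxtimes \mbb{B}_e$, which holds because $\mc{E}_{\mr{max}}$ is defined symmetrically as the minimal common modification of $\mc{E}(\mf{g}_{b_1})$ and $\mc{E}(\mf{g}_{b_2})$, so $\Ad(\varphi)$ restricts to the canonical identification on $\mc{E}_{\mr{max}}$. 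I expect the main obstacle to lie not in any individual step but in carefully tracking the identifications between $\mf{g}_{b_1} \otimes \mbb{B}_\dR$ and $\mf{g}_{b_2} \otimes \mbb{B}_\dR$ induced by $\varphi$ and by the trivializations $\triv_{b_i}$, in particular to verify that the two descriptions of $T_{\M_{b_1 \rightarrow b_2, [\mu]}}$ (through $\pi_1$ and through $\pi_2$) agree and that $(da_2)_e^{-1} \circ (da_1)_e$ is precisely $-\Ad_*\varphi$ on $\BC(\mc{E}_{\mr{max}})$.
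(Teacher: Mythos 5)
Your proposal is correct and is essentially the argument the paper intends: the corollary is stated without proof precisely because it follows by restricting the diagram of \cref{cor.unbounded-mod-tangent} along the fiber product $\M_{b_1\rightarrow b_2,[\mu]}=\pi_1\times_{\Gr_G}\Gr_{[\mu]}$, identifying the resulting tangent bundle with $\BC(\mc{E}_{\mr{max}})$ via Beauville--Laszlo and \cref{prop.schubert-cell}, and matching the columns with \cref{eq.bdd-fes}. Your two flagged checks (that $d\pi_1\circ(da_1)_e$ becomes the restriction-at-$\infty$ map, and that $-\Ad_*\varphi$ preserves $\BC(\mc{E}_{\mr{max}})$ by the symmetry $\pi_1^*\mf{g}^+_{\mr{max}}=\pi_2^*\mf{g}^+_{\mr{max}}$) are exactly the right points and are handled correctly.
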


Finally, we obtain also a description of the tangent bundle and normal bundle for the bounded generalized Newton strata.

\begin{corollary}\label{cor.bdd-newton-strata-tangent-normal}
    Let $\mc{E}_{[b_2]}$ be the restriction of the universal $G$-bundle on $\mc{X}^*\BG$ to $(\mc{X}^*\BG)^{[b_2]}$ and let $\mf{g}_{[b_2]}=\mc{E}_{[b_2]}(\mf{g})$ be its push-out by the adjoint representation. The short exact sequence over $\Gr_{[\mu]}^{b_1 \rightarrow [b_2]}$ induced by the bounded fundamental exact sequence analogous to  \cref{eq.bdd-fes} for $\mf{g}_{[b_2]}$,  
\[ 0 \rightarrow \frac{\BC(\mc{E}_{\mr{max}})}{ \BC(\mf{g}_{[b_2]})}  \rightarrow \frac{\mf{g}^+_{\mr{max}}}{\mf{g}_{[b_2]}  \boxtimes \mbb{B}^+_\dR} \rightarrow \mr{Ker}\left(\BC(\mf{g}_{[b_2]} [1]) \rightarrow \BC(\mc{E}_{\mr{max}}[1]) \right) \rightarrow 0, \]
is canonically identified with the short exact sequence
\[ 0 \rightarrow T_{\Gr_{[\mu]}^{b_1 \rightarrow[b_2]}} \xrightarrow{d\iota} \iota^* T_{\Gr_G} \rightarrow N_{\Gr_{[\mu]}^{b_1 \rightarrow [b_2]}} \rightarrow 0 \]
where $\iota: \Gr_{[\mu]}^{b_1 \rightarrow [b_2]} \hookrightarrow \Gr_{[\mu]}$ is the inclusion. This isomorphism is moreover compatible with that of \cref{cor.newton-strata-tangent-normal} by the natural inclusion maps. 
\end{corollary}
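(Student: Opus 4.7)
The plan is to deduce the statement from Corollary \ref{corollary.bounded-mod-tangent} applied to the moduli space $\mc{M}_{b_2 \rightarrow b_1, [\mu]}$ (i.e.\ with the two indices swapped), exploiting the torsor structure provided by Theorem \ref{theorem.bounded-structure}. Concretely, swapping $b_1 \leftrightarrow b_2$ in that theorem, the period map
\[ \pi_1 : \mc{M}_{b_2 \rightarrow b_1, [\mu]} \longrightarrow \Gr_{[\mu]}^{b_1 \rightarrow [b_2]} \]
is a $\tilde{G}_{b_2}$-torsor, equivariant for the $\tilde{G}_{b_1}$-action via $a_2$. Fixing a representative $b_2 \in [b_2]$ gives a canonical identification $\mf{g}_{[b_2]} = \mf{g}_{b_2}$ on the pullback, and our task becomes the verification of the identification after pulling everything back along $\pi_1$, together with $\tilde{G}_{b_2}$-equivariance so that it descends.

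First I would apply Corollary \ref{corollary.bounded-mod-tangent} (with indices swapped). Its left column reads as the bounded fundamental exact sequence for $\mf{g}_{b_2}$, and the $(da_1)_e$ row identifies the horizontal map $\BC(\mc{E}_{\mr{max}}) \xrightarrow{\sim} T_{\mc{M}_{b_2 \rightarrow b_1, [\mu]}}$. Since $\pi_1$ is a $\tilde{G}_{b_2}$-torsor, its kernel is exactly $\Lie \tilde{G}_{b_2} = \BC(\mf{g}_{b_2}) \subseteq \BC(\mc{E}_{\mr{max}})$, and therefore
\[ \pi_1^* T_{\Gr_{[\mu]}^{b_1 \rightarrow [b_2]}} \;=\; T_{\mc{M}_{b_2 \rightarrow b_1, [\mu]}} \big/ \BC(\mf{g}_{b_2}) \;=\; \BC(\mc{E}_{\mr{max}}) \big/ \BC(\mf{g}_{b_2}).\]
By the bounded fundamental exact sequence \eqref{eq.bdd-fes}, the right-hand side is exactly the kernel of $\frac{\mf{g}^+_{\mr{max}}}{\mf{g}_{b_2}\boxtimes\mbb{B}^+_\dR} \to \BC(\mf{g}_{b_2}[1])$, which is the first term in the sequence claimed by the corollary.

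Next I would identify the inclusion $d\iota$ after pullback. The composition $\pi_1^*(d\iota)\circ d\pi_1$ equals $d\pi_1$ composed with the inclusion into the ambient $\pi_1^*T_{\Gr_{[\mu]}}$; by the commuting left middle quadrilateral of Corollary \ref{corollary.bounded-mod-tangent}, this is exactly the map $\BC(\mc{E}_{\mr{max}}) \to \mf{g}^+_{\mr{max}}/(\mf{g}_{b_2} \boxtimes \mbb{B}^+_\dR)$ from the bounded FES. Hence, after quotienting by $\BC(\mf{g}_{b_2})$, we obtain the natural inclusion in the bounded FES, identifying $\pi_1^*(d\iota)$ with the second arrow in the claimed sequence; the cokernel description of $N$ then follows from exactness of \eqref{eq.bdd-fes}.

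For the descent step, I would check $\tilde{G}_{b_2}$-equivariance of these identifications: both $\BC(\mc{E}_{\mr{max}})$ and $\mf{g}^+_{\mr{max}}/(\mf{g}_{b_2}\boxtimes \mbb{B}^+_\dR)$ carry natural $\tilde{G}_{b_2}$-actions (via the adjoint action on $\mf{g}_{b_2}$, functorially transported to the modification), and the maps entering the diagram of Corollary \ref{corollary.bounded-mod-tangent} are equivariant by construction of the action maps $a_i$ and period maps $\pi_i$. Since the identification of $T_{\mc{M}_{b_2 \to b_1, [\mu]}}$ with $\BC(\mc{E}_{\mr{max}})$ is realized by $(da_1)_e$, which is $\tilde{G}_{b_2}$-equivariant via conjugation (here $\tilde{G}_{b_2}$ acts from the \emph{left}, so that the $\Ad_*\varphi$ factor in Corollary \ref{cor.unbounded-mod-tangent} is absorbed), the quotient datum descends via \cref{thm.BG-pull-back-inscribed} to a canonical identification over $\Gr_{[\mu]}^{b_1 \to [b_2]}$. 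Finally, compatibility with Corollary \ref{cor.newton-strata-tangent-normal} comes from comparing the unbounded and bounded fundamental exact sequences via the inclusion of sheaves $\mc{E}(\mf{g}_{[b_2]}) \hookrightarrow \mc{E}_{\mr{max}}$ on $\mc{X}$; this inclusion induces the natural vertical arrows between the two five-term exact sequences, and functoriality of the torsor $\pi_1$ in the choice of bounds gives the compatibility of the identifications.

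The main obstacle, and the step where I would have to work most carefully, is verifying the equivariance needed for descent: since our identification via Corollary \ref{corollary.bounded-mod-tangent} involves the right $\tilde{G}_{b_1}$-action $a_1$ while the torsor we are quotienting by is the \emph{other} $\tilde{G}_{b_2}$-action $a_2$, one must keep track of both actions and the twist by $\Ad_*\varphi$ recorded in Corollary \ref{cor.unbounded-mod-tangent} to confirm that the horizontal isomorphism $\BC(\mc{E}_{\mr{max}}) \xrightarrow{\sim} T_{\mc{M}_{b_2 \rightarrow b_1, [\mu]}}$ intertwines the intrinsic $\tilde{G}_{b_2}$-action on the modification bundle with the $\tilde{G}_{b_2}$-action coming from $a_2$. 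Once this is checked, the rest of the argument is formal.
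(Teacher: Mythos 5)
Your proposal is correct and follows what the paper leaves implicit: the corollary is meant to be read off from \cref{theorem.bounded-structure}, \cref{corollary.bounded-mod-tangent}, and the equivariant quotient formula of \cref{prop.inscribed-quotient}, exactly as you do via the torsor $\pi_1:\M_{b_2\rightarrow b_1,[\mu]}\rightarrow \Gr_{[\mu]}^{b_1\rightarrow[b_2]}$ (equivalently, via $\pi_2$ for $\M_{b_1\rightarrow b_2,[\mu^{-1}]}$, the two being exchanged by $\varphi\mapsto\varphi^{-1}$). One clarification on your final paragraph: the "obstacle" you flag rests on a mislabeling. For the swapped moduli space $\M_{b_2\rightarrow b_1,[\mu]}$ the action $a_1$ (precomposition) is the action of $\tilde{G}_{b_2}=\Aut(\mc{E}_{b_2})$ — as you yourself state in your opening paragraph — and this is precisely the action exhibiting $\pi_1$ as a torsor; since the left column of the (swapped) \cref{corollary.bounded-mod-tangent} identifies $T_{\M}$ with $\BC(\mc{E}_{\mr{max}})$ via $(da_1)_e$ for this same action, the identification and the torsor quotient use one and the same action, no $\Ad_*\varphi$ twist intervenes, and \cref{prop.inscribed-quotient} applies directly. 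The rest of your argument, including the compatibility with \cref{cor.newton-strata-tangent-normal} via the inclusion $\mc{E}(\mf{g}_{[b_2]})\hookrightarrow\mc{E}_{\mr{max}}$, is as intended.
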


\subsection{The two towers}\label{ss.two-towers}
There is an evident isomorphism 
\[ \M_{b_1 \rightarrow b_2} \xrightarrow{\sim} \M_{b_2 \rightarrow b_1}, \varphi \mapsto \varphi^{-1}. \]
This isomorphism reflects the diagrams of \cref{theorem.unbounded-structure} and \cref{theorem.bounded-structure} along the central horizontal axis, acting as $c \mapsto c^{-1}$ on $G(\mbb{B}_\dR)$ (and thus it reflects the diagrams of \cref{cor.unbounded-mod-tangent} and \cref{corollary.bounded-mod-tangent} along the central vertical axis). 

One also obtains interesting isomorphisms by changing the group; combining these observations will give the traditional two towers isomorphism  (in the form discussed, e.g., in \cite[\S 8.5]{HoweKlevdal.AdmissiblePairsAndpAdicHodgeStructuresIITheBiAnalyticAxLindemannTheorem}) in a very general setting. To state this cleanly, it is useful to consider the following generalization of our constructions:

\begin{definition}\label{def.X-pure-inner-form}
For $G/E$ a connected linear algebraic group, an affine group scheme $\mc{G}$ on $\mc{X}$ over $\Spd \breve{E}$ is an $\mc{X}$-pure inner form of $G$ if it is isomorphic to the automorphism group scheme $\mc{A}ut_{\mc{X}}(\mc{E})$ of a $G$-torsor $\mc{E}: \Spd \breve{E} \rightarrow \mc{X}^*\BG$. 
\end{definition}

\begin{example} For $b \in G(\breve{E})$, $\mc{A}ut(\mc{E})$ is the affine group scheme we have denoted by $\mc{G}_b$ above. If $b$ is basic, then $\mc{G}_b=G_b \times_E \mc{X}$ where $G_b$ is the automorphism group of the $G$-isocrystal $b$. 
\end{example}

Given $\mc{G}$ that is an $\mc{X}$-pure inner form of a connected linear algebraic group $G/E$, if we fix a $G$-bundle $\mc{E}$ and an isomorphism $\mc{G}=\mc{A}ut(\mc{E})$, we obtain a twisting isomorphism
\[ \mc{X}^*\BG \xrightarrow{\sim} \mc{X}^* \mr{B}\mc{G}, \mc{E}' \mapsto \mc{I}som_G(\mc{E}, \mc{E}').\]
In particular, it follows that $\mc{X}^* \mr{B}\mc{G}$ is an inscribed $v$-stack. 

Suppose now given such a $\mc{G}$, and $\mc{G}$-torsors $\mc{E}_1, \mc{E}_2: \Spd \breve{E} \rightarrow \mc{X}^*\mr{B}\mc{G}$ equipped with trivializations $\triv_i:   \mc{E}_\triv \xrightarrow{\sim} \mc{E}_i|_{\Spec \mbb{B}_\dR}$. Then, we can define the moduli of modifications $\mc{M}_{\mc{E}_1 \rightarrow \mc{E}_2}$, the maps $c_\dR$ and $\pi_i$, the actions of automorphism groups, etc., by imitating the discussion given above. In particular, one again obtains an isomorphism $\M_{\mc{E}_1 \rightarrow \mc{E}_2}$ that acts as $c\mapsto c^{-1}$ on $G(\mbb{B}_\dR)$ and reflects the same diagrams. 

Combining these two constructions, one obtains (an extension of) the classical two towers duality for the infinite level moduli space $\M_{1 \rightarrow b}$: first, we apply $\mc{I}som_G(\mc{E}_b, \bullet)$ to obtain an isomorphism with 
$\M_{\mc{E}' \rightarrow \mc{E}'_\triv}$ where in the subscript we have the $\mc{G}_b$-torsors $\mc{E}'=\mc{I}som_G(\mc{E}_b, \mc{E}_1)$ and $\mc{E}'_\triv=\mc{I}som_G(\mc{E}_b, \mc{E}_b)$ (the latter is the trivial $\mc{G}_b=\mc{I}som_G(\mc{E}_b, \mc{E}_b)$-torsor). Then, we take an inverse as above to reverse the arrows to get $\M_{\mc{E}' \rightarrow \mc{E}'_\triv} \xrightarrow{\sim} \M_{\mc{E}_\triv' \rightarrow \mc{E}'}$. Composing the two isomorphisms yields
\[ \M_{1 \rightarrow b} \xrightarrow{\sim} \M_{\mc{E}'_\triv \rightarrow \mc{E}'}. \]
When $b$ is basic so that $\mc{G}_b=G_b \times_E \mc{X}$, the right-hand side is canonically identified with 
$\M_{1 \rightarrow b^{-1}}$ where here $b^{-1}$ is viewed as an element of ${G_b(\breve{E})=G(\breve{E})}$ (see, e.g., \cite[\S 8.5]{HoweKlevdal.AdmissiblePairsAndpAdicHodgeStructuresIITheBiAnalyticAxLindemannTheorem}). This construction thus extends the classical two towers duality to the non-basic case (as well as the inscribed setting). Outside of the basic case, however, $\mc{G}_b$ is not the base change of a group over $\Spec E$, so one needs to allow the more general moduli spaces for $\mc{X}$-pure inner forms considered in this subsection in order to state it.  

\subsection{Moduli of mixed characteristic local shtukas with one leg}\label{ss.main-results-mcls}

For $[\mu]$ a conjugacy class of cocharacters and $b \in G(\breve{E})$ such that $b \in B(G,[\mu^{-1}])$, we write $\M_{b,[\mu]}:=\M_{1 \rightarrow b}$. In other words, 
\[ \M_{b,[\mu]}(\mc{X}/X_{E,P}^\alg, P/\Spd \breve{E}([\mu])) \]
is the set of meromorphic isomorphisms $\varphi: \mc{E}_1|_{\mc{X}\backslash \infty} \rightarrow \mc{E}_b|_{\mc{X}\backslash \infty}$ whose period matrix  
\[ c_\dR(\varphi):=\triv_b^{-1} \circ \varphi \in G(\mbb{B}_\dR) \]
lies in the Schubert cell $C_{[\mu]} \subseteq G(\mbb{B}_\dR)$.  The underlying $v$-sheaf is the infinite level moduli of mixed characteristic shtuka with one leg studied in \cite{ScholzeWeinstein.BerkeleyLecturesOnPAdicGeometryAMS207, HoweKlevdal.AdmissiblePairsAndpAdicHodgeStructuresIIIVariationAndUnlikelyIntersection}. In particular, when $[\mu]$ is minuscule, the underlying $v$-sheaf is an infinite level local Shimura variety. 

We rename the period maps in this setting: we write
$\pi_{\Hdg}^+:=\pi_1$, $\pi_{\Hdg}:=\pi_1^f$, $\pi_{\HT}^+:=\pi_2$, and $\pi_{\HT}:=\pi_2^f$ (note that, in \cite[\S8]{HoweKlevdal.AdmissiblePairsAndpAdicHodgeStructuresIITheBiAnalyticAxLindemannTheorem}, $\pi_{\Hdg}^+$ is written as $\pi_{\mc{L}_\et}$ and $\pi_{\HT}^+$ is written as $\pi_{\mc{L}_\dR}$ --- this notation comes from the fact that, for a point coming from $p$-adic cohomology, the lattice $\mc{L}_\et$ deforms \'{e}tale cohomology while $\mc{L}_\dR$ deforms de Rham cohomology). 

The groups that act are $\tilde{G}_b$, the automorphism group of $\mc{E}_b$, and $G(E^\lfid)$, the automorphism group of $\mc{E}_1$. Concretely, the action maps are
\begin{align*} a_1: \M_{b,[\mu]} \times G(E^\lfid) & \rightarrow \M_{b,[\mu]}, (\varphi, g) \mapsto (\varphi \circ g) \\
\textrm{ and } a_2: \M_{b,[\mu]} \times \tilde{G}_b & \rightarrow \M_{b,[\mu]}, (\varphi, g) \mapsto (g^{-1} \circ \varphi).
\end{align*}

In this setting, the commutative diagram of \cref{theorem.bounded-structure}
becomes 
% https://q.uiver.app/#q=WzAsMTAsWzAsMiwiXFxNX3tiLFtcXG11XX0iXSxbMiwxLCJcXEZsX3tbXFxtdV57LTF9XX1eXFxsZmlkIl0sWzMsMSwiXFxHcl97W1xcbXVdfSJdLFsyLDMsIlxcRmxfe1tcXG11XX1eXFxsZmlkIl0sWzMsMywiXFxHcl97W1xcbXVeey0xfV19Il0sWzQsMiwiQ197W1xcbXVdfSAiXSxbMCwzLCJcXE1fe2IsW1xcbXVdfVxcdGltZXNcXHRpbGRle0d9X2IiXSxbMCwxLCJcXE1fe2IsW1xcbXVdfSBcXHRpbWVzIEcoRV5cXGxmaWQpIl0sWzQsMCwiQ197W1xcbXVdfSBcXHRpbWVzIEcoXFxtYmJ7Qn1eK19cXGRSKSJdLFs0LDQsIkNfe1tcXG11XX1cXHRpbWVzIEcoXFxtYmJ7Qn1eK19cXGRSKSJdLFswLDIsIlxccGlfe1xcSGRnfV4rIiwxLHsiY3VydmUiOjF9XSxbMCwxLCJcXHBpX1xcSGRnIl0sWzIsMSwiXFxCQiIsMl0sWzQsMywiXFxCQiJdLFswLDQsIlxccGlfe1xcSFR9XisiLDEseyJjdXJ2ZSI6LTF9XSxbMCwzLCJcXHBpX1xcSFQiLDJdLFswLDUsImNfXFxkUiIsMSx7InN0eWxlIjp7InRhaWwiOnsibmFtZSI6Imhvb2siLCJzaWRlIjoidG9wIn19fV0sWzYsMCwiYV8yIl0sWzcsMCwiYV8xIiwyXSxbNyw4LCJjX1xcZFIgXFx0aW1lcyAoZ1xcbWFwc3RvIGcpIiwwLHsiY3VydmUiOi0xfV0sWzYsOSwiY19cXGRSIFxcdGltZXMgKGcgXFxtYXBzdG8gXFx0cml2X2Jeey0xfVxcY2lyYyBnIFxcY2lyYyBcXHRyaXZfYikiLDIseyJjdXJ2ZSI6MX1dLFs4LDUsIihjLGcpXFxtYXBzdG8gY2ciLDFdLFs5LDUsIihjLCBnKSBcXG1hcHN0byBnXnstMX1jIiwxXSxbNSwyLCJjIFxcbWFwc3RvIGMgXFxjZG90IFxcYXN0XzEiLDFdLFs1LDQsImNcXG1hcHN0byBjXnstMX1cXGNkb3RcXGFzdF8xIiwxXV0=
\begin{equation}\label{eq.mmcls-diagram}\begin{tikzcd}
	&&&& {C_{[\mu]} \times G(\mbb{B}^+_\dR)} \\
	{\M_{b,[\mu]} \times G(E^\lfid)} && {\Fl_{[\mu^{-1}]}^\lfid} & {\Gr_{[\mu]}} \\
	{\M_{b,[\mu]}} &&&& {C_{[\mu]} } \\
	{\M_{b,[\mu]}\times\tilde{G}_b} && {\Fl_{[\mu]}^\lfid} & {\Gr_{[\mu^{-1}]}} \\
	&&&& {C_{[\mu]}\times G(\mbb{B}^+_\dR)}
	\arrow["{(c,g)\mapsto cg}"{description}, from=1-5, to=3-5]
	\arrow["{c_\dR \times (g\mapsto g)}", curve={height=-6pt}, from=2-1, to=1-5]
	\arrow["{a_1}"', from=2-1, to=3-1]
	\arrow["\BB"', from=2-4, to=2-3]
	\arrow["{\pi_\Hdg}", from=3-1, to=2-3]
	\arrow["{\pi_{\Hdg}^+}"{description}, curve={height=6pt}, from=3-1, to=2-4]
	\arrow["{c_\dR}"{description}, hook, from=3-1, to=3-5]
	\arrow["{\pi_\HT}"', from=3-1, to=4-3]
	\arrow["{\pi_{\HT}^+}"{description}, curve={height=-6pt}, from=3-1, to=4-4]
	\arrow["{c \mapsto c \cdot \ast_1}"{description}, from=3-5, to=2-4]
	\arrow["{c\mapsto c^{-1}\cdot\ast_1}"{description}, from=3-5, to=4-4]
	\arrow["{a_2}", from=4-1, to=3-1]
	\arrow["{c_\dR \times (g \mapsto \triv_b^{-1}\circ g \circ \triv_b)}"', curve={height=6pt}, from=4-1, to=5-5]
	\arrow["\BB", from=4-4, to=4-3]
	\arrow["{(c, g) \mapsto g^{-1}c}"{description}, from=5-5, to=3-5]
\end{tikzcd}\end{equation}

In particular the group actions realize $\pi_{\Hdg}^+$ as a $\tilde{G}_b$-equivariant $G(E^\lfid)$-torsor over its image $\Gr_{[\mu]}^{b-\adm}:=\Gr_{[\mu]}^{b\rightarrow[1]}$, the inscribed $b$-admissible locus, and $\pi_2$ as a $G(E^\lfid)$-equivariant $\tilde{G}_b$-torsor over its image $\Gr_{[\mu^{-1}]}^{[b]}:=\Gr_{[\mu^{-1}]}^{1\rightarrow[b]}$, the inscribed Newton stratum.

After restricting test objects to $(X^\alg_{E,\Box})^\lfp$ to ensure that $\BC(\mf{g}[1])$ vanishes (i.e. restricting to the case where the $b$-admissible locus is deformation-theoretically open by \cref{prop.b-basic-open-stratum} and thus has trivial normal bundle), \cref{corollary.bounded-mod-tangent} and \cref{theorem.SchubertCellDiagrams} combine into the following commutative diagram of derivatives for \cref{eq.mmcls-diagram}: 

\[\begin{tikzcd}
	{0\rightarrow \mf{g}\otimes_E E^\lfid} & {\BC(\mc{E}_{\mr{max}})} & {\frac{\mf{g}^+_{\mr{max}}}{\mf{g} \otimes_E \mbb{B}^+_\dR}} & 0 \\
	& {\frac{\mf{g}_b\otimes_{\breve{E}} \mc{O}}{\Fil^0_\Hdg (\mf{g}_b \otimes_{\breve{E}}\mc{O})}} & {} \\
	& {\pi_\Hdg^*T_{\Fl_{[\mu^{-1}]}^\lfid}  } & {{\pi_{\Hdg}^+}^*T_{\Gr_{[\mu]}}} \\
	{T\M_{b,[\mu]}} &&& {c_\dR^*TC_{[\mu]}=\mf{g}^+_\mr{\max}} \\
	& {\pi_\HT^*T_{\Fl_{[\mu]}^\lfid}  } & {{\pi_{\HT}^+}^*T_{\Gr_{[\mu^{-1}]}}} \\
	& {\frac{\mf{g}\otimes_E \mc{O}}{\Fil^0_\HT (\mf{g}\otimes_E \mc{O})}} && {{\pi_{\HT}^+}^*N\Gr_{[\mu^{-1}]}^{[b]}} \\
	{0\rightarrow\BC(\mf{g}_{b})} & {\BC(\mc{E}_{\mr{max}})} & {\frac{\mf{g}^+_{\mr{max}}}{\mf{g}_{b} \otimes_{\breve{E}} \mbb{B}^+_\dR}} & {\BC(\mf{g}_{b}[1])\rightarrow0}
	\arrow[from=1-1, to=1-2]
	\arrow["{(da_1)_e}"{description}, from=1-1, to=4-1]
	\arrow[from=1-2, to=1-3]
	\arrow["\sim"{description}, curve={height=12pt}, from=1-2, to=4-1]
	\arrow[from=1-3, to=1-4]
	\arrow[two heads, from=1-3, to=2-2]
	\arrow["\sim"{description}, from=1-3, to=3-3]
	\arrow["\sim"{description}, from=2-2, to=3-2]
	\arrow["{{\pi_{\Hdg}^+}^*d\BB}"', from=3-3, to=3-2]
	\arrow["{d\pi_\Hdg}"{description}, from=4-1, to=3-2]
	\arrow["{d\pi_{\Hdg}^+}"{description}, curve={height=6pt}, from=4-1, to=3-3]
	\arrow["{dc_\dR}"{description}, from=4-1, to=4-4]
	\arrow["{d\pi_\HT}"{description}, from=4-1, to=5-2]
	\arrow["{d\pi_{\HT}^+}"{description}, curve={height=-6pt}, from=4-1, to=5-3]
	\arrow[curve={height=-12pt}, two heads, from=4-4, to=1-3]
	\arrow[curve={height=12pt}, two heads, from=4-4, to=7-3]
	\arrow["{{\pi_{\HT}^+}^*d\BB}", from=5-3, to=5-2]
	\arrow[dashed, two heads, from=5-3, to=6-4]
	\arrow["\sim"{description}, from=6-2, to=5-2]
	\arrow["{(da_2)_e}"{description}, from=7-1, to=4-1]
	\arrow[from=7-1, to=7-2]
	\arrow["\sim"{description}, curve={height=-12pt}, from=7-2, to=4-1]
	\arrow[from=7-2, to=7-3]
	\arrow["\sim"{description}, from=7-3, to=5-3]
	\arrow[two heads, from=7-3, to=6-2]
	\arrow[from=7-3, to=7-4]
	\arrow["\sim"{description}, from=7-4, to=6-4]
\end{tikzcd}\]

Note that, when $b$ is basic, the normal bundle in of the Newton stratum also vanishes in this diagram, since then the Newton stratum is also deformation-theoretically open (still under the assumption that we have restricted to $(X^\alg_{E,\Box})^\lfp$!).

\begin{remark} One can hope that these descriptions of the tangent and normal bundles of the generalized Newton stratifications will have applications, e.g., to a Banach--Colmez theory of holonomic $D$-modules. 
\end{remark}

\begin{remark}[Finite level local Shimura varieties]\label{remark.local-shimura-finite-level}
For $[\mu]$ minuscule, i.e. the local Shimura case, and $K_p \leq G(\mathbb{Q}_p)$ compact open, it is shown in \cite{ScholzeWeinstein.BerkeleyLecturesOnPAdicGeometryAMS207} that there exist rigid analytic local Shimura varieties $\M^{\mr{rig}}_{b,[\mu], K_p}$ representing the diamonds $\overline{\M_{b,[\mu]}}/K_p$. Indeed, in the minuscule case the Bialynicki-Birula map is an isomorphism $\overline{\Gr_{[\mu]}}=\Fl_{[\mu^{-1}]}^\diamond$, so that one can construct $\M^{\mr{rig}}_{b,[\mu], K_p}$ by invoking the equivalence of \'{e}tale sites $(\Fl_{[\mu^{-1}]})_\et= (\Fl_{[\mu^{-1}]}^\diamond)_\et$. 

We claim that, in this minuscule case, there is a canonical identification of $\M_{b,[\mu], K_p}:=\M_{b,[\mu]}/K_p^\lfid$ with $(\M^{\mr{rig}}_{b,[\mu], K_p})^\lfid$ after restriction to $(X^\alg_{E,\Box})^\lfp$. To see this, the first point is to construct a map from $\M_{b,[\mu]}$ to $(\M^{\mr{rig}}_{b,[\mu], K_p})^\lfid$. To that end, we note that, as in \cref{prop.ks-iso-formal-nbhd}, since the map $\mc{M}_{b,[\mu],K_p}^{\mr{rig}} \rightarrow \Fl_{[\mu^{-1}]}$ is \'{e}tale we have 
\[ (\M^{\mr{rig}}_{b,[\mu], K_p})^\lfid = \M_{b,[\mu], K_p}^\diamond \times_{\Fl_{[\mu^{-1}]}^\diamond} \Fl_{[\mu^{-1}]}^\lfid. \]
Thus we obtain a map from $\mc{M}_{b,[\mu]}$ as the product of the obvious map
\[ \overline{\M_{b,[\mu]}}  \rightarrow \overline{\M_{b,[\mu]}}/K_p= \M_{b,[\mu],K_p}^{\mr{rig},\diamond} \]
and $\pi_{\Hdg}$. The map is evidently $K_p^\lfid$-equivariant for the trivial action on the target, so factors through a map from $\M_{b,[\mu], K_p}$. This map is an isomorphism on underlying $v$-sheaves. We claim that, because we have imposed the slope condition, it is also an isomorphism on the formal neighborhood of each point of the underlying $v$-sheaf. 

It suffices to see that the map $\pi_{\Hdg}/K_p^\lfid: \M_{b,[\mu],K_p} \rightarrow \Fl_{[\mu^{-1}]}^\lfid$ is an isomorphism on the formal neighborhood of each point of the underlying $v$-sheaf. To obtain this, observe the map from $\M_{b,[\mu]}$ to $\Fl_{[\mu]^{-1}}^\lfid$ is a $G(E^\lfid)$-torsor over a deformation-theoretic open by \cref{prop.b-basic-open-stratum} (here is where we use the $\lfp$ slope condition), thus surjective on the formal neighborhood of any point of the underlying $v$-sheaf. Because $G(E^\lfid)/K_p^\lfid$ is trivially inscribed, the induced map from $\M_{b,[\mu],K_p}$ to  $\Fl_{[\mu]^{-1}}^\lfid$ becomes also injective on these formal neighborhoods, and thus an isomorphism on these formal neighborhoods. 
\end{remark}

\section{Inscribed global Shimura varieties via Igusa stacks}\label{s.Inscribed-global-Shimura}

In this section we construct a moduli-theoretic inscription on any infinite level Shimura varieties for which there exists an Igusa stack in the sense of \cite{Kim.UniquenessAndFunctorialityOfIgusaStacks}, and show that it is a torsor over the natural inscriptions on finite level rigid analytic Shimura varieties (mirroring \cref{remark.local-shimura-finite-level} for local Shimura varieties). We also recover the description of the tangent bundles in \cref{example.shimura-varieties} from this perspective --- see \cref{theorem.inscribed-global-igusa-theorem} for a full statement of the main results of this section.  

\subsubsection{}In this section we work in the inscribed context $(\AffPerf/\Spd \mbb{F}_p, X_{\mathbb{Q}_p,\Box})$ of \cref{s.inscribed-contexts}. 

\subsection{Setup and the Igusa stack}\label{ss.setup-and-igusa-stacks}

\subsubsection{}Let $\gx$ be a Shimura datum, and let $G=\mathsf{G}_{\mathbb{Q}_p}$. We fix a $p$-adic field $L$ and an embedding $\mathbb{Q}([\mu]) \rightarrow L$, where $\mathbb{Q}([\mu])$ is the reflex field for $\gx$, i.e. the field of definition of the conjugacy class of Hodge cocharacters $[\mu]$ (a subfield of $\mathbb{C}$ that is finite over $\mathbb{Q}$). We assume:
\begin{enumerate}
    \item There exists a  Igusa stack $\Igs\gx$ for $\gx$ in the sense of \cite[Definition 1.1]{Kim.UniquenessAndFunctorialityOfIgusaStacks} (which is necessarily unique by \cite[Theorem A]{Kim.UniquenessAndFunctorialityOfIgusaStacks}).
    \item The maximal $\mathbb{R}$-split $\mathbb{Q}$-anisotropic central torus in $G$ is trivial.  
\end{enumerate}
The second assumption is just to simplify statements (it ensures our infinite level Shimura variety is a torsor over the finite level Shimura variety for a compact open subgroup of $G(\mathbb{Q}_p)$), but it is also fairly innocent given the first assumption: the most general current existence result, \cite[Theorem D]{Kim.UniquenessAndFunctorialityOfIgusaStacks} (building on \cite{Zhang.APelTypeIgusaStackAndThePAdicGeometryOfShimuraVarieties} and \cite{DanielsVanHoftenKimZhang.IgusaStacksAndTheCohomologyOfShimuraVarieties}), shows that Igusa stacks exist when $\gx$ is of Hodge type, but the assumption (2) is automatic in the Hodge type case. 

\subsubsection{}
For $K \leq G(\mbb{A}^{\infty})$ a neat compact open, we write $\Sh_K$ for the associated Shimura variety of level $K$ as a (smooth) rigid analytic variety over $L$. For $K^p \leq G(\mathbb{A}^{\infty p})$ compact open, let 
\[ \Sh_{K^p}^\diamond := \varprojlim_{K_p} \Sh_{K_pK^p}^\diamond\]
where the limit is over compact open subgroups $K_p \leq G(\mathbb{Q}_p)$ such that $K_p K^p$ is neat. There is an action of $G(\mathbb{Q}_p^\diamond)$ on $\Sh_{K^p}^\diamond$ and a $G(\mathbb{Q}_p^\diamond)$-equivariant Hodge--Tate period map $\pi_{\HT}^\diamond: \Sh_{K^p}^\diamond \rightarrow \Fl_{[\mu]}^\diamond$. We write $\pi_{K_p}^\diamond: \Sh_{K^p}^\diamond \rightarrow \Sh_{K_pK^p}^\diamond$ for the natural projection map --- it is a torsor for $K_p^\diamond$. 

We write $\Igs_{K^p}=\Igs\gx/K^p$ for the finite level Igusa stack.   We write $\Bun_G$ for the $v$-stack of $G$-bundles on the Fargues--Fontaine curve and $\BL:\Fl_{[\mu]}^\diamond \rightarrow \Bun_G$ for the map obtained by making the Beauville-Laszlo modification of the trivial bundle along the $\mbb{B}^+_\dR$-lattice associated to the minuscule filtration parameterized by $\Fl_{[\mu]}^\diamond$. By the definition of an Igusa stack \cite[Definition 1.1]{Kim.UniquenessAndFunctorialityOfIgusaStacks}, there is a map 
\[ \overline{\pi}_\HT: \Igs_{K^p}\rightarrow \mr{Bun}_G \]
such that the following diagram is Cartesian:

\begin{equation}\label{eq.igusa-stacks-conj-diag}\begin{tikzcd}
	{\Sh_{K^p}^\diamond } & {\Fl_{[\mu]}^\diamond} \\
	{\Igs_{K^p}} & {\Bun_G}
	\arrow["{\pi_\HT^\diamond}", from=1-1, to=1-2]
	\arrow["{\pi_{K^p}^\diamond}"', from=1-1, to=2-1]
	\arrow["\lrcorner"{anchor=center, pos=0.125}, draw=none, from=1-1, to=2-2]
	\arrow["\BL", from=1-2, to=2-2]
	\arrow["{\overline{\pi}_\HT}", from=2-1, to=2-2]
\end{tikzcd}\end{equation}

\subsection{Construction}

We will now define $\Sh_{K^p}^\lfid$ by producing inscribed versions of the two maps $\overline{\pi}_\HT$ and $\BL$ and then taking their fiber product as in \cref{eq.igusa-stacks-conj-diag}.  

\subsubsection{}We write $\BL^\lfid: \Gr_{G} \rightarrow \mc{X}^*\BG$ for the modification map $m_{\mc{E}_\triv, \Id}$ in the notation of \cref{eq.modification-map}. Its restriction to $\Gr_{[\mu^{-1}]}$ can be viewed as a map $\Fl_{[\mu]}=\Gr_{[\mu^{-1}]} \rightarrow \mc{X}^*\BG$, since the Bialynicki-Birula map $\BB: \Gr_{[\mu^{-1}]} \rightarrow \Fl_{[\mu]}^\lfid$ is an isomorphism for $[\mu]$ minuscule. The induced map on underlying $v$-sheaves is the map $\BL$ appearing in \cref{eq.igusa-stacks-conj-diag}. 

\subsubsection{}We write $\overline{\pi}_\HT^\lfid$ for the composition of $\overline{\pi}_\HT$ (viewed as a map of trivially inscribed $v$-stacks) with the natural map $\Bun_G \rightarrow \mc{X}^*\BG$ (by pullback along $\mc{X}\rightarrow X_{\mbb{Q}_p,P}$). 

\subsubsection{}We define 
\begin{equation}\label{eq.min-comp-fiber-product-mod} {\Sh_{K^p}}^\lfid := (\Igs_{K^p} \xrightarrow{\overline{\pi}_\HT^\lfid} \mc{X}^*\BG) \times_{\mc{X}^*\BG} (\Fl_{[\mu]} \xrightarrow{\BL^\lfid} \mc{X}^*\BG).\end{equation}
By \cref{eq.igusa-stacks-conj-diag}, we have an identification 
\[ \overline{\Sh_{K^p}^\lfid} = (\Igs_{K^p}^\ast \xrightarrow{\overline{\pi}_\HT^\diamond} \Bun_G) \times_{\Bun_G} (\Fl_{[\mu]}^\diamond \xrightarrow{\BL} \Bun_G) = \Sh_{K^p}^\diamond. \]

\subsubsection{}By construction, there is a map $\pi_\HT^\lfid: (\Sh_{K_p})^\lfid \rightarrow \Fl_{[\mu]}^\lfid$ given by projection to the second factor in the product whose map on underlying $v$-sheaves is $\pi_\HT^\diamond$.

\begin{lemma}
The action of $G(\mathbb{Q}_p^\lfid)$ on $\Fl_{[\mu]}^\lfid$ preserves $\BL^\lfid$, thus induces
\[ a: \Sh_{K^p}^\lfid \times G(\mathbb{Q}_p^\lfid) \rightarrow \Sh_{K^p}^\lfid \]
such that $\pi_\HT$ is $G(\mathbb{Q}_p^\lfid)$-equivariant. \end{lemma}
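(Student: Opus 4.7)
The plan is to reduce to the statement that the Beauville--Laszlo modification map is invariant under the $G(\mathbb{Q}_p^\lfid)$-action. First I would use the Bialynicki--Birula isomorphism $\BB: \Gr_{[\mu^{-1}]} \xrightarrow{\sim} \Fl_{[\mu]}^\lfid$ (an isomorphism since $[\mu]$ is minuscule) to transport everything to the Grassmannian side. Under this identification, the $G(\mathbb{Q}_p^\lfid)$-action on $\Fl_{[\mu]}^\lfid$ corresponds to the restriction to $G(\mathbb{Q}_p^\lfid)$ of the $G(\mathbb{B}^+_\dR)$-action on $\Gr_{[\mu^{-1}]}$ along the natural inclusion $G(\mathbb{Q}_p^\lfid) = G(H^0(\mc{X}, \mc{O})) \hookrightarrow G(\mathbb{B}^+_\dR)$ given by Taylor expansion at $\infty$; this is straightforward from the definitions and the fact that the $G(\mc{O})$-action on $\Fl_{[\mu]}^\lfid$ is the one induced by $G(\mathbb{B}^+_\dR)$ via $\BB$.

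The heart of the argument is the following observation. A point of $\Gr_G$ is a pair $(\mc{E}, \varphi)$ with $\varphi$ a trivialization of $\mc{E}$ on $\Spec \mathbb{B}_\dR$, and $\BL^\lfid$ sends $(\mc{E}, \varphi)$ to the Beauville--Laszlo glueing of $\mc{E}_\triv|_{\Spec \mbb{B}_e}$ and $\mc{E}|_{\Spec \mbb{B}^+_\dR}$ along $\varphi$. The action of $g \in G(\mathbb{Q}_p^\lfid)$ sends $(\mc{E}, \varphi)$ to $(\mc{E}, g \cdot \varphi)$, where we view $g$ as an element of $G(\mbb{B}_\dR)$ via the inclusion above. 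But $g$ is precisely a global automorphism of $\mc{E}_\triv$ on all of $\mc{X}$ (this is the very definition $G(\mathbb{Q}_p^\lfid) = H^0(\mc{X}, \mc{G}_{\mc{E}_\triv})$). Therefore multiplication by $g$ on $\mc{E}_\triv|_{\Spec \mbb{B}_e}$ extends globally, producing a canonical isomorphism between the Beauville--Laszlo glueing along $\varphi$ and the Beauville--Laszlo glueing along $g \cdot \varphi$. I would package this as a canonical $2$-isomorphism $\BL^\lfid \circ a_g \Rightarrow \BL^\lfid$, functorial in $g$, exhibiting $\BL^\lfid$ as a $G(\mathbb{Q}_p^\lfid)$-invariant map to $\mc{X}^*\BG$ (with the trivial action on the target).

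Given this invariance, the fiber product description \cref{eq.min-comp-fiber-product-mod} of $(\Sh_{K^p}^\ast)^\lfid$ inherits a $G(\mathbb{Q}_p^\lfid)$-action by acting only on the $\Fl_{[\mu]}$-factor, using the $2$-isomorphism above to match up the two sides over $\mc{X}^*\BG$; the projection $\pi_\HT^\lfid$ is then equivariant by construction. Finally, to see that this action restricts to $\Sh_{K^p}^\lfid$, I would note that $\Sh_{K^p}^\lfid$ is carved out by pulling back the open subsheaf $\Sh_{K^p}^\diamond \subseteq (\Sh_{K^p}^\ast)^\diamond$ on underlying $v$-sheaves, and the underlying action recovers the given $G(\mathbb{Q}_p^\diamond)$-action which already preserves $\Sh_{K^p}^\diamond$.

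The main obstacle I anticipate is the bookkeeping with $2$-isomorphisms: $\mc{X}^*\BG$ is a $2$-stack and the fiber product in \cref{eq.min-comp-fiber-product-mod} is a $2$-fiber product, so producing an honest action (and not just a quasi-action) requires the natural isomorphism $\BL^\lfid \circ a_g \simeq \BL^\lfid$ to satisfy a cocycle condition in $g$. I expect this to hold essentially tautologically because the $2$-isomorphism is itself given by the element $g$ viewed as a global automorphism of $\mc{E}_\triv$, and composition of these $2$-isomorphisms corresponds to the group law in $G(\mathbb{Q}_p^\lfid)$, but verifying this cleanly requires some care with the fibered category structure of $\mc{X}^*\BG$.
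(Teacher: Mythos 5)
Your proposal is correct and follows essentially the same route as the paper: the paper's proof simply cites \cref{lemma.quasi-torsor-mod} (noting $G(\mathbb{Q}_p^\lfid)\subseteq G(\mbb{B}_e)$ preserves $\Gr_{[\mu^{-1}]}$), and your "heart of the argument" — that $g\in G(\mathbb{Q}_p^\lfid)$ is a global automorphism of $\mc{E}_\triv$ and hence extends across the Beauville--Laszlo glueing to identify the modifications along $\varphi$ and $g\cdot\varphi$ — is precisely the content of that lemma's proof. Your handling of the restriction to $\Sh_{K^p}^\lfid$ via the underlying $v$-sheaves also matches the paper, and your worry about $2$-isomorphism coherence is defused by the $0$-truncatedness of $\overline{\pi}_\HT$ assumed in \cref{conj.igusa-stacks}, which makes the fiber product a sheaf rather than a stack.
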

\begin{proof}The existence of the action on $\Sh_{K^p}^\lfid$ follows from \cref{lemma.quasi-torsor-mod}, since $G(\mathbb{Q}_p^\lfid)\subseteq G(\mbb{B}_e)$ preserves $\Fl_{[\mu]}^\lfid = \Gr_{[\mu^{-1}]} \subseteq \Gr_G$.
\end{proof}

\subsection{Statement of result}
Let $\mf{g}:=\Lie G(\mathbb{Q}_p)$. Note that on $\Sh_{K_p}^\lfid$ we have the trivial bundle $\mf{g} \otimes_{\mbb{Q}_p} \mathcal{O}_{\mc{X}}$ over $\mc{X}$ and the pullback of the universal bundle associated to the adjoint representation over $\mc{X}^*\BG$, $\mf{g}_\univ$. By the construction of the map $\BL^\lfid$, we have fixed an isomorphism between these two bundles after restriction to $\mc{X}\bs\infty$.

In the following, we consider the inscribed finite level Shimura varieties $\Sh_{K_pK^p}^\lfid$ by change of context as in \cref{ss.change-of-context}, i.e. 
\[ \Sh_{K_pK^p}^\lfid(\mc{X}/X_{\mbb{Q}_p,P}, P/\Spd L) = \Hom_L( P^\sharp \times_{X_{\mbb{Q}_p,P}} \mc{X}, \Sh_{K_pK^p}). \]

\begin{theorem}\label{theorem.inscribed-global-igusa-theorem} With notation and assumptions as in \cref{ss.setup-and-igusa-stacks},
\begin{enumerate}
\item For compact opens $K_p \leq G(\mathbb{Q}_p)$ such that $K_p K^p$ is neat, there are canonical maps $\pi_{K_p}^\lfid: \Sh_{K^p}^\lfid \rightarrow \Sh_{K_pK^p}^\lfid$ extending the projection maps on the underlying $v$-sheaves. These maps are compatible as $K_p$ and $K^p$ vary and, after restriction of the test objects to $X_{\mbb{Q}_p,\Box}^\lfp$, realize $\Sh_{K^p}^\lfid$ as a $K_p^\lfid$-torsor over $\Sh_{K_pK^p}^\lfid$.  
\item 
The $\mbb{B}^+_\dR$-module on $\Sh_{K_p}^\lfid$
\[ \mf{g}^+_{\mr{max}}:=\mf{g}_\univ \boxtimes \mbb{B}^+_\dR + \mf{g} \otimes_E \mbb{B}^+_\dR \subseteq \mf{g}_\univ \boxtimes \mbb{B}_\dR = \mf{g} \otimes_E \mbb{B}_\dR\]
is locally free. In particular, there is an associated vector bundle $\mc{E}_{\mr{max}}$ on $\mc{X}$ over $\Sh_{K^p}^\lfid$ fitting into two canonical modification exact sequences of sheaves on $\mc{X}$
\begin{align}
\label{eq.intro-shimura-mod-1}0 \rightarrow \mf{g} \otimes_E \mc{O}_{\mc{X}} \rightarrow \mc{E}_{\mr{max}} \rightarrow \infty_* \left(\mf{g}^+_{\mr{max}}/\mf{g}\otimes_E \mbb{B}^+_\dR\right) \rightarrow 0\\
\label{eq.intro-shimura-mod-2} \textrm{ and } 0 \rightarrow \mf{g}_\univ \rightarrow \mc{E}_{\mr{max}} \rightarrow \infty_* \left(\mf{g}^+_{\mr{max}}/\mf{g}_\univ \boxtimes \mbb{B}^+_\dR \right) \rightarrow 0. 
\end{align}
There is a canonical identification $\BC(\mc{E}_{\mr{max}})=T_{\Sh_{K^p}^\lfid}$ such that the $v$-sheafification of the associated long exact sequences of cohomology for \cref{eq.intro-shimura-mod-1} and \cref{eq.intro-shimura-mod-2}  are identified, after restriction of the test objects to $X_{\mbb{Q}_p,\Box}^\lfp$, with
\begin{equation}\label{eq.global-shim-first-mod-seq} 0 \rightarrow \mathfrak{g}\otimes \mbb{Q}_p^\lfid \xrightarrow{da_e} \BC(\mc{E}_{\max})=T_{\Sh_{K^p}^\lfid}\xrightarrow{d\pi_{K_p}^\lfid} (\pi_{K_p}^\lfid)^* T_{\Sh_{K_pK^p}^\lfid} \rightarrow 0 \end{equation}
and
\begin{equation}\label{eq.global-shim-second-mod-seq} 
 0 \rightarrow \BC(\mathfrak{g}_\univ)\rightarrow \BC(\mathfrak{g}_{\max})=T_{(\Sh_{K^p}^\circ)^\lfid} \xrightarrow{d\pi_{\HT}^\lfid} (\pi_{\HT}^{\lfid})^* T_{\Fl_{[\mu]}^\lfid} \rightarrow \BC(\mathfrak{g}_\univ[1]) \rightarrow 0. \end{equation}
\end{enumerate}
\end{theorem}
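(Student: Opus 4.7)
The plan is to prove both parts of the theorem by a two-stage strategy: first construct $\pi_{K_p}^\lfid$ in (1) using the Cartesian description of $\Sh_{K_pK^p}^\lfid$ coming from the Kodaira--Spencer isomorphism for Shimura varieties; then prove (2) by interpreting the fiber product \cref{eq.min-comp-fiber-product-mod} as a relative bounded moduli of modifications; and finally deduce the $K_p^\lfid$-torsor property in (1) from the tangent bundle computation of (2) together with the classical $K_p^\diamond$-torsor structure on underlying $v$-sheaves.

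To construct $\pi_{K_p}^\lfid$, I would mimic \cref{remark.local-shimura-finite-level}. By \cite{DiaoLanEtAl.LogarithmicRiemannHilbertCorrespondencesForRigidVarieties}, the de Rham torsor coming from $\Sh_{K^p}^\diamond \to \Sh_{K_pK^p}^\diamond$ gives the classical automorphic filtered de Rham bundle $\omega_\nabla^\Fil$ on $\Sh_{K_pK^p}$, whose Kodaira--Spencer map is an isomorphism. \cref{prop.ks-iso-formal-nbhd} then yields the Cartesian presentation $\Sh_{K_pK^p}^\lfid = \Sh_{K_pK^p}^\diamond \times_{\Fl_{[\mu^{-1}]}^\diamond/G(\overline{\mc{O}})} \Fl_{[\mu^{-1}]}^\lfid/G(\overline{\mc{O}})$. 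Interpreting \cref{eq.min-comp-fiber-product-mod} as a relative bounded moduli of modifications over the trivially-inscribed $\Igs_{K^p}^\ast$, we obtain at infinite level a Hodge-type period map $\pi_\Hdg : \Sh_{K^p}^\lfid \to \Gr_{[\mu]} = \Fl_{[\mu^{-1}]}^\lfid$ (via the minuscule Bialynicki--Birula isomorphism) coming from $\pi_1$ of \cref{theorem.bounded-structure}; the map $\pi_{K_p}^\lfid$ is then defined as the product of the natural projection on underlying $v$-sheaves and the composition of $\pi_\Hdg$ with the quotient by $G(\overline{\mc{O}})$, with the required compatibility on underlying $v$-sheaves following from the $G(\overline{\mc{O}})$-equivariance of the classical finite-level Hodge period map.

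For part (2), I would interpret a $\mc{X}/X_{\mbb{Q}_p,P}$-point of $\Sh_{K^p}^\lfid$ as a triple consisting of a $G$-bundle $\mc{E}'$ on $\mc{X}$ (from $\overline{\pi}_\HT^\lfid$), a point of $\Fl_{[\mu]}^\lfid = \Gr_{[\mu^{-1}]}$ giving a lattice modification of the trivial bundle, and an isomorphism of the modified trivial bundle with $\mc{E}'$; this is precisely a point of the bounded moduli of modifications $\M^{[\mu]}_{\triv \to \mc{E}'}$ of \cref{ss.bounded-moduli}. Since $\Igs_{K^p}^\ast$ is trivially inscribed (so has vanishing tangent bundle), applying \cref{corollary.bounded-mod-tangent} with $b_1 = 1$ and $b_2$ varying over the Newton stratification directly yields the identification $T_{\Sh_{K^p}^\lfid} = \BC(\mc{E}_{\mr{max}})$, the modification exact sequences \cref{eq.intro-shimura-mod-1}, \cref{eq.intro-shimura-mod-2}, and local freeness of $\mf{g}^+_{\mr{max}}$ (the latter from \cref{prop.schubert-cell} pulled back via $\pi_\Hdg$). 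Taking $v$-sheafified long exact sequences of cohomology and invoking \cref{prop.BC-vanishing} on $X_{\mbb{Q}_p,\Box}^\lfp$ to get $\BC((\mf{g}\otimes\mc{O})[1]) = 0$ gives \cref{eq.global-shim-first-mod-seq}, with the rightmost term $\mf{g}^+_{\mr{max}}/\mf{g}\otimes\mbb{B}^+_\dR$ identified with $(\pi_{K_p}^\lfid)^* T_{\Sh_{K_pK^p}^\lfid}$ via the construction in (1) combined with \cref{theorem.bb-derivative}; similarly, the long exact sequence for \cref{eq.intro-shimura-mod-2} yields \cref{eq.global-shim-second-mod-seq} with $(\pi_\HT^\lfid)^* T_{\Fl_{[\mu]}^\lfid} = \mf{g}^+_{\mr{max}}/\mf{g}_\univ\boxtimes\mbb{B}^+_\dR$ directly from \cref{prop.schubert-cell}. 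The $K_p^\lfid$-torsor property in (1) then follows by combining the classical $K_p^\diamond$-torsor structure on underlying $v$-sheaves with the infinitesimal identification $\ker(d\pi_{K_p}^\lfid) = \mf{g}\otimes\mbb{Q}_p^\lfid = \Lie K_p^\lfid$ read off from \cref{eq.global-shim-first-mod-seq}, matching the Lie-algebra action of $K_p^\lfid$ on $\Sh_{K^p}^\lfid$. Functoriality in $K_p$ and $K^p$ follows from functoriality of Igusa stacks, period maps, and the above construction.

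The main technical obstacle is establishing the precise compatibility between the two descriptions of the tangent bundle of $\Sh_{K^p}^\lfid$: the relative bounded moduli of modifications calculation (used in (2)) and the finite-level pullback along $\pi_{K_p}^\lfid$ (used in the identification of the rightmost term of \cref{eq.global-shim-first-mod-seq}). Reconciling the $\mbb{B}^+_\dR$-lattice modification expression $\mf{g}^+_{\mr{max}}/\mf{g}\otimes\mbb{B}^+_\dR$ with $(\pi_{K_p}^\lfid)^* T_{\Sh_{K_pK^p}^\lfid}$ passes through the Bialynicki--Birula derivative computation (\cref{theorem.bb-derivative}) and ultimately reflects the duality in the Shimura variety context between the Hodge filtration (of type $[\mu^{-1}]$) and the Hodge--Tate filtration (of type $[\mu]$); treating this consistently in the inscribed setting, where both sides arise from fundamentally different geometric constructions, is the essential new content. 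A secondary obstacle is that $\Igs_{K^p}^\ast$ is only a small $v$-stack, so the relative bounded moduli of modifications formalism of \cref{ss.bounded-moduli} (which is set up over $\Spd \breve{E}$ with a fixed bundle $\mc{E}_b$) must be applied fiberwise, with compatibility across fibers ensured by the Cartesian diagram \cref{eq.igusa-stacks-conj-diag}.
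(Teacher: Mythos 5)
Your overall strategy matches the paper's: the paper proves this by combining two lemmas, one computing $T_{\Sh_{K^p}^\lfid}$ by exhibiting the unbounded fiber product $\Igs_{K^p}^\ast \times_{\mc{X}^*\BG}\Gr_G$ as a $G(\mbb{B}_e)$-quasi-torsor over the trivially inscribed Igusa stack (via \cref{lemma.quasi-torsor-mod}) and then intersecting with the Schubert cell via \cref{prop.schubert-cell}, and one constructing $\pi_{K_p}^\lfid$ from an infinite-level Hodge period map together with the Cartesian presentation of $\Sh_{K_pK^p}^\lfid$ from \cref{prop.ks-iso-formal-nbhd}. Your "fiberwise application of \cref{corollary.bounded-mod-tangent}" is an unnecessary detour --- \cref{lemma.quasi-torsor-mod} is already stated for an arbitrary bundle $\mc{E}_0$ on a base inscribed $v$-sheaf, so it applies directly to the universal bundle pulled back from $\Igs_{K^p}^\ast$ and there is no fiberwise/globalization issue to worry about --- but this is cosmetic.

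Two steps are genuinely under-argued. First, the intermediate map "$\pi_\Hdg\colon \Sh_{K^p}^\lfid\to\Gr_{[\mu]}=\Fl_{[\mu^{-1}]}^\lfid$ coming from $\pi_1$ of \cref{theorem.bounded-structure}" does not exist: $\pi_1$ requires a trivialization of the \emph{target} bundle over $\Spec\mbb{B}_\dR$, and the universal bundle pulled back from the Igusa stack has no such global trivialization. What saves the construction is that $\overline{\pi}_\HT$ factors through $\Bun G$ (bundles on the \emph{un}-thickened curve), which equips $\infty^*(\BL^*\mc{E}_\univ)$ with a canonical integrable connection; the flat-sections reduction of structure group then produces a period map valued in the quotient $\Fl_{[\mu^{-1}]}^\lfid/G(\overline{\mc{O}})$ from the outset, exactly as in \cref{ss.hodge-period-maps}. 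You do quotient by $G(\overline{\mc{O}})$ at the end, so the intent is right, but the connection is the missing ingredient and should be made explicit. Second, the torsor property of $\pi_{K_p}^\lfid$ cannot be "read off" from the identification $\ker(d\pi_{K_p}^\lfid)=\mf{g}\otimes\mbb{Q}_p^\lfid$: a first-order statement together with the torsor property on underlying $v$-sheaves does not give surjectivity of the orbit map over an arbitrary locally free nilpotent thickening. The correct input is \cref{prop.b-basic-open-stratum} (deformation-theoretic openness of the basic stratum $b=1$ under the $\lfp$ slope condition), which is an induction over the square-zero filtration of the thickening killing obstruction classes in $H^1$; this is how the paper argues, following \cref{remark.local-shimura-finite-level}, and it is the only place the $\lfp$ restriction genuinely enters part (1).
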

\begin{proof}This combines \cref{lemma.shimura-tangent-computation} and \cref{lemma.torsor-global-shimura} to be proved below. 
\end{proof}

\begin{remark}
The fibers of $\pi_\HT^\lfid$ are identified, by the definition using a product, with inscribed variants of the Caraiani-Scholze type Igusa varieties $\Igs_{K^p} \times_{\Bun_G} ({\Bun_G} \xleftarrow{b}  \Spd \breve{\mbb{Q}}_p)$. In particular, if we look above a point of the flag variety lying in the Newton stratum for $b \in G(\breve{\mbb{Q}}_p)$, then the restriction of $\mf{g}_\univ$ is the bundle associated to the isocrystal $\mf{g}_b$. The kernel $\BC(\mf{g}_b)$ of $d \pi_\HT^\lfid$ at such a point, which is the tangent bundle of the fiber, can also be viewed as coming from differentiating the action on this Igusa variety of the group $\tilde{G}_b$ of automorphisms of the associated $G$-bundle on the relative thickened Fargues--Fontaine curve. Note that the term $\BC(\mf{g}_b[1])$ appearing in the restriction of \cref{eq.global-shim-second-mod-seq} is zero if and only if $b$ is basic. Its role here is as the normal bundle of the associated Newton stratum on the flag variety (cf. \cref{cor.bdd-newton-strata-tangent-normal} and the surrounding discussion). In particular, $\pi_\HT^{\lfid}$, after imposing the slope condition, is a submersion over the open basic locus but nowhere else; in general, it is only a submersion after pullback to a Newton stratum (and this gives the dimension computation of \cref{sss.perversity}).  
\end{remark}

\subsection{Computation of the tangent bundle}

As in the case of local Shimura varieties and more general moduli of modifications treated in \cref{s.moduli-of-mod}, the computation of the tangent bundle in \cref{theorem.inscribed-global-igusa-theorem} is accomplished by first treating an unbounded analog, and then cutting out the result we are interested in within. 

To that end, let $\mc{S} := (\Igs_{K^p} \xrightarrow{\overline{\pi}_\HT^\lfid} \mc{X}^*\BG) \times_{\mc{X}^*\BG} (\Gr_G \xrightarrow{\BL^\lfid} \mc{X}^*\BG)$. 

\begin{lemma}$\mc{S}$ is an inscribed $v$-sheaf. 
\end{lemma}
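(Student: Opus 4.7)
The plan is to verify in turn that $\mc{S}$ is an inscribed $v$-stack and then that it has trivial automorphism groupoids in all fibers, so that it collapses to an inscribed $v$-sheaf. Both steps should follow essentially formally from the inputs, with the $0$-truncatedness hypothesis on $\overline{\pi}_\HT$ in \cref{conj.igusa-stacks} doing the key work of killing the stacky information.

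For the first step, I would note that $\mc{S}$ is by construction a 2-fiber product in the $(2,1)$-category of fibered categories over $(X_{\mathbb{Q}_p,\Box})^\lf$. The three inputs are each already known to be inscribed $v$-stacks: $\Gr_G$ is an inscribed $v$-sheaf by the results of \cref{s.affine-grassmannian} (transported into the present inscribed context by change of context as in \cref{ss.change-of-context}); $\mc{X}^*\BG$ is an inscribed $v$-stack by \cref{thm.BG-pull-back-inscribed}; and $\Igs_{K^p}^\ast$, viewed as a trivially inscribed $v$-stack via $\Box^\triv$, is inscribed by \cref{lemma.adjunction-prestacks-stacks-sheaves}. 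By \cref{lemma.inscribed-limits} the inscribed property is preserved under 2-limits, and the $v$-stack property is preserved under 2-limits by standard descent, so this already identifies $\mc{S}$ as an inscribed $v$-stack.

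For the second step, I would check that every automorphism groupoid in the fibers of $\mc{S}$ is trivial. Unwinding the fiber product, an object of $\mc{S}(\mc{X}/X_{\mathbb{Q}_p,P})$ is a triple $(s, g, \iota)$ with $s \in \Igs_{K^p}^\ast(P)$, $g \in \Gr_G(\mc{X}/X_{\mathbb{Q}_p,P})$, and $\iota$ an isomorphism $\overline{\pi}_\HT^\lfid(s) \xrightarrow{\sim} \BL^\lfid(g)$ in $(\mc{X}^*\BG)(\mc{X}/X_{\mathbb{Q}_p,P})$. An automorphism is a compatible pair $(\alpha,\beta) \in \mr{Aut}(s) \times \mr{Aut}(g)$ intertwining $\iota$. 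Since $\Gr_G$ is already a $v$-sheaf, we have $\beta = \mr{id}$, and then the intertwining condition forces the pullback of $\overline{\pi}_\HT(\alpha)$ to $\mc{X}$ to be the identity. Restricting this identity further back along the canonical section $X_{\mathbb{Q}_p,P} \hookrightarrow \mc{X}$ of the thickening shows that $\overline{\pi}_\HT(\alpha) = \mr{id}$ already in $\mr{Aut}(\overline{\pi}_\HT(s))$, and the $0$-truncatedness of $\overline{\pi}_\HT$ from \cref{conj.igusa-stacks}(2) then forces $\alpha = \mr{id}$.

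I do not expect substantial obstacles in this lemma: both steps are essentially formal given what has already been established. The only conceptual point worth underlining is that it is precisely the $0$-truncatedness hypothesis on $\overline{\pi}_\HT$ built into \cref{conj.igusa-stacks}(2) that collapses the stacky fiber product to an honest inscribed $v$-sheaf; without it, one would obtain only an inscribed $v$-stack. This is what makes it legitimate subsequently to carve out the open $\Sh_{K^p}^\lfid$ inside $(\Sh_{K^p}^\ast)^\lfid$ by pullback to $\Sh_{K^p}^\diamond$ and to carry out the computation of its tangent bundle in \cref{theorem.inscribed-global-igusa-theorem}.
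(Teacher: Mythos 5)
Your proof is correct and follows exactly the paper's route: the paper likewise deduces that $\mc{S}$ is an inscribed $v$-stack from \cref{lemma.inscribed-limits} applied to the 2-fiber product, and then observes it is a sheaf rather than a stack because $\Gr_G$ is a sheaf and $\overline{\pi}_\HT$ is $0$-truncated. You have merely spelled out the automorphism-groupoid computation that the paper leaves as a one-sentence remark, and your unwinding of it is accurate.
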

\begin{proof}
    Since each of the terms in the fiber product is an inscribed $v$-stack, it is an inscribed $v$-stack by \cref{lemma.inscribed-limits}. It is a sheaf rather than a more general stack because $\Gr_G$ is a sheaf and $\overline{\pi}_{\HT}^\lfid$ is 0-truncated. 
\end{proof}

We note that on $\mc{S}$ we have the trivial bundle $\mf{g} \otimes \mathcal{O}_{\mc{X}}$ over $\mc{X}$ and the pullback of the universal bundle associated to the adjoint representation over $\mc{X}^*\BG$, $\mf{g}_\univ$. By the construction of the map $\BL^\lfid$, we have fixed an isomorphism between these two bundles after restriction to $\mc{X}\bs\infty$. 

By \cref{lemma.quasi-torsor-mod}, $\mc{S}$ is a $G(\mbb{B}_e)$ quasi-torsor over $\Igs_{K^p}$ (in fact a torsor since the map to $\Igs_{K^p}$ is surjective, but we will not actually need this fact). Since $\Igs_{K^p}$ is trivially inscribed, we deduce that, writing $a: \mc{S} \times G(\mbb{B}_e) \rightarrow \mc{S}$ for the action map, $da_e$ induces an isomorphism $\mf{g} \otimes \mbb{B}_e = \Lie G(\mbb{B}_e) \xrightarrow{\sim} T_{\mc{S}}$.

The map $\pi_{2}: \mc{S} \rightarrow \Gr_G$ is equivariant along the inclusion $G(\mbb{B}_e) \subseteq G(\mbb{B}_\dR)$. We note also that, by construction, $\pi_2^* \mf{g}^+_{\univ}=\mf{g}_\univ \boxtimes \mbb{B}^+_\dR$. Comparing with \cref{corollary.tangent-bundle-affine-grassmannian}, we find $d\pi_2$ can be identified with the natural map $\mf{g} \otimes \mbb{B}_e \rightarrow \mf{g} \otimes \mbb{B}_\dR/\mf{g}_\univ \boxtimes \mbb{B}^+_\dR$. Since 
\[ \Sh_{K^p}^\lfid = (\mc{S} \xrightarrow{\pi_2} \Gr_G) \times_{\Gr_G} \Gr_{[\mu^{-1}]}, \]
comparing with \cref{prop.schubert-cell} we obtain 
\[ T_{\Sh_{K^p}^\lfid} = \mf{g} \otimes \mbb{B}_e \times_{\mf{g} \otimes \mbb{B}_\dR/\mf{g}^+_\univ} \mf{g}^+_\mr{max}/\mf{g}^+_\univ= \BC(\mc{E}_{\mr{max}}), \]
where $\mf{g}^+_\mr{max} = \mf{g} \otimes \mbb{B}^+_\dR + \mf{g}_\univ \boxtimes \mbb{B}^+_\dR$ and $\mc{E}_{\mr{max}}$ is the associated modification of $\mf{g} \otimes \mathcal{O}_{\mc{X}}$. We have thus established:

\begin{lemma}\label{lemma.shimura-tangent-computation} With notation above, the $\mbb{B}^+_\dR$-module on $\Sh_{K_p}^\lfid$
\[ \mf{g}^+_{\mr{max}}:=\mf{g}_\univ \boxtimes \mbb{B}^+_\dR + \mf{g} \otimes_E \mbb{B}^+_\dR \subseteq \mf{g}_\univ \boxtimes \mbb{B}_\dR = \mf{g} \otimes_E \mbb{B}_\dR\]
is locally free. In particular, there is a vector bundle $\mc{E}_{\mr{max}}$ on $\mc{X}$ over $\Sh_{K_p}^\lfid$ fitting into two canonical modification exact sequences of sheaves on $\mc{X}$
\begin{align}
\label{eq.body-shimura-mod-1}0 \rightarrow \mf{g} \otimes_E \mc{O}_{\mc{X}} \rightarrow \mc{E}_{\mr{max}} \rightarrow \infty_* \left(\mf{g}^+_{\mr{max}}/\mf{g}\otimes_E \mbb{B}^+_\dR\right) \rightarrow 0\\
\label{eq.body-shimura-mod-2} \textrm{ and } 0 \rightarrow \mf{g}_\univ \rightarrow \mc{E}_{\mr{max}} \rightarrow \infty_* \left(\mf{g}^+_{\mr{max}}/\mf{g}_\univ \boxtimes \mbb{B}^+_\dR \right) \rightarrow 0. 
\end{align}
There is a canonical identification $\BC(\mc{E}_{\mr{max}})=T_{\Sh_{K^p}^\lfid}$ such that the $v$-sheafification of the associated long exact sequences of cohomology for \cref{eq.body-shimura-mod-2} is identified with 
\begin{equation}\label{eq.second-mod-seq} 
 0 \rightarrow \BC(\mathfrak{g}_\univ)\rightarrow \BC(\mathfrak{g}_{\max})=T_{(\Sh_{K^p}^\circ)^\lfid} \xrightarrow{d\pi_{\HT}^\lfid} (\pi_{\HT}^{\lfid})^* T_{\Fl_{[\mu]}^\lfid} \rightarrow \BC(\mathfrak{g}_\univ[1]) \rightarrow 0. \end{equation}
\end{lemma}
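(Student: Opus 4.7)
\medskip

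\noindent\textbf{Proof plan.} The strategy is to reduce the computation to the unbounded moduli space $\mc{S} := \Igs_{K^p}^\ast \times_{\mc{X}^*\BG} \Gr_G$, where the tangent bundle is transparent, and then cut down to $(\Sh^\ast_{K^p})^\lfid$ by the Schubert cell description from \cref{prop.schubert-cell} and \cref{corollary.tangent-bundle-affine-grassmannian}. I expect the main work to be the identification of the two descriptions of $T_{(\Sh^\ast_{K^p})^\lfid}$ via the fundamental exact sequence.

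First, I would verify the local freeness of $\mathfrak{g}^+_\mr{max}$ and construct $\mc{E}_\mr{max}$. By construction of $\BL^\lfid$, the pullback under $\pi_\HT^\lfid\colon \Sh_{K^p}^\lfid \to \Fl_{[\mu]}^\lfid = \Gr_{[\mu^{-1}]}$ of the universal lattice $\mathfrak{g}^+_\univ$ on $\Gr_{[\mu^{-1}]}$ is exactly $\mathfrak{g}_\univ \boxtimes \mathbb{B}^+_\dR$; hence the local freeness of $\mathfrak{g}^+_\mr{max} = \mathfrak{g}^+_\univ + \mathfrak{g}\otimes\mathbb{B}^+_\dR$ is pulled back from the corresponding statement in \cref{prop.schubert-cell}. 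Beauville--Laszlo glueing along $\infty$ (\cref{lemma.vb-glueing}) with $\mathfrak{g}\otimes\mc{O}_\mc{X}$ on the one side and the lattice $\mathfrak{g}^+_\mr{max}$ on the other then produces $\mc{E}_\mr{max}$, and the two presentations of $\mathfrak{g}^+_\mr{max}$ (as a sum containing either $\mathfrak{g}\otimes\mathbb{B}^+_\dR$ or $\mathfrak{g}_\univ\boxtimes\mathbb{B}^+_\dR$) give the two modification exact sequences \eqref{eq.body-shimura-mod-1} and \eqref{eq.body-shimura-mod-2}.

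Next, I would compute $T_{(\Sh^\ast_{K^p})^\lfid}$. By \cref{lemma.quasi-torsor-mod}, the map $\mc{S}\to \Igs_{K^p}^\ast$ is a $G(\mathbb{B}_e)$-quasi-torsor; since $\Igs_{K^p}^\ast$ is trivially inscribed, differentiating the action gives $T_\mc{S} = \mathfrak{g}\otimes\mathbb{B}_e$, and the equivariance of $\pi_2\colon \mc{S}\to\Gr_G$ along $G(\mathbb{B}_e)\hookrightarrow G(\mathbb{B}_\dR)$ identifies $d\pi_2$ with the natural composite $\mathfrak{g}\otimes\mathbb{B}_e \hookrightarrow \mathfrak{g}\otimes\mathbb{B}_\dR \twoheadrightarrow (\mathfrak{g}\otimes\mathbb{B}_\dR)/\mathfrak{g}^+_\univ$ under \cref{corollary.tangent-bundle-affine-grassmannian}. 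Since $(\Sh^\ast_{K^p})^\lfid = \mc{S}\times_{\Gr_G}\Gr_{[\mu^{-1}]}$ and tangent bundles commute with 2-limits of inscribed prestacks (by \cref{lemma.inscribed-limits} together with the fact that $\mathcal{B}\mapsto \mathcal{B}[\epsilon]$ preserves limits), combining with \cref{prop.schubert-cell} gives
\[
T_{(\Sh^\ast_{K^p})^\lfid} \;=\; (\mathfrak{g}\otimes\mathbb{B}_e)\times_{(\mathfrak{g}\otimes\mathbb{B}_\dR)/\mathfrak{g}^+_\univ}\bigl(\mathfrak{g}^+_\mr{max}/\mathfrak{g}^+_\univ\bigr).
\]
Unwinding, this is the kernel of $\mathfrak{g}\otimes\mathbb{B}_e \to (\mathfrak{g}\otimes\mathbb{B}_\dR)/\mathfrak{g}^+_\mr{max}$. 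On the other hand, since $\mc{E}_\mr{max}$ agrees with $\mathfrak{g}\otimes\mc{O}_\mc{X}$ on $\mc{X}\setminus\infty$ and has $\mathfrak{g}^+_\mr{max}$ as its completion at $\infty$, the fundamental exact sequence of \cref{lemma.fes} applied to $\mc{E}_\mr{max}$ reads
\[
0 \to \BC(\mc{E}_\mr{max}) \to \mathfrak{g}\otimes\mathbb{B}_e \to (\mathfrak{g}\otimes\mathbb{B}_\dR)/\mathfrak{g}^+_\mr{max} \to \BC(\mc{E}_\mr{max}[1]) \to 0,
\]
which yields the canonical identification $\BC(\mc{E}_\mr{max}) = T_{(\Sh^\ast_{K^p})^\lfid}$.

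Finally, to match \eqref{eq.body-shimura-mod-2} with \eqref{eq.second-mod-seq}, I would take the cohomology long exact sequence associated to \eqref{eq.body-shimura-mod-2} (followed by $v$-sheafification, as in \cref{lemma.fes}); the middle term is $\BC(\mc{E}_\mr{max})=T_{(\Sh^\ast_{K^p})^\lfid}$ by the previous step, and the connecting term is $\mathfrak{g}^+_\mr{max}/(\mathfrak{g}_\univ\boxtimes\mathbb{B}^+_\dR)$. Pulling back \cref{prop.schubert-cell} along $\pi_\HT^\lfid$ (and using that $\BB$ is an isomorphism in the minuscule case, so $T_{\Fl_{[\mu]}^\lfid}$ is canonically identified with $T_{\Gr_{[\mu^{-1}]}}$) identifies this connecting term with $(\pi_\HT^\lfid)^* T_{\Fl_{[\mu]}^\lfid}$, and the compatibility of the connecting map with $d\pi_\HT^\lfid$ follows from the fact that both arise from the projection to the $\Gr_{[\mu^{-1}]}$-factor in the fiber product description above. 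The hard part is essentially bookkeeping: making sure that the two different natural ways to produce the fiber-product description of $T_{(\Sh^\ast_{K^p})^\lfid}$—via the Igusa-stacks diagram on the one hand, and via the modification sequences on the other—give the same map. This compatibility is ultimately forced by the construction of $\BL^\lfid$ through the Hecke correspondence of \cref{ss.inscribed-hecke}, since both pictures are just differentiating the same quasi-torsor action of \cref{lemma.quasi-torsor-mod}.
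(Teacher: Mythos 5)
Your proposal is correct and follows essentially the same route as the paper: pass to the unbounded fiber product $\mc{S}$ with $\Gr_G$, differentiate the $G(\mbb{B}_e)$-quasi-torsor structure of \cref{lemma.quasi-torsor-mod} to get $T_{\mc{S}}=\mf{g}\otimes\mbb{B}_e$, identify $d\pi_2$ via \cref{corollary.tangent-bundle-affine-grassmannian}, and cut down by the Schubert cell using \cref{prop.schubert-cell}. The only difference is that you spell out details the paper leaves implicit (the identification of the resulting fiber product with $\ker\bigl(\mf{g}\otimes\mbb{B}_e \to (\mf{g}\otimes\mbb{B}_\dR)/\mf{g}^+_{\mr{max}}\bigr)=\BC(\mc{E}_{\mr{max}})$ via the fundamental exact sequence, and the matching of the connecting term with $(\pi_\HT^\lfid)^*T_{\Fl_{[\mu]}^\lfid}$), which is a welcome expansion rather than a deviation.
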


\subsection{The Hodge period map at infinite level.}
We note that we can define a natural \emph{Hodge} period map on $\Sh_{K^p}^\lfid$. Indeed, already on $\Fl^\lfid_{[\mu]}$, we can define $\mc{E}_\dR:= \infty^*(\BL^*\mc{E}_{\univ})$, a $G(\mc{O})$-torsor equipped with a Hodge filtration $\Fil^\bullet_{\Hdg} \mc{E}_\dR$ of type $\mu^{-1}$ by the modification $\mc{E}_{\triv} \dashrightarrow \mc{E}_\dR$. Then, because $\pi_{\overline{\HT}}$ factors through $\Bun_G$, the pullback of $\mc{E}_\dR$ is equipped with a canonical integrable connection (cf. \cref{s.hodge-etc-period-maps}) 
\[ r^*\overline{\mc{E}_{\dR}} \times ^{G(\overline{\mathcal{O}})} G(\mathcal{O}) \xrightarrow{\sim} \mc{E}_\dR \]
where here $r: \Sh_{K_p}^\lfid \rightarrow \overline{\Sh_{K_p}^\lfid}=\Sh_{K_p}^\diamond$. In particular, we obtain a flat sections reduction of structure group on $\mc{E}_\dR$ to a $G(\overline{\mc{O}})$-torsor $\mc{E}_{\dR,\nabla}$ over $\Sh_{K^p}^\lfid$ and then, quotienting by $G(\overline{\mathcal{O}})$, a period map classifying the Hodge filtration 
\[ \pi_{\Hdg}: \Sh_{K^p}^\lfid \rightarrow \Fl_{[\mu^{-1}]}/G(\overline{\mathcal{O}}). \]

We fix a $K_p$ such that $K_pK^p$ is neat. It follows from the comparison between the automorphic de Rham local systems and those constructed via $p$-adic Hodge theory as established in \cite{LiuZhu.RigidityAndARiemannHilbertCorrespondenceForpAdicLocalSystems} that, on underlying $v$-sheaves, $\pi_{\Hdg}$ restricts to the composition of $\pi_{K_p}^\diamond$ with the usual Hodge period map as constructed in \cref{ss.hodge-period-maps} for the automorphic de Rham $G$-torsor $\mc{G}_\dR$-over $\Sh_{K_pK^p}$ (equipped with its integrable connection and Hodge filtration), 
\[ \pi_{\Hdg,K_p}^\diamond: \Sh_{K_pK^p}^\diamond \rightarrow \Fl_{[\mu^{-1}]}^\diamond/G(\overline{\mc{O}}). \]

The associated Kodaira-Spencer map at finite level is, by construction, an isomorphism, thus \cref{prop.ks-iso-formal-nbhd} applies so that
\[ \Sh_{K_pK^p}^\lfid = \Sh_{K_pK^p}^\diamond \times_{\Fl_{[\mu^{-1}]}^\diamond/G(\overline{\mc{O}})} \Fl_{[\mu^{-1}]}^\lfid/G(\overline{\mc{O}}). \]

Thus, the infinite level inscribed Hodge period map $\pi_{\Hdg}$ on $\Sh_{K^p}^\lfid$ combined with $\pi_{K_p}^\diamond$ induces a map 
\[ \pi_{K_p}^\lfid: \Sh_{K^p}^\lfid \rightarrow \Sh_{K_pK^p}^\lfid. \]

Arguing as in \cref{remark.local-shimura-finite-level}, $\pi_{K_p}^\lfid$ it is a quasi-torsor for $K_p^\lfid$ and, by applying \cref{prop.b-basic-open-stratum}, in fact a torsor (i.e. surjective) after restriction to $X_{\mbb{Q}_p,\Box}^\lfp$. Comparing to the computations of \cref{lemma.shimura-tangent-computation} and the usual $p$-adic comparisons, we obtain:

\begin{lemma}\label{lemma.torsor-global-shimura}
After restricting to $X_{\mbb{Q}_p,\Box}^\lfp$, the map $\pi_{K_p}^\lfid: \Sh_{K^p}^\lfid \rightarrow \Sh_{K_pK^p}^\lfid$ is a $K_p^\lfid$-torsor.  Moreover, after the same restriction, the $v$-sheafification of the long exact sequence associated to \cref{eq.body-shimura-mod-1} is canonically identified with
\[ 0 \rightarrow  \mf{g}^{\lfid} \xrightarrow{da_e} T_{\Sh_{K^p}^\lfid} \xrightarrow{d\pi_{K_p}^\lfid} T_{\Sh_{K_pK^p}^\lfid} \rightarrow 0. \]
\end{lemma}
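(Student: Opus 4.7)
The strategy is to deduce both parts of the lemma simultaneously from (i) the $v$-sheafified long exact sequence of cohomology attached to \cref{eq.body-shimura-mod-1}, (ii) the classical $K_p^\diamond$-torsor property of $\Sh_{K^p}^\diamond \to \Sh_{K_pK^p}^\diamond$ on underlying $v$-sheaves, and (iii) a deformation argument enabled by the slope condition $\lfp$ and \cref{prop.BC-vanishing}. The main obstacle is the lifting step in the torsor property: while uniqueness of lifts of action elements is immediate from the tangent computation, existence of lifts along nilpotent thickenings requires a careful obstruction-theoretic analysis anchored in the $\lfp$ slope condition.

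First I would verify that the $G(\mathbb{Q}_p^\lfid)$-action on $\Sh_{K^p}^\lfid$ constructed earlier restricts to an action of $K_p^\lfid$ preserving the map $\pi_{K_p}^\lfid$. Since Kodaira-Spencer for the automorphic $G$-torsor on $\Sh_{K_pK^p}$ is an isomorphism (by Diao-Lan et al.), \cref{prop.ks-iso-formal-nbhd} identifies $\Sh_{K_pK^p}^\lfid$ with $\Sh_{K_pK^p}^\diamond \times_{\Fl_{[\mu^{-1}]}^\diamond/G(\overline{\mathcal{O}})} \Fl_{[\mu^{-1}]}^\lfid/G(\overline{\mathcal{O}})$, so $\pi_{K_p}^\lfid$ decomposes as the pair $(\pi_{K_p}^\diamond, \pi_\Hdg)$. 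The first factor is classically $K_p^\diamond$-invariant; for the second, I would check that the action of $g \in G(\mathbb{Q}_p^\lfid) \subseteq G(\mathbb{B}_e)$ on the modification translates the Hodge lattice by the image of $g$ in $G(\overline{\mathbb{B}^+_\dR})$, which is absorbed by the $G(\overline{\mathcal{O}})$-quotient in the target of $\pi_\Hdg$.

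Next I would analyze the tangent sequence. On $\lfp$ test objects \cref{prop.BC-vanishing} yields $\BC((\mathfrak{g}\otimes\mathcal{O}_\mathcal{X})[1]) = 0$ since $\mathfrak{g}\otimes\mathcal{O}_\mathcal{X}$ has trivial (hence nonnegative) Harder-Narasimhan slopes. Combined with the identifications $\BC(\mathfrak{g}\otimes\mathcal{O}_\mathcal{X}) = \mathfrak{g}\otimes\mathbb{Q}_p^\lfid$ and $T_{\Sh_{K^p}^\lfid}=\BC(\mathcal{E}_{\max})$ from \cref{lemma.shimura-tangent-computation}, the long exact sequence for \cref{eq.body-shimura-mod-1} reduces to
$$0 \to \mathfrak{g}\otimes\mathbb{Q}_p^\lfid \to \BC(\mathcal{E}_{\max}) \to \mathfrak{g}^+_{\max}/\mathfrak{g}\otimes\mathbb{B}^+_\dR \to 0.$$
I would then trace through the construction of $T_{\Sh_{K^p}^\lfid}$ as a fiber product inside $T_\mathcal{S}=\mathfrak{g}\otimes\mathbb{B}_e$ (coming from the $G(\mathbb{B}_e)$-quasi-torsor structure of $\mathcal{S}$ over $\Igs_{K^p}^\ast$) to identify the leftmost inclusion with $da_e$ for the $K_p^\lfid$-action, using that $\mathfrak{g}\otimes\mathbb{Q}_p^\lfid\hookrightarrow\mathfrak{g}\otimes\mathbb{B}_e$ picks out precisely the $G(\mathbb{Q}_p^\lfid)$-action directions.

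Finally, to conclude both the torsor property and the sequence identification: injectivity of $da_e$ (from the exact sequence) gives uniqueness of infinitesimal lifts of $K_p^\lfid$-elements, and the vanishing of $\BC((\mathfrak{g}\otimes\mathcal{O}_\mathcal{X})[1])$ clears the obstruction to lifting $K_p$-torsor trivializations one square-zero step at a time along any nilpotent thickening of a base $\lfp$ section. Combined with the classical $K_p^\diamond$-torsor property on underlying $v$-sheaves, this upgrades $\pi_{K_p}^\lfid$ to a $K_p^\lfid$-torsor. The identification with \cref{eq.global-shim-first-mod-seq} is then automatic: both sequences are short exact with identified kernel $\mathfrak{g}\otimes\mathbb{Q}_p^\lfid$ via $da_e$, so the quotient $\mathfrak{g}^+_{\max}/\mathfrak{g}\otimes\mathbb{B}^+_\dR$ is canonically identified with $(\pi_{K_p}^\lfid)^*T_{\Sh_{K_pK^p}^\lfid}$ via $d\pi_{K_p}^\lfid$.
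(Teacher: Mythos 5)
Your proposal is correct and follows essentially the same route as the paper: the map is constructed via \cref{prop.ks-iso-formal-nbhd} and the pair $(\pi_{K_p}^\diamond,\pi_\Hdg)$, the quasi-torsor structure comes from \cref{lemma.quasi-torsor-mod}, surjectivity comes from the slope condition killing the obstruction $H^1(X_P,\mf{g}\otimes \mc{I}^i/\mc{I}^{i+1})$, and the exact sequence is identified by matching the kernel of $d\pi_{K_p}^\lfid$ with the image of $da_e$ inside $T_{\mc{S}}=\mf{g}\otimes\mbb{B}_e$. The only organizational difference is that you re-derive the deformation-theoretic surjectivity inline via the vanishing of $\BC((\mf{g}\otimes\mc{O}_{\mc{X}})[1])$ rather than citing \cref{prop.b-basic-open-stratum} (whose proof is exactly that graded obstruction argument), which is a harmless repackaging of the same input.
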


\section{Twistors on the relative thickened Fargues--Fontaine curve}\label{s.twistors}

In this section we define our category of twistors and prove the main theorems of the paper (\cref{maintheorem.twistor-correspondence} as \cref{theorem.twistor-functor}, \cref{maintheorem.tangent-bundle-computation} as \cref{theorem.torsor-tangent-bundle-computation}, and \cref{maintheorem.Hodge--Tate-derivative} as \cref{theorem.ht-derivative-computation}). In \cref{ss.mom-revisited} we compare with the construction of \cref{s.moduli-of-mod} in the case of local Shimura varieties and more general flat crystalline local systems. In \cref{ss.global-sv-revisited} we compare with the construction of \cref{s.Inscribed-global-Shimura} in the case of global Shimura varieties (note that the construction of this section does not use Igusa stacks so is unconditional!).  

\subsubsection{Setup}
We work in the inscribed context $(\Spd \mbb{Q}_p, X^\alg_{\mathbb{Q}_p,{\Box}})$. We write $\pi: \mc{X} \rightarrow X^\alg_{\mathbb{Q}_p, \Box}$ and $\iota: X^\alg_{\mathbb{Q}_p, \Box} \rightarrow \mc{X}$ for the natural morphisms. We use the notation $\mathbb{Q}_p^\lfid$, $\mathbb{B}_e, \mathbb{B}^+_\dR, \mathbb{B}_\dR,$ and $\mc{O}$ for period sheaves as in \cref{s.modifications}, and for any of these $\mathbb{B}$ and $\mc{E}$ a vector bundle on $\mc{X}$ over an inscribed $v$-sheaf $\mc{S}$, we write $\mc{E} \boxtimes \mathbb{B}$ for the corresponding $\mathbb{B}$-module of sections over $\Spec \mathbb{B}$.

\subsection{Twistors}
\begin{definition}\label{def.mer-conn-and-twistor} Let $\mc{S}/\Spd \mbb{Q}_p$ be an inscribed $v$-sheaf. A \emph{vector bundle with meromorphic integrable connection on the relative thickened Fargues--Fontaine curve over $\mc{S}$} is a pair $(\mc{E}, e)$ where
\begin{enumerate}
    \item $\mc{E}$ is a vector bundle on $\mc{X}$ over $\mc{S}$, and
    \item $e$ is an integrable connection on $\mc{E}|_{\mc{X}\backslash \infty}$ that is, an isomorphism $e: \mc{E}|_{\mathcal{X}\backslash \infty} \xrightarrow{\sim} \pi^* \iota^* \mc{E}|_{\mathcal{X}\backslash \infty}$.
\end{enumerate}
Given a vector bundle with meromorphic integrable connection on the relative thickened Fargues--Fontaine curve over $\mc{S}$, we write $\nabla: T_{\mc{S}} \rightarrow \mc{E}nd (\mc{E} \boxtimes \mathbb{B}_e)$ for the map such that, for $\gamma \in T_{\mc{S}}(\mc{X})=\mc{S}(\mc{X}[\epsilon]),$
\[ e(\gamma)e(0)^{-1}=\Id + \nabla(t)\epsilon. \]
We say that $\mc{E}$ is a \emph{twistor bundle} if $\nabla$ has a simple pole at $\infty$, i.e., for any $\gamma$ as above, $\nabla(\gamma) \otimes_{\mbb{B}_e} \mbb{B}_\dR$ restricts to a map
\[ \Fil^1\mathbb{B}^+_\dR(\mc{X}[\epsilon]) \cdot \left(\mc{E} \boxtimes \mathbb{B}^+_\dR(\mc{X}[\epsilon])\right) \rightarrow  \mc{E} \boxtimes \mathbb{B}^+_\dR(\mc{X}[\epsilon]). \]
\end{definition}

The category of vector bundles with meromorphic integrable connection on the relative thickened Fargues--Fontaine curve over $\mc{S}$ is naturally a rigid tensor category, and the subcategory of twistor bundles is also a rigid tensor category --- to see that the twistor condition is preserved under tensor products and duals, it suffices to observe 
\[ \nabla_{\mc{E}^*}(t)=\nabla_{\mc{E}}(t)^* \textrm{ and } \nabla_{\mc{E}_1 \otimes \mc{E}_2}=\nabla_{\mc{E}_1}\otimes 1 + 1 \otimes \nabla_{\mc{E}_2}. \]

\subsection{From $\mathbb{Q}_p$-local systems to twistors}
Let $L$ be a $p$-adic field, let $C=\overline{L}^\wedge$, let $B^+_\dR=\mathbb{B}^+_\dR(C)$, and let $Z/L$ be a smooth rigid analytic variety. For $\mathbb{L}$ a $\mathbb{Q}_p$-local system on $Z$, we consider the vector bundle $\mathbb{L} \otimes_{\mathbb{Q}_p^\diamond} \mathcal{O}_{\mc{X}}$ on the relative thickened Fargues--Fontaine curve over $Z^\lfid$. It is equipped with a canonical integrable connection by writing
\[ V \otimes_{\mbb{Q}_p^\diamond} \mathcal{O}_{\mc{X}}= (V \otimes_{\mbb{Q}_p^\diamond} \mathcal{O}_{X}) \otimes_{\mathcal{O}_X} \mathcal{O}_{\mathcal{X}}. \]
In particular, any modification at $\infty$ of $V \otimes_{\mbb{Q}_p^\diamond} \mathcal{O}_{\mc{X}}$ is equipped with a canonical meromorphic integrable connection; we will construct an associated twistor bundle $\Tw(V)$ in this way.  

To obtain the desired modification, we recall that Liu and Zhu \cite{LiuZhu.RigidityAndARiemannHilbertCorrespondenceForpAdicLocalSystems} have constructed an exact tensor functor $\RH$ from $\mathbb{Q}_p$-local systems on $Z$ to $\Gal(\overline{L}/L)$-equivariant $t$-connections on $Z_{\Spaf(\mathbb{B}^+_\dR)}$. We may compose this with the functor $\mathbb{M}$ of \cref{ss.liu-zhu-period-map}, and it is immediate from the constructions that 
\[ (\mathbb{M} \circ \RH) \otimes_{\mathbb{B}^+_\dR} \mathbb{B}_\dR = V \otimes_{\mbb{Q}_p^\diamond} \mathbb{B}_\dR =  (V \otimes_{\mbb{Q}_p^\diamond} \mathcal{O}_{\mc{X}}) \boxtimes \mathbb{B}_\dR. \]
Thus we may define $\Tw(V)$ by taking the modification of $V \otimes_{\mbb{Q}_p^\diamond} \mathcal{O}_{\mc{X}}$ by $\mathbb{M}(\RH(V))$.

\begin{theorem}\label{theorem.twistor-functor}
    The assignment $V \mapsto \Tw(V)$ is a fully faithful exact tensor functor from $\mathbb{Q}_p$-local systems on $Z$ to twistors on the relative thickened Fargues--Fontaine curve over $Z^\lfid$. 
\end{theorem}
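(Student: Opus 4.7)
The plan is to verify in turn three things: first, that $\Tw(\mathbb{L})$ is indeed a twistor bundle (the meromorphic integrable connection has at most a simple pole at $\infty$); second, that $\Tw$ is an exact tensor functor; and third, that it is fully faithful. For the twistor property, I would unpack the integrable connection on $\Tw(\mathbb{L})$: on $\mc{X}\backslash\infty$ it is the canonical flat connection on $\mathbb{L}\otimes_{\mathbb{Q}_p^\diamond}\mc{O}_{\mc{X}}$ coming from identification with pullback along $\pi$ from the relative Fargues--Fontaine curve, while near $\infty$ the identification $\Tw(\mathbb{L})\boxtimes\mathbb{B}^+_\dR=\mathbb{M}(\RH(\mathbb{L}))$ combined with the descent isomorphism $r^*\overline{\mathbb{M}[\tfrac{1}{t}]}\otimes_{\overline{\mathbb{B}_\dR}}\mathbb{B}_\dR\xrightarrow{\sim}\mathbb{M}[\tfrac{1}{t}]$ of \cref{ss.liu-zhu-period-map} supplies the extension to a meromorphic integrable connection. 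The simple pole condition of \cref{def.mer-conn-and-twistor} is then precisely the defining $t$-connection property of $\RH(\mathbb{L})$ (that $\nabla$ sends the lattice into $\tfrac{1}{t}$ times the lattice tensored with $\Omega^1$), translated through the identification of lattices.

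For exactness and the tensor structure, I would invoke that $\RH$ is an exact tensor functor by \cite{LiuZhu.RigidityAndARiemannHilbertCorrespondenceForpAdicLocalSystems} and that $\mathbb{M}$ is an exact tensor functor by its construction via pullback along the $\mathbb{B}^+_\dR$-jet space and descent in \cref{ss.liu-zhu-period-map}; Beauville--Laszlo modification of a bundle by a compatible lattice at $\infty$ is likewise an exact tensor operation. Compatibility of the meromorphic connections under tensor products follows from the Leibniz rule $\nabla_{\mc{E}_1\otimes\mc{E}_2}=\nabla_{\mc{E}_1}\otimes\Id+\Id\otimes\nabla_{\mc{E}_2}$, together with the tensor-compatibility of the connection data on $\mathbb{M}$.

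For full faithfulness, the strategy is to reduce, via the tensor formalism and internal $\mc{H}om$'s in both categories (using part two to identify $\mc{H}om(\Tw(\mathbb{L}_1),\Tw(\mathbb{L}_2))=\Tw(\mathbb{L}_1^\vee\otimes\mathbb{L}_2)$), to showing for every local system $\mathbb{L}$ on $Z$ that
\begin{equation*}
H^0(Z,\mathbb{L})=\Hom_{\text{loc.sys.}}(\underline{\mathbb{Q}_p},\mathbb{L})\xrightarrow{\sim}\Hom_{\text{twistor}}(\Tw(\underline{\mathbb{Q}_p}),\Tw(\mathbb{L})).
\end{equation*}
Here $\Tw(\underline{\mathbb{Q}_p})=\mc{O}_{\mc{X}}$, since $\RH(\underline{\mathbb{Q}_p})$ is the trivial $t$-connection and $\mathbb{M}$ returns $\mathbb{B}^+_\dR$, giving the trivial modification. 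The right side consists of horizontal global sections of $\Tw(\mathbb{L})$ over $Z^\lfid$. Restriction to $\mc{X}\backslash\infty$ forces such a section to be pulled back along $\pi$ from a section of $\mathbb{L}\otimes\mc{O}_X$ on the relative Fargues--Fontaine curve minus $\infty$, and the classical fully faithful equivalence between $\mathbb{Q}_p$-local systems on $Z$ and slope-zero semistable bundles on $X_{Z^\diamond}$ identifies such pulled-back sections satisfying the extension condition at $\infty$ (landing in $\mathbb{M}(\RH(\mathbb{L}))$ via Beauville--Laszlo) with $H^0(Z,\mathbb{L})$. Conversely, any $v\in H^0(Z,\mathbb{L})$ produces such a horizontal section by functoriality of $\Tw$ applied to the corresponding map $\underline{\mathbb{Q}_p}\to\mathbb{L}$.

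The main obstacle will be making the identification of horizontal global sections with $H^0(Z,\mathbb{L})$ precise; specifically, one must rule out spurious horizontal sections arising from the interplay of the $\mathbb{M}$-lattice extension condition with the pullback structure on $\mc{X}\backslash\infty$. My expectation is that this can be handled by first working at the underlying $v$-sheaf level, where the Liu--Zhu identification of $\mathbb{M}\circ\RH$ combined with the classical slope-zero equivalence pins down horizontal sections as $H^0(Z,\mathbb{L})$, and then promoting to the inscribed setting using the pullback description $\mathbb{M}[\tfrac{1}{t}]=r^*\overline{\mathbb{M}[\tfrac{1}{t}]}\otimes_{\overline{\mathbb{B}_\dR}}\mathbb{B}_\dR$ to lift connection-compatible morphisms uniquely from $Z^\diamond$ to $Z^\lfid$ while preserving the $\mathbb{B}^+_\dR$-lattice.
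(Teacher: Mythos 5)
Your proposal is correct and follows essentially the same route as the paper: the twistor (simple pole) condition is read off from the $t$-connection property of $\RH(\mathbb{L})$, exactness and the tensor structure are inherited from $\mathbb{L}\mapsto\mathbb{L}\otimes_{\mathbb{Q}_p^\diamond}\mathcal{O}_{\mc{X}}$, $\RH$, and $\mathbb{M}$, and full faithfulness rests on the classical fully faithful equivalence $\mathbb{L}\mapsto\mathbb{L}\otimes_{\mathbb{Q}_p^\diamond}\mathcal{O}_X$ together with the fact that the integrable connection determines a morphism of twistors from its restriction to $Z^\diamond$. Your reformulation of the last step via internal $\mc{H}om$'s and horizontal global sections is just a tensor-categorical repackaging of the paper's direct faithfulness/fullness argument, and the "obstacle" you flag (pinning down horizontal sections using the non-flatness of $\mathbb{M}$) is exactly the content the paper compresses into the phrase "uniquely determined by their action here using the integrable connection."
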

\begin{proof}
For $V$ a $\mathbb{Q}_p$-local system, $\Tw(V)$ is a twistor because the associated $\nabla \otimes_{\mbb{B}_e}\mbb{B}_\dR$ as in \cref{def.mer-conn-and-twistor} comes from the $t$-connection on $\RH(V)$. That $V \mapsto \Tw(V)$ is an exact tensor functor follows from the same property for $V \mapsto V \otimes_{\mbb{Q}_p^\diamond} \mathcal{O}_{\mc{X}}$ and for $V \mapsto \RH(V)$. Finally, the full-faithfulness follows from the full-faithfulness of $\mathbb{L} \mapsto \mathbb{L} \otimes_{\mathbb{Q}_p^\diamond} \mathcal{O}_X$, since this functor is recovered by restricting to $Z^\diamond$ (faithfulness) and all morphisms are uniquely determined by their action here using the integrable connection (fullness). 
\end{proof}

\begin{remark}\label{Remark.FarguesLiuZhuDiscussion}
    Fargues and Liu-Zhu \cite[p.327]{LiuZhu.RigidityAndARiemannHilbertCorrespondenceForpAdicLocalSystems} have conjectured the existence of a functor from $\mathbb{Q}_p$-local systems to $p$-adic twistors. Since they did not give a precise definition of a $p$-adic twistor (note, in particular, that there is no base in the fiber product $X \times \FF$ in the discussion on \cite[p.327]{LiuZhu.RigidityAndARiemannHilbertCorrespondenceForpAdicLocalSystems}), we are free to claim that \cref{theorem.twistor-functor} proves this conjecture! This is not really an honest claim, however: by analogy with the $p$-adic Simpson correspondence, what we have constructed here is more like the canonical twisted Higgs field on a $v$-vector bundle over $Z^\diamond$ than it is like its $p$-adic Simpson correspondent, which is an \'{e}tale vector bundle on $Z$ equipped with a Higgs field. However, unlike in the $p$-adic Simpson correspondence, it is not clear to us what category should appear on the other side. 
\end{remark}

\begin{remark}[Extensions of \cref{theorem.twistor-functor}]\label{remark.extensions-of-twistor}
We recall that \'{e}tale $\mathbb{Q}_p$-local systems on $Z$ are equivalent, via the map $\mbb{L} \mapsto \mbb{L}  \otimes_{\mbb{Q}_p^\diamond} \mathcal{O}_X$, to vector bundles on the relative Fargues--Fontaine curve over $Z^\diamond$ that are pointwise semistable of slope zero. More generally, given a vector bundle $\mc{V}$ on the relative Fargues--Fontaine curve over $Z^\diamond$, we say $\mc{V}$ is de Rham if $\mc{V} \boxtimes \mathbb{B}^+_\dR$ is associated to a filtered vector bundle with integrable connection satisfying Griffiths transversality $(V,\nabla, \Fil)$ on $Z$ as in \cite[\S6]{Scholze.pAdicHodgeTheoryForRigidAnalyticVarieties} (i.e. if $\mc{V} \boxtimes \mathbb{B}^+_\dR$ is equal to the restriction to $Z^\diamond$ of the bundle $\mathbb{M}( (V, \nabla, \Fil))$ of \cref{ss.liu-zhu-period-map}). Noting that in this case the desired bundle $\mathbb{M}$ over $Z^\lfid$ can be constructed from the filtered vector bundle with integrable connection as in \cref{ss.latice-Hodge-period-map}, one obtains an exact tensor functor $\mc{V} \mapsto \Tw(\mc{V})$ from de Rham vector bundles with integrable connection to twistors on the relative thickened Fargues--Fontaine curve over $S^\lfid$ by modifying $\mc{V} \otimes_{\mathcal{O}_X} \mathcal{O}_{\mathcal{X}}$ by $\mathbb{M}$. 

Note that this extends the restriction of the functor of \cref{theorem.twistor-functor} to de Rham $\mathbb{Q}_p$-local systems, but it is not strictly more general than \cref{theorem.twistor-functor}, since not all $\mathbb{Q}_p$-local systems are de Rham. In general, for $L$ any non-archimedean extension over $\mathbb{Q}_p$ and $Z/L$ a rigid analytic variety equipped with an equivariant lift to $\Spaf \mathbb{B}^+_\dR(\overline{L}^\wedge)$, we expect to be able to refine the construction of $\RH$ in \cite{LiuZhu.RigidityAndARiemannHilbertCorrespondenceForpAdicLocalSystems} in order to obtain an exact tensor functor 
\[ \Tw: \parbox{16em}{Nilpotent vector bundles on the relative Fargues--Fontaine curve over $Z^\diamond$} \longrightarrow \parbox{16em}{Twistors on the relative thickened Fargues--Fontaine curve over $Z^\lfid$.}\]
Here the nilpotence condition is that the canonical Higgs field / geometric Sen morphism on $\mc{V} \boxtimes \mathcal{O}$ is nilpotent. In particular, this nilpotence is automatic when $L$ is a $p$-adic field (which is why no analog appears as a condition in \cref{theorem.twistor-functor}), and holds more generally for any $\mathbb{Q}_p$-local systems coming from the cohomology of a smooth proper family of rigid analytic varieties over $Z$ (by spreading out). We note that even this nilpotence condition is likely not necessary (but with the nilpotence condition it should be a relatively straightforward generalization of our construction here and \cite{LiuZhu.RigidityAndARiemannHilbertCorrespondenceForpAdicLocalSystems}).  
\end{remark}

\subsection{Inscribed $p$-adic Lie group torsors}\label{ss.inscribed-torsors}
Suppose $G/\mathbb{Q}_p$ is a connected linear algebraic group and $K \leq G(\mathbb{Q}_p)$ is an open subgroup. Let $L$ be a $p$-adic field, and let $Z/L$ be a smooth rigid analytic variety. Let $\tilde{Z}/Z^\diamond$ be a $K$-torsor. In this subsection we use the functor $\Tw$ in order to define a natural non-trivial inscription $\tilde{Z}^\lfid$ on $\tilde{Z}$.

To obtain this inscription, we first note that $\tilde{Z}$ gives rise to an exact tensor functor from $\Rep\, G$ to $\mathbb{Q}_p$-local systems on $Z^\diamond$ by push-out:
\[ \omega_{\et}: V \mapsto \tilde{Z} \times^{K^\diamond} V^\diamond.\]
We write $\omega_\Tw$ for the composition of $\Tw \circ \omega_\et$ with the forgetful functor to vector bundles on the relative twisted Fargues--Fontaine curve over $Z^\lfid$ (i.e. forgetting the integrable meromorphic connection). We then define 
\[ \tilde{Z}^\lfid := \tilde{Z} \times_{\ul{\Isom}(\omega_\triv, \omega_\et)} \ul{\Isom}(\omega_\triv \otimes \mathcal{O}_{\mc{X}}, \omega_\Tw) \]
where here we have used the identity
\[ \overline{\ul{\Isom}(\omega_\triv \otimes \mathcal{O}_{\mc{X}}, \omega_\Tw)}=\ul{\Isom}(\omega_{\triv} \otimes \mathcal{O}_{X}, \omega_\et \otimes \mc{O}_X)=\ul{\Isom}(\omega_{\triv}, \omega_{\et}). \]

Note that there is a natural action of $K^\lfid = K^\diamond \times_{G(\mathbb{Q}_p^\diamond)} G(\mathbb{Q}_p^\lfid)$ induced by the action of $K^\diamond $ on $\tilde{Z}$ and the action of $G(\mathbb{Q}_p^\lfid)$ as the automorphism group of $\omega_\triv \otimes \mathcal{O}_{\mc{X}}$. 

\begin{theorem}\label{theorem.ztilde-torsor}
$\tilde{Z}^\lfid$ is a $K^\lfid$ quasi-torsor over $Z^\lfid$, and it is a torsor after restriction of the test objects to $X^\lfp$.     
\end{theorem}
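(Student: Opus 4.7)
The plan is to reduce the theorem to a deformation-theoretic assertion about trivializations of a $G$-bundle on $\mc{X}$, via the fiber product structure of the definition. Write
\[ \mc{I} := \ul{\Isom}(\omega_\triv \otimes \mc{O}_{\mc{X}}, \omega_\Tw), \]
so that $\tilde{Z}^\lfid = \tilde{Z} \times_{\ul{\Isom}(\omega_\triv,\omega_\et)} \mc{I}$. First I would observe that $\mc{I}$ carries a canonical right action by $\ul{\Aut}(\omega_\triv \otimes \mc{O}_{\mc{X}}) = G(\mathbb{Q}_p^\lfid)$ (using $H^0(\mc{X},\mc{O}) = \mathbb{Q}_p^\lfid(\mc{X})$) that makes it a $G(\mathbb{Q}_p^\lfid)$-quasi-torsor over $Z^\lfid$. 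The identity $\overline{\mc{I}} = \ul{\Isom}(\omega_\triv,\omega_\et)$ built into the construction is compatible with the given $K^\diamond$-torsor map $\tilde{Z} \to \ul{\Isom}(\omega_\triv,\omega_\et)$, so the actions of $K^\diamond$ on $\tilde{Z}$ and of $G(\mathbb{Q}_p^\lfid)$ on $\mc{I}$ assemble into a canonical action of $K^\lfid = K^\diamond \times_{G(\mathbb{Q}_p^\diamond)} G(\mathbb{Q}_p^\lfid)$ on the fiber product $\tilde{Z}^\lfid$.

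The quasi-torsor property is then a formal consequence of the fiber product structure. Given two sections $(z_i,f_i)$ of $\tilde{Z}^\lfid$ over the same point of $Z^\lfid$, the uniqueness of the difference in the $K^\diamond$-torsor $\tilde{Z}$ and in the $G(\mathbb{Q}_p^\lfid)$-quasi-torsor $\mc{I}$, together with the compatibility of their images in $\ul{\Isom}(\omega_\triv,\omega_\et)$, produces a unique element of $K^\lfid$ carrying one section to the other. This requires no slope hypothesis.

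To upgrade to the torsor statement after restricting test objects to $X^\lfp$, I need $v$-local surjectivity of $\tilde{Z}^\lfid \to Z^\lfid$. Since $\tilde{Z} \to Z^\diamond$ is already a $v$-surjective $K^\diamond$-torsor, it will be enough to show that $\mc{I}$ is a $G(\mathbb{Q}_p^\lfid)$-torsor over $Z^\lfid|_{X^\lfp}$. This is equivalent to the assertion that the $G$-bundle $\omega_\Tw$ on $\mc{X}$ is $v$-locally isomorphic to the trivial bundle $\mc{E}_1 = \omega_\triv \otimes \mc{O}_{\mc{X}}$ for every test object $\mc{X}/X_P$ in $X^\lfp$ and every map $\mc{X} \to Z^\lfid$.

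This last step is the main obstacle, and it is precisely the content of \cref{prop.b-basic-open-stratum}-(2) for the basic class $b=1$. The identity $\overline{\mc{I}} = \ul{\Isom}(\omega_\triv,\omega_\et)$ identifies $\overline{\omega_\Tw}$ with $\omega_\et \otimes \mc{O}_X$ on $X_P$. Because $\omega_\et$ is the push-out of the $K$-torsor $\tilde{Z}$ along $K \hookrightarrow G(\mathbb{Q}_p)$ and $\tilde{Z} \to Z^\diamond$ is $v$-locally trivial, $\omega_\et$ is a $v$-locally trivial $\mathbb{Q}_p$-local system; via the equivalence between $\mathbb{Q}_p$-local systems and semistable slope-zero bundles on the relative Fargues--Fontaine curve, $\overline{\omega_\Tw} = \omega_\et \otimes \mc{O}_X$ is $v$-locally isomorphic to $\mc{E}_1$, i.e.\ lies in the basic stratum $(\mc{X}^*\BG)^{[1]}$. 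Invoking \cref{prop.b-basic-open-stratum}-(2) then lifts this trivialization from $X_P$ to $\mc{X}$, completing the proof. The hard part --- and the sole reason the slope hypothesis $\lfp$ appears --- is this lifting: the obstructions to deforming trivializations of $\mc{E}_1$ across the thickening $X_P \hookrightarrow \mc{X}$ live in the groups $H^1(X_P, \mf{g} \otimes \mc{I}^n/\mc{I}^{n+1})$ for the successive graded pieces of the ideal sheaf of $X_P$ in $\mc{X}$, and the non-negative Harder--Narasimhan slope condition defining $X^\lfp$ is exactly what forces these obstructions to vanish $v$-locally, following the argument in the proof of \cref{prop.b-basic-open-stratum}.
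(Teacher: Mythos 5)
Your proposal is correct and follows essentially the same route as the paper's proof: the quasi-torsor property is formal from the construction, and the torsor property after restriction to $X^\lfp$ follows from the $v$-local triviality of the underlying torsor $\tilde{Z}\to Z^\diamond$ combined with \cref{prop.b-basic-open-stratum} for $b=1$. The only difference is presentational — the paper first reduces to $K=G(\mathbb{Q}_p)$ while you keep $K$ general and assemble the $K^\lfid$-action on the fiber product directly — and your closing remarks correctly identify where the slope hypothesis enters.
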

\begin{proof}
    We can immediately reduce to the case $K=G(\mathbb{Q}_p)$, in which case we are simply studying 
    \[ \ul{\Isom}(\omega_\triv \otimes \mathcal{O}_{\mc{X}}, \omega_\Tw).\]
    By construction it is a quasi-torsor; it remains to show that it is actually a torsor after restriction to $X^\lfp$. But this is immediate from \cref{prop.b-basic-open-stratum} (with $b=1$) and the torsor property for the underlying map of $v$-sheaves $\tilde{Z} \rightarrow Z^\diamond$.
\end{proof}

\subsubsection{} Let $\mathfrak{k}=\Lie K=\Lie G(\mathbb{Q}_p)$, a $\mathbb{Q}_p$-vector space. Writing $\rho^\lfid: \tilde{Z}^\lfid \rightarrow Z^\lfid$, it follows from \cref{theorem.ztilde-torsor} that we obtain an exact sequence of $\mathbb{Q}_p^\lfid$-modules over $\tilde{Z}^\lfid$,
\begin{equation}\label{eq.torsor-exact-seq} 0 \rightarrow \mf{k} \otimes_{\mathbb{Q}_p} \mathbb{Q}_p^\lfid \xrightarrow{da_e} T_{\tilde{Z}^\lfid} \xrightarrow{d\pi^\lfid} {\rho^{\lfid}}^* T_{Z^\lfid} \rightarrow 0. \end{equation}

The following result identifies this extension. Recalling that $\mc{O}_{\mc{X}}(\infty)$ denotes the sheaf on $\mc{X}$ of functions that are holomorphic on $\mc{X}\backslash \infty$ and have at most a simple pole at $\infty$, let 
\[ r: \mf{k} \otimes_{\mathbb{Q}_p} \BC(\mc{O}_{\mc{X}}(\infty)) \rightarrow \mf{k} \otimes_{\mathbb{Q}_p} \mbb{B}_\dR\]
denote the  map by restriction of global sections to $\infty$, pulled back to a map of $\mathbb{Q}_p^\lfid$-modules over $\tilde{Z}^\lfid$, and let $\overline{r}$ denote the induced map to $\mf{k} \otimes_{\mathbb{Q}_p} \frac{\mbb{B}_\dR}{\mbb{B}^+_\dR}$. 
Note that we also have a canonical map of $\mathbb{Q}_p^\lfid$-modules over $\tilde{Z}^\lfid$ 
\[ \tilde{\kappa}:={\rho^\lfid}^* d\pi_{\LZ}: {\rho^\lfid}^* T_{Z^\lfid} \rightarrow \mf{k}\otimes_{\mathbb{Q}_p} \mathbb{B}_\dR/\mathbb{B}^+_\dR. \]   
Both maps $\tilde{\kappa}$ and $\overline{r}$ factor through $\mf{k} \otimes \mathcal{O}\{-1\}$, where 
\[ \mathcal{O}\{-1\}:\gr^{-1}\mathbb{B}_\dR=\Fil^{-1}\mathbb{B}_\dR/\Fil^0\mathbb{B}_\dR =\Fil^{-1}\mathbb{B}_\dR/\mathbb{B}^+_\dR. \]
We define $V_{\overline{r},\tilde{\kappa}}$ as the fiber product

\[\begin{tikzcd}
	{V_{\overline{r}, \tilde{\kappa}}} & {{\rho^\lfid}^* T_{Z^\lfid}} \\
	\\
	{\mf{k} \otimes_{\mathbb{Q}_p} \BC(\mc{O}_{\mc{X}}(\infty))} & {\mf{k}\otimes_{\mathbb{Q}_p} \mathcal{O}\{1\}}
	\arrow[from=1-1, to=1-2]
	\arrow[from=1-1, to=3-1]
	\arrow["\lrcorner"{anchor=center, pos=0.125}, draw=none, from=1-1, to=3-2]
	\arrow["{\tilde{\kappa}}", from=1-2, to=3-2]
	\arrow["\overline{r}", from=3-1, to=3-2]
\end{tikzcd}\]
In particular, $V_{\overline{r}, \tilde{\kappa}}$ sits in the  natural exact sequence
\begin{equation}\label{eq.putative-tangent-bundle} 0 \rightarrow \mf{k} \otimes_{\mathbb{Q}_p} \mathbb{Q}_p^\lfid \rightarrow V_{\overline{r},\tilde{\kappa}} \rightarrow {\rho^{\lfid}}^* T_{Z^\lfid} \rightarrow 0. \end{equation}

\begin{theorem}\label{theorem.torsor-tangent-bundle-computation}
The two exact sequences \cref{eq.torsor-exact-seq} and \cref{eq.putative-tangent-bundle} are canonically identified.
\end{theorem}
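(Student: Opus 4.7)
The plan is to construct a natural map of $\mathbb Q_p^\lfid$-modules
\[ (d\rho^\lfid, \Phi) : T_{\tilde Z^\lfid} \longrightarrow V_{\overline{r}, \tilde\kappa} \]
over $\tilde Z^\lfid$ fitting in a morphism of the two short exact sequences, and then verify that the induced maps on kernels and cokernels are the identity, so that the middle map is an isomorphism by the five lemma.

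First I will unpack what a tangent vector to $\tilde Z^\lfid$ is. Because $\tilde Z \to \ul\Isom(\omega_\triv, \omega_\et)$ is a torsor on the underlying $v$-sheaves and $\omega_\et$ is a $\mathbb Q_p$-local system (so not affected by the inscription), the tangent bundle is pulled back from $\mc I := \ul\Isom(\omega_\triv \otimes \mc O_{\mc X}, \omega_\Tw)$: a section of $T_{\tilde Z^\lfid}$ above $\tilde z = (z, \varphi) \in \tilde Z^\lfid(\mc X)$ is canonically a pair $(\gamma, \varphi_\epsilon)$, where $\gamma \in T_{Z^\lfid}(\mc X)$ lifts $z$ and $\varphi_\epsilon : \omega_\triv \otimes \mc O_{\mc X[\epsilon]} \xrightarrow{\sim} \gamma^*\omega_\Tw$ is an isomorphism of bundles on $\mc X[\epsilon]$ restricting to $\varphi$ on $\mc X$ and inducing the underlying $\omega_\et$-trivialization on $\mc X[\epsilon] \setminus \infty$.

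Next, I will define the map $\Phi : T_{\tilde Z^\lfid} \to \mf k \otimes_{\mathbb Q_p} \BC(\mc O_{\mc X}(\infty))$. Using $\varphi$ to identify $z^*\omega_\Tw \cong \omega_\triv \otimes \mc O_{\mc X}$, and noting that away from $\infty$ the bundle $\gamma^*\omega_\Tw$ agrees with the constant extension $\bar\gamma^*\omega_\Tw$ (since $\omega_\et$ is constant in the inscribed direction), the deformation $\varphi_\epsilon$ can be written on $\mc X[\epsilon] \setminus \infty$ as multiplication by $\mathrm{id} + \epsilon h$ for a uniquely determined $h \in \mf k \otimes \mathbb B_e(\mc X)$. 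The key point is that $\varphi_\epsilon$ must extend across $\infty$, which by Beauville--Laszlo glueing amounts to the condition that $\mathrm{id} + \epsilon h$ sends the lattice $\mathbb M_z := \mathbb M(\RH(\omega_\et))_z = \omega_\Tw \boxtimes \mathbb B^+_\dR$ into the deformed lattice $\mathbb M_\gamma$. Since $\mathbb M_\gamma \equiv \mathbb M_z \mod \epsilon \cdot \Fil^{-1}\mathbb B^+_\dR \cdot \mathbb M_z$, this forces $h$ to have at most a simple pole at $\infty$, so $h \in \mf k \otimes_{\mathbb Q_p} \BC(\mc O_{\mc X}(\infty))$; we set $\Phi(\gamma, \varphi_\epsilon) := h$. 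Functoriality and $\mathbb Q_p^\lfid$-linearity are immediate from the construction.

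It remains to check that $(d\rho^\lfid, \Phi)$ lands in the fiber product $V_{\overline r, \tilde\kappa}$ and realizes the identification of short exact sequences. Both statements reduce to a direct computation:  on the vertical subspace $\gamma = 0$, no lattice deformation occurs, so the compatibility condition forces $h \cdot \mathbb M_z \subseteq \mathbb M_z$, which combined with $h \in \mathbb B_e$ means $h \in \mf k \otimes \mathbb Q_p^\lfid$, and $\Phi$ restricts to the tautological inclusion $\mf k \otimes \mathbb Q_p^\lfid \hookrightarrow \mf k \otimes \BC(\mc O_{\mc X}(\infty))$ identifying $da_e$ with the kernel inclusion of the fiber product. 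On the quotient, $d\rho^\lfid$ is by construction the projection $(\gamma, \varphi_\epsilon) \mapsto \gamma$, and the residue computation above gives $\overline r(\Phi(\gamma, \varphi_\epsilon)) = \mathbb M_\gamma/\mathbb M_z \bmod \mathbb B^+_\dR = d\pi_{\LZ}(\gamma) = \tilde\kappa(d\rho^\lfid(\gamma, \varphi_\epsilon))$, where the middle equality is the very definition of $\pi_{\LZ}$ as measuring the position of $\mathbb M$ against a flat trivialization of $\mathbb M \otimes_{\mathbb B^+_\dR} \mathbb B_\dR$. An application of the five lemma then yields the claimed canonical identification of extensions.

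The main technical obstacle is the careful bookkeeping in the middle paragraph: one must verify, in the inscribed setting, that the compatibility of $\varphi_\epsilon$ with the underlying $\omega_\et$-trivialization corresponds exactly to the Beauville--Laszlo extension condition for the lattice, and that the resulting pole of $h$ is exactly the infinitesimal deformation of $\mathbb M$. This is where the construction of $\Tw$ via the functor $\mathbb M$ in \cref{ss.latice-Hodge-period-map} and the interpretation of $\pi_{\LZ}$ in \cref{ss.liu-zhu-period-map} are used in an essential way, and once these are tracked through the identifications, the match of the ``residue at $\infty$'' with $\tilde\kappa$ is essentially automatic.
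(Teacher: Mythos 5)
Your proof is correct and takes essentially the same route as the paper: both arguments reduce the identification to the observation that, for a deformation of the trivialization written as $1+\epsilon h$ with $h\in\mf{g}\otimes\mathbb{B}_e$ away from $\infty$, the Beauville--Laszlo extension condition across $\infty$ is exactly the fiber-product condition defining $V_{\overline{r},\tilde{\kappa}}$ (simple pole for $h$, with tail matching $d\pi_{\LZ}(\gamma)$). The only difference is direction — the paper builds the map from $V_{\overline{r},\tilde{\kappa}}$ to $T_{\tilde{Z}^\lfid}$ by multiplying the constant trivialization by $1+a\epsilon$, whereas you extract $h$ from a given tangent vector — and this is immaterial.
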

\begin{proof}
    It suffices to produce a map from \cref{eq.putative-tangent-bundle} to \cref{eq.torsor-exact-seq} that is the identity on $\mf{k} \otimes_{\mathbb{Q}_p} \mathbb{Q}_p^\lfid$ and on ${\rho_{\lfid}}^* T_{Z^\lfid}$. To that end, suppose given an $\mathcal{X}$-point 
    \[ (a,\gamma) \in V_{\overline{r},\tilde{\kappa}}(\mc{X}) \subseteq \mf{k}\otimes_{\mbb{Q}_p} \BC(\mc{O}_{\mc{X}}(\infty))(\mc{X}) \times {\rho^\lfid}^*T_{Z^\lfid}(\mc{X})\]
    lying above $\tilde{z}$ in $\tilde{Z}^\lfid(\mc{X})$. Write $z$ for the image of $\tilde{z}$ in $Z^\lfid(\mc{X})$ so that $\gamma$ can be viewed as a tangent vector to $z$, i.e. as a point above $z$ in $Z^\lfid(\mc{X}[\epsilon])$.  
    
    We write $\gamma_0$ for the zero tangent vector at the point $\tilde{z}$. Then, we construct a point of $\tilde{Z}^\lfid(\mathcal{X}[\epsilon])$ above $\gamma \in Z^\lfid(\mathcal{X}[\epsilon])$ by precomposing the given trivialization of $\gamma_0^* \omega_{\Tw}$ (corresponding to the constant extension of the trivialization parameterized by $\tilde{z}$), after restriction to $\mathcal{X}\backslash\infty[\epsilon]$, with multiplication by $1+a\epsilon \in G(\mathbb{B}_e[\epsilon])$. The compatibility between $a$ and $\gamma$ in the fiber product defining $V_{\overline{r}, \tilde{\kappa}}$ is precisely equivalent to the condition that this extends to a trivialization of $\gamma^*\omega_\Tw$ on $\mathcal{X}[\epsilon]$. 
\end{proof}

\begin{remark}
    Note that if $d\pi_{\LZ}$ is injective, then the fiber product that describes $T_{\tilde{Z}^\lfid}$ in \cref{theorem.torsor-tangent-bundle-computation} is $\BC(\mathcal{E})$, where $\mathcal{E}$ is the vector bundle on the relative thickened Fargues--Fontaine curve over $\tilde{Z}^\lfid$ obtain as the minuscule effective modification of $\mf{k} \otimes_{\mathbb{Q}_p} \mathcal{O}_{\mathcal{X}}$ by the local direct summand $\tilde{\kappa}$, i.e. by changing the holomorphic functions at $\infty$ to be those sections of $\mf{k} \otimes_{\mathbb{Q}_p} \mathcal{O}_{\mathcal{X}}(\infty)$ that evaluate at $\infty$ into $T_{\tilde{Z}^\lfid} \subseteq \mf{k} \otimes_{\mbb{Q}_p} \mathcal{O}\{-1\}$ (containment via $\tilde{\kappa}$). 
\end{remark}

\subsection{Hodge--Tate and lattice Hodge--Tate period maps}
Continuing with the notation of \cref{ss.inscribed-torsors}, let us assume $\tilde{Z}$ is furthermore a de Rham torsor, i.e. that $\omega_{\et}$ factors through the full subcategory of de Rham local systems (it suffices to check this condition on one faithful representation and, by \cite[Theorem 1.3]{LiuZhu.RigidityAndARiemannHilbertCorrespondenceForpAdicLocalSystems}, even at a single point in each connected component). Let $\omega_{\nabla}^{\Fil}$ denote the associated filtered $G$-bundle with integrable connection satisfying Griffiths transversality.  

In this setting, we write $\omega^+_\dR$ for $\mathbb{M}_0\circ \omega_{\nabla}^\Fil$. By construction, we have an isomorphism 
\[ \omega_\Tw \boxtimes \mathbb{B}_\dR = (\mathbb{M} \circ \omega_{\nabla}^\Fil) \otimes_{\mathbb{B}^+_\dR} \mathbb{B}_\dR = (\mathbb{M}_0 \circ \omega_{\nabla}^\Fil) \otimes_{\mathbb{B^+_\dR}} \mathbb{B}_\dR.\]
Thus on $\tilde{Z}^\lfid$ we obtain a lattice Hodge--Tate period map measuring the lattice  $(\mathbb{M}_0 \circ \omega_{\nabla}^\Fil)$ against the trivialization $\omega_\Tw \boxtimes \mathbb{B}_\dR$ induced by the trivialization of $\omega_\Tw$ over $\tilde{Z}^\lfid$,  
\[ \pi_{\HT}^+ : \tilde{Z}^\lfid \rightarrow \Gr_{G}. \]

Let $\mf{k}^+_\dR:=\mathbb{M}_0(\omega_{\nabla}^\Fil(\mf{g}))$. The derivative of $\pi_\HT^+$ is a map
\[ d\pi_{\HT}^+: T_{\tilde{Z}^\lfid} \rightarrow {\pi_{\HT}^+}^* T_{\Gr_G} =\frac{\mf{k} \otimes_{\mbb{Q}_p}\mathbb{B}_\dR}{\mf{k}^+_\dR} = \mf{k}^+_\dR \otimes_{\mathbb{B}^+_\dR} \frac{\mathbb{B}_\dR}{\mathbb{B}^+_\dR}. \]

Unwinding the definitions, we find:
\begin{theorem}\label{theorem.ht-derivative-computation}
Under the identification of $T_{\tilde{Z}^\lfid}=V_{\overline{r},\tilde{\kappa}}$ of \cref{theorem.torsor-tangent-bundle-computation}, $d\pi_\HT^+$ is the composition 
\[ V_{\overline{r},\tilde{\kappa}} \rightarrow \mf{k} \otimes_{\mbb{Q}_p} \BC(\mathcal{O}_{\mc{X}}(\infty)) \xrightarrow{\mr{id} \otimes r} \mf{k} \otimes_{\mbb{Q}_p} \mathbb{B}_\dR \twoheadrightarrow \frac{\mf{k} \otimes_{\mbb{Q}_p} \mathbb{B}_\dR}{\mf{k}^+_\dR}. \]
\end{theorem}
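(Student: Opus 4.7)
The strategy is to reduce the statement to a direct calculation in $\Gr_G$ by unpacking the identification $T_{\tilde{Z}^\lfid} \cong V_{\bar{r},\tilde{\kappa}}$ constructed in the proof of \cref{theorem.torsor-tangent-bundle-computation}, and then invoking \cref{corollary.tangent-bundle-affine-grassmannian} to translate everything to the standard tangent bundle formula for the $\mbb{B}^+_\dR$-affine Grassmannian.

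Concretely, I would work at a test-object point $(\tilde{z},\varphi)$ of $\tilde{Z}^\lfid$ with $\varphi: \omega_\triv \otimes \mc{O}_{\mc{X}} \xrightarrow{\sim} \omega_\Tw$, and use $\varphi$ together with the canonical iso $\omega_\Tw \bct \mbb{B}_\dR \cong \omega^+_\dR \otimes_{\mbb{B}^+_\dR} \mbb{B}_\dR$ (coming from the definition of $\omega_\Tw$ as a modification of $\omega_\et \otimes \mc{O}_{\mc{X}}$ by $\mathbb{M}$) to induce the identification $\mf{k} \otimes \mbb{B}_\dR = \mf{k}^+_\dR \otimes_{\mbb{B}^+_\dR} \mbb{B}_\dR$ appearing in the statement. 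Under this trivialization, $\pi_{\HT}^+(\tilde{z},\varphi) \in \Gr_G$ is represented by the lattice $\mf{k}^+_\dR \subseteq \mf{k} \otimes \mbb{B}_\dR$, and $\pi_{\HT}^{+,*} T_{\Gr_G}$ is identified with $\mf{k} \otimes \mbb{B}_\dR/\mf{k}^+_\dR$ by \cref{corollary.tangent-bundle-affine-grassmannian}. By the construction in the proof of \cref{theorem.torsor-tangent-bundle-computation}, a tangent vector $(a,\gamma) \in V_{\bar{r},\tilde{\kappa}}(\mc{X})$ at $(\tilde{z},\varphi)$ is represented by the $\mc{X}[\epsilon]$-trivialization obtained from the canonical flat extension of $\varphi$ to the $\gamma$-deformed base over $\mc{X}\bs\infty[\epsilon]$ (using the integrable connection on $\omega_\et \otimes \mc{O}_{\mc{X}}$, which agrees with $\omega_\Tw$ away from $\infty$), precomposed with $1 + a\epsilon \in G(\mbb{B}_e[\epsilon])$. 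Restricting to $\mbb{B}_\dR[\epsilon]$, the factor $1 + a\epsilon$ becomes $1 + r(a)\epsilon$, which acts on $\Gr_G$ via the $G(\mbb{B}_\dR)$-action; by the $G(\mbb{B}_\dR)$-equivariance of $\pi_{\HT}^+$ and \cref{corollary.tangent-bundle-affine-grassmannian}, this contributes exactly $r(a) \bmod \mf{k}^+_\dR$ to $d\pi_{\HT}^+(a,\gamma)$.

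The main obstacle is to verify that the flat-extension piece of the trivialization contributes nothing to $d\pi_{\HT}^+(a,\gamma)$ in $\mf{k} \otimes \mbb{B}_\dR/\mf{k}^+_\dR$. The essential input is \cref{lem.m0-connection}, which provides a canonical flat identification $r^*\overline{\omega^+_\dR} \otimes_{\overline{\mbb{B}^+_\dR}} \mbb{B}^+_\dR \cong \omega^+_\dR$; base-changing to $\mbb{B}_\dR$ and using the canonical iso above, this furnishes a flat extension of the lattice $\mf{k}^+_\dR$ across nearby bases. Because both $\mathbb{M}$ and $\mathbb{M}_0$ are constructed in \cref{ss.latice-Hodge-period-map} by pull-back and descent along the same integrable connection on $\omega_\nabla^\Fil$, this flat lattice extension is compatible modulo $\mf{k}^+_\dR$ with the flat extension of $\varphi$ coming from $\omega_\et \otimes \mc{O}_{\mc{X}}|_{\mc{X}\bs\infty}$, so the position of $\mf{k}^+_\dR|_\gamma$ in the flat-extended trivialization coincides with $\mf{k}^+_\dR|_{\tilde{z}}$ modulo $\mf{k}^+_\dR$. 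Granting this compatibility, the only non-trivial contribution to $d\pi_{\HT}^+(a,\gamma)$ is $r(a)$. I expect the verification of the compatibility to follow by direct Taylor-expansion bookkeeping parallel to the descent argument establishing the well-definedness of $\mathbb{M}$ in \cref{ss.latice-Hodge-period-map}.
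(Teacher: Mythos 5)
Your proposal is correct and is exactly the unwinding the paper has in mind: the paper's entire proof is the phrase ``Unwinding the definitions, we find,'' and your two-step decomposition --- the factor $1+a\epsilon$ contributing $r(a) \bmod \mf{k}^+_\dR$ via \cref{corollary.tangent-bundle-affine-grassmannian}, and the flat-extension piece contributing nothing because $\mathbb{M}_0$ is flat by \cref{lem.m0-connection} and the comparison $\omega_\Tw \boxtimes \mathbb{B}_\dR \cong \omega_\dR^+ \otimes_{\mathbb{B}^+_\dR}\mathbb{B}_\dR$ is flat by construction of $\Tw$ --- is precisely the verification the paper leaves implicit. The compatibility you flag at the end is not an extra Taylor-expansion computation but is built into the definition of the twistor functor (the connection on $\Tw(\mathbb{L})|_{\mc{X}\backslash\infty}$ is the constant one from $\mathbb{L}$, and \cref{eq.m-and-m0} together with \cref{lem.m0-connection} identifies it with the connection on $\mathbb{M}_0[\tfrac{1}{t}]$), so your argument is complete.
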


\subsection{Moduli of modifications revisited}\label{ss.mom-revisited}

 Consider a space $\Gr_{G}^{b_1 \rightarrow [b_2]}$ as in \cref{ss.inscribed-generalized-Newton-strata} (for $E=\mathbb{Q}_p$). By construction, the universal modification $\mc{E}$ of $\mc{E}_{b_1}$ carries a meromorphic integrable connection induced by 
\[ \mc{E}|_{\mathcal{X}\backslash\infty}=\mc{E}_{b_1}|_{\mathcal{X}\backslash\infty}, \]
and, as in \cref{s.moduli-of-mod}, the infinite level space $\M_{b_1 \rightarrow b_2}$ is the torsor of trivializations to $\mathcal{E}_{b_2}$ for the underlying $G$-bundle $\mc{E}$. 

In particular, suppose $[\mu]$ is a conjugacy class of characters of $G_{\qpbar}$, $L/\mathbb{Q}_p([\mu])$ is a $p$-adic field, and $Z^\diamond \rightarrow \Gr_{[\mu]}^{b \rightarrow [1]}$ for $Z/L$ a smooth rigid analytic variety --- note that, by \cite[Theorem 5.0.4-(4)]{HoweKlevdal.AdmissiblePairsAndpAdicHodgeStructuresIITheBiAnalyticAxLindemannTheorem}, such maps are determined by the induced map to $\Fl_{[\mu^{-1}]}^\diamond$, which corresponds to a map of rigid analytic varieties $Z \rightarrow \Fl_{[\mu^{-1}]}$ satisfying Griffiths transversality for the trivial connection on the trivial $G$-torsor; thus we get an induced map as in \cref{ss.latice-Hodge-period-map} 
\[ f: Z^\lfid \rightarrow \Gr_{[\mu]} \]
that, if we restrict our test objects to $X^\lfp$, factors through $\Gr_{[\mu]}^{b \rightarrow [1]}$ (by \cref{prop.b-basic-open-stratum} since it factors through this locus on the underlying $v$-sheaves). 

In particular, the meromorphic integrable connection on $f^*\mc{E}$ is a $G$-twistor, and it is the $G$-twistor associated to the flat crystalline $G(\mathbb{Q}_p)$-torsor $f^*\overline{\M_{b,[\mu]}} / Z^\diamond$. In this case, the computation of \cref{theorem.torsor-tangent-bundle-computation} can be identified with the natural computation of the tangent bundle of the fiber product $\M_{b \rightarrow [1]} \times_{\Gr_{[\mu]}} Z^\lfid$, noting that in this case $df=d\pi_{\LZ}=d\pi^+_\Hdg$.  

When the cocharacter $[\mu]$ is minuscule (the local Shimura case), this discussion applies to the whole space $\overline{\Gr_{[\mu]}^{b \rightarrow [1]}}$, whose associated rigid analytic variety is the $b$-admissible locus $\Fl_{[\mu^{-1}]}^{b-\adm}$, giving an alternate perspective on the inscribed structure on infinite level local Shimura varieties in \cref{ss.main-results-mcls}. 

\subsection{Global Shimura varieties revisited}\label{ss.global-sv-revisited}
Let $\gx$ be a Shimura datum and fix a completion $L$ of the reflex field at a place above $p$. Let $K^p\leq G(\mathbb{A}_f^{(p)})$ be a compact open subgroup. For $K_p \leq G(\mathbb{Q}_p)$ compact open such that $K_pK^p$ is neat, we write $\Sh_{K_pK^p}$ for the associated rigid analytic Shimura variety over $L$. Then, for $G^c$ the quotient of $G$ by its maximal $\mathbb{Q}$-anisotropic $\mathbb{R}$-split central torus and $\overline{K}_p$ the image of $K_p$ in $G^c(\mathbb{Q}_p)$, by the usual constructions we have a de Rham $\overline{K_p}$-torsor $\tilde{\Sh}_{K^p} / \Sh_{K_pK^p}^\diamond$. We thus obtain, by the construction of \cref{ss.inscribed-torsors}, an inscribed infinite level Shimura variety $\tilde{\Sh}_{K^p}^\lfid$. 

In particular, in the setting of \cref{s.Inscribed-global-Shimura} (where the conditions imply $G^c=G$), we have two a priori distint constructions of an inscription on the infinite level Shimura variety; we write $(\Sh_{K_pK^p}^\lfid)'$ for the inscription constructed in \cref{s.Inscribed-global-Shimura}. Since both are $K_p^\lfid$-torsors over $\Sh_{K_pK^p}^\lfid$, to show they agree it suffices to write down a $K_p^\lfid$-equivariant map $\Sh_{K_p}^\lfid \rightarrow (\Sh_{K_pK^p}^\lfid)'$ over $\Sh_{K_pK^p}^\lfid$. This is straightforward using the definition of $(\Sh_{K_pK^p}^\lfid)'$ as a fiber product: since the Igusa stack is trivially inscribed, we obtain a map to the Igusa stack from the underlying $v$-sheaf, and then we have already constructed our map $\pi_\HT$ to $\Fl_{[\mu]}$; the identification of the modified bundle with the one pulled back from the Igusa stack is obtained from the integrable connection on $\mathbb{M}_0 \circ \omega_{\nabla}^\Fil$.

\bibliographystyle{plain}
\bibliography{references, preprints}

\end{document}